\newcommand{\CC}{\mathbb{C}}
\newcommand{\GG}{\mathbb{G}}
\newcommand{\KK}{\mathbbm{k}}
\newcommand{\NN}{\mathbb{N}}
\newcommand{\PP}{\mathbb{P}}
\newcommand{\RR}{\mathbb{R}}
\newcommand{\ZZ}{\mathbb{Z}}
\newcommand{\cB}{\mathcal{B}}
\newcommand{\cC}{\mathcal{C}}
\newcommand{\cE}{\mathcal{E}}
\newcommand{\cF}{\mathcal{F}}
\newcommand{\cG}{\mathcal{G}}
\newcommand{\cH}{\mathcal{H}}
\newcommand{\cI}{\mathcal{I}}
\newcommand{\cK}{\mathcal{K}}
\newcommand{\cO}{\mathcal{O}}
\newcommand{\cP}{\mathcal{P}}
\newcommand{\cQ}{\mathcal{Q}}
\newcommand{\cR}{\mathcal{R}}
\newcommand{\cS}{\mathcal{S}}
\newcommand{\cT}{\mathcal{T}}
\newcommand{\cU}{\mathcal{U}}
\newcommand{\cV}{\mathcal{V}}
\newcommand{\cW}{\mathcal{W}}
\newcommand{\cX}{\mathcal{X}}
\newcommand{\cY}{\mathcal{Y}}
\newcommand{\fc}{\mathfrak{c}}
\newcommand{\ff}{\mathfrak{f}}
\newcommand{\fm}{\mathfrak{m}}
\newcommand{\fu}{\mathfrak{u}}
\newcommand{\fv}{\mathfrak{v}}
\newcommand{\fw}{\mathfrak{w}}
\newcommand{\dact}{\boldsymbol{.}}
\newcommand{\lra}{\longrightarrow}
\DeclareMathOperator{\add}{\mathsf{add}}
\DeclareMathOperator{\Aut}{Aut}
\DeclareMathOperator{\Char}{char}
\DeclareMathOperator{\Coh}{\mathsf{Coh}}
\DeclareMathOperator{\msCoker}{\mathsf{Coker}}
\DeclareMathOperator{\CR}{\mathsf{CR}}
\DeclareMathOperator{\coker}{coker}
\DeclareMathOperator{\msD}{\mathsf{D}}
\DeclareMathOperator{\EIP}{\mathsf{EIP}}
\DeclareMathOperator{\EKP}{\mathsf{EKP}}
\DeclareMathOperator{\End}{End}
\DeclareMathOperator{\Ext}{Ext}
\DeclareMathOperator{\ev}{ev}
\DeclareMathOperator{\GL}{GL}
\DeclareMathOperator{\Gr}{Gr}
\DeclareMathOperator{\gen}{\mathsf{gen}}
\DeclareMathOperator{\HH}{H}
\DeclareMathOperator{\Hom}{Hom}
\DeclareMathOperator{\msHom}{\mathsf{Hom}}
\DeclareMathOperator{\Inf}{Inf}
\DeclareMathOperator{\Inj}{Inj}
\DeclareMathOperator{\im}{im}
\DeclareMathOperator{\id}{id}
\DeclareMathOperator{\msInd}{\mathsf{Ind}}
\DeclareMathOperator{\Iso}{Iso}
\DeclareMathOperator{\iso}{iso}
\DeclareMathOperator{\msKer}{\mathsf{Ker}}
\DeclareMathOperator{\Mat}{Mat}
\DeclareMathOperator{\modd}{mod}
\DeclareMathOperator{\Mor}{Mor}
\DeclareMathOperator{\msIm}{\mathsf{Im}}
\DeclareMathOperator{\msim}{\mathsf{im}}
\DeclareMathOperator{\mspl}{\mathsf{pl}}
\DeclareMathOperator{\pr}{pr}
\DeclareMathOperator{\ql}{q\ell}
\DeclareMathOperator{\Rad}{Rad}
\DeclareMathOperator{\Rat}{Rat}
\DeclareMathOperator{\reg}{reg}
\DeclareMathOperator{\rep}{rep}
\DeclareMathOperator{\repp}{rep_{proj}}
\DeclareMathOperator{\reppi}{rep_{inj}}
\DeclareMathOperator{\Res}{Res}
\DeclareMathOperator{\res}{res}
\DeclareMathOperator{\rk}{rk}
\DeclareMathOperator{\SL}{SL}
\DeclareMathOperator{\StVect}{\mathsf{StVect}}
\DeclareMathOperator{\TilTheta}{\widetilde{\Theta}}
\DeclareMathOperator{\tr}{tr}
\DeclareMathOperator{\udim}{\underline{\dim}}
\DeclareMathOperator{\Vect}{\mathsf{Vect}}
\numberwithin{equation}{section}
\newtheorem{Theorem}{Theorem}[section]
\newtheorem{Lemma}[Theorem]{Lemma}
\theoremstyle{Theorem}
\newtheorem{Thm}{Theorem}[subsection]
\newtheorem{Lem}[Thm]{Lemma}
\newtheorem{Prop}[Thm]{Proposition}
\newtheorem{Cor}[Thm]{Corollary}
\newtheorem*{thm*}{Theorem A}
\newtheorem*{thm**}{Theorem B}
\theoremstyle{remark}
\newtheorem*{Remark}{Remark}
\newtheorem*{Remarks}{Remarks}
\newtheorem*{Definition}{Definition}
\newtheorem*{Example}{Example}
\newtheorem*{Examples}{Examples}
\numberwithin{equation}{section}
\begin{document}

\title[Kronecker Representations and Steiner Bundles]{Representations of Kronecker Quivers and Steiner Bundles on Grassmannians} 

\author[D. Bissinger and R. Farnsteiner]{Daniel Bissinger \lowercase{and} Rolf Farnsteiner}

\address[Daniel Bissinger]{Mathematisches Seminar, Christian-Albrechts-Universit\"at zu Kiel, Heinrich-Hecht-Platz 6, 24118 Kiel, Germany}
\email{bissinger@math.uni-kiel.de}
\address[Rolf Farnsteiner]{Mathematisches Seminar, Christian-Albrechts-Universit\"at zu Kiel, Heinrich-Hecht-Platz 6, 24118 Kiel, Germany}
\email{rolf@math.uni-kiel.de}

\date{\today}

\makeatletter
\makeatother


\begin{abstract} Let $\KK$ be an algebraically closed field. Connections between representations of the generalized Kronecker quivers $K_r$ and vector bundles on $\PP^{r-1}$ have been known for quite some time. This 
article is concerned with a particular aspect of this correspondence, involving more generally Steiner bundles on Grassmannians $\Gr_d(\KK^r)$ and certain full subcategories $\repp(K_r,d)$ of relative projective 
$K_r$-representations. Building on a categorical equivalence first explicitly established by Jardim and Prata \cite{JP15}, we employ representation-theoretic techniques provided by Auslander-Reiten theory and reflection 
functors to organize indecomposable Steiner bundles in a manner that facilitates the study of bundles enjoying certain properties such as uniformity and homogeneity. Conversely, computational results on Steiner bundles 
motivate investigations in $\repp(K_r,d)$, which elicit the conceptual sources of some recent work on the subject. 

From a purely representation-theoretic vantage point, our paper initiates the investigation of certain full subcategories of the, for $r\!\ge\!3$, wild category of $K_r$-representations. These may be characterized as being 
right Hom-orthogonal to certain algebraic families of elementary test modules. \end{abstract}

\maketitle

\section*{Introduction} \label{S:Intro}
Let $k$ be an algebraically closed field of characteristic $p\!>\!0$. In their groundbreaking article \cite{FPe11}, Friedlander and Pevtsova associated vector bundles to representations of infinitesimal group schemes by 
means of the so-called universal nilpotent operators. In subsequent work \cite{CFS11}, Carlson-Friedlander-Suslin computed some of these bundles for the second Frobenius kernel $\GG_{a(2)}$ of the additive group 
$\GG_a$. The bundles considered in \cite{CFS11} are kernels of the nilpotent operators associated to the so-called $W$-modules. These modules are graded with respect to the standard grading of the ``group algebra" 
$k\GG_{a(2)}$, and the grading induces a decomposition of the associated  vector bundles. 

The vector bundles studied in \cite{CFS11} are defined on the projective line $\PP^1$ and hence decompose into a direct sum of Serre shifts of the structure sheaf. For group schemes of type $\GG_{a(r)}$ there are natural 
generalizations of $W$-modules, whose bundles have base space $\PP^{r-1}$. When studying their graded pieces as mentioned above, one is naturally led to bundles on $\PP^{r-1}$ that are associated to certain 
representations of the $r$-Kronecker quiver $K_r$. The observation that their duals are examples of the so-called Steiner bundles motivated the investigations of this paper.

Since the introduction of the notion of Steiner bundles by Dolgachev-Kapranov \cite{DK93}, the original definition has been generalized to include vector bundles over projective varieties with short resolutions or
coresolutions given by certain exceptional pairs. We will confine our attention to Steiner bundles of Grassmannians, whose defining resolution is given by the universal bundle and trivial bundles, cf.\ \cite{AM15}. In the 
aforementioned context of group schemes, the study of the interplay between representations of elementary abelian $p$-groups and vector bundles on Grassmannians was initiated in \cite{CFP12}. Being based on 
morphisms between Serre shifts  of trivial vector bundles, the approach in loc.\ cit.\ differs from the point of view taken here, which can also be adapted to representations of elementary abelian groups.  

The by now numerous connections between vector bundles on $\PP^{r-1}$ and representations of the generalized Kronecker quiver $K_r$, were apparently first systematically exploited in Hulek's article 
\cite{Hu81}.\footnote{Hulek considers representations of dimension vectors $(n,n)$, whereas our approach is based on dimension vectors $(m,n)$ with $m\!<\!n$.} While in most cases, the authors are working over the 
complex numbers, the examples alluded to above guide us to work in greater generality: \textit{Throughout, $\KK$ is assumed to be an algebraically closed field of arbitrary characteristic. All vector spaces are assumed to be 
finite-dimensional over $\KK$.} 

Our paper is organized as follows. After recalling basic features of representations of Kronecker quivers, we introduce in Section \ref{S:Relproj} the full subcategories $\repp(K_r,d)$ of the category $\rep(K_r)$ of 
representations of the Kronecker quiver that turn out to correspond to Steiner bundles on the Grassmannian of $d$-planes of $\KK^r$. Their objects may be described as those having trivial rank varieties; they may also be 
thought of as relative projective modules of certain ring extensions. The category $\repp(K_r,1)$ coincides with the category $\EKP(K_r)$ of equal kernels representations, whose definition was inspired by \cite{CFS11}, and
which was studied in our context in a number of papers including \cite{Wo13,Bi20}. Our first main result, Theorem \ref{Fam5}, describes the objects $M \in \repp(K_r,d)$ via a family $(E(\fv))_{\fv \in \Gr_d(\KK^r)}$ of 
elementary test modules. This observation is the gateway for our applications of the shift functor $\sigma_{K_r} : \rep(K_r) \lra \rep(K_r)$ and Auslander-Reiten theory. We summarize some of our findings as follows: 

\bigskip

\begin{thm*} Suppose that $r\!\ge\!3$ and $d \in \{1,\ldots, r\!-\!1\}$. Then the following statements hold:
\begin{enumerate}
\item A representation $M \in \rep(K_r)$ belongs to $\repp(K_r,d)$ if and only if $\Hom_{K_r}(E(\fv),M)\!=\!(0)$ for every $\fv \in \Gr_d(\KK^r)$.
\item $\repp(K_r,d) \subseteq \EKP(K_r)$ is a torsion-free class containing all preprojective representations.
\item We have $\repp(K_r,r\!-\!1) \subseteq \repp(K_r,r\!-\!2) \subseteq \cdots \subseteq \repp(K_r,2) \subseteq \repp(K_r,1)\!=\!\EKP(K_r)$.
\item We have $\sigma_{K_r}^{-1}(\repp(K_r,d)) \subseteq \repp(K_r,r\!-\!1)$. \end{enumerate} \end{thm*}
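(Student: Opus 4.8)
The plan is to translate membership in $\repp(K_r,r-1)$ into a $\Hom$-vanishing statement by means of (1), to reduce to the case $d=1$ using the inclusion in (2), and then to settle that case by a direct computation with the explicit form of $\sigma_{K_r}^{-1}$; the feature of $K_r$ that makes the bound $r-1$ appear is that a hyperplane of $\KK^r$ has a one-dimensional annihilator.

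By (1) it is enough to prove that $\Hom_{K_r}(E(\fv),\sigma_{K_r}^{-1}(M))=(0)$ for every $M\in\repp(K_r,d)$ and every $\fv\in\Gr_{r-1}(\KK^r)$. Applying the functor $\sigma_{K_r}^{-1}$ to the inclusion $\repp(K_r,d)\subseteq\EKP(K_r)=\repp(K_r,1)$ of (2) reduces this to the situation $M\in\EKP(K_r)$, so fix such an $M$ and a hyperplane $\fv\in\Gr_{r-1}(\KK^r)$, and let $\fu\in\Gr_1(\KK^r)$ be the line naturally associated with $\fv$ (the one spanning $\fv^{\perp}$, after a fixed identification $(\KK^r)^{\ast}\cong\KK^r$). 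Recall that $M\in\EKP(K_r)$ means, by (1) together with the description of the test modules, that $M$ has trivial rank variety over $\Gr_1(\KK^r)$, i.e.\ that every linear combination $x_M\colon M_1\to M_2$ of the structure maps of $M$ coming from a nonzero $x\in\KK^r$ is injective. I would then invoke the explicit description of $\sigma_{K_r}^{-1}$ from the section introducing the shift functor: $\sigma_{K_r}^{-1}(M)$ is built from $M_2$ together with a quotient of $\KK^r\otimes M_2$ by the image $U_M$ of the "graph map" $M_1\to\KK^r\otimes M_2$ sending $m$ to the tuple of its images under the $r$ structure maps of $M$. Vanishing of $\Hom_{K_r}(E(\fv),\sigma_{K_r}^{-1}(M))$ is equivalent to injectivity of the structure map of $\sigma_{K_r}^{-1}(M)$ restricted along $\fv$, and unwinding the quotient identifies the kernel of the latter with the set of tensors in $M_2\otimes\fv$ whose image in $\KK^r\otimes M_2$ lies in $U_M$. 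The decisive point is that such a tensor has its $\KK^r$-support inside the hyperplane $\fv$, whereas the $\KK^r$-component of an element $\sum_i e_i\otimes(\text{image of }m)$ of $U_M$ is prescribed; matching the two forces any such tensor to come from an $n\in M_1$ with $x_M(n)=0$, where $x$ spans $\fv^{\perp}$, whence $n=0$ by injectivity of $x_M$ and the tensor is zero. So the restricted structure map is injective; as $\fv$ was arbitrary, $\sigma_{K_r}^{-1}(M)\in\repp(K_r,r-1)$ by (1), which is (4). (The same computation can be organized conceptually as an isomorphism $\Hom_{K_r}(E(\fv),\sigma_{K_r}^{-1}(M))\cong\Hom_{K_r}(E(\fu),M)$ coming from the action of $\sigma_{K_r}$ on the hyperplane test module $E(\fv)$, after which the vanishing is immediate from $M\in\repp(K_r,1)$.)

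The step I expect to be the main obstacle is precisely this matching of supports, equivalently the verification that $\sigma_{K_r}$ carries the hyperplane test module $E(\fv)$ to a line test module: it must be carried out carefully enough to exhibit that codimension one is exactly the condition under which a single test line $\fv^{\perp}$ survives the passage through $\sigma_{K_r}$, so that the $\EKP(K_r)$-property --- triviality of rank varieties over $\Gr_1(\KK^r)$, i.e.\ injectivity of all the maps $x_M$ --- is exactly the hypothesis that can be fed in; for a subspace of smaller codimension the annihilator has larger dimension and a correspondingly stronger property of $M$ would be required, consistent with $r-1$ being the largest index occurring in the chain of (3). A secondary, routine point is that $\sigma_{K_r}^{-1}$ degenerates on representations having $S_1$ or $S_2$ as a direct summand; this is harmless --- $S_2$ lies in every $\repp(K_r,e)$ and $\sigma_{K_r}^{-1}$ annihilates $S_1$ --- so such summands may be split off before running the argument.
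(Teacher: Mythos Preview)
Your argument is correct and, at its core, matches the paper's: both establish that $\sigma_{K_r}$ exchanges the hyperplane test modules $E(\fv)$ ($\fv\in\Gr_{r-1}(A_r)$) with the line test modules governing $\EKP(K_r)$, so that $\sigma_{K_r}^{-1}(\EKP(K_r))\subseteq\repp(K_r,r-1)$. The difference is in presentation. The paper argues categorically: using the isomorphism $\Hom_{K_r}(E,M)\cong\Hom_{K_r}(\sigma_{K_r}(E),\sigma_{K_r}(M))$ for regular $E$, together with the identification $\{[\sigma_{K_r}(E(\fv))]:\fv\in\Gr_{r-1}(A_r)\}=\{[\coker\fw]:\fw\in\Gr_1(A_r)\}=\{[E(\fu)]:\fu\in\Gr_1(A_r)\}$, it obtains the equivalence ``$M\in\repp(K_r,r-1)\Leftrightarrow\sigma_{K_r}(M)\in\EKP(K_r)$'' and then applies it to $M=\sigma_{K_r}^{-1}(N)$. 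This is precisely the conceptual reorganisation you sketch in your parenthetical. Your main exposition instead unwinds the same fact by hand via the cokernel description of $\sigma_{K_r}^{-1}$ and the identification $\ker\psi_{\sigma_{K_r}^{-1}(M),\fv}=(\fv\otimes M_2)\cap U_M$; the linear-algebra step ``$\varphi(m)\in\fv\otimes M_2\Rightarrow x_M(m)=0$ for $x$ spanning $\fv^\perp$'' is exactly the concrete content of the paper's test-module swap.

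Two minor points. First, the paper only spells out $\sigma_{K_r}$ explicitly (as a kernel construction); the cokernel description of $\sigma_{K_r}^{-1}$ you use is standard BGP theory but would need to be stated. Second, your closing remark about $S_2$-summands is unnecessary: $\sigma_{K_r}^{-1}$ does not degenerate on $S_2$ (one has $\sigma_{K_r}^{-1}(P_0)=P_1$), and your direct computation covers this case without modification. The genuine issue---absence of $S_1$-summands, needed for the categorical route so that $\sigma_{K_r}\sigma_{K_r}^{-1}(N)\cong N$---is automatic from $N\in\EKP(K_r)$, as the paper notes.
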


\bigskip
\noindent
Generalizing a consequence of Westwick's fundamental result \cite{We87}, we introduce, for a given $M\!=\!(M_1,M_2, (M(\gamma_i))_{1\le i \le r}) \in \rep(K_r)$ and $d \in \{1,\ldots, r\}$, the invariant $\Delta_M(d)\!:=\!
\dim_\KK M_2\!-\!d\dim_\KK M_1 \in \ZZ$ and show that $\Delta_M(d)\!\ge \min\{\dim_\KK M_1,d\}(r\!-\!d)$ whenever $M \in \repp(K_r,d)$. Moreover, if $(V_1,V_2)$ is a pair of vector spaces such that 
$\Delta_{(V_1,V_2)}(d)\!\ge\!d(r\!-\!d)$, then there always exists $M \in \repp(K_r,d)$ such that $(M_1,M_2)\!=\!(V_1,V_2)$.  

Given $d \in \{1,\ldots, r\!-\!1\},$ we introduce in Section \ref{S:Coh} the functor $\TilTheta_d : \rep(K_r) \lra \Coh(\Gr_d(\KK^r))$ which assigns to a representation of $K_r$ a coherent sheaf on $\Gr_d(\KK^r)$. The essential 
image of $\repp(K_r,d)$ under $\TilTheta_d$ coincides with the full subcategory $\StVect(\Gr_d(\KK^r))$ of Steiner bundles. By definition (see \cite{AM15}), each Steiner bundle $\cF$ affords a short resolution
\[ (0) \lra \cU_{(r,d)}^s \lra \cO_{\Gr_d(\KK^r)}^t \lra \cF \lra (0),\]
where $\cU_{(r,d)}$ denotes the universal subbundle on $\Gr_d(\KK^r)$. Since $\repp(K_r,d) \subseteq \repp(K_r,e)$ for $e\!\le\!d$, one can in fact associate Steiner bundles $\TilTheta_e(M) \in 
\StVect(\Gr_e(\KK^r))$ to each $M \in \repp(K_,d)$ ($e\!\le\!d$). The functor $\TilTheta_e : \repp(K_r,d) \lra \StVect(\Gr_e(\KK^r))$ is full and faithful and $\TilTheta_d : \repp(K_r,d) \lra \StVect(\Gr_d(\KK^r))$ is an 
equivalence, cf.\ \cite{JP15}. We record a few basic properties of $\TilTheta_d$ and, by way of illustration, interpret the results of Section \ref{S:MinType} in the context of Steiner bundles.  

The present understanding of the wild category $\rep(K_r)$ mainly rests on Kac's Theorem \cite{Ka82} concerning the distribution of the dimension vectors of indecomposable representations and the Auslander-Reiten 
quiver $\Gamma(K_r)$, which describes $\rep(K_r)$ modulo the square of its radical. In Section \ref{S:KAR} we introduce these tools and provide first applications concerning exceptional Steiner bundles, which have also 
been referred to as Fibonacci bundles, cf.\ \cite{Bra08}. Much of our subsequent work on non-exceptional Steiner bundles builds on Ringel's determination \cite{Ri78} of the regular Auslander-Reiten 
components of $\Gamma(K_r)$. This enables us to identify the parts of AR-components belonging to $\repp(K_r,d)$ and thus yields a way to efficiently organize the corresponding Steiner bundles of $\Gr_d(\KK^r)$, see 
Section \ref{S:Strat}.\footnote{In case $r\!=\!2$, it is well-known that $\Coh(\PP^1)$ is a $\Hom$-finite abelian $\KK$-category with Serre duality. It therefore affords almost split sequences, see \cite[\S 2]{GL87}.} 

In the next two sections, we apply the aforementioned concepts in the context of homogeneity, uniformity and stability, with the latter two only being discussed in the classical setting of projective spaces. Since the
inhomogeneity of an indecomposable Steiner bundle is inherited by all other members of its component, one can easily construct families of inhomogeneous bundles that are uniform of certain type. There are various 
notions of equivariance for modules and vector bundles, which are equivalent to homogeneity in some cases, such as modules with trivial space endomorphisms or over fields of characteristic $0$. We intend to address 
these connections along with those involving rational representations of parabolic subgroups of $\SL(A_r)$ in future work, where it turns out to be expedient to interpret vector bundles as locally trivial vector space fibrations.  

Given $M \in \EKP(K_r)\!\smallsetminus\!\{(0)\}$ one defines the slope $\mu(M)\!:=\!\frac{\dim_\KK M_1}{\Delta_M(1)}$ and observes that $\mu(\TilTheta_1(M))\!=\!\mu(M)$ for the standard slope on $\Vect(\PP^{r-1})$. This 
motivates the investigation of stability for modules and Steiner bundles and our main result in Section \ref{S:StabE} shows that modules corresponding to stable Steiner bundles are stable. Non-projective modules $M \in 
\repp(K_r,d)$ of minimal type are stable and our results of Section \ref{S:Dist} concerning the distribution of slopes relative to AR-components imply that elementary $\EKP$-modules also enjoy this property. This is applied in 
Section \ref{S:Filtrat}, where we provide a sufficient condition on a pair $(a,b) \in \NN^2$ involving the Tits form for $\EKP$-modules of dimension vector $(a,b)$ to afford a filtration by elementary $\EKP$-modules. Specializing to $r\!=\!3$, we use methods by Drezet \cite{Dr87} to give a sufficient condition for the stability of a bundle corresponding to a stable Kronecker representation. In particular, we verify: 

\bigskip

\begin{thm**} Let $r\!\ge\!3$. 
\begin{enumerate} 
\item Suppose that $M \in \EKP(K_r)\!\smallsetminus\!\{(0)\}$.
\begin{enumerate}
\item If $\TilTheta_1(M)$ is (semi)stable, so is $M$.
\item Let $r\!=\!3$ and $\Delta_M(2)\!\ge\!0$ ($\Delta_M(2)\!>\!0$). If $M$ is semistable (stable), so is $\TilTheta_1(M)$. 
\item Let $r\!=\!3$. If $M$ is elementary, then $\TilTheta_1(M)$ is stable. \end{enumerate}
\item Let $\cC \subseteq \Gamma(K_3)$ be an Auslander-Reiten component containing an elementary module. Then every Steiner bundle $\cF \in \TilTheta_1(\cC\cap\EKP(K_3))$ possesses a Harder-Narasimhan filtration 
by Steiner bundles, whose filtration factors are  stable. \end{enumerate} \end{thm**}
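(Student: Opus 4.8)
\textit{Proof proposal.} The plan is to deduce every part from the presentation
\[(0) \lra \cU_{(r,1)} \otimes M_1 \lra \cO_{\PP^{r-1}} \otimes M_2 \lra \TilTheta_1(M) \lra (0)\]
of the Steiner bundle attached to $M \in \EKP(K_r) = \repp(K_r,1)$, left exactness being exactly the $\EKP$-property, together with three facts recalled above: $\TilTheta_1$ is full, faithful and slope-preserving ($\mu(\TilTheta_1(M)) = \mu(M)$); $\TilTheta_1 \colon \EKP(K_r) \to \StVect(\PP^{r-1})$ is an equivalence; and $\EKP(K_r)$ is a torsion-free class, so every subrepresentation of an $\EKP$-module is again $\EKP$. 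A first consequence, recorded separately: if $0 \to A \to B \to C \to 0$ is exact in $\rep(K_r)$ with all three terms in $\EKP(K_r)$, then the snake lemma applied to the defining sequences of $\TilTheta_1(A)$ and $\TilTheta_1(B)$ shows that the kernel of $\TilTheta_1(A) \to \TilTheta_1(B)$ coincides with the kernel of $\cU_{(r,1)}\otimes C_1 \to \cO\otimes C_2$, which vanishes precisely because $C$ is $\EKP$; thus $\TilTheta_1$ sends such sequences to short exact sequences of Steiner bundles, i.e.\ it is exact for the evident exact structures.

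For (1)(a) I argue by contraposition. If $M$ is not semistable, choose a subrepresentation $N \subsetneq M$ with $\mu(N) > \mu(M)$; then $N \in \EKP(K_r)$, and $\mu(N) > \mu(M) \ge 0$ forces $\dim N_1 \ge 1$ and $\mu(\TilTheta_1(N)) = \mu(N) > 0$. Functoriality yields $\TilTheta_1(N) \to \TilTheta_1(M)$, whose kernel $\cK$ is, by the snake lemma, a subsheaf of $\cU_{(r,1)}\otimes(M_1/N_1)$, hence $\mu(\cK) \le -1$. From $0 \to \cK \to \TilTheta_1(N) \to \cF' \to 0$ and $\mu(\cK) < 0 < \mu(\TilTheta_1(N))$ we get $\cF' \ne 0$ and $\mu(\cF') \ge \mu(\TilTheta_1(N)) = \mu(N) > \mu(M) = \mu(\TilTheta_1(M))$; in particular $\cF' \subsetneq \TilTheta_1(M)$ is a proper destabilizing subsheaf. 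The stable case is the same with $\ge$ replacing $>$; the only extra point is that when $\mu(N) = \mu(M)$ the strict inclusion $N \subsetneq M$ forces $\dim N_1 < \dim M_1$, hence $\rk\TilTheta_1(N) < \rk\TilTheta_1(M)$, so $\cF'$ stays proper.

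Part (1)(b) is the technical core: one must reverse the construction and extract a destabilizing \emph{subrepresentation} of $M$ from a destabilizing subsheaf of $\cF := \TilTheta_1(M)$. I would take $\cG$ to be the maximal destabilizing subsheaf of $\cF$, so $\cG$ is saturated and semistable with $\mu(\cG) > \mu(\cF)$, and then invoke Drezet's analysis \cite{Dr87} of (semi)stable sheaves on $\PP^2$: the hypothesis $\Delta_M(2) = \dim M_2 - 2\dim M_1 \ge 0$ places the numerical invariants of $\cF$ in the range where the cohomology obstructing $\cG$ and its saturation $\cF/\cG$ from being Steiner bundles must vanish --- the universal bundle $\cU_{(3,2)}$ on $\Gr_2(\KK^3)$, which governs $\Delta_M(2)$, being what enters at this point. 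Once $\cG$ is a Steiner subbundle with Steiner quotient, the equivalence together with its exactness yields $0 \to N \to M \to M'' \to 0$ in $\EKP(K_3)$ with $\mu(N) = \mu(\cG) > \mu(M)$, contradicting semistability of $M$; with $\Delta_M(2) > 0$ strictness is preserved throughout and one obtains stability. I expect this cohomological step --- promoting an arbitrary saturated destabilizing subsheaf on $\PP^2$ to a Steiner subbundle with Steiner quotient --- to be the main obstacle of the theorem. For (1)(c): an elementary $\EKP$-module is stable (Section \ref{S:Dist}), so (1)(b) settles the case $\Delta_M(2) > 0$; in the remaining boundary cases the dimension vector is tightly constrained, and I argue directly from the stability of $M$, the exactness of $\TilTheta_1$ (which translates elementarity of $M$ into the statement that $\cF$ has no Steiner subbundle with Steiner quotient), and numerical facts about the small invariants of $\cF$ --- discriminant bounds and coprimality of rank and $c_1$ --- that would force any destabilizing or slope-equal subsheaf to be such a Steiner subbundle, a contradiction.

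Finally, for (2): since $\cC$ contains an elementary module, the Tits-form hypothesis of the filtration theorem of Section \ref{S:Filtrat} holds for the modules in $\cC \cap \EKP(K_3)$ --- here one uses Section \ref{S:Dist} together with Ringel's description \cite{Ri78} of the regular Auslander-Reiten components (and the standard structure of the preprojective component) to locate the elementary modules inside $\cC$ and to control $\mu$ along $\cC$ --- so every $M \in \cC \cap \EKP(K_3)$ admits a filtration by elementary $\EKP(K_3)$-modules which, by the monotone behaviour of $\mu$ along $\cC$, may be chosen so that the successive quotients $E_1, E_2, \dots$ have strictly decreasing slopes. Applying the exact, slope-preserving functor $\TilTheta_1$ turns this into a filtration of $\cF = \TilTheta_1(M)$ by Steiner subbundles whose successive quotients $\TilTheta_1(E_i)$ are stable by (1)(c) and of strictly decreasing slope; by uniqueness of the Harder-Narasimhan filtration, this is the HN filtration of $\cF$, and its factors are stable Steiner bundles. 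The obstacle here is the slope-ordering step: arranging the elementary filtration of $M$ with strictly decreasing slopes inside the fixed component $\cC$.
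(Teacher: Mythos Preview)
Your treatment of (1)(a) matches the paper's Proposition~\ref{StabE1} essentially verbatim: snake lemma, bound $\mu(\msKer\TilTheta(\iota))\le -1$, See-Saw. Fine.

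For (1)(b) you have identified the right source (Drezet) but mis-described the mechanism, and this is a genuine gap. The paper does \emph{not} show that the maximal destabilizing subsheaf $\cU$ of $\cF:=\TilTheta(M)(-1)$ is itself a Steiner bundle with Steiner quotient. Instead it runs the Beilinson spectral sequence for $\cU$: one sets $V_i:=\HH^1(\PP^2,\cU\otimes_{\PP^2}\Omega^i(i))$ for $i\in\{1,2\}$, checks via the vanishing $\Hom_{\PP^2}(\cO_{\PP^2}(\ell),\cF/\cU)=(0)$ for $\ell\ge 0$ that the induced maps $\tau_i:V_i\to\HH^1(\PP^2,\cF\otimes_{\PP^2}\Omega^i(i))$ are injective, and then shows that $(\tau_2,\tau_1)$ is a monomorphism of $K_3$-representations $V\hookrightarrow M$ with $\mu(V)=\mu(\coker\alpha_{\cU,-1}(1))$. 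A separate See-Saw argument against $\ker\delta_{\cU,-1}\subseteq\HH^2\otimes\cO(-2)$ gives $\mu(\coker\alpha_{\cU,-1})\ge\mu(\cU)$. The hypothesis $\Delta_M(2)\ge 0$ enters only to force $\mu(\cF)=\mu(M)-1\le 0$, which is what makes the vanishing $\Hom(\cO(\ell),\cF/\cU)=(0)$ hold; the bundle $\cU_{(3,2)}$ on $\Gr_2(\KK^3)$ plays no role here. In the stable case the paper does not use the HN filtration but instead chooses $\cU$ of maximal rank among subsheaves with $\mu(\cU)=\mu(\cF)$ and torsion-free quotient, which makes $\cF/\cU$ stable and again yields the needed vanishing (now using $\mu(\cF)<0$, i.e.\ $\Delta_M(2)>0$).

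For (1)(c), your fallback for the boundary case $\Delta_E(2)\le 0$ does not work as stated: the relevant dimension vector turns out to be $(2,4)$, so $\gcd(\rk,c_1)=\gcd(2,2)=2$ and coprimality fails. The paper instead invokes Ringel's determination of the $\sigma_3$-orbits of elementary dimension vectors to show that $(2,4)$ is the \emph{only} case with $\dim E_1<\dim E_2\le 2\dim E_1$ in $\EKP(K_3)$, and then uses that a simple rank-$2$ bundle is automatically stable.

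For (2) you are over-engineering. Since $\cC$ contains an elementary module and elementary modules are quasi-simple, \emph{all} quasi-simples of $\cC$ are elementary; hence for $M\in(M_{\cC,1}\!\rightarrow)$ the quasi-composition series of $M$ already is a filtration by elementary $\EKP$-modules, with factors $\tau_{K_3}^{-j}(M_{[1]})$ whose slopes are strictly decreasing by Proposition~\ref{Dist1}(3). Applying $\TilTheta$ and (1)(c) gives the HN filtration immediately. The machinery of Section~\ref{S:Filtrat} is not needed, and the ``slope-ordering obstacle'' you flag does not arise.
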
 

\bigskip
\noindent
The final section is concerned with an application pertaining to restrictions of Steiner bundles on $\PP^{r-1}(\CC)$ to linear hyperplanes $H \subseteq \PP^{r-1}(\CC)$. We provide conditions on the rank and the first Chern 
class ensuring that a generic Steiner bundle $\cF$ with these data is stable with none of its restrictions $\cF|_H$ being semistable.

\newpage

\begin{center}
\textbf{Acknowledgements}
\end{center}
The authors would like to thank Chiara Brambilla for patiently explaining her work to them, Adrian Langer for bringing reference \cite{Bis04} to their attention, and Enrique Arrondo and Simone Marchesi for correspondence. 

\bigskip

\section{Preliminaries} \label{S:Pre}
\subsection{Representations of $K_r$} \label{S:reps}
Let $r\!\ge\!2$ be a natural number. We denote by $K_r$ the (generalized) Kronecker quiver with two vertices $1$ and $2$ and $r$ arrows:
\[ \begin{tikzcd} 1 \arrow[r, draw=none, "\raisebox{+1.5ex}{\vdots}" description] \arrow[r, bend left, "\gamma_1"]  \arrow[r, bend right, swap,  "\gamma_r"] & 2. \end{tikzcd} \]
(For $r\!=\!1$ one obtains the quiver $K_1$ with underlying Dynkin diagram $A_1$.) 

It will be convenient to view representations of $K_r$ in the following equivalent ways: 
\begin{enumerate}
\item[(i)] A {\it representation} $M$ of the quiver $K_r$ is a triple $(M_1,M_2,(M(\gamma_i))_{1\le i \le r})$, where $M_1, M_2$ are 
finite-dimensional vector spaces and each $M(\gamma_i) : M_1 \lra M_2$ is a $\KK$-linear map. A homomorphism $f : M \lra N$ is a pair $(f_1,f_2)$ of $\KK$-linear maps $f_j : M_j \lra N_j \ (j \in \{1,2\})$ such that, 
for each $i \in \{1,\ldots,r\}$, the diagram
\[ \begin{tikzcd}   M_1 \arrow[r, "M(\gamma_i)"] \arrow[d,"f_1"] & M_2 \arrow[d,"f_2"] \\
                            N_1  \arrow[r,"N(\gamma_i)"] & N_2  
\end{tikzcd} \] 
commutes. 
\item[(ii)] Let $A_r\!:=\!\bigoplus_{i=1}^r\KK\gamma_i$ be the {\it arrow space} of $K_r$. We consider the triangular matrix algebra
\[ \KK K_r := \begin{pmatrix} \KK & 0 \\ A_r & \KK \end{pmatrix},\]
which is isomorphic to the path algebra of $K_r$. Accordingly, $\rep(K_r)$ is equivalent to the category $\modd \KK K_r$ of finite-dimensional $\KK K_r$-modules. In view of \cite[(III.2.2)]{ARS95}, the category $\modd 
\KK K_r$ is equivalent to the category $\cK_r$, whose objects are triples $M\!=\!(M_1,M_2, \psi_M)$, where $M_1,M_2$ are finite-dimensional $\KK$-vector spaces and $\psi_M : A_r\!\otimes_\KK\!M_1 \lra M_2$ is $\KK$-
linear. A morphism $M \lra N$ is a pair $(f_1,f_2)$ of $\KK$-linear maps $f_i: M_i \lra N_i$ such that the diagram
\[ \begin{tikzcd}   A_r\!\otimes_\KK\!M_1 \arrow[r, "\psi_M"] \arrow[d,"\id_{A_r}\otimes f_1"] & M_2 \arrow[d,"f_2"] \\
                           A_r\!\otimes_\KK\!N_1  \arrow[r,"\psi_N"] & N_2  
\end{tikzcd} \] 
commutes. 
\item[(iii)] One can equally well consider triples $(M_1,M_2,\psi_M)$, where $\psi_M : A_r \lra \Hom_\KK(M_1,M_2)$ is $\KK$-linear. The condition for a morphism $(f_1,f_2)$ then reads
\[ \psi_N(a)\circ f_1 = f_2\circ \psi_M(a) \ \ \ \ \forall \ a \in A_r.\]
\item[(iv)] For future reference we also record the interpretation as triples $(M_1,M_2,\varphi_M)$, where $\varphi_M : M_1 \lra \Hom_\KK(A_r,M_2)$ is given by $\varphi_M(m)(a)\!:=\!\psi_M(a \otimes m)$ for all 
$m \in M_1$ and $a \in A_r$. \end{enumerate} 

We denote by $\rep(K_r)$ the category of all (finite-dimensional) representations of $K_r$ and refer the reader to \cite{ARS95} and \cite{ASS06} for further details. 

Every subspace $\fv \subseteq A_r$ gives rise to a subalgebra 
\[ \KK.\fv := \begin{pmatrix} \KK & 0 \\ \fv & \KK \end{pmatrix}\] 
of $\KK K_r$. If $M$ is a $\KK K_r$-module, the restriction $M|_{\KK.\fv}$ corresponds to $(M_1,M_2,\psi_{M,\fv})$, where $\psi_{M,\fv}\!=\!\psi_M|_{\fv\otimes_\KK M}$. By choosing a basis of $\fv$ one obtains an 
equivalence $\modd \KK.\fv \cong \rep(K_{\dim_\KK\!\fv})$, which depends on the choice of the basis.  

Given a pair $(V_1,V_2)$ of $\KK$-vector spaces, we let $\udim(V_1,V_2)\!\:=\!(\dim_\KK V_1,\dim_\KK V_2)$ be its {\it dimension vector}. For $M \in \rep(K_r)$, we write $\udim M\!:=\!\udim(M_1,M_2)$. The category 
$\rep(K_r)$ has two simple objects (representations) $S_1,S_2$ of dimension vectors $(1,0)$ and $(0,1)$, respectively. The object $S_1$ is injective, while $S_2$ is projective. The projective cover $P_1(r)$ of
$S_1$ has dimension vector $\udim P_1(r)\!=\!(1,r)$. We will also write $P_0(r)\!:=\!S_2$.

Note that
\[ \omega_r : \KK K_r \lra \KK K_r \ \ ; \ \ \left(\begin{smallmatrix} \alpha & 0 \\ a & \beta \end{smallmatrix}\right) \mapsto \left(\begin{smallmatrix} \beta & 0 \\ a & \alpha \end{smallmatrix}\right)\]
is an anti-automorphism of order $2$. There results a duality $D_{K_r}: \modd \KK K_r \lra \modd \KK K_r  \   \  ; \  \   M  \mapsto M^\ast$ given by 
\[ (x.f)(m) := f(\omega_r(x).m) \ \ \ \ \ \ \forall \ f \in M^\ast, m \in M, x \in \KK K_r.\]
Let $M \in \rep(K_r)$. For $a \in A_r$, we denote by
\[ a_M : M_1 \lra M_2 \ \ ; \ \ m \mapsto a.m\]
the multiplication effected by $a$. Then the \textit{dual} $D_{K_r}(M)$ of $M \in \modd \KK K_r$ corresponds to $D_{K_r}(M)\!=\!(M_2^\ast, M_1^\ast, \psi_{D_{K_r}(M)})$, where
\[ \psi_{D_{K_r}(M)}(a\otimes f) = f\circ a_M \ \ \ \  \  \forall \ a \in A_r, f \in M_2^\ast.\] 
\noindent
We denote by
\[ \langle \, , \, \rangle_r : \ZZ^2\!\times\!\ZZ^2 \lra \ZZ \ \ ; \ \ (x,y) \mapsto x_1y_1\!+\!x_2y_2\!-\!rx_1y_2\]
the {\it Euler-Ringel bilinear form}. The corresponding quadratic form 
\[ q_r : \ZZ^2 \lra \ZZ \ \; \ \ x \mapsto \langle x,x\rangle_r\]
is referred to as the {\it Tits quadratic form} of $K_r$. 

The dimension vectors give rise to an isomorphism
\[ \udim : K_0(\rep(K_r)) \lra \ZZ^2,\]
which identifies the Grothendieck group $K_0(\rep(K_r))$ of $\rep(K_r)$ with $\ZZ^2$. Recall that
\[\langle \udim M,\udim N\rangle_r\!=\!\dim_\KK\Hom_{K_r}(M,N)\!-\!\dim_\KK\Ext^1_{K_r}(M,N)\]
for all $M,N \in \rep(K_r)$. Moreover, as the path algebra $\KK K_r$ is hereditary, we have $\Ext^i_{K_r}(-,-)\!=\!(0)$ for all $i\!\ge\!2$.

Let $M \in \modd \KK K_r$. Then $\Ext^1_{K_r}(M,\KK K_r)$ is a right $\KK K_r$-module, so that 
\[ \tau_{K_r}(M)\!:=\!\Ext^1_{K_r}(M,\KK K_r)^\ast\]
is a left $\KK K_r$-module. There results a (covariant) endofunctor
\[ \tau_{K_r} : \modd \KK K_r \lra \modd \KK K_r\]
which, by virtue of $\KK K_r$ being hereditary, is left exact. By the same token, $\tau_{K_r}$ is exact when restricted to the full subcategory of those modules which have no non-zero projective direct summands. 

The functor $\tau_{K_r}$ induces an endofunctor on $\rep(K_r)$, which we will also denote by $\tau_{K_r}$. General theory \cite[(VIII.2.2)]{ARS95} implies that 
\[ \udim \tau_{K_r}(M) = \Phi_r.\udim M\]
for every non-projective indecomposable $M \in \rep(K_r)$,\footnote{When multiplying matrices with vectors, we will always implicitly assume that the vector has the correct shape and thus dispense with transposing vectors.} 
where 
\[ \Phi_r\!=\! \begin{pmatrix} r^2\!-\!1 & -r \\ r & -1 \end{pmatrix} \in \GL_2(\ZZ)\] 
is the {\it Coxeter matrix} of the quiver $K_r$. An indecomposable representation $M \in \rep(K_r)$ is called {\it preprojective}, provided $\tau^n_{K_r}(M)\!=\!(0)$ for some $n \in \NN$. 

Similarly, we consider the functor
\[ \tau_{K_r}^{-1} : \modd \KK K_r \lra \modd \KK K_r \ \ ; \  \ M \mapsto \Ext^1_{K_r}(M^\ast, \KK K_r).\]
Then we have
\[ \udim \tau^{-1}_{K_r}(M) = \Phi^{-1}_r.\udim M\]
for every non-injective indecomposable representation $M \in \rep(K_r)$. An indecomposable representation $M \in \rep(K_r)$ is called {\it preinjective}, provided $\tau^{-n}_{K_r}(M)\!=\!(0)$ for some $n \in \NN$. 
Indecomposable representations that are neither preprojective nor preinjective are said to be {\it regular}.

The preprojective $\KK K_r$-modules are well understood. For $r\!\ge\!3$, we put $L_r\!:=\!\frac{r+\sqrt{r^2-4}}{2}$ and set
\[ a_i(r) := \left\{ \begin{array}{cc} \frac{(L_r)^i-(\frac{1}{L_r})^i}{\sqrt{r^2-4}} & r\!\ge\!3 \\ i & r\!=\!2 \end{array} \right.  \ \ \ \ \forall\ i \in \NN_0.\] 
For each $i \in \NN_0$ there is, up to isomorphism, exactly one preprojective module $P_i(r)$ such that 
\begin{enumerate}
\item[(a)] $P_0(r)\!=\!S_2$; $P_1(r)$ is the projective cover of $S_1$, and
\item[(b)] $\udim P_i(r)\!=\!(a_i(r),a_{i+1}(r))$ for all $i \in \NN_0$.\end{enumerate} 
For every $i \in \NN_0$ there is an almost split sequence
\[ (0) \lra P_i(r) \lra rP_{i+1}(r) \lra P_{i+2}(r) \lra (0).\]
In particular, $\tau^{-1}_{K_r}(P_i(r))\cong P_{i+2}(r)$. 

In case the module category to be considered is clear from the context, we will write $P_i$ instead of $P_i(r)$.

\bigskip

\subsection{Module varieties} \label{S:MV} In this section, we recall a number of general results from the representation theory of quivers, most of which can be found in \cite{Ka82}. 

Let $A$ be an $n$-dimensional associative $\KK$-algebra, $V$ be a $\KK$-vector space. We denote by $\modd(A;V)$ the affine variety of $A$-module structures on $V$. When 
convenient we will identify $M \in \modd(A;V)$ with its representation $\varrho_M : A \lra \End_\KK(V)$. If $\{a_1,\ldots, a_n\}$ is a basis of $A$, then
\[ \iota_V : \modd(A;V) \lra \End_\KK(V)^n \ \ ; \ \ M \mapsto (\varrho_M(a_1), \ldots, \varrho_M(a_n))\]
is a closed embedding. 

If $X$ is a variety and $x \in X$, then $\dim_xX$ denotes the {\it local dimension of $X$ at $x$}.

\bigskip

\begin{Lem} \label{MV1} Let $V$ and $W$ be $\KK$-vector spaces. The map
\[ d : \modd(A;V)\!\times\!\modd(A;W) \lra \NN_0 \ \ ; \ \ (M,N) \mapsto \dim_\KK\Hom_A(M,N)\]
is upper semicontinuous. \hfill $\square$ \end{Lem}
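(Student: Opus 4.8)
The plan is to realize $\dim_\KK \Hom_A(M,N)$ as the dimension of the kernel of an explicitly parametrized family of linear maps, and then invoke the standard upper semicontinuity of kernel dimensions for a morphism of vector bundles (equivalently, a matrix whose entries vary polynomially). Concretely, fix bases of $V$ and $W$. A homomorphism $f\colon M\to N$ is a $\KK$-linear map $f\colon V\to W$ satisfying $f\circ \varrho_M(a_i) = \varrho_N(a_i)\circ f$ for each basis element $a_i$ of $A$. Thus, for a fixed pair $(M,N)$, the space $\Hom_A(M,N)$ is the kernel of the linear map
\[ \Psi_{(M,N)}\colon \Hom_\KK(V,W) \lra \Hom_\KK(V,W)^n \ \ ; \ \ f \mapsto \bigl(f\circ\varrho_M(a_i) - \varrho_N(a_i)\circ f\bigr)_{1\le i\le n}. \]
First I would observe that, as $(M,N)$ ranges over $\modd(A;V)\times\modd(A;W)$, the assignment $(M,N)\mapsto \Psi_{(M,N)}$ is a morphism of varieties into $\Hom_\KK\bigl(\Hom_\KK(V,W),\Hom_\KK(V,W)^n\bigr)$: after choosing coordinates via the closed embeddings $\iota_V$, $\iota_W$, each entry of the matrix of $\Psi_{(M,N)}$ is a polynomial (in fact bilinear) function of the entries of the tuples $(\varrho_M(a_i))_i$ and $(\varrho_N(a_i))_i$.

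Next I would recall the general fact: if $X$ is a variety and $\Psi\colon X \to \Hom_\KK(U_1,U_2)$ is a morphism of varieties (with $U_1,U_2$ fixed finite-dimensional $\KK$-vector spaces), then $x\mapsto \dim_\KK\ker\Psi(x)$ is upper semicontinuous on $X$. This is standard and follows, for instance, from the fact that $\dim_\KK\ker\Psi(x) = \dim_\KK U_1 - \rk\Psi(x)$, together with the observation that $\{x : \rk\Psi(x) \le k\}$ is Zariski-closed, being cut out by the vanishing of all $(k{+}1)\times(k{+}1)$ minors of the matrix of $\Psi(x)$, whose entries are regular functions on $X$. Applying this with $X = \modd(A;V)\times\modd(A;W)$, $U_1 = \Hom_\KK(V,W)$, $U_2 = \Hom_\KK(V,W)^n$, and $\Psi = \Psi_{(M,N)}$ yields that $(M,N)\mapsto \dim_\KK\Hom_A(M,N) = \dim_\KK\ker\Psi_{(M,N)}$ is upper semicontinuous, which is exactly the claim.

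There is really no serious obstacle here; the only point requiring a line of care is the verification that $\Psi_{(M,N)}$ genuinely depends algebraically (indeed polynomially) on $(M,N)$, which is immediate once one fixes bases and writes everything in matrix form — composition of linear maps is bilinear in the matrix entries, so each coordinate of $\Psi_{(M,N)}(f)$ is a $\KK$-bilinear expression in the coordinates of $M$ (or $N$) and of $f$. The passage from "kernel dimension of a matrix depending regularly on parameters" to "upper semicontinuous" is the classical minor-vanishing argument and may simply be cited. One should only note that it suffices to impose the commutation relations on a basis $\{a_1,\dots,a_n\}$ of $A$ rather than on all of $A$, since both sides of $f\circ\varrho_M(a) = \varrho_N(a)\circ f$ are $\KK$-linear in $a$.
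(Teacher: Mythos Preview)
Your argument is correct and is precisely the standard proof of this well-known fact. The paper itself gives no proof at all (the statement is marked with a $\square$ and no argument), treating the result as folklore; your write-up supplies exactly the expected details.
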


\bigskip
\noindent
In analogy with the above, one can consider the variety $\rep(K_r;V_1,V_2)$ of representations of $K_r$ on a pair $(V_1,V_2)$ of vector spaces. There is an isomorphism
\[  \rep(K_r;V_1,V_2) \stackrel{\sim}{\lra} \modd(\KK K_r;V_1\!\oplus\!V_2)^{\rightarrow}\]  
between $\rep(K_r;V_1,V_2)$ and the closed subset  $\modd(\KK K_r;V_1\!\oplus\!V_2)^{\rightarrow}$ of the module variety of the path algebra $\KK K_r$, given by conditions $(\begin{smallmatrix} 1 & 0 \\ 0 & 0 
\end{smallmatrix})\dact (v_1\!\oplus\!v_2)\!=\!v_1$ and $(\begin{smallmatrix} 0 & 0 \\ 0 & 1\end{smallmatrix})\dact (v_1\!+\!v_2)\!=\!v_2.$

\bigskip
\noindent
Recall that $M \in \rep(K_r)$ is referred to as a {\it brick}, provided $\End_{K_r}(M)\cong \KK$. Preprojective and preinjective representations are known to be bricks.

\bigskip

\begin{Cor} \label{MV2} Suppose that $r\!\ge\!3$ and let $V_1,V_2$ be vector spaces with $V_1\!\oplus\!V_2\!\ne\!(0)$ and such that $q_r(\udim (V_1,V_2))\!\le\!1$. Then
\[ \cB_{(V_1,V_2)} := \{ M \in \rep(K_r;V_1,V_2) \ ; \ M \ \text{is a brick} \}\]
is a dense open subset of $\rep(K_r;V_1,V_2)$. \end{Cor}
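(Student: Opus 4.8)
\emph{Proof sketch.} Write $(a,b):=\udim(V_1,V_2)$. The starting observation is that $\rep(K_r;V_1,V_2)\cong\Hom_\KK(V_1,V_2)^{\oplus r}$ is an affine space, hence irreducible; so it suffices to show that $\cB_{(V_1,V_2)}$ is open and non-empty.

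For openness, note that $(a,b)\ne(0,0)$ forces every $M\in\rep(K_r;V_1,V_2)$ to be non-zero, so that $\dim_\KK\End_{K_r}(M)\ge 1$, with equality exactly when $M$ is a brick. Composing the diagonal morphism $M\mapsto(M,M)$ with the upper semicontinuous function of Lemma~\ref{MV1} (applied with $A=\KK K_r$ and $V=W=V_1\oplus V_2$), and restricting along the closed embedding $\rep(K_r;V_1,V_2)\hookrightarrow\modd(\KK K_r;V_1\oplus V_2)$, shows that $M\mapsto\dim_\KK\End_{K_r}(M)$ is upper semicontinuous on $\rep(K_r;V_1,V_2)$. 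Hence $\cB_{(V_1,V_2)}=\{M:\dim_\KK\End_{K_r}(M)\le 1\}$ is open.

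It remains to exhibit one brick of dimension vector $(a,b)$. Since $q_r$ is $\ZZ$-valued and $q_r(a,b)\le 1$, either $q_r(a,b)=1$ or $q_r(a,b)\le 0$; moreover $q_r(a,b)=0$ is impossible for $r\ge 3$, as it would force $a,b\ge 1$ and $r=\tfrac ab+\tfrac ba$, which is an integer only when $a=b$, giving $r=2$. If $q_r(a,b)=1$, then $(a,b)$ is a positive real root, so the unique indecomposable $M$ of this dimension vector is rigid and $\dim_\KK\End_{K_r}(M)=\langle(a,b),(a,b)\rangle_r=q_r(a,b)=1$; equivalently, $(a,b)$ or $(b,a)$ equals $\udim P_i(r)$ for some $i$, and preprojective representations as well as their duals are bricks. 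If $q_r(a,b)\le -1$, then $a,b\ge 1$, and a short Weyl-group reduction shows that $(a,b)$ is a positive imaginary root: whenever $2b>ra$ (resp.\ $2a>rb$) apply the reflection $s_2$ (resp.\ $s_1$); this fixes $q_r$, keeps both coordinates $\ge 1$ (using that $q_r(a,b)\le 0$ gives $ra>b$ and $rb>a$) and strictly lowers $a+b$, so after finitely many steps one lands in the fundamental domain. Being anisotropic ($q_r(a,b)<0$), this root has, by Kac's theorem \cite{Ka82}, the property that the general representation of dimension vector $(a,b)$ is a brick. In every case $\cB_{(V_1,V_2)}\ne\emptyset$, which completes the argument.

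The sole non-formal ingredient is the last one --- that the general representation of an anisotropic positive imaginary root of $K_r$ has trivial endomorphism ring, for which we lean on Kac's theory. This is also exactly where the hypothesis $r\ge 3$ is essential: for $r=2$ the isotropic roots $n\,(1,1)$ (with $q_2=0$) have decomposable general representation, whereas for $r\ge 3$ no isotropic roots occur and every positive root is a Schur root. Openness, irreducibility and the real-root case are routine.
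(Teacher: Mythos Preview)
Your argument is correct and follows the same strategy as the paper: non-emptiness via Kac's theory, openness via the upper semicontinuity of Lemma~\ref{MV1}, and density from the irreducibility of $\rep(K_r;V_1,V_2)\cong\Hom_\KK(V_1,V_2)^r$. The paper is terser---it cites \cite[Prop.~1]{Ka82} (together with \cite[(3.6)]{Ch13}) directly for the case $q_r\le 0$ and handles $q_r=1$ by observing that the unique indecomposable is preprojective or preinjective and hence a brick---so your Weyl-group reduction, while correct, is not strictly needed once one knows that for $K_r$ the positive imaginary roots are exactly the $(a,b)\in\NN^2$ with $q_r(a,b)\le 0$.
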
 

\begin{proof} Since $r\!\ge\!3$, the assumption $q_r(a,b)\!=\!0$ implies $a\!=\!0\!=\!b$. 

If $q_r(\udim(V_1,V_2))\!\le\!0$, we may apply \cite[Prop.1]{Ka82} (see also \cite[(3.6)]{Ch13} for an elementary proof valid in our context) in conjunction with the observation above to see that the set 
$\cB_{(V_1,V_2)}$ is not empty. Alternatively, general theory provides $M \in \rep(K_r;V_1,V_2)$ that is preprojective or preinjective (see Theorem \ref{Kac1} below). In particular, $M$ is a brick. 

Lemma \ref{MV1} implies that the map $\rep(K_r;V_1,V_2) \lra \NN_0 \ ; \ M \mapsto \dim_\KK\End_{K_r}(M)$ is upper semicontinuous, so that $\cB_{(V_1,V_2)}$ is open. Since the variety $\rep(K_r;V_1,V_2) 
\cong \Hom_\KK(V_1,V_2)^r$ is irreducible, $\cB_{(V_1,V_2)}$ thus lies dense in $\rep(K_r;V_1,V_2)$. \end{proof} 

\bigskip
\noindent
Let $V_1,V_2$ be non-zero $\KK$-vector spaces. The algebraic group $G\!:=\!\GL(V_2)\!\times\!\GL(V_1)$ acts on the irreducible variety $\Hom_\KK(V_1,V_2)^r$ via
\[ (g_2,g_1)\dact (f_i)_{1\le i\le r} := (g_2\circ f_i\circ g_1^{-1})_{1\le i\le r}.\]
Upper semicontinuity of fiber dimension implies that the map
\[ \Hom_\KK(V_1,V_2)^r \lra \NN_0 \ \ ; \ \ x \mapsto \dim G\dact x\]
is lower semicontinuous. Thus, if $s\!:=\!\max\{ \dim G\dact x \ ; \ x \in \Hom_\KK(V_1,V_2)^r\}$, then the {\it open sheet} 
\[ \cO_{(V_1,V_2)} := \{ x \in \Hom_\KK(V_1,V_2)^r \ ; \ \dim G\dact x\!=\!s\}\]
for the action of $G$ on $\Hom_\KK(V_1,V_2)^r$ is a non-empty open subset of $\Hom_\KK(V_1,V_2)^r\!=\!\rep(K_r;V_1,V_2)$.

\bigskip
\noindent
The following result, which was first proved by Kac \cite[Prop.1]{Ka82}, actually shows that for those dimension vectors for which $\rep(K_r;V_1,V_2)$ contains indecomposable modules, the modules belonging to the open 
sheet are just the bricks.  

\bigskip

\begin{Cor} \label{MV3} Let $r\!\ge\!3$. Suppose that $V_1,V_2$ are vector spaces with $V_1\!\oplus\!V_2\!\ne\!(0)$ and such that $q_r(\udim(V_1,V_2))\!\le\!1$. Then we have $\cB_{(V_1,V_2)}\!=\!\cO_{(V_1,V_2)}$.
\end{Cor}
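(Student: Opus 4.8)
The plan is to reduce the assertion to the orbit--stabilizer theorem, using the elementary fact that the dimension of the automorphism group of a representation coincides with the dimension of its endomorphism algebra, together with Corollary \ref{MV2}.

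First I would fix $x \in \Hom_\KK(V_1,V_2)^r = \rep(K_r;V_1,V_2)$ and write $M_x$ for the associated representation of $K_r$. By definition of the $G$-action, $\Stab_G(x)$ consists of those pairs $(g_2,g_1) \in \GL(V_2)\!\times\!\GL(V_1)$ with $g_2\circ M_x(\gamma_i) = M_x(\gamma_i)\circ g_1$ for $1\!\le\!i\!\le\!r$, that is, of the pairs defining an automorphism of $M_x$; hence $\Stab_G(x) = \Aut_{K_r}(M_x)$. Since a pair $(f_1,f_2) \in \End_{K_r}(M_x)$ is an automorphism precisely when $f_1$ and $f_2$ are bijective, $\Aut_{K_r}(M_x) = \End_{K_r}(M_x) \cap (\GL(V_1)\!\times\!\GL(V_2))$ is a non-empty Zariski-open subset of the affine space $\End_{K_r}(M_x)$, and therefore $\dim\Aut_{K_r}(M_x) = \dim_\KK\End_{K_r}(M_x)$. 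The orbit--stabilizer theorem now gives $\dim G\dact x = \dim G - \dim_\KK\End_{K_r}(M_x)$.

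Consequently $s = \dim G - \min\{\dim_\KK\End_{K_r}(M_x) \ ; \ x \in \rep(K_r;V_1,V_2)\}$, so $\cO_{(V_1,V_2)}$ is exactly the locus where $x \mapsto \dim_\KK\End_{K_r}(M_x)$ attains its minimum. As $V_1\!\oplus\!V_2\!\ne\!(0)$, every $M_x$ is non-zero, whence $\dim_\KK\End_{K_r}(M_x)\!\ge\!1$, with equality if and only if $M_x$ is a brick. By Corollary \ref{MV2} the set $\cB_{(V_1,V_2)}$ is non-empty, so this minimum equals $1$ and is attained precisely on $\cB_{(V_1,V_2)}$. Therefore $\cO_{(V_1,V_2)} = \cB_{(V_1,V_2)}$.

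I do not expect a genuine obstacle here: the substantive content --- that bricks exist in $\rep(K_r;V_1,V_2)$ for the dimension vectors under consideration, which ultimately rests on Kac's theorem --- is already supplied by Corollary \ref{MV2}, and what remains is the orbit-dimension bookkeeping above. The one point not to overlook is the hypothesis $V_1\!\oplus\!V_2\!\ne\!(0)$, which is exactly what forces $\dim_\KK\End_{K_r}(M_x)\!\ge\!1$ and so pins the minimal value of the endomorphism dimension at $1$.
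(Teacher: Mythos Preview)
Your argument is correct and follows essentially the same approach as the paper: both use the orbit--stabilizer theorem together with the identification $\dim\Stab_G(x)=\dim_\KK\End_{K_r}(M_x)$, and then invoke Corollary~\ref{MV2} to know that the minimum endomorphism dimension is $1$. The only cosmetic difference is that the paper first picks a common point $M_0\in\cB_{(V_1,V_2)}\cap\cO_{(V_1,V_2)}$ to compute $s=\dim G-1$ and then checks both inclusions, whereas you phrase the same computation as ``$\cO_{(V_1,V_2)}$ is the locus where the endomorphism dimension is minimal, and that minimum is $1$''; the content is identical.
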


\begin{proof} Given any $M \in \rep(K_r;V_1,V_2)$, the stabilizer $G_M$ of $M$ coincides with $G\cap\End_{K_r}(M)$, and thus is a dense open subset of $\End_{K_r}(M)$.

By Corollary \ref{MV2}, there is $M_0 \in \cB_{(V_1,V_2)}\cap\cO_{(V_1,V_2)}$, so that
\[ s = \dim G\dact M_0 = \dim G\!-\!\dim G_{M_0} = \dim G\!-\!1.\]
Let $M \in \cB_{(V_1,V_2)}$. Then $\dim G_M\!=\!1$, so that $\dim G\dact M\!=\!s$. This implies $\cB_{(V_1,V_2)}\! \subseteq \!\cO_{(V_1,V_2)}$.

On the other hand, if $M \in \cO_{(V_1,V_2)}$, then $\dim_\KK\End_{K_r}(M)\!=\!\dim G_M\!=\!\dim G\!-\!s\!=\!1$, whence $M \in \cB_{(V_1,V_2)}$. \end{proof}

\bigskip
                                 
\subsection{Restrictions} \label{S:Rest} For $d \in \{1,\ldots, r\}$, we let $\Gr_d(A_r)$ be the Grassmannian of $d$-planes of $A_r$. This is an irreducible projective variety of dimension $\dim \Gr_d(A_r)\!=\!d(r\!-\!d)$.  
For $M \in \modd \KK K_r$ and $\fv \in \Gr_d(A_r)$, we denote by $M|_\fv$ the restriction of $M$ to $\KK.\fv \subseteq \KK K_r$.

Given $\KK$-vector spaces $V_1,V_2$, we consider the set
\[ \Iso(K_d;V_1,V_2)\!:=\!\rep(K_d;V_1,V_2)/(\GL(V_2)\!\times\!\GL(V_1))\] 
of isomorphism classes together with the projection map $\pi : \rep(K_d;V_1,V_2) \lra \Iso(K_d;V_1,V_2)$. We equip $\Iso(K_d;V_1,V_2)$ with the final topology relative to $\pi$. This renders $\pi$ a continuous map
such that $U \subseteq \Iso(K_d;V_1,V_2)$ is open if and only if $\pi^{-1}(U) \subseteq \rep(K_r;V_1,V_2)$ is open.

Let
\[ \Inj_\KK(A_d,A_r) := \{ \alpha \in \Hom_\KK(A_d,A_r) \ ; \ \alpha \ \text{is injective}\}.\]
This is a conical, open subset of the affine space $\Hom_\KK(A_d,A_r)$. In particular, $\Inj_\KK(A_d,A_r)$ is a quasi-affine variety. 

The algebraic groups $\GL(A_d)$ and $\GL(A_r)$ act on the affine variety $\Hom_\KK(A_d,A_r)$ via
\[ g.\alpha := \alpha\circ g^{-1} \ \ \ \forall \ g \in \GL(A_d) \ \ \text{and} \ \ h.\alpha := h\circ \alpha \ \ \ \forall \ h \in \GL(A_r),\]
respectively. The subvariety $\Inj_\KK(A_d,A_r) \subseteq \Hom_k(A_d,A_r)$ is stable with regard to these commuting actions.

Every $\alpha \in \Inj_\KK(A_d,A_r)$ defines an injective homomorphism
\[ \alpha : \KK K_d \lra \KK K_r \   \   ;  \   \   \left(\begin{smallmatrix} x & 0 \\ a & y \end{smallmatrix} \right) \lra \left(\begin{smallmatrix} x & 0 \\ \alpha(a) & y \end{smallmatrix}\right) \]
of $\KK$-algebras. In particular, $\GL(A_r)$ can be considered a group of automorphisms of $\KK K_r$.  

Let $M \in \modd \KK K_r$. Given $\alpha \in \Inj_\KK(A_d,A_r)$, we let $\alpha^\ast(M) \in \modd \KK K_d$ be the pull-back of $M$ along $\alpha$. Thus, we have
\[ u \dact m := \alpha(u)m \ \ \ \ \ \forall \ u \in \KK K_d, m \in M.\]
In particular, $\GL(A_r) \subseteq \Aut_\KK(\KK K_r)$ acts on $\modd \KK K_r$ via pull-back: If $M \in \modd \KK K_r$, and $g \in \GL(A_r)$, then
\[M^{(g)} := (g^{-1})^\ast(M).\]
We say that $M$ is {\it homogeneous}, provided $M^{(g)} \cong M$ for every $g \in \GL(A_r)$.

\bigskip

\begin{Example} Let $P_i\!=\!(k[X,Y]_{i-1},k[X,Y]_i, X\cdot, Y\cdot)$ be the preprojective $\KK K_2$-module of dimension vector $(i,i\!+\!1)$. Then $\GL(A_2)\!=\!\GL(\KK X\!\oplus\!\KK Y)$
acts of $\KK [X,Y]$ via algebra homomorphisms of degree zero in such a way that
\[ g(\gamma f) = g(\gamma) g(f) \ \ \ \ \ \ \ \ \ \forall \ g \in \GL(A_2), \gamma \in A_2, f \in \KK[X,Y]_{i-1}.\]
Consequently, $g$ induces a $\KK$-linear isomorphism $P_i^{(g)} \stackrel{\sim}{\lra} P_i$ such that
\[ g(\gamma \dact f) = g(g^{-1}(\gamma)f) = \gamma g(f).\]
As a result, the $\KK K_2$-module $P_i$ is homogeneous. (In fact, the $P_i$ are examples of equivariant $K_r$-representations.)\end{Example}

\bigskip
\noindent
We let $\GL(A_r)$ act on $\Gr_d(A_r)$ via $h\dact \fv\!:=\!h(\fv)$ for all $h \in \GL(A_r)$ and $\fv \in \Gr_d(A_r)$. This action is transitive.   

Following \cite{Wo13}, we denote by $\EKP(K_r)$ the full subcategory of $\rep(K_r)$ such that $a_M$ is injective for every $a \in A_r\!\smallsetminus\!\{0\}$.\footnote{The notation derives from the
correspondence with the category of equal kernels modules studied in \cite{CFS11}.} Note that $\EKP(K_r)$ is closed under taking subobjects.

The classification of indecomposable objects in $\rep(K_2)$ implies that every $N \in \EKP(K_2)$ decomposes into a direct sum
\[ N \cong \bigoplus_{i\ge 0} n_iP_i.\]

\bigskip 

\begin{Lem} \label{Rest1} Let $M \in \EKP(K_r)$. Then we have
\[ \alpha^\ast(M) \cong \beta^\ast(M)\]
for all $\alpha,\beta \in \Inj_\KK(A_2,A_r)$ such that $\im\alpha\!=\!\im\beta$. \end{Lem}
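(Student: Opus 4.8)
The plan is to reduce the assertion to the homogeneity of the preprojective $\KK K_2$-modules $P_i$ established in the Example above, combined with the direct sum decomposition of objects of $\EKP(K_2)$ recalled just before the statement.

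First I would exploit the hypothesis $\im\alpha\!=\!\im\beta$. Since $\alpha,\beta \in \Inj_\KK(A_2,A_r)$ are injective with common $2$-dimensional image $\fv \subseteq A_r$, there is a unique $g \in \GL(A_2)$ with $\alpha\circ g\!=\!\beta$ (namely the composite of $\beta : A_2 \to \fv$ with the inverse of the isomorphism $\alpha : A_2 \to \fv$). Passing to the induced algebra maps $\KK K_2 \lra \KK K_r$ and the automorphism of $\KK K_2$ attached to $g$, a check on the level of module actions — all of which take place on the underlying pair $(M_1,M_2)$ — gives $\beta^\ast(M)\!=\!g^\ast(\alpha^\ast(M))$, where $g^\ast$ denotes pull-back along the automorphism of $\KK K_2$ determined by $g$. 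Hence it suffices to show $g^\ast(N)\cong N$ for $N\!:=\!\alpha^\ast(M)$.

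Next I would observe that $N$ lies in $\EKP(K_2)$: for $a \in A_2\!\smallsetminus\!\{0\}$ we have $a_N\!=\!\alpha(a)_M$ with $\alpha(a)\!\neq\!0$ since $\alpha$ is injective, so $\alpha(a)_M$ is injective because $M \in \EKP(K_r)$. By the classification recalled before the statement, $N \cong \bigoplus_{i\ge 0} n_iP_i$. As pull-back along an automorphism is an additive endofunctor of $\modd \KK K_2$ (being the identity on underlying vector spaces), $g^\ast(N) \cong \bigoplus_{i\ge 0} n_i\,g^\ast(P_i)$. Finally, the Example above shows that each $P_i$ is homogeneous, i.e.\ $P_i^{(h)} \cong P_i$ for all $h \in \GL(A_2)$; since $h\mapsto h^{-1}$ is a bijection of $\GL(A_2)$, this is equivalent to $h^\ast(P_i)\cong P_i$ for all $h$. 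In particular $g^\ast(P_i)\cong P_i$, so $g^\ast(N)\cong N$, whence $\beta^\ast(M)\cong\alpha^\ast(M)$.

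I do not anticipate a genuine obstacle here; the argument is a short reduction. The only points demanding care are the bookkeeping identity $\beta^\ast(M)\!=\!g^\ast(\alpha^\ast(M))$ — in particular keeping track of whether $g$ or $g^{-1}$ appears, in view of the convention $M^{(g)}\!:=\!(g^{-1})^\ast(M)$ — and the verification that $N \in \EKP(K_2)$, so that the decomposition of $\EKP(K_2)$-objects applies and homogeneity of the $P_i$ can be invoked termwise.
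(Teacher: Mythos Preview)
Your proposal is correct and follows essentially the same route as the paper: both verify $\alpha^\ast(M)\in\EKP(K_2)$, produce $g\in\GL(A_2)$ relating $\alpha$ and $\beta$, identify $\beta^\ast(M)$ with a $\GL(A_2)$-twist of $\alpha^\ast(M)$, and conclude via homogeneity of the $P_i$. The only cosmetic difference is that the paper packages the last step as ``$\alpha^\ast(M)$ is homogeneous'' rather than decomposing into summands first.
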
 

\begin{proof} Let $\alpha \in \Inj_\KK(A_2,A_r)$. Since $\alpha$ is injective, it follows that $\alpha^\ast(M) \in \EKP(K_2)$. By virtue of the foregoing remark and the example above, we conclude that  $\alpha^\ast(M)$ 
is homogeneous.

If $\beta \in \Inj_\KK(A_2,A_r)$ is such that $\im \alpha\!=\!\im \beta$, then $g\!:=\!\beta^{-1}\circ \alpha \in \GL(A_2)$, so that for $m \in (\alpha^\ast(M))^{(g)}$ and $\gamma \in A_2$, we obtain
\[ \gamma\dact m = g^{-1}(\gamma).m = \alpha(g^{-1}(\gamma))m = \beta(\gamma)m.\]
By the example above, this shows that $\beta^\ast(M)\cong (\alpha^\ast(M))^{(g)} \cong \alpha^\ast(M)$. \end{proof} 

\bigskip
\noindent
Let  $M \in \EKP(K_r)$, $\fv \in \Gr_2(A_r)$. In view of the foregoing result, we may define the isomorphism class $[M|_\fv]$ via
\[ [M|_\fv] := [\alpha^\ast(M)] \ \ \ \ \ \alpha \in \Inj_\KK(A_2,A_r)\ ; \  \im \alpha\!=\!\fv.\]
Less formally, we will write
\[ M|_\fv \cong \bigoplus_{i\ge 0} n_i(M,\fv) P_i(2)\]
to indicate that the above isomorphism holds for $\alpha^\ast(M)$, where $\alpha \in \Inj_\KK(A_2,A_r)$ is such that $\im\alpha\!=\!\fv$.   

Given $X \in \rep(K_2;V_1,V_2)$, we write
\[ \dim_\KK\Hom_{K_2}(X,M|_\fv) := \dim_\KK\Hom_{K_2}(X,\alpha^\ast(M)).\]
for some $\alpha \in \Inj_\KK(A_2,A_r)$ with $\im\alpha\!=\!\fv$.  

\bigskip 

\begin{Prop} \label{Rest2} Let $r\!\ge\!2$, $M \in \EKP(K_r)\cap \rep(K_r;V_1,V_2)$. Then the following statements hold:
\begin{enumerate} 
\item The map
\[ \res_M : \Gr_2(A_r) \lra \Iso(K_2;V_1,V_2) \ \ ; \ \  \fv \mapsto [M|_\fv]\]
is continuous. 
\item If $X \in \rep(K_2;U_1,U_2)$, then the map
\[ d_{X,M} : \Gr_2(A_r) \lra \NN_0 \ \ ; \ \ \fv \mapsto \dim_\KK\Hom_{K_2}(X,M|_\fv)\]
is upper semicontinuous. \end{enumerate} \end{Prop}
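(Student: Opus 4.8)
\emph{Proof strategy.} The plan is to realize both maps as factoring through a single morphism of varieties and then to run a quotient-map argument. First I would introduce the \emph{restriction morphism}
\[ \Phi_M : \Inj_\KK(A_2,A_r) \lra \rep(K_2;V_1,V_2) \ \ ; \ \ \alpha \mapsto \alpha^\ast(M). \]
This is a genuine morphism of varieties: pulling back along $\alpha$ leaves the underlying pair $(V_1,V_2)$ unchanged, and in fixed bases $\gamma_1,\dots,\gamma_r$ of $A_r$ and $\delta_1,\delta_2$ of $A_2$, writing $\alpha(\delta_j)\!=\!\sum_i a_{ij}\gamma_i$, one has $\alpha^\ast(M)(\delta_j)\!=\!\sum_i a_{ij}M(\gamma_i)$, so the structure maps of $\Phi_M(\alpha)$ depend linearly (in particular, polynomially) on the entries $a_{ij}$ of $\alpha$. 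Next I would recall the canonical projection $p : \Inj_\KK(A_2,A_r) \lra \Gr_2(A_r)$, $\alpha \mapsto \im\alpha$, whose fibre over $\fv$ is the $\GL(A_2)$-torsor $\{\alpha : A_2 \xrightarrow{\sim} \fv\}$, and note that $p$ is a surjective open map: over each standard affine chart $U$ of $\Gr_2(A_r)$ it is isomorphic to the projection $U\!\times\!\GL(A_2) \lra U$, and in particular, on a cover of $\Gr_2(A_r)$ by such charts, $p$ admits sections $s_U : U \lra \Inj_\KK(A_2,A_r)$ that are morphisms of varieties. Consequently $p$ is a topological quotient map.

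The content of Lemma \ref{Rest1} is exactly that $[\alpha^\ast(M)]$ depends only on $\im\alpha$, i.e. that $\res_M\circ p = \pi\circ\Phi_M$ as maps into $\Iso(K_2;V_1,V_2)$, where $\pi$ is the quotient map defining the final topology. For (1), given $U$ open in $\Iso(K_2;V_1,V_2)$, the set $\pi^{-1}(U)$ is open by definition, hence $\Phi_M^{-1}(\pi^{-1}(U))$ is open since $\Phi_M$ is a morphism; as $p^{-1}(\res_M^{-1}(U)) = \Phi_M^{-1}(\pi^{-1}(U))$ and $p$ is a quotient map, $\res_M^{-1}(U)$ is open, so $\res_M$ is continuous. (Equivalently, $\res_M|_U = (\pi\circ\Phi_M)\circ s_U$ is continuous on each chart $U$, and continuity is local.) For (2), after identifying $\rep(K_2;V_1,V_2)$ with the closed subvariety $\modd(\KK K_2;V_1\!\oplus\!V_2)^{\rightarrow}$ of a module variety and $\Hom_{K_2}$ with the corresponding module homomorphisms, Lemma \ref{MV1} gives that $N \mapsto \dim_\KK\Hom_{K_2}(X,N)$ is upper semicontinuous on $\rep(K_2;V_1,V_2)$ (it is the restriction of the jointly upper semicontinuous function to the slice $\{X\}\!\times\!\rep(K_2;V_1,V_2)$). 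Precomposing with the morphism $\Phi_M$ preserves upper semicontinuity, so $d_{X,M}\circ p$ is upper semicontinuous, and since $p$ is a quotient map (or, again, using the sections $s_U$ and the locality of upper semicontinuity) so is $d_{X,M}$. In both parts, well-posedness of $[M|_\fv]$ and of $\dim_\KK\Hom_{K_2}(X,M|_\fv)$ is guaranteed by Lemma \ref{Rest1}.

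The step I expect to need the most care is the claim that $p$ is a quotient map — concretely, that the Stiefel-type variety $\Inj_\KK(A_2,A_r)$ is Zariski-locally over $\Gr_2(A_r)$ a trivial $\GL(A_2)$-bundle; this is classical but should be spelled out via the explicit charts and sections above. Granting it, the remainder is a formal assembly of Lemmas \ref{MV1} and \ref{Rest1} together with the evident polynomiality of $\Phi_M$. In fact one can bypass any abstract quotient statement entirely by arguing throughout on the cover by the charts $U$ using the sections $s_U$, so the genuine obstacle here is fairly minor.
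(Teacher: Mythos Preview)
Your proposal is correct and follows essentially the same route as the paper: both introduce the morphism $\alpha\mapsto\alpha^\ast(M)$ from $\Inj_\KK(A_2,A_r)$ to $\rep(K_2;V_1,V_2)$, invoke the principal $\GL(A_2)$-bundle structure of $\msim:\Inj_\KK(A_2,A_r)\to\Gr_2(A_r)$ (the paper cites \cite[(1.1)]{CFP15} for this, whereas you spell out local triviality via standard charts), and then descend using Lemma~\ref{Rest1} and Lemma~\ref{MV1}. The only organizational difference is that for (2) the paper factors $d_{X,M}=\bar d_X\circ\res_M$ through $\Iso(K_2;V_1,V_2)$ and applies part (1), while you descend $d_{X,M}\circ p$ directly along the quotient map $p$; both are equally valid.
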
 

\begin{proof} (1) It follows from  \cite[(1.1)]{CFP15} that the map
\[ \msim :  \Inj_\KK(A_2,A_r) \lra \Gr_2(A_r) \ \ ; \ \ \alpha \mapsto \im\alpha\]
is a principal $\GL(A_2)$-bundle (a $\GL(A_2)$-torsor). In particular,
\[ \overline{\msim} : \Inj_\KK(A_2,A_r)/\GL(A_2) \lra \Gr_2(A_r) \ \ ; \ \ \bar{\alpha} \mapsto \im\alpha\]
is a homeomorphism. 

Consider the map
\[ \widetilde{\res}_M : \Inj_\KK(A_2,A_r) \lra  \rep(K_2;V_1,V_2) \ \ ; \ \ \alpha \mapsto \alpha^\ast(M).\]
We identify $\rep(K_2;V_1,V_2)$ with $\Hom_\KK(V_1,V_2)^2$ and let $\{\gamma_1,\gamma_2\}$ be a basis of $A_2$. Then
\[ \widehat{\res}_M : \Hom_\KK(A_2,A_r) \lra \Hom_\KK(V_1,V_2)^2 \ \ ; \ \ \alpha \mapsto (M(\alpha(\gamma_1)),M(\alpha(\gamma_2)))\]
is a linear map and hence in particular a morphism. (Here we have written $M(\alpha(\gamma_i))\!:=\!\psi_M(\alpha(\gamma_i)\!\otimes -)$.) Consequently, 
$\widetilde{\res}_M\!=\!\widehat{\res}_M|_{\Inj_\KK(A_2,A_r)}$ is also a morphism. 

In view of Lemma \ref{Rest1}, $\widetilde{\res}_M$ gives rise to  a continuous map
\[ \iso_M : \Inj_\KK(A_2,A_r)/\GL(A_2) \lra \Iso(K_2;V_1,V_2) \ \ ; \ \ \bar{\alpha} \mapsto [\alpha^\ast(M)].\]
Since
\[ \res_M \circ \overline{\msim} = \iso_M,\]
we obtain (1).

(2) Thanks to Lemma \ref{MV1}, the map
\[ d_X : \rep(K_2;V_1,V_2) \lra \NN_0 \ \ ; \ \ Y \mapsto \dim_\KK\Hom_{K_2}(X,Y)\]
is upper semicontinuous. Since images of $(\GL(V_2)\!\times\!\GL(V_1))$-stable open sets under $\pi : \rep(K_2;V_2,V_2)$ $ \lra \Iso(K_2;V_1,V_2)$ are open, the map
\[ \bar{d}_X : \Iso(K_2;V_1,V_2) \lra \NN_0 \ \ ; \ \ [Y] \mapsto \dim_\KK\Hom_{K_2}(X,Y)\]
enjoys the same property. In view of (1), $d_{X,M}\!=\! \bar{d}_X\circ \res_M$ is upper semicontinuous as well. \end{proof}

\bigskip

\begin{Cor} \label{Rest3} Let $M \in \EKP(K_r)$. Then there exists a sequence $(n_i(M))_{i \in \NN_0} \in \NN_0^{\NN_0}$ such that
\[ O_M := \{ \fv \in \Gr_2(A_r) \ ; \  n_i(M,\fv)\!=\!n_i(M) \ \ \ \forall \ i \in \NN_0\}\]
is a dense open subset of $\Gr_2(A_r)$. \end{Cor}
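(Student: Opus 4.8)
The plan is to reduce the statement to the known decomposition $M|_\fv \cong \bigoplus_{i\ge 0} n_i(M,\fv)P_i(2)$ together with the semicontinuity results of Proposition \ref{Rest2}. First I would fix $M \in \EKP(K_r) \cap \rep(K_r;V_1,V_2)$ and observe that for each $i \in \NN_0$ the multiplicity $n_i(M,\fv)$ is recovered from $\Hom$-dimensions against the finitely many preprojective test modules $P_0(2),\ldots,P_N(2)$, where $N$ is bounded in terms of $\dim_\KK V_1$ and $\dim_\KK V_2$: indeed $\udim P_i(2) = (i,i+1)$, so only $P_i(2)$ with $i \le \dim_\KK V_1$ can occur as a summand of $M|_\fv$, and the $(n_i)_{i\le N}$ are determined by the integers $\dim_\KK \Hom_{K_2}(P_j(2), M|_\fv)$ for $0\le j\le N$ via an invertible (triangular) linear transformation, since $\dim_\KK\Hom_{K_2}(P_j(2),P_i(2))$ is a known quantity depending only on $j$ and $i$.

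Next I would apply Proposition \ref{Rest2}(2) with $X = P_j(2)$ for each $j \in \{0,\ldots,N\}$: each map $d_{P_j(2),M} : \Gr_2(A_r) \lra \NN_0$, $\fv \mapsto \dim_\KK\Hom_{K_2}(P_j(2),M|_\fv)$ is upper semicontinuous, hence attains its minimum $m_j$ on a dense open subset $U_j \subseteq \Gr_2(A_r)$ (using that $\Gr_2(A_r)$ is irreducible). Setting $O_M := \bigcap_{j=0}^N U_j$, which is again dense open by irreducibility, on $O_M$ all the $\Hom$-dimensions $d_{P_j(2),M}(\fv)$ are simultaneously constant, equal to $m_j$. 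Inverting the triangular system then shows that each $n_i(M,\fv)$ is constant on $O_M$; calling this common value $n_i(M)$, and setting $n_i(M) := 0$ for $i > N$ (which is forced since $P_i(2)$ cannot be a summand of $M|_\fv$ for dimension reasons), we obtain the desired sequence $(n_i(M))_{i\in\NN_0}$ with $O_M = \{\fv \in \Gr_2(A_r) \mid n_i(M,\fv) = n_i(M)\ \forall i\}$.

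The main point requiring care — rather than a genuine obstacle — is the passage from the collection of constant $\Hom$-dimensions to the constancy of the multiplicities themselves, i.e.\ verifying that the map $(n_i)_i \mapsto (\dim_\KK\Hom_{K_2}(P_j(2),\bigoplus_i n_i P_i(2)))_j$ is invertible on the relevant finite range. This follows from the Krull--Schmidt theorem in $\rep(K_2)$ together with the fact that the matrix $\big(\dim_\KK\Hom_{K_2}(P_j(2),P_i(2))\big)_{0\le i,j\le N}$ is unitriangular: $\Hom_{K_2}(P_j(2),P_i(2)) = (0)$ for $j > i$ since $P_j(2),P_i(2)$ are preprojective with $P_j(2)$ later in the preprojective order, and $\End_{K_2}(P_i(2)) \cong \KK$ as $P_i(2)$ is a brick. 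One subtlety is that only finitely many indecomposables can appear in $M|_\fv$; this is guaranteed uniformly in $\fv$ because $\udim(M|_\fv) = \udim M$ is independent of $\fv$, bounding the number of summands and their dimension vectors. With these observations in place the argument is complete.
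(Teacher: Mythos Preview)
Your proposal is correct and follows essentially the same route as the paper's proof: both use Proposition~\ref{Rest2}(2) to get that each $d_{P_j(2),M}$ is constant on a dense open set, intersect finitely many such sets (only finitely many $P_j(2)$ are relevant since $\udim M|_\fv = \udim M$), and then pass from constant $\Hom$-dimensions to constant multiplicities via an invertible triangular system. The paper makes this last step fully explicit via the formula $n_i(M,\fv) = d_i(\fv) - 2d_{i+1}(\fv) + d_{i+2}(\fv)$ (coming from $\dim_\KK\Hom_{K_2}(P_i(2),P_j(2)) = \max\{j-i+1,0\}$) and writes down the corresponding triangular matrix to verify both inclusions $\bigcap O_i = O_M$, whereas you appeal abstractly to unitriangularity; but this is a difference of exposition, not of substance.
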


\begin{proof} For $i \in \NN_0$ we consider the map $d_i\!:=\!d_{P_i(2),M}$. Proposition \ref{Rest2} provides $c_i \in \NN_0$ such that
\[ O_i := \{ \fv \in \Gr_2(A_r) \ ; \ d_i(\fv) = c_i\}\]
is a dense open subset of $\Gr_2(A_r)$. Since $\dim_\KK\Hom_{K_2}(P_i(2),P_j(2))\!=\!\max\{j\!-\!i\!+\!1,0\}$, it follows that
\[ n_i(M,\fv) = d_i(\fv)\!-\!2d_{i+1}(\fv)\!+\!d_{i+2}(\fv)\]
for every $\fv \in \Gr_2(A_r)$ and $i \in \NN_0$. Consequently,
\[ n_i(M,\fv) = c_i\!-\!2c_{i+1}\!+\!c_{i+2}\]
for every $\fv \in O_i\cap O_{i+1}\cap O_{i+2}$. There exists $\ell \in \NN_0$ such that $n_i(M,\fv)\!=\!0\!=\! d_i(\fv)$ for all $i\!\ge\!\ell$ and $\fv \in \Gr_2(A_r)$. We define
\[ n_i(M):= c_i\!-\!2c_{i+1}\!+\!c_{i+2}.\]
for every $i \in \NN_0$ and obtain $n_i(M)\!=\!0\!=n_i(M,\fv)$ for all $i\!\ge\!\ell$. Consequently, $\bigcap_{i=0}^{\ell-1} O_i \subseteq O_M$. 

Suppose that $\fv \in O_M$. Then we have
\[ d_i(\fv)\!-\!2d_{i+1}(\fv)\!+\!d_{i+2}(\fv) = n_i(M,\fv) =  c_i\!-\!2c_{i+1}\!+\!c_{i+2} \ \ \ \ 0\!\le\!i\!\le\!\ell\!-\!1.\]
Since the $(\ell\!\times\!\ell)$-matrix
\[ A = \begin{pmatrix} 1 & -2 & 1 & 0 & \cdots & 0 \\ 
                                  0 &  1 & -2 & 1 & \cdots & 0 \\
                                  \vdots & \vdots & \vdots & \vdots & \vdots & \vdots \\
                                  0 & 0 &  \cdots & 1 & -2 & 1 \\
                                  0 & 0 & \cdots  & 0 & 1 & -2\\
                                  0 & 0 & \cdots & 0 & 0 & 1 \end{pmatrix}\]
 is invertible, it follows that $d_i(\fv)\!=\!c_i$ for all $i \in \NN_0$. Consequently, $\fv \in \bigcap_{i=0}^{\ell-1}O_i$, so that $O_M\!=\!\bigcap_{i=0}^{\ell-1}O_i$ is a dense, open subset of $\Gr_2(A_r)$. \end{proof}
 
\bigskip

\begin{Definition} Let $M \in \EKP(K_r)\!\smallsetminus\!\{(0)\}$. We call
\[ M_{\gen} :=  \bigoplus_{i\ge 0} n_i(M)P_i\]
the {\it generic decomposition} of $M$. \end{Definition}

\bigskip
\noindent 
We let $\PP(A_r)$ be the projective space of lines in $A_r$. The basis $\{\gamma_1,\ldots, \gamma_r\}$ defines an isomorphism $\PP(A_r) \cong \PP^{r-1}$ of projective varieties.  For $x \in \PP(A_r)$ the number 
$\rk(x_M)\!=\! \rk(a_M) \ \text{for some} \ a \in x\!\smallsetminus\!\{0\}$ is well-defined and we refer to
\[ \rk(M) := \max\{\rk(x_M) \ ; \ x \in \PP(A_r)\}\]
as the {\it generic rank} of $M$. We say that $M$ has {\it constant rank}, provided $\rk(x_M)\!=\!\rk(M)$ for all $x \in \PP(A_r)$. The full subcategory of $\rep(K_r)$, whose  objects have constant rank, will be denoted 
$\CR(K_r)$. It contains the full subcategory $\EIP(K_r)$, whose objects $M$ satisfy the condition $\rk(M)\!=\!\dim_\KK M_2$. We have a duality $D_{K_r} : \CR(K_r) \lra \CR(K_r)$ that sends $\EKP(K_r)$ to $\EIP(K_r)$ and 
vice versa. The aforementioned classification of indecomposable $K_2$-representations implies that $\CR(K_2)\!=\!\add(\EKP(K_2)\cup\EIP(K_2))$, so that every $M \in \CR(K_r)$ affords a generic decomposition
\[ M_{\gen} = \bigoplus_{i\ge 0} n_i(M)P_i(2)\!\oplus\!\bigoplus_{j\ge 0} m_j(M) I_j(2),\]
where $I_j(2)\!:=\!D_{K_r}(P_j(2))$ is indecomposable preinjective.  

\bigskip

\section{Categories of relative projective modules} \label{S:Relproj}
Let $d \in \{1,\ldots, r\!-\!1\}$. In this section we introduce the full subcategory $\repp(K_r,d)$ of $\EKP(K_r)$, that will turn out to be equivalent to the category of Steiner bundles on the Grassmannian $\Gr_d(A_r)$. 
The objects of $\repp(K_r,d)$ may be characterized via rank varieties, whose definition is analogous to those investigated in \cite{CFP12}. By contrast to the setting in \cite{CFP12}, their complements may also be 
defined as being the objects of the right $\Hom$-orthogonal category of a class of elementary $K_r$-modules.  

\bigskip

\subsection{Rank varieties and families of elementary modules} \label{S:Fam}
As noted above, the map
\[ \msim : \Inj_\KK(A_d,A_r) \lra \Gr_d(A_r) \  \  ;  \ \ \alpha \mapsto \im \alpha\]
is a $\GL(A_d)$-torsor with respect to the canonical right $\GL(A_d)$-action $\alpha.h\!:=\!\alpha\circ h$, cf.\ \cite[(1.1)]{CFP15}. 

\bigskip

\begin{Lem} \label{Fam1} Let $M \in \rep(K_r)$. Then the following statements hold:
\begin{enumerate} 
\item The subset
\[ \cP(K_r,d)_M := \{ \alpha \in \Inj_\KK(A_d,A_r) \ ; \ \alpha^\ast(M) \ \text{is not projective}\}\]
is closed and $\GL(A_d)$-stable. 
\item The set
\[\cV(K_r,d)_M := \{ \fv \in \Gr_d(A_r) \ ; \ \exists \ \alpha \in \msim^{-1}(\fv) \ \text{such that} \ \alpha^\ast(M) \ \text{is not projective}\}\]
is a closed subset of $\Gr_d(A_r)$. \end{enumerate} \end{Lem}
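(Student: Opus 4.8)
The plan is to reduce statement (2) to statement (1) using the torsor structure of $\msim$, and to prove (1) by exhibiting $\cP(K_r,d)_M$ as the zero locus of a suitable collection of regular functions on the quasi-affine variety $\Inj_\KK(A_d,A_r)$. For the $\GL(A_d)$-stability in (1), I would observe that if $h \in \GL(A_d)$, then $\alpha\circ h : \KK K_d \lra \KK K_r$ factors as the composite of the algebra automorphism of $\KK K_d$ induced by $h$ with $\alpha$; hence $(\alpha\circ h)^\ast(M) \cong h^\ast(\alpha^\ast(M))$ is the pull-back of $\alpha^\ast(M)$ along an automorphism of $\KK K_d$, so it is projective exactly when $\alpha^\ast(M)$ is. This shows $\cP(K_r,d)_M$ is $\GL(A_d)$-stable, and since $\msim$ is a $\GL(A_d)$-torsor (so in particular an open, surjective quotient map), the image $\cV(K_r,d)_M\!=\!\msim(\cP(K_r,d)_M)$ is closed in $\Gr_d(A_r)$ precisely when $\cP(K_r,d)_M$ is closed; this yields (2) from (1).

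The heart of the argument is closedness of $\cP(K_r,d)_M$. First fix dimensions: write $\udim M\!=\!(m_1,m_2)$, and recall that a $\KK K_d$-module $N$ with $\udim N\!=\!(m_1,m_2)$ is projective if and only if $N \cong m_2 P_0(d)\oplus a P_1(d)$ for the appropriate $a$, equivalently if and only if the structure map $\psi_{N}: A_d\otimes_\KK N_1 \lra N_2$ is surjective \emph{and} $N$ has no summand isomorphic to $S_1$; more cleanly, $N$ is projective iff $\psi_N$ is injective on $A_d\otimes_\KK N_1$ — no, rather one checks that over the hereditary algebra $\KK K_d$ a module is projective iff it has no summand $S_1$ and its "cokernel part" is zero, but the efficient criterion here is: $N$ is projective $\iff$ the canonical map $A_d\otimes_\KK N_1 \lra N_2$ is injective. (For $K_d$ the indecomposable projectives are $P_0(d)\!=\!S_2$ and $P_1(d)$ with $\udim P_1(d)\!=\!(1,d)$; a module is projective iff it is a direct sum of copies of these, which one can detect by the map $A_d\otimes N_1 \to N_2$ being injective, forcing $d\dim N_1\!\le\!\dim N_2$, together with surjectivity onto a complement.) I would pick whichever of these equivalent linear-algebra conditions is cleanest and phrase projectivity of $\alpha^\ast(M)$ as the non-vanishing of all maximal minors of the matrix of the $\KK$-linear map $A_d\otimes_\KK M_1 \lra M_2$, $a\otimes m \mapsto \alpha(a).m$. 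Since $\alpha \mapsto$ (this matrix) is a polynomial (indeed linear) map $\Inj_\KK(A_d,A_r) \lra \Mat$, the locus where some minor vanishes — i.e.\ where $\alpha^\ast(M)$ fails to be projective — is the intersection of $\Inj_\KK(A_d,A_r)$ with a Zariski-closed set, hence closed.

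The main obstacle I anticipate is pinning down the correct closed (polynomial) characterization of projectivity of $\alpha^\ast(M)$ uniformly in $\alpha$, because "projective" is a priori only a constructible, not obviously closed, condition — one must use that all $\alpha^\ast(M)$ share the fixed dimension vector $(m_1,m_2)$ so that projectivity is equivalent to a single rank/injectivity condition on the linear map $A_d\otimes_\KK M_1 \lra M_2$ determined by $\alpha$, which is a closed condition on $\alpha$. Once that equivalence is established, everything else (the $\GL(A_d)$-stability, the passage from (1) to (2) via the torsor) is formal. I would also double-check the degenerate endpoints: when $d\!=\!r$ the statement is about a point, and for $M\!=\!(0)$ both sets are empty; neither causes trouble. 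I therefore expect the write-up to consist of: (a) the automorphism-pullback observation for stability; (b) the linear-algebra lemma identifying projectivity with a minor (non-)vanishing condition; (c) the deduction that $\cP(K_r,d)_M$ is closed; and (d) the torsor argument giving (2).
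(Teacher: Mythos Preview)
Your overall plan is sound and the passage from (1) to (2) via the torsor is exactly what the paper does. The difference lies in how you prove closedness in (1): the paper does not use the linear-algebra criterion ``$\alpha^\ast(M)$ is projective $\iff$ $\psi_{\alpha^\ast(M)}$ is injective'' (that equivalence is established later, in Theorem \ref{Fam5}). Instead it invokes the characterization ``$N$ projective $\iff \dim_\KK\Ext^1_{K_d}(N,S_2)=0$'', notes that $N\mapsto \dim_\KK\Ext^1_{K_d}(N,S_2)$ is upper semicontinuous on $\rep(K_d;M_1,M_2)$, and then pulls back along the morphism $\alpha\mapsto \alpha^\ast(M)$. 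Your route is more hands-on and entirely legitimate (and in fact anticipates Theorem \ref{Fam5}); the paper's route is shorter because it borrows semicontinuity off the shelf.

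Two things to clean up in your write-up. First, commit to the criterion: for $N\in\rep(K_d)$, one has $N$ projective $\iff$ $\psi_N:A_d\otimes_\KK N_1\to N_2$ is injective. One direction is immediate; for the other, injectivity of $\psi_N$ gives a subrepresentation $(N_1,\im\psi_N)\cong(\dim_\KK N_1)P_1(d)$ with projective quotient $(\Delta_N(d))P_0(d)$, and the resulting short exact sequence splits. State this once and stop hedging. Second, your minor condition is phrased backwards: projectivity means \emph{some} maximal ($d\dim_\KK M_1$)-minor of the matrix of $\psi_{\alpha^\ast(M)}$ is nonzero, so the non-projective locus is where \emph{all} such minors vanish --- a closed condition, as you need. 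The sentence ``the locus where some minor vanishes --- i.e.\ where $\alpha^\ast(M)$ fails to be projective'' is not right as written; fix it to ``where every maximal minor vanishes''.
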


\begin{proof} (1) Recall that a $\KK K_d$-module $N$ is projective if and only if $\dim_\KK\Ext^1_{K_d}(N,S_2)\!=\!0$. In view of the upper semicontinuity of the map $\rep(K_d;M_1,M_2) \lra \NN_0 \ ; \ N \mapsto 
\dim_\KK\Ext^1_{K_d}(N,S_2)$,\footnote{This follows for instance from Lemma \ref{MV1} and $\dim_\KK\Ext^1_{K_d}(N,S_2)\!=\!\dim_\KK \Hom_{K_d}(N,S_2)\!-\!\langle \udim N,\udim S_2\rangle_d$.} we 
conclude that
\[ C_d := \{ X \in \rep(K_d;M_1,M_2) \ ; \ X \ \text{is not projective}\}\]
is a closed subset of $\rep(K_d;M_1,M_2)$. The arguments of the proof of Proposition \ref{Rest2} ensure the continuity of the map
\[ \Inj_\KK(A_d,A_r) \lra \rep(K_d;M_1,M_2) \ \ ; \ \ \alpha \mapsto \alpha^\ast(M).\]
Consequently, $\cP(K_r,d)_M$ is a closed subset of $\Inj_\KK(A_d,A_r)$. 

Recall that $P_1$ and $S_2$ are representatives of the isomorphism classes of the projective indecomposable $\KK K_d$-modules. Since their dimensions pairwise differ, it follows that $Q^{(h)} \cong Q$ for 
$Q \in \{P_1,S_2\}$ and $h \in \GL(A_d)$ and hence $P^{(h)} \cong P$ for every projective $\KK K_d$-module $P$. Let $\alpha \not \in \cP(K_r,d)_M$ and $h \in \GL(A_d)$. Then
\[ (h\dact\alpha)^\ast(M) \cong (\alpha^\ast(M))^{(h)},\]
is projective, so that $h\dact\alpha \not\in \cP(K_r,d)_M$. As a result, the variety $\cP(K_r,d)_M$ is $\GL(A_d)$-stable. 

(2) As before, we consider the space $\Inj_\KK(A_d,A_r)/\GL(A_d)$ together with the topology induced by the canonical projection
\[ \pi : \Inj_\KK(A_d,A_r) \lra \Inj_\KK(A_d,A_r)/\GL(A_d).\]
As observed in the proof of Proposition \ref{Rest2}, the map 
\[ \msim : \Inj_\KK(A_d,A_r) \lra \Gr_d(A_r) \ \ ; \ \ \alpha \mapsto \im \alpha \]
induces a homeomorphism
\[ \overline{\msim} :  \Inj_\KK(A_d,A_r)/\GL(A_d) \lra \Gr_d(A_r) \ \ ; \ \ [\alpha] \mapsto \im \alpha.\]
In view of (1), the open set $U_M\!:=\!\Inj_\KK(A_d,A_r)\!\smallsetminus\!\cP(K_r,d)_M$ is $\GL(A_d)$-stable. Thus, $\pi^{-1}(\pi(U_M))\!=\!U_M$, so that $\pi(U_M)\!=\!(\Inj_\KK(A_d,A_r)/\GL(A_d))\!\smallsetminus\!
\pi(\cP(K_r,d)_M)$ is an open subset of  $\Inj_\KK(A_d,A_r)/\GL(A_d)$. It follows that $\cV(K_r,d)_M\!=\!\msim(\cP(K_r,d)_M)\!=\!\overline{\msim}(\pi(\cP(K_r,d)_M))$ is closed. \end{proof}

\bigskip

\begin{Remark} Since $\GL(A_d)$ acts transitively on $\im^{-1}(\fv)$, it follows from (1) that
\[ \cV(K_r,d)_M = \{ \fv \in \Gr_d(A_r) \ ; \  \alpha^\ast(M) \ \text{is not projective for all}  \ \alpha \in \msim^{-1}(\fv) \}.\]
\end{Remark}

\bigskip

\begin{Lem} \label{Fam2} Let $(0) \lra M' \lra M \lra M'' \lra (0)$ be an exact sequence in $\rep(K_r)$.
\begin{enumerate}
\item  We have
\[ \cV(K_r,d)_{M'} \subseteq \cV(K_r,d)_M \subseteq \cV(K_r,d)_{M'}\cup\cV(K_r,d)_{M''}.\]
\item If the sequence splits, then
\[  \cV(K_r,d)_M = \cV(K_r,d)_{M'}\cup\cV(K_r,d)_{M''}. \]
\end{enumerate} \end{Lem}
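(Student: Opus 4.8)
The statement to prove is Lemma~\ref{Fam2}, concerning the behavior of the rank-type varieties $\cV(K_r,d)_M$ under short exact sequences.

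\medskip

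The plan is to argue pointwise on $\Gr_d(A_r)$, reducing everything to a statement about restrictions along a fixed $\alpha \in \Inj_\KK(A_d,A_r)$ and then invoking the fact that projectivity of a $\KK K_d$-module is governed by the vanishing of $\Ext^1_{K_d}(-,S_2)$, together with the long exact $\Ext$-sequence. Fix $\fv \in \Gr_d(A_r)$ and choose $\alpha \in \msim^{-1}(\fv)$. Since $\alpha^\ast$ is exact (it is restriction along the algebra embedding $\alpha : \KK K_d \hookrightarrow \KK K_r$), applying it to the given sequence yields a short exact sequence $(0) \lra \alpha^\ast(M') \lra \alpha^\ast(M) \lra \alpha^\ast(M'') \lra (0)$ in $\modd \KK K_d$. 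Now use the Remark after Lemma~\ref{Fam1}, which says $\fv \in \cV(K_r,d)_N$ if and only if $\alpha^\ast(N)$ is not projective (for \emph{any}, equivalently all, $\alpha$ with $\im\alpha = \fv$), so all three conditions in (1) become statements purely about these three $\KK K_d$-modules.

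\medskip

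For the first inclusion $\cV(K_r,d)_{M'} \subseteq \cV(K_r,d)_M$: here I would use that $\EKP(K_r)$-type hereditary behavior is not needed; rather, over the hereditary algebra $\KK K_d$ a submodule of a projective module is projective, so if $\alpha^\ast(M)$ is projective then so is its submodule $\alpha^\ast(M')$; contrapositively, $\fv \in \cV(K_r,d)_{M'}$ forces $\fv \in \cV(K_r,d)_M$. For the second inclusion $\cV(K_r,d)_M \subseteq \cV(K_r,d)_{M'} \cup \cV(K_r,d)_{M''}$: suppose $\fv$ lies in neither right-hand set, so both $\alpha^\ast(M')$ and $\alpha^\ast(M'')$ are projective; then the sequence $(0) \to \alpha^\ast(M') \to \alpha^\ast(M) \to \alpha^\ast(M'') \to (0)$ splits (its class lies in $\Ext^1_{K_d}(\alpha^\ast(M''),\alpha^\ast(M'))=(0)$ as $\alpha^\ast(M'')$ is projective), whence $\alpha^\ast(M) \cong \alpha^\ast(M') \oplus \alpha^\ast(M'')$ is projective and $\fv \notin \cV(K_r,d)_M$. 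This proves (1).

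\medskip

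For part (2): if the original sequence splits, then $M \cong M' \oplus M''$, so $\alpha^\ast(M) \cong \alpha^\ast(M') \oplus \alpha^\ast(M'')$ for every such $\alpha$; a direct sum of two $\KK K_d$-modules is projective if and only if both summands are, hence $\alpha^\ast(M)$ is non-projective precisely when $\alpha^\ast(M')$ or $\alpha^\ast(M'')$ is non-projective. Translating back via the Remark gives $\cV(K_r,d)_M = \cV(K_r,d)_{M'} \cup \cV(K_r,d)_{M''}$; combined with (1) this also re-derives the reverse inclusion, but it is cleanest to state it directly from the direct-sum decomposition. I do not anticipate a serious obstacle here: the only point requiring care is the legitimacy of passing freely between ``there exists $\alpha \in \msim^{-1}(\fv)$ with $\alpha^\ast(M)$ not projective'' and ``for all such $\alpha$'', which is exactly the content of the Remark (itself a consequence of the $\GL(A_d)$-stability established in Lemma~\ref{Fam1}(1) and transitivity of the $\GL(A_d)$-action on fibers of $\msim$), so I would simply cite that Remark rather than re-prove it.
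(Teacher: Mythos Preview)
Your proof is correct and follows essentially the same approach as the paper: both use that $\KK K_d$ is hereditary (so submodules of projectives are projective) for the first inclusion, and the standard splitting argument for the second. The paper's proof is actually terser than yours---it states only the first inclusion of (1) explicitly and leaves the second as implicit, then derives (2) as ``a direct consequence of (1)'' (since in the split case $M''$ is also a submodule of $M$, so the first inclusion applies to it as well)---but your more detailed version is perfectly fine.
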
 

\begin{proof} (1) Since the algebra $\KK K_d$ is hereditary, we have $\cV(K_r,d)_{M'} \subseteq \cV(K_r,d)_M$. 

(2) This is a direct consequence of (1). \end{proof} 

\bigskip
\noindent
We consider the projective $K_r$-representations $(0,A_d)\cong P_0(r)^d$ and $(\KK,A_r)\cong P_1(r)$, where we have $\psi_{P_1(r)}(a\!\otimes\!t)\!=\!t a$. An element $\alpha \in \Hom_\KK(A_d,A_r)$ can be viewed as a 
morphism 
\[ \bar{\alpha} : (0,A_d) \lra (\KK,A_r) \ \ ; \ \  \bar{\alpha}_1\!:=\!0 \ \ \text{and} \ \ \bar{\alpha}_2\!:=\!\alpha.\] 
Let $M \in \rep(K_r)$. As noted in Section \ref{S:reps}, $M$ may be interpreted as a linear map
\[ \varphi_M : M_1 \lra \Hom_\KK(A_r,M_2) \ \ ; \ \  \varphi_M(m)(a)\!:=\!\psi_M(a\!\otimes m).\]
Given $\alpha \in \Hom_\KK(A_d,A_r)$, we obtain
\[ \varphi_{\alpha^\ast(M)} : M_1 \lra \Hom_\KK(A_d,M_2) \   \  ; \ \ m \mapsto \varphi_M(m)\circ \alpha.\]

\bigskip
\noindent
A representation $M \in \rep(K_r)$ is called {\it regular}, if all its indecomposable constituents are regular. We recall the notion of elementary representations, which are just the simple objects in the full subcategory $\reg(K_r) \subseteq \rep(K_r)$ of regular representations.

\bigskip

\begin{Definition} A non-zero regular representation $E \in \rep(K_r)$ is referred to as {\it elementary}, provided there does not exist a short exact sequence
\[ (0) \lra A \lra E \lra B \lra (0)\]
with $A,B$ non-zero and regular. \end{Definition} 

\bigskip

\begin{Prop} \label{Fam3} Let $\alpha \in \Inj_\KK(A_d,A_r)$. Then the following statements hold:
\begin{enumerate}
\item The module $\coker\bar{\alpha}$ is elementary with dimension vector $\udim \coker\bar{\alpha}\!=\!(1,r\!-\!d)$. 
\item Let $\beta \in \Inj_\KK(A_d,A_r)$. Then $\coker\bar{\alpha}\cong \coker\bar{\beta}$ if and only if $\im \alpha\!=\!\im\beta$.
\item If $N\in \rep(K_r)$ is indecomposable with $\udim N\!=\!(1,r\!-\!d)$, then there is $\zeta \in \Inj_\KK(A_d,A_r)$ such that $N \cong \coker\bar{\zeta}$.
\item Given $M \in \rep(K_r)$, we have an isomorphism of vector spaces $\coker\varphi_{\alpha^\ast(M)} \cong \Ext^1_{K_r}(\coker\bar{\alpha},M)$. 
\item Given $c \in \{1,\ldots, d\}$ and $\beta \in \Inj_\KK(A_c,A_d)$, there exists an epimorphism $\coker\overline{\alpha\circ \beta} \lra \coker\bar{\alpha}$. \end{enumerate}   \end{Prop}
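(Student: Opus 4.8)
The plan is to exhibit both cokernels as quotients of the single projective representation $P_1(r)=(\KK,A_r)$ by a nested pair of subrepresentations, and then take the canonical projection between those quotients.

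First I would observe that the assignment $\gamma\mapsto\bar\gamma$ is compatible with composition. For $\beta\in\Inj_\KK(A_c,A_d)$ the pair $\bar\beta$ with $\bar\beta_1:=0$ and $\bar\beta_2:=\beta$ is a morphism $(0,A_c)\lra(0,A_d)$ of $K_r$-representations (compatibility with the operators $M(\gamma_i)$ is vacuous, the first component of the source being zero), and comparing the two components separately yields $\bar\alpha\circ\bar\beta=\overline{\alpha\circ\beta}$. Since $\alpha$ and $\beta$ are injective, $\bar\alpha$ and $\bar\beta$ are monomorphisms, hence so is $\overline{\alpha\circ\beta}=\bar\alpha\circ\bar\beta$; in particular $\im\overline{\alpha\circ\beta}=\bar\alpha(\im\bar\beta)$ is contained in $\im\bar\alpha$ as a subrepresentation of $P_1(r)$.

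Next I would apply the third isomorphism theorem in the abelian category $\rep(K_r)$ to the chain $\im\overline{\alpha\circ\beta}\subseteq\im\bar\alpha\subseteq P_1(r)$, obtaining the canonical epimorphism
\[ \coker\overline{\alpha\circ\beta}=P_1(r)/\im\overline{\alpha\circ\beta}\ \tha\ P_1(r)/\im\bar\alpha=\coker\bar\alpha, \]
which is the asserted map. Its kernel equals $\im\bar\alpha/\im\overline{\alpha\circ\beta}\cong(0,A_d)/(0,\beta(A_c))\cong P_0(r)^{d-c}$, consistent with the dimension vectors $(1,r-c)$ and $(1,r-d)$ recorded in part (1). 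Unwinding the description of (1), this epimorphism is simply $(\id_\KK,\pr)\colon(\KK,A_r/\alpha(\beta(A_c)))\lra(\KK,A_r/\alpha(A_d))$, where $\pr$ is the projection of quotient spaces induced by $\alpha(\beta(A_c))\subseteq\alpha(A_d)$; it is visibly surjective.

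I do not expect a genuine obstacle here: the statement is essentially the functoriality of the cokernel construction $\bar\alpha\mapsto\coker\bar\alpha$ combined with the evident inclusion $\im(\alpha\circ\beta)\subseteq\im\alpha$. The only points requiring (routine) verification are that $\bar\beta$ and $(\id_\KK,\pr)$ are bona fide representation morphisms — immediate from $\psi_{P_1(r)}(a\otimes t)=ta$ and the shape of the first components — and that $\alpha\circ\beta$ is again injective, so that part (1) legitimately applies to $\coker\overline{\alpha\circ\beta}$.
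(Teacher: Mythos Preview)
Your argument for part (5) is correct. The key observation that $\overline{\alpha\circ\beta}=\bar\alpha\circ\bar\beta$ gives the inclusion $\im\overline{\alpha\circ\beta}\subseteq\im\bar\alpha$ of subrepresentations of $P_1(r)=(\KK,A_r)$, and the third isomorphism theorem then produces the desired epimorphism $(\KK,A_r/\alpha\beta(A_c))\tha(\KK,A_r/\alpha(A_d))$; your computation of the kernel as $P_0(r)^{d-c}$ is a nice consistency check.

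The paper does not spell this out: for (5) it simply refers to the proof of \cite[(2.2.2(c))]{Bi20}, having earlier (in (1)--(3)) identified $\coker\bar\alpha$ with the cokernel of an explicit map $x_\alpha\colon P_0(r)^d\to P_1(r)$ and invoked the corresponding results from that reference. Your argument is the natural self-contained version of what that citation presumably does --- the same underlying idea (nested images inside $P_1(r)$), just carried out directly in the $\bar\alpha$-notation rather than via the auxiliary map $x_\alpha$ and an external source.
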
  

\begin{proof} (1)-(3) Recall that the projective cover $P_1(r)$ of the simple representation $S_1\!=\!(\KK,0)$ is given by $P_1(r)\!=\!(\KK,A_r,(P_1(r)(\gamma_i))_{1 \leq i \leq r})$ with $P_1(r)(\gamma_i)(\lambda)\!=\!\lambda 
\gamma_i$ for all $i \in \{1,\ldots,r\}$. For $\alpha \in \Inj_\KK(A_d,A_r)$ we consider the monomorphism of representations $x_\alpha\!=\!(0,(x_\alpha)_2) \colon P_0(r)^d$ $\lra P_1(r)$, given by
\[(x_\alpha)_2 \colon \KK^d \lra A_r \ \ ; \ \ \lambda \mapsto \sum^d_{i=1}  \lambda_i \alpha(\gamma_i).\]
We get a commutative diagram
\[ \xymatrix{
(0) \ar[r] & P_0(r)^d \ar^{x_\alpha}[r] \ar^{\cong}[d] & P_1(r) \ar[r] \ar^{\cong}[d]& \coker x_\alpha \ar[r] \ar[d] & (0)  \\
(0) \ar[r] & (0,A_d) \ar^{\overline{\alpha}}[r] & (\KK,A_r) \ar[r] & \coker \overline{\alpha} \ar[r] & (0) 
}\]
with exact rows. In particular,  $\udim\coker \overline{\alpha}\!=\!(1,r\!-\!d)$ and the 5-Lemma yields $\coker x_\alpha \cong \coker \overline{\alpha}$. Hence (1), (2) and (3) follow from  \cite[(2.1.4),(2.2.1),(2.2.2)]{Bi20}, 
respectively. 

(4) The statement follows from a small modification of the proof of \cite[(2.3.3)]{Bi20}. For the benefit of the reader we outline the changes. Since $\Ext^1_{K_r}((\KK,A_r),-)\!=\!(0)$, application of $\Hom_{K_r}(-,M)$ to the 
short exact sequence 
\[(0) \lra (0,A_d) \stackrel{\overline{\alpha}}{\lra} (\KK,A_r) \lra \coker \overline{\alpha} \lra (0)\] 
yields the following diagram with exact rows 
\[ 
\xymatrix{
\Hom_{K_r}((\KK,A_r),M) \ar^{\overline{\alpha}^\ast}[r] \ar^{f}[d]& \Hom_{K_r}((0,A_d),M) \ar[r] \ar^{g}[d] & \Ext^1_{K_r}(\coker \overline{\alpha},M) \ar[r] & (0)  \\
M_1 \ar^{\varphi_{\alpha^{\ast}(M)}}[r] &  \Hom_\KK(A_d,M_2) \ar[r] & \coker \varphi_{\alpha^{\ast}(M)} \ar[r] & (0), 
}\]
where $f(h)\!:=\!h_1(1_\KK)$ for all $h \in \Hom_{K_r}((\KK,A_r),M)$ and $g(\eta)\!:=\!\eta_2$ for all $\eta \colon (0,A_d) \lra M$. Let $(h_1,h_2)\!=\!h \in \Hom_{K_r}((\KK,A_r),M)$. We recall that $\psi_M \circ (\id_{A_r} 
\otimes h_1)\! =\! h_2 \circ \psi_{(\KK,A_r)}$ and $\psi_{(\KK,A_r)}(a \otimes \lambda)\! =\! \lambda a$ for all $\lambda \in \KK$ and $a \in A_r$. For $a \in A_d$ we obtain
\begin{align*}
 [(\varphi_{\alpha^\ast(M)} \circ f)(h)](a) &= [\varphi_M(h_1(1_\KK)) \circ \alpha](a) = [\psi_M \circ (\id_{A_r} \otimes h_1)](\alpha(a) \otimes 1_\KK)\\
 &= [h_2 \circ \psi_{(\KK,A_r)}](\alpha(a) \otimes 1_\KK) = h_2(\alpha(a)) =  (h \circ \overline{\alpha})_2(a)  \\
 &= [(g \circ \overline{\alpha}^{\ast})(h)](a).
\end{align*}
Hence the left-hand square of the diagram commutes and $f, g$ being isomorphisms gives us
\[\dim_\KK \coker \varphi_{\alpha^{\ast}(M)} = \dim_\KK \Ext^1_{K_r}(\coker \overline{\alpha},M).\]

(5) This follows from the proof of \cite[(2.2.2(c))]{Bi20}.  \end{proof} 

\bigskip
\noindent
Let $\fv \in \Gr_d(A_r)$. Thanks to Proposition \ref{Fam3}(2), we may define
\[ \coker \fv := \coker \bar{\alpha} \ \ \ \ \ \ \alpha \in \msim^{-1}(\fv).\]

\bigskip

\begin{Definition} For $\fv \in \Gr_d(A_r)$, we put 
\[ E(\fv) := D_{K_r}(\tau_{K_r}(\coker \bar{\alpha})) \ \ \ \ \ \ \alpha \in \msim^{-1}(\fv).\] 
\end{Definition}

\bigskip
\noindent
In view of \cite[(1.1)]{KL96} $E(\fv)$ is an elementary representation.

Given $M \in \rep(K_r)$ and $\fv \in \Gr_d(A_r)$, we recall the notation
\[ \psi_{M,\fv} := (\psi_M)|_{\fv\otimes_\KK M_1}.\]
Similarly, we define $\varphi_{M,\fv} : M_1\lra \Hom_\KK(\fv,M_2)$ via
\[ \varphi_{M,\fv}(m) := \varphi_M(m)|_\fv  \ \ \ \ \forall \ m \in M_1.\]

\bigskip

\begin{Prop} \label{Fam4} Let $M \in \rep(K_r)$, $\fv \in \Gr_d(A_r)$. 
\begin{enumerate}
\item Let $\alpha \in \msim^{-1}(\fv)$. Then 
\[\dim_\KK M_2\!-\!\rk(\psi_{M,\fv}) = \dim_\KK M_2\!-\!\dim_\KK\Rad(\alpha^{\ast}(M))\] 
is the multiplicity of $P_0(d)$ in the decomposition of $\alpha^{\ast}(M) \in \rep(K_d)$ into indecomposable direct summands.
\item We have $\dim_\KK \ker \psi_{M,\fv}\!=\! \dim_\KK \Hom_{K_r}(E(\fv),M)$.
\end{enumerate} \end{Prop}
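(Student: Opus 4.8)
For the first assertion I would reduce everything to the structure of the restricted $K_d$-representation $N := \alpha^\ast(M)=(M_1,M_2,\psi_N)$. Since $\psi_N=\psi_{M,\fv}\circ(\alpha\!\otimes\!\id_{M_1})$ with $\alpha\!\otimes\!\id_{M_1}$ bijective, one has $\rk\psi_N=\rk\psi_{M,\fv}$, and $\im\psi_N$, $\ker\psi_N$ correspond to $\im\psi_{M,\fv}$, $\ker\psi_{M,\fv}$. As the Jacobson radical of $\KK K_d$ is the arrow ideal $(\begin{smallmatrix} 0 & 0 \\ A_d & 0\end{smallmatrix})$, one gets $\Rad(N)=(0,\im\psi_N)$, hence $\dim_\KK\Rad(N)=\rk\psi_N=\rk\psi_{M,\fv}$; this gives the equality of the two displayed expressions. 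For the identification with the multiplicity of $P_0(d)=S_2$, I would compute $\Hom_{K_d}(N,S_2)\cong(\coker\psi_N)^\ast$ — a morphism $N\to S_2$ has vanishing first component and a second component annihilating $\im\psi_N$ — so $\dim_\KK\Hom_{K_d}(N,S_2)=\dim_\KK M_2-\rk\psi_{M,\fv}$. Since $S_2$ is simple and projective, any nonzero morphism from an indecomposable $\KK K_d$-module $X$ to $S_2$ is a split epimorphism, so $\Hom_{K_d}(X,S_2)=(0)$ unless $X\cong S_2$; decomposing $N$ into indecomposables then shows $\dim_\KK\Hom_{K_d}(N,S_2)$ equals the multiplicity of $S_2=P_0(d)$ in $N$, completing (1).

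For the second assertion, set $C := \coker\bar\alpha$, which by Proposition \ref{Fam3}(1) is elementary, hence regular and in particular non-projective; consequently $\tau_{K_r}(C)$ is its Auslander-Reiten translate and is again regular. The plan is to transport $\Hom_{K_r}(E(\fv),M)$ to an $\Ext^1$ to which Proposition \ref{Fam3}(4) applies. Using that $D_{K_r}$ is an involutive duality,
\[ \Hom_{K_r}(E(\fv),M) = \Hom_{K_r}(D_{K_r}(\tau_{K_r}(C)),M) \cong \Hom_{K_r}(D_{K_r}(M),\tau_{K_r}(C)). \]
Over the hereditary algebra $\KK K_r$ there are no nonzero morphisms from a preinjective to a regular representation, and every injective representation is preinjective; since $\tau_{K_r}(C)$ is regular, this forces every morphism $D_{K_r}(M)\to\tau_{K_r}(C)$ that factors through an injective to vanish. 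Hence the Auslander-Reiten formula $\Ext^1_{K_r}(C,D_{K_r}(M))\cong D(\overline{\Hom}_{K_r}(D_{K_r}(M),\tau_{K_r}(C)))$ simplifies, and combined with the above and with Proposition \ref{Fam3}(4) (applied to $D_{K_r}(M)$ in place of $M$) it yields
\[ \dim_\KK\Hom_{K_r}(E(\fv),M) = \dim_\KK\Ext^1_{K_r}(C,D_{K_r}(M)) = \dim_\KK\coker\varphi_{\alpha^\ast(D_{K_r}(M))}. \]

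It then remains to compute the last cokernel. The representation $\alpha^\ast(D_{K_r}(M))$ has underlying pair $(M_2^\ast,M_1^\ast)$, and unravelling the definitions of $\psi_{D_{K_r}(M)}$, of pull-back along $\alpha$, and of $\varphi_{(-)}$ identifies $\varphi_{\alpha^\ast(D_{K_r}(M))} : M_2^\ast \to \Hom_\KK(A_d,M_1^\ast)$ with the transpose of $\psi_{M,\fv} : \fv\!\otimes_\KK\!M_1 \to M_2$, under the canonical isomorphisms $\Hom_\KK(A_d,M_1^\ast)\cong(A_d\!\otimes_\KK\!M_1)^\ast\cong(\fv\!\otimes_\KK\!M_1)^\ast$ (the last one induced by $\alpha$). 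Therefore $\rk\varphi_{\alpha^\ast(D_{K_r}(M))}=\rk\psi_{M,\fv}$ and
\[ \dim_\KK\Hom_{K_r}(E(\fv),M) = d\dim_\KK M_1-\rk\psi_{M,\fv} = \dim_\KK\ker\psi_{M,\fv}. \]
I expect the main obstacle to lie in the two bookkeeping points just invoked: first, justifying that the $\overline{\Hom}$-correction in the Auslander-Reiten formula is trivial here, which rests on the orthogonality between preinjective and regular representations over $\KK K_r$; and second, checking carefully that $\varphi_{\alpha^\ast(D_{K_r}(M))}$ really is the transpose of $\psi_{M,\fv}$. The remaining steps are formal manipulations with dualities and with the short exact sequence underlying Proposition \ref{Fam3}(4).
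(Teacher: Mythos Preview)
Your proof is correct and follows essentially the same route as the paper's: for (1) both arguments reduce to $\Rad(\alpha^\ast(M))=(0,\im\psi_{M,\fv})$, and for (2) both combine the duality $D_{K_r}$, the Auslander--Reiten formula, Proposition~\ref{Fam3}(4) applied to $D_{K_r}(M)$, and the identification of $\varphi_{\alpha^\ast(D_{K_r}(M))}$ with the transpose of $\psi_{M,\fv}$, only traversed in the opposite order. Your explicit justification that $\overline{\Hom}=\Hom$ here is a welcome point of care; the paper simply invokes the hereditary form of the AR formula from \cite[(IV.2.13)]{ASS06}, where the bar is already absent.
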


\begin{proof} (1) This follows immediately from the definition.

(2) We consider the dual representation $D_{K_r}(M) = (M_2^{\ast},M_1^{\ast},\psi_{D_{K_r}(M)} )$ with structure map 
\[\psi_{D_{K_r}(M)} \colon A_r\!\otimes_\KK\!M_2^{\ast} \lra M_1^{\ast} \ \ ; \ \  a \otimes f  \mapsto f \circ a_M.\]
There results a commutative diagram
\[\xymatrixcolsep{6pc} \xymatrix{
 M_2^{\ast} \ar^{(\psi_{M,\fv})^{\ast}}[r] \ar@{=}[d]& (\fv\!\otimes_\KK\!M_1)^{\ast} \ar_{\cong}^{\eta}[d]\\
M_2^{\ast} \ar^{\varphi_{D_{K_r}(M),\fv}}[r] & \Hom_\KK(\fv,M_1^{\ast}), 
}\]
with $\eta(f)(a)(m_1)\!=\!f(a \otimes m_1)$ for all $a \in \fv, m_1 \in M_1$ and $f \in (\fv\!\otimes_\KK\!M_1)^{\ast}$: For $h \in M_2^\ast, a \in \fv$ and $m \in M_1$ we have
\begin{align*}
[ \varphi_{D_{K_r}(M),\fv}(h)(a)](m) &= [ \psi_{D_{K_r}(M)}(a \otimes h)](m) = (h \circ a_M)(m)\\ 
                                                        &= (h \circ \psi_M)(a \otimes m) = (h \circ \psi_{M,\fv})(a \otimes m) \\
                                                        &= [\eta(h \circ \psi_{M,\fv})(a)](m) = [(\eta \circ (\psi_{M,\fv})^{\ast})(h)(a)](m).
\end{align*}
Hence $\varphi_{D_{K_r}(M),\fv}\!=\!\eta \circ (\psi_{M,\fv})^{\ast}$. Consequently, $\coker \varphi_{D_{K_r}(M),\fv} \cong \ker \psi_{M,\fv}$. 

Let $\alpha \in \im^{-1}(\fv)$. We conclude with Proposition \ref{Fam3}(2),(4) that 
\[\dim_\KK\coker\varphi_{\alpha^\ast(D_{K_r}(M))} =\dim_\KK\coker\varphi_{D_{K_r}(M),\fv} = \dim_\KK\ker\psi_{M,\fv}.\]
Since $\coker\overline{\alpha}$ is regular, the Auslander-Reiten formula \cite[(IV.2.13)]{ASS06} in conjunction with Proposition \ref{Fam3}(4) now yields
\begin{align*}
\dim_\KK\ker\psi_{M,\fv} & = \dim_\KK \coker \varphi_{\alpha^{\ast}(D_{K_r}(M))} = \dim_\KK \Ext^1_{K_r}(\coker \overline{\alpha},D_{K_r}(M)) \\
&= \dim_\KK \Hom_{K_r}(D_{K_r}(M),\tau_{K_r}(\coker \overline{\alpha})) \\
&=\dim_\KK \Hom_{K_r}(E(\fv),M) \hfill \qedhere.
\end{align*}  \end{proof}

\bigskip
\noindent
For a pair  $(V_1,V_2)$ of $\KK$-vector spaces and $d\in \{1,\ldots,r\}$, we set
\[ \Delta_{(V_1,V_2)}(d) := \dim_\KK V_2\!-\!d\dim_\KK V_1.\]
If $M \in \rep(K_r)$, we write 
\[\Delta_M(d) := \Delta_{(M_1,M_2)}(d).\] 

\bigskip

\begin{Thm} \label{Fam5} Let $M \in \rep(K_r)$. Given $\fv \in \Gr_d(A_r)$, the following statements are equivalent:
\begin{enumerate}
\item $\Hom_{K_r}(E(\fv),M)\!=\!(0)$.
\item For every $\alpha \in \msim^{-1}(\fv)$, the map $\psi_{\alpha^\ast(M)}$ is injective.
\item There is $\alpha \in \msim^{-1}(\fv)$ such that $\psi_{\alpha^\ast(M)}$ is injective.
\item There is $\alpha \in \msim^{-1}(\fv)$ such that $\alpha^\ast(M) \cong \Delta_M(d)P_0(d)\!\oplus\!(\dim_kM_1)P_1(d)$.
\item $\fv \not\in \cV(K_r,d)_M$. \end{enumerate} \end{Thm}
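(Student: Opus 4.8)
The plan is to establish the equivalences by a cycle together with a couple of direct implications, using the structural facts about $K_d$-representations and the results already proved about $\varphi$, $\psi$, cokernels and $E(\fv)$. First I would observe that the relation $(2)\Rightarrow(3)$ is trivial (since $\msim^{-1}(\fv)\neq\emptyset$), so the bulk of the work is connecting $(3)$, $(4)$, $(5)$ and $(1)$. Fix $\alpha\in\msim^{-1}(\fv)$ and set $N\!:=\!\alpha^\ast(M)\in\rep(K_d)$; note $\udim N\!=\!(\dim_\KK M_1,\dim_\KK M_2)$. The key local observation is that the structure map $\psi_N: A_d\otimes_\KK N_1\to N_2$ is injective precisely when $N$ has no nonzero summand on which some arrow acts non-injectively — and since $\KK K_d$ is hereditary with indecomposable projectives $P_0(d)\!=\!S_2$ and $P_1(d)$, one checks directly that a $K_d$-representation is projective if and only if $\psi_N$ is injective. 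This gives $(3)\Leftrightarrow$ "$N$ is projective for some (equivalently every, by $\GL(A_d)$-homogeneity of projectives as in the proof of Lemma \ref{Fam1}) $\alpha$'', which by the Remark after Lemma \ref{Fam1} is exactly $(5)$, i.e.\ $\fv\notin\cV(K_r,d)_M$; this also yields $(2)\Leftrightarrow(3)$ at once.

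Next I would pin down the shape of a projective $N$: a projective $\KK K_d$-module is $aP_0(d)\oplus bP_1(d)$ for unique $a,b\ge 0$, and matching dimension vectors $b\!=\!\dim_\KK M_1$ (the first coordinate) and $a\!+\!db\!=\!\dim_\KK M_2$, so $a\!=\!\Delta_M(d)$. Hence whenever $N$ is projective it is forced to be $\Delta_M(d)P_0(d)\oplus(\dim_\KK M_1)P_1(d)$, giving $(3)\Rightarrow(4)$; and $(4)\Rightarrow(3)$ is immediate because this particular module is projective, so its $\psi$ is injective. For the link with $(1)$ I would invoke Proposition \ref{Fam4}(2): $\dim_\KK\ker\psi_{M,\fv}\!=\!\dim_\KK\Hom_{K_r}(E(\fv),M)$, and since under the chosen basis identification $\psi_{M,\fv}$ corresponds to $\psi_{\alpha^\ast(M)}\!=\!\psi_N$ (this is precisely the definition $\psi_{M,\fv}\!=\!\psi_M|_{\fv\otimes M_1}$ transported along $\alpha$), injectivity of $\psi_N$ is equivalent to $\Hom_{K_r}(E(\fv),M)\!=\!(0)$. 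That closes the loop $(1)\Leftrightarrow(3)$, and combined with the above all five statements are equivalent.

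The step I expect to require the most care is the purely $K_d$-theoretic claim that $\psi_N$ injective $\iff N$ projective, together with the bookkeeping that $\psi_{M,\fv}$ really does correspond to $\psi_{\alpha^\ast(M)}$ under a basis of $\fv$. For the first: if $\psi_N$ is injective then $\dim_\KK N_2\ge d\dim_\KK N_1$ and one can split off a free part $P_1(d)^{\dim N_1}$ (the image of a generic splitting of $N_1\hookrightarrow \Hom_\KK(A_d,N_2)$ via $\varphi_N$, using that $\varphi_N$ is injective as $\psi_N$ is) with complement a trivial representation $S_2^{\,\Delta_M(d)}$; conversely projectives obviously have injective $\psi$. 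Concretely, injectivity of $\psi_N$ is equivalent, via Proposition \ref{Fam3}(4) applied with $d$ in place of $r$ and $\alpha\!=\!\id$, to $\Ext^1_{K_d}(\coker\,\overline{\mathrm{id}}_{A_d},N)\!=\!(0)$ where $\coker\,\overline{\mathrm{id}}_{A_d}\!=\!S_1$ (dimension vector $(1,r\!-\!d)$ with $r\!=\!d$, i.e.\ $(1,0)$), so $\psi_N$ injective $\iff \Ext^1_{K_d}(S_1,N)\!=\!(0)\iff N$ projective; this is the cleanest route and I would phrase the argument that way, avoiding explicit splittings. The remaining implications are then formal, and the whole proof is short once this local lemma is in hand.
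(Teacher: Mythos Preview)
Your overall strategy is correct and is essentially the paper's proof, reorganized around the local lemma ``$\psi_N$ injective $\iff$ $N$ projective in $\rep(K_d)$'' together with Proposition~\ref{Fam4}(2) for the link to~(1). The paper runs the same cycle $(1)\Rightarrow(2)\Rightarrow(3)\Rightarrow(4)\Rightarrow(5)\Rightarrow(1)$, using Proposition~\ref{Fam4}(2) at the two ends and Proposition~\ref{Fam4}(1) plus a projective-cover count for $(3)\Rightarrow(4)$; your direct splitting argument (identify $(N_1,\im\psi_N)\cong(\dim_\KK M_1)P_1(d)$ and observe the cokernel is $\Delta_M(d)S_2$, which is projective and hence splits off) is an equally valid replacement.

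However, your ``cleanest route'' in the last paragraph is wrong on two counts and should be dropped. First, Proposition~\ref{Fam3}(4) with $r=d$ and $\alpha=\id$ yields $\coker\varphi_N\cong\Ext^1_{K_d}(S_1,N)$, so its vanishing is \emph{surjectivity of $\varphi_N$}, not injectivity of $\psi_N$; these are different conditions (already for $d=1$ and $N=P_0(1)\oplus P_1(1)$ one has $\psi_N$ injective but $\varphi_N$ not surjective). Second, $\Ext^1_{K_d}(S_1,N)=0$ characterizes \emph{injectivity} of $N$, not projectivity: since $S_2$ is projective, $\Ext^1_{K_d}(S_2,N)=0$ always, so vanishing of $\Ext^1_{K_d}(S_1,N)$ is exactly the injectivity criterion. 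The correct $\Ext$-criterion for projectivity is $\Ext^1_{K_d}(N,S_2)=0$ (as used in the proof of Lemma~\ref{Fam1}). So keep your direct splitting argument for the local lemma, or alternatively mimic the paper and use Proposition~\ref{Fam4}(1) to read off the multiplicity of $P_0(d)$.
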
 

\begin{proof} (1) $\Rightarrow$ (2). Let $\alpha \in \im^{-1}(\fv)$. Proposition \ref{Fam4}(2) yields $0\!=\!\dim_\KK\Hom_{K_r}(E(\fv),M)\!=\!\dim_\KK \ker \psi_{\alpha^\ast(M)}$.

(2) $\Rightarrow$ (3). Trivial.

(3) $\Rightarrow$ (4). Let $\alpha \in \msim^{-1}(\fv)$ be such that $\psi_{\alpha^\ast(M)}$ is injective. Then we have 
\begin{align*}
 \dim_\KK M_2\! -\! \rk(\psi_{M,\fv}) & = \dim_\KK M_2\!-\!\dim_\KK (\fv\!\otimes_\KK\!M_1) \\
 &= \dim_\KK M_2\! -\! d \dim_\KK M_1 = \Delta_M(d).
\end{align*} 
By Proposition \ref{Fam4}(1), we may write $\alpha^{\ast}(M)\!=\!{\Delta_M(d)} P_0(d)\!\oplus\!N$, where $N \in \rep(K_d)$ does not have $P_0(d)$ as a direct summand. Therefore, a projective cover of $N$ is given by  
$(\dim_\KK N_1)P_1(d) \twoheadrightarrow N$. Since $\dim_\KK N_1\!=\!\dim_\KK M_1$ and 
\[ \dim_\KK N_2 = \dim_\KK M_2\!-\! \Delta_M(d) = d \dim_\KK M_1, \]
we conclude $\udim N\! =\! \udim [(\dim_\KK N_1) P_1(d)]$. Hence $\alpha^{\ast}(M) \cong \Delta_M(d) P_0(d)\!\oplus\! (\dim_\KK M_1)\!P_1(d)$.

(4) $\Rightarrow$ (5). This is a direct consequence of the remark following Lemma \ref{Fam1}. 

(5) $\Rightarrow$ (1). Let $\alpha \in \msim^{-1}(\fv)$. We write $\alpha^{\ast}(M) \cong a P_0(d)\! \oplus\!b P_1(d)$, so that 
\[ \dim_\KK \Rad(\alpha^{\ast}(M)) = \dim_\KK \Rad(a P_0(d)\!\oplus\! b P_1(d)) = b \dim_\KK \Rad(P_1(d)) = b d.\]
Since $\udim P_0(d)$ and $\udim P_1(d)$ are linearly independent, it follows that $a\!=\!\Delta_M(d)$ and $b\!=\! \dim_\KK M_1$. We obtain
$\dim_\KK \ker \psi_{M,\fv}\! = \! d \dim_\KK M_1\! -\! \rk(\psi_{M,\fv})\! =\! d \dim_\KK M_1\!  -\! \dim_\KK\Rad(\alpha^{\ast}(M))\!=\!0$, so that Proposition \ref{Fam4} yields (1).  \end{proof}

\bigskip
\noindent
Let $M \in \rep(K_r)$. Theorem \ref{Fam5} implies
\[ \cV(K_r,d)_M = \{\fv \in \Gr_d(A_r) \ ; \  \rk(\psi_{M,\fv})\!<\!d\dim_\KK M_1\},\]
so that we refer to $\cV(K_r,d)_M$ as the {\it $d$-th rank variety} of $M$. 

\bigskip

\begin{Remarks} (1) For $d\!=\!1$, we have $\Gr_1(A_r)\!=\!\PP(A_r)$ and 
\[ \cV(K_r,1)_M := \{ x \in \PP(A_r) \ ; \ \rk(x_M)\!<\!\dim_\KK M_1\}.\]
In particular, $\cV(K_r,1)_M\!=\!\emptyset$ if and only if $M \in \EKP(K_r)$. 

(2) Let $\alpha \in \Inj_\KK(A_d,A_r)$. The pull-back functor $\alpha^\ast : \rep(K_r) \lra \rep(K_d)$ takes projectives to projectives. Theorem \ref{Fam5} implies that
$\beta \in \GL(A_d).\alpha$ if and only if for every $M \in \rep(K_r)$ we have $\alpha^\ast(M) \ \text{projective} \ \Leftrightarrow \ \beta^\ast(M) \ \text{projective}$. Hence
$\Inj_\KK(A_d,A_r)/\GL(A_d)$ may be viewed as an analogue of the space of equivalence classes of $p$-points (in the context of abelian unipotent group schemes), cf.\ \cite{FPe05}. \end{Remarks}

\bigskip

\subsection{The categories $\repp(K_r,d)$}\label{S:CatRep}
As before, we fix $d \in \{1,\ldots, r\!-\!1\}$. In this section, we introduce the subcategory $\repp(K_r,d)$ of $\rep(K_r)$ that will be instrumental in our study of Steiner bundles on the Grassmannian $\Gr_d(A_r)$. 

\bigskip

\begin{Definition} We let $\repp(K_r,d)$ be the full subcategory of $\rep(K_r)$, whose objects $M$ are given by $\cV(K_r,d)_M\!=\!\emptyset$. \end{Definition}

\bigskip

\begin{Remarks} (1) Note that $\repp(K_r,1)\!=\!\EKP(K_r)$ is just the category of equal kernels representations, which usually has wild representation type, cf.\ \cite[(4.1.1)]{Bi20}. The roughly analogous concept of shifted 
cyclic subgroups for elementary abelian $p$-groups yields projective modules, see \cite[(11.8)]{Da}. 

(2) The category $\repp(K_r,d)$ is closed under taking submodules, cf.\ Lemma \ref{Fam2}.

(3) Suppose that $M \in \repp(K_r,d)$ is such that $\dim_\KK M_1\!\le\!d$. Then $A_r\!\otimes_\KK\!M_1\!=\!\bigcup_{\fv \in \Gr_d(A_r)}\fv\otimes_\KK\!M_1$, so that $\psi_M$ is injective. Consequently, the direct
summand $(M_1,\psi_M(M_1),\psi_M)$ of $M$ is isomorphic to $(\dim_\KK M_1)P_1(r)$ and $M\cong \Delta_M(r)P_0(r)\!\oplus\!(\dim_\KK M_1)P_1(r)$ is projective. 

(4) An object $M \in \repp(K_r,d)$ can be characterized by saying that $\alpha^\ast(M)$ is projective for every $\alpha \in \Inj_\KK(A_d,A_r)$. Equivalently, for $M \in \modd \KK K_r$ the restrictions
$M|_{\KK .\fv} \in \modd \KK .\fv$ are projective for every $\fv \in \Gr_d(A_r)$. The latter condition can be shown to be equivalent to $M$ being $(\KK K_r, \KK .\fv)$-projective in the sense of relative homological 
algebra, cf.\ \cite{Ho56}. \end{Remarks}

\bigskip

\begin{Prop}\label{CatRep1} The following statements hold:
\begin{enumerate}
\item $\repp(K_r,d) \subseteq \EKP(K_r)$ is a torsion free class that is closed under $\tau^{-1}_{K_r}$.
\item $\repp(K_r,d)$ contains all preprojective representations.
\item We have $\repp(K_r,r\!-\!1) \subseteq \repp(K_r,r\!-\!2) \subseteq \cdots \subseteq \repp(K_r,1) = \EKP(K_r)$.
\end{enumerate} \end{Prop}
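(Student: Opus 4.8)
The plan is to prove the three assertions of Proposition~\ref{CatRep1} by systematically exploiting the characterizations of $\repp(K_r,d)$ obtained in Theorem~\ref{Fam5} and the behaviour of rank varieties under exact sequences recorded in Lemma~\ref{Fam2}.

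\textbf{Part (3): the chain of inclusions.} This is the most transparent assertion and I would dispatch it first, since it organizes the rest. Fix $1\!\le\!e\!<\!d\!\le\!r\!-\!1$ and $M \in \repp(K_r,d)$. Given $\fw \in \Gr_e(A_r)$, pick any $\fv \in \Gr_d(A_r)$ with $\fw \subseteq \fv$. Choosing $\alpha \in \msim^{-1}(\fv)$ and writing $\alpha\!=\!\alpha'\circ\iota$ with $\iota$ the inclusion $A_e \hookrightarrow A_d$ adapted to $\fw \subseteq \fv$, the pull-back $\iota^\ast$ takes the projective $\KK K_d$-module $\alpha^\ast(M)$ — projective by Remark~(4) following the definition of $\repp(K_r,d)$, or equivalently by Theorem~\ref{Fam5}(4) — to a module over $\KK K_e$; but pull-back along an injection of arrow spaces takes projectives to projectives (Remark~(2) after Theorem~\ref{Fam5}), so $(\alpha')^\ast$ applied to $M$ restricted appropriately is projective, i.e.\ $\fw \notin \cV(K_r,e)_M$. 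Hence $\cV(K_r,e)_M\!=\!\emptyset$ and $M \in \repp(K_r,e)$. The base case $\repp(K_r,1)\!=\!\EKP(K_r)$ is exactly Remark~(1) following Theorem~\ref{Fam5}. The one point that needs a line of care is that \emph{every} $e$-plane $\fw$ is contained in \emph{some} $d$-plane, which holds since $e\!<\!d\!\le\!r$; and that the relevant $\alpha\circ\beta$-type factorization is covered by Proposition~\ref{Fam3}(5), which already produces the needed compatibility of cokernels.

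\textbf{Part (2): preprojectives lie in $\repp(K_r,d)$.} By part (3) it suffices to treat $d\!=\!r\!-\!1$, but it is just as easy to argue directly. Let $P$ be preprojective; then $P$ is a direct summand of $P_n(r)^{\oplus k}$ for suitable $n,k$ via the almost split sequences $(0)\to P_i\to rP_{i+1}\to P_{i+2}\to(0)$, so by Lemma~\ref{Fam2}(2) it is enough to show $\cV(K_r,d)_{P_n(r)}\!=\!\emptyset$ for all $n$, i.e.\ $\alpha^\ast(P_n(r))$ is projective for every $\alpha \in \Inj_\KK(A_d,A_r)$. I would induct on $n$: for $n\!=\!0$, $P_0(r)\!=\!S_2\!=\!P_0(d)$ restricts to a projective; for $n\!=\!1$, $\alpha^\ast(P_1(r))$ has dimension vector $(1,d)$ and its structure map $\psi_{\alpha^\ast(P_1(r))}$ is, up to the identification, the injection $A_d\!\otimes\!\KK \hookrightarrow A_r\!\otimes\!\KK$ precomposed suitably — injective because $\alpha$ is — so Theorem~\ref{Fam5}(3)$\Rightarrow$(4) forces $\alpha^\ast(P_1(r)) \cong \Delta(d)P_0(d)\oplus P_1(d)$, which is projective. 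For the inductive step, apply $\alpha^\ast$ (exact on modules with no projective summands, but here everything in sight is projective so there is no obstruction) to the almost split sequence and use that an extension of projectives by projectives is projective, together with the induction hypothesis. Alternatively, and perhaps more cleanly, use the description $P_n(r)\!=\!\tau^{-1}_{K_r}(P_{n-2}(r))$ and defer to part~(1), which asserts closure under $\tau^{-1}_{K_r}$ — then (2) reduces to the cases $n\in\{0,1\}$ only.

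\textbf{Part (1): torsion-free class, closed under $\tau^{-1}_{K_r}$.} That $\repp(K_r,d) \subseteq \EKP(K_r)$ is immediate from part~(3). That it is closed under submodules is Remark~(2) following the definition (itself a consequence of Lemma~\ref{Fam2}(1), since $\cV(K_r,d)_{M'}\subseteq\cV(K_r,d)_M$). For a torsion-free class one additionally needs closure under extensions: if $(0)\to M'\to M\to M''\to(0)$ with $M',M''\in\repp(K_r,d)$, then Lemma~\ref{Fam2}(1) gives $\cV(K_r,d)_M \subseteq \cV(K_r,d)_{M'}\cup\cV(K_r,d)_{M''}\!=\!\emptyset$. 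Closure under quotients is \emph{not} claimed and indeed fails, which is why ``torsion-free'' and not ``Serre'' is the right word. The genuinely substantive point is \textbf{closure under $\tau^{-1}_{K_r}$}, and this is where I expect the main obstacle to lie. The clean route is via Theorem~\ref{Fam5}: $M\in\repp(K_r,d)$ iff $\Hom_{K_r}(E(\fv),M)\!=\!(0)$ for all $\fv\in\Gr_d(A_r)$. Since each $E(\fv)$ is regular (hence not injective, as $S_1$ is the only indecomposable injective that is regular... in fact no regular module is injective over $K_r$ for $r\ge 2$), one has $E(\fv)\cong\tau_{K_r}(\tau^{-1}_{K_r}E(\fv))$, and the Auslander--Reiten formula gives $\Hom_{K_r}(E(\fv),\tau^{-1}_{K_r}M)\cong\overline{\Hom}_{K_r}(\tau_{K_r}E(\fv),M) \oplus (\text{possibly})\,\mathrm{D}\Ext^1_{K_r}(M,E(\fv))$-type terms — I would instead use that $\repp(K_r,d)$ consists of modules with no injective summands (being inside $\EKP(K_r)$), so $\tau^{-1}_{K_r}$ restricted there is exact and $\Hom_{K_r}(E(\fv),\tau^{-1}_{K_r}M)\cong\Ext^1_{K_r}(E(\fv),M)^{\text{via AR duality}}$; and then the hereditary AR formula $\Ext^1_{K_r}(E(\fv),M)\cong\mathrm{D}\overline{\Hom}_{K_r}(M,\tau_{K_r}E(\fv))$. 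The cleanest formulation: by the defining Definition, $E(\fv)\!=\!D_{K_r}(\tau_{K_r}(\coker\bar\alpha))$, so $\tau_{K_r}E(\fv)$ relates back to $\coker\bar\alpha$, whose relevant $\Ext^1$ and $\Hom$ groups into $M$ are governed by Proposition~\ref{Fam3}(4) and Proposition~\ref{Fam4}(2); chasing these identifications should show $\Hom_{K_r}(E(\fv),\tau^{-1}_{K_r}M)\!=\!(0)$ follows from $\Hom_{K_r}(E(\fv),M)\!=\!(0)$. The bookkeeping of which AR-translate sits where, and confirming no projective/injective summands interfere so that $\tau^{-1}_{K_r}$ behaves exactly, is the part I would be most careful with; everything else is formal.
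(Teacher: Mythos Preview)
Your argument for~(3) is correct and matches the paper's: extend $\fw\in\Gr_e(A_r)$ to some $\fv\in\Gr_d(A_r)$ and use $\ker\psi_{M,\fw}\subseteq\ker\psi_{M,\fv}=(0)$; you phrase this via projectivity of pull-backs, which is equivalent by Theorem~\ref{Fam5}.

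For~(1) and~(2) the paper simply invokes Theorem~\ref{Fam5} together with \cite[(2.1.3)]{Bi20}, so your self-contained attempt is different in spirit. Two issues, however:

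\textit{Part~(2).} Your inductive step is miscast: in $(0)\to\alpha^\ast P_i\to r\alpha^\ast P_{i+1}\to\alpha^\ast P_{i+2}\to(0)$ the module $\alpha^\ast P_{i+2}$ is a \emph{quotient}, not an extension, and a quotient of a projective by a projective need not be projective (already over $\KK K_1$ one has $S_1\cong P_1/P_0$). Your alternative via $\tau^{-1}_{K_r}$ is correct but depends on~(1). The one-line proof you are missing is: each $E(\fv)$ is regular, and $\Hom_{K_r}(R,P)=(0)$ whenever $R$ is regular and $P$ preprojective (\cite[(VIII.2.13)]{ASS06}), so Theorem~\ref{Fam5} gives $P\in\repp(K_r,d)$ immediately.

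\textit{Part~(1).} Your verification of the torsion-free axioms via Lemma~\ref{Fam2} is fine. Closure under $\tau^{-1}_{K_r}$, however, you leave as ``chasing identifications'', and the AR-duality manipulations you list do not by themselves close the loop: you need to pass from $\Hom_{K_r}(E(\fu),\tau^{-1}_{K_r}M)\ne(0)$ back to $\Hom_{K_r}(E(\fv),M)\ne(0)$, and nothing in your sketch explains why vanishing for the $E(\fv)$'s should propagate to their $\tau$-translates. The argument behind \cite[(2.1.3)]{Bi20}---which the paper reprises later in the proof of Lemma~\ref{Const3}(2a)---runs as follows. From $\Hom_{K_r}(\tau_{K_r}E(\fu),M)\ne(0)$ and $\udim\tau_{K_r}E(\fu)=(r\!-\!d,1)$ one gets $d\dim_\KK(\tau_{K_r}E(\fu))_1>d\!im_\KK(\tau_{K_r}E(\fu))_2$, so Proposition~\ref{Fam4}(2) forces $\Hom_{K_r}(E(\fv),\tau_{K_r}E(\fu))\ne(0)$ for \emph{every} $\fv$. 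Now $\tau_{K_r}E(\fu)$ is elementary, and one invokes the bridging property of elementary modules (\cite[(2.1.1)]{Bi20}): since $E(\fv)$ and (the regular part of) $M$ are regular with nonzero maps into and out of the elementary $\tau_{K_r}E(\fu)$, one concludes $\Hom_{K_r}(E(\fv),M)\ne(0)$. This last step---that elementary modules, being the simple objects of the regular category, allow nonzero maps to be composed through them---is the ingredient your sketch lacks.
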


\begin{proof} (1) and (2) follow from Theorem \ref{Fam5} and \cite[(2.1.3)]{Bi20}.

(3) Let $M \in \repp(K_r,b)$ for some $1\! <\! b\!<\! r$ and consider $\fv \in \Gr_{b-1}(A_r)$. Then there is $\fw \in \Gr_b(A_r)$ such that $\fv \subseteq \fw$. Theorem \ref{Fam5} yields
\[ \ker\psi_{M,\fv} \subseteq \ker\psi_{M,\fw} = (0),\]
so that $M \in \repp(K_r,b\!-\!1)$. \end{proof}

\bigskip

\begin{Remark} Let $X \subseteq \Gr_d(A_r)$ be a subset. Using Theorem \ref{Fam5} and \cite[(2.1.3)]{Bi20}, one can show that (1) and (2) of the foregoing result hold for the full subcategory $\rep_X(K_r,d)$, whose objects
$M$ satisfy $\cV(K_r,d)_M \subseteq X$. \end{Remark} 

\bigskip
\noindent
In the sequel the shift functors $\sigma_{K_r},\sigma^{-1}_{K_r}: \rep(K_r) \lra \rep(K_r)$ will be of major importance. These functors correspond to the BGP-reflection functors but take into account that the opposite 
quiver of $K_r$ is isomorphic to $K_r$, i.e.\ $D_{K_r} \circ \sigma_{K_r} \cong \sigma^{-1}_{K_r} \circ D_{K_r}$, where $D_{K_r} \colon \rep(K_r) \lra \rep(K_r)$ denotes the standard duality. 

Given a representation $M \in \rep(K_r)$, $\sigma_{K_r}(M)$ is by definition the representation 
\[ (\sigma_{K_r}(M)_1,\sigma_{K_r}(M)_2) = (\ker \psi_M,M_1), \]
where we identify $\psi_M$ by means of the basis $\{\gamma_1,\ldots, \gamma_r\}$ with the map $\psi_M : (M_1)^r \lra M_2, (m_i) \mapsto \sum^{r}_{i=1} M(\gamma_i)(m_i)$. By definition, $[\sigma_{K_r}(M)](\gamma_i) 
\colon \sigma_{K_r}(M)_1 \lra \sigma_{K_r}(M)_2 = \pi_{i}|_{\ker \psi_M}$, with $\pi_i \colon (M_1)^r \lra M_1$ being the projection onto the $i$-th component. If $f \in \Hom_{K_r}(M,N)$, then $\sigma_{K_r}(f)_1 : \sigma_{K_r}(M)_1 \lra \sigma_{K_r}(N)_1$ is the restriction to $\ker\psi_M$ of the map
\[ (m_i)_{i\le i\le r} \mapsto (f_1(m_i))_{1\le i \le r},\]
while $\sigma_{K_r}(f)_2\!:=\!f_1$. 

According to \cite[(VII.5.3(b))]{ASS06}, $\sigma_{K_r}$ is left exact, while $\sigma_{K_r}^{-1}$ is right exact and the functor $\sigma_{K_r}$ induces an equivalence
\[ \sigma_{K_r} : \rep_2(K_r) \lra \rep_1(K_r)\]
between the full subcategories $\rep_i(K_r)$ of $\rep(K_r)$, whose objects don't have any direct summands isomorphic to $S_i$. By the same token, $\sigma_{K_r}^{-1}$ is a quasi-inverse of $\sigma_{K_r}$. 
Moreover, $\sigma_{K_r}$ and $\sigma^{-1}_{K_r}$ induce quasi-inverse equivalences on the full subcategory $\reg(K_r) \subseteq \rep(K_r)$ of regular representations. We have $\sigma_{K_r} \circ \sigma_{K_r} \cong 
\tau_{K_r}$ as well as $\sigma_{K_r}(P_{i+1}(r)) \cong  P_{i}(r)$ for all $i\!\geq\!0$. 

Recall that $\{ \udim P_i(r) \ ; \  i \in \NN_0 \}$ consists exactly of those tuples $(a,b) \in \NN^2_0$ that satisfy $a\!<\!b$ and $q_r(a,b)\!=\!1$. Since all irreducible morphisms between preprojective representations in 
$\rep(K_r)$ are injective, it follows that $P_i(r)$ is isomorphic to a subrepresentation of $P_j(r)$ if and only if $i\! \leq\!j$. We 
also note that 
\[\udim \sigma_{K_r}(M)\! =\!(r \dim_\KK M_1\! -\! \dim_\KK M_2, \dim_\KK M_1)\] 
for $M$ indecomposable and not isomorphic to $P_0(r)$, while $\sigma_{K_r}(P_0(r))\!=\!(0)$.  In conjunction with the left exactness of $\sigma_{K_r}$ this implies that $\sigma_{K_r} \colon \rep_2(K_r) \lra \rep_1(K_r)$ is exact. By the same token, $\sigma_{K_r}^{-1} \colon \rep_1(K_r) \lra \rep_2(K_r)$ is exact.

\bigskip

\begin{Lem}\label{CatRep2} Let $1\! \leq \!d \!<\! r$. We have 
\[\{ [\sigma_{K_r}(E(\fv))] \ ; \  \fv \in \Gr_d(A_r) \} = \{  [\coker \fw] \ ;  \ \fw \in \Gr_{r-d}(A_r) \}.\] \end{Lem}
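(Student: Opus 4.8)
The plan is to identify both sides of the equation by exhibiting an explicit short exact sequence relating $E(\fv)$ to the universal-type module $(\KK,A_r)$, then applying the shift functor $\sigma_{K_r}$ and tracking dimension vectors. Recall that for $\fv \in \Gr_d(A_r)$ with chosen $\alpha \in \msim^{-1}(\fv)$, the module $\coker\bar\alpha$ is elementary of dimension vector $(1,r\!-\!d)$ (Proposition \ref{Fam3}(1)), and $E(\fv) = D_{K_r}(\tau_{K_r}(\coker\bar\alpha))$. Since $\sigma_{K_r}\circ\sigma_{K_r}\cong\tau_{K_r}$ and $D_{K_r}\circ\sigma_{K_r}\cong\sigma^{-1}_{K_r}\circ D_{K_r}$, one computes
\[ \sigma_{K_r}(E(\fv)) = \sigma_{K_r}(D_{K_r}(\sigma_{K_r}^2(\coker\bar\alpha))) \cong D_{K_r}(\sigma_{K_r}^{-1}(\sigma_{K_r}^2(\coker\bar\alpha))) \cong D_{K_r}(\sigma_{K_r}(\coker\bar\alpha)), \]
where the last step uses that $\coker\bar\alpha$ is regular (hence lies in $\rep_1(K_r)\cap\rep_2(K_r)$, so $\sigma_{K_r}^{-1}\sigma_{K_r}$ is the identity on it). So the task reduces to understanding $D_{K_r}(\sigma_{K_r}(\coker\bar\alpha))$.

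Next I would compute $\sigma_{K_r}(\coker\bar\alpha)$ directly from the short exact sequence $(0)\to(0,A_d)\xrightarrow{\bar\alpha}(\KK,A_r)\to\coker\bar\alpha\to(0)$. Writing $C = \coker\bar\alpha$, we have $C_1 = \KK$, $C_2 = A_r/\alpha(A_d)$, and $\psi_C: \KK^r\to A_r/\alpha(A_d)$ is the quotient of $(\lambda_i)\mapsto\sum\lambda_i\gamma_i$. Its kernel is an $(r-d)$-dimensional subspace of $\KK^r$ naturally identified (via the basis) with a $d$-dimensional… — more precisely, $\ker\psi_C$ corresponds to the $d$-dimensional subspace $\{(\lambda_i): \sum\lambda_i\gamma_i\in\alpha(A_d)\}$, which has dimension $d$. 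Wait: $\dim\ker\psi_C = r - \rk\psi_C = r - (r-d) = d$. So $\sigma_{K_r}(C)$ has dimension vector $(d, 1)$, and is $\coker\bar\alpha$ of some injection — actually its structure is exactly that of a representation $(\fw', \KK)$ where $\fw'\subseteq A_d$… Let me instead just dualize: $D_{K_r}(\sigma_{K_r}(C))$ has dimension vector $(1,d)$, it is regular (shift and duality preserve regularity), indecomposable, and I claim it is of the form $\coker\bar\beta$ for some $\beta\in\Inj_\KK(A_{r-d},A_r)$. By Proposition \ref{Fam3}(3), \emph{any} indecomposable $N$ with $\udim N = (1,d)$… no wait, I need $(1, r-(r-d)) = (1,d)$, and $\coker\fw$ for $\fw\in\Gr_{r-d}(A_r)$ has dimension vector $(1, r-(r-d)) = (1,d)$. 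Good, the dimension vectors match. So by Proposition \ref{Fam3}(3), $D_{K_r}(\sigma_{K_r}(E(\fv)))\cong\coker\bar\beta$ for some $\beta\in\Inj_\KK(A_{r-d},A_r)$, i.e.\ $\sigma_{K_r}(E(\fv))\in\{[\coker\fw]: \fw\in\Gr_{r-d}(A_r)\}$, giving the inclusion $\subseteq$.

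For the reverse inclusion I would use a dimension/cardinality argument or, better, run the construction backwards: Proposition \ref{Fam3}(2) says $\coker: \Gr_{r-d}(A_r)\to\{$iso classes$\}$ is injective, and likewise $\fv\mapsto[E(\fv)]$ is injective (since $E$ is built from $\coker$ via the bijective functors $\tau_{K_r}$, $D_{K_r}$ on regulars), so $\fv\mapsto[\sigma_{K_r}(E(\fv))]$ is injective on $\Gr_d(A_r)$. Thus the map $\Gr_d(A_r)\to\{[\coker\fw]:\fw\in\Gr_{r-d}(A_r)\}\cong\Gr_{r-d}(A_r)$ we constructed is injective between two copies of an irreducible projective variety of the same dimension $d(r-d)=(r-d)d$; if one checks it is a morphism (which it is, being assembled from the algebraic families underlying $\res$, $\tau_{K_r}$, $D_{K_r}$, $\sigma_{K_r}$ on the level of module varieties) then it is dominant, hence — being a map of projective varieties — surjective. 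Alternatively, and more cleanly, I would make the correspondence $\fv\leftrightarrow\fw$ explicit: show that $\sigma_{K_r}(E(\fv)) \cong \coker\fw$ where $\fw$ is a subspace of $A_r$ canonically determined by $\fv$ (presumably an ``orthogonal complement'' type construction via the trace form or via the defining exact sequences), and that $\fv\mapsto\fw$ is a bijection $\Gr_d(A_r)\to\Gr_{r-d}(A_r)$. The main obstacle is precisely pinning down this explicit bijection: carefully chasing the three successive functorial constructions ($\coker$, then $\sigma_{K_r}$, then $\tau_{K_r}\cong\sigma_{K_r}^2$, then $D_{K_r}$) through their effect on the underlying subspace data without getting the dualities and the role of the basis $\{\gamma_1,\dots,\gamma_r\}$ tangled; the dimension-vector bookkeeping is routine but the identification of which $(r-d)$-plane in $A_r$ actually appears requires care. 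Once that is in hand both inclusions follow at once.
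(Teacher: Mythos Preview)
Your argument for the inclusion $\subseteq$ is correct and essentially matches the paper's: both reduce to computing $\udim \sigma_{K_r}(E(\fv)) = (1,d)$ and invoking Proposition~\ref{Fam3}(3). (There is a slip near the end of that paragraph: you write ``$D_{K_r}(\sigma_{K_r}(E(\fv)))\cong\coker\bar\beta$'' where you clearly mean ``$\sigma_{K_r}(E(\fv))\cong\coker\bar\beta$''; the sentence following it confirms your intent.) The intermediate digression computing $\sigma_{K_r}(C)$ directly is unnecessary once you have the functorial identity $\sigma_{K_r}(E(\fv))\cong D_{K_r}(\sigma_{K_r}(\coker\bar\alpha))$, but it does no harm.

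For the reverse inclusion you are working much harder than necessary. The paper simply applies the already-proved $\subseteq$ direction with $d$ replaced by $r-d$: given $\fw\in\Gr_{r-d}(A_r)$, there exists $\fv\in\Gr_d(A_r)$ with $\sigma_{K_r}(E(\fw))\cong\coker\fv$. Then the very identity you established in your first display, $\sigma_{K_r}(E(\fv))\cong D_{K_r}(\sigma_{K_r}(\coker\fv))$, combined with $\sigma_{K_r}(\coker\fv)\cong\sigma_{K_r}^2(E(\fw))\cong\tau_{K_r}(E(\fw))$ and $D_{K_r}(\tau_{K_r}(E(\fw)))\cong\coker\fw$ (unwind the definition of $E(\fw)$), yields $\sigma_{K_r}(E(\fv))\cong\coker\fw$ directly. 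No Grassmannian geometry, no verification that an abstractly defined map is a morphism of varieties, and no explicit identification of ``which $(r-d)$-plane'' is required. Your two proposed routes would ultimately work, but the symmetry shortcut is both shorter and sidesteps exactly the obstacle you yourself flagged.
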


\begin{proof} Let $\fv \in \Gr_d(A_r)$. Since $E(\fv)$ is regular indecomposable, the same holds for $\sigma_{K_r}(E(\fv))$. As $\udim \sigma_{K_r}(E(\fv))\!=\! \udim \sigma^{-1}_{K_r}(D_{K_r}(\coker \fv))\!=\!(1,d)$, it
follows from Proposition \ref{Fam3}(3) that $\sigma_{K_r}(E(\fv))$ $\cong \coker \fw$ for some $\fw \in \Gr_{r-d}(A_r)$.

If $\fw \in \Gr_{r-d}(A_r)$, then the preceding argument provides $\fv \in \Gr_d(A_r)$ such that $\sigma_{K_r}(E(\fw)) \cong \coker \fv$. We therefore obtain
\[ \sigma_{K_r}(E(\fv)) \cong \sigma_{K_r}(D_{K_r}(\tau_{K_r}(\coker \fv))) \cong D_{K_r}(\sigma_{K_r}(\coker\fv)) \cong D_{K_r}(\tau_{K_r}(E(\fw))) \cong \coker \fw. \qedhere\]
\end{proof}

\bigskip
\noindent
Let $M,E \in \rep(K_r)$ with $E$ regular. We claim that
\begin{enumerate}
\item[(i)]  $\Hom_{K_r}(E,M) \cong \Hom_{K_r}(\sigma_{K_r}(E),\sigma_{K_r}(M))$, and
\item[(ii)] $\Hom_{K_r}(E,M) \cong \Hom_{K_r}(\tau_{K_r}(E),\tau_{K_r}(M))$.   \end{enumerate}
In order to verify (i), we write $M\!=\!S_2^l\!\oplus\!N$, with $S_2$ not being a direct summand of $N$. Since there are no non-zero homomorphisms from non-zero regular representations to 
projective representations (see \cite[(VIII.2.13)]{ASS06}), we have $\Hom_{K_r}(E,S_2)\!=\!(0)$. Moreover, $\sigma_{K_r}(S_2)\!=\!(0)$ implies  $\sigma_{K_r}(M) \cong \sigma_{K_r}(N)$ and we conclude
\begin{eqnarray*}  \Hom_{K_r}(E,M)  & \cong   & \Hom_{K_r}(E,S_2^l)\!\oplus\! \Hom_{K_r}(E,N) \cong  \Hom_{K_r}(\sigma_{K_r}(E),\sigma_{K_r}(N))\\ 
                                                           & \cong & \Hom_{K_r}(\sigma_{K_r}(E),\sigma_{K_r}(M)).
\end{eqnarray*} 
Since $\sigma_{K_r}^2 \cong \tau_{K_r}$, we also get (ii). 

\bigskip

\begin{Thm}\label{CatRep3} Let $1\! \leq\! d\! <\! r$ and $M \in \rep(K_r)$.
\begin{enumerate}
\item $M \in \repp(K_r,d)$ if and only if $\Hom_{K_r}(\coker\fw,\sigma_{K_r}(M))\!=\!(0)$ for all $\fw \in \Gr_{r-d}(A_r)$.
\item $M \in \repp(K_r,r\!-\!1)$ if and only if $\sigma_{K_r}(M) \in \EKP(K_r)$. 
\item We have $\sigma_{K_r}^{-1}(\repp(K_r,d)) \subseteq \repp(K_r,r\!-\!1)$. In particular, $\repp(K_r,d)$ is $\sigma^{-1}_{K_r}$-stable. \end{enumerate} \end{Thm}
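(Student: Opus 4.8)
For part (1) the plan is to push the defining condition of $\repp(K_r,d)$ through Theorem \ref{Fam5}, the $\sigma_{K_r}$-transport of $\Hom$-spaces recorded just above, and Lemma \ref{CatRep2}. In detail: the equivalence of (1) and (5) in Theorem \ref{Fam5} says that $M\in\repp(K_r,d)$, i.e.\ $\cV(K_r,d)_M=\emptyset$, holds exactly when $\Hom_{K_r}(E(\fv),M)=(0)$ for every $\fv\in\Gr_d(A_r)$. Every $E(\fv)$ is regular, so $\Hom_{K_r}(E(\fv),M)\cong\Hom_{K_r}(\sigma_{K_r}(E(\fv)),\sigma_{K_r}(M))$, and the condition becomes $\Hom_{K_r}(\sigma_{K_r}(E(\fv)),\sigma_{K_r}(M))=(0)$ for all $\fv\in\Gr_d(A_r)$. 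Since Lemma \ref{CatRep2} identifies $\{[\sigma_{K_r}(E(\fv))]:\fv\in\Gr_d(A_r)\}$ with $\{[\coker\fw]:\fw\in\Gr_{r-d}(A_r)\}$, this is precisely the vanishing asserted in (1).

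For part (2) the plan is to invoke (1) with $d$ replaced by $r\!-\!1$: then $r-d=1$, $\Gr_1(A_r)=\PP(A_r)$, and (1) reads ``$M\in\repp(K_r,r\!-\!1)$ iff $\Hom_{K_r}(\coker\fw,\sigma_{K_r}(M))=(0)$ for every line $\fw\in\Gr_1(A_r)$''. It then remains to prove the elementary fact that an arbitrary $N\in\rep(K_r)$ lies in $\EKP(K_r)$ if and only if $\Hom_{K_r}(\coker\fw,N)=(0)$ for all $\fw\in\Gr_1(A_r)$, and to apply it to $N=\sigma_{K_r}(M)$. For this fact, write $\fw=\KK a$ with $0\ne a\in A_r$; by Proposition \ref{Fam3} the module $\coker\fw$ is $(\KK,\,A_r/\KK a)$ with $i$-th structure map $\lambda\mapsto\lambda\overline{\gamma_i}$. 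A homomorphism $(f_1,f_2)\colon\coker\fw\to N$ is then pinned down by $n:=f_1(1)\in N_1$, since $f_2$ is forced to carry $\overline{\gamma_i}$ to $(\gamma_i)_N(n)$, and this prescription descends to $A_r/\KK a$ exactly when $a_N(n)=0$. Hence $\Hom_{K_r}(\coker\fw,N)\cong\ker a_N$, so vanishing for all $\fw$ amounts to injectivity of every $a_N$ with $0\ne a\in A_r$, i.e.\ to $N\in\EKP(K_r)$.

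Part (3) should then be a short deduction. Let $M\in\repp(K_r,d)$. By Proposition \ref{CatRep1}(1) we have $M\in\EKP(K_r)$; since $S_1\notin\EKP(K_r)$ (the map $a_{S_1}\colon\KK\to0$ is not injective for $a\ne0$) and $\EKP(K_r)$ is closed under subobjects, $M$ has no direct summand isomorphic to $S_1$, so $M\in\rep_1(K_r)$ and the equivalence $\sigma_{K_r}\colon\rep_2(K_r)\to\rep_1(K_r)$ with quasi-inverse $\sigma_{K_r}^{-1}$ gives $\sigma_{K_r}(\sigma_{K_r}^{-1}(M))\cong M$. Applying part (2) to the object $\sigma_{K_r}^{-1}(M)$ now shows $\sigma_{K_r}^{-1}(M)\in\repp(K_r,r\!-\!1)$, because $\sigma_{K_r}(\sigma_{K_r}^{-1}(M))\cong M\in\EKP(K_r)$. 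This yields $\sigma_{K_r}^{-1}(\repp(K_r,d))\subseteq\repp(K_r,r\!-\!1)$; combining this with the inclusion $\repp(K_r,r\!-\!1)\subseteq\repp(K_r,d)$ from Proposition \ref{CatRep1}(3) gives the claimed $\sigma_{K_r}^{-1}$-stability.

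I do not foresee a genuine obstacle: apart from the elementary identification $\Hom_{K_r}(\coker\fw,N)\cong\ker a_N$ in part (2), everything is an assembly of Theorem \ref{Fam5}, the $\Hom$-transport under $\sigma_{K_r}$, Lemma \ref{CatRep2}, and the chain in Proposition \ref{CatRep1}(3). The one point to handle with care is the passage in part (3) where $\sigma_{K_r}^{-1}$ must literally invert $\sigma_{K_r}$ on $M$, which is why one first notes that objects of $\repp(K_r,d)\subseteq\EKP(K_r)$ carry no $S_1$-summand.
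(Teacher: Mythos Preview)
Your proof is correct and for parts (1) and (3) follows the paper essentially verbatim. For part (2) you take a slightly more direct route than the paper: you invoke part (1) at $d=r\!-\!1$ to reduce to the condition $\Hom_{K_r}(\coker\fw,\sigma_{K_r}(M))=(0)$ for all $\fw\in\Gr_1(A_r)$, and then establish the elementary identification $\Hom_{K_r}(\coker(\KK a),N)\cong\ker a_N$ by hand. The paper instead re-runs the $\sigma_{K_r}$-transport, rewrites $\sigma_{K_r}(E(\fv))$ as $D_{K_r}(\sigma_{K_r}(\coker\fv))$, notes that this has dimension vector $(1,r\!-\!1)$, and then appeals to Lemma~\ref{CatRep2} and Proposition~\ref{Fam3}(3) to match the resulting test family with the one characterising $\EKP(K_r)$ via Theorem~\ref{Fam5}. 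Your explicit Hom computation is a clean shortcut that bypasses this chain of identifications; incidentally, combining it with Proposition~\ref{Fam4}(2) yields $\coker(\KK a)\cong E(\KK a)$ for every $0\ne a\in A_r$, which is precisely the fact the paper's argument is implicitly encoding.
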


\begin{proof} (1) This is a direct consequence of Theorem \ref{Fam5}, Lemma \ref{CatRep2}, and (i).  

(2) In view of (i), we have $M \in \rep(K_r,r\!-\!1)$ if and only if 
\[ (0) = \Hom_{K_r}(E(\fv),M) \cong \Hom_{K_r}(\sigma_{K_r}(E(\fv)),\sigma_{K_r}(M)) \cong  \Hom_{K_r}(D_{K_r}(\sigma_{K_r}(\coker \fv)),\sigma_{K_r}(M))\] 
for all $\fv \in \Gr_{r-1}(A_r)$. Since $\dim_\KK D_{K_r}(\sigma_{K_r}(\coker \fv)) = (1,r\!-\!1)$ for all $\fv \in \Gr_{r-1}(A_r)$, the statement follows from Lemma \ref{CatRep2} and Proposition \ref{Fam3}(3).

(3) Let $M\!=\!\sigma_{K_r}^{-1}(N)$ for some $N \in \repp(K_r,d)$. Let $\fv \in \Gr_d(A_r)$. Since $\dim_\KK \Hom_{K_r}(E(\fv),S_1)\!=\!\dim_\KK E(\fv)_1\!\neq\!0$, it follows that $N \in \rep_1(K_r)$. 
Then Proposition \ref{CatRep1}(3) in conjunction with the equivalence $\rep_2(K_r) \lra \rep_1(K_r)$ yields $\sigma_{K_r}(M) \cong N \in \EKP(K_r)$, so that (2) ensures that $M \in \rep(K_r,r\!-\!1)$. 
By Proposition  \ref{CatRep1}(3), this also implies that $\repp(K_r,d)$ is $\sigma^{-1}_{K_r}$-stable.\end{proof}

\bigskip

\begin{Example}  In view of $\sigma_{K_r}(P_{i+1}(r)) \cong  P_{i}(r)$ for all $i\!\geq\!0$, parts (2) and (3) of Theorem \ref{CatRep3} imply inductively that $P_i(r) \in \repp(K_r,r\!-\!1)$ for all $i\!\ge\!0$. \end{Example}

\bigskip
\noindent
We finally record a topological property of the set of relative projective modules of fixed dimension vector. The relevance of the technical condition (2) will be clarified in the following section.

\bigskip  

\begin{Prop} \label{CatRep4} Let $V_1,V_2$ be $\KK$-vector spaces, $d\in \{1,\ldots,r\!-\!1\}$. Then the following statements hold:
\begin{enumerate}
\item The set $\repp(K_r,d)\cap\rep(K_r;V_1,V_2) \!=\!\{M \in \rep(K_r;V_1,V_2) \ ; \ \cV(K_r,d)_M\!=\!\emptyset\}$ is open. 
\item If $\Delta_{(V_1,V_2)}(d)\!\ge\!d(r\!-\!d)$, then $\repp(K_r,d)\cap\rep(K_r;V_1,V_2)$ lies dense in $\rep(K_r;V_1,V_2)$.  \end{enumerate} \end{Prop}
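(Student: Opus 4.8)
For part (1), the set $\repp(K_r,d)\cap\rep(K_r;V_1,V_2)$ is precisely the set of $M$ for which $\cV(K_r,d)_M=\emptyset$. By Theorem~\ref{Fam5}, this is the locus where $\dim_\KK\ker\psi_{M,\fv}=0$ for every $\fv\in\Gr_d(A_r)$, equivalently where $\rk(\psi_{M,\fv})=d\dim_\KK M_1$ for all $\fv$. One approach is to invoke upper semicontinuity of $M\mapsto\dim_\KK\Hom_{K_r}(E(\fv),M)$ on $\rep(K_r;V_1,V_2)$ (Lemma~\ref{MV1}, applied after noting $E(\fv)$ lives in a fixed dimension vector), giving for each fixed $\fv$ an open condition; but one needs uniformity over the projective variety $\Gr_d(A_r)$. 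The clean way is to form the incidence set inside $\rep(K_r;V_1,V_2)\times\Gr_d(A_r)$ cut out by $\rk(\psi_{M,\fv})<d\dim_\KK V_1$, which is closed (a determinantal condition, with $\psi_{M,\fv}$ depending algebraically on $(M,\fv)$ once one works on the $\GL(A_d)$-torsor $\Inj_\KK(A_d,A_r)$ or on affine charts of $\Gr_d$); its image under the projection to $\rep(K_r;V_1,V_2)$ is closed because $\Gr_d(A_r)$ is complete, and that image is exactly the complement of $\repp(K_r,d)\cap\rep(K_r;V_1,V_2)$. This yields openness.

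For part (2), openness from (1) reduces the claim to nonemptiness: it suffices to produce a single $M\in\repp(K_r,d)$ with $\udim M=(\dim_\KK V_1,\dim_\KK V_2)$, since then $\repp(K_r,d)\cap\rep(K_r;V_1,V_2)$ is a nonempty open subset of the irreducible variety $\rep(K_r;V_1,V_2)\cong\Hom_\KK(V_1,V_2)^r$ and hence dense. Set $m:=\dim_\KK V_1$, $n:=\dim_\KK V_2$, so the hypothesis is $n\ge dm+d(r-d)$. The plan is to build $M$ from preprojective representations, which by Proposition~\ref{CatRep1}(2) all lie in $\repp(K_r,d)$, together with a direct summand isomorphic to a suitable number of copies of $P_1(r)$ and $P_0(r)$; since $\repp(K_r,d)$ is closed under direct sums (it is a torsion-free class, Proposition~\ref{CatRep1}(1)), any such $M$ works. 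Concretely, one looks for nonnegative integers realizing $(m,n)$ as an $\NN_0$-combination of dimension vectors of representations already known to lie in $\repp(K_r,d)$ — for instance $a\cdot(1,r)+b\cdot(0,1)+\text{(a preprojective piece)}$ — and the arithmetic constraint $n\ge dm+d(r-d)$ is exactly what guarantees such a decomposition exists. Alternatively, and perhaps more transparently, one can take a single indecomposable preprojective $P_i(r)$ with $\udim P_i(r)=(a_i(r),a_{i+1}(r))$ for $i$ large and correct its dimension vector by adding copies of $P_1(r)=(1,r)$ and $P_0(r)=(0,1)$; here the point is that $P_1(r)$ contributes $r$ to the second coordinate per unit of the first, and the slack $n-dm\ge d(r-d)>0$ leaves room to absorb the discrepancy using the projective $P_0(r)=(0,1)$.

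The main obstacle is the combinatorial realization step in part (2): verifying that the numerical condition $\Delta_{(V_1,V_2)}(d)\ge d(r-d)$ precisely suffices to write $(m,n)$ as a nonnegative integer combination of dimension vectors of representations in $\repp(K_r,d)$. One has great freedom — all preprojectives are available, their dimension vectors $(a_i(r),a_{i+1}(r))$ grow roughly geometrically with ratio $L_r$, and one may additionally pad with $(1,r)$ and $(0,1)$ — so the existence should reduce to a short case analysis on $m$: for $m\le d$ the module is automatically projective (Remark~(3) after the definition of $\repp(K_r,d)$) and one just needs $n\ge dm$, which is weaker than the hypothesis; for $m>d$ one selects a preprojective $P_i(r)$ with $a_i(r)\le m$ and $a_{i+1}(r)$ not too large, then pads. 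I expect this to go through cleanly but it requires bounding the ratios $a_{i+1}(r)/a_i(r)\le r$ to control the overshoot in the second coordinate; that inequality, together with $n-dm\ge d(r-d)$, is what closes the argument.
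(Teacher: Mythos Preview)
Your approach to part~(1) is correct and essentially matches the paper: form the incidence locus in $\rep(K_r;V_1,V_2)\times\Gr_d(A_r)$ where $\rk(\psi_{M,\fv})<d\dim_\KK V_1$, observe it is closed, and project to $\rep(K_r;V_1,V_2)$ using completeness of the Grassmannian.

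Your approach to part~(2), however, has a genuine gap. You propose to exhibit an explicit $M\in\repp(K_r,d)$ of dimension vector $(m,n)$ as a direct sum of preprojective modules. This cannot work in general. Every preprojective $P_i(r)$ has $\udim P_i(r)=(a_i,a_{i+1})$ with $a_{i+1}/a_i\ge L_r$ for $i\ge 1$ (the ratios decrease to $L_r$), and $P_0(r)=(0,1)$ only increases the second coordinate. Hence any $\NN_0$-combination of preprojective dimension vectors with first coordinate $m$ has second coordinate at least $L_r m$. But the hypothesis $n\ge dm+d(r-d)$ allows $n/m$ to be arbitrarily close to $d$, and $d<L_r$ for every $d\in\{1,\ldots,r-1\}$. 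Concretely, take $r=3$, $d=1$, $(m,n)=(10,12)$: then $\Delta_{(V_1,V_2)}(1)=2=d(r-d)$, yet $12<L_3\cdot 10\approx 26.18$, so no direct sum of preprojectives has this dimension vector. (That $\repp(K_3,1)$ nonetheless contains representations of this dimension vector can be seen from the hyperplane-arrangement modules of Section~\ref{S:Ha}, but those come later and are regular, not preprojective.) Your case $m\le d$ is fine, and the padding with $P_0,P_1$ works there; the argument breaks precisely for $m>d$ with $n$ near the lower bound, which is the interesting case.

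The paper instead proves~(2) by a dimension count on the incidence variety $\cC_{(r,d)}$ already set up in~(1). One shows $\cC_{(r,d)}$ is irreducible of dimension $rmn-1-\Delta_{(V_1,V_2)}(d)+d(r-d)$ (via the projection to $\Gr_d(A_r)$, whose fibers are products of a determinantal variety with an affine space). Under the hypothesis this is at most $rmn-1$, so the image $\cX_{(r,d)}$ in $\rep(K_r;V_1,V_2)$ has dimension strictly less than $rmn$ and is therefore a proper closed subset; its complement $\repp(K_r,d)\cap\rep(K_r;V_1,V_2)$ is then nonempty and hence dense.
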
 

\begin{proof} (1) We interpret $\rep(K_r;V_1,V_2)$ as $\Hom_\KK(A_r\!\otimes_\KK\!V_1,V_2)$. In view of Theorem \ref{Fam5}, the relevant set is given by
\[ \cO_{(r,d)} := \{ \psi \in \Hom_\KK(A_r\!\otimes_\KK\!V_1,V_2) \ ; \ \psi\circ (\alpha\!\otimes \id_{V_1}) \in \Inj_\KK(A_d\!\otimes_\KK\!V_1,V_2) \ \ \ \forall \ \alpha \in \Inj_\KK(A_d,A_r)\} .\]
We consider the canonical map
\[ \msim : \Inj_\KK(A_d,A_r) \lra \Gr_d(A_r) \ \ ; \ \ \alpha \mapsto \im\alpha.\]
As noted before, this map defines a principal $\GL(A_d)$-bundle, and hence so does
\[ \kappa : \Hom_\KK(A_r\!\otimes_\KK\!V_1,V_2)\!\times\!\Inj_\KK(A_d,A_r) \lra \Hom_\KK(A_r\!\otimes_\KK\!V_1,V_2)\!\times\!\Gr_d(A_r) \ \ ; \ \ (\psi, \alpha) \mapsto (\psi,\im\alpha).\]
In particular, $\kappa$ is an open morphism. 

For $\psi \in \Hom_\KK(A_r\!\otimes_\KK\!V_1,V_2)$ and $\fv \in \Gr_d(A_r)$, we put $\psi_\fv\!:=\!\psi|_{\fv\otimes_\KK V_1}$. We consider the sets
\[ \cO_1\!:=\!\{(\psi, \alpha) \in \Hom_\KK(A_r\!\otimes_\KK\!V_1,V_2)\!\times\!\Inj_\KK(A_d,A_r) \ ; \ \rk(\psi\circ (\alpha\otimes\id_{V_1}))\!=\!d\dim_\KK V_1\}\] 
and 
\[ \cO_2 \!:= \! \{(\psi,\fv) \in \Hom_\KK(A_r\!\otimes_\KK\!V_1,V_2)\!\times\!\Gr_d(A_r) \ ; \ \rk(\psi_\fv)\!=\!d\dim_\KK V_1\}.\]
Since $\Hom_\KK(A_r\!\otimes_\KK\!V_1,V_2)\!\times\!\Inj_\KK(A_d,A_r) \lra \Hom_\KK(A_d\!\otimes_\KK\!V_1,V_2) \ ; \ (\psi,\alpha) \mapsto \psi\circ(\alpha\otimes\id_{V_1})$ is a morphism, lower semicontinuity of ranks 
ensures that $\cO_1$ is an open subset, so that 
\[\cO_2 = \kappa(\cO_1)\]
is open as well. As a result, 
\[ \cC_{(r,d)}\!:=\!\{(\psi,\fv) \in \Hom_\KK(A_r\!\otimes_\KK\!V_1,V_2)\!\times\!\Gr_d(A_r) \ ; \ \rk(\psi_\fv)\!<\!d\dim_\KK V_1\}.\]
is closed. 

As the projective variety $\Gr_d(A_r)$ is complete, the morphism $\pr : \Hom_\KK(A_r\!\otimes_\KK\!V_1,V_2)\!\times\!\Gr_d(A_r) \lra \Hom_\KK(A_r\!\otimes_\KK\!V_1,V_2) \ ; \ (\psi,\fv) \mapsto \psi$ is closed. 
Consequently, 
\[\cX_{(r,d)}:=\pr(\cC_{(r,d)})\] 
is a closed subset of the affine space $\Hom_\KK(A_r\!\otimes_\KK\!V_1,V_2)$. 

Suppose that $\psi \not \in \cO_{(r,d)}$. Then there is $\fv \in \Gr_d(A_r)$ such that $\rk(\psi_\fv)\!<\!d\dim_\KK V_1$, so that $(\psi,\fv) \in \cC_{(r,d)}$. It follows that $\psi \in \cX_{(r,d)}$. Conversely, if $\psi \in \cX_{(r,d)}$, 
then there is $\fv \in \Gr_d(A_r)$ such that $(\psi,\fv) \in \cC_{(r,d)}$. Thus, there is $\alpha \in \Inj_\KK(A_d,A_r)$ such that $\rk(\psi \circ (\alpha\otimes\id_{V_1}))\!<\!d\dim_\KK V_1$, whence $\psi\not \in \cO_{(r,d)}$.

As an upshot of the above, we obtain that 
\[ \cO_{(r,d)} = \Hom_\KK(A_r\!\otimes_\KK\!V_1,V_2)\!\smallsetminus\!\cX_{(r,d)}\] 
is open. 

(2) In view of (1), it suffices to show that $\repp(K_r,d)\cap\rep(K_r;V_1,V_2)\!\ne\!\emptyset$. Setting $n\!:=\!\dim_\KK V_1$ and $m\!:=\!\dim_\KK V_2$, we define for $\ell \in \{0,\ldots,n\}$
\[ \Hom_\KK(V_1,V_2)_{ \le \ell} :=  \{ f \in \Hom_\KK(V_1,V_2) \ ; \ \rk(f)\!\le\!\ell\}.\]
We proceed in several steps, beginning by recalling that (see \cite[Chap.IV,exerc. 3]{Pe})

(i) \textit{$\Hom_\KK (V_1,V_2)_{\le \ell}$ is a closed, irreducible subspace of dimension $\ell(n\!+\!m\!-\!\ell)$}.

\medskip
\noindent
Next, we verify the following claim 

(ii) \textit{The closed subset $\cC_{(r,d)}$ defined in (1) is an irreducible variety of dimension}
\[ \dim \cC_{(r,d)} = rmn\!-\!1\!-\!\Delta_{(V_1,V_2)}(d)\!+\!d(r\!-\!d).\] 

\smallskip
\noindent
In what follows, we let $\ell\!:=\!dn\!-\!1$. The algebraic group $\KK^\times$ acts on $\Hom_\KK(A_r\!\otimes_\KK\!V_1,V_2)\!\times\!\Gr_d(A_r)$ via
\[ \alpha\dact(\psi,\fv) := (\alpha\psi,\fv) \ \ \ \ \forall \ (\psi,\fv) \in \Hom_\KK(A_r\!\otimes_\KK\!V_1,V_2)\!\times\!\Gr_d(A_r).\]
Note that $\cC_{(r,d)}$ is $\KK^\times$-stable, so that every irreducible component $Z$ of $\cC_{(r,d)}$ enjoys the same property. 

We consider the surjective morphism
\[ q : \cC_{(r,d)} \lra \Gr_d(A_r) \  \  ; \  \  (\psi,\fv) \mapsto \fv\]
as well as 
\[ \iota_0 : \Gr_d(A_r) \lra \Hom_\KK(A_r\!\otimes_\KK\!V_1,V_2)\!\times\!\Gr_d(A_r) \ \ ; \ \  \fv \mapsto (0,\fv) \]
Let $Z$ be an irreducible component of $\cC_{(r,d)}$. Then $Z$ is a closed subset of $\Hom_\KK(A_r\!\otimes_\KK\!V_1,V_2)\!\times\!\Gr_d(A_r)$. If $\fv \in q(Z)$, then there is $\psi \in 
\Hom_\KK(A_r\!\otimes_\KK\!V_1,V_2)$ such that $(\psi,\fv) \in Z$, and the $\KK^\times$-stability mentioned above implies $\iota_0(\fv)\!=\!(0,\fv) \in \overline{\KK^\times\dact(\psi,\fv)} \subseteq Z$. Consequently, $q(Z)\!=\!
\iota_0^{-1}(Z)$ is closed.

Let $\fv \in \Gr_d(A_r)$. Writing $A_r\!=\!\fv\!\oplus\!\fw$, we obtain  
\[ q^{-1}(\fv) \cong\Hom_\KK(\fv\!\otimes_\KK\!V_1,V_2)_{\le \ell}\!\times\! \Hom_\KK(\fw\!\otimes_\KK\!V_1,V_2),\]
so that (i) ensures that $q^{-1}(\fv)$ is irreducible of dimension
\[ \dim q^{-1}(\fv) = \ell(m\!+\!dn\!-\!\ell)\!+\!mn(r\!-\!d) = \ell(m\!+\!1)\!+\!mn(r\!-\!d).\]
In view of \cite[(1.5)]{Fa04}, the variety $\cC_{(r,d)}$ is irreducible and the fiber dimension theorem \cite[(\S I.8, Thm.3)]{Mu} yields
\begin{eqnarray*} 
\dim \cC_{(r,d)} & = & \ell(m\!+\!1)\!+\!mn(r\!-\!d) \!+\!d(r\!-\!d)\\
              & = & (dn\!-\!1)(m\!+\!1)\!+\!mn(r\!-\!d)\!+\!d(r\!-\!d)\\ 
              & = & dmn\!+\!\!dn\!-\!m\!-\!1\!+\!mn(r\!-\!d)\!+\!d(r\!-\!d)\\  
              & = & rmn\!-\!1\!-\!\Delta_{(V_1,V_2)}(d)\!+\!d(r\!-\!d),
\end{eqnarray*} 
as desired. \hfill $\diamond$

\medskip
\noindent
By virtue of (ii) and our current assumption, the fiber dimension theorem implies
\[ \dim \cX_{(r,d)} \le \dim \cC_{(r,d)}  \le rmn\!-\!1 < \dim \Hom_\KK(A_r\!\otimes_\KK V_1,V_2),\]
so that $\cO_{(r,d)}\!=\!\Hom_\KK(A_r\!\otimes_\KK\! V_1,V_2)\!\smallsetminus\!\cX_{(r,d)}\!\ne\!\emptyset$. Hence there is $M \in \rep(K_r;V_1,V_2)$ such that $\cV(K_r,d)_M\!=\!\emptyset$. \end{proof}

\bigskip

\subsection{Modules of minimal type and applications} \label{S:MinType}
It is a consequence of work of Westwick \cite{We87} that  for ``most" objects $M \in \EKP(K_r)$ the inequality 
\[ \Delta_M(1) \ge r\!-\!1\]
holds. As we shall show below, the following definition naturally generalizes those $\EKP$-representations for which we have equality. 

\bigskip

\begin{Definition} Let $d \in \{1,\ldots, r\!-\!1\}$. We say that $M \in \repp(K_r,d)$ has {\it minimal type}, provided $\Delta_M(d)\!=\!d(r\!-\!d)$. \end{Definition} 

\bigskip
\noindent
Our approach rests on the following technical Lemma: 

\bigskip

\begin{Lem} \label{MinType1} Let $M \in \repp(K_r,d)$. Then
\[ C_M :=\{(m,\fv) \in M_2\!\times\!\Gr_d(A_r) \ ; \ m \in \im \psi_{M,\fv}\} \subseteq M_2\!\times\!\Gr_d(A_r)\] 
is a closed, irreducible subset of dimension $\dim C_M\!=\!d(r\!-\!d)\!+\!d\dim_\KK M_1$.
\end{Lem}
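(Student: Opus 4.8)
The plan is to realize $C_M$ as the image of a vector bundle over $\Gr_d(A_r)$ and then read off closedness, irreducibility, and dimension from that description. Fix $M \in \repp(K_r,d)$ and put $n\!:=\!\dim_\KK M_1$. The key point supplied by Theorem \ref{Fam5} is that for every $\fv \in \Gr_d(A_r)$ the map $\psi_{M,\fv} : \fv\!\otimes_\KK\!M_1 \lra M_2$ is injective; hence its image $\im\psi_{M,\fv} \subseteq M_2$ is a subspace of the constant dimension $dn$. So $C_M$ is exactly the total space of the tautological family whose fiber over $\fv$ is the $dn$-dimensional subspace $\im\psi_{M,\fv}$, viewed inside the trivial bundle $M_2\!\times\!\Gr_d(A_r)$.

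**Construction of the auxiliary bundle.** Let $\cU\!=\!\cU_{(r,d)} \subseteq A_r\!\otimes_\KK\!\cO_{\Gr_d(A_r)}$ be the universal subbundle, whose fiber at $\fv$ is $\fv$. Tensoring with $M_1$ gives a vector bundle $\cU\!\otimes_\KK\!M_1$ of rank $dn$ on $\Gr_d(A_r)$, and $\psi_M$ induces a morphism of bundles
\[ \Psi : \cU\!\otimes_\KK\!M_1 \lra M_2\!\otimes_\KK\!\cO_{\Gr_d(A_r)}, \]
which on the fiber over $\fv$ is precisely $\psi_{M,\fv}$. Let $E$ denote the total space of $\cU\!\otimes_\KK\!M_1$ (a quasi-projective, in fact irreducible, variety since $\Gr_d(A_r)$ is irreducible and $E$ is a vector bundle over it, of dimension $d(r\!-\!d)\!+\!dn$), and consider the morphism
\[ \Phi : E \lra M_2\!\times\!\Gr_d(A_r) \ \ ; \ \ (\xi,\fv) \mapsto (\Psi_\fv(\xi),\fv). \]
By construction the image of $\Phi$ is exactly $C_M$. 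Because $\Psi_\fv\!=\!\psi_{M,\fv}$ is injective for every $\fv$ (this is where $M \in \repp(K_r,d)$ is used), $\Phi$ is injective; in particular it is a bijective morphism $E \lra C_M$, and $E$ is irreducible, so $C_M$ is irreducible of dimension $d(r\!-\!d)\!+\!dn$ — provided we know $C_M$ is a variety, i.e. locally closed, which will follow from closedness.

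**Closedness.** For closedness the plan is to use properness of the Grassmannian. Consider the closed subvariety
\[ \widetilde{C}_M := \{(m,\fv,\xi) \in M_2\!\times\!E \ ; \ m = \Psi_\fv(\xi)\} \subseteq M_2\!\times\!E, \]
the graph of $\Phi$ composed with the projection — this is closed since it is cut out by the equation $m\!=\!\Psi_\fv(\xi)$, a morphism condition. Alternatively, and more cleanly, argue as in the proof of Proposition \ref{CatRep4}: the incidence set $\{(m,\fv) : m \in \im\psi_{M,\fv}\}$ is closed because, writing things out over an affine chart of $\Gr_d(A_r)$ where $\cU\!\otimes_\KK\!M_1$ is trivialized and $\Psi$ is given by a matrix, membership of $m$ in the column span of an injective (hence constant-rank $dn$) matrix is a closed condition — it is the vanishing of all $(dn\!+\!1)\!\times\!(dn\!+\!1)$ minors of the augmented matrix $[\,\Psi_\fv \mid m\,]$. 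Since the chart transition functions are morphisms, these closed conditions glue to show $C_M$ is closed in $M_2\!\times\!\Gr_d(A_r)$. Combined with the previous paragraph, $C_M$ is a closed irreducible subvariety of the asserted dimension.

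**Main obstacle.** The only genuinely delicate point is the dimension count, and it rests entirely on the injectivity of $\psi_{M,\fv}$: without the hypothesis $M \in \repp(K_r,d)$ the fibers of $\Phi$ would jump and one would at best get an upper bound on $\dim C_M$. Granting Theorem \ref{Fam5}, $\Phi : E \lra C_M$ is a bijective morphism from an irreducible variety of dimension $d(r\!-\!d)\!+\!d\dim_\KK M_1$, so $\dim C_M = \dim E = d(r\!-\!d)\!+\!d\dim_\KK M_1$, as claimed. (One does not even need $\Phi$ to be an isomorphism; a dominant morphism between irreducible varieties that is generically finite, or here injective, already equates dimensions.)
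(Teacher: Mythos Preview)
Your argument is correct and takes a genuinely different route from the paper's proof. The paper first constructs a morphism $\zeta_M : \Gr_d(A_r) \lra \Gr_{d\dim_\KK M_1}(M_2)$, $\fv \mapsto \psi_M(\fv\!\otimes_\KK\!M_1)$ (verifying via Pl\"ucker embeddings that this really is a morphism), and then obtains closedness of $C_M$ by writing it as $(\id_{M_2}\!\times\!\zeta_M)^{-1}(\cI_M)$ for the standard incidence variety $\cI_M \subseteq M_2\!\times\!\Gr_{d\dim_\KK M_1}(M_2)$. For irreducibility and dimension the paper analyzes the projection $\pr_2 : C_M \lra \Gr_d(A_r)$: the fibers are linear spaces of constant dimension $d\dim_\KK M_1$, and a $\KK^\times$-action argument shows that each irreducible component of $C_M$ has closed image under $\pr_2$, after which a general criterion \cite[(1.5)]{Fa04} gives irreducibility and the fiber dimension theorem gives the dimension.

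Your approach bypasses both the Pl\"ucker computation and the $\KK^\times$/\cite[(1.5)]{Fa04} machinery by working directly with the total space $E$ of $\cU_{(r,d)}\!\otimes_\KK\!M_1$: the map $\Phi$ is an injective morphism from an irreducible variety of the right dimension, so irreducibility and the dimension count are immediate, while closedness comes from the minor condition (equivalently, from the fact that a fiberwise injective bundle map has a subbundle as image). Your route is shorter; the paper's route has the minor advantage that the morphism $\zeta_M$ is reused elsewhere and that the fiber analysis via $\pr_2$ parallels the later computation of $\dim I_{P(M)}$ in Theorem~\ref{MinType2}. One cosmetic point: your graph argument via $\widetilde{C}_M$ does not close as stated (the projection $M_2\!\times\!E \lra M_2\!\times\!\Gr_d(A_r)$ is not proper since $E$ is an affine bundle), but you correctly abandon it for the minors argument, which is sound.
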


\begin{proof} We proceed in several steps.

\medskip
(a) \ \textit{Let $f : V \lra W$ be a $\KK$-linear map, $C \subseteq \Gr_q(V)$ be closed such that $\dim_\KK f(\fv)\!=\!q$ for all $\fv \in C$. Then 
     \[ \msim_{f,C} : C \lra \Gr_q(W) \ \ ; \   \   \fv \mapsto f(\fv)\]
     is a morphism of projective varieties.}
     
\smallskip
\noindent
We denote by $\mspl_W : \Gr_q(W) \lra \PP(\bigwedge^q(W))$ the Pl\"ucker embedding and observe that $(\mspl_W\circ\msim_{f,C})(\fv)\!=\!\bigwedge^q(f(\fv))\!=\!\bigwedge^q(f)(\bigwedge^q(\fv))$ for every $\fv \in C$. The 
map $\bigwedge^q(f) : \bigwedge^q(V) \lra \bigwedge^q(W)$ is $\KK$-linear and
$O_f\!:=\!\{ [x] \in \PP(\bigwedge^q(V)) \ ; \ \bigwedge^q(f)(x)\!\ne\!0\}$ is open and such that
\[ \varphi : O_f \lra \PP(\bigwedge^q(W)) \  ;  \ [x] \mapsto [\bigwedge^q(f)(x)]\]
is a morphism. Since $\mspl_V(C) \subseteq O_f$, it follows that $\mspl_W\circ\msim_{f,C}\!=\!\varphi\circ \mspl_V|_C$ is a morphism. Hence the same holds for $\msim_{f,C}$. \hfill $\diamond$

\medskip
(b) \ \textit{The map
         \[ \zeta_M : \Gr_d(A_r) \lra \Gr_{d\dim_\KK M_1}(M_2) \  \  ;   \  \   \fv \mapsto \psi_M(\fv\!\otimes_\KK\!M_1)\]
         is a morphism.}
         
\smallskip
\noindent
We first consider the map
\[ \eta_M : \Gr_d(A_r) \lra \Gr_{d\dim_\KK M_1}(A_r\!\otimes_\KK\!M_1) \ \ ; \ \ \fv \mapsto \fv\!\otimes_\KK\!M_1.\]
Then we have 
\[ (\mspl_{A_r\otimes_\KK M_1}\circ \eta_M)(\fv) = \bigwedge^{d\dim_\KK M_1}(\fv\!\otimes_\KK\!M_1) \cong \bigwedge^d(\fv)\!\otimes_\KK\!\bigwedge^{\dim_\KK M_1}(M_1) \cong \mspl_{A_r}(\fv)\!\otimes_\KK\!
\bigwedge^{\dim_\KK M_1}(M_1),\]
where the second to last isomorphism holds for dimension reasons. Hence $\mspl_{A_r\otimes_\KK M_1}\circ \eta_M$ is a morphism, so that $\eta_M$ enjoys the same property.

Consequently, $C\!:=\!\im \eta_M$ is closed. Since $\zeta_M\!=\!\msim_{\psi_M,C}\circ \eta_M$, our assertion follows from (a). \hfill $\diamond$

\smallskip
\noindent
The incidence variety $\cI_M\!:=\!\{(m,\fw) \in M_2\!\times\!\Gr_{d\dim_\KK M_1}(M_2) \ ; \ m \in \fw\}$ is known to be closed. By (b), the map
\[ \id_{M_2}\!\times\zeta_M : M_2\!\times\!\Gr_d(A_r) \lra M_2\!\times\!\Gr_{d\dim_\KK M_1}(M_2)\]
is a morphism. Hence $C_M\!=\!(\id_{M_2}\!\times \zeta_M)^{-1}(\cI_M)$ is closed as well. 

We consider the projection 
\[ \pr_2 : C_M \lra \Gr_d(A_r) \ \ ; \ \ (m,\fv) \mapsto \fv.\]
For every $\fv \in \Gr_d(A_r)$, the fiber $\pr_2^{-1}(\fv)\!\cong\!\im \psi_{M,v}$ is irreducible of dimension $d\dim_\KK M_1$. Note that $\KK^\times$ acts on $C_M$ via
\[ \alpha. (m,\fv) = (\alpha m,\fv) \  \  \  \  \forall \ \alpha \in \KK^\times, (m,\fv) \in C_M.\]
It follows that every irreducible component $C \subseteq C_M$ is $\KK^\times$-stable, so that $(m,\fv) \in C$ implies $(0,\fv) \in C$. The morphism
\[ \iota : \Gr_d(A_r) \lra C_M \  \  ;  \  \  \fv \mapsto (0,\fv)\]
thus yields $\pr_2(C)\!=\!\iota^{-1}(C)$, showing that $\pr_2(C)$ is closed. The irreducibility of $C_M$ now follows from \cite[(1.5)]{Fa04}, while its dimension is given by the fiber dimension theorem. \end{proof}

\bigskip
\noindent
For $M \in \rep(K_r)$ we put
\[ I_M := \bigcup_{\fv \in \Gr_d(A_r)}\im\psi_{M,\fv} \subseteq M_2.\]

\bigskip

\begin{Thm} \label{MinType2} Let $M \in \repp(K_r,d)$. Then we have
\[ \dim_\KK \ker\psi_M \le (r\!-\!d)(\dim_\KK M_1\!-\!\min\{\dim_\KK M_1,d\}).\]
In particular,
\[ \Delta_M(d) \ge (r\!-\!d)\min\{\dim_\KK M_1,d\}.\]
\end{Thm}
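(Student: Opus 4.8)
The key object is the variety $C_M \subseteq M_2 \times \Gr_d(A_r)$ from Lemma \ref{MinType1}, together with its first projection $\pr_1 : C_M \to M_2$, whose image is exactly $I_M = \bigcup_{\fv}\im\psi_{M,\fv}$. Since $C_M$ is irreducible and $\pr_1$ is a morphism of varieties (indeed $C_M$ is projective over $M_2$, so $\pr_1$ is a closed morphism), its image $I_M$ is a closed irreducible subset of the affine space $M_2$, and moreover $\dim I_M \le \dim C_M = d(r\!-\!d)\!+\!d\dim_\KK M_1$. First I would observe that $I_M$ is a cone: it is $\KK^\times$-stable under scalar multiplication on $M_2$, so $0 \in I_M$. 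The linear span of $I_M$ is precisely $\im\psi_M$, since $\im\psi_M = \psi_M(A_r \otimes_\KK M_1)$ is spanned by the images of the $\fv\otimes_\KK M_1$ as $\fv$ ranges over $\Gr_d(A_r)$ (already for $d\!=\!1$ these exhaust $A_r$). Hence $\dim_\KK \im\psi_M \le \dim_\KK(\text{span of }I_M)$, and I would want to bound this span by $\dim I_M$; but a closed cone need not span a space of dimension equal to its own, so this naive step is too weak on its own and the dimension count has to be done more carefully.

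**The core of the argument.** Rather than bounding the span, I would argue directly on $\dim_\KK\im\psi_M$ using the fibre dimension theorem applied to the \emph{other} projection together with a generic-point analysis. Let $n\!:=\!\dim_\KK M_1$ and set $k\!:=\!\min\{n,d\}$. Choose a $k$-dimensional subspace $\fv_0 \subseteq A_r$ with $\fv_0$ contained in some $d$-plane; since $M \in \repp(K_r,d) \subseteq \repp(K_r,k)$ by Proposition \ref{CatRep1}(3), the map $\psi_{M,\fv_0}$ is injective (for $d\ge n$ this says $\psi_M$ itself is injective on $\fv_0\otimes M_1$; in general apply Theorem \ref{Fam5}), so $\im\psi_{M,\fv_0}$ has dimension $kn$ and in particular $I_M$ contains a linear subspace of dimension $kn$. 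Thus $\dim I_M \ge kn$, and combined with $\dim I_M \le d(r\!-\!d)+dn$ this already gives nothing new. The point instead is to run the fibre-dimension argument on $\pr_1 : C_M \to I_M$: a generic fibre of $\pr_1$ over a point $m \in I_M$ consists of those $\fv$ with $m \in \im\psi_{M,\fv}$, and I would show this generic fibre has dimension at least $d(r\!-\!d) - (r\!-\!d)(n\!-\!k)$ by a linear-algebra count — fixing a generic $m$, the condition $m \in \psi_M(\fv\otimes M_1)$ cuts out a subvariety of $\Gr_d(A_r)$ whose codimension I can bound using that $\psi_M$ has kernel of controlled size. Running $\dim C_M = \dim I_M + \dim(\text{generic fibre})$ then yields $\dim I_M \le d(r\!-\!d)+dn - [d(r\!-\!d)-(r\!-\!d)(n\!-\!k)] = dn + (r\!-\!d)(n\!-\!k)$, and since $\dim_\KK\ker\psi_M = rn - \dim_\KK\im\psi_M$ and $\im\psi_M$ is the span of $I_M$, one translates $\dim I_M \le dn+(r\!-\!d)(n\!-\!k)$ into the stated bound on $\dim_\KK\ker\psi_M$, the $\Delta_M(d)$ inequality being then immediate from $\Delta_M(d) = \dim_\KK\im\psi_M - dn + \dim_\KK\ker\psi_M$...

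**Simpler route, which I would actually pursue.** The cleanest approach avoids fibre dimensions entirely. Note $\dim_\KK \im\psi_M = rn - \dim_\KK\ker\psi_M$, so the two displayed inequalities are equivalent and it suffices to prove $\dim_\KK\ker\psi_M \le (r\!-\!d)(n\!-\!k)$ with $k=\min\{n,d\}$. Consider the morphism $\zeta_M : \Gr_d(A_r) \to \Gr_{dn}(M_2)$, $\fv \mapsto \psi_M(\fv\otimes M_1)$ from step (b) of Lemma \ref{MinType1} (it lands in $\Gr_{dn}$ precisely because $M\in\repp(K_r,d)$, i.e. each $\psi_{M,\fv}$ is injective). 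Dually, composing with orthogonality, $\fv \mapsto \psi_M(\fv\otimes M_1)$ determines, via the surjections $M_2 \twoheadrightarrow M_2/\psi_M(\fv\otimes M_1)$, a family of quotients; the intersection $\bigcap_{\fv}\psi_M(\fv\otimes M_1)$ contains no information directly, but the key identity is $\sum_{\fv \in \Gr_d(A_r)} \psi_M(\fv\otimes M_1) = \im\psi_M$. I would now pick a generic complete flag of $d$-planes and use that the family $\zeta_M$ is non-degenerate to force $\dim_\KK \im\psi_M$ to be large; concretely, choosing $\fv_1,\ldots,\fv_N$ spanning $A_r$ with each $\dim\fv_j = d$ and successive sums growing by exactly $r\!-\!d$ if possible, injectivity of each $\psi_{M,\fv_j}$ forces each $\psi_M(\fv_j\otimes M_1)$ to meet the previous span in codimension at least... — this is where the real combinatorial estimate lives, and it is the main obstacle: controlling how much $\psi_M(\fv_{j+1}\otimes M_1)$ can fail to add to the running span, knowing only that $\psi_M$ restricted to each $\fv_j\otimes M_1$ is injective and that $\fv_j\cap\fv_{j+1}$ has dimension $d\!-\!1$. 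I expect the bound to come out of a careful induction: if $W_j := \psi_M(\fv_1\otimes M_1)+\cdots+\psi_M(\fv_j\otimes M_1)$ then $\dim_\KK W_{j+1} - \dim_\KK W_j \ge n - (n-k) \cdot (\text{something})$, chosen so that after the flag exhausts $A_r$ one gets $\dim_\KK\im\psi_M \ge rn - (r\!-\!d)(n\!-\!k)$, equivalently $\dim_\KK\ker\psi_M \le (r\!-\!d)(n\!-\!k)$. The hardest step is pinning down this per-step increment; I would handle it by reducing, via a generic choice of the one-dimensional "new direction" $a \in \fv_{j+1}\smallsetminus\fv_j$, to understanding $\ker(\psi_M) \cap (\KK a \otimes M_1)$ and using that $M \in \repp(K_r,1) = \EKP(K_r)$ so each $a_M$ is injective, hence $\dim_\KK(\KK a\otimes M_1)\cap\ker\psi_M$ is controlled by how $a_M(M_1)$ overlaps the previous images.
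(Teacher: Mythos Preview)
Your proposal has a genuine gap, and none of your three sketches closes it. The difficulty is that you work throughout with $I_M \subseteq M_2$, the \emph{downstream} image. An upper bound on $\dim I_M$ cannot yield a lower bound on $\dim_\KK \im\psi_M$ (the span of $I_M$), since a cone may span a space of strictly larger dimension; you note this yourself in the first paragraph, yet in the second paragraph you try to ``translate $\dim I_M \le dn + (r\!-\!d)(n\!-\!k)$ into the stated bound on $\dim_\KK\ker\psi_M$'', which is exactly the illegitimate step. Your generic-fibre claim for $\pr_1 : C_M \to I_M$ is also unsubstantiated: the fibres depend on $\psi_M$ and you give no argument for the claimed lower bound. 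The flag-induction approach in your third paragraph is left at the level of a hope (``I expect the bound to come out of a careful induction''), and the per-step increment you need is precisely the whole content of the theorem.

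The paper's key move is to work \emph{upstream} in $A_r\!\otimes_\KK\!M_1$ rather than in $M_2$. Apply Lemma~\ref{MinType1} not to $M$ but to the projective representation $P(M) = (M_1, A_r\!\otimes_\KK\!M_1, \id)$: then $I_{P(M)} = \bigcup_\fv \fv\!\otimes_\KK\!M_1 \subseteq A_r\!\otimes_\KK\!M_1$ is \emph{independent of $\psi_M$}, so its dimension can be computed exactly by the fibre-dimension theorem. Writing $x = \sum a_i \otimes m_i$ with the $m_i$ a basis of $M_1$, the fibre over $x$ is the set of $\fv$ containing the coefficient space $\fc(x) = \langle a_1,\ldots,a_s\rangle$; generically $\dim_\KK\fc(x) = \min\{s,d\}$, and one finds $\dim I_{P(M)} = ds + (r\!-\!d)\min\{s,d\}$. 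Now the hypothesis $M \in \repp(K_r,d)$ says precisely that $\ker\psi_M \cap (\fv\!\otimes_\KK\!M_1) = (0)$ for every $\fv$, i.e.\ $\ker\psi_M \cap I_{P(M)} = \{0\}$. Since $\ker\psi_M$ is a \emph{linear subspace} of the affine space $A_r\!\otimes_\KK\!M_1$ and $I_{P(M)}$ is a closed subvariety, the affine dimension theorem gives $0 \ge \dim_\KK\ker\psi_M + \dim I_{P(M)} - rs$, whence $\dim_\KK\ker\psi_M \le rs - ds - (r\!-\!d)\min\{s,d\} = (r\!-\!d)(s - \min\{s,d\})$. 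The lesson: put yourself in the ambient space where $\ker\psi_M$ actually lives, so that intersection-theoretic tools apply directly.
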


\begin{proof} We consider the representation
\[ P(M)\!:=\!(M_1,A_r\!\otimes_\KK\!M_1,\id_{A_r\otimes_\KK M_1}).\]
Since $P(M)\cong (\dim_\KK M_1)P_1(r)$ is projective, Lemma \ref{MinType1} shows that the set
\[ C_{P(M)} = \{ (x,\fv) \in (A_r\!\otimes_\KK\!M_1)\!\times\!\Gr_d(A_r) \ ; \ x \in \fv\!\otimes_\KK\!M_1\}\]
is closed, irreducible and of dimension $\dim C_{P(M)}\!=\!d\dim_\KK M_1\!+\!d(r\!-\!d)$. The projection
\[ \pr_1 : C_{P(M)} \lra A_r\!\otimes_\KK\!M_1\]
has image $I_{P(M)}\!=\!\bigcup_{\fv \in \Gr_d(A_r)}\fv\!\otimes_\KK\!M_1 \subseteq A_r\!\otimes_\KK\!M_1$. Since $\Gr_d(A_r)$ is complete and $C_{P(M)}$ is closed and irreducible, $I_{P(M)}$ is a closed, irreducible 
subset of $A_r\!\otimes_\KK\!M$. Moreover, given $x \in I_{P(M)}$, we have $\pr_1^{-1}(x)\cong \{\fv \in \Gr_d(A_r) \ ; \ x \in \fv\!\otimes_\KK\!M_1\}$. 

Let $\{m_1,\ldots, m_s\}$ be a basis of $M_1$, so that $s\!=\!\dim_\KK M_1$. Every $x \in A_r\!\otimes_\KK\!M_1$ is of the form $x\!=\!\sum_{i=1}^s v_i(x)\otimes m_i$. We let $\fc(x)\!:=\!\langle v_i(x) \ , \ i \in \{1,\ldots, s\}
\rangle \subseteq A_r$ be the coefficient space of $x$ and put $\ell(x)\!=\!\dim_\KK\fc(x)$. Writing $v_i(x)\!:=\!\sum_{j=1}^ra_{ij}(x)\gamma_j$ and $A(x)\!:=\!(a_{ij}(x)) \in \Mat_{s\times r}(k)$, we obtain $\dim_\KK\fc(x)\!=\!
\rk(A(x))$. Since $\rk(A(x))\!\le\!\min\{s,d\}$ for all $x \in I_{P(M)}$, it follows from $\sum_{i=1}^{\min\{s,d\}}\gamma_i\otimes m_i \in I_{P(M)}$ and the lower semicontinuity of $x \mapsto \rk(A(x))$ that   
\[ O_M := \{ x \in I_{P(M)} \ ; \ \ell(x)\!=\!\min\{s,d\}\}\]
is a dense open subset of $I_{P(M)}$. 

Let $x \in O_M$. Then $(x,\fv )\in \pr_1^{-1}(x)$ if and only if $\fc(x) \subseteq \fv$. If $s\!\ge\!d$, then $\pr^{-1}_1(x)\!=\!\{(x\,\fc(x))\}$. Alternatively, $\pr_1^{-1}(x) \cong \Gr_{d-s}(A_r/\fc(x))$,
so that $\dim \pr^{-1}_1(x)\!=\!(d\!-\!s)(r\!-\!d)$. The fiber dimension theorem now shows that 
\[\dim I_{P(M)} = ds\!+\!(r\!-\!d)\min\{s,d\}.\] 
Note that $M \in \repp(K_r,d)$ implies that $\ker\psi_M\cap I_{P(M)}\!= \{0\}$.  The affine dimension theorem \cite[(7.1)]{Ha77} now ensures that
\[ 0 = \dim \ker\psi_M\cap I_{P(M)} \ge \dim_\KK \ker \psi_M\!+\!\dim I_{P(M)}\!-\!\dim_\KK(A_r\!\otimes_\KK\!M_1),\]
whence
\[ \dim_\KK\ker\psi_M\le rs\!-\!\dim I_{P(M)} = (r\!-\!d)(s\!-\!\min\{s,d\}).\]
Finally, 
\[ \dim_\KK M_2 \ge \rk(\psi_M) = rs\!-\!\dim_\KK\ker \psi_M \ge rs\!-\!(r\!-\!d)(s\!-\!\min\{s,d\}) = ds\!+\!(r\!-\!d)\min\{s,d\},\]
so that $\Delta_M(d)\!\ge\!(r\!-\!d)\min\{\dim_\KK M_1,d\}$. \end{proof} 

\bigskip
\noindent
Theorem \ref{MinType2} has been used by the first named author \cite{Bi22} to determine the orbit representatives of the dimension vectors of the elementary $K_r$-modules relative to the actions given by 
$\sigma_{K_r}$ and the duality. This extends work by Ringel \cite{Ri16} concerning $r\!=\!3$, where module representatives were also identified. In this regard, the case $r\!=\!3$ appears to be special.  

We shall see later that the following result reflects the scarcity of indecomposable vector bundles $\cF$ on $\Gr_d(A_r)$ of rank $\rk(\cF)\!<\!d(r\!-\!d)$. 

\bigskip

\begin{Cor} \label{MinType3}  Let $M,N \in \repp(K_r,d)$ be representations. 
\begin{enumerate}
\item Suppose that $\Delta_M(d)\!<\!d(r\!-\!d)$. 
\begin{enumerate}
\item If $M$ is indecomposable, then $M\cong P_0(r)$, or $d\!\ne\!1$ and $M\!\cong P_1(r)$.
\item $M \cong \Delta_M(r)P_0(r)\!\oplus\!(\dim_\KK M_1)P_1(r)$ is projective. \end{enumerate}
\item Suppose that $\Delta_M(d)\!=\!d(r\!-\!d)$. 
\begin{enumerate} 
\item If $M$ is not projective, then every $f \in \Hom_{K_r}(M,N)\!\smallsetminus\!\{0\}$ is injective. 
\item $M$ is a brick or projective.
\item If $M$ is not projective, then $\dim_\KK M_1\!\ge\!d\!+\!1$. 
\item If $\dim_\KK M_1\!\ge\!d\!+\!1$, then $M$ is not projective. \end{enumerate} \end{enumerate} \end{Cor}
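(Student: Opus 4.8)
The plan is to deduce everything from Theorem \ref{MinType2} together with the observation, recorded in the Remarks following the definition of $\repp(K_r,d)$, that $\dim_\KK M_1 \le d$ forces an object $M$ of $\repp(K_r,d)$ to be projective, namely $M \cong \Delta_M(r)P_0(r)\oplus(\dim_\KK M_1)P_1(r)$; the only substantive point is (2)(a), from which (2)(b) is formal. For part (1), if $\Delta_M(d) < d(r-d)$ then Theorem \ref{MinType2} gives $(r-d)\min\{\dim_\KK M_1,d\} \le \Delta_M(d) < d(r-d)$, so $\min\{\dim_\KK M_1,d\} < d$ and hence $\dim_\KK M_1 < d$; the quoted observation then yields $M \cong \Delta_M(r)P_0(r)\oplus(\dim_\KK M_1)P_1(r)$, which is (1)(b), and by Krull--Schmidt an indecomposable such $M$ is isomorphic to $P_0(r)$ or to $P_1(r)$. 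Since $\udim P_1(r) = (1,r)$ we have $\Delta_{P_1(r)}(d) = r-d$, and $r-d < d(r-d)$ holds precisely when $d > 1$, so the case $M \cong P_1(r)$ can occur only for $d \ne 1$, which gives (1)(a). For (2)(c), its contrapositive says that $\dim_\KK M_1 \le d$ implies $M$ is projective, which is exactly the quoted observation. For (2)(d), a projective $M \cong aP_0(r)\oplus bP_1(r)$ with $\Delta_M(d) = d(r-d)$ satisfies $a + (r-d)b = d(r-d)$, whence $a = (r-d)(d-b) \ge 0$ and so $\dim_\KK M_1 = b \le d$, which is the contrapositive of (2)(d).

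For (2)(a) I would first extract from the minimal type condition that $\psi_M$ is surjective: since $M$ is not projective, (2)(c) gives $\dim_\KK M_1 \ge d+1$, so $\min\{\dim_\KK M_1,d\} = d$, and the first inequality of Theorem \ref{MinType2} yields $\rk\psi_M = r\dim_\KK M_1 - \dim_\KK\ker\psi_M \ge r\dim_\KK M_1 - (r-d)(\dim_\KK M_1 - d) = \dim_\KK M_2$; hence $\psi_M$ is onto, and therefore $M$ has no quotient isomorphic to $S_2$, since a surjection $M \to S_2$ would force $\im\psi_M$ into a proper subspace of $M_2$. Now take $0 \ne f \colon M \to N$, put $K := \ker f$, and suppose $K \ne 0$. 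As $\repp(K_r,d)$ is closed under submodules (Lemma \ref{Fam2}), both $K$ and $\im f$ lie in $\repp(K_r,d)$; using $\Delta_M(d) = \Delta_K(d) + \Delta_{\im f}(d)$ together with Theorem \ref{MinType2} and the hypothesis $\Delta_M(d) = d(r-d)$, one gets $\min\{\dim_\KK K_1,d\} + \min\{\dim_\KK(\im f)_1,d\} \le d$, while $\dim_\KK K_1 + \dim_\KK(\im f)_1 = \dim_\KK M_1 \ge d+1$. A short case distinction on these two relations forces $\dim_\KK K_1 = 0$ or $\dim_\KK(\im f)_1 = 0$. In the first case $f_1$ is injective, so $\dim_\KK(\im f)_1 = \dim_\KK M_1 \ge d+1$ and thus $\Delta_{\im f}(d) \ge d(r-d)$ by Theorem \ref{MinType2}, while $\Delta_K(d) = \dim_\KK K_2 \ge 1$ since $K \ne 0$; hence $\Delta_M(d) \ge 1 + d(r-d) > d(r-d)$, a contradiction. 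In the second case $\im f$ is a nonzero direct sum of copies of $S_2$, contradicting the absence of an $S_2$-quotient of $M$. Therefore $K = 0$ and $f$ is injective.

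Finally (2)(b) is formal: if $M$ is projective there is nothing to prove, and otherwise (2)(a) applied with $N = M$ shows that every nonzero endomorphism of $M$ is injective, hence bijective since $M$ is finite-dimensional; thus $\End_{K_r}(M)$ is a finite-dimensional division algebra over the algebraically closed field $\KK$, so $\End_{K_r}(M) = \KK$ and $M$ is a brick. The main obstacle is (2)(a): one must set up the inequalities for $\dim_\KK K_1$ and $\dim_\KK(\im f)_1$ so that they genuinely force one of the two to vanish, and then recognize that surjectivity of $\psi_M$, i.e.\ minimal type together with non-projectivity, is exactly what rules out the remaining case where $\im f$ is semisimple. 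The remaining parts are routine bookkeeping with Theorem \ref{MinType2} and the Remark on modules with $\dim_\KK M_1 \le d$.
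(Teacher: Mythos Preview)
Your proof is correct. The main departure from the paper is in part (2)(a). The paper argues as follows: assuming $\Delta_{\ker f}(d)\ne 0$, one gets $\Delta_{\im f}(d)<d(r-d)$, so $\im f$ is projective by part (1); after ruling out $\Delta_{\im f}(d)=0$ (which would force $\im f=0$), one also gets $\Delta_{\ker f}(d)<d(r-d)$, so $\ker f$ is projective by (1) as well; the short exact sequence then splits and $M$ is projective, a contradiction. Thus $\Delta_{\ker f}(d)=0$, which forces $\ker f=(0)$. This is shorter because it feeds both $\ker f$ and $\im f$ back into the already established part (1), avoiding your separate verification that $\psi_M$ is surjective and the ensuing case analysis on $\dim_\KK K_1$ versus $\dim_\KK(\im f)_1$. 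Your route, on the other hand, is more self-contained: it extracts the surjectivity of $\psi_M$ (equivalently, the absence of a $P_0(r)$-quotient) as an explicit consequence of minimal type plus non-projectivity, which is a useful fact in its own right.

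There are also minor differences in (1) and (2)(c). For (1), the paper proves (1)(a) first via the minimal projective resolution of an indecomposable $M\not\cong P_0(r)$ and then deduces (1)(b) by Krull--Remak--Schmidt; you instead derive $\dim_\KK M_1<d$ directly from Theorem~\ref{MinType2} and invoke the Remark on objects with $\dim_\KK M_1\le d$, obtaining (1)(b) first and (1)(a) as a corollary. For (2)(c), the paper uses (2)(b) together with the projective resolution, whereas you simply take the contrapositive and cite the same Remark. Your ordering (prove (2)(c) before (2)(a)) is logically independent of (2)(a) and (2)(b), so there is no circularity.
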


\begin{proof} (1a) Suppose first that $M\not\cong P_0(r)$ is indecomposable. Then $\Delta_M(r)\!\le\!0$ and 
\[ (\ast) \ \ \ \ (0) \lra -\Delta_M(r)P_0(r) \lra (\dim_\KK M_1)P_1(r) \lra M \lra (0)\]
is a projective resolution of $M$. In view of
\[ (\dim_\KK M_1)(r\!-\!d) = \Delta_M(d)\!-\!\Delta_M(r),\]
our current assumption in conjunction with Theorem \ref{MinType2} implies $d\!>\!\dim_\KK M_1$ and $\Delta_M(d)\!\ge\!(\dim_\KK M_1)(r\!-\!d)$, whence $\Delta_M(r)\!\ge\!0$ and $d\!\ne\!1$. Consequently, 
$\Delta_M(r)\!=\!0$, so that ($\ast$) and the indecomposability of $M$ show that $M\cong P_1(r)$. 

(1b) Since $\repp(K_r,d)$ is closed under taking subrepresentations, the assertion follows from the Theorem of Krull-Remak-Schmidt.

(2a) Let $f \in \Hom_{K_r}(M,N)\!\smallsetminus\!\{0\}$. Then the terms of the short exact sequence
\[ (0) \lra \ker f \lra M \stackrel{f}{\lra} \im f \lra (0)\]
belong to $\repp(K_r,d)$ and
\[ \Delta_M(d) = \Delta_{\ker f}(d)\!+\!\Delta_{\im f}(d).\]
If $\Delta_{\ker f}(d)\!\ne\!0$, then $\Delta_{\im f}(d)\!<\!d(r\!-\!d)$, so that (1) shows that $\im f$ is projective. In view of Theorem \ref{MinType2}, the assumption $\Delta_{\im f}(d)\!=\!0$ implies $(\im f)_1\!=\!(0)$, whence
$0\!=\!-d\dim_\KK (\im f)_2$, which contradicts $f\!\ne\!0$. Hence $\ker f$ is also projective, and so is $M\cong \ker f\!\oplus\!\im f$. As $M$ is not projective, we conclude that $\Delta_{\ker f}(d)\!=\!0$, whence 
$\ker f\!=\!(0)$. 

(2b) Suppose that $M$ is not projective and let $f \in \End_{K_r}(M)\!\smallsetminus\!\{0\}$. In view of (2a), $f$ is bijective. Hence there is a non-zero eigenvalue $\alpha \in \KK$ and another application of (2a) yields
$f\!=\!\alpha\id_M$. 

(2c) As $M$ is a non-projective brick, the exact sequence ($\ast$) yields $\Delta_M(r)\!<\!0$ and
\[ (\dim_\KK M_1)(r\!-\! d) = \Delta_{(\dim_\KK M_1) P_1(r)}(d) > \Delta_M(d) = d(r\!-\!d),\]
so that $\dim_\KK M_1\!\ge\!d\!+\!1$. 

(2d) Suppose that $M$ is projective. Then $M \cong \Delta_M(r)P_0(r)\!\oplus\!(\dim_\KK M_1)P_1(r)$, and we have
\[ 0 \le \Delta_M(r) = \Delta_M(d)\!-\!(\dim_\KK M_1)(r\!-\!d) \le -(r\!-\!d),\]
a contradiction. Hence $M$ is not projective.  \end{proof}

\bigskip

\begin{Cor} \label{MinType4} Suppose that $M \in \repp(K_r,d)$ is non-projective and of minimal type. Then $M \not \in \repp(K_r,d\!+\!1)$. \end{Cor}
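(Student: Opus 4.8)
The plan is to derive a contradiction by confronting the lower bound for $\Delta_M(d\!+\!1)$ supplied by Theorem \ref{MinType2} with the exact value of $\Delta_M(d\!+\!1)$ that is forced once one knows $M$ has minimal type for $d$ and is non-projective.

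First I would dispose of the boundary case $d\!=\!r\!-\!1$ separately: here $\Gr_r(A_r)\!=\!\{A_r\}$ and the restriction $M|_{A_r}$ to $\KK.A_r\!=\!\KK K_r$ is just $M$ itself, so $\repp(K_r,r)$ (under the evident convention) is precisely the class of projective representations; since $M$ is not projective, there is nothing to prove. Hence I may assume $d\!\le\!r\!-\!2$, which places $d\!+\!1$ in the admissible range $\{1,\ldots,r\!-\!1\}$, so that Theorem \ref{MinType2} and the category $\repp(K_r,d\!+\!1)$ make sense.

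The main step: since $M$ is non-projective of minimal type, Corollary \ref{MinType3}(2c) gives $\dim_\KK M_1\!\ge\!d\!+\!1$. Assume, for contradiction, that $M \in \repp(K_r,d\!+\!1)$. Applying Theorem \ref{MinType2} at level $d\!+\!1$, and using $\min\{\dim_\KK M_1,d\!+\!1\}\!=\!d\!+\!1$, yields $\Delta_M(d\!+\!1)\!\ge\!(r\!-\!d\!-\!1)(d\!+\!1)$. On the other hand, straight from the definition of $\Delta$ and the minimal-type hypothesis $\Delta_M(d)\!=\!d(r\!-\!d)$ one has $\Delta_M(d\!+\!1)\!=\!\Delta_M(d)\!-\!\dim_\KK M_1\!=\!d(r\!-\!d)\!-\!\dim_\KK M_1$. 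Combining the two relations gives $\dim_\KK M_1\!\le\!d(r\!-\!d)\!-\!(r\!-\!d\!-\!1)(d\!+\!1)\!=\!2d\!+\!1\!-\!r$; together with $\dim_\KK M_1\!\ge\!d\!+\!1$ this forces $r\!\le\!d$, contradicting $d\!\le\!r\!-\!2$.

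I do not expect a genuine obstacle here: once the reductions are in place the argument is a short arithmetic manipulation. The only points that need a little care are keeping $d\!+\!1$ inside the range where $\repp(K_r,\,\cdot\,)$ and Theorem \ref{MinType2} are available, and recognizing that the minimal-type assumption is exactly what converts the a priori one-directional bound of Theorem \ref{MinType2} into an equality $\Delta_M(d)\!=\!d(r\!-\!d)$ that can then be propagated verbatim to $\Delta_M(d\!+\!1)$.
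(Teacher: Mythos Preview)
Your argument is correct and matches the paper's proof essentially step for step: both use Corollary~\ref{MinType3}(2c) to obtain $\dim_\KK M_1\ge d+1$, compute $\Delta_M(d+1)=d(r-d)-\dim_\KK M_1$ from the minimal-type hypothesis, and then contrast this with the bound $\Delta_M(d+1)\ge (d+1)(r-d-1)$ from Theorem~\ref{MinType2} to force $\dim_\KK M_1\le 2d+1-r$, a contradiction. Your explicit treatment of the boundary case $d=r-1$ is a small piece of additional care that the paper leaves implicit.
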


\begin{proof} Since $M$ is non-projective and of minimal type, Corollary \ref{MinType3}(2b),(2c) provides $x\!\ge\!d\!+\!1$ such that $\udim M\!=\!(x,d(r\!-\!d\!+\!x))$. We thus have
\[ (\ast) \ \ \ \ \Delta_M(d\!+\!1) = d(r\!-\!d)\!-\!x.\]
Suppose that $M \in \repp(K_r,d\!+\!1)$. Since $x\!\ge\!d\!+\!1$, Theorem \ref{MinType2} in conjunction with ($\ast$) gives $d(r\!-\!d)\!-\!x\!\ge\!(d\!+\!1)(r\!-\!d\!-\!1)$, whence $d\!+\!1\! \le\! x\!\le\! 2d\!+\!1\!-\!r$. As this 
contradicts $d\!\le\!r\!-\!1$, we conclude that $M \not \in \rep(K_r,d\!+\!1)$. \end{proof} 

\bigskip

\begin{Remarks} (1) It follows that every regular $M \in \repp(K_r,d)$ of minimal type is a brick. 

(2) Let $(V_1,V_2)$ be a pair of $\KK$-vector spaces. If $\Delta_{(V_1,V_2)}(d)\!\ge\!d(r\!-\!d)$, then Proposition \ref{Fam5} ensures that $\repp(K_r,d)\cap\rep(K_r;V_1,V_2)$ is not empty. Alternatively, this set is either empty 
(namely, if and only if $\Delta_M(r)\!<\!0$), or it consists of one $(\GL(V_2)\!\times\!\GL(V_1))$-orbit of projective representations. \end{Remarks}

\bigskip
\noindent
Since the category $\repp(K_r,d)$ is closed under taking extensions (see Theorem \ref{Fam5}), our next result provides a first characterization of objects in $\repp(K_r,d)$ in terms of those of minimal type.

\bigskip 

\begin{Prop} \label{MinType5} Let $M \in \repp(K_r,d)$ be such that $\Delta_M(d)\!>\!d(r\!-\!d)$. There exists an exact sequence
\[ (0) \lra a P_0(r) \lra M \stackrel{\pi}{\lra} M_{\min} \lra (0),\]
where $M_{\min}$ has minimal type and $a \in \NN$. \end{Prop}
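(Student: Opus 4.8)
The plan is to find inside $M$ a projective subrepresentation isomorphic to $aP_0(r)$ whose quotient lands in $\repp(K_r,d)$ and has $\Delta(d)$ equal to $d(r-d)$. Since $\Delta_M(d) = \dim_\KK M_2 - d\dim_\KK M_1$ and $P_0(r)=S_2$ has dimension vector $(0,1)$, quotienting by a copy of $aP_0(r)$ sitting as a subobject with $(aP_0(r))_1 = (0)$ decreases $\Delta(d)$ by exactly $a$. So I would set $a := \Delta_M(d) - d(r-d) \in \NN$ (positive by hypothesis) and look for a suitable $a$-dimensional subspace $W \subseteq M_2$ with $\psi_M(A_r \otimes_\KK M_1) \cap W = (0)$, giving a subrepresentation $(0,W,\psi_M|_{A_r\otimes W}) \cong aP_0(r)$ of $M$; then $M_{\min} := M/aP_0(r)$ has $(M_{\min})_1 = M_1$ and $\dim_\KK (M_{\min})_2 = \dim_\KK M_2 - a = d\dim_\KK M_1 + d(r-d)$, i.e. $\Delta_{M_{\min}}(d) = d(r-d)$.

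First I would check that such a $W$ exists: this only needs $\dim_\KK M_2 - \rk(\psi_M) \ge a$, i.e. $\dim_\KK \ker\psi_M \cdots$ — more precisely, we need the image $\psi_M(A_r\otimes M_1)$ to have codimension at least $a$ in $M_2$, which is exactly the statement $\dim_\KK M_2 - \rk(\psi_M) \ge a = \Delta_M(d) - d(r-d)$. Since $\rk(\psi_M) \le r\dim_\KK M_1$ always, and Theorem~\ref{MinType2} gives $\dim_\KK\ker\psi_M \le (r-d)(\dim_\KK M_1 - \min\{\dim_\KK M_1,d\})$, I would compute: $\dim_\KK M_2 - \rk(\psi_M) = \dim_\KK M_2 - r\dim_\KK M_1 + \dim_\KK\ker\psi_M = \Delta_M(d) - d\dim_\KK M_1 - (r-d)\dim_\KK M_1 + \dim_\KK\ker\psi_M$, hmm — let me instead argue directly that $\Delta_M(d) \ge d(r-d)$ forces enough room. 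Actually the cleanest route: choose any complement, but we also must ensure $M_{\min} \in \repp(K_r,d)$, and this is where Lemma~\ref{Fam2} enters — $\repp(K_r,d)$ is closed under quotients only via exact sequences with projective kernel. The key point is that $aP_0(r)$ is projective, hence $\cV(K_r,d)_{aP_0(r)} = \emptyset$, and by Remark following Lemma~\ref{Fam1} together with part (3) of Theorem~\ref{Fam5}, the sequence $(0) \to aP_0(r) \to M \to M_{\min} \to (0)$ with the first two terms in $\repp(K_r,d)$ forces... — no, $\repp(K_r,d)$ is a torsion-free class (Proposition~\ref{CatRep1}(1)), so it is closed under subobjects but not quotients in general. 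So I must argue that this particular quotient stays in $\repp(K_r,d)$, not use formal closure.

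The honest argument for $M_{\min} \in \repp(K_r,d)$: fix $\fv \in \Gr_d(A_r)$; I want $\psi_{M_{\min},\fv}$ injective. We have $(M_{\min})_1 = M_1$ and $\psi_{M_{\min},\fv} = q \circ \psi_{M,\fv}$ where $q : M_2 \to M_2/W$ is the quotient. Since $M \in \repp(K_r,d)$, $\psi_{M,\fv}$ is injective with image of dimension $d\dim_\KK M_1$. So $\psi_{M_{\min},\fv}$ is injective iff $\psi_M(\fv \otimes M_1) \cap W = (0)$. Thus I need to choose $W$ of dimension $a$ avoiding every one of the subspaces $\psi_M(\fv\otimes M_1)$, $\fv\in\Gr_d(A_r)$ — equivalently avoiding their union $I_M = \bigcup_{\fv}\im\psi_{M,\fv} \subseteq M_2$ from the discussion before Theorem~\ref{MinType2}. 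By the same incidence-variety computation as in the proof of Theorem~\ref{MinType2} (apply Lemma~\ref{MinType1} to $M$ itself, or note $I_M$ is the image of the complete variety $C_M$ under $\pr_1$), $I_M$ is a closed irreducible cone in $M_2$ of dimension $\le d(r-d) + d\dim_\KK M_1 = \rk(\psi_M) + d(r-d) - (r-d)\dim_\KK\ker\psi_M\cdots$; the relevant bound is simply $\dim I_M \le d(r-d) + d\dim_\KK M_1$, while $\dim_\KK M_2 = d\dim_\KK M_1 + d(r-d) + a$. Hence a generic $a$-dimensional subspace $W$ meets $I_M$ only in $0$ (the locus of $W \in \Gr_a(M_2)$ with $W \cap I_M \ne (0)$ is a proper closed subset, by the usual incidence-variety/fiber-dimension count: $\dim\{(x,W) : 0\ne x\in W\cap I_M\} \le (\dim I_M - 1) + a(\dim_\KK M_2 - a) < \dim\Gr_a(M_2) = a(\dim_\KK M_2 - a)$). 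Pick such a $W$; it is automatically $\psi_M$-stable in the trivial sense and yields the subrepresentation $aP_0(r)$, the sequence is exact by construction, $M_{\min}\in\repp(K_r,d)$ by the injectivity criterion above, and $\Delta_{M_{\min}}(d) = d(r-d)$ by the dimension count. The main obstacle is precisely the genericity argument that $W$ can be chosen to avoid $I_M$; everything else is bookkeeping with dimension vectors. Note $a\ge 1$ since $\Delta_M(d) > d(r-d)$, as required. $\hfill\square$
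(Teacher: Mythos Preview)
Your approach is exactly the paper's: set $a=\Delta_M(d)-d(r-d)$, find an $a$-dimensional subspace $W\subseteq M_2$ meeting the closed cone $I_M=\bigcup_{\fv}\im\psi_{M,\fv}$ only in $0$, and pass to the quotient. The paper invokes Noether Normalization (specifically \cite[(II.3.14)]{Ku}) to produce a linear subspace $X\subseteq M_2$ of dimension $\dim_\KK M_2-\dim I_M$ with $X\cap I_M=\{0\}$, then takes any $a$-dimensional $Y\subseteq X$; your incidence-variety count is the same existence statement argued directly.

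One slip to fix: in your fiber-dimension estimate you wrote
\[
\dim\{(x,W):0\ne x\in W\cap I_M\}\le(\dim I_M-1)+a(\dim_\KK M_2-a),
\]
but the fiber over a point $[x]\in\PP(I_M)$ is $\{W\in\Gr_a(M_2):x\in W\}\cong\Gr_{a-1}(M_2/\KK x)$, of dimension $(a-1)(\dim_\KK M_2-a)$, not $a(\dim_\KK M_2-a)$. With the correct exponent the bound becomes $(\dim I_M-1)+(a-1)(\dim_\KK M_2-a)$, and the condition for the projection to $\Gr_a(M_2)$ to miss a point is $\dim I_M\le\dim_\KK M_2-a=d\dim_\KK M_1+d(r-d)$, which is precisely what Lemma~\ref{MinType1} gives. (As written your inequality would force $\dim I_M<1$.) Also, $C_M$ is not itself complete; what you need is that the projection $M_2\times\Gr_d(A_r)\to M_2$ is closed because $\Gr_d(A_r)$ is complete, hence $I_M=\pr_1(C_M)$ is closed.
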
 

\begin{proof} By Lemma \ref{MinType1}, the set
\[ C_M =\{(m,\fv) \in M_2\!\times\!\Gr_d(A_r) \ ; \ m \in \im \psi_{M,\fv}\} \subseteq M_2\!\times\!\Gr_d(A_r)\] 
is a closed, irreducible subset of dimension $\dim C_M\!=\!d(r\!-\!d)\!+\!d\dim_\KK M_1$. Note that the morphism
\[ \pr_1 : C_M \lra M_2 \ \ ; \  \  (m,\fv) \mapsto m\]
has image $I_M\! =\!\bigcup_{\fv \in \Gr_d(A_r)}\im\psi_{M,\fv} \subseteq M_2$, so that $\Gr_d(A_r)$ being complete implies that $I_M$ is closed.\footnote{Observe that $\pr_1$ is the restriction of the projection 
$M_2\!\times\!\Gr_d(A_r)\lra M_2$, which is closed.} Moreover, $I_M$ is irreducible, conical and of dimension
\[ \dim I_M \le d(r\!-\!d)\!+\!d\dim_\KK M_1\!<\!\dim_\KK M_2.\]
The Noether Normalization Theorem thus implies the existence of a linear subspace $X \subseteq M_2$ of dimension $\dim_\KK M_2\!-\!\dim I_M$ and such that
\[ X\cap I_M = \{0\},\]
cf.\ \cite[(II.3.14)]{Ku}. Since $\dim_\KK X\!\ge\!\dim_\KK M_2\!-\!d\dim_\KK M_1\!-\!d(r\!-\!d)\!=\!\Delta_M(d)\!-\!d(r\!-\!d)\!>\!0$, we can find a subspace $(0) \subsetneq Y \subseteq X$ such that
\begin{enumerate}
\item[(i)] $\dim_\KK Y\!=\!\Delta_M(d)\!-\!d(r\!-\!d)$, and
\item[(ii)] $Y\cap I_M\!=\!\{0\}$. \end{enumerate}
We now consider $M_{\min}\!:=\!N\!:=\!(M_1,M_2/Y)$ together with the canonical projection $\pi_2 : M_2 \lra N_2$ and put $\psi_N\!:=\!\pi_2\circ \psi_M$. Then $\pi\!:=\!(\id_{M_1},\pi_2)$ defines a surjective morphism
$\pi : M \lra N$ such that $\ker\pi \cong (\dim_\KK Y)P_0(r)$. Given $\fv \in \Gr_d(A_r)$, (ii) yields $\im\psi_{M,\fv}\cap Y\!=\!(0)$, whence
\[ \ker \psi_{N,\fv} = \psi_{M,\fv}^{-1}(Y) = (0).\] 
Consequently, $M_{\min} \in \repp(K_r,d)$, while (i) yields $\Delta_{M_{\min}}(d)\!=\!\Delta_M(d)\!-\!\Delta_{(\dim_\KK Y)P_0(r)}(d)\!=\!\Delta_M(d)\!-\!\dim_\KK Y\!=\!d(r\!-\!d)$. \end{proof}

\bigskip
\noindent
We turn to the problem of embedding $K_r$-representations of minimal type into $K_{r+s}$-representations of the same type. 

\bigskip

\begin{Lem} \label{MinType6} Given $M \in \rep(K_r)$ such that $\dim_\KK M_1\!\ge\!d$, we have
\[ I_M = \bigcup_{W \in \Gr_d(M_1)} \psi_M(A_r\!\otimes_\KK\!W)\!=:\!J_M.\]
\end{Lem}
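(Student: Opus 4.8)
The statement asserts the equality of two subsets of $M_2$, so the plan is to prove $I_M \subseteq J_M$ and $J_M \subseteq I_M$. Recall the definitions: $I_M = \bigcup_{\fv \in \Gr_d(A_r)} \psi_M(\fv \otimes_\KK M_1)$ and $J_M = \bigcup_{W \in \Gr_d(M_1)} \psi_M(A_r \otimes_\KK W)$. The hypothesis $\dim_\KK M_1 \ge d$ guarantees that $\Gr_d(M_1)$ is non-empty, so both unions are over non-empty index sets. The key observation is that an element $m \in M_2$ lies in $I_M$ precisely when $m = \psi_M(x)$ for some $x \in A_r \otimes_\KK M_1$ whose ``support'' on the $A_r$-side can be squeezed into a $d$-dimensional subspace $\fv$, and lies in $J_M$ precisely when it can be written using an $x$ whose support on the $M_1$-side fits into a $d$-dimensional subspace $W$. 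So the heart of the matter is a symmetric statement about tensors: every element of $A_r \otimes_\KK M_1$ of tensor rank $\le d$ lies in $\fv \otimes_\KK M_1$ for some $\fv \in \Gr_d(A_r)$ and also in $A_r \otimes_\KK W$ for some $W \in \Gr_d(M_1)$.

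\textbf{The inclusion $I_M \subseteq J_M$.} Let $m \in I_M$, so $m = \psi_M(x)$ with $x \in \fv \otimes_\KK M_1$ for some $\fv \in \Gr_d(A_r)$. Choosing a basis $\{w_1,\dots,w_k\}$ of $\fv$ (so $k \le d$), write $x = \sum_{i=1}^k w_i \otimes n_i$ with $n_i \in M_1$. Let $W_0 := \langle n_1,\dots,n_k\rangle \subseteq M_1$; then $\dim_\KK W_0 \le k \le d$, and since $\dim_\KK M_1 \ge d$ we may enlarge $W_0$ to a subspace $W \in \Gr_d(M_1)$. By construction $x \in \fv \otimes_\KK W \subseteq A_r \otimes_\KK W$, so $m = \psi_M(x) \in \psi_M(A_r \otimes_\KK W) \subseteq J_M$.

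\textbf{The inclusion $J_M \subseteq I_M$.} This is the symmetric argument, and it is no harder. Let $m \in J_M$, so $m = \psi_M(x)$ with $x \in A_r \otimes_\KK W$ for some $W \in \Gr_d(M_1)$. Pick a basis $\{n_1,\dots,n_d\}$ of $W$ and write $x = \sum_{i=1}^d u_i \otimes n_i$ with $u_i \in A_r$. Set $\fv_0 := \langle u_1,\dots,u_d\rangle \subseteq A_r$; then $\dim_\KK \fv_0 \le d$, and since $d \le r$ (indeed $d \le r-1$ in our standing hypotheses, and certainly $d \le \dim_\KK A_r = r$) we may enlarge $\fv_0$ to some $\fv \in \Gr_d(A_r)$. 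Then $x \in \fv \otimes_\KK M_1$, so $m = \psi_M(x) \in \psi_M(\fv \otimes_\KK M_1) \subseteq I_M$. Combining the two inclusions gives $I_M = J_M$.

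\textbf{Remark on difficulty.} There is no real obstacle here; the only thing to be careful about is the use of the hypothesis $\dim_\KK M_1 \ge d$, which is exactly what allows the enlargement step in the first inclusion (and is why the lemma is stated with this restriction, whereas the definition $I_M = \bigcup_\fv \psi_M(\fv \otimes_\KK M_1)$ makes sense in general). One might alternatively phrase the proof by noting that both $I_M$ and $J_M$ equal $\psi_M\bigl(\{x \in A_r \otimes_\KK M_1 : \operatorname{rank}(x) \le d\}\bigr)$, where the rank of a tensor is the minimal number of simple tensors needed to express it, and then the equality is immediate from the symmetry of tensor rank together with $d \le \min\{r, \dim_\KK M_1\}$; but the direct two-inclusion argument above is cleaner to write out and avoids introducing extra terminology.
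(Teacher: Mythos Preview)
Your proof is correct and follows essentially the same approach as the paper's. The paper first proves the equality $I_{P(M)}=J_{P(M)}$ for the projective representation $P(M)=(M_1,A_r\otimes_\KK M_1,\id)$ (where $\psi_{P(M)}=\id$, so the statement reduces to the pure tensor-rank claim you describe in your closing remark) and then observes $I_M=\psi_M(I_{P(M)})=\psi_M(J_{P(M)})=J_M$; you carry out the same basis-and-enlargement argument directly for $M$, which amounts to the same thing. One trivial slip: since $\fv\in\Gr_d(A_r)$ you have $k=d$, not merely $k\le d$.
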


\begin{proof} Recall that $P(M)\!=\!(M_1,A_r\!\otimes_\KK\!M_1,\id_{A_r\!\otimes_\KK M_1})$. We first show that $I_{P(M)}\!=\!J_{P(M)}$. Let $x \in I_{P(M)}$. Then there is $\fv \in \Gr_d(A_r)$ such that 
$x \in \fv\!\otimes_\KK\!M_1$. Given a basis $\{a_1,\ldots, a_d\} \subseteq \fv$, we write $x\!=\!\sum_{i=1}^d a_i\otimes m_i$, so that there is $W \in \Gr_d(M_1)$ such that $\langle\{m_1,\ldots, m_d\}\rangle \subseteq W$. 
Consequently, $x \in A_r\!\otimes_\KK\!W$. As a result, $I_{P(M)} \subseteq J_{P(M)}$. The proof of the reverse inclusion is analogous. 

For general $M$, we note that $I_M\!=\!\psi_M(I_{P(M)})\!=\!\psi_M(J_{P(M)})\!=\!J_M$. \end{proof} 

\bigskip

\begin{Prop} \label{MinType7} Let $M \in \repp(K_r,d)$ be of minimal type such that $\dim_\KK M_1\!\ge\!d\!+\!1$. Then there exists $N \in \repp(K_{r+1},d)$ of minimal type such that
\[ N|_{K_r} \cong M\!\oplus\!dP_0(r).\]
\end{Prop}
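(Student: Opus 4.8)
The plan is to realise $N$ as a suitably generic member of the affine family of $K_{r+1}$-representations restricting to $M\oplus dP_0(r)$, and to locate it inside $\repp(K_{r+1},d)$ by a dimension count. Write $s:=\dim_\KK M_1$ and fix the decomposition $A_{r+1}=A_r\oplus\KK\gamma_{r+1}$. It suffices to produce $N\in\rep(K_{r+1})$ with $N_1=M_1$, $N_2=M_2\oplus\KK^d$, and $\psi_N|_{A_r\otimes_\KK M_1}=(\psi_M,0)$ (the composite of $\psi_M$ with $M_2\hookrightarrow M_2\oplus\KK^d$), together with $N\in\repp(K_{r+1},d)$: for any such $N$ the image of $\psi_N|_{A_r\otimes_\KK M_1}$ lies in $M_2\oplus(0)$, so $N|_{K_r}\cong M\oplus dP_0(r)$. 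Giving such an $N$ amounts to choosing $\psi_N(\gamma_{r+1}\otimes-)=:(\phi',\phi'')\in\Hom_\KK(M_1,M_2\oplus\KK^d)$, so these $N$ form an affine space $\cA\subseteq\rep(K_{r+1};M_1,M_2\oplus\KK^d)$ with $\dim\cA=s(\dim_\KK M_2+d)$. Every $N\in\cA$ satisfies $\Delta_N(d)=\Delta_M(d)+d=d(r-d)+d=d(r+1-d)$, so it remains only to find $N\in\cA\cap\repp(K_{r+1},d)$; any such $N$ is then automatically of minimal type.

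By Proposition \ref{CatRep4}(1) the set $\repp(K_{r+1},d)\cap\rep(K_{r+1};M_1,M_2\oplus\KK^d)$ is open, so $\cZ:=\cA\smallsetminus\repp(K_{r+1},d)$ is closed in the irreducible variety $\cA$, and I would be done upon showing $\dim\cZ<\dim\cA$. By Theorem \ref{Fam5}, $\cZ$ consists of those $N\in\cA$ for which $\psi_{N,\fv}$ is non-injective for some $\fv\in\Gr_d(A_{r+1})$; hence $\cZ$ is the image, under the first projection, of the incidence variety
\[ \cT:=\{(N,\fv,[\xi])\ ;\ N\in\cA,\ \fv\in\Gr_d(A_{r+1}),\ [\xi]\in\PP(\fv\otimes_\KK M_1),\ \psi_N(\xi)=0\},\]
a closed subvariety of the product of $\cA$ with the projectivised tautological bundle on $\Gr_d(A_{r+1})$. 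Thus $\dim\cZ\le\dim\cT$, and I would bound $\dim\cT$ by projecting onto the total space $\cE:=\{(\fv,[\xi])\}$, which has dimension $d(r+1-d)+ds-1$, and controlling the fibres.

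The heart of the argument is this fibre analysis. Writing $\xi=\eta+\gamma_{r+1}\otimes m$ with $\eta\in A_r\otimes_\KK M_1$ and $m\in M_1$ according to the decomposition above, the equation $\psi_N(\xi)=0$ becomes $\phi''(m)=0$ together with $\psi_M(\eta)=-\phi'(m)$. Suppose $m=0$; then $\xi=\eta$ lies in $(\fv\cap A_r)\otimes_\KK M_1$ and $\dim_\KK(\fv\cap A_r)\le d$. Since $M\in\repp(K_r,d)$, Theorem \ref{Fam5} shows that $\psi_{M,\fw}$ is injective for every $d$-dimensional $\fw\subseteq A_r$, hence for every subspace of dimension at most $d$; so $\psi_M$ is injective on $(\fv\cap A_r)\otimes_\KK M_1$, forcing $\xi=0$, which is excluded. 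Therefore $\cT$ lies over the locus where $m\ne0$, and there, fixing a representative $\xi$, the fibre over $(\fv,[\xi])$ equals $\{(\phi',\phi'')\ ;\ \phi'(m)=-\psi_M(\eta),\ \phi''(m)=0\}$ — an affine subspace of dimension $(s-1)\dim_\KK M_2+(s-1)d=(s-1)(\dim_\KK M_2+d)$, since for $m\ne0$ each of the two defining conditions imposes the maximal number of independent linear constraints.

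Combining, $\dim\cT\le\dim\cE+(s-1)(\dim_\KK M_2+d)$; substituting $\dim_\KK M_2=ds+d(r-d)$ gives $\dim\cA-\dim\cT\ge(\dim_\KK M_2+d)-d(r+1-d)-ds+1=1$. Hence $\dim\cZ\le\dim\cT<\dim\cA$, so $\cA\smallsetminus\cZ\ne\emptyset$, and any $N$ in this set lies in $\repp(K_{r+1},d)$, restricts to $M\oplus dP_0(r)$, and is of minimal type. I expect the main obstacle to lie in the fibre-dimension bookkeeping: precisely, verifying that the case $m=0$ is vacuous — which is exactly where the hypothesis $M\in\repp(K_r,d)$ enters — and recognising that working with $\PP(\fv\otimes_\KK M_1)$ rather than $\fv\otimes_\KK M_1$ is what upgrades the resulting estimate from $\le$ to the strict inequality $<$ needed to conclude (without projectivising, the count yields only $\dim\cZ\le\dim\cA$).
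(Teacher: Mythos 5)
Your proposal is correct, but it takes a genuinely different route from the paper. The paper first builds an auxiliary representation $Q$ with $Q_2=M_2\oplus Y_2$, $\dim_\KK Y_2=\dim_\KK M_1$, gluing the new arrow via an isomorphism $\lambda:\KK a\otimes_\KK M_1\to Y_2$; membership $Q\in\repp(K_{r+1},d)$ is then checked \emph{directly} from the transversality $\psi_M(A_r\otimes_\KK W)\cap\lambda(\KK a\otimes_\KK W)=(0)$ for $W\in\Gr_d(M_1)$, using Lemma \ref{MinType6}. Since $\Delta_Q(d)$ overshoots the minimal value by $\dim_\KK M_1-d>0$, the paper then invokes Proposition \ref{MinType5} to quotient out $(\dim_\KK M_1-d)P_0(r+1)$ and finishes with a pushout/pullback argument identifying the restriction of the quotient as $M\oplus dP_0(r)$; this is where the hypothesis $\dim_\KK M_1\ge d+1$ is genuinely used (both for $\Delta_Q(d)>d(r+1-d)$ and for the injectivity of $\pi\circ\iota$ via Corollary \ref{MinType3}(2a),(2d)). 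You instead parametrize \emph{all} candidates whose restriction is already $M\oplus dP_0(r)$ by the affine space $\cA$ of choices of $\psi_N(\gamma_{r+1}\otimes-)$, and show by an incidence-variety dimension count that the locus failing $\repp(K_{r+1},d)$ is a proper closed subset; your fibre analysis (exclusion of $m=0$ via $M\in\repp(K_r,d)$, exact fibre dimension $(\dim_\KK M_1-1)(\dim_\KK M_2+d)$ for $m\ne0$, and the gain of $1$ from projectivizing) is sound, and the arithmetic indeed yields $\dim\cT\le\dim\cA-1$. Your argument buys a one-step construction that bypasses Proposition \ref{MinType5} and the homological bookkeeping entirely, and — notably — never uses $\dim_\KK M_1\ge d+1$, so it also covers the projective case that the paper relegates to the Remark following the proposition; what the paper's route buys is an explicit representation $Q$ whose relative projectivity is verified pointwise rather than generically.
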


\begin{proof} Writing $A_{r+1}\!=\!A_r\!\oplus\!\KK a$, we first construct $Q \in \repp(K_{r+1},d)$ such that
\[ Q|_{K_r} \cong M\!\oplus (\dim_\KK M_1)P_0(r).\]
We put $Q_1\!:=\!M_1$, $Q_2\!:=\!M_2\!\oplus\! Y_2$, with $\dim_\KK Y_2\!=\!\dim_\KK M_1$ and pick an isomorphism
\[ \lambda : \KK a\!\otimes_\KK\!M_1 \stackrel{\sim}{\lra} Y_2.\]
Interpreting $\psi_M$ and $\lambda$ as elements of $\Hom_\KK(A_{r+1}\!\otimes_\KK\!Q_1,Q_2)$, we have
\[ (\ast) \ \ \ \ \ \ \ \ \  \psi_M(A_r\!\otimes_\KK\!W)\cap \lambda(\KK a\!\otimes_\KK\!W) = (0) \ \  \text{for all} \ \  W \in \Gr_d(M_1).\] 
We consider the representation $Q\!:=\!(M_1,Q_2,\psi_M\!+\!\lambda) \in \rep(K_{r+1})$.  Let $x \in \ker\psi_Q\cap I_{P(Q)}$. Lemma \ref{MinType6} provides $W \in \Gr_d(M_1)$ such that
$x \in \ker\psi_Q\cap (A_{r+1}\!\otimes_\KK W)$. Writing $x\!=\!y\!+\!z$, with $y \in A_r\!\otimes_\KK\!W$ and $z \in \KK a\!\otimes_\KK\!W$ we obtain $\lambda(z)\!=\!-\psi_M(y) \in \psi_M(A_r\!\otimes_\KK\!W)\cap 
\lambda(ka\!\otimes_\KK\!W)$, so that ($\ast$) implies $\lambda(z)\!=\!0\!=\!\psi_M(y)$. As $\lambda$ is injective, we conclude that $z\!=\!0$, while Lemma \ref{MinType6} in conjunction with $M \in \repp(K_r,d)$ yields 
$y \in \ker\psi_M\cap J_{P(M)}\!=\!\ker\psi_M\cap I_{P(M)}\!=\!(0)$. As a result, $x\!=\!0$, so that $Q \in \repp(K_{r+1},d)$. By construction, we have $Q|_{K_r} \cong M\!\oplus (\dim_\KK M_1)P_0(r)$.

Since $\Delta_Q(d)\!=\!\Delta_M(d)\!+\!\dim_\KK M_1\!=\!\dim_\KK M_1\!-\!d\!+\!d(r\!+\!1\!-\!d)$ and $\dim_\KK M_1\!>\!d$, Proposition \ref{MinType5} provides a short exact sequence
\[ (0) \lra (\dim_\KK M_1\!-\!d)P_0(r\!+\!1) \lra Q \stackrel{\pi}{\lra} Q_{\min} \lra (0),\]
where $N\!:=\!Q_{\min}$ has minimal type. Let $\iota : M \lra Q|_{K_r}$ be the given injection. Since $\dim_\KK M\!\ge\!d\!+\!1$, Corollary \ref{MinType3}(2d) ensures that $M$ is not projective. As $\ker \pi|_{K_r} \cong  
(\dim_\KK M_1\!-\!d)P_0(r)$, the map $\pi \circ \iota : M \lra  N|_{K_r}$ is not zero, and hence by Corollary \ref{MinType3}(2a) injective. The restriction of the sequence above yields
\[ (0) \lra (\dim_\KK M_1\!-\!d)P_0(r) \stackrel{\binom{\alpha}{\beta}}{\lra} (\dim_\KK M_1)P_0(r)\!\oplus\!M \stackrel{(\zeta,\pi\circ \iota)}{\lra} N|_{K_r} \lra (0),\]
and thanks to  \cite[(I.5.6)]{ARS95} there results a push-out and pull-back diagram 
\[ \begin{tikzcd}   (\dim_\KK M_1\!-\!d)P_0(r)  \arrow[r, "\alpha"] \arrow[d,"\beta"] & (\dim_\KK M_1)P_0(r) \arrow[d,"-\zeta"] \\
                           M \arrow[r,"\pi\circ \iota"] & N|_{K_r}.  
\end{tikzcd} \] 
Since $\pi\circ\iota$ is injective, so is $\alpha$. It follows that $dP_0(r) \cong \coker\alpha \cong \coker(\pi\circ \iota)$ (cf.\ \cite[(I.5.6)]{ARS95}), so that there is an exact sequence
\[ (0) \lra M \stackrel{\pi\circ \iota}{\lra} N|_{K_r} \lra dP_0(r) \lra (0).\]
Consequently, $N|_{K_r} \cong M\!\oplus\! dP_0(r)$. \end{proof}

\bigskip

\begin{Remark} Suppose that $M \in \repp(K_r,d)$ is projective and of minimal type. Since $\Delta_{P_1(r)}(d)\!=\!(r\!-\!d)$, there is $\ell \in \{0,\ldots, d\}$ such that $M \cong \ell P_1(r)\!\oplus\!(d\!-\!\ell)(r\!-\!d)P_0(r)$.
We consider $N\!:= \ell P_1(r\!+\!1)\!\oplus\!(d\!-\!\ell)(r\!+\!1\!-\!d)P_0(r\!+\!1) \in \repp(K_{r+1},d)$. Then we have
\[ \Delta_N(d) = \ell (r\!+\!1\!-\!d)\!+\!(d\!-\!\ell)(r\!+\!1\!-\!d) = d(r\!+\!1\!-\!d),\]
so that $N$ has minimal type. Moreover,
\[ N|_{K_r} \cong \ell P_1(r)\!\oplus\!\ell P_0(r)\!\oplus\!(d\!-\!\ell)(r\!+\!1\!-\!d)P_0(r),\]
showing that $N|_{K_r} \cong M\!\oplus\! dP_0(r)$. By the remark at the beginning of Section \ref{S:CatRep}, Proposition \ref{MinType7} thus also holds in case $\dim_\KK M_1\!\le\!d$. \end{Remark} 

\bigskip

\subsection{Examples: Representations and hyperplane arrangements} \label{S:Ha} The representations of $\EKP(K_r)\!=\!\repp(K_r,1)$ discussed in this section correspond to the logarithmic bundles \cite{DK93} and 
Schwarzenberger bundles \cite{Sc61b}. Our notation is meant to reflect this relationship  

Let $r\!\ge\!2$, $V$ be an $r$-dimensional vector space and $m\!\ge\!r$. We say that $\ff\!:=\!(f_1,\ldots, f_m) \in (V^\ast)^m$ is in \textit{general position}, provided $V^\ast\!=\!\langle f_i \ ; \ i \in J \rangle$ for every subset $J 
\subseteq \{1,\ldots, m\}$ such that $|J|\!=\!r$. We consider the projective space $\PP^{r-1}\!=\!\PP(V)$. A {\it hyperplane arrangement} of $V$ is an $m$-tuple $\cH\!:=\!(H_1,\ldots, H_m)$ of linear hyperplanes of 
$\PP(V)$. We say that $\cH$ is {\it in general position}, provided for every $\ell\! \le\! r$ and every $\ell$-element subset $J_\ell\subseteq \{1,\ldots, m\}$, we have $\dim_\KK \bigcap_{i\in J_\ell}H_i\!=\!r\!-\!1\!-\!\ell$. 
(Here we put $\dim \emptyset\!=\!-1$.) By definition, each $H_i\:=\!Z(f_i)$ is the set of zeros for some $f_i \in V^\ast\!\smallsetminus\!\{0\}$. It follows that $\cH$ is in general position if and only if $(f_1,\ldots, f_m)$ enjoys this 
property. 

Let $\ff\!:=\!(f_1,\ldots, f_m) \in (V^\ast)^m$. Following \cite[\S1]{DK93}, we set 
\[ I_\ff\!:=\{\lambda \in \KK^m \ ; \ \sum_{i=1}^m\lambda_if_i\!=\!0\} \ \  \text{and} \ \ W_m\!:=\!\{\lambda \in \KK^m \ ; \ \sum_{i=1}^m\lambda_i\!=\!0\},\] 
and refer to the linear map
\[ t_\ff : V\!\otimes_\KK\!I_\ff  \lra W_m \ \ ; \  \  v \otimes \lambda \mapsto (\lambda_1f_1(v), \ldots, \lambda_mf_m(v))\]
as the {\it fundamental tensor} of $\ff$ (viewed as an element of $V^\ast\!\otimes_\KK\!I_\ff^\ast\!\otimes_\KK\!W_m$). 

Given $\ff\!:=\!(f_1,\ldots, f_m) \in (A_r^\ast)^m$, we define $Q_\ff\!:=\!(\KK^m,\KK^m,(Q_\ff(\gamma_i))_{1\le i \le r}) \in \rep(K_r)$ via
\[ Q_\ff(\gamma_i)(\lambda) = (\lambda_1f_1(\gamma_i), \ldots, \lambda_mf_m(\gamma_i)) \ \ \ \ \ \forall \ i \in \{1,\ldots, r\}, \lambda \in \KK^m.\]
Then, setting $V\!:=\!A_r$, we see that
\[ M_\ff\!:=\!(I_\ff,W_m, (Q_\ff(\gamma_i)|_{I_\ff})_{1\le i \le r})\] 
is a subrepresentation of $Q_\ff$ such that $\psi_{M_\ff}\!=\!t_\ff$. If $\cH$ is a hyperplane arrangement given by $\ff \in ((A_r)^\ast\!\smallsetminus\!\{0\})^m$, we abuse notation and  write $M_\cH\!=\!M_\ff$
as well as $t_\cH\!=\!t_\ff$. 

\bigskip

\begin{Lem}\label{Ha1} Suppose that $m\!\ge\!r\!\ge\!3$, and let $\cH\!=\!(H_1,\ldots, H_m)$ be a hyperplane arrangement in general position. 
\begin{enumerate}
\item $M_\cH \in \EKP(K_r)$ has dimension vector $\udim M_\cH\!=\!(m\!-\!r,m\!-\!1)$. In particular, $M_\cH$ has minimal type. 
\item If $m\!\ge\! r\!+\!2$, then $M_\cH \not \in \repp(K_r,2)$. \end{enumerate} \end{Lem}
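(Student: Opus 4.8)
The plan is to treat the two assertions in turn: part~(1) is a direct dimension count exploiting the general-position hypothesis, and part~(2) then follows formally from the results of Section~\ref{S:MinType}.

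\emph{Part (1).} Since $W_m=\ker(\lambda\mapsto\sum_i\lambda_i)$ is a hyperplane in $\KK^m$, we have $\dim_\KK W_m=m-1$. The space $I_\ff$ is the kernel of $\KK^m\lra A_r^\ast$, $\lambda\mapsto\sum_i\lambda_if_i$, whose image is $\langle f_1,\ldots,f_m\rangle$; general position applied to an $r$-element subset $J$ gives $\bigcap_{i\in J}\ker f_i=(0)$, so the $f_i$ ($i\in J$) are linearly independent and already span $A_r^\ast$, whence $\dim_\KK I_\ff=m-r$. Thus $\udim M_\cH=(m-r,m-1)$ and $\Delta_{M_\cH}(1)=(m-1)-(m-r)=r-1$, which is the minimal-type condition for $d=1$. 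To see $M_\cH\in\EKP(K_r)$, I would check that for each $a\in A_r\smallsetminus\{0\}$ the map $a_{M_\cH}\colon I_\ff\lra W_m$, $\lambda\mapsto(\lambda_if_i(a))_{1\le i\le m}$, is injective: an element of its kernel is supported on $S(a):=\{i:f_i(a)=0\}$, and since $\KK a\subseteq\bigcap_{i\in S(a)}\ker f_i$, general position forces $|S(a)|\le r-1$ and the $f_i$ ($i\in S(a)$) linearly independent, so $\sum_{i\in S(a)}\lambda_if_i=0$ gives $\lambda=0$. (Alternatively, this is the injectivity of the fundamental tensor $t_\ff$ on each $a\otimes I_\ff$, cf.\ \cite{DK93}.)

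\emph{Part (2).} By part~(1), $M_\cH\in\repp(K_r,1)=\EKP(K_r)$ is of minimal type for $d=1$, and $\dim_\KK(M_\cH)_1=m-r\ge2$ because $m\ge r+2$; hence Corollary~\ref{MinType3}(2d) shows that $M_\cH$ is not projective. Corollary~\ref{MinType4}, applied with $d=1$ (legitimate since $r\ge3$), then yields $M_\cH\notin\repp(K_r,2)$. One may also argue numerically: if $M_\cH$ belonged to $\repp(K_r,2)$, then $\min\{\dim_\KK(M_\cH)_1,2\}=2$ and Theorem~\ref{MinType2} would force $\Delta_{M_\cH}(2)\ge2(r-2)$, whereas $\Delta_{M_\cH}(2)=(m-1)-2(m-r)=2r-m-1$, giving $m\le3$ and contradicting $m\ge r+2\ge5$.

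The only genuine work sits in part~(1), namely the verification that $M_\cH\in\EKP(K_r)$; this is where general position is really used. Given part~(1), part~(2) is immediate from the machinery of Section~\ref{S:MinType}. The points to watch are that $m\ge r+2$ is precisely the condition making $\dim_\KK(M_\cH)_1\ge2$ (so the relevant minimum equals $2$, not $1$), and that the admissibility constraint $d+1\le r-1$ needed for Corollary~\ref{MinType4} holds because $r\ge3$.
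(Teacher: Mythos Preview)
Your proof is correct and follows essentially the same approach as the paper: the dimension count and the appeal to Corollary~\ref{MinType3}(2d) and Corollary~\ref{MinType4} for part~(2) are exactly what the paper does. The only difference is that you spell out the elementary injectivity argument for $a_{M_\cH}$ where the paper simply cites \cite[(1.5)]{DK93}, and you add a redundant direct numerical verification of part~(2); both are harmless elaborations.
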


\begin{proof} (1) Let $a \in A_r\!\smallsetminus\!\{0\}$. Then we have
\[ a_{M_\cH}(\lambda) = \psi_{M_\cH}(a\otimes \lambda) = t_\cH(a\!\otimes \lambda)\]
for all $\lambda \in I_\ff$. We may now apply \cite[(1.5)]{DK93} (which holds for arbitrary ground fields) to see that the linear map $a_{M_\cH}$ is injective. Consequently, $M_\cH \in \EKP(K_r)$. 

We write $\cH\!:=\!(Z(f_1),\ldots, Z(f_m))$, $\ff\!:=\!(f_1,\ldots, f_m)$. By construction, we have $\dim_\KK(M_\cH)_2\!=\!\dim_\KK W_m\!=\!m\!-\!1$. Since $m\!\ge\!r$ and $\cH$ is in general position, we have 
$A_r^\ast\!=\!\langle f_1,\ldots, f_r\rangle$, so that $\{f_1,\ldots, f_r\} \subseteq A_r^\ast$ is a basis of $A_r^\ast$. Hence the linear map
\[\alpha_\ff : \KK^m \lra A_r^\ast \ \ ; \  \  \lambda \mapsto  \sum_{i=1}^m\lambda_if_i\] 
is surjective, whence $\dim_\KK I_\ff\!=\!\dim_\KK\ker\alpha_\ff\!=\!m\!-\!r$. 

(2) This is a direct consequence of (1), Corollary \ref{MinType3}(2d) and Corollary \ref{MinType4}. \end{proof}

\bigskip
\noindent
A subclass of the modules $M_\cH$ considered above is given by the family $(M_\cS[m])_{m\ge r+1}$ of {\it Schwarzenberger modules}. Let $X_1,X_2$ be indeterminates over $\KK$. For $m\!\ge\!r\!+\!1$, we consider the 
representation $M_\cS[m] \in \rep(K_r)$, defined via
\[ M_\cS[m]_1 := \KK[X_1,X_2]_{m-r-1} \ \ ; \ \ M_\cS[m]_2 := \KK[X_1,X_2]_{m-2} \ \ ; \ \ M_\cS[m](\gamma_i)(a) := X_1^{i-1}X_2^{r-i}a \ \ \ \forall \ a \in M[m]_1\]
for $1\!\le\!i\!\le\!r$. Then we have $M_\cS[m] \in \EKP(K_r)$, while $\udim M_\cS[m]\!=\!(m\!-\!r,m\!-\!1)$. The modules $M_\cS[m]$ turn out to correspond to vector bundles on $\PP^{r-1}$ that were introduced by 
Schwarzenberger, cf.\ \cite{Sc61b}.

\bigskip

\subsection{The generic canonical decomposition} \label{S:GCD} Our first result provides the {\it canonical decomposition} \cite[p.85]{Ka80} of certain dimension vectors. For ease of notation, we write 
$\Delta_{(V_1,V_2)}\!:=\!\Delta_{(V_1,V_2)}(1)$.

Suppose that $V_1,V_2$ are vector spaces such that $\repp(K_r,d)\cap\rep(K_r;V_1,V_2)\!\ne\!\emptyset$. According to Theorem \ref{MinType2}, the assumption $\Delta_{(V_1,V_2)}(d)\!=\!0$ implies
$\dim_\KK V_1\!=\!0$ and hence $\dim_\KK V_2\!=\!\Delta_{(V_1,V_2)}(d)\!=\!0$. Hence the technical condition $\Delta_{(V_1,V_2)}\!\ge\!1$ of the ensuing results only excludes the trivial case. 

\bigskip

\begin{Lem} \label{GCD1} Let $V_1,V_2$ be vector spaces such that $\Delta_{(V_1,V_2)}\!\ge\!1$ and write $\dim_\KK V_1\!=\!j\Delta_{(V_1,V_2)}\!+\!b$ for $b \in \{0,\ldots,\Delta_{(V_1,V_2)}\!-\!1\}$. 
Then the set
\[ \rep(K_2;V_1,V_2)_0 := \{M \in \rep(K_2;V_1,V_2) \ ; \ M \cong (\Delta_{(V_1,V_2)}\!-\!b)P_j(2)\!\oplus\! bP_{j+1}(2)\}\]
is $(\GL(V_2)\!\times\!\GL(V_1))$-stable, dense and open. \end{Lem}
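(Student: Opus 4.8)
The plan is to work over the Kronecker quiver $K_2$, where the representation theory is completely understood: every indecomposable is one of the preprojectives $P_i(2)$, the preinjectives $I_j(2)$, or a regular module supported at a point of $\PP^1$. First I would observe that the asserted decomposition type $(\Delta_{(V_1,V_2)}\!-\!b)P_j(2)\oplus bP_{j+1}(2)$ has dimension vector equal to $\udim(V_1,V_2)$; indeed, using $\udim P_i(2)=(i,i+1)$, the first coordinate is $(\Delta-b)j+b(j+1)=j\Delta+b=\dim_\KK V_1$, and the second is $(\Delta-b)(j+1)+b(j+2)=(j+1)\Delta+b=\dim_\KK V_1+\Delta=\dim_\KK V_2$, so at least this module really does lie in $\rep(K_2;V_1,V_2)$. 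The $(\GL(V_2)\times\GL(V_1))$-stability of $\rep(K_2;V_1,V_2)_0$ is immediate, since isomorphism type is constant on orbits.

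Next I would establish openness. The key input is Lemma \ref{MV1} (upper semicontinuity of $\dim_\KK\Hom$), together with the fact that over $K_2$ the isomorphism type of a module $M\in\rep(K_2;V_1,V_2)$ is detected by finitely many Hom-dimensions against the fixed test modules $P_i(2)$ (and, if needed, $I_j(2)$). Concretely, $M\cong (\Delta-b)P_j(2)\oplus bP_{j+1}(2)$ iff $\dim_\KK\Hom_{K_2}(M,I)$ vanishes for every preinjective indecomposable $I$ (so $M$ is preprojective) and the multiplicities extracted from $\dim_\KK\Hom_{K_2}(P_i(2),M)=\sum_{k}n_k\max\{k-i+1,0\}$ — as in the proof of Corollary \ref{Rest3} via the difference formula $n_i=d_i-2d_{i+1}+d_{i+2}$ — match the prescribed ones. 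Each such condition cuts out a locally closed set; intersecting, $\rep(K_2;V_1,V_2)_0$ is the locus where a finite collection of upper-semicontinuous functions attains its minimum, hence it is open (here one uses that the prescribed decomposition is the "generic" one, i.e. minimizes all the relevant $\dim_\KK\Hom$'s — this is exactly Kac's canonical decomposition, and I would cite \cite{Ka80} or argue directly that any $M$ with $q_2(\udim M)$-type dimension vector degenerates to this sum of preprojectives).

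For density, since $\rep(K_2;V_1,V_2)\cong\Hom_\KK(V_1,V_2)^2$ is irreducible, it suffices that $\rep(K_2;V_1,V_2)_0$ is non-empty and open; non-emptiness is witnessed by the explicit module above. Alternatively, and perhaps cleaner, I would argue that the generic module in the irreducible variety $\rep(K_2;V_1,V_2)$ is preprojective with no homomorphisms to preinjectives (a generic $r=2$ Kronecker representation of this dimension vector is preprojective since $q_2$ of the dimension vector is positive when $\Delta\ge 1$ unless $V_1=0$), and the only preprojective module of dimension vector $(\dim_\KK V_1,\dim_\KK V_2)$ with $\dim_\KK V_1=j\Delta+b$, $0\le b<\Delta$, is precisely $(\Delta-b)P_j(2)\oplus bP_{j+1}(2)$, because consecutive $P_j,P_{j+1}$ are the unique way to write that dimension vector as a non-negative combination of the $\udim P_i(2)$.

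The main obstacle I anticipate is the openness step: one must be careful that $\rep(K_2;V_1,V_2)_0$ is genuinely open and not merely locally closed, which forces one to verify that the prescribed decomposition is the minimal (generic) one for \emph{all} the semicontinuous functions involved simultaneously — i.e. that there is no "more generic" stratum. This is where invoking the theory of the canonical decomposition (Kac \cite{Ka80}), or the AR-theory of $K_2$ to rule out competing preprojective/regular/preinjective types, does the real work; the rest is bookkeeping with the Hom-dimension formulas already used in Corollary \ref{Rest3}.
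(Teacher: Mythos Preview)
Your overall strategy---reduce to Kac's canonical decomposition for $K_2$---is exactly what the paper does, but the paper executes it in two lines while several of your intermediate claims are false. First, the characterization ``$M$ is preprojective iff $\Hom_{K_2}(M,I)=0$ for all preinjective $I$'' is wrong: in $K_2$ one has $\Hom_{K_2}(P_i(2),I_j(2))\ne 0$ in general (the directedness $\cP\to\cR\to\cI$ means homs go \emph{towards} preinjectives, not away). Second, your uniqueness claim (``the only preprojective module of this dimension vector is $(\Delta-b)P_j\oplus bP_{j+1}$'') fails already for $\udim=(3,5)$, where both $P_1(2)\oplus P_2(2)$ and $P_0(2)\oplus P_3(2)$ occur. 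What singles out the \emph{consecutive} pair is the $\Ext^1$-vanishing, not any uniqueness of nonnegative integer solutions.

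The paper bypasses all of this: it simply notes that $P_j(2),P_{j+1}(2)$ are bricks (hence lie in their open sheets by Corollary~\ref{MV3}) and that $\Ext^1_{K_2}(P_\ell(2),P_k(2))=(0)$ for $\{\ell,k\}=\{j,j+1\}$, then invokes \cite[Prop.~3(a)]{Ka82} directly. That proposition says precisely that a sum of pairwise $\Ext^1$-orthogonal generic indecomposables is the generic module of its dimension vector, giving openness and density of the orbit at once. Your proposal would work if you drop the faulty detours and go straight to this $\Ext$-vanishing criterion---that is the ``real work'' you correctly anticipated, and it is all that is needed.
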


\begin{proof} By definition, $ \rep(K_2;V_1,V_2)_0$ is $(\GL(V_2)\!\times\!\GL(V_1))$-stable.

Since $P_j(2),P_{j+1}(2)$ are bricks, Corollary \ref{MV3} implies that these representations belong to their respective open sheets. By virtue of $\dim_\KK V_i\!=\!(j\!+\!i\!-\!1)\Delta_{(V_1,V_2)}\!+\!b$ and 
$\Ext^1_{K_2}(P_\ell(2),P_k(2))\!=\!(0)$ for $\{\ell,k\}\!=\!\{j,j\!+\!1\}$, the conditions of \cite[Prop.3(a)]{Ka82} are met, so that our assertion follows from Kac's Theorem. \end{proof}

\bigskip
\noindent
Let $r\!\ge\!3$. We consider the map
\[ \Res : \rep(K_r;V_1,V_2)\!\times\!\Inj_\KK(A_2,A_r) \lra \rep(K_2;V_1,V_2) \ \ ; \ \ (M,\alpha) \mapsto \alpha^\ast(M).\]

\bigskip

\begin{Prop} \label{GCD2}  Let $V_1,V_2$ be $\KK$-vector spaces such that $\Delta_{(V_1,V_2)}\!\ge\!1$, and write 
\[\dim_\KK V_1= j\Delta_{(V_1,V_2)}\!+\!b\] 
for some $b \in \{0,\ldots,\Delta_{(V_1,V_2)}\!-\!1\}$. The following statements hold:
\begin{enumerate}
\item $\Res$ is a morphism.
\item The set $\Res((\EKP(K_r)\cap\rep(K_r;V_1,V_2))\!\times\!\Inj_\KK(A_2,A_r))$ is open.
\item Suppose that $\EKP(K_r)\cap \rep(K_r;V_1,V_2)\!\ne\!\emptyset$. Then 
\[ U := \{(M,\alpha) \in \rep(K_r;V_1,V_2)\!\times\! \Inj_\KK(A_2,A_r) \ ; \  \alpha^\ast(M) \cong (\Delta_{(V_1,V_2)}\!-\!b)P_j(2)\!\oplus\!bP_{j+1}(2)\}\]
is a dense, open subset of $\rep(K_r;V_1,V_2)\!\times\! \Inj_\KK(A_2,A_r)$.   \end{enumerate} \end{Prop}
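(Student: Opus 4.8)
\textbf{Proof proposal for Proposition \ref{GCD2}.}

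The plan is to establish (1)--(3) in order, using (1) to feed topological arguments into (2) and (3). For part (1), I would argue that $\Res$ is a morphism by working coordinate-wise: fixing a basis $\{\gamma_1,\ldots,\gamma_r\}$ of $A_r$ and a basis $\{\delta_1,\delta_2\}$ of $A_2$, a point $(M,\alpha)$ with $\alpha(\delta_k)=\sum_i \alpha_{ik}\gamma_i$ is sent to the pair of maps $\bigl(\sum_i \alpha_{i1}M(\gamma_i),\ \sum_i\alpha_{i2}M(\gamma_i)\bigr)$ in $\Hom_\KK(V_1,V_2)^2\cong\rep(K_2;V_1,V_2)$; each coordinate is polynomial (in fact bilinear) in the entries of $M$ and of $\alpha$, so $\Res$ is a morphism. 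This is exactly the argument already used for $\widehat{\res}_M$ in the proof of Proposition \ref{Rest2}, just carried out in the second variable as well; the only point is that $\Inj_\KK(A_2,A_r)$ is an open subvariety of an affine space, so a morphism on the ambient space restricts to one here.

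For part (2), the key observation is that the source set $\EKP(K_r)\cap\rep(K_r;V_1,V_2)$ equals $\repp(K_r,1)\cap\rep(K_r;V_1,V_2)$, which is open in $\rep(K_r;V_1,V_2)$ by Proposition \ref{CatRep4}(1) (the hypothesis $\Delta_{(V_1,V_2)}\ge1$ is the case $d=1$, $d(r-d)=r-1$; but openness in \ref{CatRep4}(1) needs no inequality at all). Hence $\bigl(\EKP(K_r)\cap\rep(K_r;V_1,V_2)\bigr)\times\Inj_\KK(A_2,A_r)$ is open in $\rep(K_r;V_1,V_2)\times\Inj_\KK(A_2,A_r)$. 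Now I would invoke the fact, recalled in the proof of Proposition \ref{Rest2} via \cite[(1.1)]{CFP15}, that $\msim:\Inj_\KK(A_2,A_r)\lra\Gr_2(A_r)$ is a $\GL(A_2)$-torsor, and that the induced map $\kappa:\rep(K_r;V_1,V_2)\times\Inj_\KK(A_2,A_r)\lra\rep(K_r;V_1,V_2)\times\Gr_2(A_r)$ is an open morphism (this is the same device as in the proof of \ref{CatRep4}(1)). The point of passing through $\EKP$ is Lemma \ref{Rest1}: for $M\in\EKP(K_r)$ the isomorphism type of $\alpha^\ast(M)$ depends only on $\im\alpha$, so $\Res$ restricted to the $\EKP$-locus is constant along the fibres of $\kappa$, hence descends to a morphism $\overline{\Res}$ on $\kappa$ of that open set. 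Since $\GL(V_2)\times\GL(V_1)$ acts on $\rep(K_2;V_1,V_2)$ and $\Res$ is clearly equivariant for the action $(g_2,g_1)\dact(M,\alpha)=((g_2,g_1)\dact M,\alpha)$ on the source and the usual action on the target, the image $\Res(\cdots)$ is a union of $(\GL(V_2)\times\GL(V_1))$-orbits; to conclude it is open I would use that the quotient map $\pi:\rep(K_2;V_1,V_2)\lra\Iso(K_2;V_1,V_2)$ sends stable open sets to open sets (as recorded in the proof of \ref{Rest2}(2)), combined with the fact that $\Res$ factors through the surjective $\kappa$ and a morphism on the quotient. Concretely: the image of an open $\GL(A_2)$-stable (and $\GL(V_2)\times\GL(V_1)$-stable) set under the composite is stable and one checks openness fibrewise over $\Gr_2(A_r)$, using that each fibre map is a morphism between affine varieties whose image of an open set is open after saturating by the group action. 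I expect this descent-plus-openness bookkeeping to be the main obstacle — not any single hard fact, but assembling the torsor statement, Lemma \ref{Rest1}, and the quotient-openness into a clean argument.

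For part (3), assume $\EKP(K_r)\cap\rep(K_r;V_1,V_2)\ne\emptyset$. First, $U$ is open: it is the preimage under the morphism $\Res$ (part (1)) of the set $\rep(K_2;V_1,V_2)_0$, which is open by Lemma \ref{GCD1}. For density, I would proceed in two stages. The set $\rep(K_r;V_1,V_2)$ is irreducible (being isomorphic to $\Hom_\KK(V_1,V_2)^r$) and so is $\Inj_\KK(A_2,A_r)$ (an open subset of an affine space), hence the product is irreducible; therefore it suffices to show $U\ne\emptyset$. Pick any $M_0\in\EKP(K_r)\cap\rep(K_r;V_1,V_2)$ and any $\alpha_0\in\Inj_\KK(A_2,A_r)$. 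By Lemma \ref{Rest1} and the discussion preceding Corollary \ref{Rest3}, $\alpha_0^\ast(M_0)\in\EKP(K_2)$, so it is a direct sum of preprojective modules $P_i(2)$; this need not be the generic decomposition $(\Delta_{(V_1,V_2)}-b)P_j(2)\oplus bP_{j+1}(2)$. To fix this I would use Corollary \ref{Rest3}: for $M_0\in\EKP(K_r)$ there is a dense open $O_{M_0}\subseteq\Gr_2(A_r)$ on which the restriction has the generic decomposition $\bigoplus_i n_i(M_0)P_i(2)$; but a general member of $\rep(K_r;V_1,V_2)$ can be assumed to have its generic restriction decomposition equal to the canonical decomposition of the dimension vector, because openness of the $\EKP$-locus plus semicontinuity of $\Hom_{K_2}(P_i(2),-)$ (Proposition \ref{Rest2}(2)) forces, on a dense open subset of $\rep(K_r;V_1,V_2)$, the generic restriction to have the smallest possible $\Hom$-dimensions, i.e.\ to equal $(\Delta_{(V_1,V_2)}-b)P_j(2)\oplus bP_{j+1}(2)$ by Lemma \ref{GCD1}. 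Intersecting that dense open subset of $\rep(K_r;V_1,V_2)$ with a choice of $\fv\in O_M$ and lifting to $\alpha$ with $\im\alpha=\fv$ produces a point of $U$, so $U\ne\emptyset$ and, by irreducibility, dense. Alternatively and more cheaply: choose the Schwarzenberger module or a hyperplane-arrangement module $M_\cH$ of the right dimension vector (Section \ref{S:Ha}) — for these one can verify directly that a generic $2$-plane restriction is $(\Delta-b)P_j(2)\oplus bP_{j+1}(2)$ — which gives an explicit point of $U$; this sidesteps the semicontinuity juggling and is probably the route I would actually write up.
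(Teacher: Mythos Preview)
Your argument for (1) is fine and matches the paper's (which phrases it as ``$\Res(\varrho,\alpha)=\varrho\circ\alpha$ is matrix multiplication'').

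For (2), however, you are working much too hard and the descent bookkeeping you outline never quite lands: $\Res$ does \emph{not} factor through $\kappa$ as a map to $\rep(K_2;V_1,V_2)$ (the concrete $K_2$-structure $\alpha^\ast(M)$ depends on $\alpha$, not just on $\im\alpha$; only its isomorphism class does), so the ``check openness fibrewise over $\Gr_2(A_r)$'' step has no clear meaning. The paper's argument is far simpler and avoids all of this. One writes the image as
\[
\bigcup_{\alpha\in\Inj_\KK(A_2,A_r)} \alpha^\ast\bigl(\EKP(K_r)\cap\rep(K_r;V_1,V_2)\bigr),
\]
and for each fixed $\alpha$ one chooses a complement $\fw$ with $A_r=\im\alpha\oplus\fw$, so that under $\Hom_\KK(A_r,\Hom_\KK(V_1,V_2))\cong\Hom_\KK(A_2,\Hom_\KK(V_1,V_2))\oplus\Hom_\KK(\fw,\Hom_\KK(V_1,V_2))$ the map $\alpha^\ast$ is the first projection. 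A projection $X\times Y\to X$ is open, and $\EKP(K_r)\cap\rep(K_r;V_1,V_2)$ is open by Proposition~\ref{CatRep4}(1); hence each $\alpha^\ast(\EKP\cap\cdots)$ is open, and so is their union. No torsors, no quotients, no equivariance.

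For (3), once you have (2) the non-emptiness of $U$ is immediate: the image in (2) is a non-empty open subset of $\rep(K_2;V_1,V_2)$ (this is where the hypothesis $\EKP(K_r)\cap\rep(K_r;V_1,V_2)\ne\emptyset$ is used), so it meets the dense open set $\rep(K_2;V_1,V_2)_0$ of Lemma~\ref{GCD1}, whence $U=\Res^{-1}(\rep(K_2;V_1,V_2)_0)\ne\emptyset$. Your first route to non-emptiness essentially rederives this but in a roundabout way; your second route via Schwarzenberger or hyperplane-arrangement modules does not work in general, since those only realise dimension vectors of the special shape $(m-r,m-1)$, not arbitrary $(V_1,V_2)$ with $\Delta_{(V_1,V_2)}\ge 1$.
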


\begin{proof} (1) We interpret $\rep(K_r;V_1,V_2)$ as $\Hom_\KK(A_r, \Hom_\KK(V_1,V_2))$, so that 
\[ \Res(\varrho,\alpha) = \varrho\circ \alpha\]
corresponds to matrix multiplication. Consequently, $\Res$ is a morphism.

(2) Note that 
\[ \Res((\EKP(K_r)\cap\rep(K_r;V_1,V_2))\!\times\!\Inj_\KK(A_2,A_r)) = \bigcup_{\alpha \in \Inj_\KK(A_2,A_r)} \alpha^\ast(\EKP(K_r)\cap \rep(K_r;V_1,V_2)).\]
Given $\alpha \in \Inj_\KK(A_2,A_r)$, we write $A_r\!=\!\im\alpha\!\oplus\!\fw \cong  A_2\!\oplus\!\fw$ for some $\fw \in \Gr_{r-2}(A_r)$. Then we have
\[ \Hom_\KK(A_r,\Hom_\KK(V_1,V_2)) \cong \Hom_\KK(A_2,\Hom_\KK(V_1,V_2))\!\oplus \Hom_\KK(\fw,\Hom_\KK(V_1,V_2)),\]
and $\alpha^\ast$ is the projection onto the first summand.

For varieties $X,Y,$ the projection $\pr_X : X\!\times\!Y \lra X$ is easily seen to be open. We apply this to the situation above to see that $\alpha^\ast : \rep(K_r;V_1,V_2) \lra \rep(K_2;V_1,V_2)$ is open. 
Since Proposition \ref{CatRep4} ensures that $\EKP(K_r)\cap \rep(K_r;V_1,V_2)$ is open, our assertion follows.

(3) In view of (2) and Lemma \ref{GCD1}, our current assumption implies that
\[ \Res((\EKP(K_r)\cap\rep(K_r;V_1,V_2))\!\times\!\Inj_\KK(A_2,A_r))\cap \rep(K_2;V_1,V_2)_0\]
is a non-empty, open subset of $\rep(K_2;V_1,V_2)$. By (1), 
\[  U = \Res^{-1}(\rep(K_2;V_1,V_2)_0)\]
is thus a non-empty, open subset of $\rep(K_r;V_1,V_2)\!\times\! \Inj_\KK(A_2,A_r)$. Being an open subset of the irreducible variety $\rep(K_r;V_1,V_2)\!\times\! \Hom_\KK(A_2,A_r)$, the variety $\rep(K_r;V_1,V_2)\!\times\! 
\Inj_\KK(A_2,A_r)$ is irreducible, so that $U$ lies dense in $\rep(K_r;V_1,V_2)\!\times\! \Inj_\KK(A_2,A_r)$. \end{proof}

\bigskip
\noindent
Recall the notion of the generic decomposition $M_{\gen}$ of $M \in \repp(K_r,d)$, which we have discussed at the end of Section \ref{S:Rest}.

\bigskip

\begin{Cor} \label{GCD3} Suppose that $V_1,V_2$ are $\KK$-vector spaces such that $\Delta_{(V_1,V_2)}\!\ge\!1$ and $\EKP(K_r)\cap\rep(K_r;V_1,V_2)\!\ne\!\emptyset$. Let $(j,b) \in \NN_0\!\times\!\{0,\ldots,
\Delta_{(V_1,V_2)}\!-\!1\}$ be given by $\dim_\KK V_1\!=\!j\Delta_{(V_1,V_2)}\!+\!b$. Then 
\[ O_{\gen} := \{ M \in \EKP(K_r)\cap\rep(K_r;V_1,V_2) \ ; \ M_{\gen}\! =\! (\Delta_{(V_1,V_2)}\!-\!b)P_j(2)\!\oplus\!bP_{j+1}(2)\}\] 
is a dense open subset of $\rep(K_r;V_1,V_2)$. \end{Cor}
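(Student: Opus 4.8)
The plan is to realize $O_{\gen}$ as the intersection of the locus $\EKP(K_r)\cap\rep(K_r;V_1,V_2)$ with the image, under the first projection, of the dense open set $U$ of Proposition \ref{GCD2}(3). Write $R\!:=\!(\Delta_{(V_1,V_2)}\!-\!b)P_j(2)\!\oplus\!bP_{j+1}(2)$ and let
\[ \pr_1 : \rep(K_r;V_1,V_2)\!\times\!\Inj_\KK(A_2,A_r) \lra \rep(K_r;V_1,V_2)\]
be the projection. Since $\pr_1$ is an open, surjective morphism and $\Inj_\KK(A_2,A_r)\!\ne\!\emptyset$, the set $W\!:=\!\pr_1(U)$ — consisting of those $M$ admitting \emph{some} $\alpha \in \Inj_\KK(A_2,A_r)$ with $\alpha^\ast(M)\!\cong\!R$ — is a dense open subset of $\rep(K_r;V_1,V_2)$. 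The inclusion $O_{\gen}\subseteq W$ is immediate: if $M \in O_{\gen}$, any $\fv_0$ in the non-empty dense open set $O_M\subseteq\Gr_2(A_r)$ satisfies $M|_{\fv_0}\!\cong\!M_{\gen}\!\cong\!R$, so any $\alpha \in \msim^{-1}(\fv_0)$ witnesses $M \in W$. Together with the trivial inclusion $O_{\gen}\subseteq\EKP(K_r)\cap\rep(K_r;V_1,V_2)$, this gives one half of the identity $O_{\gen}=W\cap\EKP(K_r)\cap\rep(K_r;V_1,V_2)$.

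The main point is the reverse inclusion: for $M \in \EKP(K_r)\cap\rep(K_r;V_1,V_2)$, a \emph{single} restriction of isomorphism type $R$ already forces $M_{\gen}\!\cong\!R$. I would prove this by semicontinuity. For $i \in \NN_0$ the maps $d_i\!:=\!d_{P_i(2),M}:\Gr_2(A_r)\lra\NN_0$ (Proposition \ref{Rest2}) and $e_i:\rep(K_2;V_1,V_2)\lra\NN_0$, $N\mapsto\dim_\KK\Hom_{K_2}(P_i(2),N)$ (Lemma \ref{MV1}) are upper semicontinuous, and $d_i(\fv)\!=\!e_i(\alpha^\ast(M))$ for $\alpha \in \msim^{-1}(\fv)$. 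Put $c_i\!:=\!\min_{\fv}d_i(\fv)$ and $c_i'\!:=\!\min_{N}e_i(N)$, so that $n_i(M)\!=\!c_i\!-\!2c_{i+1}\!+\!c_{i+2}$ by the proof of Corollary \ref{Rest3}. By Lemma \ref{GCD1} the modules isomorphic to $R$ form a dense subset of $\rep(K_2;V_1,V_2)$, and since the generic value of an upper semicontinuous map on the irreducible variety $\rep(K_2;V_1,V_2)$ equals its minimum, this dense set meets $\{e_i\!=\!c_i'\}$; hence $e_i(R)\!=\!c_i'$ for all $i$. If now $(M,\alpha)\in U$ and $\fv_0\!:=\!\im\alpha$, then $c_i\!\le\!d_i(\fv_0)\!=\!e_i(R)\!=\!c_i'$, and $c_i\!\ge\!c_i'$ because every value of $d_i$ is attained by $e_i$; thus $c_i\!=\!d_i(\fv_0)$ for all $i$. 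Therefore $n_i(M)\!=\!d_i(\fv_0)\!-\!2d_{i+1}(\fv_0)\!+\!d_{i+2}(\fv_0)\!=\!n_i(M,\fv_0)$ for all $i$, so $M_{\gen}\!\cong\!M|_{\fv_0}\!\cong\!R$ and $M\in O_{\gen}$. I expect this semicontinuity bookkeeping, rather than anything conceptually deeper, to be the only real obstacle.

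With the identity $O_{\gen}=W\cap\EKP(K_r)\cap\rep(K_r;V_1,V_2)$ in hand, it remains to note that $\EKP(K_r)\cap\rep(K_r;V_1,V_2)=\repp(K_r,1)\cap\rep(K_r;V_1,V_2)$ is open by Proposition \ref{CatRep4}(1) and non-empty by hypothesis, hence dense in the irreducible variety $\rep(K_r;V_1,V_2)$. Thus $O_{\gen}$ is an intersection of two dense open subsets of $\rep(K_r;V_1,V_2)$, and is therefore itself dense and open.
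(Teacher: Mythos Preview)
Your proof is correct and follows the paper's approach: both identify $O_{\gen}$ with $\pr_1(U)\cap\bigl(\EKP(K_r)\cap\rep(K_r;V_1,V_2)\bigr)$ and conclude from Propositions \ref{GCD2} and \ref{CatRep4}. For the reverse inclusion the paper uses the continuity of $\res_M$ (Proposition \ref{Rest2}(1)) to see that $\{\fv:[M|_\fv]=[R]\}$ is open, hence dense in $\Gr_2(A_r)$, and intersects it with $O_M$, while you obtain $\fv_0\in O_M$ directly by comparing $\min d_i$ with the global minimum of $e_i$ on $\rep(K_2;V_1,V_2)$ --- a harmless repackaging of the same semicontinuity input.
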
 

\begin{proof} According to Proposition \ref{GCD2}
\begin{eqnarray*}  
\{(M,\alpha) \in \rep(K_r;V_1,V_2)\!\times\! \Inj_\KK(A_2,A_r) \ ; \  \alpha^\ast(M)\! & \cong & \!(\Delta_{(V_1,V_2)}\!-\!b)P_j(2)\!\oplus\!bP_{j+1}(2)\} \\
                                                                                                                                & = & U= \Res^{-1}(\rep(K_2;V_1,V_2)_0)
\end{eqnarray*}
is a dense, open subset of $\rep(K_r;V_1,V_2)\!\times\!\Inj_\KK(A_2,A_r)$. The projection $\pr_1 : \rep(K_r;V_1,V_2)\!\times\! \Inj_\KK(A_2,A_r) \lra \rep(K_r;V_1,V_2)$ is open, so that $\pr_1(U)$ is open and  
dense in $\rep(K_r;V_1,V_2)$. In view of Proposition \ref{CatRep4}(1), the subset $\pr_1(U)\cap\EKP(K_r)$ enjoys the same properties. 

If $M \in O_{\gen}$, then there is $\alpha \in \Inj_\KK(A_2,A_r)$ such that 
\[ \alpha^\ast(M) \cong (\Delta_{(V_1,V_2)}\!-\!b)P_j(2)\!\oplus\!bP_{j+1}(2),\]
whence $(M,\alpha) \in U$ and $M \in \pr_1(U)\cap\EKP(K_r)$.

Conversely, let $M \in \pr_1(U)\cap\EKP(K_r)$. Then there is $\alpha \in \Inj_\KK(A_2,A_r)$ such that 
\[ (\ast) \ \ \ \ \alpha^\ast(M) \cong (\Delta_{(V_1,V_2)}\!-\!b)P_j(2)\!\oplus\!bP_{j+1}(2).\]  
Let $\pi : \rep(K_2;V_1,V_2) \lra \Iso(K_2;V_1,V_2)$ be the canonical projection. Thanks to Lemma \ref{GCD1}, $\Iso(K_2;V_1,V_2)_0\!:=\!\pi(\rep(K_2;V_1,V_2)_0)$ is a non-empty open subset of $\Iso(K_2;V_1,V_2)$.
Owing to ($\ast$), Proposition \ref{Rest2} implies that
\[ U_M\!:=\! \res_M^{-1}(\Iso(K_2;V_1,V_2)_0) = \{\fv \in \Gr_2(A_r) \ ; \ [M|_\fv]\!=\![(\Delta_{(V_1,V_2)}\!-\!b)P_j(2)\!\oplus\!bP_{j+1}(2)]\}\]
is a dense open subset of $\Gr_2(A_r)$. In view of Corollary \ref{Rest3}, $U_M$ therefore intersects the set
\[ O_M := \{ \fv \in \Gr_2(A_r) \ ; \  n_i(M,\fv) = n_i(M) \ \ \ \forall \ i \in \NN_0\}.\]
Consequently, 
\[ n_i(M) = \left\{ \begin{array}{cc} 0 & i \not\in \{j,j\!+\!1\} \\ \Delta_{(V_1,V_1)}\!-\!b & i\!=\!j \\ b & i\!=\!j\!+\!1, \end{array} \right.\]
so that $M \in O_{\gen}$. It follows that $O_{\gen}\!=\!\pr_1(U)\cap\EKP(K_r)$ has the asserted properties. \end{proof} 

\bigskip

\begin{Cor} \label{GCD4} Let $V_1,V_2$ be $\KK$-vector spaces such that $\Delta_{(V_1,V_2)}\!\ge\!\max\{\dim_\KK V_1,r\!-\!1\}$. Then we have 
\[ O_{\gen} = \{ M \in \EKP(K_r)\cap\rep(K_r;V_1,V_2) \ ; \ M_{\gen}\! =\! (\Delta_{(V_1,V_2)}\!-\!\dim_\KK V_1)P_0(2)\!\oplus\!(\dim_\KK V_1)P_1(2)\}.\]
\end{Cor}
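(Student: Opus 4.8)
The plan is to read this off directly from Corollary~\ref{GCD3}. Since $r\ge 3$, the hypothesis $\Delta_{(V_1,V_2)}\ge\max\{\dim_\KK V_1,r-1\}$ forces in particular $\Delta_{(V_1,V_2)}\ge r-1\ge 1$, so the hypothesis $\Delta_{(V_1,V_2)}\ge 1$ of Corollary~\ref{GCD3} is satisfied. First I would check that $\EKP(K_r)\cap\rep(K_r;V_1,V_2)\ne\emptyset$: applying Proposition~\ref{CatRep4}(2) with $d=1$, the inequality $\Delta_{(V_1,V_2)}(1)=\Delta_{(V_1,V_2)}\ge r-1=1\cdot(r-1)$ shows that $\repp(K_r,1)\cap\rep(K_r;V_1,V_2)=\EKP(K_r)\cap\rep(K_r;V_1,V_2)$ lies dense in the nonempty irreducible variety $\rep(K_r;V_1,V_2)\cong\Hom_\KK(V_1,V_2)^r$, and is in particular nonempty. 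Thus all hypotheses of Corollary~\ref{GCD3} are in force.

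It then remains only to evaluate the pair $(j,b)\in\NN_0\times\{0,\ldots,\Delta_{(V_1,V_2)}-1\}$ determined by the Euclidean division $\dim_\KK V_1=j\Delta_{(V_1,V_2)}+b$, using the extra assumption $\dim_\KK V_1\le\Delta_{(V_1,V_2)}$. If $\dim_\KK V_1<\Delta_{(V_1,V_2)}$, then $j=0$ and $b=\dim_\KK V_1$, and Corollary~\ref{GCD3} gives the generic decomposition $(\Delta_{(V_1,V_2)}-\dim_\KK V_1)P_0(2)\oplus(\dim_\KK V_1)P_1(2)$ verbatim. If $\dim_\KK V_1=\Delta_{(V_1,V_2)}$, then $j=1$ and $b=0$, and Corollary~\ref{GCD3} yields $\Delta_{(V_1,V_2)}P_1(2)$, which coincides with $(\Delta_{(V_1,V_2)}-\dim_\KK V_1)P_0(2)\oplus(\dim_\KK V_1)P_1(2)$ since in this case the multiplicity of $P_0(2)$ is zero. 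In either case the set $O_{\gen}$ of Corollary~\ref{GCD3} is exactly the set displayed in the statement, which is the assertion.

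I do not anticipate any genuine obstacle: the substance is entirely contained in Corollary~\ref{GCD3}, and the one point that needs a moment's care is the boundary case $\dim_\KK V_1=\Delta_{(V_1,V_2)}$, where one must confirm that the re-indexing $(j,b)=(1,0)$ produces the same module $\Delta_{(V_1,V_2)}P_1(2)$ as the closed formula in the statement. The proof is therefore a short formal deduction from Corollary~\ref{GCD3}.
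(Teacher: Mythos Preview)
Your proposal is correct and follows essentially the same approach as the paper's own proof: verify nonemptiness of $\EKP(K_r)\cap\rep(K_r;V_1,V_2)$ via Proposition~\ref{CatRep4}, then split into the two cases $\dim_\KK V_1<\Delta_{(V_1,V_2)}$ and $\dim_\KK V_1=\Delta_{(V_1,V_2)}$ and read off the parameters $(j,b)$ for Corollary~\ref{GCD3}. The paper's argument is slightly terser but identical in substance.
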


\begin{proof} Since $\Delta_{(V_1,V_2)}\!\ge\!r\!-\!1$, Proposition \ref{CatRep4} shows that $\EKP(K_r)\cap\rep(K_r;V_1,V_2)\!\ne\!\emptyset$. If $\Delta_{(V_1,V_2)}\!\ge\!\dim_\KK V_1\!+\!1$, we obtain
\[ \dim_\KK V_1 = 0\Delta_{(V_1,V_2)}\!+\!\dim_\KK V_1,\]
where $\dim_\KK V_1\le\!\Delta_{(V_1,V_2)}\!-\!1$. The result thus follows from Corollary $\ref{GCD3}$. Alternatively, $\dim_\KK V_1\!=\!\Delta_{(V_1,V_2)}$ and Corollary \ref{GCD3} implies
$M_{\gen}\!=\!(\dim_\KK V_1)P_1(2)$. \end{proof}

\bigskip

\begin{Cor} \label{GCD5} Let $V_1,V_2$ be $\KK$-vector spaces such that $\Delta_{(V_1,V_2)}\!\ge\!1$, $M \in \EKP(K_r)\cap\rep(K_r;V_1,V_2)$. If there exist $\fv_0 \in \Gr_2(A_r)$ and $c,d,j \in \NN_0$ such that 
\[ [M|_{\fv_0}] = cP_j(2)\!\oplus\!dP_{j+1}(2),\]
then $M_{\gen}\!=\!cP_j(2)\!\oplus\!dP_{j+1}(2)$ and $M \in O_{\gen}$. \end{Cor}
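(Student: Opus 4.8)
The plan is to exploit the irreducibility of $\Gr_2(A_r)$: once the decomposition $cP_j(2)\!\oplus\!dP_{j+1}(2)$ is realized by the single $2$-plane $\fv_0$, it is realized on a dense open subset of $\Gr_2(A_r)$, which then meets the locus controlling the generic decomposition.

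First I would do the dimension bookkeeping. Since restriction preserves dimension vectors, $\udim(M|_{\fv_0})\!=\!\udim M\!=\!(\dim_\KK V_1,\dim_\KK V_2)$, and comparing with $\udim(cP_j(2)\!\oplus\!dP_{j+1}(2))$ together with $\udim P_i(2)\!=\!(i,i\!+\!1)$ gives $c\!+\!d\!=\!\Delta_{(V_1,V_2)}$ and $\dim_\KK V_1\!=\!\Delta_{(V_1,V_2)}j\!+\!d$. If $d\!=\!\Delta_{(V_1,V_2)}$ then $c\!=\!0$, and rewriting $dP_{j+1}(2)$ as $\Delta_{(V_1,V_2)}P_{j+1}(2)$ we may replace $(j,c,d)$ by $(j\!+\!1,\Delta_{(V_1,V_2)},0)$; hence I may assume $d\in\{0,\ldots,\Delta_{(V_1,V_2)}\!-\!1\}$, so that $(j,d)$ is exactly the pair occurring in Corollary \ref{GCD3} and $c\!=\!\Delta_{(V_1,V_2)}\!-\!d$.

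Next I would produce a dense open set of ``good'' $2$-planes. By Lemma \ref{GCD1} the set $\rep(K_2;V_1,V_2)_0$ of representations isomorphic to $cP_j(2)\!\oplus\!dP_{j+1}(2)$ is $(\GL(V_2)\!\times\!\GL(V_1))$-stable, dense and open; arguing as in the proof of Proposition \ref{Rest2}(2), its image $\Iso(K_2;V_1,V_2)_0$ under $\pi$ is open, and continuity of $\res_M$ (Proposition \ref{Rest2}(1)) makes
\[ U_M:=\res_M^{-1}(\Iso(K_2;V_1,V_2)_0)=\{\fv\in\Gr_2(A_r)\ ;\ [M|_\fv]=cP_j(2)\!\oplus\!dP_{j+1}(2)\}\]
open; it contains $\fv_0$, hence is non-empty and, by irreducibility of $\Gr_2(A_r)$, dense. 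Then I would invoke Corollary \ref{Rest3}: the set $O_M=\{\fv\ ;\ n_i(M,\fv)=n_i(M)\ \forall\,i\}$ is dense open, so there is $\fv\in U_M\cap O_M$. For such $\fv$, $M|_\fv\cong cP_j(2)\!\oplus\!dP_{j+1}(2)$ forces $n_j(M)\!=\!c$, $n_{j+1}(M)\!=\!d$ and $n_i(M)\!=\!0$ otherwise, whence $M_{\gen}=cP_j(2)\!\oplus\!dP_{j+1}(2)=(\Delta_{(V_1,V_2)}\!-\!d)P_j(2)\!\oplus\!dP_{j+1}(2)$; since $M\in\EKP(K_r)\cap\rep(K_r;V_1,V_2)$, Corollary \ref{GCD3} then gives $M\in O_{\gen}$.

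The main obstacle is essentially cosmetic: no machinery beyond Lemma \ref{GCD1}, Proposition \ref{Rest2} and Corollary \ref{Rest3} is needed, and the only delicate points are handling the degenerate case $d\!=\!\Delta_{(V_1,V_2)}$ by relabeling and being careful that $U_M$ is genuinely open (not merely locally closed). The conceptual content — that one good $2$-plane spreads over a dense open locus and so must meet $O_M$ — is precisely what the earlier results were built to supply.
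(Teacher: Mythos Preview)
Your proof is correct and follows essentially the same approach as the paper: dimension bookkeeping to identify $(j,d)$, openness of the locus $U_M$ of good $2$-planes, then intersecting with $O_M$ from Corollary~\ref{Rest3} to pin down $M_{\gen}$. Your route via Proposition~\ref{Rest2}(1) and Lemma~\ref{GCD1} is in fact slightly more direct than the paper's, which first passes through the product space $\rep(K_r;V_1,V_2)\times\Inj_\KK(A_2,A_r)$ via Proposition~\ref{GCD2}(3), pulls back along $\iota_M\colon\alpha\mapsto(M,\alpha)$, and then pushes down to $\Gr_2(A_r)$ via the open morphism $\msim$---arriving at exactly the same dense open set you construct.
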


\begin{proof} By assumption, we have $\Delta_{(V_1,V_2)}\!=\!\Delta_M\!=\!c\Delta_{P_j(2)}\!+\!d\Delta_{P_{j+1}(2)}\!=\!c\!+\!d$, while $\dim_\KK V_1\!=\!cj\!+\!d(j\!+\!1)\!=\!j\Delta_{(V_1,V_2)}\!+\!d$. If $c\!\ne\!0$,
then $0\!\le\!d\!\le\!\Delta_{(V_1,V_2)}\!-\!1$ and  Proposition \ref{GCD2}(3) implies that 
\[ U := \{(N,\alpha) \in \rep(K_r;V_1,V_2)\!\times\! \Inj_\KK(A_2,A_r) \ ; \  \alpha^\ast(N) \cong cP_j(2)\!\oplus\!dP_{j+1}(2)\}\]
is a dense open subset of $\rep(K_r;V_1,V_2)\!\times\! \Inj_\KK(A_2,A_r)$. Alternatively, $\dim_\KK V_1\!=\!(j\!+\!1)\Delta_{(V_1,V_2)}$ and we arrive at the same conclusion. In view of Proposition \ref{CatRep4}(1),
we conclude that
\[ \tilde{U} := U \cap ((\EKP(K_r)\cap\rep(K_r;V_1,V_2))\!\times\! \Inj_\KK(A_2,A_r))\]
is a dense open subset of $\rep(K_r;V_1,V_2)\!\times\! \Inj_\KK(A_2,A_r)$. 

Since
\[ \iota_M : \Inj_{\KK}(A_2,A_r) \lra \rep(K_r;V_1,V_2)\!\times\!\Inj_{\KK}(A_2,A_r) \ \ ; \ \ \alpha \mapsto (M,\alpha)\]
is continuous,  our current assumption ensures that $\tilde{O}\!:=\! \iota_M^{-1}(\tilde{U})$ is a dense open subset of $\Inj_{\KK}(A_2,A_r)$. As the morphism $\msim : \Inj_\KK(A_2,A_r) \lra \Gr_2(A_r)$ is open 
(see the proof of Proposition \ref{Rest2}), 
\[ \msim(\tilde{O})\!=\!\{ \fv \in \Gr_2(A_r) \ , \ M|_\fv \cong  cP_j(2)\!\oplus\!dP_{j+1}(2)\}\] 
is a dense open subset of $\Gr_2(A_r)$ and Corollary \ref{Rest3} gives
\[ M_{\gen} =  cP_j(2)\!\oplus\!dP_{j+1}(2).\]
As a result, $M \in O_{\gen}$. \end{proof}

\bigskip
\noindent
By way of example, we briefly discuss Schwarzenberger modules. Our result and its succeeding remark will provide the generic splitting type of the associated vector bundles and
show that these are $\GL(A_2)$-homogeneous, but not homogeneous (cf.\ Section \ref{S:HomM} below).  

Given $\ell \in \NN$, we consider the Schwarzenberger module $M_\cS[\ell\!+\!r]$ of dimension vector $\udim M_\cS[\ell\!+\!r]\!=\!(\ell, \ell\!+\!r\!-\!1)$. Then we have $(M_\cS[\ell\!+\!r])_1\!=\!\KK[X_1,X_2]_{\ell-1}$ and 
$(M_\cS[\ell\!+\!r])_2\!=\!\KK[X_1,X_2]_{\ell+r-2}$. Note that these spaces are $\ZZ^2$-graded. Without loss of generality, we may assume that $M_\cS[\ell\!+\!r](\gamma_i)(f)\!=\!X_i^{r-1}f$ for $i \in \{1,2\}$ and $f \in 
\KK[X_1,X_2]_{\ell-1}$. In particular, we have $\deg(M_\cS[\ell\!+\!r](\gamma_1))\!=\!(r\!-\!1,0)$ and $\deg(M_\cS[\ell\!+\!r](\gamma_2))\!=\!(0,r\!-\!1)$. 

\bigskip

\begin{Cor} \label{GCD6} If $\ell\!-\!1\!=\!a_\ell(r\!-\!1)\!+\!q_\ell$ for $q_\ell \in \{0,\ldots, r\!-\!2\}$, then 
\[ M_\cS[\ell\!+\!r]_{\gen} = (r\!-\!2\!-\!q_\ell)P_{a_\ell}(2)\!\oplus\!(q_\ell\!+\!1)P_{a_\ell+1}(2).\] 
In particular, $M_{\cS}[\ell\!+\!r] \in O_{\gen}$. \end{Cor}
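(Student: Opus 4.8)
The plan is to deduce the statement from Corollary \ref{GCD5} by computing the restriction of $M_\cS[\ell+r]$ to one explicit plane. Using the normalization recorded just before the statement, I may assume $M_\cS[\ell+r](\gamma_i)(f)=X_i^{r-1}f$ for $i\in\{1,2\}$, and I take $\fv_0:=\langle\gamma_1,\gamma_2\rangle\in\Gr_2(A_r)$. Under the identification of Section \ref{S:Rest}, the restriction $N:=M_\cS[\ell+r]|_{\fv_0}$ is then the $K_2$-representation with $N_1=\KK[X_1,X_2]_{\ell-1}$, $N_2=\KK[X_1,X_2]_{\ell+r-2}$ and structure maps ``multiplication by $X_1^{r-1}$'' and ``multiplication by $X_2^{r-1}$''. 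Since $M_\cS[\ell+r]\in\EKP(K_r)$ and pullback along an injective algebra map preserves this property (cf.\ the proof of Lemma \ref{Rest1}), we have $N\in\EKP(K_2)$.

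The key observation is that both structure maps of $N$ preserve the residue modulo $r-1$ of the $X_1$-degree of a monomial. Grading $N_1$ and $N_2$ accordingly, I obtain a decomposition $N=\bigoplus_{s=0}^{r-2}N^{(s)}$ into subrepresentations, where $N^{(s)}_j$ is spanned by the monomials $X_1^aX_2^b$ of the appropriate total degree with $a\equiv s\pmod{r-1}$. Each $N^{(s)}$ is a subobject of $N$, hence lies in $\EKP(K_2)$ and therefore decomposes into copies of the $P_i(2)$. A count of monomials gives $\dim_\KK N^{(s)}_2=\dim_\KK N^{(s)}_1+1$ for every $s$, so $\udim N^{(s)}=(k_s,k_s+1)$ with $k_s:=\dim_\KK N^{(s)}_1$; since $\udim P_i(2)=(i,i+1)$, a direct sum of $P_i(2)$'s with dimension vector $(k_s,k_s+1)$ must be $P_{k_s}(2)$, whence $N^{(s)}\cong P_{k_s}(2)$. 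Writing $\ell-1=a_\ell(r-1)+q_\ell$ with $q_\ell\in\{0,\dots,r-2\}$ and counting the admissible exponents $a\in\{0,\dots,\ell-1\}$ in each residue class (treating the degenerate case $a_\ell=0$ separately) yields $k_s=a_\ell+1$ for $s\in\{0,\dots,q_\ell\}$ and $k_s=a_\ell$ for $s\in\{q_\ell+1,\dots,r-2\}$. Hence $N\cong(r-2-q_\ell)P_{a_\ell}(2)\oplus(q_\ell+1)P_{a_\ell+1}(2)$.

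To conclude, I note that $\Delta_{M_\cS[\ell+r]}=(\ell+r-1)-\ell=r-1\ge 1$ and $M_\cS[\ell+r]\in\EKP(K_r)$, so Corollary \ref{GCD5} applies with the plane $\fv_0$ above and $(c,d,j)=(r-2-q_\ell,\,q_\ell+1,\,a_\ell)$; it gives simultaneously $M_\cS[\ell+r]_{\gen}=(r-2-q_\ell)P_{a_\ell}(2)\oplus(q_\ell+1)P_{a_\ell+1}(2)$ and $M_\cS[\ell+r]\in O_{\gen}$. The only real content is the decomposition step in the second paragraph: once one sees that the two structure maps are compatible with the grading of $\KK[X_1,X_2]$ by $X_1$-degree modulo $r-1$, both the splitting $N=\bigoplus_sN^{(s)}$ and the identification of its summands are immediate, and what remains is the bookkeeping of the Euclidean division of $\ell-1$ by $r-1$.
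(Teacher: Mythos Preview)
Your proof is correct and follows essentially the same route as the paper's: the same plane $\fv_0=\langle\gamma_1,\gamma_2\rangle$, the same decomposition of $N=M_\cS[\ell+r]|_{\fv_0}$ via the grading by $X_1$-degree modulo $r\!-\!1$, the same identification of each summand as a single $P_{k_s}(2)$ from its dimension vector $(k_s,k_s\!+\!1)$, and the same appeal to Corollary~\ref{GCD5}. The paper phrases the indecomposability of $N^{(s)}$ slightly differently (using that $\Delta_X>0$ for nonzero $X\in\EKP(K_2)$, hence $\Delta_{N^{(s)}}=1$ forces a single summand), but this is exactly your argument.
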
 

\begin{proof} Let $\fv_0\!:=\!\KK\gamma_1\!\oplus\!\KK\gamma_2$. Writing $M\!:=\!M_\cS[\ell\!+\!r]$, we shall first find the decomposition of $M|_{\fv_0}$.

Given $(a,b) \in \NN_0^2$, we write $X^{(a,b)}\!:=\!X_1^aX_2^b$. For $q \in \{0,\ldots, r\!-\!2\}$ we consider the vector spaces
\[ M_{1,q} := \langle\{ X^{(a,b)} \in \KK[X_1,X_2]_{\ell-1} \ ; \ a\equiv q \ \modd(r\!-\!1)\} \rangle\] 
as well as 
\[M_{2,q} := \langle\{ X^{(a,b)} \in \KK[X_1,X_2]_{\ell+r-2} \ ; \ a\equiv q \ \modd(r\!-\!1)\}\rangle.\]
Then we have $M(\gamma_i)(M_{1,q}) \subseteq M_{2,q}$ for $i \in \{1,2\}$. Defining $M_q\!:=\!(M_{1,q},M_{2,q})$, we obtain a decomposition
\[ (\ast) \ \ \ \ \ \ \ \ M|_{\fv_0} = \bigoplus_{q=0}^{r-2} M_q\]
of $\KK.\fv_0$-modules. Moreover, setting $t_q\!:=\!|\{ n \in \NN_0 \ ; \ q\!+\!n(r\!-\!1)\!\le\!\ell\!-\!1\}|$, we have $\udim M_q\!=\!(t_q,t_q\!+\!1)$, so that $M_q \in \EKP(K_2)$ is of minimal type. Since $\EKP(K_2)$ is 
closed under taking direct summands and $\Delta_X\!>\!0$ for all $X \in \EKP(K_2)\!\smallsetminus\!\{(0)\}$, this renders $M_q$ indecomposable. We conclude $M_q\!\cong\!P_{t_q}(2)$, so that ($\ast$) actually is the 
decomposition of the $\KK K_2$-module $M|_{\fv_0}$ into indecomposables. For $0\!\le\!q\!\le\!q_{\ell}$, we have $t_q\!=\!a_\ell\!+\!1$, while $t_q\!:=\!a_\ell$ for $q_\ell\!+\!1\!\le\!q\!\le\!r\!-\!2$. As a result, ($\ast$) yields
\[  M|_{\fv_0} \cong (r\!-\!2\!-\!q_\ell)P_{a_\ell}(2)\!\oplus\!(q_\ell\!+\!1)P_{a_\ell+1}(2).\]
Corollary \ref{GCD5} implies 
\[ (M_{\cS}[\ell\!+\!r])_{\gen} = (r\!-\!2\!-\!q_\ell)P_{a_\ell}(2)\!\oplus\!(q_\ell\!+\!1)P_{a_\ell+1}(2),\]
and $M_{\cS}[\ell\!+\!r] \in O_{\gen}$. \end{proof}

\bigskip

\begin{Remark} Let $M\!:=\!M_\cS[\ell\!+\!r]$ and consider
\[ \fv_1 := \KK\gamma_1\!\oplus\!\KK\gamma_3,\]
where
\[ M(\gamma_1)(X_1^aX_2^b) = X_1^{a+r-1}X_2^b  \ \ ; \ \  M(\gamma_3)(X_1^aX_2^b) = X_1^{a+r-2}X_2^{b+1}  \ \ \ \ (a\!+\!b\!=\!\ell\!-\!1).\]
Then $(\id_{\KK [X_1,X_2]_{\ell-1}}, X_1^{r-2}\cdot)$ defines an embedding $P_\ell(2) \hookrightarrow M|_{\fv_1}$ such that
\[ M|_{\fv_1} \cong (r\!-\!2)P_0(2)\!\oplus\!P_{\ell}(2).\]
Thus, for $\ell\!\ge\!2$, the dense open subset $O_M$ defined in Corollary \ref{Rest3} is properly contained in $\Gr_2(A_r)$. \end{Remark}

\bigskip

\section{Coherent sheaves on $\Gr_d(A_r)$ and representations of $K_r$} \label{S:Coh}
This section is concerned with the equivalence of the categories $\repp(K_r,d)$ of relative projective representations and Steiner bundles on the Grassmannians $\Gr_d(A_r)$ along with some consequences thereof. 
Our account builds on work by Jardim-Prata \cite{JP15}, where vector bundles over fields of characteristic $0$ were considered. For fields of characteristic $p\!>\!0$, Friedlander-Pevtsova \cite{FPe11} earlier employed 
universal nilpotent operators to define functors between module categories of infinitesimal group schemes and vector bundles on their support spaces. In the context of elementary abelian $p$-groups (or equivalently 
elementary abelian restricted Lie algebras), the two approaches are related via the functorial correspondence between Kronecker representations and modules of Loewy length $\le\!2$.  

\bigskip

\subsection{Conventions} \label{S:Co} Let $r\!\ge\!2$, $d \in \{1,\ldots, r\!-\!1\}$. We denote by $\Coh(\Gr_d(A_r))$ and $\Vect(\Gr_d(A_r))$ the categories of coherent sheaves and vector bundles (locally free coherent 
sheaves) on $\Gr_d(A_r)$, respectively. Note that $\Coh(\Gr_d(A_r))$ is an abelian category, cf.\ \cite[(7.4)]{GW}. Let $\cO_{\Gr_d(A_r)}$ be the structure sheaf of $\Gr_d(A_r)$, so that every $\cF \in \Coh(\Gr_d(A_r))$ is 
an $\cO_{\Gr_d(A_r)}$-module. In view of \cite{At56}, the category $\Vect(\Gr_d(A_r))$ is a Krull-Schmidt category. Thus, the indecomposable objects of any full subcategory $\cC \subseteq \Vect(\Gr_d(A_r))$ that is closed 
under taking direct summands are just the indecomposable vector bundles belonging to $\cC$. 

Given $\fv \in \Gr_d(A_r)$, we let $\cO_{\Gr_d(A_r),\fv}$ be the local ring of $\Gr_d(A_r)$ at $\fv$, whose maximal ideal we denote by $\fm_\fv$. Let $\cF \in \Coh(\Gr_d(A_r))$. For $\fv \in \Gr_d(A_r)$, the {\it stalk} $
\cF_\fv$ of $\cF$ at $\fv$ is an $\cO_{\Gr_d(A_r),\fv}$-module, and the finite-dimensional $\KK$-vector space $\cF(\fv)\!=\!\cF_\fv/\fm_\fv\cF_\fv$ is called the {\it fiber} of $\cF$ at $\fv$. If $\cF \in \Vect(\Gr_d(A_r))$, then $
\cF_\fv$ is a free $\cO_{\Gr_d(A_r),\fv}$-module of rank $\rk(\cF_\fv)\!=\!\dim_\KK\cF(\fv)$.

If $\cF,\cG \in \Coh(\Gr_d(A_r))$ are sheaves, we let $\msHom_{\Gr_d(A_r)}(\cF,\cG)$ be the sheaf of $\cO_{\Gr_d(A_r)}$-homomor- phisms from $\cF$ to $\cG$. By definition, we have $\msHom_{\Gr_d(A_r)}(\cF,\cG)
(\Gr_d(A_r))\!=\!\Hom_{\Gr_d(A_r)}(\cF,\cG)$, the space of homomorphisms from $\cF$ to $\cG$, cf.\ \cite[\S 7.4]{GW}. For $i \in \NN_0$, we let 
\[ \Ext^i_{\Gr_d(A_r)}(\cF,-) : \Coh(\Gr_d(A_r)) \lra \modd \KK\] 
be the $i$-th right derived functor of $\Hom_{\Gr_d(A_r)}(\cF,-) \cong \Ext^0_{\Gr_d(A_r)}(\cF,-)$. Setting 
\[ \HH^i(\Gr_d(A_r),\cF) := \Ext^i_{\Gr_d(A_r)}(\cO_{\Gr_d(A_r)},\cF),\] 
we recall that $\Ext^i_{\Gr_d(A_r)}(\cF,\cG) \cong \HH^i(\Gr_d(A_r),\msHom_{\Gr_d(A_r)}(\cF,\cG))$ for every $\cG \in \Coh(\Gr_d(A_r))$. 

We denote by
\[ \chi(\cF) = \sum_{i=0}^{d(r-d)}(-1)^i \dim_\KK\HH^i(\Gr_d(A_r),\cF)\]
the {\it Euler characteristic} of $\cF$ (see \cite[(III.2.7)]{Ha77}). We refer the reader to \cite[(III.6)]{Ha77} for more details. 

The following subsidiary result for ``special exceptional pairs`` of coherent sheaves on an arbitrary variety $X$ is inspired by \cite[(3.1)]{Bra05}. We begin by recalling the relevant terminology:

\bigskip
\noindent
A vector bundle $\cF \in \Vect(X)$ is referred to as {\it exceptional} if $\dim_\KK \Ext^i_X(\cF,\cF)\!=\!\delta_{i,0}$ for all $i\!\ge\!0$. We say that a pair $(E_0,E_1)$ of coherent sheaves on $X$ is {\it special
exceptional}, provided 
\begin{enumerate}
\item[(a)] $\dim_\KK\Ext^n_X(E_i,E_i)\!=\!\delta_{n,0}$ for all $n\!\ge\!0$, $i \in \{0,1\}$.
\item[(b)] $\dim_\KK\Ext^n_X(E_0,E_1)\!=\!0$ for all $n\!\ge\!1$.
\item[(c)] $\dim_\KK\Ext^n_X(E_1,E_0)\!=\!0$ for all $n\!\ge\!0$.\footnote{Our choice of terminology derives from Rudakov's more general notion of an exceptional pair, cf.\ \cite{Ru90}. Pairs satisfying the above conditions are 
referred to as ``strongly exceptional" in \cite{AM15}, which differs from the definition employed in \cite{BLV15}.} \end{enumerate}
We put $r\!:=\!\dim_\KK\Hom_X(E_0,E_1)$. Suppose there are exact sequences
\[ (\ast) \ \ \ \ (0) \lra E_0^m \lra E_1^n \lra \cF \lra (0)\]
and 
\[ (\ast\ast) \ \ \ (0) \lra E_0^s \lra E_1^t \lra \cG \lra (0).\]

\bigskip
\noindent
Recall the from Section \ref{S:reps} the definition of the Euler-Ringel bilinear form:
\[ \langle(x_1,y_1),(y_1,y_2)\rangle_r = x_1y_1\!+\!x_2y_2\!-\!rx_1y_2.\]

\begin{Prop} \label{Co1} We have
\[\chi(\msHom_X(\cF,\cG))=\!\langle (m,n),(s,t)\rangle_r.\]
\end{Prop}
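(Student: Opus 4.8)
The plan is to deduce the formula from the bi-additivity of the Euler pairing on short exact sequences. Write $\chi(\cA,\cB):=\sum_{i\ge 0}(-1)^i\dim_\KK\Ext^i_X(\cA,\cB)$ for coherent sheaves $\cA,\cB$ on $X$; since $X$ is projective, each $\Ext^i_X(\cA,\cB)$ is finite-dimensional and vanishes once $i>\dim X$, so this is a well-defined integer. Moreover, by the identification $\Ext^i_X(\cF,\cG)\cong\HH^i(X,\msHom_X(\cF,\cG))$ recalled just before the statement, we get $\chi(\msHom_X(\cF,\cG))=\chi(\cF,\cG)$, so it suffices to compute the right-hand side. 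The two ingredients I would isolate first are: (i) applying the long exact $\Ext$-sequence in the first variable to any short exact sequence $(0)\to\cA'\to\cA\to\cA''\to(0)$ and taking the alternating sum of dimensions (all but finitely many terms vanishing) gives $\chi(\cA,\cB)=\chi(\cA',\cB)+\chi(\cA'',\cB)$, and symmetrically $\chi(\cA,-)$ is additive; in particular $\chi(\cA^k,\cB)=k\,\chi(\cA,\cB)=\chi(\cA,\cB^k)$.

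Next I would evaluate $\chi$ on the four building blocks. Conditions (a), (b), (c) in the definition of a special exceptional pair give at once $\chi(E_0,E_0)=\chi(E_1,E_1)=1$, then $\chi(E_0,E_1)=\dim_\KK\Hom_X(E_0,E_1)=r$, and $\chi(E_1,E_0)=0$. Now run the resolutions through. Feeding $(\ast)$ into the first slot and using additivity yields
\[ \chi(\cF,\cG)=n\,\chi(E_1,\cG)-m\,\chi(E_0,\cG). \]
Feeding $(\ast\ast)$ into the second slot gives $\chi(E_1,\cG)=t\,\chi(E_1,E_1)-s\,\chi(E_1,E_0)=t$ and $\chi(E_0,\cG)=t\,\chi(E_0,E_1)-s\,\chi(E_0,E_0)=tr-s$. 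Substituting,
\[ \chi(\cF,\cG)=nt-m(tr-s)=ms+nt-rmt=\langle (m,n),(s,t)\rangle_r, \]
which is the desired identity.

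There is no real obstacle here: the argument is pure homological bookkeeping once the set-up of Section \ref{S:Co} is in place. The only points deserving care are (a) confirming that $\chi(\msHom_X(\cF,\cG))$ is genuinely the alternating sum $\sum_i(-1)^i\dim_\KK\Ext^i_X(\cF,\cG)$ — which is precisely the identification recalled before the statement, so nothing new is needed — and (b) keeping the signs in the two $\Ext$ long exact sequences straight, i.e.\ that the contravariant one subtracts the "$E_0$" contribution and adds the "$E_1$" one in $(\ast)$, and likewise for $(\ast\ast)$. One could alternatively phrase the whole computation in terms of the local $\mathcal{E}xt$-sheaves, but routing it through the global $\Ext$-spaces and the long exact sequences keeps the additivity step entirely formal.
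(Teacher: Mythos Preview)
Your argument is correct and is the cleaner route: once you know that $\chi(-,-)$ is additive on short exact sequences in each variable (which follows from the long exact $\Ext$-sequence provided all terms are finite-dimensional and eventually vanish), the computation reduces to evaluating on the four pairs $(E_i,E_j)$, and the conditions (a)--(c) hand you those values directly.

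Two remarks comparing with the paper. First, the paper does not argue via bi-additivity but instead builds up the answer by chasing explicit five-term exact sequences obtained from $(\ast)$ and $(\ast\ast)$; along the way (step (iii)) it proves the vanishing $\Ext^{\ell}_X(\cF,\cG)=(0)$ for $\ell\ge 2$. This vanishing is invoked verbatim later in the paper (in the proof of Proposition~\ref{ExSt1} and implicitly in Corollary~\ref{Fun5}(2)), so if you replace the paper's proof by yours you should record this vanishing separately --- it falls out of the same long exact sequences you are already using, since $\Ext^{\ge 1}_X(E_i,E_j)=(0)$ for all $i,j$ forces $\Ext^{\ge 2}_X(\cF,\cG)=(0)$ after two applications.

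Second, a small quibble: the paper works over ``an arbitrary variety $X$'', so your appeal to projectivity for finiteness and vanishing of $\Ext^i$ is not quite on target. What actually makes $\chi(\cA,\cB)$ well-defined here is that the hypotheses (a)--(c) give finiteness and vanishing for all $\Ext^i_X(E_j,E_k)$, and these propagate to $\Ext^i_X(\cF,\cG)$ through the long exact sequences induced by $(\ast)$ and $(\ast\ast)$. Otherwise your argument goes through unchanged.
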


\begin{proof} We proceed in several steps:
\medskip

(i) {\it We have an exact sequence}
\[ (0) \lra \Hom_X(\cF,E_1^t) \lra \Hom_X(\cF,\cG) \lra \Ext^1_X(\cF,E_0^s) \lra \Ext^1_X(\cF,E_1^t) \lra \Ext^1_X(\cF,\cG) \lra (0).\]

\smallskip
\noindent 
The assertion follows by applying $\Hom_X(\cF, -)$ to ($\ast\ast$), once we know that $\Hom_X(\cF,E_0^s)\!=\!(0)\!=\!\Ext^2_X(\cF,E_0^s)$.

By applying $\Hom_X(-,E_0^s)$ to ($\ast$) while observing (c), we obtain a sequence $(0)\!\lra\!\Hom_X(\cF,E_0^s)$ $\lra\!\Hom_X(E_1^n,E_0^s)\!=\!(0)$, so that $\Hom_X(\cF,E_0^s)\!=\!(0)$. To show exactness at 
$\Ext^1_X(\cF,\cG)$, we apply $\Ext^1_X(-,E_0^s)$ to $(\ast)$. Then (a) and (c) yield
\[ (0) = \Ext^1_X(E_0^m,E_0^s) \lra \Ext^2_X(\cF,E_0^s) \lra \Ext^2_X(E_1^n,E_0^s) = (0),\]
whence $\Ext^2_X(\cF,E_0^s)\!=\!(0)$.  \hfill $\diamond$ 

\medskip

(ii) {\it We have} $\dim_\KK\Hom_X(\cF,E_1^t)\!-\!\dim_\KK\Ext^1_X(\cF,E_1^t)\!=\!nt\!-\!rmt$. 

\smallskip
\noindent
We apply $\Hom_X(-,E_1^t)$ to ($\ast$) and obtain
\[ (0) \lra \Hom_X(\cF,E_1^t) \lra \Hom_X(E_1^n,E_1^t) \lra \Hom_X(E_0^m,E_1^t) \lra \Ext^1_X(\cF,E_1^t) \lra \Ext^1_X(E_1^n,E_1^t)\!=\!(0).\] 
As $\dim_\KK\Hom_X(E_1^n,E_1^t)\!=\!nt$, while $\dim_\KK\Hom_X(E_0^m,E_1^t)\!=\!rmt$, our assertion follows. \hfill $\diamond$

\medskip

(iii) {\it We have} $\Ext^\ell_X(\cF,\cG)\!=\!(0)$ for $\ell\!\ge\!2$.

\smallskip
\noindent
Let $\ell\!\ge\!2$. Application of $\Ext^{\ell-1}_X(-,E_0^s)$ to $(\ast)$ in conjunction with (a) and (c) yields
\[(0) = \Ext^{\ell-1}_X(E_0^m,E_0^s) \lra \Ext^\ell_X(\cF,E_0^s) \lra \Ext^\ell_X(E_1^n,E_0^s) = (0),\]
so that $\Ext^\ell_X(\cF,E_0^s)\!=\!(0)$. In view of (b) and (a), application of $\Ext^{\ell-1}_X(-,E_1^t)$ to ($\ast$) gives
\[(0) = \Ext^{\ell-1}_X(E_0^m,E_1^t) \lra \Ext^\ell_X(\cF,E_1^t) \lra \Ext^\ell_X(E_1^n,E_1^t) = (0),\]
so that $\Ext^\ell_X(\cF,E_1^t)\!=\!(0)$.

Applying $\Ext^\ell_X(\cF,-)$ to ($\ast\ast$), we finally arrive at
\[ (0) = \Ext^\ell_X(\cF,E_1^t) \lra \Ext^\ell_X(\cF,\cG) \lra \Ext^{\ell+1}_X(\cF,E_0^s)=(0),\]
so that $\Ext^\ell_X(\cF,\cG)\!=\!(0)$. \hfill $\diamond$

\medskip
\noindent
By applying (iii), (i) and (ii) consecutively, we obtain
\begin{eqnarray*}
\chi(\msHom_X(\cF,\cG)) & = & \dim_\KK\Hom_X(\cF,\cG)\!-\!\dim_\KK\Ext^1_X(\cF,\cG)\\  
                                   & = & \dim_\KK\Hom_X(\cF,E_1^t)\!-\!\dim_\KK\Ext^1_X(\cF,E_1^t)\!+\!\dim_\KK\Ext^1_X(\cF,E_0^s)\\ 
                                   & = & nt\!-\!rmt\!+\!\!\dim_\KK\Ext^1_X(\cF,E_0^s).
\end{eqnarray*}
Application of $\Hom_X(-,E_0^s)$ to ($\ast$) gives
\[ (0)= \Hom_X(E_1^n,E_0^s) \lra \Hom_X(E_0^m,E_0^s) \lra \Ext^1_X(\cF,E_0^s) \lra \Ext^1_X(E_1^n,E_0^s)=(0),\]
whence
\[ \chi(\msHom_X(\cF,\cG)) = nt\!-\!rmt\!+\!ms = \langle (m,n),(s,t)\rangle_r,\]
as desired. \end{proof}

\bigskip

\subsection{The functor $\TilTheta_d$}\label{S:Fun}
For $d \in \{1,\ldots,r\!-\!1\}$ we consider the {\it universal vector bundle} $\cU_{(r,d)}$ of $\Gr_d(A_r)$. By definition, $\cU_{(r,d)}$ is the locally free sheaf corresponding to the locally trivial vector space fibration $(E,p)$ 
over $\Gr_d(A_r)$, where
\[ E\!:=\!\{(\fv,a) \in \Gr_d(A_r)\!\times\!A_r \ ; \ a \in \fv\} \ \ \text{and}  \ \  p : E \lra \Gr_d(A_r) \ ; \ (\fv,a) \mapsto \fv\]
denote the incidence variety and the canonical projection, respectively. Consequently,
\[ \cU_{(r,d)}(U) = \{s : U \lra E \ ; \ p\circ s\!=\!\id_U\} \cong \{ t : U \lra A_r \ ; \ t(\fv) \in \fv \ \ \ \forall \ \fv \in U\}\]
for every open subset $U \subseteq \Gr_d(A_r)$. The vector bundle $\cU_{(r,d)}$ is known to be simple and the canonical map $\cU_{(r,d)} \lra \cO^r_{\Gr_d(A_r)}$ is locally split injective. Hence its cokernel $\cQ_{(r,d)}$ 
is a vector bundle (cf.\ \cite[(6.2.1)]{Be17}), referred to as the {\it universal quotient bundle}, cf.\ \cite[(3.2.3)]{EH}.

Given a pair $(V_1,V_2)$ of $\KK$-vector spaces, we put
\[ \widetilde{V_1} := V_1\!\otimes_\KK\!\cU_{(r,d)} \ \ \text{and} \ \ \widetilde{V_2} := V_2\!\otimes_\KK\!\cO_{\Gr_d(A_r)}.\]
Let $f_i : V_i \lra W_i$ ($i \in \{1,2\}$) be linear maps between $\KK$-vector spaces. These give rise to morphisms $\tilde{f_i} : \widetilde{V_i} \lra \widetilde{W_i}$, where 
\[ \tilde{f_1} := f_1\!\otimes\id_{\cU_{(r,d)}} \ \ \ \text{and} \ \ \ \tilde{f_2} := f_2\otimes\id_{\cO_{\Gr_d(A_r)}}.\]
For $\fv \in \Gr_d(A_r)$, the standard isomorphisms $\ev_{1,\fv} : \cU_{(r,d)}(\fv) \lra \fv$ and $\ev_{2,\fv} : \cO_{\Gr_d(A_r)}(\fv) \lra \KK$ induce commutative diagrams
\begin{equation}\label{diagram1} \begin{tikzcd}   \widetilde{V_i}(\fv) \arrow[d, "\id_{V_i}\otimes \ev_{i,\fv}"] \arrow[r,"\tilde{f_i}(\fv)"] & \widetilde{W_i}(\fv) \arrow[d,"\id_{W_i}\otimes \ev_{i,\fv}"]\\
                            V_i\!\otimes_\KK\!\fu_i  \arrow[r,"f_i\otimes\id_{\fu_i}"] & W_i\!\otimes_\KK\!\fu_i,
\end{tikzcd} \end{equation} 
where 
\[ \fu_i := \left\{\begin{array}{cc} \fv & i\!=\!1 \\ \KK & i\!=\!2.\end{array} \right.\]
Similarly, the isomorphism $\cO_{\Gr_d(A_r)}(\Gr_d(A_r)) \cong \KK$ allows us to identify $(\tilde{f_2})_{\Gr_d(A_r)}$ and $f_2$.  

Let $M \in \rep(K_r)$. In analogy with \cite[\S 8.2]{Be17} (see also \cite{FPe11}), we consider for every open subset $U \subseteq \Gr_d(A_r)$, the morphism
\[ \TilTheta_{M,d}(U) : \widetilde{M_1}(U) \lra \widetilde{M_2}(U) \ \ ; \ \ m\otimes t \mapsto \sum_{i=1}^r \gamma_i\dact m\otimes \gamma_i^\ast\circ t\]
of $\cO_{\Gr_d(A_r)}(U)$-modules.\footnote{Here $\{\gamma_1^\ast,\ldots, \gamma_r^\ast\} \subseteq A_r^\ast$ denotes the dual basis of $\{\gamma_1,\ldots, \gamma_r\}.$} Then $\TilTheta_{M,d} : \widetilde{M_1} \lra 
\widetilde{M_2}$ is a morphism of vector bundles, and we define
\[ \TilTheta_d(M) := \msCoker\TilTheta_{M,d} \in \Coh(\Gr_d(A_r)).\]
The definition of $\TilTheta_d(M)$ does not depend on the choice of the dual bases $\{\gamma_1,\ldots, \gamma_r\} \subseteq A_r$, $\{\gamma^\ast_1,\ldots, \gamma^\ast_r\} \subseteq A^\ast_r$. In fact, one can define 
sheaves $\widetilde{M}'_1$ and $\widetilde{M}'_2$ via
\[ \widetilde{M}'_1(U) := \{ s : U \lra A_r\!\otimes_\KK\!M_1 \ ; \ s(\fv) \subseteq \fv\!\otimes_\KK\!M_1 \ \ \  \ \forall \ \fv \in U\} \subseteq \Mor(U,A_r\!\otimes_\KK\!M_1)\]
and 
\[ \widetilde{M}'_2(U) := \Mor(U,M_2).\]
Setting
\[ \TilTheta'_{M,d}(U)(s) = \psi_M\circ s  \ \ \ \ \forall \ s \in \widetilde{M}'_1(U),\]
we obtain
\[ \TilTheta_d(M) \cong \coker\TilTheta'_{M,d}.\]
If $f : M \lra N$ is a morphism in $\rep(K_r)$ with components $f_i: M_i \lra N_i \ \ \ (i \in \{1,2\})$, then 
\[ \tilde{f}_i : \widetilde{M_i} \! \lra \widetilde{N_i} \ \ ; \ \ m\otimes g \mapsto f_i(m)\otimes g \  \  \  \  \  \  \   (i \in \{1,2\})\]
are morphisms of vector bundles and there results a commutative diagram
\[ \begin{tikzcd}   \widetilde{M_1} \arrow[d, "\tilde{f_1}"] \arrow[r,"\TilTheta_{M,d}"] & \widetilde{M_2} \arrow[d,"\tilde{f_2}"] \arrow[r] & \TilTheta_d(M) \arrow[r] & (0)\\
                            \widetilde{N_1}  \arrow[r,"\TilTheta_{N,d}"] & \widetilde{N_2} \arrow[r] & \TilTheta_d(N) \arrow[r] & (0).
\end{tikzcd} \] 
Consequently, there is a unique morphism 
\[ \TilTheta_d(f) : \TilTheta_d(M) \lra \TilTheta_d(N),\]
which completes the diagram above. We thus obtain a functor
\[ \TilTheta _d: \rep(K_r) \lra \Coh(\Gr_d(A_r)).\]
We denote by $\reppi(K_r,d)$ the full subcategory of $\rep(K_r)$, whose objects $M$ have the property that $\psi_{M,\fv}\!=\!\psi_M|_{\fv\otimes_\KK M_1}$ is surjective for every $\fv \in \Gr_d(A_r)$. (This category 
coincides with the essential image of $\repp(K_r,d)$ under the standard duality.) 

\bigskip

\begin{Definition} Let $M \in \rep(K_r)$. We say that $M$ has {\it constant $d$-rank}, provided there is $\rk_d(M) \in \NN_0$ such that $\rk(\psi_{M,\fv})\!=\!\rk_d(M)$ for all $\fv \in \Gr_d(A_r)$.  
We let $\CR(K_r,d)$ be the full subcategory of $\rep(K_r)$ whose objects have constant $d$-rank. \end{Definition}

\bigskip

\begin{Remark} The category $\CR(K_r,d)$ is the analog of the category of modules of constant $(d,1)$-radical rank that was considered in \cite[(4.1)]{CFP15}. \end{Remark}  

\bigskip
\noindent
We record a few basic properties:

\bigskip

\begin{Lem} \label{Fun1} The following statements hold:
\begin{enumerate} 
\item The functor $\TilTheta_d$ is right exact. 
\item Let $M \in \rep(K_r)$. Then $M \in \CR(K_r,d)$ if and only if $\TilTheta_d(M) \in \Vect(\Gr_d(A_r))$. In that case, we have $\rk(\TilTheta_d(M))\!=\!\dim_\KK M_2\!-\!\rk_d(M)$.
\item We have $\ker\TilTheta_d\cap\CR(K_r,d)\!=\!\reppi(K_r,d)$. \end{enumerate} \end{Lem}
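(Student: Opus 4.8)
The plan is to deduce all three assertions from a single fibrewise description of $\TilTheta_d(M)$: for every $\fv \in \Gr_d(A_r)$ there is an isomorphism
\[ \TilTheta_d(M)(\fv) \cong \coker\bigl(\psi_{M,\fv}\colon \fv\!\otimes_\KK\!M_1 \lra M_2\bigr),\]
so that $\dim_\KK\TilTheta_d(M)(\fv)\!=\!\dim_\KK M_2\!-\!\rk(\psi_{M,\fv})$. To establish this, I would pass to the fibre at $\fv$ in the presentation $\widetilde{M_1}\stackrel{\TilTheta_{M,d}}{\lra}\widetilde{M_2}\lra\TilTheta_d(M)\lra(0)$; since taking fibres is right exact, $\TilTheta_d(M)(\fv)$ is the cokernel of $\TilTheta_{M,d}(\fv)$. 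Using the evaluation isomorphisms $\ev_{1,\fv}\colon\cU_{(r,d)}(\fv)\stackrel{\sim}{\lra}\fv$ and $\ev_{2,\fv}\colon\cO_{\Gr_d(A_r)}(\fv)\stackrel{\sim}{\lra}\KK$ (together with the fact that $M_i\!\otimes_\KK\!-$ commutes with fibres) one identifies $\widetilde{M_1}(\fv)\cong M_1\!\otimes_\KK\!\fv$ and $\widetilde{M_2}(\fv)\cong M_2$, and unwinding the defining formula for $\TilTheta_{M,d}$ shows that $\TilTheta_{M,d}(\fv)$ is carried to the map $m\otimes a\mapsto\sum_{i=1}^r\gamma_i^\ast(a)\,\gamma_i\dact m\!=\!a\dact m$, i.e.\ (up to the flip $m\otimes a\leftrightarrow a\otimes m$) to $\psi_{M,\fv}$.

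For (1), note that $V\mapsto V\!\otimes_\KK\!\cU_{(r,d)}$ and $V\mapsto V\!\otimes_\KK\!\cO_{\Gr_d(A_r)}$ are exact (each is a finite direct sum over a $\KK$-basis of $V$), so $M\mapsto\widetilde{M_1}$ and $M\mapsto\widetilde{M_2}$ are exact functors $\rep(K_r)\to\Coh(\Gr_d(A_r))$. Hence a short exact sequence $(0)\lra M'\lra M\lra M''\lra(0)$ produces the commutative ladder
\[ \begin{tikzcd}
(0) \arrow[r] & \widetilde{M'_1}\arrow[r]\arrow[d,"\TilTheta_{M',d}"] & \widetilde{M_1}\arrow[r]\arrow[d,"\TilTheta_{M,d}"] & \widetilde{M''_1}\arrow[r]\arrow[d,"\TilTheta_{M'',d}"] & (0)\\
(0) \arrow[r] & \widetilde{M'_2}\arrow[r] & \widetilde{M_2}\arrow[r] & \widetilde{M''_2}\arrow[r] & (0)
\end{tikzcd} \]
with exact rows, and the Snake Lemma in the abelian category $\Coh(\Gr_d(A_r))$ gives the exact sequence $\TilTheta_d(M')\lra\TilTheta_d(M)\lra\TilTheta_d(M'')\lra(0)$. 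Thus $\TilTheta_d$ is right exact.

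For (2), the fibre formula shows that $M\in\CR(K_r,d)$ — equivalently $\fv\mapsto\rk(\psi_{M,\fv})$ is constant — if and only if $\fv\mapsto\dim_\KK\TilTheta_d(M)(\fv)$ is constant on $\Gr_d(A_r)$. Since $\Gr_d(A_r)$ is an integral Noetherian scheme and $\TilTheta_d(M)$ is coherent, the standard criterion for local freeness (cf.\ \cite[Ch.~II, Exercise~5.8]{Ha77}) says the latter is equivalent to $\TilTheta_d(M)\in\Vect(\Gr_d(A_r))$, and in that case $\rk(\TilTheta_d(M))$ is the common fibre dimension $\dim_\KK M_2\!-\!\rk_d(M)$. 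Then (3) follows formally: if $M\in\reppi(K_r,d)$ then every $\psi_{M,\fv}$ is surjective, so $M\in\CR(K_r,d)$ with $\rk_d(M)\!=\!\dim_\KK M_2$, and (2) forces $\TilTheta_d(M)$ locally free of rank $0$, i.e.\ $\TilTheta_d(M)\!=\!(0)$ and $M\in\ker\TilTheta_d$; conversely, if $M\in\ker\TilTheta_d\cap\CR(K_r,d)$, then (2) gives $0\!=\!\rk(\TilTheta_d(M))\!=\!\dim_\KK M_2\!-\!\rk_d(M)$, whence $\rk(\psi_{M,\fv})\!=\!\dim_\KK M_2$ for all $\fv$, i.e.\ $M\in\reppi(K_r,d)$.

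I expect the only real obstacle to be the opening fibrewise identification — carefully matching $\TilTheta_{M,d}(\fv)$ with $\psi_{M,\fv}$ through the evaluation isomorphisms and confirming compatibility with the formation of cokernels — together with the correct invocation of the integrality (in particular reducedness) of $\Gr_d(A_r)$ in the local-freeness criterion; once these are in place, parts (1) and (3) are formal and part (2) is an immediate translation.
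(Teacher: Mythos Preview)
Your proposal is correct and follows essentially the same approach as the paper: both establish the fibrewise identification $\TilTheta_{M,d}(\fv)\cong\psi_{M,\fv}$ (up to the flip $\omega$) via the evaluation isomorphisms, deduce (1) from the Snake Lemma applied to the obvious ladder, and obtain (3) formally from (2). The only cosmetic difference is in (2): the paper phrases the criterion in terms of the morphism $\TilTheta_{M,d}$ having constant rank on fibres and cites \cite[(6.2.1)]{Be17}, whereas you phrase it in terms of the cokernel sheaf having constant fibre dimension and cite Hartshorne's exercise; these are equivalent formulations of the same local-freeness criterion.
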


\begin{proof} (1) The right exactness of $\TilTheta_d$ is a direct consequence of (the proof of) the Snake Lemma. 

(2) Suppose that $M \in \CR(K_r,d)$. Given $\fv \in \Gr_d(A_r)$, the commutative diagram
\[ \begin{tikzcd}   \widetilde{M_1}(\fv) \arrow[d, "\id_{M_1}\otimes \ev_{1,\fv}"] \arrow[r,"\TilTheta_{M,d}(\fv)"] & \widetilde{M_2}(\fv) \arrow[d,"\id_{M_2}\otimes \ev_{2,\fv}"]\\
                            M_1\!\otimes_\KK\!\fv \arrow[r,"\psi_{M,\fv}\circ \omega"] & M_2,
\end{tikzcd} \] 
where $\omega$ flips the tensor factors, yields
\[\rk(\TilTheta_{M,d}(\fv)) = \rk(\psi_{M,\fv}) = \rk_d(M).\] 
It now follows from \cite[(6.2.1)]{Be17} that $\TilTheta_d(M)$ is a vector bundle of rank $\rk(\TilTheta_d(M))\!=\!\dim_\KK M_2\!-\!\rk_d(M)$. 

Conversely, assume that $\TilTheta_d(M)$ is a vector bundle. Then \cite[(6.2.1)]{Be17} implies that there is $n \in \NN_0$ such that
\[ \rk(\TilTheta_{M,d}(\fv)) = n  \ \ \ \ \ \forall \ \fv \in \Gr_d(A_r).\]
The observation above thus implies that $\rk(\psi_{M,\fv})\!=\!n$ for all $\fv \in \Gr_d(A_r)$, so that $M \in \CR(K_r,d)$. 

(3) Let $M \in \CR(K_r,d)$. By definition, we have $M \in \reppi(K_r,d)$ if and only if $\rk_d(M)\!=\!\dim_\KK M_2$. In view of (2), this is equivalent to $\TilTheta_d(M)\!=\!(0)$. \end{proof}

\bigskip
\noindent
Steiner bundles on projective space were first systematically studied by Dolgachev-Kapranov \cite{DK93}. The following definition for Grassmannians is taken from \cite{AM15}.

\bigskip

\begin{Definition} Let $(s,t) \in \NN_0^2$. A vector bundle $\cF \in \Vect(\Gr_d(A_r))$ is referred to as an {\it $(s,t)$-Steiner bundle} if there exists an exact sequence
\[ (0) \lra \cU^s_{(r,d)} \lra \cO^t_{\Gr_d(A_r)} \lra \cF \lra (0).\] 
We denote by $\StVect(\Gr_d(A_r))$ the full subcategory of $\Vect(\Gr_d(A_r))$, whose objects are Steiner bundles (for some $(s,t) \in \NN_0^2$). \end{Definition}

\bigskip
\noindent
In view of $\cU_{(r,1)}\cong \cO_{\PP(A_r)}(-1)$ (cf.\ \cite[(3.2.3)]{EH}), we retrieve the original definition of \cite{DK93}. 

The relationship between $\repp(K_r)$ and $\StVect(\Gr_d(A_r))$, which for $d\!=\!1$ is implicit in Brambilla's work \cite{Bra05,Bra08} on Steiner bundles (for $\KK\!=\!\CC$), was investigated by Jardim and 
Prata \cite{JP15} for $\Char(\KK)\!=\!0$ in the more general context of cokernel bundles. In our context the category of ``globally injective representations`` defined in \cite{JP15} coincides with $\repp(K_r,d)$.

Since we will employ the functors $\TilTheta_d$ extensively, we recall the basic arguments of the proof of \cite[(3.5)]{JP15}, thereby hopefully convincing the reader that the result does 
not necessitate any assumption on the characteristic of the base field. To that end we require the following subsidiary result.

\bigskip

\begin{Lem} \label{Fun2} The pair $(\cU_{(r,d)},\cO_{\Gr_d(A_r)})$ is special exceptional. \end{Lem}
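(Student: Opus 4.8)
The claim is that the pair $(\cU_{(r,d)}, \cO_{\Gr_d(A_r)})$ satisfies conditions (a), (b), (c) in the definition of a special exceptional pair. These are statements purely about the cohomology of $\Gr_d(A_r)$, and since $\Gr_d(A_r) = \Gr_d(\KK^r)$ is a rational homogeneous variety, all of the required $\Ext$-groups can be computed via the Borel--Weil--Bott theorem applied to the relevant homogeneous (equivariant) bundles. I will organize the proof around the identifications
\[
\Ext^n_{\Gr_d(A_r)}(\cE,\cF) \cong \HH^n(\Gr_d(A_r), \cE^\vee \otimes \cF),
\]
valid since $\cE$ is locally free, and then evaluate each of the three cases.

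\medskip
\textbf{Step 1: condition (c), $\Ext^n(\cO,\cU_{(r,d)}) = 0$ for all $n\ge 0$.} Here $\cE^\vee \otimes \cF = \cU_{(r,d)}$, so I must show $\HH^n(\Gr_d(A_r), \cU_{(r,d)}) = 0$ for every $n\ge 0$. The universal subbundle is the homogeneous bundle associated to the standard representation of the Levi factor on the ``sub'' side; on $\PP^{r-1}$ this is just $\cU_{(r,1)} \cong \cO(-1)$, whose cohomology vanishes identically for $r\ge 2$. In general, $\cU_{(r,d)}$ corresponds to a dominant-type weight whose Bott twist lands in a non-regular or ``null'' chamber, forcing all cohomology to vanish; I will cite the standard computation (e.g. via Borel--Weil--Bott, or directly from the tautological sequence $0 \to \cU_{(r,d)} \to \cO^r \to \cQ_{(r,d)} \to 0$ together with the known vanishing $\HH^n(\Gr, \cO) = \delta_{n,0}\KK$ and $\HH^n(\Gr,\cQ_{(r,d)}) = \delta_{n,0}\,\KK^r$). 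Taking the long exact sequence of this tautological sequence then yields $\HH^n(\Gr_d(A_r),\cU_{(r,d)}) = 0$ for all $n$.

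\medskip
\textbf{Step 2: condition (b), $\Ext^n(\cU_{(r,d)}, \cO) = 0$ for all $n\ge 1$, and part of (a).} Now $\cE^\vee \otimes \cF = \cU_{(r,d)}^\vee = \msHom(\cU_{(r,d)},\cO)$. The dual of the tautological sequence reads $0 \to \cQ_{(r,d)}^\vee \to \cO^r \to \cU_{(r,d)}^\vee \to 0$. Since $\cQ_{(r,d)}^\vee$ is again an ample-dual homogeneous bundle with $\HH^n(\Gr, \cQ_{(r,d)}^\vee) = 0$ for all $n\ge 0$ (same Bott argument as Step 1, on the quotient side), the long exact sequence gives $\HH^0(\Gr, \cU_{(r,d)}^\vee) \cong \KK^r$ and $\HH^n(\Gr, \cU_{(r,d)}^\vee) = 0$ for $n\ge 1$. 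This establishes (b), and simultaneously computes $\dim_\KK\Hom_{\Gr_d(A_r)}(\cU_{(r,d)},\cO) = r$, matching the ``$r$'' in the notation of Proposition \ref{Co1}.

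\medskip
\textbf{Step 3: condition (a), the exceptionality of each of $\cU_{(r,d)}$ and $\cO_{\Gr_d(A_r)}$.} For $\cO_{\Gr_d(A_r)}$ this is the classical fact that $\HH^n(\Gr_d(A_r), \cO) = \delta_{n,0}\KK$, valid over any field since $\Gr_d(A_r)$ has a cellular decomposition by Schubert cells with only even-dimensional cells and no higher cohomology of the structure sheaf (Grassmannians are rational, and arithmetically Cohen--Macaulay; alternatively quote \cite{Ha77}). For $\cU_{(r,d)}$ I need $\Ext^n(\cU_{(r,d)}, \cU_{(r,d)}) = \delta_{n,0}\KK$, i.e.\ $\HH^n(\Gr_d(A_r), \cEnd(\cU_{(r,d)})) = \delta_{n,0}\KK$; the simplicity statement $\HH^0 = \KK$ is already asserted in the excerpt (``$\cU_{(r,d)}$ is known to be simple''), and the vanishing of the higher $\HH^n$ is again a Borel--Weil--Bott computation for the homogeneous bundle $\cU_{(r,d)}^\vee \otimes \cU_{(r,d)}$, whose weights pair with $\rho$ to land in walls, killing all positive-degree cohomology. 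I will assemble this by tensoring the tautological sequence with $\cU_{(r,d)}$, respectively $\cU_{(r,d)}^\vee$, reducing to the known cohomology of $\cU_{(r,d)}$, $\cU_{(r,d)}^\vee$, $\cQ_{(r,d)}\otimes\cU_{(r,d)}$ etc., all of which are standard.

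\medskip
\textbf{Main obstacle.} The conceptual content is light; the only genuine subtlety is making sure the Borel--Weil--Bott type vanishing statements hold in \emph{arbitrary characteristic}, since Bott's theorem in the naive form fails in positive characteristic. The safe route, which I will take, is to avoid invoking Bott directly and instead reduce everything to (i) $\HH^\bullet(\Gr,\cO) = \KK$ concentrated in degree $0$, (ii) $\HH^\bullet(\Gr, \cU_{(r,d)}) = 0$, and (iii) $\HH^\bullet(\Gr, \cU_{(r,d)}^\vee)$ concentrated in degree $0$ with value $\KK^r$ — each of which is characteristic-free — and then propagate via the short exact sequences obtained by tensoring the tautological sequence $0\to\cU_{(r,d)}\to\cO^r\to\cQ_{(r,d)}\to 0$ (and its dual) with $\cO$, $\cU_{(r,d)}$, $\cU_{(r,d)}^\vee$. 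This is precisely the style of argument in \cite{AM15}, to which I will refer for the residual computations.
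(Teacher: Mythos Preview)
Your overall strategy is sound in characteristic zero, but the paper works over an algebraically closed field of \emph{arbitrary} characteristic, and this is where your Step~3 does not close. The reduction you sketch for the exceptionality of $\cU_{(r,d)}$, namely tensoring the tautological sequence with $\cU_{(r,d)}$ or $\cU_{(r,d)}^\vee$, does not terminate with your three ``basic'' inputs (i)--(iii): after one step you need $\HH^\bullet(\cQ_{(r,d)}^\vee\otimes\cU_{(r,d)})$ or $\HH^\bullet(\cQ_{(r,d)}\otimes\cU_{(r,d)})$, and chasing these through further tautological sequences leads you to $\HH^\bullet(\msEnd(\cQ_{(r,d)}))$, which is essentially the dual problem. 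Your fallback of citing \cite{AM15} does not help, since that paper assumes $\Char(\KK)=0$ and relies precisely on the Borel--Weil--Bott vanishing you correctly flagged as unavailable. So as written, condition~(a) for $\cU_{(r,d)}$ is not established in positive characteristic.

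The paper takes a genuinely different and much shorter route that sidesteps all of this. It invokes the tilting bundle
\[
\cT=\bigoplus_{\alpha\in\cB_{r-d,d}}\bigwedge^{\alpha'}(\cU_{(r,d)})
\]
on $\Gr_d(A_r)$ constructed by Buchweitz--Leuschke--Van den Bergh \cite{BLV15}, which is valid in arbitrary characteristic. Since $\cT$ is tilting, $\Ext^i_{\Gr_d(A_r)}(\cT,\cT)=(0)$ for all $i>0$; and both $\cO_{\Gr_d(A_r)}=\bigwedge^{0}(\cU_{(r,d)})$ and $\cU_{(r,d)}=\bigwedge^{1}(\cU_{(r,d)})$ occur as direct summands of $\cT$. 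This immediately gives $\Ext^i(\cX,\cY)=(0)$ for all $i>0$ and $\cX,\cY\in\{\cU_{(r,d)},\cO_{\Gr_d(A_r)}\}$, handling (a), (b) and the higher part of (c) in one stroke. The remaining facts, that both bundles are simple and that $\Hom_{\Gr_d(A_r)}(\cO_{\Gr_d(A_r)},\cU_{(r,d)})=\cU_{(r,d)}(\Gr_d(A_r))=(0)$, are elementary. The upshot: your approach, once completed, would be a hands-on cohomology computation that works smoothly only in characteristic zero; the paper's approach outsources the hard characteristic-free work to \cite{BLV15} and reads off the result from the summand structure of a tilting object.
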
 

\begin{proof} Given a partition $\alpha$, we let $|\alpha|\!=\!\sum_i\alpha_i$ be its degree, and $\alpha'$ be its transpose partition. Let $\cB_{r-d,d}$ be the set of those partitions, whose corresponding Young tableaux 
have at most $r\!-\!d$ rows and at most $d$ columns. We pick a total ordering $\prec$ on $\cB_{r-d,d}$ such that $|\alpha| < |\beta|$ implies $\alpha \prec \beta$.  

According to \cite[(5.7)]{BLV15}, the vector bundle 
\[ \cT := \bigoplus_{\alpha \in \cB_{r-d,d}}\bigwedge^{\alpha'}(\cU_{(r,d)})\]
is a tilting object in the bounded derived category $\msD^b(\Gr_d(A_r))$ of $\Coh(\Gr_d(A_r))$. In particular,  $\Ext^i_{\Gr_d(A_r)}(\cT,\cT)\!=\!(0)$ for all $i\!>\!0$. We consider the partitions $\alpha\!:=\!0$ and 
$\beta\!:=\!1$. Then $\alpha\!\prec\!\beta \in \cB_{r-d,d}$, so that  $\cO_{\Gr_d(A_r)}\!=\!\bigwedge^\alpha(\cU_{(r,d)})$ and $\cU_{(r,d)}\!=\!\bigwedge^\beta(\cU_{(r,d)})$ are direct summands of $\cT$. Consequently, 
$\Ext^i_{\Gr_d(A_r)}(\cX,\cY)\!=\!(0)$ for $\cX,\cY \in \{\cU_{(r,d)},\cO_{\Gr_d(A_r)}\}$ and $i\!>\!0$. Since both bundles are simple, our assertion follows from the fact that $\Hom_{\Gr_d(A_r)}(\cO_{\Gr_d(A_r)}, \cU_{(r,d)})\!
=\!\cU_{(r,d)}(\Gr_d(A_r))\!=\!(0)$, cf.\ \cite[(I.5.2)]{Sh}. \end{proof} 

\bigskip

\begin{Thm} \label{Fun3}  The following statements hold:
\begin{enumerate}
\item $\cF \in \Vect(\Gr_d(A_r))$ is a Steiner bundle if and only if there is $M \in \repp(K_r,d)$ such that $\cF \cong \TilTheta_d(M)$. 
\item The functor $\TilTheta_d : \repp(K_r,d) \lra \StVect(\Gr_d(A_r))$ is an equivalence.
\item Let $M \in \repp(K_r,d)$. Then we have $\rk(\TilTheta_d(M))\!=\!\Delta_M(d)$. \end{enumerate} \end{Thm}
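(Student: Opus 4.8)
The plan is to derive all three statements from the presentation of $\TilTheta_d(M)$ afforded by the special exceptional pair $(\cU_{(r,d)},\cO_{\Gr_d(A_r)})$ of Lemma~\ref{Fun2}, using Theorem~\ref{Fam5} to control fibrewise injectivity; no hypothesis on $\Char(\KK)$ will intervene, exactly as in \cite{JP15}.

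First I would prove (1) and read off (3). If $M\in\repp(K_r,d)$, then Theorem~\ref{Fam5} gives that $\psi_{M,\fv}$ is injective for every $\fv\in\Gr_d(A_r)$, so the morphism $\TilTheta'_{M,d}\colon\widetilde{M}'_1\lra\widetilde{M}'_2$, whose fibre at $\fv$ is $\psi_{M,\fv}$, is fibrewise injective; by \cite[(6.2.1)]{Be17} it is then locally split injective with locally free cokernel, and since $\widetilde{M}'_1\cong\cU_{(r,d)}^{\dim_\KK M_1}$ and $\widetilde{M}'_2\cong\cO_{\Gr_d(A_r)}^{\dim_\KK M_2}$, the resulting exact sequence
\[ (0)\lra\cU_{(r,d)}^{\dim_\KK M_1}\lra\cO_{\Gr_d(A_r)}^{\dim_\KK M_2}\lra\TilTheta_d(M)\lra(0)\]
exhibits $\TilTheta_d(M)$ as a $(\dim_\KK M_1,\dim_\KK M_2)$-Steiner bundle. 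In particular $\TilTheta_d$ restricts to a functor $\repp(K_r,d)\lra\StVect(\Gr_d(A_r))$, and, comparing ranks and using $\rk(\cU_{(r,d)})\!=\!d$, we obtain $\rk(\TilTheta_d(M))\!=\!\dim_\KK M_2\!-\!d\dim_\KK M_1\!=\!\Delta_M(d)$, which is (3) (alternatively this follows from Lemma~\ref{Fun1}(2), as $M\in\CR(K_r,d)$ with $\rk_d(M)\!=\!d\dim_\KK M_1$). Conversely, given a Steiner bundle $\cF$ with a presentation $(0)\lra\cU_{(r,d)}^s\stackrel{\phi}{\lra}\cO_{\Gr_d(A_r)}^t\lra\cF\lra(0)$, I would invoke the standard isomorphism $\Hom_{\Gr_d(A_r)}(\cU_{(r,d)},\cO_{\Gr_d(A_r)})\cong A_r^\ast$ to identify $\phi$ with an element of $\Hom_\KK(\KK^s,\KK^t)\otimes_\KK A_r^\ast\cong\Hom_\KK(A_r\otimes_\KK\KK^s,\KK^t)$, i.e.\ with a structure map $\psi$; setting $M\!:=\!(\KK^s,\KK^t,\psi)\in\rep(K_r)$ one checks directly that $\phi$ is precisely $\TilTheta'_{M,d}$, so $\cF\cong\coker\TilTheta'_{M,d}\cong\TilTheta_d(M)$, while local freeness of $\cF$ makes the sequence locally split, whence $\phi$, and therefore every $\psi_{M,\fv}$, is injective and $M\in\repp(K_r,d)$ by Theorem~\ref{Fam5}. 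This proves (1) and (3).

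It then remains, for (2), to show that $\TilTheta_d$ is full and faithful on $\repp(K_r,d)$, essential surjectivity being part of (1). From Lemma~\ref{Fun2} I would extract $\Hom(\cO_{\Gr_d(A_r)},\cU_{(r,d)})=(0)=\Ext^1_{\Gr_d(A_r)}(\cO_{\Gr_d(A_r)},\cU_{(r,d)})$ and $\Hom(\cU_{(r,d)},\cU_{(r,d)})\cong\KK\cong\Hom(\cO_{\Gr_d(A_r)},\cO_{\Gr_d(A_r)})$. Write $\pi_M\colon\widetilde{M}'_2\twoheadrightarrow\TilTheta_d(M)$ for the projection, so that $\TilTheta'_{M,d}$ is a kernel of $\pi_M$ when $M\in\repp(K_r,d)$. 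For faithfulness: if $f=(f_1,f_2)\colon M\lra N$ in $\repp(K_r,d)$ satisfies $\TilTheta_d(f)=0$, then the ladder defining $\TilTheta_d(f)$ shows $\id_{\cO}\otimes f_2$ maps $\widetilde{M}'_2$ into $\ker\pi_N=\im\TilTheta'_{N,d}$, and since $\Hom(\widetilde{M}'_2,\widetilde{N}'_1)\cong\Hom_\KK(M_2,N_1)\otimes_\KK\Hom(\cO_{\Gr_d(A_r)},\cU_{(r,d)})=(0)$, we get $f_2=0$; then $\TilTheta'_{N,d}\circ(\id_{\cU}\otimes f_1)=(\id_{\cO}\otimes f_2)\circ\TilTheta'_{M,d}=0$ and the injectivity of $\TilTheta'_{N,d}$ forces $f_1=0$. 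For fullness: given $g\colon\TilTheta_d(M)\lra\TilTheta_d(N)$, the vanishing $\Ext^1_{\Gr_d(A_r)}(\widetilde{M}'_2,\widetilde{N}'_1)\cong\Hom_\KK(M_2,N_1)\otimes_\KK\Ext^1_{\Gr_d(A_r)}(\cO_{\Gr_d(A_r)},\cU_{(r,d)})=(0)$ lets me lift $g\circ\pi_M$ along $\pi_N$ to a morphism $\widetilde{M}'_2\lra\widetilde{N}'_2$, necessarily of the form $\id_{\cO}\otimes f_2$ with $f_2\in\Hom_\KK(M_2,N_2)$; then $(\id_{\cO}\otimes f_2)\circ\TilTheta'_{M,d}$ is annihilated by $\pi_N$, hence factors uniquely through $\ker\pi_N=\TilTheta'_{N,d}$ by a morphism which, since $\Hom(\cU_{(r,d)},\cU_{(r,d)})=\KK$, has the form $\id_{\cU}\otimes f_1$ with $f_1\in\Hom_\KK(M_1,N_1)$. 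Evaluating the identity $\TilTheta'_{N,d}\circ(\id_{\cU}\otimes f_1)=(\id_{\cO}\otimes f_2)\circ\TilTheta'_{M,d}$ on local sections of the form $\fv\ni a\mapsto a\otimes m$ shows that $(f_1,f_2)\in\Hom_{K_r}(M,N)$, and $g=\TilTheta_d((f_1,f_2))$ because both complete the same square over the epimorphism $\pi_M$. Hence $\TilTheta_d\colon\repp(K_r,d)\lra\StVect(\Gr_d(A_r))$ is an equivalence.

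The main obstacle is the bookkeeping in this last step rather than any serious difficulty: one has to work throughout with the basis-free model $\TilTheta_d(M)\cong\coker\TilTheta'_{M,d}$, keep in mind that $\TilTheta'_{M,d}$ is the image of $\psi_M$ under $\Hom_\KK(A_r\otimes_\KK M_1,M_2)\cong\Hom_\KK(M_1,M_2)\otimes_\KK\Hom(\cU_{(r,d)},\cO_{\Gr_d(A_r)})$, and verify that the factorisation producing $f_1$ genuinely arises from a $\KK$-linear map $M_1\lra N_1$ whose pair with $f_2$ satisfies the Kronecker compatibility. All the cohomological vanishing enabling the liftings is supplied by Lemma~\ref{Fun2}, which is precisely why no condition on $\Char(\KK)$ is needed; the argument is essentially that of \cite[(3.5)]{JP15} with the cohomological input isolated.
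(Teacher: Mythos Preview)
Your proposal is correct and follows essentially the same route as the paper: both establish (1) and (3) via Theorem~\ref{Fam5} and the fibrewise injectivity criterion of \cite[(6.2.1)]{Be17}, and both hinge the converse on the identification $\Hom_{\Gr_d(A_r)}(\cU_{(r,d)},\cO_{\Gr_d(A_r)})\cong A_r^\ast$ (which the paper justifies via \cite[(1.1.12)]{Ma12}). For (2) the paper simply invokes Lemma~\ref{Fun2} and defers to \cite[(3.5)]{JP15}, whereas you have written out exactly that argument---the lifting along $\pi_N$ using $\Ext^1(\cO,\cU)=0$ and the recovery of $f_1$ from $\End(\cU_{(r,d)})\cong\KK$---so your exposition is a faithful unpacking of what the paper abbreviates.
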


\begin{proof} Let $M \in \CR(K_r,d)$. The proof of Lemma \ref{Fun1} yields  $\ker \TilTheta_{M,d}(\fv) \cong \ker \psi_{M,\fv}$ for every $\fv \in \Gr_d(A_r)$.

(1) Suppose that $M \in \repp(K_r,d)$. By virtue of Theorem \ref{Fam5} and the above we have 
\[ \ker \TilTheta_{M,d}(\fv) \cong \ker \psi_{M,\fv}\!=\!(0)\] 
for all $\fv \in \Gr_d(A_r)$. According to \cite[(6.2.1)(c)]{Be17}, the morphism $\TilTheta_{M,d}$ is locally split injective, whence $\msKer\TilTheta_{M,d}\!=\!(0)$. Consequently, $\TilTheta_d(M)$ is a Steiner bundle. 

Conversely, suppose that $\cF$ is a Steiner bundle. Then there exists a pair $(M_1,M_2)$ of vector spaces and an exact sequence
\[ (0) \lra \widetilde{M_1} \stackrel{\Psi}{\lra} \widetilde{M_2} \lra \cF \lra (0)\]
of vector bundles. It follows from \cite[(1.1.12)]{Ma12}, which holds for fields of arbitrary characteristic, that 
\[ \dim_\KK\Hom_{\Gr_d(A_r)}(\cU_{(r,d)},\cO_{\Gr_d(A_r)}) = \dim_\KK\HH^0(\cU_{(r,d)}^\vee) = r.\]
Hence the injective canonical map
\[ \Hom_\KK(M_1,M_2)\!\otimes_\KK\!A_r^\ast \lra \Hom_{\Gr_d(A_r)}(\widetilde{M_1},\widetilde{M_2})\]
sending $\zeta\otimes f$ to the map $m\otimes t \mapsto \zeta(m)\otimes f\circ t$ is also surjective and thus an isomorphism. There thus exist $\KK$-linear maps $M(\gamma_i) : M_1 \lra M_2$ such that
\[ \Psi(m\otimes t) = \sum_{i=1}^rM(\gamma_i)(m)\otimes \gamma_i^\ast\circ t \ \ \ \ \ \ \forall \ m \in M_1, t \in \cU_{r,d}.\]
Letting $M\!:=\!(M_1,M_2,(M(\gamma_i))_{1\le i\le r}) \in \rep(K_r)$, we see that $\cF\!\cong\!\TilTheta_d(M)$, while $\msKer\TilTheta_{M,d}\!=\!(0)$. Consequently, $M \in \repp(K_r,d)$, as $\ker \psi_{M,\fv} \cong 
(\msKer\TilTheta_{M,d})(\fv)\! =\! (0)$ for every $\fv \in \Gr_d(A_r)$. 

(2) Lemma \ref{Fun2} ensures that $\Ext^1_{\Gr_d(A_r)}(\cO_{\Gr_d(A_r)},\cU_{(r,d)})\!=\!(0)$ and we may now proceed as in \cite[(3.5)]{JP15}, observing that the assumption $\Char(\KK)\!=\!0$ is not needed. 

(3) This follows directly from Lemma \ref{Fun1}. \end{proof}

\bigskip

\begin{Remark} For $d\!=\!1$, the bundles $\TilTheta_1(M_{\cS}[m]), \ (m\!\ge\!r\!+\!1)$ are those introduced by Schwarzenberger, cf.\ \cite[Prop.2]{Sc61b}. Using Corollary \ref{GCD6} one obtains an explicit formula for 
the generic splitting types of these bundles. \end{Remark}  

\bigskip 

\begin{Cor} \label{Fun4} Let $(0) \lra A \stackrel{f}{\lra} B \stackrel{g}{\lra} C \lra (0)$ be an exact sequence in $\rep(K_r)$ such that $C \in \repp(K_r,d)$. Then the sequence $(0) \lra \TilTheta_d(A) \lra \TilTheta_d(B) \lra 
\TilTheta_d(C) \lra (0)$ is exact. \end{Cor}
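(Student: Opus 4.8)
The plan is to reduce the statement to the injectivity of the left-hand map and then extract this from the Snake Lemma. By Lemma \ref{Fun1}(1) the functor $\TilTheta_d$ is right exact, so applying it to the given sequence already produces an exact sequence $\TilTheta_d(A)\to\TilTheta_d(B)\to\TilTheta_d(C)\to(0)$; it therefore suffices to show that $\TilTheta_d(f)\colon\TilTheta_d(A)\to\TilTheta_d(B)$ is a monomorphism.

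First I would assemble the appropriate commutative diagram of vector bundles on $\Gr_d(A_r)$. Applying to $f$ and $g$ the naturality of $\TilTheta_{-,d}$ recorded in Section \ref{S:Fun} (the commuting square relating $\TilTheta_{M,d}$ and $\TilTheta_{N,d}$ along a morphism $M\to N$) yields a commutative diagram with vertical maps $\TilTheta_{A,d}$, $\TilTheta_{B,d}$, $\TilTheta_{C,d}$, top row $\widetilde{A_1}\to\widetilde{B_1}\to\widetilde{C_1}$ and bottom row $\widetilde{A_2}\to\widetilde{B_2}\to\widetilde{C_2}$. Since $f$ is a monomorphism and $g$ an epimorphism in $\rep(K_r)$ (kernels and cokernels being formed componentwise), each $(0)\to A_i\to B_i\to C_i\to(0)$ is a split exact sequence of $\KK$-vector spaces; applying the exact functors $-\otimes_\KK\cU_{(r,d)}$ and $-\otimes_\KK\cO_{\Gr_d(A_r)}$ then shows that both rows $(0)\to\widetilde{A_i}\to\widetilde{B_i}\to\widetilde{C_i}\to(0)$ are short exact in the abelian category $\Coh(\Gr_d(A_r))$ for $i\in\{1,2\}$.

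Next I would invoke the Snake Lemma in $\Coh(\Gr_d(A_r))$ for this diagram, which gives a six-term exact sequence
\[ (0)\lra\msKer\TilTheta_{A,d}\lra\msKer\TilTheta_{B,d}\lra\msKer\TilTheta_{C,d}\lra\TilTheta_d(A)\stackrel{\TilTheta_d(f)}{\lra}\TilTheta_d(B)\lra\TilTheta_d(C)\lra(0), \]
where I have used $\msCoker\TilTheta_{M,d}\!=\!\TilTheta_d(M)$. Because $C\in\repp(K_r,d)$, Theorem \ref{Fun3}(1) (equivalently, Theorem \ref{Fam5} together with the local split injectivity of $\TilTheta_{C,d}$ noted in its proof) gives $\msKer\TilTheta_{C,d}\!=\!(0)$; the last four terms of the sequence then read $(0)\to\TilTheta_d(A)\to\TilTheta_d(B)\to\TilTheta_d(C)\to(0)$, which is the assertion. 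The argument has essentially no hard step: one only needs to be careful that all kernels and cokernels in sight are formed in $\Coh(\Gr_d(A_r))$, so that the Snake Lemma applies, and to note that the hypothesis $C\in\repp(K_r,d)$ enters precisely once, namely to annihilate $\msKer\TilTheta_{C,d}$ — without it the left-hand map need not be injective, as $\TilTheta_d$ is only right exact in general.
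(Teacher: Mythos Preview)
Your proof is correct and follows essentially the same route as the paper: both set up the commutative diagram with exact rows $(0)\to\widetilde{A_i}\to\widetilde{B_i}\to\widetilde{C_i}\to(0)$, observe that $\msKer\TilTheta_{C,d}=(0)$ because $C\in\repp(K_r,d)$, and apply the Snake Lemma. Your write-up is in fact a bit more explicit than the paper's, which simply says ``the assertion now follows by applying the Snake Lemma'' after noting the vanishing of $\msKer\TilTheta_{C,d}$.
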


\begin{proof} We have exact sequences $(0) \lra A_i \stackrel{f_i}{\lra} B_i \stackrel{g_i}{\lra} C_i \lra (0)$ for $i \in \{1,2\}$. Tensoring with $\cU_{(r,d)}$ and $\cO_{\Gr_d(A_r)}$  yields a commutative diagram
\[  \begin{tikzcd} 
(0) \arrow[r] & \widetilde{A_1} \arrow[r] \arrow[d,"\TilTheta_{A,d}"] & \widetilde{B_1} \arrow[r] \arrow[d,"\TilTheta_{B,d}"] & \widetilde{C_1} \arrow[d,"\TilTheta_{C,d}"] \arrow[r] & (0)\\
(0) \arrow[r] & \widetilde{A_2} \arrow[r] & \widetilde{B_2} \arrow[r] & \widetilde{C_2} \arrow[r] & (0)
\end{tikzcd} \]
with exact rows. The arguments of the proof of Theorem \ref{Fun3} show that 
\[ 0 = \dim_\KK \ker \psi_{C,\fv}  = \dim_\KK\ker(\TilTheta_{C,d}(\fv))\] 
for all $\fv \in \Gr_d(A_r)$. By virtue of \cite[(7.3.1)]{GW}, we obtain $\msKer \TilTheta_{C,d}\!=\!(0)$. The assertion now follows by applying the Snake Lemma. \end{proof} 

\bigskip

\begin{Cor} \label{Fun5} For $M,N \in \rep(K_r,d)$ the following statements hold:
\begin{enumerate}
\item We have
\[ \chi(\msHom_{\Gr_d(A_r)}(\TilTheta_d(M),\TilTheta_d(N))) = \langle\udim M, \udim N\rangle_r.\]
\item We have
\[ \dim_\KK\Ext^1_{K_r}(M,N) = \dim_\KK\Ext^1_{\Gr_d(A_r)}(\TilTheta_d(M),\TilTheta_d(N)).\] \end{enumerate} \end{Cor}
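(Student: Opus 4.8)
The plan is to derive both parts from Proposition \ref{Co1} applied to the special exceptional pair $(\cU_{(r,d)},\cO_{\Gr_d(A_r)})$, which is special exceptional by Lemma \ref{Fun2}, together with the fact that for $M \in \repp(K_r,d)$ the defining sequence of the Steiner bundle $\TilTheta_d(M)$ is exactly a resolution of the form $(\ast)$ in the set-up of Section \ref{S:Co}. First I would record that, by Theorem \ref{Fun3}(1) and the construction of $\TilTheta_d$, for $M \in \repp(K_r,d)$ with $\udim M = (m_1,m_2)$ one has a short exact sequence
\[ (0) \lra \cU_{(r,d)}^{m_1} \lra \cO_{\Gr_d(A_r)}^{m_2} \lra \TilTheta_d(M) \lra (0),\]
since $\TilTheta_{M,d}\colon \widetilde{M_1} \to \widetilde{M_2}$ is the morphism whose cokernel is $\TilTheta_d(M)$ and whose source and target are $\cU_{(r,d)}^{m_1}$ and $\cO_{\Gr_d(A_r)}^{m_2}$; by Theorem \ref{Fun3}(1) this map is injective. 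The analogous sequence holds for $N$ with $\udim N = (n_1,n_2)$. Here the invariant ``$r$'' in Proposition \ref{Co1}, namely $\dim_\KK \Hom_X(E_0,E_1) = \dim_\KK \Hom_{\Gr_d(A_r)}(\cU_{(r,d)},\cO_{\Gr_d(A_r)})$, equals $r$ by the computation $\dim_\KK \HH^0(\cU_{(r,d)}^\vee) = r$ cited in the proof of Theorem \ref{Fun3}(1), so the bilinear form $\langle\,,\,\rangle_r$ appearing there is indeed the Euler-Ringel form of $K_r$.

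For part (1), I would simply invoke Proposition \ref{Co1} with $\cF = \TilTheta_d(M)$, $\cG = \TilTheta_d(N)$, $(m,n) = (m_1,m_2) = \udim M$ and $(s,t) = (n_1,n_2) = \udim N$. This immediately gives
\[ \chi(\msHom_{\Gr_d(A_r)}(\TilTheta_d(M),\TilTheta_d(N))) = \langle \udim M, \udim N\rangle_r,\]
which is the claim.

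For part (2), the idea is that both sides compute the same difference $\dim_\KK \Hom - \dim_\KK \Ext^1$, with the $\Hom$-terms being equal by full faithfulness of $\TilTheta_d$. Concretely: $\TilTheta_d\colon \repp(K_r,d)\to\StVect(\Gr_d(A_r))$ is an equivalence by Theorem \ref{Fun3}(2), hence $\dim_\KK\Hom_{K_r}(M,N) = \dim_\KK\Hom_{\Gr_d(A_r)}(\TilTheta_d(M),\TilTheta_d(N))$. On the module side, $\langle\udim M,\udim N\rangle_r = \dim_\KK\Hom_{K_r}(M,N) - \dim_\KK\Ext^1_{K_r}(M,N)$ since $\KK K_r$ is hereditary. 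On the sheaf side, the proof of Proposition \ref{Co1} (step (iii)) shows $\Ext^\ell_{\Gr_d(A_r)}(\TilTheta_d(M),\TilTheta_d(N)) = (0)$ for $\ell \ge 2$, so that $\chi(\msHom_{\Gr_d(A_r)}(\TilTheta_d(M),\TilTheta_d(N))) = \dim_\KK\Hom_{\Gr_d(A_r)}(\TilTheta_d(M),\TilTheta_d(N)) - \dim_\KK\Ext^1_{\Gr_d(A_r)}(\TilTheta_d(M),\TilTheta_d(N))$, using that $\HH^0$ of the internal Hom is the global Hom and $\HH^i(\msHom) = \Ext^i$. Combining these three identities with part (1) and cancelling the equal $\Hom$-terms yields $\dim_\KK\Ext^1_{K_r}(M,N) = \dim_\KK\Ext^1_{\Gr_d(A_r)}(\TilTheta_d(M),\TilTheta_d(N))$.

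The only genuinely delicate point — and the step I would be most careful about — is confirming that $\Ext^i_{\Gr_d(A_r)}(\cF,\cG) \cong \HH^i(\Gr_d(A_r),\msHom_{\Gr_d(A_r)}(\cF,\cG))$ here, i.e.\ that the local-to-global spectral sequence degenerates sufficiently; but this is exactly the identity recorded in Section \ref{S:Co} (valid because $\cF = \TilTheta_d(M)$ is locally free, so $\msHom(\cF,-)$ is exact and $\mathcal{E}xt^i$ vanishes for $i>0$), so no new work is needed. Everything else is bookkeeping: matching the exact sequences to the hypotheses $(\ast)$, $(\ast\ast)$ of Proposition \ref{Co1} and tracking that the ambient variety $X = \Gr_d(A_r)$ with $E_0 = \cU_{(r,d)}$, $E_1 = \cO_{\Gr_d(A_r)}$ satisfies conditions (a)–(c) — which is precisely Lemma \ref{Fun2}.
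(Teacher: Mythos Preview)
Your proposal is correct and follows essentially the same route as the paper: part (1) is obtained by applying Proposition \ref{Co1} to the special exceptional pair $(\cU_{(r,d)},\cO_{\Gr_d(A_r)})$ of Lemma \ref{Fun2} via the defining resolutions from Theorem \ref{Fun3}, and part (2) is deduced by combining part (1), the vanishing of higher $\Ext$ from step (iii) in the proof of Proposition \ref{Co1}, the full faithfulness of $\TilTheta_d$, and the Euler--Ringel identity. The paper's proof records exactly this chain of equalities, only more tersely.
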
 

\begin{proof} (1) This is a direct consequence of Theorem \ref{Fun3}, Proposition \ref{Co1} and Lemma \ref{Fun2}.

(2) Proposition \ref{Co1} and its proof yield
\begin{eqnarray*} 
\dim_\KK\Ext^1_{\Gr_d(A_r)}(\TilTheta_d(M),\TilTheta_d(N)) &= & \dim_\KK \Hom_{\Gr_d(A_r)}(\TilTheta_d(M),\TilTheta_d(N))\\
                                                                                                           &- & \chi(\msHom_{\Gr_d(A_r)}(\TilTheta_d(M),\TilTheta_d(N)))\\ 
                                                                                                           & = & \dim_\KK \Hom_{K_r}(M,N)\!-\!\langle\udim M,\udim N\rangle_r\\
                                                                                                           & = &\dim_\KK\Ext^1_{K_r}(M,N). \ \ \  \ \ \  \qedhere                                                                                                 
\end{eqnarray*}
\end{proof}

\bigskip

\subsection{Direct consequences} \label{S:Dc}
Following \cite{EH}, we begin by collecting a few facts concerning Chow rings of Grassmannians. We emphasize that our statements are valid for the field $\KK$, which has arbitrary characteristic.

For $\ell \in \{0,\ldots, d(r\!-\!d)\}$, we let $Z_\ell(\Gr_d(A_r))$ be the free abelian group with basis given by the $\ell$-dimensional irreducible closed subsets of $\Gr_d(A_r)$. The elements
of $Z_\ell(\Gr_d(A_r))$ are called {\it $\ell$-cycles}, those of
\[ Z(\Gr_d(A_r)) := \bigoplus_{\ell=0}^{d(r\!-\!d)} Z_\ell(\Gr_d(A_r))\]
are referred to as {\it cycles}. 

\bigskip
\noindent 
The {\it Chow group} of $\Gr_d(A_r)$ is the factor group
\[ A(\Gr_d(A_r)) := Z(\Gr_d(A_r))/\Rat(\Gr_d(A_r))\]
given by the subgroup $\Rat(\Gr_d(A_r))$ defined in \cite[(1.2.2)]{EH}. If $C$ is a cycle, we write $[C]$ for its residue class. The class $[\Gr_d(A_r)] \in A(\Gr_d(A_r))$ is called the {\it fundamental class} of $\Gr_d(A_r)$. 

\bigskip
\noindent
Given $\ell \in \{0,\ldots, d(r\!-\!d)\}$, we put
\[ A^\ell(\Gr_d(A_r)) := \{ [C] \ ; \ C \in Z_{d(r-d)-\ell}(\Gr_d(A_r))\}.\]
We have
\[ A(\Gr_d(A_r)) = \bigoplus_{\ell=0}^{d(r-d)} A^\ell(\Gr_d(A_r)).\] 
The group $A(\Gr_d(A_r))$ has the structure of a $\ZZ$-graded, commutative ring, see \cite[(1.2.3)]{EH}. One can show that $A^0(\Gr_d(A_r))\!=\!\ZZ\cdot [\Gr_d(A_r)] \cong \ZZ$.  

\bigskip 

\begin{Definition} The ring $A(\Gr_d(A_r))$ is called the {\it Chow ring} of $\Gr_d(A_r)$. \end{Definition}

\bigskip
\noindent
Let
\[ \cV : \ \ \ \ \ \ (0) \subseteq V_1 \subseteq V_2 \subseteq \cdots \subseteq V_{r-1} \subseteq V_r\!=\!A_r\]
be a complete flag in $A_r$, so that $\dim_\KK V_i\!=\!i$. 

We let $\NN_0^d(r)\!:=\!\{ a \in \NN_0^d \ ; \  r\!-\!d\!\ge\! a_1\! \ge\! \cdots\! \ge\! a_d\! \ge\! 0\}$. For $a \in \NN_0^d(r)$ we define the {\it Schubert variety} $\Sigma_a(\cV)$ via
\[ \Sigma_a(\cV) := \{\fv \in \Gr_d(A_r) \ ; \ \dim_\KK(\fv\cap V_{r-d+i-a_i})\!\ge\!i \ \ \ \forall \ i \in \{1,\ldots, d\}\}.\]
Then $\Sigma_a(\cV)$ is smooth, closed, irreducible and of codimension $|a|\!:=\!\sum_{i=1}^da_i$ in $\Gr_d(A_r)$, cf.\ \cite[(16.4.1)]{EH}. Its class 
\[ \sigma_a := [\Sigma_a(\cV)]\]
is the {\it Schubert class} of $a$. It does not depend on the choice of $\cV$.

Given $\ell \in \{0,\ldots, r\!-\!d\}$, we write
\[ \Sigma_\ell := \Sigma_{(\ell,0,\ldots,0)},\]
so that 
\[\Sigma_\ell(\cV) =  \{ \fv \in \Gr_d(A_r) \ ; \ \fv\cap V_{r-d+1-\ell} \!\ne\!(0)\}.\]
The classes $\sigma_\ell\!:=[\Sigma_\ell(\cV)]$ are referred to as {\it special Schubert classes}.

We have
\[ \sigma_\ell \in A^\ell(\Gr_d(A_r))\]
for all $\ell \in \{0,\ldots,r\!-\!d\}$.

\bigskip

\begin{Thm} \label{Dc1} (cf.\ \cite[(4.7)]{EH}) The Schubert classes $\{\sigma_a \ ; \ a \in \NN_0^d(r)\}$ form a basis of the $\ZZ$-module $A(\Gr_d(A_r))$. \end{Thm}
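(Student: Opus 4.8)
The plan is to realize the Schubert stratification of $\Gr_d(A_r)$ as an affine paving and then invoke the standard principle that the classes of the closures of the cells of such a paving form a free $\ZZ$-basis of the Chow group.

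First I would introduce the open Schubert cells. For $a \in \NN_0^d(r)$ put
\[ \Sigma_a^\circ(\cV) := \{ \fv \in \Gr_d(A_r) \ ; \ \dim_\KK(\fv\cap V_{r-d+i-a_i}) = i \ \text{ and } \ \dim_\KK(\fv\cap V_{r-d+i-a_i-1}) = i\!-\!1 \ \ \forall\ i \in \{1,\ldots,d\}\},\]
so that $\Sigma_a^\circ(\cV)$ is the locus in $\Sigma_a(\cV)$ on which every incidence inequality is sharp and each $\fv$ meets the flag $\cV$ in the generic manner prescribed by $a$. Writing a point $\fv$ in reduced row-echelon form with respect to $\cV$, the positions of the $d$ pivots are determined precisely by $a$ and the remaining entries are unconstrained; this yields an isomorphism $\Sigma_a^\circ(\cV) \cong \A^{d(r-d)-|a|}$. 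One then checks the usual combinatorial facts: $\Gr_d(A_r) = \bigsqcup_{a \in \NN_0^d(r)} \Sigma_a^\circ(\cV)$, the closure of $\Sigma_a^\circ(\cV)$ equals $\Sigma_a(\cV)$, and $\Sigma_b^\circ(\cV) \subseteq \Sigma_a(\cV)$ forces $|b| \ge |a|$. In particular, ordering $\NN_0^d(r)$ by increasing $|a|$ makes each union $U_k := \bigcup_{|a|\le k} \Sigma_a^\circ(\cV)$ open in $\Gr_d(A_r)$, with $U_k \smallsetminus U_{k-1}$ a disjoint union of affine spaces of dimension $d(r-d)-k$.

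Second, I would feed this paving into the homological machine. The two inputs are: (i) $A_\ell(\A^n) = \ZZ$ for $\ell = n$ (generated by the fundamental class) and $A_\ell(\A^n) = 0$ otherwise, the vanishing for $\ell < n$ coming from the standard cone degeneration of a subvariety of affine space; and (ii) the right-exact localization sequence $A_\ell(Z) \to A_\ell(X) \to A_\ell(U) \to 0$ for $U = X \smallsetminus Z$ open with complement $Z$. Running an induction on $k$ that peels the cells off $\Gr_d(A_r)$ in order of increasing codimension and applies (i) and (ii) to $U_{k-1} \subseteq U_k$ at each step, one obtains simultaneously that the classes $\sigma_a = [\Sigma_a(\cV)]$ generate $A(\Gr_d(A_r))$ and that they satisfy no $\ZZ$-linear relation; splitting off by the grading $\sigma_a \in A^{|a|}(\Gr_d(A_r))$ then gives the basis claim. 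Independence of the choice of flag (already recorded in the excerpt) follows since any two complete flags lie in one $\GL(A_r)$-orbit and the connected group $\GL(A_r)$ acts trivially on $A(\Gr_d(A_r))$.

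The main obstacle is the first step: verifying that the row-echelon normal form genuinely produces $\Sigma_a^\circ(\cV) \cong \A^{d(r-d)-|a|}$ and, above all, that $\overline{\Sigma_a^\circ(\cV)} = \Sigma_a(\cV)$ together with the closure-order relation — this is the combinatorial core of the argument and must be carried out with some care, though none of it uses any hypothesis on $\KK$. Once the affine paving is in place, the Chow-theoretic conclusion is formal. Alternatively, one simply cites \cite[(4.7)]{EH}, whose proof follows exactly this template.
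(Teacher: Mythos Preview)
Your proposal is correct and follows the standard affine-paving argument; this is exactly the approach in \cite[(4.7)]{EH}. Note, however, that the paper itself does not give a proof of this theorem: it is stated with a reference to \cite{EH} and used as a black box, so there is no in-paper proof to compare against beyond the citation you already reproduce at the end of your sketch.
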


\bigskip
\noindent
It follows that 
\[\{\sigma_a \ ; \ a \in \NN_0^d(r), \ |a|\!=\!i\}\] 
is a basis of $A^i(\Gr_d(A_r))$ for every $i \in \{0,\ldots,d(r\!-\!d)\}$. 

\bigskip
\noindent
Let $\cF$ be a vector bundle on $\Gr_d(A_r)$. We denote by
\[ c(\cF) = 1\!+\!\sum_{i=1}^{\rk(\cF)} c_i(\cF)t^i \in A(\Gr_d(A_r))[t]\]
the {\it Chern polynomial} of $\cF$. The coefficient $c_i(\cF) \in A^i(\Gr_d(A_r))$ is called the $i$-th {\it Chern class} of $\cF$, cf.\ \cite[(5.2),(5.5)]{EH}. 

The Chern polynomial of the universal quotient bundle $\cQ_{(r,d)}$ on $\Gr_d(A_r)$ is given by
\[ c(\cQ_{(r,d)}) = 1\!+\!\sum_{\ell=1}^{r-d} \sigma_\ell t^{\ell},\]
see \cite[(5.6.2)]{EH}. 

\bigskip

\begin{Cor} \label{Dc2} Let $M \in \repp(K_r,d)$. Then we have
\[ c_1(\TilTheta_d(M)) = (\dim_\KK M_1)\sigma_1.\]
Hence the dimension vector of $M$ can be recovered from $c_1(\TilTheta_d(M))$ and $\rk(\TilTheta_d(M))$ (cf.\ Theorem \ref{Fun3}(3)). 
\end{Cor}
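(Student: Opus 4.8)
The plan is to read off the first Chern class directly from the defining resolution of the Steiner bundle $\TilTheta_d(M)$, using multiplicativity of the total Chern class together with the Chern data of the universal bundles recorded above.

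First I would invoke Theorem \ref{Fun3}(1): since $M \in \repp(K_r,d)$, the morphism $\TilTheta_{M,d}$ is locally split injective, so there is a short exact sequence of vector bundles
\[ (0) \lra \widetilde{M_1} \lra \widetilde{M_2} \lra \TilTheta_d(M) \lra (0),\]
with $\widetilde{M_1}\!=\!M_1\!\otimes_\KK\!\cU_{(r,d)}$ and $\widetilde{M_2}\!=\!M_2\!\otimes_\KK\!\cO_{\Gr_d(A_r)}$. The Whitney sum formula for Chern polynomials (see \cite{EH}) then gives $c(\widetilde{M_2})\!=\!c(\widetilde{M_1})\,c(\TilTheta_d(M))$ in $A(\Gr_d(A_r))[t]$. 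Since $\widetilde{M_2}$ is a trivial bundle we have $c(\widetilde{M_2})\!=\!1$, and since $\widetilde{M_1}\cong\cU_{(r,d)}^{\dim_\KK M_1}$ we have $c(\widetilde{M_1})\!=\!c(\cU_{(r,d)})^{\dim_\KK M_1}$; comparing the coefficients of $t$ then yields $c_1(\TilTheta_d(M))\!=\!-c_1(\widetilde{M_1})\!=\!-(\dim_\KK M_1)\,c_1(\cU_{(r,d)})$.

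Next I would compute $c_1(\cU_{(r,d)})$ from the universal exact sequence $(0)\lra\cU_{(r,d)}\lra\cO^r_{\Gr_d(A_r)}\lra\cQ_{(r,d)}\lra(0)$. Again by multiplicativity $c(\cU_{(r,d)})\,c(\cQ_{(r,d)})\!=\!1$, so in degree one $c_1(\cU_{(r,d)})\!=\!-c_1(\cQ_{(r,d)})$. The displayed formula $c(\cQ_{(r,d)})\!=\!1+\sum_{\ell=1}^{r-d}\sigma_\ell t^\ell$ gives $c_1(\cQ_{(r,d)})\!=\!\sigma_1$, hence $c_1(\cU_{(r,d)})\!=\!-\sigma_1$. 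Substituting into the previous step gives $c_1(\TilTheta_d(M))\!=\!(\dim_\KK M_1)\sigma_1$, as claimed.

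For the concluding assertion, I would note that $\{\sigma_a\ ;\ a\in\NN_0^d(r),\ |a|\!=\!1\}\!=\!\{\sigma_1\}$ is a basis of the free group $A^1(\Gr_d(A_r))\cong\ZZ$ by Theorem \ref{Dc1}, so $\dim_\KK M_1$ is the coefficient of $\sigma_1$ in $c_1(\TilTheta_d(M))$; Theorem \ref{Fun3}(3) then yields $\dim_\KK M_2\!=\!\rk(\TilTheta_d(M))\!+\!d\dim_\KK M_1$, so that $\udim M$ is determined by $c_1(\TilTheta_d(M))$ and $\rk(\TilTheta_d(M))$. I do not anticipate a genuine obstacle here; the only points needing care are that the Whitney formula is applied to the honest short exact sequences of vector bundles involved and that the grading conventions are consistent so that $c_1$ indeed lands in $A^1(\Gr_d(A_r))$.
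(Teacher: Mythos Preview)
Your proof is correct and follows essentially the same route as the paper's: both apply the Whitney sum formula to the universal exact sequence to obtain $c_1(\cU_{(r,d)})=-\sigma_1$, and then to the defining resolution of $\TilTheta_d(M)$ to conclude $c_1(\TilTheta_d(M))=(\dim_\KK M_1)\sigma_1$. Your additional justification via Theorem~\ref{Dc1} that $\sigma_1$ generates $A^1(\Gr_d(A_r))$ freely, and the explicit recovery of $\dim_\KK M_2$ from Theorem~\ref{Fun3}(3), spell out details the paper leaves implicit.
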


\begin{proof} By definition, there is an exact sequence 
\[ (0) \lra \cU_{(r,d)} \lra A_r\!\otimes_\KK\! \cO_{\Gr_d(A_r)} \lra \cQ_{(r,d)} \lra (0), \]
so that the Whitney sum formula yields $c_1(\cU_{(r,d)})\!=\!-c_1(\cQ_{(r,d)})\!=\!-\sigma_1$. By the same token, the defining sequence
\[ (0) \lra \widetilde{M}_1 \lra \widetilde{M}_2 \lra \TilTheta_d(M) \lra (0)\]
implies $c_1(\TilTheta_d(M))\!=\!-c_1(\widetilde{M}_1)\!=\!-(\dim_\KK M_1)c_1(\cU_{(r,d)})\!=\!(\dim_\KK M_1)\sigma_1$. \end{proof} 

\bigskip
\noindent
The foregoing result allows us to view the first Chern class $c_1(\cF)$ of a Steiner bundle $\cF$ as a number $c_1(\cF) \in \NN_0$. In the sequel, we will freely use this interpretation without further notice.

We next employ the functor $\TilTheta_d$ to translate some results of Section \ref{S:MinType} to the context of Steiner bundles. 

\bigskip

\begin{Prop} \label{Dc3} Let $\cF \in \StVect(\Gr_d(A_r))$ be a Steiner bundle. 
\begin{enumerate} 
\item We have $\rk(\cF)\!\ge\!\min\{c_1(\cF),d\}(r\!-\!d)$. 
\item If $c_1(\cF)\!\le\!d$ or $\rk(\cF)\!<\!d(r\!-\!d)$, then $\cF \cong \cO_{\Gr_d(A_r)}^{\rk(\cF)-(r-d)c_1(\cF)}\!\oplus\!\cQ_{(r,d)}^{c_1(\cF)}$.
\item If $\rk(\cF)\!=\!d(r\!-\!d)$, then $\cF$ is either as in (2), or $\cF$ is simple.
\item If $\rk(\cF)\!=\!d(r\!-\!d)$ and $c_1(\cF)\!\ge\!d\!+\!1$, then $\cF$ is simple.
\item Suppose that $\rk(\cF)\!>\!d(r\!-\!d)$. Then there exists a Steiner bundle $\cG$ such that $\rk(\cG)\!=\!d(r\!-\!d)$ and a short exact sequence
\[ (0) \lra \cO_{\Gr_d(A_r)}^{\rk(\cF)-d(r-d)} \lra \cF \lra \cG \lra (0).\]
\end{enumerate} \end{Prop}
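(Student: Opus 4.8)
The entire Proposition is an exercise in transporting the structural results of Section~\ref{S:MinType} through the equivalence $\TilTheta_d : \repp(K_r,d) \lra \StVect(\Gr_d(A_r))$ of Theorem~\ref{Fun3}, using the dictionary $\rk(\TilTheta_d(M))\!=\!\Delta_M(d)$ (Theorem~\ref{Fun3}(3)) and $c_1(\TilTheta_d(M))\!=\!(\dim_\KK M_1)\sigma_1$, i.e. $c_1(\cF)\!=\!\dim_\KK M_1$ under the identification fixed after Corollary~\ref{Dc2}. So the first move is: given $\cF$, write $\cF \cong \TilTheta_d(M)$ with $M \in \repp(K_r,d)$, and translate every hypothesis on $(\rk(\cF),c_1(\cF))$ into one on $(\Delta_M(d),\dim_\KK M_1)$. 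Then each clause becomes a statement proved in Section~\ref{S:MinType}, and I would push it back through $\TilTheta_d$, using Corollary~\ref{Fun4} to see that the module-level exact sequences map to exact sequences of sheaves and that $\TilTheta_d$ sends $P_0(r)\!=\!S_2$ to $\cO_{\Gr_d(A_r)}$ and $P_1(r)$ to $\cQ_{(r,d)}$ (the latter because $\TilTheta_{P_1(r),d}$ is exactly the canonical inclusion $\cU_{(r,d)} \hookrightarrow \cO^r_{\Gr_d(A_r)}$, whose cokernel is $\cQ_{(r,d)}$ by definition in Section~\ref{S:Fun}).

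For (1): by Theorem~\ref{MinType2}, $\Delta_M(d)\!\ge\!\min\{\dim_\KK M_1,d\}(r\!-\!d)$, and under the translation this reads $\rk(\cF)\!\ge\!\min\{c_1(\cF),d\}(r\!-\!d)$. For (2): the hypothesis $c_1(\cF)\!\le\!d$ is $\dim_\KK M_1\!\le\!d$ and $\rk(\cF)\!<\!d(r\!-\!d)$ is $\Delta_M(d)\!<\!d(r\!-\!d)$; in either case Corollary~\ref{MinType3}(1b) (together with Remark~(3) after the definition of $\repp(K_r,d)$ for the first case) gives $M \cong \Delta_M(r)P_0(r)\!\oplus\!(\dim_\KK M_1)P_1(r)$, so applying $\TilTheta_d$ and recording $\Delta_M(r)\!=\!\Delta_M(d)\!-\!\dim_\KK M_1(r\!-\!d)\!=\!\rk(\cF)\!-\!(r\!-\!d)c_1(\cF)$ yields the asserted splitting $\cF \cong \cO_{\Gr_d(A_r)}^{\rk(\cF)-(r-d)c_1(\cF)}\!\oplus\!\cQ_{(r,d)}^{c_1(\cF)}$. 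For (3) and (4): when $\rk(\cF)\!=\!d(r\!-\!d)$, i.e. $M$ is of minimal type, Corollary~\ref{MinType3}(2b) says $M$ is a brick or projective; a brick has $\End_{K_r}(M)\!\cong\!\KK$, and $\TilTheta_d$ being fully faithful on $\repp(K_r,d)$ transports this to $\End_{\Gr_d(A_r)}(\cF)\!\cong\!\KK$, i.e. $\cF$ simple, while the projective case falls under (2); and when additionally $c_1(\cF)\!\ge\!d\!+\!1$, i.e. $\dim_\KK M_1\!\ge\!d\!+\!1$, Corollary~\ref{MinType3}(2d) rules out projectivity, forcing $\cF$ simple. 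For (5): the hypothesis $\rk(\cF)\!>\!d(r\!-\!d)$ is $\Delta_M(d)\!>\!d(r\!-\!d)$, so Proposition~\ref{MinType5} supplies an exact sequence $(0) \lra aP_0(r) \lra M \lra M_{\min} \lra (0)$ with $M_{\min}$ of minimal type and $a\!=\!\Delta_M(d)-d(r\!-\!d)\!=\!\rk(\cF)-d(r\!-\!d)$; since $M_{\min}\in\repp(K_r,d)$, Corollary~\ref{Fun4} makes $(0) \lra \TilTheta_d(aP_0(r)) \lra \TilTheta_d(M) \lra \TilTheta_d(M_{\min}) \lra (0)$ exact, and $\TilTheta_d(aP_0(r))\!\cong\!\cO_{\Gr_d(A_r)}^a$ while $\cG\!:=\!\TilTheta_d(M_{\min})$ is a Steiner bundle of rank $\Delta_{M_{\min}}(d)\!=\!d(r\!-\!d)$.

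The only genuinely non-formal points are (i) pinning down $\TilTheta_d(S_2)\!=\!\cO_{\Gr_d(A_r)}$ and $\TilTheta_d(P_1(r))\!=\!\cQ_{(r,d)}$ from the definitions in Section~\ref{S:Fun}, and (ii) justifying that ``brick'' is preserved by $\TilTheta_d$, which is immediate from the full faithfulness in Theorem~\ref{Fun3}(2). I would organize the write-up clause by clause exactly as above, opening with the sentence ``Write $\cF\!\cong\!\TilTheta_d(M)$ with $M\in\repp(K_r,d)$ (Theorem~\ref{Fun3}), and recall $\rk(\cF)\!=\!\Delta_M(d)$, $c_1(\cF)\!=\!\dim_\KK M_1$, $\TilTheta_d(P_0(r))\!\cong\!\cO_{\Gr_d(A_r)}$ and $\TilTheta_d(P_1(r))\!\cong\!\cQ_{(r,d)}$,'' and then reading off each part. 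The main ``obstacle'' is really just bookkeeping: making sure the arithmetic $\Delta_M(r)\!=\!\Delta_M(d)-(r\!-\!d)\dim_\KK M_1$ is used consistently so the exponents in (2) and the rank of $\cG$ in (5) come out as stated in terms of $\rk(\cF)$ and $c_1(\cF)$ rather than in terms of $M$.
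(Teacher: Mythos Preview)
Your proposal is correct and follows essentially the same route as the paper: write $\cF\cong\TilTheta_d(M)$, use the dictionary $\rk(\cF)=\Delta_M(d)$, $c_1(\cF)=\dim_\KK M_1$, identify $\TilTheta_d(P_0(r))\cong\cO_{\Gr_d(A_r)}$ and $\TilTheta_d(P_1(r))\cong\cQ_{(r,d)}$, and then invoke Theorem~\ref{MinType2}, Corollary~\ref{MinType3} (plus Remark~(3) after the definition of $\repp(K_r,d)$), and Proposition~\ref{MinType5} together with Corollary~\ref{Fun4} clause by clause. Your bookkeeping for the exponents in (2) and for $a=\Delta_M(d)-d(r\!-\!d)$ in (5) is exactly what is needed.
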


\begin{proof} Theorem \ref{Fun3} provides $M \in \repp(K_r,d)$ such that $\TilTheta_d(M) \cong \cF$ and $\rk(\cF) \cong \Delta_M(d)$. In addition, Corollary \ref{Dc2} gives $\dim_\KK M_1\!=\!c_1(\cF)$.

(1) This is a direct consequence of Theorem \ref{MinType2}.

(2) In view of Corollary \ref{MinType3} and the remark at the beginning of Section \ref{S:CatRep}, the representation $M$ is projective, whence $M \cong \Delta_M(r)P_0(r)\!\oplus\!(\dim_\KK M_1)P_1(r)$.
We have $\Delta_M(r)\!=\!\Delta_M(d)\!-\!(r\!-\!d)\dim_\KK M_1 = \rk(\cF)\!-\!(r\!-\!d)c_1(\cF)$. Moreover, $\TilTheta_d(P_0(r))\cong \cO_{\Gr_d(A_r)}$ and since $\TilTheta_{P_1(r),d}(t)\!=\!\sum_{i=1}^r \gamma_i\otimes
\gamma_i^\ast\circ t$ for all $t \in \cU_{(r,d)}(U)$ and $U \subseteq \Gr_d(A_r)$ open, we obtain $\TilTheta_d(P_1(r))\cong \cQ_{(r,d)}$. 

(3) In view of Corollary \ref{MinType3}, $M$ is projective or a brick. In the former case, $\cF$ is as in (2), in the latter case, Theorem \ref{Fun3} gives $\End_{\Gr_d(A_r)}(\cF) \cong \End_{K_r}(M) \cong \KK$, so that $\cF$
is simple. 

(4) This follows as in (3), using Corollary \ref{MinType3}(2d,2b).

(5) Let $n\!:=\!\Delta_M(d)\!-\!(r\!-\!d)$. Proposition \ref{MinType5} provides a short exact sequence 
\[ (0) \lra nP_0(r) \lra M \lra N \lra (0),\]
where $N \in \repp(K_r,d)$ has minimal type. Observing Corollary \ref{Fun4}, we may apply $\TilTheta_d$ to obtain the asserted sequence. \end{proof}  

\bigskip

\begin{Remarks} Suppose that $\Char(\KK)\!=\!0$. 

(1) In this case, part (1) was proved in \cite[(2.4)]{AM15}, using Porteous' formula. 

(2) For $d\!=\!1$, part (5) was obtained in \cite[(3.6)]{MMR21}. \end{Remarks} 

\bigskip

\section{Indecomposable Steiner bundles: Kac's Theorem and the Auslander-Reiten quiver}\label{S:KAR}
In this section, we introduce our two main representation-theoretic tools: Kac's Theorem concerning dimension vectors of indecomposable representations of $K_r$ and the Auslander-Reiten quiver $\Gamma(K_r)$ of 
$\rep(K_r)$. In combination with the functors $\TilTheta_d$ the latter will be used to endow the set of isoclasses of indecomposable Steiner bundles with the structure of a quiver, whose connected components can be 
studied in terms of those of $\Gamma(K_r)$.   

\bigskip

\subsection{Kac's Theorem} \label{S:Kac} Recall that an indecomposable module $M \in \rep(K_r)$ is called regular, if it is neither preinjective nor preprojective. For future reference it will be convenient to have the following version of  \cite[Thm.B]{Ka82}, which also builds on \cite[Thm.3]{Ri76}, at our disposal:

\bigskip

\begin{Thm}[Kac's Theorem for $K_r$] \label{Kac1} Let $r\!\ge\!2$ and $\delta \in \NN^2_0\!\smallsetminus\!\{0\}$.
\begin{enumerate}
\item If $\delta\!=\!\udim M$ for some indecomposable $M \in \rep(K_r)$, then $q_r(\delta)\!\le\!1$. 
\item If $q_r(\delta)\!=\!1$, then there is a, up to isomorphism, unique indecomposable module $M \in \rep(K_r)$ such that $\udim M\!=\!\delta$. The module $M$ is preprojective or preinjective.
\item If $q_r(\delta)\!\le\!0$, then every indecomposable module $M \in \rep(K_r)$ such that $\udim M\!=\!\delta$ is regular. Moreover, there are infinitely many isoclasses of these modules. 
\end{enumerate} \end{Thm}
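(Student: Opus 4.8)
The plan is to obtain this from the general Kac Theorem together with Ringel's description of the preprojective/preinjective modules already recalled in Section \ref{S:reps}. I will not reprove Kac's Theorem in full; instead I treat it as known in the form: for a connected quiver $Q$ with symmetrized Euler form, the dimension vectors of indecomposable representations are precisely the positive roots of the associated Kac--Moody algebra, with a unique indecomposable for each positive real root and infinitely many (parametrized by a variety of positive dimension) for each positive imaginary root. The translation to $K_r$ rests on the identification of $q_r$ with the Tits form and the classification of roots: a positive root $\delta$ is real exactly when $q_r(\delta)=1$, and imaginary exactly when $q_r(\delta)\le 0$ (using $r\ge 2$, so that $q_r$ is either the $\widetilde{A}_1$ form for $r=2$ or indefinite for $r\ge 3$; in either case $q_r(\delta)=0$ forces $\delta$ imaginary and $q_r$ takes no values strictly between $0$ and $1$ on $\NN_0^2$ — indeed $q_r(a,b)=a^2+b^2-rab$ is an integer, so the only issue is ruling out values in $(0,1)$, which is automatic).

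First I would establish (1): if $\delta=\udim M$ for an indecomposable $M$, then $\delta$ is a positive root, hence $q_r(\delta)\le 1$. This is the ``easy'' half of Kac's Theorem and can also be seen directly from Happel--Ringel-type arguments, but citing \cite{Ka82} suffices. Next, for (2), suppose $q_r(\delta)=1$. Then $\delta$ is a positive real root, so there is, up to isomorphism, exactly one indecomposable $M$ with $\udim M=\delta$. To see that $M$ is preprojective or preinjective: the set $\{\udim P_i(r)\ ;\ i\in\NN_0\}$ is exactly the set of positive roots $(a,b)$ with $a<b$, and applying the duality $D_{K_r}$ (which interchanges $S_1\leftrightarrow S_2$ and hence sends preprojectives to preinjectives) covers the roots with $a>b$. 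The case $a=b$ gives $q_r(\delta)=a^2(2-r)$, which equals $1$ only when $r=2$ and $a=1$, i.e. $\delta=(1,1)$; this is $\udim P_1(2)$, again preprojective. So every real root is the dimension vector of a preprojective or preinjective indecomposable, and by uniqueness $M$ must be that module. For (3), if $q_r(\delta)\le 0$ then $\delta$ is a positive imaginary root; Kac's Theorem provides infinitely many isoclasses of indecomposables with dimension vector $\delta$ (the moduli space has dimension $1-q_r(\delta)\ge 1$). It remains to check each such indecomposable is regular: a preprojective or preinjective indecomposable has dimension vector $\udim P_i(r)$ or $\udim I_i(r)$, which is a real root ($q_r=1$), contradicting $q_r(\delta)\le 0$; hence all indecomposables of dimension vector $\delta$ are regular.

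The only genuinely delicate point is the root-theoretic bookkeeping in step (2) — namely that the $\udim P_i(r)$ together with their duals exhaust the positive real roots of $K_r$. For $r\ge 3$ this follows from the recursion $\udim P_{i+2}(r)=\Phi_r\cdot\udim P_i(r)$ (the Coxeter transformation), from the fact that the real roots of a rank-2 Kac--Moody algebra form a single Weyl-group orbit of the two simple roots, and from the coincidence of the Weyl group action with the pair $\{c_r,c_r^{-1}\}$ of Coxeter transformations; for $r=2$ one uses instead the classical $\widetilde{A}_1$ root system, where the real roots are $(i,i+1)$ and $(i+1,i)$ for $i\ge 0$ and the imaginary roots are the multiples of $(1,1)$. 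I would phrase this uniformly by noting $q_r(a,b)=1$ forces (for $r\ge 3$) $a\ne b$, and then induct on $\min\{a,b\}$ using that $\sigma_{K_r}$ reduces $\udim P_{i+1}(r)$ to $\udim P_i(r)$ and, dually, $\sigma_{K_r}^{-1}$ reduces the ``$a>b$'' roots, until one reaches the simple roots $(1,0)=\udim S_1$ and $(0,1)=\udim P_0(r)$. This is the step I expect to spell out most carefully; everything else is a direct appeal to \cite{Ka82} and \cite{Ri76} plus the material already assembled in Section \ref{S:reps}.
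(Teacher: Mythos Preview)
The paper does not actually prove this theorem; it simply records it as a known result, citing \cite[Thm.~B]{Ka82} and \cite[Thm.~3]{Ri76}. Your proposal is essentially an unpacking of what lies behind those citations, and the overall strategy is correct and standard.

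There is one small slip in your treatment of (2): you write that $q_r(a,a)=a^2(2-r)$ equals $1$ precisely when $r=2$ and $a=1$, and you identify $(1,1)$ with $\udim P_1(2)$. In fact $q_2(1,1)=0$, and $\udim P_1(2)=(1,2)$; the case $a=b$ never yields $q_r(\delta)=1$ for any $r\ge 2$. This is harmless for the argument---you simply have one fewer case to consider---but the sentence as written is incorrect and should be deleted.
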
 

\bigskip

\subsection{Exceptional Steiner bundles} \label{S:ExSt} In this section we shall classify exceptional Steiner bundles. Throughout, we assume that $r\!\ge\!2$ and $d \in \{1,\ldots, r\!-\!1\}$. Our first result characterizes the 
modules corresponding to exceptional Steiner bundles. 

\bigskip

\begin{Prop} \label{ExSt1} Suppose that $r\!\ge\!3$, and let $\cF \in \StVect(\Gr_d(A_r))$ be a Steiner bundle. Then the following 
statements are equivalent:
\begin{enumerate} 
\item $\cF$ is exceptional.
\item There is $M \in \rep(K_r)$ indecomposable preprojective such that $\cF \cong \TilTheta_d(M)$. \end{enumerate} \end{Prop}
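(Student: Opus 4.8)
The plan is to exploit the equivalence $\TilTheta_d : \repp(K_r,d) \lra \StVect(\Gr_d(A_r))$ from Theorem \ref{Fun3}(2), together with the compatibility $\Ext^1_{K_r}(M,N) \cong \Ext^1_{\Gr_d(A_r)}(\TilTheta_d(M),\TilTheta_d(N))$ recorded in Corollary \ref{Fun5}(2), to reduce the statement to a purely representation-theoretic claim: for $M \in \repp(K_r,d)$ indecomposable, $\TilTheta_d(M)$ is exceptional if and only if $M$ is preprojective. Since $\TilTheta_d$ is fully faithful on $\repp(K_r,d)$, we have $\Hom_{\Gr_d(A_r)}(\TilTheta_d(M),\TilTheta_d(M)) \cong \End_{K_r}(M)$, and by Corollary \ref{Fun5}(2) we have $\Ext^1_{\Gr_d(A_r)}(\TilTheta_d(M),\TilTheta_d(M)) \cong \Ext^1_{K_r}(M,M)$. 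Moreover, as noted in the excerpt, $\msHom$-$\Ext$ vanishes in degrees $\ge 2$ for Steiner bundles (this is step (iii) in the proof of Proposition \ref{Co1}, which gives $\Ext^\ell_{\Gr_d(A_r)}(\cF,\cG)=(0)$ for $\ell \ge 2$ whenever $\cF,\cG$ are Steiner bundles). Therefore $\cF = \TilTheta_d(M)$ is exceptional precisely when $\dim_\KK \End_{K_r}(M)=1$ and $\Ext^1_{K_r}(M,M)=(0)$, i.e.\ precisely when $M$ is an exceptional $K_r$-representation in the usual sense.

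The next step is to identify the exceptional indecomposable $K_r$-representations. By the formula $\langle \udim M,\udim M\rangle_r = \dim_\KK\End_{K_r}(M) - \dim_\KK\Ext^1_{K_r}(M,M) = q_r(\udim M)$, an exceptional module satisfies $q_r(\udim M)=1$. By Kac's Theorem (Theorem \ref{Kac1}(2)), any module with $q_r(\udim M)=1$ is the unique indecomposable of that dimension vector and is preprojective or preinjective; conversely preprojective and preinjective modules are bricks (stated in Section \ref{S:MV}) and, being rigid (no self-extensions, as they are directing — or simply because $q_r=1$ forces $\Ext^1_{K_r}(M,M)=(0)$ once we know $\End_{K_r}(M)\cong\KK$), they are exceptional. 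So the exceptional indecomposable $K_r$-representations are exactly the preprojective and preinjective ones. It then remains to exclude the preinjective case: we must show that if $M \in \repp(K_r,d)$ is indecomposable and preinjective, then $M$ cannot occur, i.e.\ no preinjective module lies in $\repp(K_r,d) \subseteq \EKP(K_r)$. This is where the torsion-free-class structure enters: $\repp(K_r,d)$ contains all preprojectives (Proposition \ref{CatRep1}(2)), and since $\Hom_{K_r}(\text{preprojective},\text{preinjective})$ can be nonzero while $\Hom_{K_r}$ from preinjective to preprojective vanishes, a preinjective module $M$ would receive no nonzero maps from $P_i(r)$ in general but — more directly — one argues that a preinjective indecomposable has $\Delta_M(1) < 0$ or fails the $\EKP$ condition. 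Concretely, $\udim \tau^{-1}_{K_r}$ iterates drive $\udim M$ eventually to a vector with second coordinate smaller than the first, and for a preinjective $M$ with $\udim M = (a,b)$ one has (by applying $\Phi_r^{-1}$ to $\udim S_1=(1,0)$) that $b \le a$, whereas any nonzero $M \in \EKP(K_r)$ needs $\psi_M$ injective, forcing $b = \dim_\KK M_2 \ge \dim_\KK(\fv\otimes M_1) = \dim_\KK M_1 \cdot d$ for suitable $\fv$ when $a \ge d$, hence $b > a$ unless $M$ is projective — giving a contradiction with preinjectivity (a preinjective indecomposable is never projective for $r \ge 2$).

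So the skeleton of the proof is: (1) use $\TilTheta_d$ and Corollary \ref{Fun5}(2) plus the degree-$\ge 2$ vanishing to translate ``$\cF$ exceptional'' into ``$M = \TilTheta_d^{-1}(\cF)$ is an exceptional indecomposable $K_r$-representation''; (2) use the Euler form identity and Kac's Theorem to see that exceptional indecomposables are exactly preprojectives and preinjectives; (3) rule out preinjectives inside $\repp(K_r,d)$ using the $\EKP$-constraint on dimension vectors (via Theorem \ref{MinType2} or the elementary $\psi_M$-injectivity argument); (4) conversely, every preprojective lies in $\repp(K_r,d)$ by Proposition \ref{CatRep1}(2) and is exceptional, so $\TilTheta_d$(preprojective) is an exceptional Steiner bundle. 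The main obstacle I expect is step (3): making the exclusion of preinjectives clean. The cleanest route is probably to observe that for $M$ preinjective indecomposable one has $\udim M = \Phi_r^{-n}(\udim I_0)$ for the injective $I_0 = S_1$ with $\udim S_1 = (1,0)$, and a short computation with $\Phi_r^{-1}$ shows the first coordinate dominates, i.e.\ $\dim_\KK M_1 \ge \dim_\KK M_2$; but any nonzero $M \in \EKP(K_r)$ with $\dim_\KK M_1 \le d$ is projective by Remark (3) after the definition of $\repp(K_r,d)$, hence not preinjective, while if $\dim_\KK M_1 > d$ then Theorem \ref{MinType2} gives $\Delta_M(d) \ge (r-d)d > 0$, i.e.\ $\dim_\KK M_2 > d\dim_\KK M_1 \ge \dim_\KK M_1$, contradicting $\dim_\KK M_1 \ge \dim_\KK M_2$. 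This closes the argument.
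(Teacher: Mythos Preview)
Your proof is correct and follows essentially the same strategy as the paper: translate exceptionality through the equivalence $\TilTheta_d$ using Corollary~\ref{Fun5}(2) and the higher-$\Ext$ vanishing from Proposition~\ref{Co1}(iii), then invoke Kac's Theorem to conclude that an exceptional indecomposable $K_r$-representation is preprojective or preinjective, and finally exclude the preinjective case.

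The only substantive difference is in how you rule out preinjectives. The paper simply cites \cite[Thm.A]{Wo13}, which asserts that $\EKP(K_r)$ contains no preinjective module. You instead give a self-contained argument: a preinjective indecomposable has $\dim_\KK M_1 > \dim_\KK M_2$ (since $\udim I_j(r)=(a_{j+1}(r),a_j(r))$), and then you split on whether $\dim_\KK M_1\le d$ (forcing $M$ projective by Remark~(3) after the definition of $\repp(K_r,d)$, hence not preinjective) or $\dim_\KK M_1>d$ (where Theorem~\ref{MinType2} gives $\Delta_M(d)\ge d(r-d)>0$, whence $\dim_\KK M_2>d\dim_\KK M_1\ge \dim_\KK M_1$, a contradiction). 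Your route has the advantage of being internal to the paper; the paper's citation is shorter. Either way the argument is sound.
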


\begin{proof} (1) $\Rightarrow$ (2). By definition, we have $\End_{\Gr_d(A_r)}(\cF)\cong \KK$, while $\Ext^i_{\Gr_d(A_r)}(\cF,\cF)\!=\!(0)$ for all $i\!\ge\!1$. Theorem \ref{Fun3} provides $M \in
\repp(K_r,d) \subseteq \EKP(K_r)$ such that $\TilTheta_d(M)\cong \cF$. In particular, $M$ is a brick and hence indecomposable. In view of \cite[Thm.A]{Wo13}, the module $M$ is not preinjective. Corollary 
\ref{Fun5} yields 
\[ \Ext^1_{K_r}(M,M) \cong \Ext^1_{\Gr_d(A_r)}(\TilTheta_d(M),\TilTheta_d(M)) = (0),\]
so that Theorem \ref{Kac1} in conjunction with the Euler-Ringel form (cf.\ Section \ref{S:reps}) ensures that $M$ is preprojective. 

(2) $\Rightarrow$ (1). Since $M$ is preprojective, an application of \cite[(VIII.2.7)]{ARS95} in conjunction with Corollary \ref{Fun5} implies 
\[\Ext^1_{\Gr_d(A_r)}(\cF,\cF) \cong  \Ext^1_{K_r}(M,M)\!=\!(0).\]
Now Theorem \ref{Kac1} ensures that $M$ is a brick, so that the $\KK$-space $\End_{\Gr_d(A_r)}(\cF,\cF) \cong \End_{K_r}(M)$ is one-dimensional. In view of Lemma \ref{Fun2}, part (iii) of the proof of Proposition \ref{Co1} 
yields $\Ext^i_{\Gr_d(A_r)}(\cF,\cF)\!=\!(0)$ for all $i\!\ge\!2$. As a result, the Steiner bundle $\cF$ is exceptional. \end{proof}

\bigskip
\noindent
Part (3) of our next result extends \cite[(2.1)]{Bra05}, where the bundles involved were assumed to be generic. Part (5) shows that \cite[(6.3)]{Bra08} holds for Grassmannians over fields of arbitrary characteristic.

Recall from Section \ref{S:reps} that $(P_n(r))_{n\ge 0}$ denotes the family of preprojective $K_r$-representations (of dimension vectors $\udim P_n(r)\!=\!(a_n(r),a_{n+1}(r))$).

\bigskip

\begin{Thm} \label{ExSt2} Suppose that $r\!\ge\!3$. Then the following statements hold:
\begin{enumerate}
\item For each $n \in \NN_0$ there exists an exceptional Steiner bundle $\cE_{n,d} \in \StVect(\Gr_d(A_r))$ such that $\rk(\cE_{n,d})\!=\! a_{n+1}(r)\! -\!da_n(r)$ and $c_1(\cE_{n,d})\! =\! a_n(r)\sigma_1$.
\item If $\cE \in \StVect(\Gr_d(A_r))$ is an exceptional Steiner bundle, then $\cE \cong \cE_{n,d}$ for some $n \in \NN_0$.
\item An indecomposable Steiner bundle $\cF \in \StVect(\Gr_d(A_r))$ is exceptional if and only if $\rk(\cF)\!=\!a_{n+1}(r)\!-\!da_n(r)$ and $c_1(\cF)\! =\! a_n(r)\sigma_1$ for some $n \in \NN_0$.
\item A non-zero Steiner bundle $\cF \in \StVect(\Gr_d(A_r))$ satisfies $\Ext^1_{\Gr_d(A_r)}(\cF,\cF)\!=\!(0)$ if and only if there exist $(a,b) \in \NN\!\times\!\NN_0$ and $n \in \NN_0$ such that $\cF \cong \cE_{n,d}^a\!
\oplus\!\cE_{n+1,d}^b$.
\item Let $(c,s) \in \NN_0\!\times\!\NN$ such that $q_r(c,s\!+\!dc)\!>\!1$, then there exist uniquely determined elements $n,a,b \in \NN_0$ such that a generic Steiner bundle $\cF \in \StVect(\Gr_d(A_r))$ of rank $s$ and with first Chern class $c \sigma_1$ is isomorphic to $\cE_{n,d}^a\!\oplus\! \cE_{n+1,d}^b$. \end{enumerate}\end{Thm}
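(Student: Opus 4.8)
\textbf{Proof proposal for Theorem \ref{ExSt2}(5).}
The plan is to transport the question to $\rep(K_r)$ via the equivalence $\TilTheta_d \colon \repp(K_r,d) \lra \StVect(\Gr_d(A_r))$ of Theorem \ref{Fun3}. By Corollary \ref{Dc2} and Theorem \ref{Fun3}(3), a Steiner bundle $\cF$ of rank $s$ with $c_1(\cF)\!=\!c\sigma_1$ corresponds to a representation $M \in \repp(K_r,d)$ with $\dim_\KK M_1\!=\!c$ and $\Delta_M(d)\!=\!s$, hence $\dim_\KK M_2\!=\!s\!+\!dc$; conversely every $M \in \repp(K_r,d)\cap\rep(K_r;V_1,V_2)$ with $\udim(V_1,V_2)\!=\!(c,s\!+\!dc)$ yields such a bundle. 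So I would fix $(V_1,V_2)$ with $\udim(V_1,V_2)\!=\!(c,s\!+\!dc)$ and work inside the irreducible variety $\rep(K_r;V_1,V_2)\!\cong\!\Hom_\KK(V_1,V_2)^r$. The word ``generic'' I would interpret as: there is a dense open subset $\cO \subseteq \rep(K_r;V_1,V_2)$ on which $\cF$ has the asserted isomorphism type. Since $\TilTheta_d$ is an equivalence, it suffices to produce a dense open subset of $\rep(K_r;V_1,V_2)$ consisting of representations lying in $\repp(K_r,d)$ and isomorphic to a fixed $P_n(r)^a\!\oplus\!P_{n+1}(r)^b$.

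First I would locate the right $(n,a,b)$. The condition $q_r(c,s\!+\!dc)\!>\!1$ means $(c,s\!+\!dc)$ is not a root, so by Kac's Theorem \ref{Kac1} no indecomposable has this dimension vector; thus a generic representation decomposes. The candidate decomposition is the \emph{canonical decomposition} of $(c,s\!+\!dc)$ in the sense of Kac \cite{Ka80}, which for the Kronecker quiver and a dimension vector with $q_r\!>\!1$ (so lying ``above'' the preprojective region in the appropriate sense) consists of copies of two consecutive preprojectives $P_n(r)$, $P_{n+1}(r)$. Concretely, one shows there are unique $n,a,b \in \NN_0$ with $a\!+\!b\!\ge\!1$ such that $(c,s\!+\!dc)\!=\!a\,\udim P_n(r)\!+\!b\,\udim P_{n+1}(r)$ and $b\!<\!$ (the relevant bound); the existence and uniqueness is a Euclidean-algorithm argument using that $\{\udim P_n(r)\}_n$ is a $\ZZ$-basis-like staircase and that $\Ext^1_{K_r}(P_n(r),P_{n+1}(r))\!=\!0\!=\!\Ext^1_{K_r}(P_{n+1}(r),P_n(r))$ (the latter from $P_n,P_{n+1}$ being consecutive preprojectives, so $\Hom$ and $\Ext$ computed via the Euler form). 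This is essentially the $r\!=\!2$-free analogue of Lemma \ref{GCD1}/Corollary \ref{GCD3}, carried out over $K_r$ rather than over $K_2$-restrictions, and I would invoke \cite[Prop.~3(a)]{Ka82} (Kac's theorem on the canonical decomposition, using that both summands are bricks lying in their open sheets by Corollary \ref{MV3}, and that the Ext-groups between them vanish) to conclude that
\[
\rep(K_r;V_1,V_2)_0 := \{ N \in \rep(K_r;V_1,V_2) \ ; \ N \cong P_n(r)^a\!\oplus\!P_{n+1}(r)^b\}
\]
is a dense open $(\GL(V_2)\!\times\!\GL(V_1))$-stable subset.

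Next I would intersect with relative projectivity. By Proposition \ref{CatRep1}(2), every preprojective representation lies in $\repp(K_r,d)$, so $\rep(K_r;V_1,V_2)_0 \subseteq \repp(K_r,d)\cap\rep(K_r;V_1,V_2)$; in particular the latter is non-empty, and by Proposition \ref{CatRep4}(1) it is open. Hence $\rep(K_r;V_1,V_2)_0$ is a dense open subset of $\rep(K_r;V_1,V_2)$ contained in $\repp(K_r,d)$. Applying $\TilTheta_d$ and using that it is an equivalence (Theorem \ref{Fun3}(2)) together with $\TilTheta_d(P_n(r))\cong\cE_{n,d}$ from Theorem \ref{ExSt2}(1) (and the additivity of $\TilTheta_d$, e.g.\ via Corollary \ref{Fun4}), I get that a generic Steiner bundle of rank $s$ with first Chern class $c\sigma_1$ is isomorphic to $\cE_{n,d}^a\!\oplus\!\cE_{n+1,d}^b$. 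Uniqueness of $(n,a,b)$ follows either from the Krull--Schmidt property of $\StVect(\Gr_d(A_r))$ (Section \ref{S:Co}) plus the fact that the $\cE_{n,d}$ are pairwise non-isomorphic exceptional bundles, or, cleaner, from uniqueness of the canonical decomposition on the $K_r$-side.

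\textbf{Main obstacle.} The delicate point is the combinatorial step: proving that for $q_r(c,s\!+\!dc)\!>\!1$ the canonical decomposition really consists of two \emph{consecutive} preprojectives $P_n, P_{n+1}$ (no preinjective or regular summands appear, and no ``gap'' between the indices), and that $(n,a,b)$ is uniquely pinned down. This requires knowing where the region $\{q_r\!>\!1\}\cap\{\text{reachable dimension vectors}\}$ sits relative to the rays spanned by the $\udim P_n(r)$ — essentially that it is covered by the cones $\RR_{\ge0}\udim P_n(r)\!+\!\RR_{\ge0}\udim P_{n+1}(r)$ — together with the Ext-vanishing $\Ext^1_{K_r}(P_n,P_{n+1})\!=\!0\!=\!\Ext^1_{K_r}(P_{n+1},P_n)$ needed to apply Kac's canonical-decomposition criterion. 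One must also check $c\!\ge\!1$ (given $s\!\ge\!1$) forces $a\!\ge\!1$ so that $c_1$ is genuinely non-trivial and $n$ is well-defined; the edge cases $c\!=\!0$ (where $\cF$ would be trivial, excluded since then $q_r(0,s)\!=\!s^2\!\le\!1$ only for $s\!\le\!1$) need a separate glance. The rest is bookkeeping with the functor $\TilTheta_d$ and the openness statements already established.
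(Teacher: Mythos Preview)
Your approach is essentially the same as the paper's: work on the level of $\rep(K_r;V_1,V_2)$ with $\udim(V_1,V_2)=(c,s+dc)$, invoke Kac's canonical decomposition to see that a generic representation is $aP_n(r)\oplus bP_{n+1}(r)$, note that this lies in $\repp(K_r,d)$, and apply $\TilTheta_d$. The paper disposes of the step you flag as the ``main obstacle'' by citing \cite[Thm.~4(b)]{Ka80} directly---this is the Kronecker-specific result that already packages the fact that a dimension vector in the region $q_r>1$ with $\dim V_1<\dim V_2$ has canonical decomposition into two \emph{consecutive} preprojectives---and then justifies its validity in arbitrary characteristic via \cite[Prop.~3]{Ka82} together with the $\Ext^1$-vanishing between $P_i(r)$ and $P_j(r)$ for $|i-j|\le 1$, exactly the ingredients you list.
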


\begin{proof} (1) Let $n \in \NN_0$ and consider the preprojective indecomposable representation $P_n(r)$. In view of $\udim P_n(r)\! =\! (a_n(r),a_{n+1}(r))$, an application of Theorem \ref{Fun3}, 
Proposition \ref{ExSt1} and Corollary \ref{Dc2} shows that $\cE_{n,d}\!:=\! \TilTheta_d(P_n(r))$ is an exceptional Steiner bundle of rank $\Delta_{P_n(r)}(d)\! =\! a_{n+1}(r)\! -\!da_n(r)$ and first Chern class 
$c_1(\cE_{n,d})\! =\! \dim_\KK P_n(r)_1\sigma_1\! =\! a_n(r)\sigma_1$.

(2) This follows immediately from Proposition \ref{ExSt1}, the definition of $\cE_{n,d}$ and the above-mentioned classification of the indecomposable preprojective $K_r$-representations. 

(3) In view of (1), exceptional Steiner bundles enjoy the stated properties. Conversely, let $\cF$ be an indecomposable Steiner bundle such that $\rk(\cF)\! =\!a_{n+1}(r)\!-\!da_n(r)$ and $c_1(\cF)\!=\!a_n(r)\sigma_1$. 
According to Theorem \ref{Fun3}, there is an indecomposable representation $M \in \repp(K_r,d)$ such that $\cF \cong \TilTheta_d(M)$ and $\udim M\!=\!(a_n(r),a_{n+1}(r))$. Since $q_r(\udim M)\! =\! 
q_r(a_n(r),a_{n+1}(r))\!=\!1$, Theorem \ref{Kac1} implies that there is a unique indecomposable representation with dimension vector $(a_n(r),a_{n+1}(r))$. Hence $M \cong P_n(r)$ and $\cF \cong \cE_{n,d}$ is an 
exceptional Steiner bundle. 

(4) Let $\cF \in \StVect(\Gr_d(A_r))$ be a non-zero Steiner bundle that satisfies $\Ext^1_{\Gr_d(A_r)}(\cF,\cF)\!=\!(0)$. Application of Theorem \ref{Fun3} implies the existence of $(0)\!\ne\!M \in \repp(K_r,d)$ such that 
$\cF \cong \TilTheta_d(M)$. Let $N$ be an indecomposable direct summand of $M$. By Proposition \ref{CatRep1} we have $N \in \repp(K_r,d)$. As $\Ext^1_{K_r}(N,N)\!=\!(0)$, Theorem \ref{Kac1} provides 
$i \in \NN_0$ such that $N\cong P_i(r)$. We fix $n \in \NN_0$ minimal subject to $P_n(r)$ being a direct summand of $M$. Let $m\!>\!n$  and assume that $P_m(r)$ is a direct summand of $M$. We set 
$l\!:=\!m\!-\!n\!-\!1\!\geq\!0$ and conclude with the Auslander-Reiten formula \cite[(IV.2.13)]{ASS06}
\begin{align*}
0 &= \dim_\KK \Ext^1_{K_r}(P_m(r),P_n(r)) = \dim_\KK \Hom_{K_r}(P_{n+2}(r),P_m(r))\\
&= \dim_\KK \Hom_{K_r}(\sigma_{K_r}^{n+1}(P_{n+2}(r)),\sigma_{K_r}^{n+1}(P_m(r))) = \dim_\KK \Hom_{K_r}(P_1(r),P_{l}(r)) = \dim_\KK P_{l}(r)_1.
\end{align*} 
Hence $l\!=\!0$ and $m\!=\!n\!+\!1$. The result now follows by applying $\TilTheta_d$.

The other implication follows in a similar fashion using the fact that $\Ext^1_{K_r}(P_i(r),P_j(r))\!=\!(0)$ for all $i,j \in \NN$ with $|i\!-\!j|\!\leq\!1$. 

(5) We show the statement on the level of representations and fix vector spaces $V_1,V_2$ of dimension $c$ and $s\!+\!dc$, respectively. Note that we have $\dim_\KK V_1\!<\! \dim_\KK V_2$. 
Now \cite[Thm 4 (b)]{Ka80} \footnote{Using the notation of \cite[Thm 4 (b)]{Ka80} we have $\beta_0\!=\!\udim I_0(r)$ and $\beta_{-i}\!=\!\udim I_i(r)$, $\beta_i\!=\!\udim P_{i-1}(r)$ for all $i \in \NN$. Moreover, since $
\Ext^1_{K_r}(P_i(r),P_j(r))\!=\!(0)$ for all $i,j \in \NN_0$ with $|i\!-\!j|\!\leq\!1$, \cite[Proposition 3]{Ka82} and Theorem \ref{Kac1} guarantee that \cite[Thm 4 (b)]{Ka80} holds in arbitrary characteristic.} ensures the 
existence of uniquely determined elements $a,b,n \in \NN_0$ such that a generic representation in $N \in \rep(K_r;V_1,V_2)$ is of the form $N \cong a P_n(r)\!\oplus\!b P_{n+1}(r) \in \repp(K_r,d)$. By Theorem 
\ref{Fun3} and Corollary \ref{Dc2} we have $\rk(\TilTheta_d(N))\!=\!\Delta_{(V_1,V_2)}(d)\!=\!s$ and $c_1(\TilTheta_d(N))\!=\!c \sigma_{1}$. \end{proof}

\bigskip

\begin{Remarks} (1) The numerical data in (3) do not imply the indecomposability of $\cF$: Let $E \in \rep(K_3)$ be indecomposable of dimension vector $\udim E\!=\!(1,1)$. Since $\udim \tau_{K_3}^{-1}(E)\!=\!(2,5)$, it 
follows from \cite[Thm.]{Bi20b} that $\tau_{K_r}^{-1}(E) \in \EKP(K_3)\!=\!\repp(K_r,1)$ and hence so is $M\!:=\!\tau^{-1}_{K_3}(E)\!\oplus\!P_1(3)$. We have $\udim M\!=\!(3,8)\!=\!\udim P_2(3)$, but $\cF\!:=\!
\TilTheta_1(M)$ is not  indecomposable.

(2) In view of Corollary \ref{Fun4}, the functor $\TilTheta_d|_{\repp(K_r,d)}$ is exact. For $i\!\ge\!2$, the almost split sequence 
\[(0) \lra P_{i-2}(r) \lra rP_{i-1}(r)\lra P_i(r) \lra (0)\]
thus induces an exact sequence
\[ (0) \lra \cE_{i,d}^\vee \lra r \cE_{i-1,d}^\vee \lra \cE_{i-2,d}^\vee \lra (0)\]
of vector bundles. For $d\!=\!1$, this corresponds to the construction given in \cite[(2.1)]{Bra05} (with an extra term given by $\cE_{-1,d}\!:=\!\cO(-1)$). As pointed out in \cite[(2.3)]{Bra05}, the sequences are thus 
related to the mutations studied in \cite{GR87}. \end{Remarks}

\bigskip

\subsection{Decomposing $\StVect(\Gr_d(A_r))$ via the Auslander-Reiten quiver} \label{S:Strat}
Let $\Gamma(K_r)$ be the Auslander-Reiten quiver of $\rep(K_r)$. Its vertices are the isoclasses of the indecomposable representations, while its arrows correspond to  the so-called 
{\it irreducible morphisms}.\footnote{By abuse of notation, we will henceforth usually not distinguish between indecomposable representations and the vertices of $\Gamma(K_r)$ associated to them.}  Let $\cC \subseteq 
\Gamma(K_r)$ be a component. Given $M \in \cC$, we let $(\rightarrow\! M)$ and $(M\!\rightarrow)$ be the sets of predecessors and successors of $M$, respectively. We refer the reader to \cite{ARS95} for basic  facts 
on Auslander-Reiten theory and continue by recording some special features of $\Gamma(K_r)$.

Let $r\!\ge\!3$, $d \in \{1,\ldots, r\!-\!1\}$. The Auslander-Reiten quiver $\Gamma(K_r)$ of $\rep(K_r)$ is a disjoint union of components. There is a preprojective component $\cP$ (containing all preprojective modules), a 
preinjective component $\cI$ (containing all preinjective modules), and all other components are called {\it regular}. Owing to Proposition \ref{CatRep1} the preprojective component $\cP$ is contained in 
$\repp(K_r,d)$ and \cite[Thm.A]{Wo13} implies that $\EKP(K_r)$ contains no preinjective modules. Recall that $\{P_n(r) \ ; \ n\!\ge\!0\}$ is the set of vertices of $\cP$, so that Proposition \ref{ExSt1} ensures that 
$\TilTheta_d(\cP)$ is just the set $\{\cE_{n,d} \ ; \ n\!\ge\!0\}$ of exceptional bundles. 

Let $\cC \subseteq \Gamma(K_r)$ be a regular AR-component. By Ringel's Theorem \cite[(2.3)]{Ri78}, which is fundamental for our methods, $\cC$ is a stable translation quiver of isomorphism type 
$\ZZ[A_\infty]$. A representation $M \in \cC$ is called {\it quasi-simple}, provided $M$ has precisely one immediate predecessor. The quasi-simple representations are therefore located at the ``bottom'' 
of the component. 

For each $M \in \cC$, there is a unique directed path
\[ M_{[1]} \rightarrow M_{[2]} \rightarrow \cdots \rightarrow M_{[n-1]} \rightarrow M_{[n]}\!=\!M\]
in $\cC$ such that
\begin{enumerate}
\item[(a)] for every $i \in \{1,\ldots,n\!-\!1\}$ the irreducible morphism $M_{[i]} \lra M_{[i+1]}$ is injective, and
\item[(b)] for every $i \in \{1,\ldots,n\}$ the factor module $M_{[i]}/M_{[i-1]} \cong \tau^{-i+1}_{K_r}(M_{[1]})$ ($M_{[0]}\!:=\!(0)$) is quasi-simple. \end{enumerate}
In that case $\ql(M)\!:=\!n$ is called the {\it quasi-length} of $M$, and we refer to $M_{[1]}$ and $M/M_{[n-1]}$ as the {\it quasi-socle} and {\it quasi-top} of $M$, respectively.  
The filtration
\[ M_{[1]} \subsetneq \cdots \subsetneq M_{[n-1]} \subsetneq M\]
is called the {\it quasi-composition series} of $M$ with {\it quasi-composition factors} $\{\tau^{-i}_{K_r}(M_{[1]}) \ ; \ 0 \!\le\! i\!\le \ql(M)\!-\!1\}$. 

\bigskip

\begin{Prop}\label{Strat1}  For each regular component $\cC$ the set $\repp(K_r,d) \cap \cC$ forms a non-empty cone in $\cC$, consisting of the successors $(M_{\cC,d}\!\rightarrow)$ of a uniquely determined 
quasi-simple representation $M_{\cC,d}$ in $\cC$. \end{Prop}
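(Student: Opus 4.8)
The statement has two parts for each regular component $\cC$: that $\repp(K_r,d)\cap\cC$ is non-empty, and that it is exactly the set of successors of a single quasi-simple module. My plan is to exploit three facts established earlier: (a) $\repp(K_r,d)$ is closed under $\tau^{-1}_{K_r}$ and contains no preinjective modules but all preprojectives (Proposition \ref{CatRep1}, Theorem \ref{CatRep3}); (b) $\repp(K_r,d)$ is a torsion-free class, in particular closed under submodules (Proposition \ref{CatRep1}, Remarks after the definition); and (c) by Ringel's Theorem $\cC\cong\ZZ[A_\infty]$, so every module sits in a unique quasi-composition series and the "successor" relation is completely transparent. The key reformulation is: $M\in\repp(K_r,d)$ iff $\Hom_{K_r}(E(\fv),M)=(0)$ for all $\fv\in\Gr_d(A_r)$ (Theorem \ref{Fam5}(1)), and each $E(\fv)$ is elementary, hence in particular lies in a regular component and has quasi-length $1$ there, or at least is regular with no regular submodules/quotients.

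\emph{Non-emptiness and the cone structure via $\tau^{-1}_{K_r}$.} Fix a regular component $\cC$ and a quasi-simple $S\in\cC$. Since $\cC\cong\ZZ[A_\infty]$, the $\tau^{-1}_{K_r}$-orbit of $S$ lists all quasi-simples, and every module of $\cC$ is obtained from a quasi-simple by going "up" along injective irreducible maps; concretely $M\in\cC$ has quasi-socle $\tau^{j}_{K_r}(S)$ for some $j$ and quasi-length $n$, and the successors of a quasi-simple $N$ are exactly the modules containing $N$ (up to $\tau$-shift of the socle) — i.e. those $M$ whose quasi-composition series starts at or after $N$. First I would show $\repp(K_r,d)\cap\cC\neq\emptyset$: the component $\cC$ contains modules of arbitrarily large dimension vector, and applying $\tau^{-1}_{K_r}$ repeatedly to any module, one reaches a module $M$ with $\Delta_M(d)$ large; alternatively, invoke Theorem \ref{Fam5} / Proposition \ref{CatRep4}(2) to find, for suitable large dimension vectors realized in $\cC$, a relative projective, but the cleanest route is: pick any $N\in\cC$, and use that $\repp(K_r,d)$ is $\tau^{-1}_{K_r}$-stable together with the fact that $\Hom_{K_r}(E(\fv),\tau^{-m}_{K_r}N)\cong\Hom_{K_r}(\tau^{m}_{K_r}E(\fv),N)$ vanishes for $m\gg0$ since $\tau^{m}_{K_r}E(\fv)$ eventually has dimension vector with first coordinate exceeding that of $N$ while being regular (no nonzero maps between modules of that shape by dimension count of the Euler form). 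This shows $\tau^{-m}_{K_r}N\in\repp(K_r,d)$ for $m\gg0$, giving non-emptiness.

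\emph{Closure upward and the single quasi-simple.} Next I would prove $\repp(K_r,d)\cap\cC$ is closed under $\tau^{-1}_{K_r}$ (immediate from Proposition \ref{CatRep1}(1)) and, crucially, closed under the injective irreducible morphisms within $\cC$, i.e. if $M\in\repp(K_r,d)\cap\cC$ and $M\hookrightarrow M'$ is the irreducible map adding one to the quasi-length, then $M'\in\repp(K_r,d)$. This is where I expect the main obstacle. One direction of the cone (going up in quasi-length at fixed socle, and shifting the socle forward via $\tau^{-1}$) must be shown to preserve membership; going down must be shown to eventually fail. For "up": use that $\cC\cong\ZZ[A_\infty]$ means $M'$ sits in an almost split sequence $0\to M\to M'\oplus\tau^{-1}_{K_r}M_{<}\to\tau^{-1}_{K_r}M\to0$ (with $M_<$ the co-quasi-length-one-shorter module), so $M'$ is an extension involving $M$ and $\tau^{-1}_{K_r}(\text{something in }\repp(K_r,d))$; since $\repp(K_r,d)$ is closed under extensions (Theorem \ref{Fam5}) and under $\tau^{-1}_{K_r}$, and $M_<$ is a submodule of $M$ hence in $\repp(K_r,d)$, one gets $M'\in\repp(K_r,d)$. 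Then define $M_{\cC,d}$ to be the quasi-simple of minimal quasi-length position in $\cC$ such that $(M_{\cC,d}\!\rightarrow)\subseteq\repp(K_r,d)$ — more precisely, among all quasi-simples whose successor-cone lands in $\repp(K_r,d)$, the existence of one follows from the non-emptiness step (any relative projective in $\cC$ is a successor of its own quasi-socle, and all its successors are too by the "up" closure and $\tau^{-1}$-closure). For uniqueness and that nothing outside the cone belongs: if $N\notin(M_{\cC,d}\!\rightarrow)$, then by the structure of $\ZZ[A_\infty]$ the quasi-socle of $M_{\cC,d}$ is a quotient of $N$ (or a $\tau$-predecessor situation), and I must produce $\fv$ with $\Hom_{K_r}(E(\fv),N)\neq(0)$; the point is that the quasi-simples "below" $M_{\cC,d}$ are precisely those $E(\fv)$ (or modules mapping onto them / receiving maps from them) witnessing failure — here I would use Lemma \ref{CatRep2}, the identification of $\sigma_{K_r}(E(\fv))$ with $\coker\fw$, and that the $E(\fv)$ for $\fv$ ranging over $\Gr_d(A_r)$, together with their $\tau$-shifts, meet every regular component in a predictable "sub-cone." The honest hard part is verifying that the complement of the cone is exactly where some $E(\fv)$ maps in nontrivially — i.e. matching the combinatorial cone in $\ZZ[A_\infty]$ with the Hom-orthogonality condition — and I would do this by a downward induction on quasi-length using the almost split sequences and left-exactness of $\Hom_{K_r}(E(\fv),-)$, reducing to the quasi-simple case where one invokes that a regular quasi-simple $S\notin\repp(K_r,d)$ means some $E(\fv)$ maps to it, forcing (by quasi-simplicity and regularity of $E(\fv)$) that $E(\fv)\cong S$ or $E(\fv)\twoheadrightarrow S$, pinning down which quasi-simple $M_{\cC,d}$ is as the $\tau^{-1}_{K_r}$-successor of the last such $E(\fv)$.
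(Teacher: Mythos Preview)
The paper's own proof is a one-line citation to Theorem~\ref{Fam5} and \cite[(2.1.3)]{Bi20}; you are essentially re-deriving the content of that cited result. Your overall strategy---non-emptiness, closure under successors, then identifying the boundary quasi-simple---is correct, but two steps need repair.

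\textbf{Non-emptiness.} Your stated reason that $\Hom_{K_r}(\tau^{m}E(\fv),N)=0$ for $m\gg0$ ``by dimension count of the Euler form'' is wrong: the Euler form only controls $\dim\Hom-\dim\Ext^{1}$, not $\dim\Hom$ alone, and there is no a priori bound on $\Hom$ between regular modules of disparate dimensions. The correct argument is: $\tau^{m}E(\fv)$ is again elementary, and any nonzero map from an elementary module to a regular module is injective (because a proper quotient of an elementary module is preinjective and cannot embed in a regular module; this is the content of \cite[(4.4)]{Ke96}, used later in the paper). Hence $\Hom_{K_r}(\tau^{m}E(\fv),N)\ne0$ forces $\dim\tau^{m}E(\fv)\le\dim N$, which fails for $m$ large. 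Since all $E(\fv)$ share the dimension vector $(d,rd-1)$, the bound is uniform in $\fv$.

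\textbf{The ``hard part'' is easy.} You substantially overcomplicate the complement direction. Once you have defined $M_{\cC,d}$ as the left-most quasi-simple in $\repp(K_r,d)$ (existence: by non-emptiness and submodule-closure some quasi-simple lies in $\repp(K_r,d)$; boundedness below: $\nu(\tau^{m}S)\to 1/L_{r}<1$ forces $\tau^{m}S\notin\EKP(K_r)$ for $m$ large), the inclusion $\repp(K_r,d)\cap\cC\subseteq(M_{\cC,d}\!\rightarrow)$ is immediate: if $M\notin(M_{\cC,d}\!\rightarrow)$, then the quasi-socle $M_{[1]}$ of $M$ equals $\tau^{j}(M_{\cC,d})$ for some $j\ge1$, hence $M_{[1]}\notin\repp(K_r,d)$ by minimality. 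But $M_{[1]}$ is a \emph{submodule} of $M$, and $\repp(K_r,d)$ is closed under submodules (Proposition~\ref{CatRep1}(1)), so $M\notin\repp(K_r,d)$. No downward induction on quasi-length or analysis of where the $E(\fv)$ sit is needed.

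Your closure-under-successors argument via the almost split sequence is correct (the middle term is an extension of $\tau^{-1}M$ by $M$, both in $\repp(K_r,d)$, hence so is the middle term and its summands), though your phrasing about ``$M_{<}$'' is a red herring---you only need $M$ and $\tau^{-1}M$.
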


\begin{proof} This follows from Theorem \ref{Fam5} and \cite[(2.1.3)]{Bi20}. \end{proof}

\begin{figure*}[!h]
\centering 
\begin{tikzpicture}[very thick, scale=1]
                    [every node/.style={fill, circle, inner sep = 1pt}]

\def \n {9} 
\def \m {3} 
\def \translation {1} 

\def \ab {0.15} 
\def \Pab {0.6} 

\def \rcone {1} 
\def \rdist {2} 
\def \rcolor {gray} 

\def \rrcone {1} 
\def \rrdist {3} 

\node[color=black]  at (\n*\Pab*2-\Pab*2*3,-\ab-0.3) {$M_{\mathcal{C},1}$};
\node[color=black]  at (\n*\Pab*2-\Pab*2*2,-\ab-0.3) {$M_{\mathcal{C},r-1}$};

\foreach \a in {0,...,\n}{
\foreach \b in {0,...,\m}{
  
   \ifthenelse{\a = \n \and \b < \m}{
   \node[color=black] ({\a,\b,5})at ({\a*2*\Pab},{\b*2*\Pab}) {$\circ$};
     }
     {
      \ifthenelse{\b = \m \and \a < \n}{
      \node[color=black] ({\a,\b}) at ({\a*2*\Pab+\Pab},{\b*2*\Pab+\Pab}) {$\circ$};
      \node[color=black] ({\a,\b,5})at ({\a*2*\Pab},{\b*2*\Pab}) {$\circ$};
      }
      {
    
     \ifthenelse{\b = \m \and \a = \n}
     {\node[color=black] ({\a,\b,5})at ({\a*2*\Pab},{\b*2*\Pab}) {$\circ$};}
    {\node[color=black] ({\a,\b}) at ({\a*2*\Pab+\Pab},{\b*2*\Pab+\Pab}) {$\circ$};
    \node[color=black] ({\a,\b,5})at ({\a*2*\Pab},{\b*2*\Pab}) {$\circ$};

      }
      }
      }
    }
    }

\foreach \s in {0,...,\n}{
\foreach \t in {0,...,\m}
{  
 \ifthenelse{\t = \m \and \s < \n}{
    \draw[->] (\s*2*\Pab+\ab,\t*2*\Pab+\ab) to (\s*2*\Pab+\Pab-\ab,\t*2*\Pab+\Pab-\ab); 
    \draw[->] (\s*2*\Pab+\Pab+\ab,\t*2*\Pab+\Pab-\ab) to (\s*2*\Pab+2*\Pab-\ab,\t*2*\Pab+\ab); 

  }{
  
  \ifthenelse{\s = \n \and \t < \m}{
  
  }
  {
  \ifthenelse{\s = \n \and \t = \m}{
   
  }{
   \draw[->] (\s*2*\Pab+\ab,\t*2*\Pab+\ab) to (\s*2*\Pab+\Pab-\ab,\t*2*\Pab+\Pab-\ab); 
   \draw[->] (\s*2*\Pab+\Pab+\ab,\t*2*\Pab+\Pab+\ab) to (\s*2*\Pab+2*\Pab-\ab,\t*2*\Pab+2*\Pab-\ab);
   \draw[->] (\s*2*\Pab+\ab,\t*2*\Pab+2*\Pab-\ab) to (\s*2*\Pab+\Pab-\ab,\t*2*\Pab+\Pab+\ab); 
   \draw[->] (\s*2*\Pab+\Pab+\ab,\t*2*\Pab+\Pab-\ab) to (\s*2*\Pab+2*\Pab-\ab,\t*2*\Pab+\ab);    
   }
   }
  
    }
    }
    }

\ifthenelse{\isodd{\m}}
 { 
  \node[color=black] (Dots1) at (0,\m*\Pab+2*\Pab) {$\cdots$};
  \node[color=black] (Dots2) at (\n*2*\Pab,\m*\Pab+2*\Pab) {$\cdots$};
   \ifthenelse{\isodd{\n}}{
  \node[color=black] (Dots3) at (0.5*\n*2*\Pab,2*\m*\Pab+2*\Pab) {$\vdots$};}
  {\node[color=black] (Dots3) at (0.5*\n*2*\Pab,2*\m*\Pab+\Pab) {$\vdots$};} 
  }
  {
  \node[color=black] (Dots1) at (0,\m*\Pab+\Pab) {$\cdots$};
  \node[color=black] (Dots2) at (\n*2*\Pab,\m*\Pab+\Pab) {$\cdots$};
  \ifthenelse{\isodd{\n}}{
  \node[color=black] (Dots3) at (0.5*\n*2*\Pab,2*\m*\Pab+2*\Pab) {$\vdots$};}
  {\node[color=black] (Dots3) at (0.5*\n*2*\Pab,2*\m*\Pab+\Pab) {$\vdots$};}
  }
 
\ifthenelse{\translation = 1}{
   \foreach \s in {0,...,\n}{
   \foreach \t in {0,...,\m}{ 
   \ifthenelse{\s = 0}{}{
      \ifthenelse{\s = \n}{\draw[->,dotted,thin] (\s*2*\Pab-\ab,\t*2*\Pab) to (\s*2*\Pab-2*\Pab+\ab,\t*2*\Pab); }{
   \draw[->,dotted,thin] (\s*2*\Pab-\ab,\t*2*\Pab) to (\s*2*\Pab-2*\Pab+\ab,\t*2*\Pab); 
   \draw[->,dotted,thin] (\s*2*\Pab-\ab+\Pab,\t*2*\Pab+\Pab) to (\s*2*\Pab-2*\Pab+\Pab+\ab,\t*2*\Pab+\Pab); 
   }
   }}
}}
{}  

\begin{scope}[on background layer]

\ifthenelse{\rrcone = 1}{
        \draw[fill = \rcolor!20] (\n*\Pab*2+\ab,-\ab) node[anchor=north]{}
  -- (\n*\Pab*2-\rrdist*\Pab*2-0.7*\Pab,-\ab) node[anchor=north]{}
  -- (\n*\Pab*2+\ab,\rrdist*\Pab*2+0.7*\Pab) node[anchor=south]{};
    }
  {}

\ifthenelse{\rcone = 1}{
        \draw[fill= \rcolor!40](\n*\Pab*2+\ab,-\ab) node[anchor=north]{}
  -- (\n*\Pab*2-\rdist*\Pab*2-0.7*\Pab,-\ab) node[anchor=north]{}
  -- (\n*\Pab*2+\ab,\rdist*\Pab*2+0.7*\Pab) node[anchor=south]{};
    }
  {}   
   
\end{scope}
\end{tikzpicture}
\caption{Cones of successors of $M_{\mathcal{C},1}$ and $M_{\mathcal{C},r-1}$ in a regular component $\mathcal{C} \subseteq \Gamma(K_r)$ with $M_{\mathcal{C},i} \neq M_{\mathcal{C},i+1}$ for some 
$1\!\leq\!i\!<\! r\! -\! 1$ (cf.\ Theorem \ref{CatRep3}). The Auslander-Reiten translation is indicated by dotted arrows.}
\end{figure*}
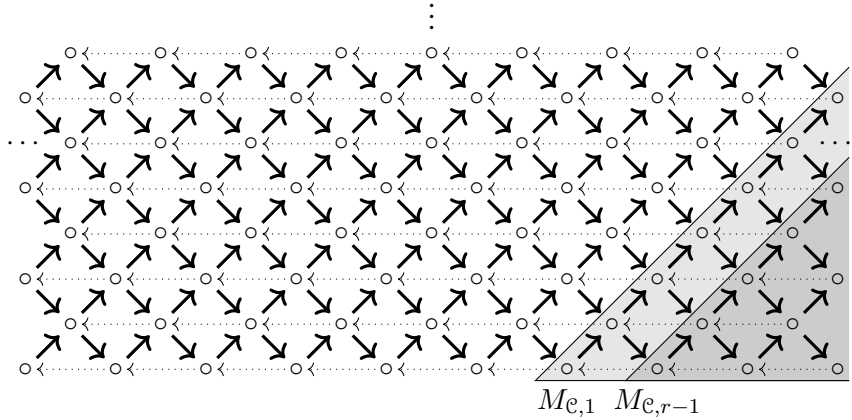

\bigskip

\begin{Lem} \label{Strat2} Let $M \in \repp(K_r,d)$ be an indecomposable representation. Then $\TilTheta_d(M)$ is an indecomposable object in $\Coh(\Gr_d(A_r))$. \end{Lem}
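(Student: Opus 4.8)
The plan is to exploit the equivalence $\TilTheta_d : \repp(K_r,d) \lra \StVect(\Gr_d(A_r))$ provided by Theorem~\ref{Fun3}(2), reducing the indecomposability of $\TilTheta_d(M)$ to that of $M$. The only point that needs care is that the assertion concerns indecomposability in the ambient category $\Coh(\Gr_d(A_r))$ rather than merely in the full subcategory $\StVect(\Gr_d(A_r))$; this is why I would argue via the endomorphism ring instead of simply invoking that equivalences preserve indecomposability.

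First I would recall that $\rep(K_r)$ is equivalent to $\modd\KK K_r$, the module category of a finite-dimensional algebra, so that Fitting's Lemma applies (cf.\ \cite{ARS95}): since $M$ is indecomposable, the ring $\End_{K_r}(M)$ is local. Next, as $\TilTheta_d$ is an equivalence it is in particular full and faithful on $\repp(K_r,d)$, hence $f \mapsto \TilTheta_d(f)$ defines a ring isomorphism $\End_{K_r}(M) \cong \End_{\StVect(\Gr_d(A_r))}(\TilTheta_d(M))$. Since $\StVect(\Gr_d(A_r))$ is by definition a full subcategory of $\Vect(\Gr_d(A_r))$, which in turn is full in $\Coh(\Gr_d(A_r))$, the latter endomorphism ring coincides with $\End_{\Coh(\Gr_d(A_r))}(\TilTheta_d(M))$. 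Therefore the endomorphism ring of $\TilTheta_d(M)$, computed in $\Coh(\Gr_d(A_r))$, is local.

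Finally I would invoke the elementary fact that a nonzero object of an additive category whose endomorphism ring is local is indecomposable: a decomposition $\TilTheta_d(M) \cong \cF_1\oplus\cF_2$ in $\Coh(\Gr_d(A_r))$ with both summands nonzero would yield the nontrivial idempotent given by projection onto $\cF_1$, contradicting locality. That $\TilTheta_d(M) \neq (0)$ is clear since $M \neq (0)$ and $\TilTheta_d$ is an equivalence (alternatively, $\rk(\TilTheta_d(M))\!=\!\Delta_M(d)\!\geq\!0$ with equality only for $M\!=\!(0)$ by Theorem~\ref{MinType2}). I do not expect a genuine obstacle here; the one subtlety worth flagging is precisely the passage from indecomposability in $\StVect(\Gr_d(A_r))$ to indecomposability in $\Coh(\Gr_d(A_r))$, which the endomorphism-ring argument handles automatically thanks to fullness of the relevant subcategories.
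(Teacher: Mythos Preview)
Your proof is correct and follows essentially the same route as the paper: both argue that full faithfulness of $\TilTheta_d$ (Theorem~\ref{Fun3}) yields $\End_{\Gr_d(A_r)}(\TilTheta_d(M)) \cong \End_{K_r}(M)$, and the latter is local since $M$ is indecomposable. You make explicit the point that fullness of $\StVect(\Gr_d(A_r))$ in $\Coh(\Gr_d(A_r))$ is what allows the endomorphism ring to be computed in either category, which the paper leaves implicit.
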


\begin{proof} By Theorem \ref{Fun3}, the functor $\TilTheta_d$ is full and faithful, so that the ring $\End_{\Gr_d(A_r)}(\TilTheta_d(M))\cong \End_{K_r}(M)$ is local. Consequently, $\TilTheta_d(M)$ is 
indecomposable. \end{proof}

\bigskip
\noindent
We denote by $\msInd\StVect(\Gr_d(A_r))$ the full subcategory of indecomposable Steiner bundles. Theorem \ref{Fun3} and Lemma \ref{Strat2} provide a decomposition
\[ \msInd\StVect(\Gr_d(A_r)) = \TilTheta_d(\cP)\sqcup \bigsqcup_{\cC} \TilTheta_d((M_{\cC,d}\!\rightarrow)),\]
with $\cC$ running through the set of regular AR-components of $K_r$.\footnote{Abusing notation, we write $\msInd\StVect(\Gr_d(A_r))$ for the set of isoclasses of the objects of $\msInd\StVect(\Gr_d(A_r))$.}  

Recall that $\Phi_r^{-1}\!=\! \left(\begin{smallmatrix} -1 & r \\ -r & r^2\!-\!1 \end{smallmatrix}\right)$, while $\udim \tau^{-1}_{K_r}(M)\!=\!\Phi^{-1}_{K_r}.\udim M$ for every $M \in \rep(K_r)$ without non-zero injective direct 
summands, cf.\ \cite[(VIII.2.2)]{ARS95}. 

If $M \not \cong P_0(r)$ is indecomposable, then 
\[ \udim \sigma_{K_r}(M) = (r \dim_\KK M_1\!-\!\dim_\KK M_2, \dim_\KK M_1),\] 
while $\sigma_{K_r}(P_0(r))\!=\!(0)$ and $\sigma_{K_r}(P_{i+1}(r)) \cong  P_{i}(r)$ for all $i\! \geq\! 0$. Similarly, 
\[\udim \sigma_{K_r}^{-1}(M)  = (\dim_\KK M_2, r\dim_\KK M_2\!-\!\dim_\KK M_1)\] 
for $M \not\cong S_1$. Thus, viewing $M \in \EKP(K_r)\!\smallsetminus\!\{0\}$ as a $\KK K_r$-module, we obtain 
\[ (\dagger) \ \ \ \ \ \ \dim_\KK\sigma^{-1}_{K_r}(M) = \dim_\KK M\!+\!2\Delta_M(1)\!\!+\!(r\!-\!2)\dim_\KK M_2 > \dim_\KK M.\]
In particular, all irreducible morphisms between preprojective representations in $\rep(K_r)$ are injective, and $P_i(r)$ is isomorphic to a subrepresentation of $P_j(r)$ if and only if $i\! \leq\! j$.

\bigskip

\begin{Lem} \label{Strat3} Let $M \in \repp(K_r,d)$ be indecomposable. Then the following statements hold:
\begin{enumerate}
\item If $M\not\cong P_0(r)$, then $\Delta_{\sigma_{K_r}^{-1}(M)}(d)\!>\!\Delta_M(d)$.
\item $\Delta_{\tau^{-1}_{K_r}(M)}(d)\!>\!\Delta_M(d)$. \end{enumerate} \end{Lem}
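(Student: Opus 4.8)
The statement concerns the behaviour of the invariant $\Delta_{(-)}(d)$ under the functors $\sigma^{-1}_{K_r}$ and $\tau^{-1}_{K_r}$ applied to an indecomposable $M \in \repp(K_r,d)$. Since everything is controlled by dimension vectors, the plan is to reduce both statements to the explicit formulas for $\udim \sigma^{-1}_{K_r}(M)$ and $\udim \tau^{-1}_{K_r}(M)$ recorded just before the Lemma, and then extract the needed inequality from the facts that $M$ lies in $\EKP(K_r)$ and is not projective (or not injective). Recall from Section~\ref{S:CatRep} that $\repp(K_r,d) \subseteq \EKP(K_r)$ and that $\repp(K_r,d)$ is closed under $\tau^{-1}_{K_r}$ and under $\sigma^{-1}_{K_r}$ (Proposition~\ref{CatRep1}(1), Theorem~\ref{CatRep3}(3)), so all the representations in sight are again equal-kernel modules; in particular they are not preinjective by \cite[Thm.A]{Wo13}, hence they have no injective direct summand and the dimension-vector formulas for $\tau^{-1}_{K_r}$ apply.

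\textbf{Part (1).} For $M \not\cong P_0(r)$ indecomposable in $\EKP(K_r)$ the module $\sigma^{-1}_{K_r}(M)$ is again indecomposable in $\rep_2(K_r)$ with $\udim \sigma^{-1}_{K_r}(M) = (\dim_\KK M_2,\, r\dim_\KK M_2 - \dim_\KK M_1)$. Therefore
\[
\Delta_{\sigma^{-1}_{K_r}(M)}(d) = (r\dim_\KK M_2 - \dim_\KK M_1) - d\dim_\KK M_2 = (r-d)\dim_\KK M_2 - \dim_\KK M_1,
\]
whence
\[
\Delta_{\sigma^{-1}_{K_r}(M)}(d) - \Delta_M(d) = (r-d)\dim_\KK M_2 - \dim_\KK M_1 - \dim_\KK M_2 + d\dim_\KK M_1 = (r-d-1)\dim_\KK M_2 + (d-1)\dim_\KK M_1.
\]
Since $1 \le d \le r-1$, both coefficients $r-d-1$ and $d-1$ are $\ge 0$; to get strict positivity I would argue as follows. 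If $\dim_\KK M_2 = 0$ then $M$ is a direct sum of copies of $S_1$, contradicting $M \in \EKP(K_r) \subseteq \rep_2(K_r)$ (or simply: an $\EKP$-module with $M_2 = 0$ also has $M_1 = 0$, so $M = (0)$, which is not indecomposable). Hence $\dim_\KK M_2 \ge 1$. If $r - d - 1 \ge 1$ we are done. Otherwise $d = r-1$, and then $d - 1 = r - 2 \ge 1$ (as $r \ge 3$), while $\dim_\KK M_1 \ge 1$: indeed if $\dim_\KK M_1 = 0$ then $M \cong P_0(r)^{\dim M_2}$ is a multiple of $P_0(r)$, forcing $M \cong P_0(r)$ by indecomposability, contrary to hypothesis. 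So in every case the difference is $\ge 1$.

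\textbf{Part (2).} Here there is no excluded module. Use $\tau^{-1}_{K_r} \cong \sigma^{-1}_{K_r} \circ \sigma^{-1}_{K_r}$ on $\rep_1(K_r)$ (the quasi-inverse statements recalled in Section~\ref{S:CatRep}, together with $\sigma_{K_r}^2 \cong \tau_{K_r}$). An indecomposable $M \in \repp(K_r,d)$ lies in $\rep_1(K_r)$ since $\Hom_{K_r}(E(\fv),S_1) \ne (0)$ shows $M \not\cong S_1$, and $M$ has no summand $S_1$ because $M$ is indecomposable and not $S_1$; thus $M' := \sigma^{-1}_{K_r}(M) \in \rep_2(K_r)$ is indecomposable, lies in $\EKP(K_r)$ by Theorem~\ref{CatRep3}(3) applied with $\sigma^{-1}_{K_r}$-stability, and $\tau^{-1}_{K_r}(M) \cong \sigma^{-1}_{K_r}(M')$. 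The only point needing care is whether Part (1) applies to $M'$, i.e.\ whether $M' \not\cong P_0(r)$: but $M' = \sigma^{-1}_{K_r}(M)$ and $\sigma^{-1}_{K_r}$ restricted to $\rep_1(K_r)$ is an equivalence onto $\rep_2(K_r)$, whose inverse $\sigma_{K_r}$ kills $P_0(r)$; since $\sigma_{K_r}(M') \cong M \ne (0)$ we get $M' \not\cong P_0(r)$. Applying Part (1) twice — first to $M$ (if $M \not\cong P_0(r)$) to get $\Delta_{M'}(d) > \Delta_M(d)$, and if $M \cong P_0(r)$ handling it directly since $\tau^{-1}_{K_r}(P_0(r)) \cong P_2(r)$ with $\Delta_{P_2(r)}(d) = a_3(r) - d a_2(r) \ge r-d > 0 = \Delta_{P_0(r)}(d)$ — and then to $M'$, yields $\Delta_{\tau^{-1}_{K_r}(M)}(d) > \Delta_{M'}(d) \ge \Delta_M(d)$, which gives the claim; alternatively one computes $\Delta_{\tau^{-1}_{K_r}(M)}(d) - \Delta_M(d)$ directly from $\udim\tau^{-1}_{K_r}(M) = \Phi^{-1}_r.\udim M$ and checks positivity by the same $\EKP$/indecomposability bookkeeping.

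\textbf{Main obstacle.} The computations are elementary; the only real subtlety is justifying the strict inequality at the boundary values $d = 1$ and $d = r-1$, where one of the two coefficients in the difference vanishes and one must fall back on the structural constraints ($M \in \EKP(K_r)$, indecomposable, $M \not\cong P_0(r)$) to rule out the degenerate dimension vectors. Keeping track of which excluded-module hypothesis is in force for each application of Part (1) inside Part (2) is the bookkeeping one has to get right.
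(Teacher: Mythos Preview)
Your argument is correct and follows essentially the same route as the paper: compute $\Delta_{\sigma^{-1}_{K_r}(M)}(d)-\Delta_M(d)=(r-d-1)\dim_\KK M_2+(d-1)\dim_\KK M_1$ from the dimension-vector formula and verify positivity, then deduce (2) from $\tau^{-1}_{K_r}\cong\sigma^{-2}_{K_r}$. The paper establishes positivity slightly differently, using $\Delta_M(d)\ge 0$ (i.e.\ $\dim_\KK M_2\ge d\dim_\KK M_1$) to lower-bound the difference by $((r-d)d-1)\dim_\KK M_1>0$, whereas you split into the cases $r-d-1\ge 1$ and $d=r-1$; both work.

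One small slip: in your treatment of the case $M\cong P_0(r)$ in Part~(2) you write $\Delta_{P_0(r)}(d)=0$, but $\udim P_0(r)=(0,1)$ gives $\Delta_{P_0(r)}(d)=1$. This does not affect the conclusion, since $\Delta_{P_2(r)}(d)=r(r-d)-1\ge 2>1$.
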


\begin{proof} (1) As $M$ is not isomorphic to $I_0(r)\!=\!S_1$, we have 
\[\udim \sigma^{-1}_{K_r}(M) = (\dim_\KK M_2, r\dim_\KK M_2\!-\!\dim_\KK M_1).\] 
We thus obtain, observing $\Delta_M(d)\!\ge\!0$ and $M \not \cong P_0(r)$,
\begin{eqnarray*}
\Delta_{\sigma^{-1}_{K_r}(M)}(d)\!-\!\Delta_M(d) & = & r\dim_\KK M_2\!-\!\dim_\KK M_1\!-\!d\dim_\KK M_2\!-\!\dim_\KK M_2\!+\!d\dim_\KK M_1 \\
                                                                            & = & (r\!-\!d\!-\!1)\dim_\KK M_2\!+\!(d\!-\!1)\dim_\KK M_1 \\
                                                                            &\ge & ((r\!-\!d)d\!-\!1)\dim_\KK M_1 > 0. 
\end{eqnarray*}

(2) This follows from (1), as $\tau^{-1}_{K_r}(M)\cong \sigma_{K_r}^{-2}(M)$ and $\sigma^{-1}_{K_r}(M)\not \cong P_0(r)$. \end{proof}  

\bigskip

\bigskip
\begin{Prop} \label{Strat4} Let $r\!\ge\!3$, $d \in \{1,\ldots,r\!-\!1\}$. The following statements hold:
\begin{enumerate}
\item Let $\cC \subseteq \Gamma(K_r)$ be a regular AR-component. 
\begin{enumerate}
\item[(a)] We have $\rk(\TilTheta_d(M_{\cC,d}))\!<\!\rk(\TilTheta_d(M))$ for all $M \in (M_{\cC,d}\!\rightarrow)\!\smallsetminus\!\{M_{\cC,d}\}$. 
\item[(b)] If $M_{\cC,d}$ is a brick, then every $\cF \in \TilTheta_d((M_{\cC,d}\!\rightarrow))$ has a filtration by simple Steiner bundles, whose filtration factors are pairwise non-isomorphic. \end{enumerate}
\item If $M \in \repp(K_r,d)$ is indecomposable of minimal type, then either $M\cong P_1(r)$ and $d\!=\!1$, or $M \cong P_2(r)$ and $d\!=\!r\!-\!1$, or there is a regular AR-component $\cC$ such that $M \cong 
M_{\cC,d}$. 
\item If $\cF \in \StVect(\Gr_d(A_r))$ is simple, then either $\cF \in \TilTheta_d(\cP)$, or there is a regular AR-component $\cC$ and a natural number $n \in \NN_0$ such that $\cF \cong \TilTheta_d(\tau^{-n}_{K_r}
(M_{\cC,d}))$. \end{enumerate} \end{Prop}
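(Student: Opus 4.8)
The plan is to prove the three parts of Proposition~\ref{Strat4} using the decomposition of $\msInd\StVect(\Gr_d(A_r))$ established above together with the structure of regular AR-components provided by Ringel's Theorem and Proposition~\ref{Strat1}.

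For part~(1)(a), recall from Proposition~\ref{Strat1} that $\repp(K_r,d)\cap\cC$ is the cone $(M_{\cC,d}\!\to)$ of successors of the quasi-simple representation $M_{\cC,d}$. Every $M$ in this cone with $M\not\cong M_{\cC,d}$ is obtained from $M_{\cC,d}$ by a sequence of irreducible monomorphisms inside $\cC$ (using the directed path $M_{[1]}\to\cdots\to M_{[n]}=M$ from the discussion of quasi-length, all of whose maps are injective). Hence there is a short exact sequence $(0)\to M_{\cC,d}\to M\to M''\to(0)$ with $M''$ regular and nonzero; since $\repp(K_r,d)$ is closed under submodules (Remark after the definition), $M'' $ need not lie in $\repp(K_r,d)$, so instead I argue via the chain $M_{[1]}\subsetneq\cdots\subsetneq M_{[n]}$ all of whose terms lie in $\repp(K_r,d)$ (they are successors of $M_{\cC,d}$ in the cone). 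Applying the exact functor $\TilTheta_d|_{\repp(K_r,d)}$ (Corollary~\ref{Fun4}) to $(0)\to M_{[n-1]}\to M\to \tau^{-n+1}_{K_r}(M_{\cC,d})\to(0)$ gives $\rk(\TilTheta_d(M))=\rk(\TilTheta_d(M_{[n-1]}))+\rk(\TilTheta_d(\tau^{-n+1}_{K_r}(M_{\cC,d})))$, and by Theorem~\ref{Fun3}(3) this equals $\Delta_{M_{[n-1]}}(d)+\Delta_{\tau^{-n+1}_{K_r}(M_{\cC,d})}(d)$. Iterating, $\Delta_M(d)=\sum_{i=0}^{n-1}\Delta_{\tau^{-i}_{K_r}(M_{\cC,d})}(d)$; since each summand is positive (Lemma~\ref{Strat3}(2), noting the quasi-composition factors are regular, hence not $P_0(r)$, and lie in $\repp(K_r,d)$ by $\sigma^{-1}_{K_r}$-stability from Theorem~\ref{CatRep3}(3)/Proposition~\ref{CatRep1}(1)) and $n\ge 2$, we get $\rk(\TilTheta_d(M_{\cC,d}))=\Delta_{M_{\cC,d}}(d)<\Delta_M(d)=\rk(\TilTheta_d(M))$.

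For part~(1)(b), if $M_{\cC,d}$ is a brick then by Corollary~\ref{MinType3}(2b) applied along the component (or directly: a regular brick is quasi-simple and all its $\tau^{-i}_{K_r}$-translates are quasi-simple), every $M$ in the cone has quasi-composition factors $\tau^{-i}_{K_r}(M_{\cC,d})$, $0\le i\le\ql(M)-1$, which are pairwise non-isomorphic (distinct vertices of $\cC$) and each of minimal type; by Corollary~\ref{MinType4} or Corollary~\ref{MinType3}(2b) together with Theorem~\ref{Fun3}, $\TilTheta_d(\tau^{-i}_{K_r}(M_{\cC,d}))$ is simple. Applying the exact functor $\TilTheta_d|_{\repp(K_r,d)}$ to the quasi-composition series $M_{[1]}\subsetneq\cdots\subsetneq M$ yields the desired filtration of $\TilTheta_d(M)$ by simple Steiner bundles with pairwise non-isomorphic factors. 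For part~(2), if $M\in\repp(K_r,d)$ is indecomposable of minimal type, then $M$ is either preprojective or regular (it cannot be preinjective since $\repp(K_r,d)\subseteq\EKP(K_r)$ contains no preinjectives by \cite[Thm.A]{Wo13}). If regular, it lies in some regular component $\cC$, hence in the cone $(M_{\cC,d}\!\to)$, and by part~(1)(a) minimal type forces $\rk(\TilTheta_d(M))=\Delta_M(d)=d(r-d)=\Delta_{M_{\cC,d}}(d)$, so $M=M_{\cC,d}$ (strict increase along the cone). If preprojective, $M\cong P_n(r)$ for some $n$, and $\Delta_{P_n(r)}(d)=a_{n+1}(r)-da_n(r)=d(r-d)$ pins down $n$: for $n=0$ this gives $d=d(r-d)$ i.e. $r-d=1$, which contradicts $\Delta_M(d)=d(r-d)$ unless... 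I will check the small cases directly, finding $n=1,d=1$ (where $\udim P_1(r)=(1,r)$, $\Delta=r-1=1\cdot(r-1)$) and $n=2,d=r-1$ (where $\udim P_2(r)=(r,r^2-1)$, $\Delta_M(r-1)=r^2-1-(r-1)r=r-1=(r-1)\cdot 1$).

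For part~(3), if $\cF\in\StVect(\Gr_d(A_r))$ is simple, Theorem~\ref{Fun3} gives $M\in\repp(K_r,d)$ with $\TilTheta_d(M)\cong\cF$ and $\End_{K_r}(M)\cong\End_{\Gr_d(A_r)}(\cF)\cong\KK$, so $M$ is a brick, hence indecomposable. If $M$ is preprojective, $\cF\in\TilTheta_d(\cP)$. Otherwise $M$ is regular, lying in a regular component $\cC$, hence in the cone $(M_{\cC,d}\!\to)$; since $M$ is a brick and every proper successor in the cone has a nontrivial quasi-composition series (with more than one factor) and hence — provided $M_{\cC,d}$ is a brick — is not a brick, actually the cleanest route is: a regular brick is quasi-simple, so $M$ itself is quasi-simple, and every quasi-simple in $\cC$ lying in $\repp(K_r,d)$ is of the form $\tau^{-n}_{K_r}(M_{\cC,d})$ for some $n\ge0$ (the quasi-simples of the cone are exactly the $\tau^{-n}_{K_r}$-orbit of $M_{\cC,d}$, by the $\ZZ[A_\infty]$ structure and $\sigma^{-1}_{K_r}$-stability). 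Thus $\cF\cong\TilTheta_d(\tau^{-n}_{K_r}(M_{\cC,d}))$. The main obstacle I anticipate is pinning down, in part~(3), that a regular brick in $\repp(K_r,d)$ must be quasi-simple and in the $\tau^{-1}_{K_r}$-orbit of $M_{\cC,d}$: this requires knowing that non-quasi-simple modules in a $\ZZ[A_\infty]$ component are never bricks (their quasi-composition series has $\ge2$ factors, producing non-scalar endomorphisms), a standard fact I would cite from Ringel's work \cite{Ri78} or \cite{ARS95}, and that the quasi-simples of the cone form precisely one $\tau^{-1}_{K_r}$-orbit, which follows from Proposition~\ref{Strat1} together with the combinatorics of $\ZZ[A_\infty]$.
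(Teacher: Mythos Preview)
Your overall strategy matches the paper's, but part~(1b) contains a genuine error. You assert that the quasi-composition factors $\tau^{-i}_{K_r}(M_{\cC,d})$ are ``each of minimal type'' and then invoke Corollary~\ref{MinType3}(2b)/\ref{MinType4} to conclude that their images under $\TilTheta_d$ are simple. This is false: by Lemma~\ref{Strat3}(2) we have $\Delta_{\tau^{-i}_{K_r}(M_{\cC,d})}(d) > \Delta_{M_{\cC,d}}(d) \ge d(r\!-\!d)$ for every $i\ge 1$, so none of the higher factors is of minimal type, and the cited corollaries do not apply. The correct argument (and the one the paper gives) is that $\tau^{-1}_{K_r}$ restricts to an auto-equivalence of the category of regular representations, so $M_{\cC,d}$ being a brick forces every $\tau^{-i}_{K_r}(M_{\cC,d})$ to be a brick; then full faithfulness of $\TilTheta_d$ on $\repp(K_r,d)$ (Theorem~\ref{Fun3}(2)) yields $\End_{\Gr_d(A_r)}(\TilTheta_d(\tau^{-i}_{K_r}(M_{\cC,d})))\cong \KK$, i.e.\ simplicity.

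A secondary point: in both (1a) and (1b) you tacitly assume the quasi-socle $M_{[1]}$ of an arbitrary $M\in(M_{\cC,d}\!\to)$ equals $M_{\cC,d}$, writing the factors as $\tau^{-i}_{K_r}(M_{\cC,d})$ for $0\le i\le \ql(M)-1$. This fails already for $M=\tau^{-1}_{K_r}(M_{\cC,d})$. What is true is that every $M$ in the cone has quasi-socle $\tau^{-j_0}_{K_r}(M_{\cC,d})$ for some $j_0\ge 0$, so its quasi-composition factors lie in $\{\tau^{-j}_{K_r}(M_{\cC,d}) : j\ge 0\}\subseteq\repp(K_r,d)$; your additivity argument and the conclusion of (1a) then go through. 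The paper phrases this via the wing $\cW(M)$ and the identity $f(M)=\sum_{X\in\cW(M)_1}f(X)$ for the additive function $f=\rk\circ\TilTheta_d$, which sidesteps the indexing issue. With these two fixes your proof is complete and aligns with the paper's; in particular your treatment of (2) via (1a) (rather than first establishing quasi-simplicity, as the paper does via \cite[(XVIII.2.15)]{SS07}) is a legitimate shortcut, and your argument for (3) is exactly the paper's.
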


\begin{proof}  Let $\cC$ be a regular component, $f : (M_\cC\!\rightarrow) \lra \NN_0$ be a function that is additive on almost split sequences and split exact sequences. For $M \in (M_{\cC,d}\!\rightarrow)$, we let 
$\cW(M) \subseteq \cC$ be the {\it wing} of $M$. (This is the full subgraph of $\cC$ bounded by the triangle with vertices $M$, the quasi-socle of $M$ and the quasi-top of $M$. 

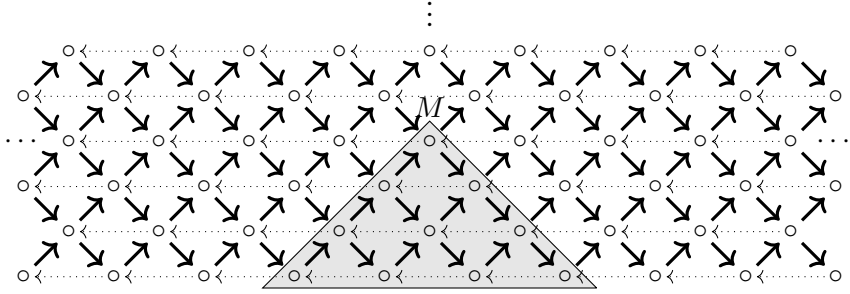
\begin{figure*}[!h]
\centering 
\begin{tikzpicture}[very thick, scale=1]
                    [every node/.style={fill, circle, inner sep = 1pt}]

\def \n {9} 
\def \m {2} 
\def \translation {1} 

\def \ab {0.15} 
\def \Pab {0.6} 

\def \wing {1} 
\def \wingheight {3} 
\def \wingcolor {gray} 

\node[color=black]  at (\n*\Pab*2-1.5*\wingheight*\Pab*2,-\ab+\wingheight*\Pab+\Pab) {$M$};

\foreach \a in {0,...,\n}{
\foreach \b in {0,...,\m}{
  
   \ifthenelse{\a = \n \and \b < \m}{
   \node[color=black] ({\a,\b,5})at ({\a*2*\Pab},{\b*2*\Pab}) {$\circ$};
     }
     {
      \ifthenelse{\b = \m \and \a < \n}{
      \node[color=black] ({\a,\b}) at ({\a*2*\Pab+\Pab},{\b*2*\Pab+\Pab}) {$\circ$};
      \node[color=black] ({\a,\b,5})at ({\a*2*\Pab},{\b*2*\Pab}) {$\circ$};
      }
      {
    
     \ifthenelse{\b = \m \and \a = \n}
     {\node[color=black] ({\a,\b,5})at ({\a*2*\Pab},{\b*2*\Pab}) {$\circ$};}
    {\node[color=black] ({\a,\b}) at ({\a*2*\Pab+\Pab},{\b*2*\Pab+\Pab}) {$\circ$};
    \node[color=black] ({\a,\b,5})at ({\a*2*\Pab},{\b*2*\Pab}) {$\circ$};

      }
      }
      }
    }
    }

\foreach \s in {0,...,\n}{
\foreach \t in {0,...,\m}
{  
 \ifthenelse{\t = \m \and \s < \n}{
    \draw[->] (\s*2*\Pab+\ab,\t*2*\Pab+\ab) to (\s*2*\Pab+\Pab-\ab,\t*2*\Pab+\Pab-\ab); 
    \draw[->] (\s*2*\Pab+\Pab+\ab,\t*2*\Pab+\Pab-\ab) to (\s*2*\Pab+2*\Pab-\ab,\t*2*\Pab+\ab); 

  }{
  
  \ifthenelse{\s = \n \and \t < \m}{
  
  }
  {
  \ifthenelse{\s = \n \and \t = \m}{
   
  }{
   \draw[->] (\s*2*\Pab+\ab,\t*2*\Pab+\ab) to (\s*2*\Pab+\Pab-\ab,\t*2*\Pab+\Pab-\ab); 
   \draw[->] (\s*2*\Pab+\Pab+\ab,\t*2*\Pab+\Pab+\ab) to (\s*2*\Pab+2*\Pab-\ab,\t*2*\Pab+2*\Pab-\ab);
   \draw[->] (\s*2*\Pab+\ab,\t*2*\Pab+2*\Pab-\ab) to (\s*2*\Pab+\Pab-\ab,\t*2*\Pab+\Pab+\ab); 
   \draw[->] (\s*2*\Pab+\Pab+\ab,\t*2*\Pab+\Pab-\ab) to (\s*2*\Pab+2*\Pab-\ab,\t*2*\Pab+\ab);    
   }
   }
  
    }
    }
    }

\ifthenelse{\isodd{\m}}
 { 
  \node[color=black] (Dots1) at (0,\m*\Pab+2*\Pab) {$\cdots$};
  \node[color=black] (Dots2) at (\n*2*\Pab,\m*\Pab+2*\Pab) {$\cdots$};
   \ifthenelse{\isodd{\n}}{
  \node[color=black] (Dots3) at (0.5*\n*2*\Pab,2*\m*\Pab+2*\Pab) {$\vdots$};}
  {\node[color=black] (Dots3) at (0.5*\n*2*\Pab,2*\m*\Pab+\Pab) {$\vdots$};} 
  }
  {
  \node[color=black] (Dots1) at (0,\m*\Pab+\Pab) {$\cdots$};
  \node[color=black] (Dots2) at (\n*2*\Pab,\m*\Pab+\Pab) {$\cdots$};
  \ifthenelse{\isodd{\n}}{
  \node[color=black] (Dots3) at (0.5*\n*2*\Pab,2*\m*\Pab+2*\Pab) {$\vdots$};}
  {\node[color=black] (Dots3) at (0.5*\n*2*\Pab,2*\m*\Pab+\Pab) {$\vdots$};}
  }
 
\ifthenelse{\translation = 1}{
   \foreach \s in {0,...,\n}{
   \foreach \t in {0,...,\m}{ 
   \ifthenelse{\s = 0}{}{
      \ifthenelse{\s = \n}{\draw[->,dotted,thin] (\s*2*\Pab-\ab,\t*2*\Pab) to (\s*2*\Pab-2*\Pab+\ab,\t*2*\Pab); }{
   \draw[->,dotted,thin] (\s*2*\Pab-\ab,\t*2*\Pab) to (\s*2*\Pab-2*\Pab+\ab,\t*2*\Pab); 
   \draw[->,dotted,thin] (\s*2*\Pab-\ab+\Pab,\t*2*\Pab+\Pab) to (\s*2*\Pab-2*\Pab+\Pab+\ab,\t*2*\Pab+\Pab); 
   }
   }}
}}
{}  

\begin{scope}[on background layer]

\ifthenelse{\wing = 1}{
         \draw[fill = \wingcolor!20] (\n*\Pab*2-\wingheight*\Pab*2+0.7*\Pab,-\ab) node[anchor=north]{}
  -- (\n*\Pab*2-2*\wingheight*\Pab*2-0.7*\Pab,-\ab) node[anchor=north]{}
  -- (\n*\Pab*2-1.5*\wingheight*\Pab*2,-\ab+\wingheight*\Pab+0.7*\Pab) node[anchor=south]{}
  -- (\n*\Pab*2-\wingheight*\Pab*2+0.7*\Pab,-\ab) node[anchor=north]{};
      }

  {}

\end{scope}
\end{tikzpicture}
\caption{Wing $\mathcal{W}(M)$ of a regular representation $M$ of quasi-length $4$ in a regular component.}
\end{figure*}

Then $\cW(M) \subseteq 
(M_{\cC.d}\!\rightarrow)$, and we put
\[ \cW(M)_1 := \{ X \in \cW(M) \ ; \ \ql(X)\!=\!1\}.\]
(This is the set of quasi-composition factors of $M$.) Using induction on $\ql(M)$ one verifies that 
\[ (\ast) \ \ \ \ \ \ \ \ f(M) = \sum_{X \in \cW(M)_1} f(X).\]

(1a) Let $M \in (M_{\cC,d}\!\rightarrow)\!\smallsetminus\!\{M_{\cC,d}\}$. According to Theorem \ref{Fun3}(3), the function
\[ f : (M_{\cC,d}\!\rightarrow) \lra \NN_0 \ \ ; \ \ M \mapsto \rk(\TilTheta_d(M))\]
is additive on almost split sequences and split exact sequences. Consequently, identity ($\ast$) in conjunction with Lemma \ref{Strat3}(2) and Theorem \ref{Fun3}(3) yields
\[ \rk(\TilTheta_d(M)) = \sum_{X \in \cW(M)_1} \rk(\TilTheta_d(X)) > \rk(\TilTheta_d(M_{\cC,d})).\]

(1b) Since $M_{\cC,d}$ is a brick, it follows that $\tau_{K_r}^{-n}(M_{\cC,d})$ is a brick for all $n \in \NN_0$.  Let $M \in (M_{\cC,d}\!\rightarrow)$. The quasi-composition series of $M$ has $\cW(M)_1 
\subseteq \{\tau_{K_r}^{-n}(M_{\cC,d}) \ ; \ n \in \NN_0\}$ as set of filtration factors, with each factor occurring with multiplicity $1$. By Corollary \ref{Fun4} and Theorem \ref{Fun3}, the functor $\TilTheta_d : \repp(K_r,d) 
\lra \StVect(\Gr_d(A_r))$ is an exact equivalence, so that the assertion follows.  

(2) Suppose that $M$ is regular and let $\cC$ be the regular AR-component containing $M$. A consecutive application of Corollary \ref{MinType3}(2b) and \cite[(XVIII.2.15)]{SS07} ensures that $M$ is quasi-simple. 
Hence there is $n\!\ge\!0$ such that $M \cong \tau_{K_r}^{-n}(M_{\cC,d})$. Since $M$ has minimal type, Lemma \ref{Strat3}(2) and Corollary \ref{MinType3}(1a) yield $n\!=\!0$, whence $M\cong M_{\cC,d}$.

If $M$ is preprojective, then $M\cong P_i(r)$ for some $i\!\ge\!0$. As $r\!\ge\!3$, we have $\Delta_{P_0(r)}(d)\!=\!1\!<\!d(r\!-\!d)$, while $\Delta_{P_1(r)}(d)\!=\!r\!-\!d$ and 
\[ \Delta_{P_2(r)}(d)\!-\! d(r\!-\!d) = \Delta_{\sigma^{-1}_{K_r}(P_1(r))}(d)\!-\!d(r\!-\!d) = (r\!-\!d)^2\!-\!1.\]  
Lemma \ref{Strat3}(1) thus implies $M\!\cong P_1(r)$ and $d\!=\!1$, or $M\cong P_2(r)$ and $d\!=\!r\!-\!1$.

(3) Theorem \ref{Fun3} provides $M \in \repp(K_r,d)$ such that $\cF \cong \TilTheta_d(M)$. Suppose that $M \not\in \cP$. As $\cF$ is simple, $M$ is a brick, and \cite[(XVIII.2.15)]{SS07} shows that $M$ is quasi-simple. 
Hence there is a regular AR-component $\cC$ and a natural number $n \in \NN_0$ such that $M\cong \tau^{-n}_{K_r}(M_{\cC,d})$.  \end{proof}

\bigskip
\noindent
For future reference we record structural features of regular AR-components that are related to the functor $\sigma_{K_r}$. Recall that $\sigma_{K_r}$ and $\sigma^{-1}_{K_r}$ induce quasi-inverse auto-equivalences 
on the full subcategory category $\cR(K_r) \subseteq \rep(K_r)$ of regular representations. Moreover, we have a natural equivalence $\sigma_{K_r} \circ \sigma_{K_r} \cong \tau_{K_r}$. 

\bigskip

\begin{Lem}\label{Strat5} Let $1\! \leq\! d\! <\! r$ and $M \in \rep(K_r)$ be regular indecomposable. Then there exists a unique representation $M_{\sigma,d}$ in the $\sigma_{K_r}$-orbit $\langle\sigma_{K_r}\rangle.M$ 
of $M$ such that 
\[ \repp(K_r,d) \cap \langle\sigma_{K_r}\rangle.M = \{ \sigma^{-n}_{K_r}(M_{\sigma,d}) \ ; \ n \in \NN_0\}.\]
Moreover, we have $M_{\sigma,d} \in \{M_{\sigma,1},\sigma_{K_r}^{-1}(M_{\sigma,1})\}$. \end{Lem}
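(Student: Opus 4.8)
The plan is to recast everything inside the $\sigma_{K_r}$-orbit $O:=\langle\sigma_{K_r}\rangle.M=\{\sigma_{K_r}^{k}(M)\ ;\ k\in\ZZ\}$ and to show that the index set $S_d:=\{k\in\ZZ\ ;\ \sigma_{K_r}^{k}(M)\in\repp(K_r,d)\}$ is a half-line $\{k\le k_d\}$; then $M_{\sigma,d}:=\sigma_{K_r}^{k_d}(M)$ will be the required representation. First I would note that, since $M$ is regular indecomposable and $r\!\ge\!3$, each $\sigma_{K_r}^{k}(M)$ is again regular indecomposable (hence not projective, so $\ne P_0(r)$), and that $\sigma_{K_r}$ acts freely on $O$: the component of $\Gamma(K_r)$ containing $M$ is a stable translation quiver of type $\ZZ[A_\infty]$ with free $\tau_{K_r}$-action, and $\sigma_{K_r}^{2}\cong\tau_{K_r}$ forbids periodicity. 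So $O$ is genuinely $\ZZ$-indexed.

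Next I would establish three properties of $S_d$. (i) $S_d$ is downward closed: by Theorem \ref{CatRep3}(3) one has $\sigma_{K_r}^{-1}(\repp(K_r,d))\subseteq\repp(K_r,r\!-\!1)\subseteq\repp(K_r,d)$, so $k\in S_d$ forces $k\!-\!1\in S_d$. (ii) $S_d$ is bounded above: if not, then by (i) $S_d=\ZZ$, so every $N_k:=\sigma_{K_r}^{k}(M)$ lies in $\repp(K_r,d)$; being regular, $N_k\ne P_0(r)$, and Lemma \ref{Strat3}(1) then gives $\Delta_{N_{k-1}}(d)=\Delta_{\sigma_{K_r}^{-1}(N_k)}(d)>\Delta_{N_k}(d)$ for all $k$, so $k\mapsto\Delta_{N_k}(d)$ is strictly decreasing on $\ZZ$ and unbounded below, contradicting the bound $\Delta_{N_k}(d)\ge(r\!-\!d)\min\{\dim_\KK(N_k)_1,d\}\ge0$ of Theorem \ref{MinType2}. (iii) $S_d\ne\emptyset$: let $\cC$ be the regular component containing $M$; by Proposition \ref{Strat1}, $\repp(K_r,d)\cap\cC=(M_{\cC,d}\!\rightarrow)$ is a non-empty cone with quasi-simple apex, so it contains a representation of every quasi-length, in particular one of quasi-length $\ql(M)$; since in a $\ZZ[A_\infty]$-component the representations of a fixed quasi-length form a single $\tau_{K_r}$-orbit, this representation equals $\tau_{K_r}^{\ell}(M)=\sigma_{K_r}^{2\ell}(M)$ for some $\ell\in\ZZ$, whence $2\ell\in S_d$.

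Combining (i)--(iii), $S_d=\{k\in\ZZ\ ;\ k\le k_d\}$ for a unique $k_d$; putting $M_{\sigma,d}:=\sigma_{K_r}^{k_d}(M)$ yields $\repp(K_r,d)\cap O=\{\sigma_{K_r}^{-n}(M_{\sigma,d})\ ;\ n\in\NN_0\}$, and $M_{\sigma,d}$ is determined uniquely as the only member $N$ of this intersection with $\sigma_{K_r}(N)\notin\repp(K_r,d)$. For the ``moreover'' clause: $\repp(K_r,d)\subseteq\repp(K_r,1)=\EKP(K_r)$ by Proposition \ref{CatRep1}(3) gives $S_d\subseteq S_1$, so $k_d\le k_1$; conversely $M_{\sigma,1}\in\EKP(K_r)$ together with Theorem \ref{CatRep3}(3) shows $\sigma_{K_r}^{-1}(M_{\sigma,1})=\sigma_{K_r}^{k_1-1}(M)\in\repp(K_r,r\!-\!1)\subseteq\repp(K_r,d)$ (as $d\le r\!-\!1$), so $k_1\!-\!1\in S_d$ and $k_d\ge k_1\!-\!1$; hence $k_d\in\{k_1\!-\!1,k_1\}$, i.e. $M_{\sigma,d}\in\{\sigma_{K_r}^{-1}(M_{\sigma,1}),M_{\sigma,1}\}$.

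The main obstacle is step (iii): one has to connect the $\sigma_{K_r}$-orbit of $M$ to the cone of Proposition \ref{Strat1}, which forces use of the $\ZZ[A_\infty]$ shape of regular components to identify the $\tau_{K_r}$-orbit of $M$ with the ``row'' of its quasi-length and to observe that the cone meets every such row. One must also keep in mind that the full $\sigma_{K_r}$-orbit may straddle two regular components $\cC$ and $\sigma_{K_r}(\cC)$, interchanged by $\sigma_{K_r}$ and each fixed by $\tau_{K_r}=\sigma_{K_r}^{2}$; but only the even part of the orbit, which is the $\tau_{K_r}$-orbit of $M$ inside $\cC$, is needed for (iii). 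Everything else is bookkeeping with the already-established stability, inclusion, and monotonicity properties of the categories $\repp(K_r,d)$.
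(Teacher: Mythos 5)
Your proof is correct and follows essentially the same route as the paper: non-emptiness of $\repp(K_r,d)\cap\langle\sigma_{K_r}\rangle.M$ via Proposition \ref{Strat1}, closure under $\sigma_{K_r}^{-1}$ via Theorem \ref{CatRep3}(3), and a strictly monotone invariant to rule out the whole orbit, followed by the same comparison of $M_{\sigma,d}$ with $M_{\sigma,1}$. The only (harmless) variation is that you bound the half-line using the monotonicity of $\Delta_{(-)}(d)$ from Lemma \ref{Strat3}(1) together with Theorem \ref{MinType2}, whereas the paper uses the dimension identity $(\dagger)$ preceding Lemma \ref{Strat3}.
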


\begin{proof} In view of Proposition \ref{Strat1}, we have $\repp(K_r,d) \cap \langle\sigma_{K_r}\rangle.M\!\ne\!\emptyset$. Let $M_{\sigma,d} \in \repp(K_r,d) \cap \langle\sigma_{K_r}\rangle.M$ be a vertex of minimal 
dimension. Identity ($\dagger$) right before Lemma \ref{Strat3} in conjunction with Theorem \ref{CatRep3}(3) yields $\repp(K_r,d) \cap \langle\sigma_{K_r}\rangle.M\!=\!\{ \sigma^{-n}_{K_r}(M_{\sigma,d}) \ ; 
\ n \in \NN_0\}$. 

Thanks to Proposition \ref{CatRep1}(3), we have $M_{\sigma,d} \in \repp(K_r,1)\cap\langle \sigma_{K_r} \rangle.M$ and there exists $n\!\ge\!0$ such that $M_{\sigma,d}\!=\!\sigma_{K_r}^{-n}(M_{\sigma,1})$. Moreover,
Theorem \ref{CatRep3}(3) yields $\sigma_{K_r}^{-1}(M_{\sigma,1}) \in \repp(K_r,d)$, whence $n\!\le\!1$. \end{proof}

\bigskip

\begin{Thm}\label{Strat6} Let $\cC \subseteq \Gamma(K_r)$ be a regular component and $M\!:=\!M_{\cC,1}$. 
\begin{enumerate}
\item We have $M_{\sigma,1} \in \{M,\sigma_{K_r}(M)\}$.
\item If $M_{\sigma,1}\!=\!\sigma_{K_r}{(M)}$, then $(M\!\rightarrow)\!=\!\cC\cap \repp(K_r,r\!-\!1)$.
\item If ${M}_{\sigma,1}\!=\!M$, then $(\tau^{-1}_{K_r}(M)\!\rightarrow)\!=\!\cC\cap\repp(K_r,r\!-\!1)$.
\item We have
\[ ({M_{[i]}})|_{\fv} \cong M|_{\fv}\!\oplus\! \Delta_{\tau^{-1}_{K_r} (M_{[i-1]})}(d) P_0(d)\!\oplus\!\dim_\KK \tau^{-1}_{K_r}(M_{[i-1]})_1 P_1(d)\]
for all $i \in \NN_{\geq 2}$, $1\!\leq\!d\!\leq\! r\!-\!1$ and $\fv \in \Gr_d(A_r)$.  \end{enumerate} \end{Thm}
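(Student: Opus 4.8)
The statement to be proved is Theorem~\ref{Strat6}, and the final part (4) is the most computational, so let me describe how I would handle the whole package, then focus on part (4). Parts (1)--(3) are essentially bookkeeping with the $\sigma_{K_r}$-orbit structure: by Lemma~\ref{Strat5} we know $M_{\sigma,1} \in \{M_{\sigma,1}, \sigma_{K_r}^{-1}(M_{\sigma,1})\}$ and $M_{\sigma,d} \in \{M_{\sigma,1}, \sigma_{K_r}^{-1}(M_{\sigma,1})\}$, and since $M = M_{\cC,1}$ is by Proposition~\ref{Strat1} the quasi-simple representation whose cone of successors is $\cC \cap \repp(K_r,1)$, I would first relate $M_{\cC,1}$ and $M_{\sigma,1}$: both control membership in $\EKP(K_r) = \repp(K_r,1)$ within $\cC$, but $M_{\sigma,1}$ lives in the $\sigma_{K_r}$-orbit while $M_{\cC,1}$ lives in $\cC$ via the $\tau_{K_r}$-action. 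Since $\tau_{K_r} = \sigma_{K_r}^2$ and the quasi-simple modules of $\cC$ form exactly one $\tau_{K_r}$-orbit $\{\tau_{K_r}^{-n}(M_{\cC,1})\}$, one checks that $\sigma_{K_r}(M)$ is again regular quasi-simple (the shift functor preserves quasi-simplicity on $\reg(K_r)$), hence $M_{\sigma,1}$ is either $M$ itself or $\sigma_{K_r}(M)$ depending on parity — this is (1). For (2) and (3): if $M_{\sigma,1} = \sigma_{K_r}(M)$, then $M = \sigma_{K_r}^{-1}(M_{\sigma,1}) \in \repp(K_r,r-1)$ by Theorem~\ref{CatRep3}(3) and is the smallest such vertex in $\cC$, so its cone of successors is $\cC \cap \repp(K_r,r-1)$; if $M_{\sigma,1} = M$ then $M \notin \repp(K_r,r-1)$ (else $\sigma_{K_r}(M) \in \EKP$ would force $M_{\sigma,1} = \sigma_{K_r}(M)$ via minimality), and $\tau_{K_r}^{-1}(M) = \sigma_{K_r}^{-2}(M) = \sigma_{K_r}^{-1}(\sigma_{K_r}^{-1}(M_{\sigma,1}))$, which is the smallest vertex of $\cC$ lying in $\repp(K_r,r-1)$, giving (3).

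**The heart: part (4).** I would argue by induction on $i \geq 2$, the point being to track the restriction $(M_{[i]})|_\fv$ along the directed path $M_{[1]} \to M_{[2]} \to \cdots$ realizing the quasi-composition series. Recall from the setup that each irreducible morphism $M_{[i-1]} \hookrightarrow M_{[i]}$ is injective with cokernel $\tau_{K_r}^{-(i-1)}(M_{[1]})$, so we have short exact sequences
\[ (0) \lra M_{[i-1]} \lra M_{[i]} \lra \tau_{K_r}^{-(i-1)}(M_{[1]}) \lra (0). \]
Here $M_{[1]} = M = M_{\cC,1} \in \EKP(K_r)$, and by Proposition~\ref{CatRep1}(1) the class $\EKP(K_r) = \repp(K_r,1)$ is closed under $\tau_{K_r}^{-1}$, so every term lies in $\EKP(K_r)$; moreover $\tau_{K_r}^{-(i-1)}(M) = \tau_{K_r}^{-1}(M_{[i-2]}/M_{[i-3]})$-type factors, but more to the point each cokernel $\tau_{K_r}^{-(i-1)}(M)$ for $i \geq 2$ is quasi-simple of quasi-length $1$. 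The restriction functor $(-)|_\fv$ is exact (pullback along $\KK.\fv \hookrightarrow \KK K_r$), so restricting the sequence above to $\fv \in \Gr_d(A_r)$ gives
\[ (0) \lra (M_{[i-1]})|_\fv \lra (M_{[i]})|_\fv \lra \big(\tau_{K_r}^{-(i-1)}(M)\big)|_\fv \lra (0). \]
Now the crucial input is that for $i \geq 2$ the quasi-simple cokernel $\tau_{K_r}^{-(i-1)}(M) = \tau_{K_r}^{-1}(\tau_{K_r}^{-(i-2)}(M))$; writing $N_{i-1} := \tau_{K_r}^{-(i-1)}(M)$ one needs that $N_{i-1}|_\fv$ is \emph{projective} over $\KK.\fv$, i.e. $N_{i-1} \in \repp(K_r,d)$ when restricted at $\fv$ — but this is exactly part of the content: these are the $\tau_{K_r}^{-n}(M_{\cC,d})$ once $n$ is large enough. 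I would instead use the cleaner observation that $N_{i-1} = \tau_{K_r}^{-1}(\text{something in }\EKP)$, and by Corollary~\ref{MinType3} combined with the fact (from Lemma~\ref{Strat3}) that $\Delta$ strictly increases under $\tau_{K_r}^{-1}$, such a module either is projective or splits off copies of $P_0(d), P_1(d)$ under restriction exactly as in Theorem~\ref{Fam5}(4). Concretely: since $N_{i-1} = \tau_{K_r}^{-1}(M_{[i-1]}/M_{[i-2]}) \cong \sigma_{K_r}^{-1}(\sigma_{K_r}^{-1}(\cdots))$ lies in $\repp(K_r,r-1) \subseteq \repp(K_r,d)$ by Theorem~\ref{CatRep3}(3) once it is of the form $\sigma_{K_r}^{-1}(\text{EKP module})$, Theorem~\ref{Fam5}(4) applies to give
\[ N_{i-1}|_\fv = \big(\tau_{K_r}^{-1}(M_{[i-1]})\big)\big|_\fv \cong \Delta_{\tau_{K_r}^{-1}(M_{[i-1]})}(d)\, P_0(d) \ \oplus\ \big(\dim_\KK \tau_{K_r}^{-1}(M_{[i-1]})_1\big)\, P_1(d), \]
using that $\tau_{K_r}^{-(i-1)}(M) = \tau_{K_r}^{-1}(M_{[i-1]}/M_{[i-2]})$ and $(M_{[i-1]}/M_{[i-2]})$ has the same dimension vector as is needed — wait, I should be careful: the statement has $\tau_{K_r}^{-1}(M_{[i-1]})$, not $\tau_{K_r}^{-1}$ of the quasi-simple factor. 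So the short exact sequence I actually want is the one where the cokernel, restricted, is a sum of $P_0(d)$'s and $P_1(d)$'s, and I track it via the dimension vector of $\tau_{K_r}^{-1}(M_{[i-1]})$.

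**Finishing and the main obstacle.** Given the restricted short exact sequence $(0) \to (M_{[i-1]})|_\fv \to (M_{[i]})|_\fv \to C \to (0)$ with $C$ a direct sum of $P_0(d)$'s and $P_1(d)$'s — hence projective over $\KK.\fv$ — the sequence splits, so $(M_{[i]})|_\fv \cong (M_{[i-1]})|_\fv \oplus C$. For the base case $i = 2$: $(M_{[1]})|_\fv = M|_\fv$ and $C = (\tau_{K_r}^{-1}(M_{[1]}))|_\fv = \Delta_{\tau_{K_r}^{-1}(M)}(d) P_0(d) \oplus \dim_\KK \tau_{K_r}^{-1}(M)_1\, P_1(d)$, which is precisely the claimed formula with $M_{[0]} := (0)$ read as $M_{[i-1]} = M_{[1]} = M$, i.e. $i-1 = 1$. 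For the inductive step, by induction $(M_{[i-1]})|_\fv \cong M|_\fv \oplus \Delta_{\tau_{K_r}^{-1}(M_{[i-2]})}(d) P_0(d) \oplus \dim_\KK \tau_{K_r}^{-1}(M_{[i-2]})_1\, P_1(d)$, and adding $C$ — which contributes $\Delta_{\tau_{K_r}^{-(i-1)}(M)}(d)$ copies of $P_0(d)$ and $\dim_\KK \tau_{K_r}^{-(i-1)}(M)_1$ copies of $P_1(d)$ — I need the identity
\[ \Delta_{\tau_{K_r}^{-1}(M_{[i-1]})}(d) = \Delta_{\tau_{K_r}^{-1}(M_{[i-2]})}(d) + \Delta_{\tau_{K_r}^{-(i-1)}(M)}(d), \qquad \dim_\KK \tau_{K_r}^{-1}(M_{[i-1]})_1 = \dim_\KK \tau_{K_r}^{-1}(M_{[i-2]})_1 + \dim_\KK \tau_{K_r}^{-(i-1)}(M)_1, \]
which follows by applying $\udim$ and $\Phi_r^{-1}$ (which is linear!) to the short exact sequence $(0) \to M_{[i-2]} \to M_{[i-1]} \to \tau_{K_r}^{-(i-2)}(M) \to (0)$ — additivity of dimension vectors plus linearity of $\tau_{K_r}^{-1}$ on dimension vectors for modules without injective summands (none of these preprojective-free $\EKP$ modules have injective summands). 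The main obstacle is precisely pinning down that the relevant cokernel restricts to a projective $\KK.\fv$-module so the sequence splits; this requires knowing $\tau_{K_r}^{-(i-1)}(M) \in \repp(K_r,d)$ for $i \geq 2$, which by Theorem~\ref{CatRep3}(3) follows once we check $\tau_{K_r}^{-(i-1)}(M) = \sigma_{K_r}^{-1}(\sigma_{K_r}^{-1}(\tau_{K_r}^{-(i-2)}(M)))$ with $\tau_{K_r}^{-(i-2)}(M) \in \EKP(K_r)$ (true for $i-2 \geq 0$ since $M \in \EKP$ and $\EKP$ is $\tau_{K_r}^{-1}$-closed) and $\sigma_{K_r}^{-1}$ of an $\EKP$ module lands in $\repp(K_r,r-1) \subseteq \repp(K_r,d)$ — so actually only $\tau_{K_r}^{-1}(M_{[i-1]})$-type restrictions, not the full strength, are needed, and the bookkeeping closes.
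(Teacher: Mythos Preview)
Your arguments for (1)--(3) are correct and essentially match the paper's, though your phrasing for (1) via ``parity'' obscures the simple point: $\sigma_{K_r}^2(M)=\tau_{K_r}(M)\notin\EKP(K_r)$ (by definition of $M_{\cC,1}$), so $M_{\sigma,1}$ can be at most one $\sigma_{K_r}$-step behind $M$.

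For (4) your inductive argument is correct, but the paper takes a more direct route that avoids both the induction and the additivity check. In a $\ZZ[A_\infty]$ component there are \emph{two} canonical short exact sequences for $M_{[i]}$: the one you use,
\[
(0)\lra M_{[i-1]}\lra M_{[i]}\lra \tau_{K_r}^{-(i-1)}(M)\lra (0),
\]
with quasi-simple cokernel, and the dual one,
\[
(0)\lra M\lra M_{[i]}\lra \tau_{K_r}^{-1}(M_{[i-1]})\lra (0),
\]
with quasi-socle kernel (here $\tau_{K_r}^{-1}(M_{[i-1]})$ is the module of quasi-length $i\!-\!1$ with quasi-socle $\tau_{K_r}^{-1}(M)$). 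The paper uses the second sequence: parts (2) and (3) together with Proposition~\ref{CatRep1}(3) give $\tau_{K_r}^{-1}(M_{[i-1]})\in\repp(K_r,r\!-\!1)\subseteq\repp(K_r,d)$ directly (since $M_{[i-1]}\in(M\!\rightarrow)$ implies $\tau_{K_r}^{-1}(M_{[i-1]})\in(\tau_{K_r}^{-1}(M)\!\rightarrow)\subseteq\repp(K_r,r\!-\!1)$), so its restriction to $\fv$ is projective, the sequence splits over $\KK.\fv$, and the claimed formula falls out in one step. Your approach trades this single observation for an induction plus the exactness of $\tau_{K_r}^{-1}$ on regulars to verify additivity of the $P_0(d)$- and $P_1(d)$-multiplicities; both work, but recognizing the second sequence is what makes the statement's appearance of $\tau_{K_r}^{-1}(M_{[i-1]})$ (rather than the quasi-simple factor) natural rather than something to be reconciled after the fact.
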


\begin{proof} (1) Since $\sigma^2_{K_r}(M) \not\in \EKP(K_r)\!=\!\repp(K_r,1)$, we have ${M}_{\sigma,1} \in \{M,\sigma_{K_r}(M)\}$.

(2) If $M_{\sigma,1}\!=\!\sigma_{K_r}(M)$, then Theorem \ref{CatRep3} implies $M \in \repp(K_r,r\!-\!1)$. By Proposition \ref{Strat1}, the same is true for every element of $(M\! \rightarrow)\!=\!
\cC \cap \EKP(K_r)$. Consequently, $(M\!\rightarrow) \subseteq \cC\cap \repp(K_r,r\!-\!1) \subseteq \cC\cap \EKP(K_r)\!=\!(M\!\rightarrow)$.

(3) If $M\! =\! M_{\sigma,1}$, then $\sigma_{K_r}(M)\not \in \EKP(K_r)$ and Theorem \ref{CatRep3}(2) shows that $M \not \in \repp(K_r,r\!-\!1)$. On the other hand, Theorem 
\ref{CatRep3}(3) implies  $\tau^{-1}_{K_r}(M) \cong \sigma_{K_r}^{-2}(M) \in \repp(K_r,r\!-\!1)$, so that $\tau^{-1}_{K_r}(M)\!=\!M_{\cC,r-1}$. Consequently, $(\tau^{-1}_{K_r}(M)\!\rightarrow)\!=\! 
\cC\cap\repp(K_r,r\!-\!1)$.

(4) Let $i \in \NN_{\geq 2}$, $1\!\leq\!d\!<\! r$, and $\fv \in \Gr_d(A_r)$. General theory provides a short exact sequence
\[(0) \lra  M \lra M_{[i]} \lra \tau^{-1}_{K_r} (M_{[i-1]}) \lra (0).\]
By virtue of (2), (3) and Proposition \ref{CatRep1}(3), we have  $\tau^{-1}_{K_r}(M_{[i-1]}) \in \repp(K_r,d)$, so that
\[\tau^{-1}_{K_r}(M_{[i-1]})|_{\fv} \cong \Delta_{\tau^{-1}_{K_r} (M_{[i-1]})}(d)P_0(d)\!\oplus\! \dim_\KK \tau^{-1}_{K_r}(M_{[i-1]})_1 P_1(d)\]
is projective. In particular, the above short exact sequence splits upon restriction to $\KK.\fv$. \end{proof}

\bigskip

\section{Homogeneity and uniformity for representations and Steiner bundles}\label{S:UniHom}
Throughout this section, we assume that $r\!\ge\!3$. We will employ AR-theory to obtain results on indecomposable modules and Steiner bundles, whose restrictions to $K_2$ and $\PP^1$ all have the 
same isomorphism type, respectively. We shall see that all preprojective representations as well as ''almost all`` regular indecomposable $\EKP$-representations are uniform in an even stronger sense. As a result, 
uniform representations belonging to $\EKP(K_r)$ and Steiner bundles are usually not homogeneous.

\bigskip

\subsection{Homogeneous modules and bundles} \label{S:HomM}
Given $d \in \{1,\ldots, r\}$, a linear map $\alpha \in \Inj_\KK(A_d,A_r)$ gives rise to an injective morphism $\hat{\alpha} : \PP(A_d) \lra \PP(A_r)$. There results the inverse image functor 
\[ \hat{\alpha}^\ast : \Coh(\PP(A_r)) \lra \Coh(\PP(A_d)),\]
which is known to be right exact, cf.\ \cite[(7.11)]{GW}. The additive functor $\hat{\alpha}^\ast$ commutes with direct sums and the map that sends a fraction $\frac{f}{g}$ of homogeneous polynomials to 
$\frac{f\circ\alpha}{g\circ\alpha}$ induces isomorphisms $\hat{\alpha}^\ast(\cO_{\PP^{r-1}}(j)) \cong \cO_{\PP^{d-1}}(j)$ for $j \in \ZZ$.\footnote{Here we interpret $f$ and $g$ as elements of the algebra $\KK[A_r]\cong 
S(A_r^\ast)$ of polynomial functions on $A_r$.} Moreover, for $\gamma^\ast \in A_r^\ast$, the morphism
\[ \gamma^\ast\cdot : \cO_{\PP^{r-1}}(j) \lra \cO_{\PP^{r-1}}(j\!+\!1) \ \ ; \ \ f \mapsto \gamma^\ast f\]
is being sent to
\[ \hat{\alpha}^\ast(\gamma^\ast\cdot) : \cO_{\PP^{d-1}}(j) \lra \cO_{\PP^{d-1}}(j\!+\!1) \ \ ; \ \ g \mapsto (\gamma^\ast\!\circ\!\alpha)g.\] 
For ease of notation, we will write $\TilTheta\!:=\!\TilTheta_1$ and $\TilTheta_M\!:=\!\TilTheta_{M,d}$.

\bigskip

\begin{Lem} \label{HomM1} Let $M \in \rep(K_r)$. For $\alpha \in \Inj_\KK(A_d,A_r)$ we have
\[ \TilTheta(\alpha^\ast(M)) \cong \hat{\alpha}^\ast(\TilTheta(M)).\] \end{Lem}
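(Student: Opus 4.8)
The plan is to exhibit an explicit isomorphism between the two cokernel sheaves by comparing their defining short exact sequences. Recall that $\TilTheta(\alpha^\ast(M)) = \TilTheta_1(\alpha^\ast(M))$ is the cokernel of the morphism $\TilTheta_{\alpha^\ast(M),1} : \widetilde{(\alpha^\ast(M))_1} \lra \widetilde{(\alpha^\ast(M))_2}$ on $\PP(A_d)$, and since $(\alpha^\ast(M))_i = M_i$ for $i \in \{1,2\}$, the source and target are $M_1 \otimes_\KK \cU_{(d,1)}$ and $M_2 \otimes_\KK \cO_{\PP(A_d)}$, i.e.\ $M_1\otimes_\KK\cO_{\PP^{d-1}}(-1)$ and $M_2\otimes_\KK\cO_{\PP^{d-1}}$ using $\cU_{(d,1)}\cong\cO_{\PP(A_d)}(-1)$. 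On the other side, $\hat{\alpha}^\ast(\TilTheta(M))$ is obtained by applying the right exact functor $\hat{\alpha}^\ast$ to the defining sequence
\[ (0) \lra M_1\otimes_\KK\cO_{\PP^{r-1}}(-1) \stackrel{\TilTheta_{M,1}}{\lra} M_2\otimes_\KK\cO_{\PP^{r-1}} \lra \TilTheta(M) \lra (0) \]
on $\PP(A_r)$, which yields the exact sequence
\[ \hat{\alpha}^\ast(M_1\otimes_\KK\cO_{\PP^{r-1}}(-1)) \stackrel{\hat{\alpha}^\ast(\TilTheta_{M,1})}{\lra} \hat{\alpha}^\ast(M_2\otimes_\KK\cO_{\PP^{r-1}}) \lra \hat{\alpha}^\ast(\TilTheta(M)) \lra (0). \]

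First I would use that $\hat{\alpha}^\ast$ is additive, commutes with direct sums, and satisfies $\hat{\alpha}^\ast(\cO_{\PP^{r-1}}(j))\cong\cO_{\PP^{d-1}}(j)$ (all recalled just before the Lemma), to produce canonical isomorphisms $\hat{\alpha}^\ast(M_1\otimes_\KK\cO_{\PP^{r-1}}(-1))\cong M_1\otimes_\KK\cO_{\PP^{d-1}}(-1)$ and $\hat{\alpha}^\ast(M_2\otimes_\KK\cO_{\PP^{r-1}})\cong M_2\otimes_\KK\cO_{\PP^{d-1}}$; under these identifications the source and target of $\hat{\alpha}^\ast(\TilTheta_{M,1})$ agree with those of $\TilTheta_{\alpha^\ast(M),1}$. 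Then the core computation is to check that the two morphisms themselves agree, i.e.\ that the square
\[ \begin{tikzcd} \hat{\alpha}^\ast(M_1\otimes_\KK\cO_{\PP^{r-1}}(-1)) \arrow[r,"\hat{\alpha}^\ast(\TilTheta_{M,1})"] \arrow[d,"\cong"] & \hat{\alpha}^\ast(M_2\otimes_\KK\cO_{\PP^{r-1}}) \arrow[d,"\cong"]\\ M_1\otimes_\KK\cO_{\PP^{d-1}}(-1) \arrow[r,"\TilTheta_{\alpha^\ast(M),1}"] & M_2\otimes_\KK\cO_{\PP^{d-1}} \end{tikzcd} \]
commutes. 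For this I would write $\TilTheta_{M,1}$ as $\sum_{i=1}^r (\gamma_i)_M\otimes(\gamma_i^\ast\cdot)$, apply $\hat{\alpha}^\ast$ term by term using the recalled formula $\hat{\alpha}^\ast(\gamma^\ast\cdot)$ being multiplication by $\gamma^\ast\circ\alpha$, and then expand $\gamma_i^\ast\circ\alpha = \sum_{j=1}^d \lambda_{ij}\delta_j^\ast$ (where $\{\delta_1,\dots,\delta_d\}$ is the chosen basis of $A_d$ with dual basis $\{\delta_j^\ast\}$, and $\alpha(\delta_j)=\sum_i\lambda_{ij}\gamma_i$); regrouping the double sum and using $\sum_i\lambda_{ij}(\gamma_i)_M = \psi_M(\alpha(\delta_j)\otimes -) = (\delta_j)_{\alpha^\ast(M)}$ identifies the result with $\sum_{j=1}^d(\delta_j)_{\alpha^\ast(M)}\otimes(\delta_j^\ast\cdot) = \TilTheta_{\alpha^\ast(M),1}$. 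Since cokernels are determined up to unique isomorphism by the morphism, a commuting square of this form with vertical isomorphisms induces the desired isomorphism $\hat{\alpha}^\ast(\TilTheta(M))\cong\TilTheta(\alpha^\ast(M))$ on cokernels.

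The main obstacle I expect is purely bookkeeping: making the identification $\hat{\alpha}^\ast(\cO_{\PP^{r-1}}(j))\cong\cO_{\PP^{d-1}}(j)$ compatible with the multiplication maps $\gamma^\ast\cdot$ in a way that is genuinely canonical (natural in $j$ and in the linear forms), so that the term-by-term application of $\hat\alpha^\ast$ really does glue into an equality of morphisms rather than merely an equality up to scalars on each summand. The excerpt already records exactly the needed naturality (the behaviour of $\hat{\alpha}^\ast$ on $\cO_{\PP^{r-1}}(j)$ and on $\gamma^\ast\cdot$), so once those are invoked the verification is a short linear-algebra manipulation with the coefficient matrix of $\alpha$; no characteristic hypothesis or deeper geometry enters. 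Alternatively, one could phrase the whole argument in the $\widetilde{M}'_i$/$\Mor(U,-)$ description of $\TilTheta'_{M,1}$ given in Section~\ref{S:Fun}, where $\hat\alpha^\ast$ acts on a section $s:U\to A_r\otimes_\KK M_1$ by precomposition with $\hat\alpha$ and postcomposition with $\alpha\otimes\id_{M_1}$, making the compatibility $\TilTheta'_{\alpha^\ast(M),1}\circ\hat\alpha^\ast = \hat\alpha^\ast\circ\TilTheta'_{M,1}$ immediate from $\psi_{\alpha^\ast(M)} = \psi_M\circ(\alpha\otimes\id_{M_1})$; I would likely present this cleaner second version.
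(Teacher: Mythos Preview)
Your proposal is correct and follows essentially the same approach as the paper: identify $\hat{\alpha}^\ast(\TilTheta_{M,1})$ with $\TilTheta_{\alpha^\ast(M),1}$ under the canonical isomorphisms and then pass to cokernels using right exactness of $\hat{\alpha}^\ast$. The only difference is cosmetic: instead of working with an arbitrary basis and a coefficient matrix $(\lambda_{ij})$, the paper extends $\alpha(\gamma_1),\ldots,\alpha(\gamma_d)$ to a basis $\{\delta_1,\ldots,\delta_r\}$ of $A_r$, so that $\delta_i^\ast\circ\alpha$ equals $\gamma_i^\ast$ for $i\le d$ and $0$ otherwise, which collapses your double-sum regrouping into a single line.
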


\begin{proof} Let $\{\gamma_1,\ldots, \gamma_d\}$ be a basis for $A_d$ and put $\delta_i\!:=\!\alpha(\gamma_i)$. We choose $\delta_{d+1}, \ldots, \delta_r \in A_r$ such that $\{\delta_1,\ldots, \delta_r\}$ 
is a basis for $A_r$. For the corresponding dual bases we obtain 
\[ \delta_i^\ast\circ \alpha = \left\{ \begin{array}{cc} \gamma_i^\ast & i \in \{1,\ldots, d\} \\ 0 & \text{else.} \end{array}\right.\]
We compute the morphism $\hat{\alpha}^\ast(\TilTheta_M)$. Let $U \subseteq \PP^{d-1}$ be open, $g \in \cO_{\PP^{d-1}}(-1)(U)$ and $m \in M$. In view of our foregoing observations we obtain
\begin{eqnarray*}
\hat{\alpha}^\ast(\TilTheta_M)(m\otimes g) & = & \sum_{i=1}^r M(\delta_i)(m)\otimes \hat{\alpha}^\ast(\delta_i^\ast\cdot)(g) = \sum_{i=1}^d M(\delta_i)(m)\otimes (\delta_i^\ast\!\circ\alpha)g \\
& = & \sum_{i=1}^d M(\alpha(\gamma_i))(m)\otimes\gamma_i^\ast g  = \TilTheta_{\alpha^\ast(M)}(m\otimes g),
\end{eqnarray*}
so that 
\[ \hat{\alpha}^\ast(\TilTheta_M) = \TilTheta_{\alpha^\ast(M)}.\]
In view of the right exactness of $\hat{\alpha}^\ast$, this implies
\[ \hat{\alpha}^\ast(\TilTheta(M)) = \hat{\alpha}^\ast(\coker\TilTheta_M) \cong \coker \hat{\alpha}^\ast(\TilTheta_M) = \coker \TilTheta_{\alpha^\ast(M)} = \TilTheta(\alpha^\ast(M)),\]
as desired. \end{proof}

\bigskip
\noindent
Recall that the general linear group $\GL(A_r)$ naturally acts on $A_r$ and $\KK K_r$, on the latter via automorphisms of associative $\KK$-algebras. For every $M \in \modd \KK K_r$ and $g \in \GL(A_r)$, the twisted 
module $M^{(g)}$ has underlying $\KK$-space $M$ and action given by
\[ a\dact m := (g^{-1}(a)).m \ \ \ \ \forall \ g \in \GL_r(A_r), a \in \KK K_r, m \in M.\]
In this fashion, $\GL(A_r)$ acts on $\modd \KK K_r$ and $\rep(K_r)$ via auto-equivalences. For $g \in \GL_r(\KK)\cong\GL(A_r)$ with $g^{-1}\!=\!(a_{ij})_{1\le i,j\le r}$ we obtain
\[ M^{(g)}(\gamma_j) = \sum_{i=1}^ra_{ij}M(\gamma_i) \ \ \ \ \ \ \ \ 1\!\le\!j\!\le\!r.\]
Given $M \in \rep(K_r)$, the stabilizer
\[ \GL(A_r)_M := \{ g \in \GL_r(A_r) \ ;  \ M^{(g)} \cong M\}\]
is a closed subgroup \cite[(2.1)]{Fa11}, and we say that $M$ is {\it homogeneous}, provided $\GL(A_r)_M\!=\!\GL(A_r)$.

The group $\GL_r(A_r)\!=\!\Inj_\KK(A_r,A_r)$ acts on $\PP^{r-1}\!=\!\PP(A_r)$ via automorphisms and hence on $\Coh(\PP^{r-1})$: Given $g \in \GL(A_r)$ and $\cF \in \Coh(\PP^{r-1})$, we define
\[ (g^\ast.\cF)(U) := \cF(g^{-1}.U)\]
for every open subset $U \subseteq \PP^{r-1}$. A coherent sheaf $\cF \in \Coh(\PP^{r-1})$ is called {\it homogeneous}, provided $g^\ast.\cF \cong \cF$ for every $g \in \GL(A_r)$.

\bigskip

\begin{Cor}\label{HomM2} Let $M \in \EKP(K_r)$. Then $M$ is homogeneous if and only if $\TilTheta(M)$ is homogeneous.\footnote{This result can readily be seen to hold for $M \in \repp(K_r,d)$ and $\TilTheta_d(M) \in \Vect(\Gr_d(A_r))$ as well.} \end{Cor}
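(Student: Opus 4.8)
The plan is to show that the functor $\TilTheta = \TilTheta_1$ is $\GL(A_r)$-equivariant in the appropriate sense, so that a twist $M^{(g)}$ on the module side corresponds precisely to the twist $(g^{-1})^\ast.\TilTheta(M)$ on the sheaf side. Once this compatibility is established, the equivalence $\TilTheta : \repp(K_r,1) = \EKP(K_r) \lra \StVect(\PP^{r-1})$ from Theorem \ref{Fun3}(2) immediately transports the statement $M^{(g)} \cong M$ to the statement $(g^{-1})^\ast.\TilTheta(M) \cong \TilTheta(M)$, and varying $g$ over all of $\GL(A_r)$ yields the claimed equivalence of homogeneity.

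**Key steps.** First I would make precise the relation between the $\GL(A_r)$-action on $\modd \KK K_r$ by twisting and the pull-back action: for $g \in \GL(A_r)$ one has $M^{(g)} = (g^{-1})^\ast(M)$ in the notation of Section \ref{S:Rest}, where $g^{-1}$ is viewed as an element of $\Inj_\KK(A_r,A_r) = \GL(A_r)$. Second, I would invoke Lemma \ref{HomM1} in the special case $d = r$ and $\alpha = g^{-1} \in \GL(A_r) \subseteq \Inj_\KK(A_r,A_r)$: this gives $\TilTheta((g^{-1})^\ast(M)) \cong \widehat{g^{-1}}^{\,\ast}(\TilTheta(M))$, where $\widehat{g^{-1}} : \PP(A_r) \lra \PP(A_r)$ is the induced automorphism. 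Third, I would identify $\widehat{g^{-1}}^{\,\ast}(\TilTheta(M))$ with $g^\ast.\TilTheta(M)$ as defined just before the Corollary; this is a bookkeeping matter comparing the two descriptions of the $\GL(A_r)$-action on $\Coh(\PP^{r-1})$ (inverse-image along $\widehat{g^{-1}}$ versus the presheaf formula $(g^\ast.\cF)(U) = \cF(g^{-1}.U)$), and amounts to unwinding that $\widehat{g^{-1}}$ acts on points of $\PP^{r-1}$ the same way $g^{-1}$ does, since $\cU_{(r,1)} \cong \cO_{\PP^{r-1}}(-1)$ and the maps $\gamma^\ast\cdot$ transform correctly under $\hat\alpha^\ast$ as recorded in the paragraph preceding Lemma \ref{HomM1}. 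Combining, $\TilTheta(M^{(g)}) \cong g^\ast.\TilTheta(M)$ naturally in $g$. Finally, if $M$ is homogeneous, then for every $g$ we get $g^\ast.\TilTheta(M) \cong \TilTheta(M^{(g)}) \cong \TilTheta(M)$, so $\TilTheta(M)$ is homogeneous; conversely, if $\TilTheta(M)$ is homogeneous, then $\TilTheta(M^{(g)}) \cong \TilTheta(M)$ for all $g$, and since $\TilTheta$ restricted to $\EKP(K_r) = \repp(K_r,1)$ is an equivalence (Theorem \ref{Fun3}(2)), hence faithful and reflecting isomorphisms, we conclude $M^{(g)} \cong M$ for all $g$, i.e.\ $M$ is homogeneous.

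**Main obstacle.** The one genuine point requiring care is step three: reconciling the two notational conventions for the $\GL(A_r)$-action on sheaves — the inverse-image functor $\widehat{g^{-1}}^{\,\ast}$ arising from Lemma \ref{HomM1} versus the explicit presheaf formula $(g^\ast.\cF)(U) = \cF(g^{-1}.U)$ used in the statement — and making sure the inverses land on the correct side so that "$M^{(g)} \cong M$ for all $g$" matches "$g^\ast.\TilTheta(M) \cong \TilTheta(M)$ for all $g$". Since one is quantifying over the whole group $\GL(A_r)$, a sign error in the exponent is harmless for the final equivalence but must be tracked to write the naturality statement correctly. Everything else is a direct appeal to Lemma \ref{HomM1} and Theorem \ref{Fun3}(2), together with the observation (footnoted in the Corollary) that the identical argument works verbatim with $\TilTheta_d$ in place of $\TilTheta_1$ and $\repp(K_r,d)$ in place of $\EKP(K_r)$, using Lemma \ref{HomM1}'s analogue for $\Gr_d(A_r)$.
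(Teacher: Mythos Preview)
Your proposal is correct and follows essentially the same approach as the paper: the paper's one-line proof simply records the identifications $g^\ast.\cF = \widehat{g^{-1}}^{\,\ast}(\cF)$ and $M^{(g)} = (g^{-1})^\ast(M)$ and then appeals to Lemma~\ref{HomM1}. Your write-up is more explicit in spelling out the converse direction via the equivalence of Theorem~\ref{Fun3}(2), which the paper leaves implicit, but the argument is the same.
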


\begin{proof} In view of $g^\ast.\cF\!=\!\widehat{g^{-1}}^\ast(\cF)$ and $M^{(g)}\!=\!(g^{-1})^\ast(M)$, our assertion follows directly from Lemma \ref{HomM1}. \end{proof}

\bigskip

\begin{Remark} Homogeneous vector bundles of small rank are scarce: Suppose that $\Char(\KK)\!=\!0$ and let $\cF \in \StVect(\PP^{r-1})$ be an indecomposable homogeneous Steiner bundle of rank at most $2r\!-\!3$. 
In view of \cite[Thm.]{BaEl}, we have:
\begin{enumerate}
\item[(a)] $r\!=\!3$ and $\cF \in \{\cO_{\PP^2}, \cT_{\PP^{2}}(-1)\}, S^2 \cT_{\PP^2}(-2)$\}, or
\item[(b)] $r\!\geq\!4$ and $\cF \in \{ \cO_{\PP^{r-1}},\cT_{\PP^{r-1}}(-1)\}$.
\end{enumerate} \end{Remark}

\bigskip
\noindent
Let $\{\gamma^\ast_1,\ldots, \gamma^\ast_r\} \subseteq A_r^\ast$ be the dual basis of $\{\gamma_1,\ldots, \gamma_r\}$. For $M \in \rep(K_r)$ and $i \in \{1,\ldots, r\}$, we consider the linear map
\[ \widehat{\gamma}_i : A_r\!\otimes_\KK\!M_1 \lra M_1 \ \ ; \ \ a\otimes m \mapsto \gamma^\ast_i(a) m.\]
Then we have
\[ \sigma_{K_r}(M)(\gamma_i) = \widehat{\gamma}_i|_{\ker\psi_M}.\]

\bigskip

\begin{Lem} \label{HomM3} Let $M \in \rep(K_r)$.
\begin{enumerate}
\item We have $\sigma_{K_r}(M^{(g)})\cong \sigma_{K_r}(M)^{((g^{-1})^{\tr})}$ for all $g \in \GL(A_r)\cong \GL_r(\KK)$. 
\item $M$ is homogeneous if and only if $\sigma_{K_r}(M)$ is homogeneous. \end{enumerate} \end{Lem}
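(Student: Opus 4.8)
The plan is to prove part (1) by direct computation with the explicit description of $\sigma_{K_r}$, and then deduce part (2) as a formal consequence. The key observation is that twisting by $g \in \GL(A_r)$ is an \emph{auto-equivalence} of $\rep(K_r)$, so it preserves direct summands of type $S_1$ and $S_2$ (since these are the simple projective/injective and hence are fixed by auto-equivalences, as already noted in the paper for $P_0,S_2$ under $\GL(A_d)$-twists); thus it suffices to establish the isomorphism for $M$ without $S_1,S_2$-summands, where $\sigma_{K_r}$ restricts to an equivalence onto its image.

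For part (1), first I would fix $g \in \GL_r(\KK)$ with $g^{-1} = (a_{ij})$, so that $M^{(g)}(\gamma_j) = \sum_{i=1}^r a_{ij} M(\gamma_i)$. Using the interpretation $\psi_M : (M_1)^r \to M_2$, $(m_i) \mapsto \sum_i M(\gamma_i)(m_i)$, I would compute $\psi_{M^{(g)}}$ and observe that $\psi_{M^{(g)}} = \psi_M \circ (g^{-1})^{\oplus}$ for the appropriate block-scalar action of $g^{-1}$ on $(M_1)^r$ — more precisely, $\psi_{M^{(g)}}\big((m_j)_j\big) = \sum_j \sum_i a_{ij} M(\gamma_i)(m_j) = \psi_M\big(( \sum_j a_{ij} m_j)_i\big)$, i.e.\ $\psi_{M^{(g)}} = \psi_M \circ (g^{-1}\!\otimes\mathrm{id}_{M_1})$ under the identification $(M_1)^r \cong A_r \otimes_\KK M_1$. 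Hence $\ker \psi_{M^{(g)}} = (g^{-1}\!\otimes\mathrm{id}_{M_1})^{-1}(\ker\psi_M) = (g\!\otimes\mathrm{id}_{M_1})(\ker\psi_M)$. This already identifies $\sigma_{K_r}(M^{(g)})_1$ with $\ker\psi_M$ via the isomorphism induced by $g\!\otimes\mathrm{id}$, and $\sigma_{K_r}(M^{(g)})_2 = M_1 = \sigma_{K_r}(M)_2$. The remaining point is to check that under this identification the structure maps $[\sigma_{K_r}(M^{(g)})](\gamma_j) = \widehat{\gamma}_j|_{\ker\psi_{M^{(g)}}}$ correspond to $[\sigma_{K_r}(M)^{((g^{-1})^{\tr})}](\gamma_j) = \sum_i (g^{\tr})_{ij}\, \widehat{\gamma}_i|_{\ker\psi_M} = \sum_i g_{ji}\,\widehat{\gamma}_i|_{\ker\psi_M}$. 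This is a bookkeeping computation: for $x = \sum_k \gamma_k \otimes m_k \in \ker\psi_M$, we have $(g\!\otimes\mathrm{id})(x) = \sum_{i,k} g_{ik}\gamma_i \otimes m_k$, and applying $\widehat{\gamma}_j$ gives $\sum_k g_{jk} m_k = (\sum_i g_{ji}\widehat{\gamma}_i)(x)$, which is exactly the claimed match. So the pair of maps $(g\!\otimes\mathrm{id}_{M_1}|_{\ker\psi_M},\ \mathrm{id}_{M_1})$ is the desired isomorphism $\sigma_{K_r}(M^{(g)}) \xrightarrow{\sim} \sigma_{K_r}(M)^{((g^{-1})^{\tr})}$, functorially in $g$.

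For part (2), I would argue: if $M$ is homogeneous, then $M^{(g)} \cong M$ for all $g$, hence $\sigma_{K_r}(M^{(g)}) \cong \sigma_{K_r}(M)$, and by (1) $\sigma_{K_r}(M)^{((g^{-1})^{\tr})} \cong \sigma_{K_r}(M)$ for all $g$; since $g \mapsto (g^{-1})^{\tr}$ is an automorphism of $\GL(A_r)$, this says $\sigma_{K_r}(M)$ is homogeneous. Conversely, if $\sigma_{K_r}(M)$ is homogeneous, the same computation gives $\sigma_{K_r}(M^{(g)}) \cong \sigma_{K_r}(M)$ for all $g$; to conclude $M^{(g)} \cong M$ I would first reduce to the case that $M$ has no $S_1$ or $S_2$ summands (using that twisting permutes, in fact fixes, such summands and that $\sigma_{K_r}$ kills $S_2$ while $\sigma_{K_r}$ of $S_1$-summand contributions are handled via the $\rep_2(K_r)$ reduction), and then invoke that $\sigma_{K_r} : \rep_2(K_r) \to \rep_1(K_r)$ is an equivalence (from \cite[(VII.5.3)]{ASS06}, as recalled in the text), so that $\sigma_{K_r}(M^{(g)}) \cong \sigma_{K_r}(M)$ forces $M^{(g)} \cong M$.

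The main obstacle is the reduction to the no-simple-summands case in the converse direction: one must be careful that $\sigma_{K_r}$ is not fully faithful on all of $\rep(K_r)$ (it annihilates $S_2$), so the implication "$\sigma_{K_r}(X) \cong \sigma_{K_r}(Y) \Rightarrow X \cong Y$" requires knowing $X,Y$ lie in $\rep_2(K_r)$ and have no $S_1$ summand being mishandled. The clean way around this is to note that $M \mapsto M^{(g)}$ is an auto-equivalence of $\rep(K_r)$, hence commutes with the decomposition $M = S_1^a \oplus S_2^b \oplus M'$ with $M' \in \rep_1(K_r)\cap\rep_2(K_r)$, and fixes $S_1$ and $S_2$ up to isomorphism (they are the unique simple injective and simple projective); so homogeneity of $M$ is equivalent to homogeneity of $M'$, and likewise $\sigma_{K_r}(M) = \sigma_{K_r}(M') \oplus S_1^{?}$-type pieces — more carefully $\sigma_{K_r}(S_1) = P_0 = S_2$ need checking against conventions, but in any case the multiplicities $a,b$ are twist-invariant, reducing everything to $M'$ where $\sigma_{K_r}$ is an equivalence and part (1) closes the argument.
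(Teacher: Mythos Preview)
Your proof is correct and essentially identical to the paper's: for (1), both compute $\ker\psi_{M^{(g)}} = (g\otimes\id_{M_1})(\ker\psi_M)$ and verify that the pair $(g\otimes\id_{M_1},\,\id_{M_1})$ intertwines the structure maps with the $(g^{-1})^{\tr}$-twist (the paper phrases this via the ambient representation $\omega_{K_r}(M)=(A_r\otimes_\KK M_1,\,M_1,\,(\widehat{\gamma}_i))$, but the computation is the same). For (2) the paper simply says ``direct consequence of (1)''; your reduction to $\rep_2(K_r)$ by splitting off $S_2$-summands and invoking the equivalence $\sigma_{K_r}\colon\rep_2(K_r)\to\rep_1(K_r)$ is exactly the justification this glosses over. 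One small slip: $\sigma_{K_r}(S_1)$ has dimension vector $(r,1)$, not $(0,1)$, but this is immaterial since only $S_2$-summands need to be handled for the reduction.
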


\begin{proof} (1) For $g \in \GL(A_r)$ we have 
\[ g(\gamma_j) = \sum_{i=1}^r g_{ij}\gamma_i \ \ \ \ \ \ \ 1\!\le\!j\!\le\!r.\]
Direct computation shows that $\ker\psi_{M^{(g)}}\!=\!(g\otimes\id_{M_1})(\ker\psi_M)$.

Setting $\omega_{K_r}(M)\!:=\!(A_r\!\otimes_\KK\!M_1,M_1)$ and 
\[ \omega_{K_r}(M)(\gamma_i)\!=\!\widehat{\gamma_i},\]
we obtain
\begin{eqnarray*}
\omega_{K_r}(M^{(g)})(\gamma_\ell)((g\otimes\id_{M_1})(\gamma_j\otimes m)) & = & \widehat{\gamma_\ell}(\sum_{i=1}^rg_{ij}\gamma_i\otimes m) =  g_{\ell j}m \\
& = & (\sum_{i=1}^r g_{\ell i} \omega_{K_r}(M)(\gamma_i))(\gamma_j\otimes m) \\
& = & \omega_{K_r}(M)^{((g^{-1})^{\tr})}(\gamma_\ell)(\gamma_j\otimes m)
\end{eqnarray*}
for $m \in M_1$.  By the above, the subrepresentation $\sigma_{K_r}(M)\subseteq \omega_{K_r}(M)$ is $(g\otimes\id_{M_1})$-stable, and hence $(g\!\otimes\!\id_{M_1},\id_{M_1})$ induces an isomorphism 
$\sigma_{K_r}(M)^{((g^{-1})^{\tr})} \cong \sigma_{K_r}(M^{(g)})$.
 
(2) This is a direct consequence of (1).  \end{proof}

\bigskip

\subsection{Generalized uniform modules}\label{S:GUM}
Recall from Section \ref{S:Rest} that every $M \in \EKP(K_r)$ affords a generic decomposition $M_{\gen}\!=\!\bigoplus_{i\ge 0} n_i(M)P_i(2)$, with 
\[ O_M := \{ \fv \in \Gr_2(A_r) \ ; \ n_i(M,\fv)\!=\!n_i(M)\}\]
being a dense open subset of $\Gr_2(A_r)$. We say that $M \in \EKP(K_r)$ is {\it uniform}, provided $O_M\!=\!\Gr_2(A_r)$. In \cite[Chap.1,3.3.1]{OSS}, the authors provide a notion
that encompasses those of homogeneity and uniformity. In our context the corresponding property reads as follows:

\bigskip

\begin{Definition} Let $d \in \{1,\ldots, r\}$. We say that a representation $M \in \rep(K_r)$ is \textit{$d$-homogeneous}, provided $\alpha^\ast(M) \cong \beta^\ast(M)$ for all $\alpha,\beta \in \Inj_\KK(A_d,A_r)$. 
\end{Definition}

\bigskip
\noindent
Note that every $d$-homogeneous representation belongs to $\CR(K_r,d)$. (Here we set $\CR(K_r,r)\!:=\!\rep(K_r)$.) For $d\!=\!1$, we have in fact equality. Moreover, $M$ is $r$-homogeneous if and only if it is 
homogeneous, while the uniform $\EKP$-representations correspond to those that are $2$-homogeneous.\footnote{If $M \in \rep(K_r)$ is $2$-homogeneous, then $M \in \CR(K_r)$ has constant rank. The observations at
the end of Section \ref{S:Rest} show that $M$ has a generic decomposition $M_{\gen}\!=\!\bigoplus_{i\ge 0} n_i(M)P_i(2)\!\oplus\!\bigoplus_{j\ge\!0}m_j(M)I_j(2)$. Hence $M$ is $2$-homogeneous if and only if $
\alpha^\ast(M)\cong M_{\gen}$ for all $\alpha \in \Inj_\KK(A_2,A_r)$.}

\bigskip

\begin{Remarks} (1) If $M$ is $d$-homogeneous, then we either have $\cV(K_r,d)_M\!=\!\emptyset$ or $\cV(K_r,d)_M\!=\!\Gr_d(A_r)$. (Here $\cV(K_r,r)_M\!:=\!\emptyset$ in case $M$ is projective and $\cV(K_r,r)_M\!:=\!
\{A_r\}$ otherwise.) 

(2) If $\fv \not \in \cV(K_r,d)_M$, then
\[ M|_\fv \cong \Delta_M(d) P_0(d)\!\oplus\!(\dim_\KK M_1) P_1(d).\] 
Consequently, every $M \in \repp(K_r,d)$ is $d$-homogeneous for $1\!\le\!d\!\le\!r\!-\!1$. 

(3) If $M$ is homogeneous, then $M$ is $d$-homogeneous for all $d \in \{1,\ldots, r\}$. \end{Remarks}

\bigskip

\begin{Prop} \label{GUM1} Let $d \in \{1,\ldots, r\!-\!1\}$. Then there is a brick $M \in \rep(K_r,d)$ such that $M$ not $(d\!+\!1)$-homogeneous. \end{Prop}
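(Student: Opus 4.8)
The statement to prove is Proposition \ref{GUM1}: for $d \in \{1,\ldots,r\!-\!1\}$ there is a brick $M \in \repp(K_r,d)$ that is not $(d\!+\!1)$-homogeneous. The natural source of such $M$ is a \emph{regular} module of minimal type. Indeed, by Corollary \ref{MinType3}(2b) together with the Remark following Corollary \ref{MinType4}, every regular $M \in \repp(K_r,d)$ of minimal type is automatically a brick, so the brick requirement will come for free. So the plan is: first produce a regular module of minimal type in $\repp(K_r,d)$, then show it fails to be $(d\!+\!1)$-homogeneous.

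For the existence of a regular $M \in \repp(K_r,d)$ of minimal type, I would invoke Proposition \ref{Strat1}: every regular AR-component $\cC$ contains a uniquely determined quasi-simple $M_{\cC,d}$ whose successor cone is exactly $\cC \cap \repp(K_r,d)$. Combining this with the reasoning of Proposition \ref{Strat4}(2) — where it is shown that an indecomposable regular module of minimal type in $\repp(K_r,d)$ must be of the form $M_{\cC,d}$ — one gets that $M_{\cC,d}$ itself has minimal type, i.e. $\Delta_{M_{\cC,d}}(d) = d(r\!-\!d)$ (this follows because $M_{\cC,d}$ is the quasi-simple bottom of the cone, and Lemma \ref{Strat3}(2) forces $\Delta$ to strictly increase along $\tau^{-1}_{K_r}$-orbits, so the minimal value in the cone occurs exactly at $M_{\cC,d}$; that minimal value cannot be less than $d(r\!-\!d)$ by Corollary \ref{MinType3}(1a)/(2c), and a regular module cannot be projective, so it equals $d(r\!-\!d)$). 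Pick any regular component $\cC$ (there are plenty by Kac's Theorem \ref{Kac1}(3)) and set $M := M_{\cC,d}$. Then $M$ is a regular brick of minimal type in $\repp(K_r,d)$.

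Now I apply Corollary \ref{MinType4}: since $M \in \repp(K_r,d)$ is non-projective and of minimal type, $M \not\in \repp(K_r,d\!+\!1)$, i.e. $\cV(K_r,d\!+\!1)_M \neq \emptyset$. On the other hand, $M \in \repp(K_r,d)$ gives $\cV(K_r,d)_M = \emptyset$, and by Proposition \ref{CatRep1}(3) this does \emph{not} force $\cV(K_r,d\!+\!1)_M = \emptyset$ — but more importantly we need $\cV(K_r,d\!+\!1)_M \neq \Gr_{d+1}(A_r)$. For this, observe that any $\fv \in \Gr_{d+1}(A_r)$ contains some $\fw \in \Gr_d(A_r)$; since $\cV(K_r,d)_M = \emptyset$ means $\ker \psi_{M,\fw} = (0)$ for all such $\fw$, at least the restriction $M|_\fv$ still has $\psi_{M,\fw}$ injective for the sub-$d$-plane, so $M|_\fv$ has $P_1(d\!+\!1)$-multiplicity at least $\dim_\KK M_1$... actually the cleanest route: by Theorem \ref{Fam5} and Remark (1) following Corollary \ref{MinType4}'s proof, $\cV(K_r,d\!+\!1)_M$ is a proper closed subset because a \emph{generic} $\fv \in \Gr_{d+1}(A_r)$ has $\rk(\psi_{M,\fv}) = \min\{(d\!+\!1)\dim_\KK M_1,\ \dim_\KK M_2\}$; since $M$ has minimal type one checks $\dim_\KK M_2 = \Delta_M(d) + d\dim_\KK M_1 = d(r\!-\!d) + d\dim_\KK M_1$, and with $\dim_\KK M_1 \geq d\!+\!1$ (Corollary \ref{MinType3}(2c)) one gets $(d\!+\!1)\dim_\KK M_1 > \dim_\KK M_2$ exactly when... this needs care, but the point is that $\psi_{M,\fv}$ cannot be injective generically (else $M \in \repp(K_r,d\!+\!1)$, contradicting Corollary \ref{MinType4}) yet the rank is still generically maximal, so $\cV(K_r,d\!+\!1)_M$ is a \emph{proper} nonempty closed subset. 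Hence there exist $\fv \in \cV(K_r,d\!+\!1)_M$ and $\fv' \notin \cV(K_r,d\!+\!1)_M$; choosing $\alpha,\beta \in \Inj_\KK(A_{d+1},A_r)$ with $\im\alpha = \fv$, $\im\beta = \fv'$, Theorem \ref{Fam5} shows $\psi_{\beta^\ast(M)}$ is injective while $\psi_{\alpha^\ast(M)}$ is not, so $\alpha^\ast(M) \not\cong \beta^\ast(M)$ and $M$ is not $(d\!+\!1)$-homogeneous.

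\textbf{Main obstacle.} The delicate point is establishing that $\cV(K_r,d\!+\!1)_M$ is a \emph{proper} subset of $\Gr_{d+1}(A_r)$, rather than all of it. This is where I'd expect the real work: one must show $\rk(\psi_{M,\fv}) = (d\!+\!1)\dim_\KK M_1$ for \emph{some} $\fv$ when $\dim_\KK M_2 \geq (d\!+\!1)\dim_\KK M_1$, or more generally that the generic rank of $\psi_{M,\fv}$ over $\Gr_{d+1}(A_r)$ is as large as the dimension count permits. The cleanest argument is probably: $M$ restricted along a generic $(d\!+\!1)$-plane is governed by the open-sheet/semicontinuity machinery of Section \ref{S:MV} (or by an explicit rank computation using that $M \in \repp(K_r,d)$ already controls all $d$-subplanes), combined with the numerical constraint $\dim_\KK M_2 = d(r\!-\!d)+d\dim_\KK M_1$ and $\dim_\KK M_1 \geq d\!+\!1$; these force $\dim_\KK M_2 < (d\!+\!1)\dim_\KK M_1$ precisely when $d(r\!-\!d) < \dim_\KK M_1$, which — since $r\!-\!d \geq 1$ and $\dim_\KK M_1$ can be taken large by picking $\cC$ appropriately, or alternatively is automatically $\geq d\!+\!1$ — one arranges. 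If the numerics don't close in full generality, the fallback is to exhibit a concrete module (e.g. a Schwarzenberger module $M_\cS[m]$ with $m$ large, or one of the hyperplane-arrangement modules $M_\cH$ from Lemma \ref{Ha1}, which are of minimal type in $\EKP(K_r) = \repp(K_r,1)$ and, by Lemma \ref{Ha1}(2), not in $\repp(K_r,2)$) and verify directly via the explicit structure maps that some restriction is injective and another is not; the Remark after Corollary \ref{GCD6} already produces exactly such a phenomenon for Schwarzenberger modules when $d=1$, showing $O_M \subsetneq \Gr_2(A_r)$, which is precisely the failure of $2$-homogeneity.
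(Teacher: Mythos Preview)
Your proposal has several genuine gaps.

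\textbf{First gap: $M_{\cC,d}$ need not have minimal type.} You claim that for any regular component $\cC$ the module $M_{\cC,d}$ satisfies $\Delta_{M_{\cC,d}}(d)=d(r-d)$. Your reasoning conflates ``$\Delta(d)$ is minimal among vertices of the cone $(M_{\cC,d}\!\rightarrow)$'' with ``$\Delta(d)=d(r-d)$''. Proposition~\ref{Strat4}(2) only says the \emph{converse}: an indecomposable regular module of minimal type must equal some $M_{\cC,d}$; it does not say every $M_{\cC,d}$ is of minimal type. In fact quasi-simple modules in a regular component can have arbitrarily large dimension vectors, so $\Delta_{M_{\cC,d}}(d)$ can exceed $d(r-d)$. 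To get a regular brick of minimal type you must instead \emph{construct} one directly (e.g.\ via Proposition~\ref{CatRep4}(2) with a dimension vector satisfying $\Delta(d)=d(r-d)$ and $\dim V_1\ge d+1$).

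\textbf{Second gap: the case $d=r-1$ cannot be handled by rank varieties.} When $d=r-1$, being $(d+1)$-homogeneous is being homogeneous, and $\Gr_r(A_r)$ is a single point. A non-projective $M$ always has $\cV(K_r,r)_M=\{A_r\}$, so your dichotomy ``$\cV(K_r,d+1)_M$ is a proper nonempty subset'' is impossible here. The paper handles this case separately: it takes the Schwarzenberger module $M_\cS[r+2]\in\EKP(K_r)$, which is a regular brick and inhomogeneous by Corollary~\ref{GCD6} and its Remark, and then applies $\sigma_{K_r}^{-1}$ to land in $\repp(K_r,r-1)$ via Theorem~\ref{CatRep3}, with inhomogeneity preserved by Lemma~\ref{HomM3}.

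\textbf{Third gap: for $d\le r-2$ you never establish that $\cV(K_r,d+1)_M$ is proper.} You correctly identify this as the main obstacle, but neither of your suggested arguments works. Merely knowing $M\in\repp(K_r,d)$ does not force $\psi_{M,\fv}$ to be injective for any $\fv\in\Gr_{d+1}(A_r)$; and your Schwarzenberger fallback only produces modules in $\repp(K_r,1)$, not $\repp(K_r,d)$ for $d\ge 2$. The paper resolves this differently: rather than starting from a fixed $M$ and then trying to find a good $\fv$, it fixes the dimension vector $(2d,d(r+d))$ (so that $\Delta(d)=d(r-d)$ and $\Delta(d+1)=d(r-d-2)\ge 0$), fixes $\fv\in\Gr_{d+1}(A_r)$, and observes that both $\{\psi\,:\,\psi_\fv\text{ injective}\}$ and $\repp(K_r,d)\cap\rep(K_r;V_1,V_2)$ are nonempty open subsets of the irreducible variety $\rep(K_r;V_1,V_2)$ (the latter by Proposition~\ref{CatRep4}). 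Their intersection therefore contains some $M$, which by construction has $\fv\notin\cV(K_r,d+1)_M$; Corollary~\ref{MinType4} gives $\cV(K_r,d+1)_M\neq\emptyset$, and Corollary~\ref{MinType3}(2b,2d) makes $M$ a brick.
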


\begin{proof} We first assume that $d\!\le\!r\!-\!2$ and pick a pair $(V_1,V_2)$ of $\KK$-vector spaces such that $\udim (V_1,V_2)\!=\!(2d,d(r\!+\!d))$. Then we have
\begin{enumerate}
\item[(a)] $\Delta_{(V_1,V_2)}(d)\!=\!d(r\!-\!d)$, and
\item[(b)] $\Delta_{(V_1,V_2)}(d\!+\!1)\!\ge\!0$. \end{enumerate}
We consider $\rep(K_r;V_1,V_2)\!=\!\Hom_\KK(A_r\!\otimes_\KK\!V_1,V_2)$. Given $\fv \in \Gr_{d+1}(A_r)$, the restriction map
\[ \res_\fv : \Hom_\KK(A_r\!\otimes_\KK\!V_1,V_2) \lra \Hom_\KK(\fv\!\otimes_\KK\!V_1,V_2) \  \  ;  \ \ \psi \mapsto \psi|_{\fv\otimes_\KK V_1}\]
is linear and hence a morphism. Consequently,
\[ \cO_{\fv} := \res_\fv^{-1}(\Inj_\KK(\fv\!\otimes_\KK\!V_1,V_2))\]
is an open subset of $\rep(K_r;V_1,V_2)$, which, by virtue of (b), is not empty. In view of (a), Proposition \ref{CatRep4} implies $\repp(K_r,d)\cap\cO_\fv\ne \emptyset$. Theorem \ref{Fam5} thus provides $M \in \repp(K_r,d)
\cap\rep(K_r;V_1,V_2)$ such that $\fv \not \in \cV(K_r,d\!+\!1)_M$. As $M \in \repp(K_r,d)$ has minimal type, Corollary \ref{MinType3}(2b,2d) and Corollary \ref{MinType4} ensure that $M \not \in \rep(K_r,d\!+\!1)$ is 
a brick. In particular, $\cV(K_r,d\!+\!1)_M\ne\emptyset$. 

Since $M$ belongs to $\repp(K_r,d)$, it is clearly $d$-homogeneous. In view of $\emptyset \ne \cV(K_r,d\!+\!1)_M\!\ne\!\Gr_{d+1}(A_r)$, there are  $\alpha, \beta\in \Inj_\KK(A_{d+1},A_r)$ such that 
$\alpha^\ast(M)$ is projective, while $\beta^\ast(M)$ is not. Consequently, $M$ is not $(d\!+\!1)$-homogeneous. 

If $d\!=\!r\!-\!1$, we pick the Schwarzenberger module $M_\cS[r\!+\!2] \in \EKP(K_r)$. This module has minimal type and Corollary \ref{MinType3}(2b,2d) implies that $M$ is a regular brick. In view of Corollary \ref{GCD6} 
and its succeeding remark $M_\cS[r\!+\!2]$ is not homogeneous. Thanks to Theorem \ref{CatRep3}, Lemma \ref{HomM3} and $\sigma_{K_r}$ being an auto-equivalence on regular modules, the representation 
$N\!:=\!\sigma_{K_r}^{-1}(M) \in \repp(K_r,r\!-\!1)$ is an $(r\!-\!1)$-homogeneous brick, which is not $r$-homogeneous. \end{proof}

\bigskip
\noindent
Every $\alpha \in \Inj_\KK(A_d,A_r)$ gives rise to a linear embedding $\hat{\alpha} : \PP^{d-1} \hookrightarrow \PP^{r-1}$. 

\bigskip

\begin{Definition} Let $d \in \{0,\ldots, r\!-\!1\}$. A vector bundle $\cF \in \Vect(\PP^{r-1})$ is said to be {\it $d$-homogeneous}, provided
\[ \hat{\alpha}^\ast(\cF) \cong \hat{\beta}^\ast(\cF)\]
for all $\alpha, \beta \in \Inj_\KK(A_{d+1},A_r)$. \end{Definition}

\bigskip
\noindent
It directly follows from Lemma \ref{HomM1} that $M \in \EKP(K_r)$ is $d$-homogeneous if and only if $\TilTheta(M)$ is $(d\!-\!1)$-homogeneous.

A closed subset $L \subseteq \PP^{r-1}$ is called a {\it line}, provided $L\!=\!\im\hat{\alpha}$ for some $\alpha \in \Inj_\KK(A_2,A_r)$. Let $\iota : L \hookrightarrow \PP^{r-1}$ be the inclusion. Given 
$\cF \in \Vect(\PP^{r-1})$, we put 
\[\cF|_L:= \iota^\ast(\cF).\] 
Since vector bundles on $\PP^1$ are homogeneous, we have
\[ \cF|_L \cong \hat{\alpha}^\ast(\cF) \ \ \ \ \ \ \text{whenever} \ \ L= \hat{\alpha}(\PP^1).\] 

\bigskip

\begin{Definition} A vector bundle $\cF \in \Vect(\PP^{r-1})$ is said to be {\it uniform}, if there are $a_i(\cF) \in \NN_0$ such that
\[ \cF|_L \cong \bigoplus_{i\ge 0} a_i(\cF)\cO_L(i)\]
for every line $L \subseteq \PP^{r-1}$. \end{Definition}

\bigskip
\noindent
It follows that $\cF$ is $(r\!-\!1)$-homogeneous if and only if it is homogeneous. Moreover, $\cF$ is uniform if and only if $\cF$ is $1$-homogeneous.  

\bigskip

\begin{Example} Since the projective representation $P_1(3)$ is homogeneous, so is  $M\!:=\!P_2(3) \cong \sigma_{K_r}^{-1}(P_1(3))$, cf.\ Lemma \ref{HomM3}. Then $M$ has dimension vector $(3,8)$ and Proposition 
\ref{CatRep1}(2) ensures that $M \in \repp(K_3,2)$. Consequently, $M$ is uniform with $M_{\gen}\!=\!\Delta_M(2)P_0(2)\!\oplus\!(\dim_\KK M_1)P_1(2)$. Note that $\cF\!:=\!\TilTheta_1(M)$ is an indecomposable Steiner 
bundle such that $c_1(\cF)\!=\!3$ and $\rk(\cF)\!=\!5$. 

Let $N \in \repp(K_3,2)$ be such that $\udim N\!=\!(3,8)$. Since $\Delta_N(2)\!=\!2$, (2b) and (2d) of Corollary \ref{MinType3} ensure that $N$ is a brick and not projective. Consequently, $\udim \sigma_{K_3}(N)\!=\!(1,3)$, 
so that $\sigma_{K_3}(N)\cong P_1(3)$ and $N\cong P_2(3)\!=\!M$. As a result, $\cF$ is isomorphic to the dual of the bundle constructed in \cite[Ex.3.1]{MMR21}. 

Corollary \ref{HomM2} and Lemma \ref{Fun1} imply that $\cF$ is homogeneous (and hence uniform) such that 
\[\cF|_L \cong 2\cO_L\!\oplus 3\cO_L(1)\] 
for every line $L \subseteq \PP^2$. \end{Example}

\bigskip
\noindent
According to \cite[Chap.1, 3.3.2]{OSS}, there exists, for every $d\!\in \{1,\ldots,\!r\!-\!2\}$, a holomorphic vector bundle $\cF_d$ on $\PP^{r-1}(\CC)$ such that $\cF_d$ is $(d\!-\!1)$-homogeneous but not $d$-homogeneous.
The following result, which slightly strengthens that of loc.\ cit., implies that such examples may also be found within the category of Steiner bundles on $\PP^{r-1}$. 

\bigskip

\begin{Cor} \label{GUM2} Given $d \in \{1,\ldots, r\!-\!1\}$, there exists a simple Steiner bundle $\cF_d$ that is $(d\!-\!1)$-homogeneous, but not $d$-homogeneous. \end{Cor}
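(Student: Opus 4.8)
The plan is to obtain $\cF_d$ by transporting the module $M$ constructed in Proposition \ref{GUM1} through the functor $\TilTheta$. Recall that the preceding remark (right after the definition of $d$-homogeneity for bundles) records that $M \in \EKP(K_r)$ is $d$-homogeneous if and only if $\TilTheta(M)$ is $(d-1)$-homogeneous; this is the exact bridge we need. So first I would invoke Proposition \ref{GUM1} to produce a brick $M \in \repp(K_r,d)$ which is $d$-homogeneous but not $(d+1)$-homogeneous. To land in the range stated in the corollary, I would apply Proposition \ref{GUM1} with the parameter shifted: take a brick $N \in \repp(K_r,d-1)$ (for $d \in \{1,\ldots,r-1\}$, so $d-1 \in \{0,\ldots,r-2\}$) that is $(d-1)$-homogeneous but not $d$-homogeneous. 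One subtlety: Proposition \ref{GUM1} as stated covers $d \in \{1,\ldots,r-1\}$ for the source index, which upon the shift covers exactly the values needed; for the degenerate value $d-1=0$, a brick that is $0$-homogeneous is no restriction, and the non-$1$-homogeneity is exactly non-uniformity, which the Schwarzenberger argument already supplies.

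The second step is to set $\cF_d := \TilTheta(N)$ and verify the three required properties. Simplicity: since $N$ is a brick and $\repp(K_r,d-1) \subseteq \repp(K_r,1) = \EKP(K_r)$ by Proposition \ref{CatRep1}(3), Theorem \ref{Fun3}(2) tells us $\TilTheta = \TilTheta_1$ is full and faithful on $\repp(K_r,1)$, so $\End_{\PP^{r-1}}(\cF_d) \cong \End_{K_r}(N) \cong \KK$; hence $\cF_d$ is simple. Moreover $\cF_d$ is a Steiner bundle by Theorem \ref{Fun3}(1). For the homogeneity statements, I would use Lemma \ref{HomM1}: for any $\alpha \in \Inj_\KK(A_e,A_r)$ we have $\TilTheta(\alpha^\ast(N)) \cong \hat\alpha^\ast(\TilTheta(N)) = \hat\alpha^\ast(\cF_d)$. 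Since $\TilTheta|_{\repp(K_r,1)}$ is faithful (indeed an equivalence onto its image), $\alpha^\ast(N) \cong \beta^\ast(N)$ if and only if $\hat\alpha^\ast(\cF_d) \cong \hat\beta^\ast(\cF_d)$, once we know $\alpha^\ast(N), \beta^\ast(N)$ again lie in a category on which $\TilTheta$ is conservative — here they are $K_e$-representations, so one should instead argue directly: two $K_2$-representations (resp. $K_d$-representations) are isomorphic iff the coherent sheaves they produce under the corresponding $\TilTheta$ for $\PP^1$ (resp. $\PP^{d-1}$) are, again via Theorem \ref{Fun3}(2) applied on $\Gr_{e}$, using that $\alpha^\ast(N)$ is relative projective because $N \in \repp(K_r,d-1) \subseteq \repp(K_r,e)$ for $e \le d-1$. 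Concretely: $N$ being $(d-1)$-homogeneous means $\alpha^\ast(N) \cong \beta^\ast(N)$ for all $\alpha,\beta \in \Inj_\KK(A_d,A_r)$, which by Lemma \ref{HomM1} gives $\hat\alpha^\ast(\cF_d) \cong \hat\beta^\ast(\cF_d)$ for all such $\alpha,\beta$, i.e.\ $\cF_d$ is $(d-1)$-homogeneous; and $N$ failing to be $d$-homogeneous means there exist $\alpha,\beta \in \Inj_\KK(A_{d+1},A_r)$ with $\alpha^\ast(N) \not\cong \beta^\ast(N)$, whence, since $\TilTheta_{d}$ or rather the $\PP^d$-version is faithful on relative projectives (when $d \le r-1$ this applies; when $d = r-1$ one uses that a non-homogeneous module gives a non-homogeneous bundle by Corollary \ref{HomM2} combined with Lemma \ref{HomM3}), $\hat\alpha^\ast(\cF_d) \not\cong \hat\beta^\ast(\cF_d)$, so $\cF_d$ is not $d$-homogeneous.

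The main obstacle I anticipate is the bookkeeping around the two boundary cases. For $d \in \{1,\ldots,r-2\}$ the argument is uniform: $N$ is a brick in $\repp(K_r,d-1)$, so $\alpha^\ast(N)$ is a relative projective $K_{d+1}$-representation for every $\alpha \in \Inj_\KK(A_{d+1},A_r)$ only when $d+1 \le \ldots$, which need not hold — so one cannot simply invoke the equivalence at level $d+1$. The clean fix is to phrase non-$d$-homogeneity of the bundle directly via Lemma \ref{HomM1}: $\hat\alpha^\ast(\cF_d) \cong \hat\beta^\ast(\cF_d)$ would give $\TilTheta(\alpha^\ast(N)) \cong \TilTheta(\beta^\ast(N))$; now $\alpha^\ast(N)$ and $\beta^\ast(N)$ are $K_{d+1}$-representations, and one can compare their dimension vectors and rank data through $\TilTheta$ on $\PP^{d}$ — but to conclude $\alpha^\ast(N) \cong \beta^\ast(N)$ one needs $\TilTheta$ to be faithful there, which requires $\alpha^\ast(N) \in \repp(K_{d+1}, e)$ for some admissible $e$. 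Since $N$ is not even $(d+1)$-homogeneous in $\repp(K_r,d-1)$, one of $\alpha^\ast(N),\beta^\ast(N)$ is projective and the other is not; but then their images under $\TilTheta$ cannot be isomorphic because one is locally free of one rank and the other of a strictly larger rank (projective $K_{d+1}$-modules give $\TilTheta$ of rank $\Delta(1)$ by Theorem \ref{Fun3}(3), and the non-projective one has strictly smaller $\Delta(d+1)$, hence a different Chern/rank signature after pulling back). This numerical separation — comparing $\rk$ and $c_1$ of the two pullbacks via Theorem \ref{Fun3}(3) and Corollary \ref{Dc2} — is the cleanest route and sidesteps the faithfulness issue entirely. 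For $d = r-1$ one uses the Schwarzenberger module as in the proof of Proposition \ref{GUM1}, together with Corollary \ref{HomM2} to pass non-homogeneity of the module to non-homogeneity of its bundle.

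In summary, I would: (1) apply Proposition \ref{GUM1} (with index $d-1$) to get a brick $N \in \repp(K_r,d-1)$ that is $(d-1)$-homogeneous but not $d$-homogeneous; (2) set $\cF_d := \TilTheta(N)$, which is a Steiner bundle by Theorem \ref{Fun3}(1) and simple by Theorem \ref{Fun3}(2) since $N$ is a brick; (3) use Lemma \ref{HomM1} to translate $(d-1)$-homogeneity of $N$ into $(d-1)$-homogeneity of $\cF_d$; (4) use Lemma \ref{HomM1} together with the rank/Chern-class invariants of Theorem \ref{Fun3}(3) and Corollary \ref{Dc2} (or, for $d=r-1$, Corollary \ref{HomM2}) to conclude that $\cF_d$ is not $d$-homogeneous.
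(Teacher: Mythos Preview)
Your core idea --- apply Proposition \ref{GUM1} and push through $\TilTheta$ --- is exactly the paper's approach, but you have introduced an off-by-one error with the index shift. Your very first sentence gets it right: Proposition \ref{GUM1} with parameter $d$ gives a brick $M \in \repp(K_r,d)$ that is $d$-homogeneous (automatic for objects of $\repp(K_r,d)$, by the remark after the definition) and not $(d{+}1)$-homogeneous. The dictionary ``$M$ is $e$-homogeneous $\Leftrightarrow$ $\TilTheta(M)$ is $(e{-}1)$-homogeneous'' then yields directly that $\cF_d := \TilTheta(M)$ is $(d{-}1)$-homogeneous and not $d$-homogeneous. No shift is needed, and the range $d\in\{1,\ldots,r{-}1\}$ of the corollary matches the range of Proposition \ref{GUM1} exactly.

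Your shifted version is actually wrong: if $N \in \repp(K_r,d{-}1)$ is $(d{-}1)$-homogeneous as a \emph{module} (meaning $\alpha^\ast(N)\cong\beta^\ast(N)$ for $\alpha,\beta\in\Inj_\KK(A_{d-1},A_r)$, not $A_d$ as you wrote), then $\TilTheta(N)$ is only $(d{-}2)$-homogeneous as a \emph{bundle}, and its failure of $d$-homogeneity translates to failure of $(d{-}1)$-homogeneity of the bundle. So your $\cF_d$ would be $(d{-}2)$-homogeneous and not $(d{-}1)$-homogeneous, which is not what the corollary asserts. Moreover, for $d=1$ your shifted index $d{-}1=0$ lies outside the range of Proposition \ref{GUM1}. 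All of the subsequent worry about faithfulness of $\TilTheta$ at level $d{+}1$ and the rank/Chern-class separation argument is unnecessary: the iff in the remark preceding the corollary already covers both directions (for $d{+}1\le r{-}1$ one uses that $\alpha^\ast(M)\in\EKP(K_{d+1})$ and Theorem \ref{Fun3}(2); for $d{+}1=r$ one uses Corollary \ref{HomM2}), and the paper simply invokes it.
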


\begin{proof} Proposition \ref{GUM1} provides a brick $M \in \rep(K_r,d)$ such that $M$ is not $(d\!+\!1)$-homogeneous. Consequently, the bundle $\cF_d\!:=\!\TilTheta(M)$ has the requisite properties. \end{proof}

\bigskip
\noindent
In \cite{MMR21} the authors introduced the notion of uniform vector bundles of $k$-type and studied the case $k\!=\!1$ in detail.\footnote{The vector bundles studied in \cite{MMR21} are dual to those considered here.} 
Given $d \in \{1,\ldots, r\!-\!1\}$ and $k \in \NN_0$, we say that a $d$-homogeneous Steiner bundle $\cF$ is of { \it $k$-type}, provided there are $n_0, \ldots, n_k \in \NN_0$, $n_k\!\ne\!0$ such that
\[ \alpha^\ast(\cF) \cong \bigoplus_{i=0}^k \cE_i[d\!+\!1]^{n_i}\] 
for all $\alpha \in \Inj_\KK(A_{d+1},A_r)$. Here the $\cE_i[d\!+\!1] \in \StVect(\PP(A_{d+1}))$ are the exceptional Steiner bundles discussed in Section \ref{S:ExSt}. 

Our next result implies that, for $k\!=\!1$, we obtain the essential image of $\repp(K_r,d\!+\!1)$.

\bigskip

\begin{Lem} \label{GUM3} Let $d \in \{2,\ldots, r\!-\!1\}$, $\cF \in \StVect(\PP^{r-1})$ be a Steiner bundle. Then the following statements are equivalent:
\begin{enumerate}
\item $\cF \in \TilTheta(\repp(K_r,d))$.
\item $\hat{\alpha}^\ast(\cF)\cong \cO^{c_1(\cF)}_{\PP(A_d)}\!\oplus\!\cT_{\PP(A_d)}^{\rk(\cF)-(d-1)c_1(\cF)}(-1)$ for every $\alpha \in \Inj_\KK(A_d,A_r)$. \end{enumerate} \end{Lem}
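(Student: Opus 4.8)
The plan is to characterize $\repp(K_r,d)$-membership on the level of representations via the restriction functors $\alpha^\ast$ and then transport this to the bundle side using Lemma \ref{HomM1} together with the compatibility $\TilTheta_e(\cU_{(r,e)}) \cong \cO$ and $\TilTheta_e(P_1(e)) \cong \cQ_{(r-1,\dots)}$ type identifications established in Section \ref{S:Coh}. First I would apply Theorem \ref{Fun3} to write $\cF \cong \TilTheta(M) = \TilTheta_1(M)$ for a uniquely determined $M \in \repp(K_r,1) = \EKP(K_r)$, with $\rk(\cF) = \Delta_M(1)$ and $c_1(\cF) = \dim_\KK M_1$ by Corollary \ref{Dc2} and Theorem \ref{Fun3}(3). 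The key observation is that for $\alpha \in \Inj_\KK(A_d,A_r)$, Lemma \ref{HomM1} gives $\hat{\alpha}^\ast(\TilTheta(M)) \cong \TilTheta(\alpha^\ast(M))$, where on the right $\TilTheta = \TilTheta_1$ is now the functor $\rep(K_d) \lra \Coh(\PP(A_d)) = \Coh(\PP^{d-1})$. So statement (2) translates into a statement purely about the $K_d$-representation $\alpha^\ast(M)$: namely that $\TilTheta_1(\alpha^\ast(M)) \cong \cO^{c_1(\cF)}_{\PP^{d-1}} \oplus \cT_{\PP^{d-1}}(-1)^{\rk(\cF)-(d-1)c_1(\cF)}$ for all such $\alpha$.

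Next I would identify the right-hand side of (2) as $\TilTheta_1$ applied to a projective $K_d$-module. Indeed $\TilTheta_{1,K_d}(P_0(d)) \cong \cO_{\PP^{d-1}}$ and $\TilTheta_{1,K_d}(P_1(d)) \cong \cQ_{(d,1)} = \cT_{\PP^{d-1}}(-1)$ (the universal quotient bundle on $\PP^{d-1} = \Gr_1(A_d)$, which is $\cT_{\PP^{d-1}}(-1)$ since $\cU_{(d,1)} \cong \cO(-1)$, cf.\ the discussion after Theorem \ref{Fun3}); this matches the computation in the proof of Proposition \ref{Dc3}(2) carried out over the Grassmannian $\Gr_1(A_d)$. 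Since $\TilTheta_1$ restricted to $\repp(K_d,1) = \EKP(K_d)$ is an equivalence (Theorem \ref{Fun3}(2)), and projective modules $\Delta P_0(d) \oplus c P_1(d)$ have the correct numerical invariants — $\rk = \Delta$, $c_1 = c$, so $\Delta = \rk(\cF)$ forces $c = c_1(\cF)$ and $\Delta_M(1)$ for the restriction computed as $\dim_\KK(\alpha^\ast M)_2 - d\dim_\KK M_1 = \Delta_M(1) - (d-1)c_1(\cF)$ gives the exponent $\rk(\cF) - (d-1)c_1(\cF)$ on the $\cT(-1)$-factor — I conclude that (2) holds if and only if for every $\alpha \in \Inj_\KK(A_d,A_r)$ the module $\alpha^\ast(M)$ is isomorphic to $(\Delta_M(d))P_0(d) \oplus (\dim_\KK M_1)P_1(d)$, i.e.\ $\alpha^\ast(M)$ is projective of the expected type.

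Finally I would invoke Theorem \ref{Fam5}: the condition that $\alpha^\ast(M)$ is projective (equivalently $\psi_{\alpha^\ast(M)}$ is injective, equivalently $\alpha^\ast(M) \cong \Delta_M(d)P_0(d) \oplus (\dim_\KK M_1)P_1(d)$) for every $\alpha \in \msim^{-1}(\fv)$ and every $\fv \in \Gr_d(A_r)$ is precisely the condition $\cV(K_r,d)_M = \emptyset$, that is, $M \in \repp(K_r,d)$. Since by Proposition \ref{CatRep1}(3) we have $\repp(K_r,d) \subseteq \EKP(K_r)$, and $M$ was already the unique $\EKP$-module with $\TilTheta(M) \cong \cF$, this gives $\cF \in \TilTheta(\repp(K_r,d))$ if and only if $M \in \repp(K_r,d)$, which is exactly the equivalence of (1) and (2). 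The one point requiring care — the expected main obstacle — is the bookkeeping needed to make Lemma \ref{HomM1} interact correctly with the identification of the Grassmannian $\Gr_1(A_d) = \PP(A_d)$ as a linear subvariety $\hat{\alpha}(\PP^{d-1}) \subseteq \PP^{r-1}$, and in particular checking that $\hat{\alpha}^\ast(\cU_{(r,1)}) \cong \cU_{(d,1)}$ so that the pullback of the defining Steiner sequence of $\cF$ is again a Steiner sequence on $\PP^{d-1}$; this is essentially the content of Lemma \ref{HomM1} applied to $P_0(r)$ and $P_1(r)$, but one should state it cleanly before running the rank/Chern-class comparison. An alternative, slightly slicker route would be to avoid $M$ entirely: pull back the exact sequence $(0) \to \cU_{(r,1)}^{\rk(\cF)} \to \cO^{?} \to \cF \to 0$ along $\hat\alpha$, observe it stays exact (the sequence is locally split), and directly apply the characterization of Steiner bundles on $\PP^{d-1}$ that decompose as $\cO^a \oplus \cT(-1)^b$ — namely that these are exactly the ones coming from projective $K_d$-modules, via Corollary \ref{MinType3} — but the representation-theoretic route above makes the bidirectional implication most transparent and reuses Theorem \ref{Fam5} directly.
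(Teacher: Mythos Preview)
Your approach is essentially identical to the paper's: write $\cF \cong \TilTheta(M)$ with $M \in \EKP(K_r)$ via Theorem \ref{Fun3}, use Lemma \ref{HomM1} to identify $\hat{\alpha}^\ast(\cF)$ with $\TilTheta(\alpha^\ast(M))$ on $\PP(A_d)$, and then invoke the equivalence Theorem \ref{Fun3}(2) together with $\TilTheta(P_0(d)) \cong \cO_{\PP(A_d)}$ and $\TilTheta(P_1(d)) \cong \cT_{\PP(A_d)}(-1)$ to see that (2) holds iff each $\alpha^\ast(M)$ is projective, i.e.\ iff $M \in \repp(K_r,d)$.

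One correction on the numerics, which does not affect the logic: since $\alpha^\ast(M) \cong \Delta_M(d)\,P_0(d)\oplus(\dim_\KK M_1)\,P_1(d)$ and $\Delta_M(d)=\rk(\cF)-(d-1)c_1(\cF)$, the multiplicities in (2) should read $\cO_{\PP(A_d)}^{\,\rk(\cF)-(d-1)c_1(\cF)}\oplus\cT_{\PP(A_d)}(-1)^{\,c_1(\cF)}$ (the printed statement has them swapped, and your own computation inherits and compounds this---in particular, for $N=\Delta P_0(d)\oplus c P_1(d)$ one has $\rk(\TilTheta(N))=\Delta+c(d-1)$, not $\Delta$).
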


\begin{proof} (1) $\Rightarrow$ (2). By assumption, there exists $M \in \repp(K_r,d)$ such that $\cF \cong \TilTheta(M)$. Given $\alpha \in \Inj_\KK(A_d,A_r)$, it follows that $\alpha^\ast(M)$ is projective. Observing Lemma 
\ref{HomM1}, we thus obtain 
\begin{eqnarray*} 
\hat{\alpha}^\ast(\TilTheta(M)) & \cong & \TilTheta(\alpha^\ast(M)) \cong \TilTheta(P_0(d))^{\Delta_M(d)}\!\oplus\!\TilTheta(P_1(d))^{\dim_\KK M_1}  \\ 
                                      & \cong & \cO_{\PP(A_d)}^{\rk(\cF)-(d-1)c_1(\cF)}\!\oplus\!\cT_{\PP(A_d)}^{c_1(\cF)}(-1).
\end{eqnarray*}

(2) $\Rightarrow$ (1). There is $M \in \EKP(K_r)$ such that $\cF \cong \TilTheta(M)$. The foregoing calculation in conjunction with Lemma \ref{HomM1} and Theorem \ref{Fun3} now shows that
\[ \alpha^\ast(M) \cong \Delta_M(d) P_0(d)\!\oplus\!(\dim_\KK M_1)P_1(d)\]
for all $\alpha\in \Inj_\KK(A_d,A_r)$, so that each $\alpha^\ast(M)$ is projective. It follows that $M \in \repp(K_r,d)$. \end{proof}

\bigskip

\begin{Cor} \label{GUM4} Let $\cC \subseteq \Gamma(K_r)$ be a regular AR-component. 
\begin{enumerate}
\item Every $M \in (\tau_{K_r}^{-1}(M_{\cC,1})\!\rightarrow)$ is $(r\!-\!1)$-homogeneous.
\item If $M \in (\tau_{K_r}^{-1}(M_{\cC,1})\!\rightarrow)$, then $\TilTheta(M)$ is $(r\!-\!2)$-homogeneous of $1$-type.
\item Let $M_{\cC,1}$ be $d$-homogeneous for some $d\!\le\!r$.
\begin{enumerate}
\item Every $M \in (M_{\cC,1}\!\rightarrow)$ is $d$-homogeneous.
\item Let $d\!\ge\!2$. If $\TilTheta(M_{\cC,1})$ is $(d\!-\!1)$-homogeneous of $k$-type, then $\TilTheta(M)$ is $(d\!-\!1)$-homogeneous of $k$-type for every $M \in (M_{\cC,1}\rightarrow) \!\smallsetminus\!(\tau_{K_r}^{-1}
(M_{\cC,1})\rightarrow)$.\end{enumerate}\end{enumerate} \end{Cor}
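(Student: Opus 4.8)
The plan is to reduce all four assertions to statements about $K_r$-representations by means of the equivalence $\TilTheta\colon\repp(K_r,d)\to\StVect(\PP^{r-1})$ of Theorem \ref{Fun3}, the identity $\TilTheta(\alpha^\ast(M))\cong\hat\alpha^\ast(\TilTheta(M))$ of Lemma \ref{HomM1}, and the remark following Lemma \ref{GUM3} that $M\in\EKP(K_r)$ is $d$-homogeneous iff $\TilTheta(M)$ is $(d-1)$-homogeneous. The combinatorial backbone is the description of the cone of successors of the quasi-simple representation $M:=M_{\cC,1}$: writing $E_j:=\tau^{-j}_{K_r}(M)$ for the quasi-simples on the coray through $M$ and $E_j[\ell]$ for the representation of quasi-length $\ell$ with quasi-socle $E_j$, one has $(M\!\rightarrow)=\{E_j[\ell]:j\ge0,\ \ell\ge1\}$, and since the first index is non-decreasing along the arrows of a $\ZZ A_\infty$-component, $(M\!\rightarrow)\smallsetminus(\tau^{-1}_{K_r}(M)\!\rightarrow)=\{M_{[\ell]}:\ell\ge1\}$ (the representations with quasi-socle $M$). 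Thus every successor of $M$ either lies in $(\tau^{-1}_{K_r}(M)\!\rightarrow)$ or is some $M_{[\ell]}$, and I will treat these two families separately.

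For (1): Theorem \ref{Strat6}(2),(3) shows that, whichever of $M_{\sigma,1}=M$ or $M_{\sigma,1}=\sigma_{K_r}(M)$ occurs, $(\tau^{-1}_{K_r}(M)\!\rightarrow)\subseteq\cC\cap\repp(K_r,r-1)$; so every $N$ in that cone lies in $\repp(K_r,r-1)$ and is $(r-1)$-homogeneous by the remark after the definition of $d$-homogeneity. For (2): fix such an $N$. Then $\TilTheta(N)$ is $(r-2)$-homogeneous, and $N\in\repp(K_r,r-1)$ gives $\alpha^\ast(N)\cong\Delta_N(r-1)P_0(r-1)\oplus(\dim_\KK N_1)P_1(r-1)$ for every $\alpha\in\Inj_\KK(A_{r-1},A_r)$, whence by Lemma \ref{HomM1} $\hat\alpha^\ast(\TilTheta(N))\cong\cO_{\PP(A_{r-1})}^{\Delta_N(r-1)}\oplus\cT_{\PP(A_{r-1})}(-1)^{\dim_\KK N_1}$. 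Since $\cE_0[r-1]=\cO_{\PP(A_{r-1})}$ and $\cE_1[r-1]=\cT_{\PP(A_{r-1})}(-1)$ (Euler sequence; cf.\ the proof of Lemma \ref{GUM3}), and $\dim_\KK N_1=c_1(\TilTheta(N))>0$ because $N$ is regular indecomposable, this is precisely the defining property of $\TilTheta(N)$ being $(r-2)$-homogeneous of $1$-type.

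For (3a): let $M_{\cC,1}$ be $d$-homogeneous and $N\in(M_{\cC,1}\!\rightarrow)$. If $d\le r-1$: when $N\in(\tau^{-1}_{K_r}(M)\!\rightarrow)$ one has $N\in\repp(K_r,r-1)\subseteq\repp(K_r,d)$ by Proposition \ref{CatRep1}(3), which is automatically $d$-homogeneous; when $N=M_{[\ell]}$ with $\ell\ge2$, Theorem \ref{Strat6}(4) gives $M_{[\ell]}|_\fv\cong M|_\fv\oplus aP_0(d)\oplus bP_1(d)$ for all $\fv\in\Gr_d(A_r)$, with $a,b$ depending only on $\ell$ and $d$, and since $M=M_{\cC,1}$ is $d$-homogeneous the $\KK.\fv$-module $M|_\fv$ — hence also $M_{[\ell]}|_\fv$ — is independent of $\fv$ (the case $\ell=1$ is the hypothesis). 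If $d=r$ (so $d$-homogeneity is ordinary homogeneity): I argue by propagation through $\cC$. From $M_{\cC,1}^{(g)}\cong M_{\cC,1}$ for all $g\in\GL(A_r)$ and the fact that $(-)^{(g)}$ is an exact auto-equivalence of $\modd\KK K_r$ sending almost split sequences to almost split sequences, I show: if $X\in\cC$ is homogeneous then so is every immediate successor of $X$. Indeed, twisting the almost split sequence $0\to X\to B\to\tau^{-1}_{K_r}(X)\to0$ and invoking uniqueness yields $B^{(g)}\cong B$; as $B$ is either indecomposable or the sum of two indecomposables of distinct quasi-length, and $(-)^{(g)}$ preserves quasi-length, Krull--Remak--Schmidt forces each indecomposable summand of $B$ to be fixed by $(-)^{(g)}$. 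Iterating along a directed path from $M_{\cC,1}$ proves homogeneity of every successor.

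For (3b): here only the family $\{M_{[\ell]}:\ell\ge1\}$ occurs, and one may assume $2\le d\le r-1$ since $\cC$ regular makes the hypothesis vacuous for $d=r$. From $\TilTheta(M_{\cC,1})$ being $(d-1)$-homogeneous of $k$-type, Lemma \ref{HomM1} and the equivalence $\TilTheta_1\colon\EKP(K_d)\to\StVect(\PP(A_d))$ (Theorem \ref{Fun3}) give $\alpha^\ast(M_{\cC,1})\cong\bigoplus_{i=0}^kP_i(d)^{n_i}$ for all $\alpha\in\Inj_\KK(A_d,A_r)$, with $n_k\ne0$ and $k\ge1$ (as $(M_{\cC,1})_1\ne0$). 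Theorem \ref{Strat6}(4) then yields $\alpha^\ast(M_{[\ell]})\cong P_0(d)^{n_0+a}\oplus P_1(d)^{n_1+b}\oplus\bigoplus_{i\ge2}P_i(d)^{n_i}$ with $a,b$ independent of $\alpha$; applying $\TilTheta_1$ and Lemma \ref{HomM1} shows $\hat\alpha^\ast(\TilTheta(M_{[\ell]}))$ is the fixed bundle $\bigoplus_{i=0}^k\cE_i[d]^{m_i}$ with $m_k=n_k\ne0$, i.e.\ $\TilTheta(M_{[\ell]})$ is $(d-1)$-homogeneous of $k$-type. The main obstacle is the $d=r$ case of (3a): there is no counterpart of Theorem \ref{Strat6}(4) on $\Gr_r(A_r)$, so homogeneity must be transported through $\cC$ directly, and the delicate point is exactly the Krull--Remak--Schmidt argument that $B^{(g)}\cong B$ pins down the individual summands — this hinges on the two immediate successors of a vertex in a $\ZZ A_\infty$-component having different quasi-lengths and on quasi-length being an invariant of $(-)^{(g)}$. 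A secondary bookkeeping point is the identification of $\cE_0[m],\cE_1[m]$ with $\cO_{\PP^{m-1}},\cT_{\PP^{m-1}}(-1)$ and matching it against the exponents in Lemma \ref{GUM3}(2).
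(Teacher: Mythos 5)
Your argument is correct and, for parts (1), (2), (3a) with $d\le r\!-\!1$, and (3b), it follows essentially the same route as the paper: the inclusion $(\tau_{K_r}^{-1}(M_{\cC,1})\!\rightarrow)\subseteq\repp(K_r,r\!-\!1)$ from Theorem \ref{CatRep3}/\ref{Strat6}, the translation via Lemma \ref{HomM1} and Lemma \ref{GUM3}, and the restriction formula of Theorem \ref{Strat6}(4) applied to the ray $\{(M_{\cC,1})_{[\ell]}\,;\,\ell\ge 1\}=(M_{\cC,1}\!\rightarrow)\smallsetminus(\tau_{K_r}^{-1}(M_{\cC,1})\!\rightarrow)$, which you identify correctly. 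The one place where you genuinely diverge is the case $d\!=\!r$ of (3a): the paper disposes of it by citing \cite[(2.3)]{Fa11} together with the connectedness of $\GL(A_r)$, whereas you prove the propagation of homogeneity through the component directly, by twisting the almost split sequence starting at a homogeneous vertex, invoking uniqueness of almost split sequences to get $B^{(g)}\cong B$, and then separating the two middle-term summands by their quasi-lengths (or dimensions) via Krull--Remak--Schmidt before inducting along directed paths. This argument is sound — once one vertex of $\cC$ is fixed by $(-)^{(g)}$ the auto-equivalence stabilizes $\cC$ and preserves quasi-length, so the summands cannot be interchanged — and it buys a self-contained proof at the cost of a slightly longer induction; the paper's citation buys brevity and a statement valid for arbitrary connected group actions. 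Your bookkeeping identifying $\cE_0[m],\cE_1[m]$ with $\cO_{\PP^{m-1}},\cT_{\PP^{m-1}}(-1)$ also matches the computation in the proof of Lemma \ref{GUM3} (note that the displayed exponents in the statement of that lemma are transposed relative to its proof; your version is the correct one).
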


\begin{proof} (1) In view of Theorem \ref{CatRep3}(3), we have $(\tau_{K_r}^{-1}(M_{\cC,1})\!\rightarrow) \subseteq \repp(K_r,r\!-\!1)$, so that every $M \in (\tau_{K_r}^{-1}(M_{\cC,1})\!\rightarrow)$ is in particular 
$(r\!-\!1)$-homogeneous.

(2) This is a direct consequence of (1), Lemma \ref{GUM3} and Lemma \ref{HomM1}.

(3a) If $d\!=\!r$, then $M$ is homogeneous, and our assertion follows from \cite[(2.3)]{Fa11} in conjunction with $\GL(A_r)$ being connected. Alternatively, Theorem \ref{Strat6}(3) implies that $(M_{\cC,1})_{[i]}$ is 
$d$-homogeneous for every $i\!\ge\!2$. The result thus follows from (1). 

(3b) Since $\TilTheta(M_{\cC,1})$ $(d\!-\!1)$-homogeneous of $k$-type, Theorem \ref{ExSt2}(2) in conjunction with Lemma 
\ref{HomM1} and Theorem \ref{Fun3}(2) implies that there are $n_0,\ldots, n_k \in \NN_0$, $n_k\!\ne\!0$ such that  
\[ \alpha^\ast(M_{\cC,1}) \cong \bigoplus_{i=0}^k n_i P_i(d)  \ \ \ \ \  \ \forall \ \alpha \in \Inj_\KK(A_d,A_r).\]
Parts (3) of Theorem \ref{Strat6} now shows that for every $M \in (M_{\cC,1}\rightarrow) \!\smallsetminus\!(\tau_{K_r}^{-1}(M_{\cC,1})\rightarrow) $ its restriction $\alpha^\ast(M)$ can be written in such a way. 
This implies (b). \end{proof}

\bigskip

\begin{Remark} If $d\!\le\!r\!-\!2$, then Proposition \ref{GUM1} provides an indecomposable representation $M \in \rep(K_r,d)$ such that $M$ is not $(d\!+\!1)$-homogeneous. Let $\cC_M$ be the AR-component containing $M$. As ``most'' vertices of the cone $\cC_M \cap\repp(K_r,d)$ are $(d\!+\!1)$-homogenous, we see that the natural analogue of \cite[(2.3)]{Fa11} does not hold in this context. \end{Remark}

\bigskip
\noindent
In 1960 Schwarzenberger \cite[(1.2)]{Sc61b} raised the question, whether all uniform vector bundles on $\PP^{r-1}$ are homogeneous. The first counterexample was given by Elencwajg \cite{El79} in 1979. The following 
result implies that every indecomposable inhomogeneous Steiner bundle gives rise to families of inhomogeneous, but $(r\!-\!2)$-homogeneous indecomposable vector bundles of $1$-type.

\bigskip

\begin{Cor} \label{UM5} Let $\cC \subseteq \Gamma(K_r)$ be a component that contains a representation $M \in \cC$ that is not homogeneous. Then every $X \in (\tau_{K_r}^{-1}(M_{\cC,1})\!\rightarrow)$ is 
$(r\!-\!1)$-homogeneous, but not homogeneous. \end{Cor}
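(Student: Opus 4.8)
The plan is to combine the homogeneity-transport results already established with the structural description of the cone $\cC\cap\repp(K_r,r-1)$. First I would observe that the hypothesis ``$\cC$ contains a non-homogeneous $M$'' forces the whole component to consist of non-homogeneous representations: by Lemma \ref{HomM3}(2), homogeneity is preserved and reflected by $\sigma_{K_r}$, and by Corollary \ref{GUM4}(3a) (or directly Theorem \ref{Strat6}(3)) homogeneity of one vertex in a regular component propagates through the cone; more precisely, since irreducible morphisms and the AR-translation relate all vertices of $\cC$ and homogeneity behaves well under $\tau_{K_r}\cong \sigma_{K_r}^2$ and under $\sigma_{K_r}$ on $\reg(K_r)$, no vertex of $\cC$ can be homogeneous once one of them fails to be. In particular the quasi-simple $M_{\cC,1}$ and hence $\tau_{K_r}^{-1}(M_{\cC,1})$ are not homogeneous, and neither is any $X \in (\tau_{K_r}^{-1}(M_{\cC,1})\!\rightarrow)$.

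Next I would pin down where the non-homogeneity can already be detected. The subtlety is that a representation could a priori be $d$-homogeneous for all $d\le r-1$ yet fail to be $r$-homogeneous (i.e.\ homogeneous), so ``not homogeneous'' is genuinely weaker than ``not $(r-1)$-homogeneous''. Indeed, by Corollary \ref{GUM4}(1) every $X \in (\tau_{K_r}^{-1}(M_{\cC,1})\!\rightarrow)\subseteq \repp(K_r,r-1)$ is $(r-1)$-homogeneous automatically, by Remark (2) following the definition of $d$-homogeneous (every object of $\repp(K_r,r-1)$ is $(r-1)$-homogeneous). So the only thing left to prove is that such $X$ is not homogeneous, and for this it suffices to show $M_{\cC,1}$ itself — or equivalently $\tau_{K_r}^{-1}(M_{\cC,1})$ — is not homogeneous, and then transport this failure along the cone.

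The transport along the cone is the technical heart. For the forward direction of Corollary \ref{GUM4}(3a) one used Theorem \ref{Strat6}(3): if $M_{\cC,1}=M_{\sigma,1}$ then $(M_{\cC,1})_{[i]}|_{\fv}\cong M_{\cC,1}|_{\fv}\oplus(\text{projective})$, so restriction data of any cone vertex is controlled by that of $M_{\cC,1}$ together with $\tau_{K_r}^{-1}$-iterates. The contrapositive statement I need is: if some $g\in\GL(A_r)$ witnesses $M_{\cC,1}^{(g)}\not\cong M_{\cC,1}$, then the same $g$ (or rather the induced action via $\TilTheta$ and Lemma \ref{HomM1}/Corollary \ref{HomM2}) witnesses $X^{(g)}\not\cong X$. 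Concretely, $\GL(A_r)$ acts on $\cC$ by permuting vertices (it acts on $\rep(K_r)$ by auto-equivalences preserving $\tau_{K_r}$ and $\repp(K_r,d)$), so it permutes the cones; since $M_{\cC,1}$ is the unique quasi-simple generating $\cC\cap\EKP(K_r)$, we get $(M_{\cC,1})^{(g)}=M_{g\cdot\cC,1}$, and $(X)^{(g)}=X'$ for the corresponding vertex $X'$ in the same quasi-position of $g\cdot\cC$. If $X^{(g)}\cong X$ then $g\cdot\cC=\cC$ and $X$ and $(M_{\cC,1})_{[i]}$ forces (using that $\GL(A_r)_X$ is closed, Theorem \ref{Strat6}(3), and the faithful transport through $\TilTheta$) $M_{\cC,1}^{(g)}\cong M_{\cC,1}$, a contradiction. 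Thus $X$ is not homogeneous.

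The main obstacle I anticipate is making the last step fully rigorous: one must be careful that the $\GL(A_r)$-action really permutes the quasi-simple vertices $M_{\cC,1}$ consistently (this uses that $\GL(A_r)$-twists are exact auto-equivalences commuting with $\tau_{K_r}$ and with $\sigma_{K_r}$ up to the transpose-inverse twist, cf.\ Lemma \ref{HomM3}(1), hence send regular components to regular components and the canonical quasi-simple of $\cC\cap\EKP$ to that of the image component), and that non-homogeneity of a quasi-simple propagates \emph{upward} in the cone rather than only downward. For the upward propagation the cleanest route is the functor $\TilTheta$: by Corollary \ref{HomM2} (and its footnote for $\repp(K_r,d)$) homogeneity of $X$ is equivalent to homogeneity of $\TilTheta(X)$, and by Lemma \ref{Strat2} together with the exactness and full faithfulness of $\TilTheta|_{\repp(K_r,d)}$ (Theorem \ref{Fun3}, Corollary \ref{Fun4}) the quasi-composition series of $X$ maps to a filtration of $\TilTheta(X)$ having $\TilTheta(M_{\cC,1})$ (up to $\tau_{K_r}^{-1}$-shifts) among its factors — so a $\GL(A_r)$-stabilizer of $\TilTheta(X)$ would stabilize the isoclass set of these factors and in particular fix $\TilTheta(M_{\cC,1})$, contradicting its inhomogeneity. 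Assembling these pieces gives the claim.
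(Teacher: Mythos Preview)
Your overall strategy is sound, but the paper's proof is dramatically shorter: it cites \cite[(2.3)]{Fa11} directly for the statement that (in)homogeneity is constant across an AR-component when $\GL(A_r)$ is connected, then simply invokes Corollary~\ref{GUM4}(1) for the $(r\!-\!1)$-homogeneity. You are in effect reproving \cite[(2.3)]{Fa11} by hand.

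Two concrete issues with your reconstruction. First, you never observe that $\cC$ must be regular: since all preprojective and preinjective indecomposables are homogeneous, the existence of a non-homogeneous $M\in\cC$ forces $\cC$ to be regular, and only then is $M_{\cC,1}$ even defined. Second, your ``upward propagation'' via filtration factors is both unnecessary and not quite correct as written: the quasi-composition factors of $X\in(\tau_{K_r}^{-1}(M_{\cC,1})\!\rightarrow)$ need not include $M_{\cC,1}$ itself, and an isomorphism $X^{(g)}\cong X$ does not immediately pin down individual factors. The clean argument (which is presumably what \cite[(2.3)]{Fa11} does) is purely structural: the twist $(-)^{(g)}$ is an exact auto-equivalence of $\rep(K_r)$, hence commutes with $\tau_{K_r}$ and preserves quasi-length; it also preserves $\EKP(K_r)$. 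Thus if $X^{(g)}\cong X$ for some vertex $X\in\cC$, then $(-)^{(g)}$ maps $\cC$ to itself and preserves the cone $\cC\cap\EKP(K_r)$, so it must fix its unique leftmost quasi-simple $M_{\cC,1}$ and therefore every vertex of $\cC$ (each being determined by its $\tau$-position and quasi-length). Contrapositively, one non-homogeneous vertex makes them all non-homogeneous. Your component-permutation idea is exactly this, but the filtration detour obscures it.
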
 

\begin{proof} Since preprojective and preinjective representations are homogeneous, it follows that $\cC$ is regular.  By Corollary \ref{GUM4}, the cone $(\tau_{K_r}^{-1}(M_{\cC,1})\!\rightarrow) \subseteq \repp(K_r,r\!-\!1)$ 
consists of $(r\!-\!1)$-homogeneous representations. As $M \in \cC$ is not homogeneous and $\GL(A_r)$ is connected, \cite[(2.3)]{Fa11} ensures that all representations in $\cC$ are inhomogeneous. \end{proof}

\bigskip

\begin{Remark} In the situation above, every $X \in (\tau_{K_r}^{-1}(M_{\cC,1})\!\rightarrow)$ gives rise to a family of Steiner bundles that are not homogeneous but $(r\!-\!2)$-homogeneous: In view of \cite{Fa11}, 
\[ \GL(A_r)_X := \{ g \in \GL(A_r) \ ; \ X^{(g)} \cong X\}\]
is a closed, proper subgroup of $\GL(A_r)$, so that the variety $\GL(A_r)/\GL(A_r)_X$ has dimension $\ge\!1$. Hence there are at least $|\KK|$ pairwise non-isomorphic inhomogeneous but $(r\!-\!1)$-homogeneous 
representations of dimension vector $\udim X$. Upon application of $\TilTheta$, we obtain a family of Steiner bundles, whose members are inhomogeneous but $(r\!-\!2)$-homogeneous. \end{Remark} 

\bigskip

\subsection{Constructions} \label{S:Const} In this section we turn to the existence of modules and bundles with certain prescribed properties. This contributes to understanding the conceptual 
sources of several computations and ad hoc arguments in \cite{MMR21}.    

\bigskip

\begin{Cor} \label{Const1} Let $d\!\ge\!2$. For every $r\!\ge\!d\!+\!1$ and $n\!\ge\!d\!+\!2$, there exists $N \in \repp(K_r,d)$ such that
\begin{enumerate}
\item[(a)] $\dim_\KK N_1\!=\!n$, and
\item[(b)] $N$ is not homogeneous and of minimal type. \end{enumerate} \end{Cor}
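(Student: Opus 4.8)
The strategy is to produce, for fixed $d\ge 2$, $r\ge d+1$ and $n\ge d+2$, a non-projective $M\in\repp(K_r,d)$ of minimal type with $\dim_\KK M_1$ as large as we like, and then pump it up to the desired $n$ while staying of minimal type. The starting point is the Schwarzenberger module $M_\cS[r+2]\in\EKP(K_r)$ from Section \ref{S:Ha}, which has $\udim M_\cS[r+2]=(2,r+1)$, so $\Delta_{M_\cS[r+2]}(1)=r-1$ and it is of minimal type for $d=1$. To move to general $d$ I would instead start from a representation of minimal type for the given $d$: using Proposition \ref{CatRep4}(2), for a pair $(V_1,V_2)$ with $\dim_\KK V_1=d+1$ and $\Delta_{(V_1,V_2)}(d)=d(r-d)$ (so $\dim_\KK V_2=d(r-d)+d(d+1)=d(r+1)$) there exists $M\in\repp(K_r,d)\cap\rep(K_r;V_1,V_2)$, i.e.\ $\Delta_M(d)=d(r-d)$; this is of minimal type, and since $\dim_\KK M_1=d+1\ge d+1$, Corollary \ref{MinType3}(2d) guarantees $M$ is not projective, hence by Corollary \ref{MinType3}(2b) it is a brick.

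\textbf{Getting non-homogeneity.} The only genuinely new point is to arrange that $M$ is not homogeneous. Two routes are available. The direct route: by Corollary \ref{MinType4}, $M\notin\repp(K_r,d+1)$, so $\cV(K_r,d+1)_M\ne\emptyset$; if one can also arrange $\cV(K_r,d+1)_M\ne\Gr_{d+1}(A_r)$ — which holds for a \emph{generic} such $M$ because the locus where $\psi_{M,\fw}$ is injective for some $\fw\in\Gr_{d+1}(A_r)$ is open and, by a dimension count as in Proposition \ref{GUM1}, non-empty whenever $\Delta_M(d+1)=d(r-d)-(d+1)\ge 0$ — then $M$ is not $(d+1)$-homogeneous, hence not homogeneous. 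This is exactly the argument already run in the proof of Proposition \ref{GUM1} for the case $d\le r-2$; I would simply invoke that proposition, which hands us a brick $M\in\rep(K_r,d)$ of minimal type with $\dim_\KK M_1=2d$ that is not $(d+1)$-homogeneous. For the boundary case $d=r-1$, Proposition \ref{GUM1} instead produces $N=\sigma_{K_r}^{-1}(M_\cS[r+2])\in\repp(K_r,r-1)$, an $(r-1)$-homogeneous brick which is not $r$-homogeneous, i.e.\ not homogeneous, with $\dim_\KK N_1=\dim_\KK (M_\cS[r+2])_2=r+1=d+2$.

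\textbf{Climbing to $\dim_\KK N_1=n$.} Starting from the module of minimal type supplied by Proposition \ref{GUM1}, which has $\dim_\KK M_1\ge d+1$ (in fact $2d$ or $d+2$, both $\ge d+1$) and is not homogeneous, I would now iterate Proposition \ref{MinType7}: it produces from $M\in\repp(K_r,d)$ of minimal type with $\dim_\KK M_1\ge d+1$ a representation $N\in\repp(K_{r+1},d)$ of minimal type with $N|_{K_r}\cong M\oplus dP_0(r)$. But here $r$ is fixed, so Proposition \ref{MinType7} in that form raises $r$, not $n$. Instead I would use the \emph{extension} mechanism of Proposition \ref{MinType5} in reverse, combined with the fact that $\repp(K_r,d)$ is closed under extensions (Theorem \ref{Fam5}): choosing an exact sequence $(0)\to M\to M'\to (\dim_\KK M_1)P_1(r)\to(0)$ with $M'$ of the form coming from an $(s,t)$-Steiner-bundle extension, one increases $\dim_\KK M_1$ by $\dim_\KK M_1$ while $\Delta_{M'}(d)=\Delta_M(d)+(\dim_\KK M_1)(r-d)>d(r-d)$; then Proposition \ref{MinType5} cuts it back down to minimal type via $(0)\to aP_0(r)\to M'\to M'_{\min}\to(0)$. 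A cleaner route: pick $M_0\in\repp(K_r,d)$ of minimal type non-projective and non-homogeneous with $\dim_\KK (M_0)_1=n_0\in\{2d,d+2\}$ from Proposition \ref{GUM1}, and directly apply Proposition \ref{CatRep4}(2) to the pair $(V_1,V_2)$ with $\dim_\KK V_1=n$, $\dim_\KK V_2=d(r-d)+dn$ to get $N$ of minimal type with $\dim_\KK N_1=n$; non-projectivity is Corollary \ref{MinType3}(2d) (as $n\ge d+2>d$), and for non-homogeneity one runs, verbatim, the generic-locus dimension count from the proof of Proposition \ref{GUM1}: for $d\le r-2$ one has $\Delta_N(d+1)=d(r-d)-n$, and whenever this is $\ge 0$ (which needs $n\le d(r-d)$) the locus $\cV(K_r,d+1)_N$ is a proper non-empty closed subset, forcing non-$(d+1)$-homogeneity; when $n>d(r-d)$ one instead embeds a known inhomogeneous minimal-type module and invokes additivity of $\cV$ under direct sums (Lemma \ref{Fam2}(2)) together with the extension argument.

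\textbf{Main obstacle.} The delicate point is precisely the non-homogeneity at large $n$ in the regime $n>d(r-d)$, where the naive dimension count of Proposition \ref{GUM1} no longer directly yields a non-empty \emph{and} proper $(d+1)$-rank variety. I expect the cleanest fix is to build $N$ as an extension (legitimate since $\repp(K_r,d)$ is extension-closed) of a large projective minimal-type piece by the inhomogeneous minimal-type brick $M_0$ of Proposition \ref{GUM1}, then apply Proposition \ref{MinType5} to return to minimal type: by Lemma \ref{Fam2}(1) one has $\cV(K_r,d+1)_{M_0}\subseteq\cV(K_r,d+1)_{N'}$, so $N'$ is still not $(d+1)$-homogeneous, and Corollary \ref{MinType4} keeps the rank variety proper; extracting the minimal-type quotient $N=N'_{\min}$ one checks $\cV(K_r,d+1)_N$ remains non-empty and proper by the same two lemmas applied to the defining sequence $(0)\to aP_0(r)\to N'\to N\to(0)$. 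Assembling these pieces — choose the size, ensure non-projectivity via $\dim_\KK N_1\ge d+1$, ensure minimal type via Proposition \ref{CatRep4} or Proposition \ref{MinType5}, ensure non-homogeneity via Proposition \ref{GUM1} plus Lemma \ref{Fam2} — completes the argument.
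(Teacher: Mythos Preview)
Your approach differs fundamentally from the paper's, and in its current form has a genuine gap.

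The paper does \emph{not} fix $r$ and vary $n$; it inducts on $r$. For the base case $r=d+1$, it takes the Schwarzenberger module $M=M_\cS[\ell+r]$ with $\ell=n-r+1\ge 2$ (so $\udim M=(\ell,\ell+r-1)$, any $n\ge d+2$ is reachable), observes $M\in\EKP(K_r)$ is not homogeneous by Corollary \ref{GCD6} and its Remark, and sets $N:=\sigma_{K_r}^{-1}(M)$. Theorem \ref{CatRep3}(3) gives $N\in\repp(K_r,r-1)=\repp(K_r,d)$; one checks $\udim N=(n,(r-1)n+d)$, so $\Delta_N(d)=d=d(r-d)$ and $N$ is of minimal type; Lemma \ref{HomM3} transfers inhomogeneity. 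For the inductive step $r\to r+1$, Proposition \ref{MinType7} produces from the inductively given $M\in\repp(K_r,d)$ an $N\in\repp(K_{r+1},d)$ of minimal type with $N|_{K_r}\cong M\oplus dP_0(r)$ and $\dim_\KK N_1=\dim_\KK M_1=n$; if $N$ were homogeneous then so would $N|_{K_r}$ be $\GL(A_r)$-homogeneous, hence (by connectedness) each summand, contradicting inhomogeneity of $M$. You noticed Proposition \ref{MinType7} but discarded it because it raises $r$ --- that is exactly the point.

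Your route has a real hole in the ``large $n$'' regime. For $d\le r-2$ your non-$(d+1)$-homogeneity argument via the rank variety needs $\Delta_N(d+1)=d(r-d)-n\ge 0$, i.e.\ $n\le d(r-d)$; already for $r=d+1$ this forces $n\le d$, impossible since $n\ge d+2$, so the direct argument \emph{never} applies there. Your fallback --- extend by a projective minimal-type piece, then apply Proposition \ref{MinType5} to pass to the quotient $N=N'_{\min}$ --- does not control what you need. From $(0)\to aP_0(r)\to N'\to N\to(0)$ Lemma \ref{Fam2}(1) only yields $\cV(K_r,d+1)_{N'}\subseteq\cV(K_r,d+1)_N$; this gives non-emptiness of $\cV(K_r,d+1)_N$ but says nothing about properness, so you cannot conclude $N$ is not $(d+1)$-homogeneous. (Corollary \ref{MinType4} gives non-emptiness, not properness.) Moreover, a projective minimal-type summand has $\dim_\KK(-)_1\le d$, so each extension step raises $\dim_\KK N_1$ by at most $d$; you would need to iterate and re-establish properness each time, and no mechanism for that is offered.
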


\begin{proof} We proceed by induction on $r$, starting with $r\!=\!d\!+\!1$. Let $n\!\ge\!d\!+\!2$, so that $\ell\!:=\!n\!-\!r\!+\!1\!\ge\!2$. We consider the Schwarzenberger module $M\!:=\!M_\cS[\ell+r] \in \EKP(K_r)\!=\!
\repp(K_r,1)$ of dimension vector $\udim M\!=\!(\ell,\ell\!+\!r\!-\!1)$. Thanks to Theorem \ref{CatRep3} the representation $N\!:=\!\sigma_{K_r}^{-1}(M)$ belongs to $\repp(K_r,d)$. It has dimension vector $\udim N\!=\!(n,(r\!-\!1)n\!+\!d)$ and 
hence is of minimal type. Since $\ell\!\ge\!2$, Corollary \ref{GCD6} and its succeeding remark imply that $M$ is not homogeneous. By Lemma \ref{HomM3}, the representation $N$ is also not homogeneous. 

Suppose the statements hold for some $r\!\ge\!d\!+\!1$. Given $n\!\ge\!d\!+\!2$, there is $M \in \repp(K_r,d)$ of minimal type such that (a) and (b) hold. Proposition \ref{MinType7} provides $N \in \repp(K_{r+1},d)$ of
minimal type such that $N|_{K_r} \cong M\!\!\oplus\!d P_0(r)$. In particular, (a) holds. If $N$ is homogeneous, then its restriction $N|_{K_r}$ inherits this property. As $\GL(A_r)$ is connected, this then holds for every 
direct summand of $N|_{K_r}$, a contradiction.\footnote{One can show that the category of uniform representations is closed under taking direct summands. Hence one can also find $N$ of minimal type that is not 
uniform.} \end{proof} 

\bigskip

\begin{Remark} Upon application of $\TilTheta_d$, Corollary \ref{Const1} yields a generalization of \cite[(4.1)]{MMR21}: One obtains inhomogeneous vector bundles $\cF_d \in \StVect(\Gr_d(A_r))$ with $c_1(\cF_d)\!=\!
n\sigma_1$ and $\rk(\cF_d)\!=\!\dim \Gr_d(A_r)$. \end{Remark}

\bigskip
\noindent
We provide a conceptual proof for the following generalization of  \cite[(3.5)]{MMR21}:

\bigskip

\begin{Prop} \label{Const2} Let $d \in \{2,\ldots,r\!-\!1\}$. The following statements hold: 
\begin{enumerate} 
\item Let $\cF \in \StVect(\PP^{r-1})$ be a Steiner bundle.
\begin{enumerate}
\item If $c_1(\cF)\!\ge\!d$ and $\cF$ is $(d\!-\!1)$-homogeneous of $1$-type, then $\rk(\cF)\!\ge\!d(r\!-\!d)\!+\!(d\!-\!1)c_1(\cF)$.
\item If $\cO_{\PP^{r-1}}$ is not a direct summand of $\cF$, then $\rk(\cF)\!\le\!c_1(\cF)(r\!-\!1)$. \end{enumerate}
\item For each pair $(c,\ell) \in \NN^2$ such that $\ell\!\ge\!d(r\!-\!d)\!+\!c(d\!-\!1)$ there is $\cF \in \StVect(\PP^{r-1})$ which is $(d\!-\!1)$-homogeneous of $1$-type and such that $(c_1(\cF),\rk(\cF))\!=\!(c,\ell)$.
\end{enumerate} \end{Prop}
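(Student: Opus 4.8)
The plan is to translate everything to the representation-theoretic side via the equivalence $\TilTheta\!=\!\TilTheta_1 : \repp(K_r,1) \lra \StVect(\PP^{r-1})$ (Theorem \ref{Fun3}) and then exploit the numerical dictionary: for $\cF \cong \TilTheta(M)$ with $M \in \EKP(K_r)$ one has $\rk(\cF)\!=\!\Delta_M(1)$ and $c_1(\cF)\!=\!\dim_\KK M_1$ (Theorem \ref{Fun3}(3), Corollary \ref{Dc2}), and by Lemma \ref{GUM3} the bundle $\cF$ is $(d\!-\!1)$-homogeneous of $1$-type if and only if $M \in \repp(K_r,d)$ (the $1$-type condition forcing $\alpha^\ast(M)$ to be of the shape $\Delta_M(d)P_0(d)\!\oplus\!(\dim_\KK M_1)P_1(d)$ for all $\alpha \in \Inj_\KK(A_d,A_r)$, which is exactly relative projectivity). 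Granting this, part (1)(a) is immediate: if $c_1(\cF)\!=\!\dim_\KK M_1\!\ge\!d$ and $M \in \repp(K_r,d)$, then Theorem \ref{MinType2} gives $\Delta_M(d)\!\ge\!d(r\!-\!d)$, i.e.\ $\dim_\KK M_2\!\ge\!d\dim_\KK M_1\!+\!d(r\!-\!d)$, whence $\rk(\cF)\!=\!\Delta_M(1)\!=\!\dim_\KK M_2\!-\!\dim_\KK M_1\!\ge\!(d\!-\!1)\dim_\KK M_1\!+\!d(r\!-\!d)\!=\!(d\!-\!1)c_1(\cF)\!+\!d(r\!-\!d)$.

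For part (1)(b), write $\cF \cong \TilTheta(M)$ with $M \in \EKP(K_r)$ and consider the canonical epimorphism $\psi_M : A_r\!\otimes_\KK\!M_1 \lra M_2$. If $\psi_M$ is surjective then $\rk(\cF)\!=\!\dim_\KK M_2\!-\!\dim_\KK M_1\!\le\!(r\!-\!1)\dim_\KK M_1\!=\!(r\!-\!1)c_1(\cF)$ and we are done; so the content is to show that $\cO_{\PP^{r-1}}$ not being a direct summand of $\cF$ forces $\psi_M$ surjective. The cokernel $C$ of $\psi_M$ gives, on the module side, a quotient $M \tha (M_1/\!\ker, \coker) $ — more precisely the image of $\psi_M$ is a subrepresentation $M' \subseteq M$ with $M/M' \cong (0,C)\cong (\dim_\KK C)P_0(r)$, and since $\EKP(K_r)$ is closed under submodules and $P_0(r)=S_2$ is projective, this sequence splits, so $(\dim_\KK C)P_0(r)$ is a direct summand of $M$; applying $\TilTheta$ and $\TilTheta(P_0(r))\cong\cO_{\PP^{r-1}}$ forces $C\!=\!(0)$. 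The main obstacle here is pinning down exactly that splitting and making sure $M' $ really is a direct summand with complement a sum of $S_2$'s; this is elementary but needs the hereditary/$\EKP$-closure facts used consistently elsewhere in the paper.

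For part (2), fix $(c,\ell) \in \NN^2$ with $\ell\!\ge\!d(r\!-\!d)\!+\!c(d\!-\!1)$ and choose vector spaces $V_1,V_2$ with $\dim_\KK V_1\!=\!c$ and $\dim_\KK V_2\!=\!\ell\!+\!c$, so that $\Delta_{(V_1,V_2)}(1)\!=\!\ell$ and $\Delta_{(V_1,V_2)}(d)\!=\!\ell\!+\!c\!-\!dc\!=\!\ell\!-\!c(d\!-\!1)\!\ge\!d(r\!-\!d)$. By Proposition \ref{CatRep4}(2), $\repp(K_r,d)\cap\rep(K_r;V_1,V_2)$ is a non-empty (in fact dense open) subset of $\rep(K_r;V_1,V_2)$, so there exists $M \in \repp(K_r,d)$ with $\udim M\!=\!(c,\ell\!+\!c)$. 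Setting $\cF\!:=\!\TilTheta(M)$, Lemma \ref{GUM3} gives that $\cF$ is $(d\!-\!1)$-homogeneous of $1$-type, while Theorem \ref{Fun3}(3) and Corollary \ref{Dc2} yield $\rk(\cF)\!=\!\Delta_M(1)\!=\!\ell$ and $c_1(\cF)\!=\!\dim_\KK M_1\!=\!c$, as required. The only point needing a word of care is that Lemma \ref{GUM3} as stated assumes $\cF$ is already a Steiner bundle with the stated shape of restriction, so one should instead invoke directly the computation inside its proof: for $M \in \repp(K_r,d)$ and any $\alpha \in \Inj_\KK(A_d,A_r)$ one has $\alpha^\ast(M)\cong \Delta_M(d)P_0(d)\!\oplus\!(\dim_\KK M_1)P_1(d)$ by Theorem \ref{Fam5}, hence $\hat\alpha^\ast(\cF)\cong\TilTheta(\alpha^\ast(M))$ (Lemma \ref{HomM1}) is the same Steiner bundle on $\PP(A_d)$ independently of $\alpha$, which is precisely $(d\!-\!1)$-homogeneity of $1$-type.
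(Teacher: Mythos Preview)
Your proof is correct and follows essentially the same route as the paper: translate via $\TilTheta$ to $M \in \EKP(K_r)$, invoke Lemma~\ref{GUM3} and Theorem~\ref{MinType2} for (1)(a), argue that non-surjectivity of $\psi_M$ would split off a copy of $P_0(r)$ (hence $\cO_{\PP^{r-1}}$) for (1)(b), and use Proposition~\ref{CatRep4}(2) for (2). Your explanation of the splitting in (1)(b) is in fact more detailed than the paper's, which simply asserts that $\psi_M$ is surjective; one small slip is calling $\psi_M$ an ``epimorphism'' before establishing that, but your subsequent case distinction makes clear you do not assume it.
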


\begin{proof} (1) Let $M \in \EKP(K_r)$ be such that $\TilTheta(M)\cong\cF$. 

(a) Lemma \ref{GUM3} implies $M\in\repp(K_r,d)$ with $\dim_\KK M_1\!=\!c_1(\cF)\!\ge\!d$ and $\Delta_M\!=\!\rk(\cF)$. Theorem \ref{MinType2} now yields $\rk(\cF)\!=\!\Delta_M(d)\!+\!(d\!-\!1)\dim_\KK M_1\!\ge 
\!d(r\!-\!d)\!+\!(d\!-\!1)\dim_\KK M_1\!=\!d(r\!-\!d)\!+\!(d\!-\!1)c_1(\cF)$. 

(b) Since $\cO_{\PP^{r-1}}$ is not a direct summand of $\cF$, the map $\psi_M$ is surjective, so that there is a projective resolution
\[ (0) \lra -\Delta_M(r)P_0(r) \lra (\dim_\KK M_1)P_1(r) \lra M \lra (0).\]
In particular, $\Delta_M(r)\!\le\!0$, whence $\rk(\cF)\!=\!\Delta_M\!=\!\Delta_M(r)\!+\!(r\!-\!1)\dim_\KK M_1\!\le\! c_1(\cF)(r\!-\!1)$. 

(2) Let $(V_1,V_2)$ be a pair of $\KK$-vector spaces such that $\dim_\KK V_1\!=\!c$ and $\dim_\KK V_2\!=\!c\!+\!\ell$. By assumption, we have $\Delta_{(V_1,V_2)}(d)\!=\!\ell\!-\!(d\!-\!1)c\!\ge\!d(r\!-\!d)$ and 
Proposition \ref{CatRep4} provides $M \in \rep(K_r;V_1,V_2)\cap\repp(K_r,d)$. Hence $\cF\!:=\!\TilTheta(M)$ has the requisite properties. \end{proof}

\bigskip
\noindent
We continue by providing a general method of constructing uniform $1$-type Steiner bundles which are not homogeneous relative to certain subgroups of $\GL(A_r)$. 

Let $H \subseteq \GL(A_r)$ be a subgroup. Then $M \in \rep(K_r)$ is called {\it $H$-homogeneous}, provided
\[ M^{(h)} \cong M \ \ \ \ \ \forall \ h \in H.\]
There is an analogous notion of $H$-homogeneous vector bundles on $\PP^{r-1}$.

For $s\!<\!r$, we put $A_s\!:=\!\bigoplus_{j=1}^s\KK\gamma_j$ and $A_{r-s}\!:=\!\bigoplus_{j=s+1}^r\KK\gamma_j$, so that $A_r\!=\!A_s\!\oplus\!A_{r-s}$. Given $M \in \rep(K_s)$, we denote by $\Inf(M) \in \rep(K_r)$ the 
representation with underlying pair of spaces $(M_1,M_2)$ and structure map
\[ \psi_{\Inf(M)}((a\!+\!b)\otimes m) := \psi_M(a\otimes m) \ \ \ \ \ \ a \in A_s, b \in A_{r-s}, m \in M_1.\]
Equivalently,
\[\Inf(M)(\gamma_i) = \left\{\begin{array}{cc} M(\gamma_i) & 1\!\le\!i\! \le\! s \\ 0 & s\!+\!1\!\le\!i\!\le\! r.\end{array} \right.\] 
Note that
\[ \iota : \GL(A_s) \lra \GL(A_r) \ \ ; \ \ h \mapsto h\!\oplus\!\id_{A_{r-s}}\]
is an injective homomorphism of algebraic groups such that
\[ (\ast) \ \ \ \ \ \Inf(M)^{\iota(h)} = \Inf(M^{(h)}) \ \ \ \ \ \forall \ h \in \GL(A_s).\]
In the sequel, we shall identify $\GL(A_s)$ with its image under $\iota$ in $\GL(A_r)$.

\bigskip

\begin{Remark} If $N \in \rep(K_s)$ is homogeneous, then ($\ast$) shows that $\Inf(N)$ is $\GL(A_s)$-homogeneous. However, $\Inf(N)$ is usually not $s$-homogeneous. \end{Remark}  

\bigskip

\begin{Lem} \label{Const3} Let $d \in \{1,\ldots, r\!-\!1\}$. The following statements hold:
\begin{enumerate}
\item For $\fv \in \Gr_d(A_r)$, we have $\udim E(\fv)\!=\!(d,rd\!-\!1)$, $\udim \sigma_{K_r}(E(\fv))\!=\!(1,d)$ and $\udim \tau_{K_r}(E(\fv))\!=\!(r\!-\!d,1)$. 
\item Let $M \in \rep(K_r)$ be regular such that $\cV(K_r,d)_M\!\ne\!\Gr_d(A_r)$.
\begin{enumerate}
\item We have $\tau^{-1}_{K_r}(M) \in \repp(K_r,d)$. 
\item If $d\!>\!1$, then $\sigma_{K_r}^{-1}(M) \in \repp(K_r,d\!-\!1)$. 
\item If $d\!>\!1$, then $\tau_{K_r}^{-1}(M)\in \repp(K_r,r\!-\!1)$.\end{enumerate} \end{enumerate} \end{Lem}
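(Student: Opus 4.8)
\textbf{Proof proposal for Lemma \ref{Const3}.}

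The plan is to prove (1) by a direct dimension-vector computation and then derive (2) from Theorem \ref{Fam5}, Theorem \ref{CatRep3}, and the AR-translation formulas, using that $\cV(K_r,d)_M \ne \Gr_d(A_r)$ together with the semicontinuity results already established.

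For part (1): pick $\alpha \in \msim^{-1}(\fv)$. By Proposition \ref{Fam3}(1) the module $\coker\bar\alpha$ is elementary with $\udim\coker\bar\alpha = (1,r\!-\!d)$. Applying $\tau_{K_r}$ and using $\udim\tau_{K_r}(N) = \Phi_r\dact\udim N$ for non-projective indecomposables (and noting $\coker\bar\alpha$ is regular, hence non-projective), one computes $\Phi_r\dact(1,r\!-\!d) = (r^2\!-\!1\!-\!r(r\!-\!d),\, r\!-\!(r\!-\!d)) = (rd\!-\!1,\, d)$, so $\udim\tau_{K_r}(\coker\bar\alpha) = (rd\!-\!1,d)$. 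Since $E(\fv) = D_{K_r}(\tau_{K_r}(\coker\bar\alpha))$ and the duality $D_{K_r}$ reverses the dimension vector, $\udim E(\fv) = (d, rd\!-\!1)$. For $\sigma_{K_r}(E(\fv))$: it was already noted in the proof of Lemma \ref{CatRep2} that $\udim\sigma_{K_r}(E(\fv)) = (1,d)$; alternatively apply the formula $\udim\sigma_{K_r}(M) = (r\dim_\KK M_1\!-\!\dim_\KK M_2,\dim_\KK M_1)$ to $(d,rd\!-\!1)$, giving $(rd\!-\!(rd\!-\!1),d) = (1,d)$. Finally $\tau_{K_r}(E(\fv)) \cong \sigma_{K_r}(\sigma_{K_r}(E(\fv)))$, so its dimension vector is $(r\cdot 1 - d,\, 1) = (r\!-\!d,1)$, after checking $\sigma_{K_r}(E(\fv))$ is not isomorphic to $P_0(r)$ (clear, as $\dim_\KK\sigma_{K_r}(E(\fv))_1 = 1 \ne 0$).

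For part (2)(a): since $\cV(K_r,d)_M \ne \Gr_d(A_r)$, there is $\fv \in \Gr_d(A_r)$ with $\fv \notin \cV(K_r,d)_M$, hence by Theorem \ref{Fam5} we have $\Hom_{K_r}(E(\fv),M) = (0)$. Now $M$ is regular, so $\tau_{K_r}(\tau_{K_r}^{-1}(M)) \cong M$ and the identity $\Hom_{K_r}(E,N) \cong \Hom_{K_r}(\tau_{K_r}(E),\tau_{K_r}(N))$ for $E$ regular (established as (ii) just before Lemma \ref{CatRep2}; note $E(\fw)$ is elementary hence regular) gives, for every $\fw \in \Gr_d(A_r)$,
\[ \Hom_{K_r}(E(\fw),\tau_{K_r}^{-1}(M)) \cong \Hom_{K_r}(\tau_{K_r}(E(\fw)),M). \]
The point is that $\tau_{K_r}(E(\fw))$ has dimension vector $(r\!-\!d,1)$ by (1), so it maps to $M$ only through its top; more precisely, one argues via Theorem \ref{Fam5} and the remark following it, exactly as in the proof of Proposition \ref{CatRep1}(1) (which cites \cite[(2.1.3)]{Bi20} for $\tau^{-1}_{K_r}$-stability of the right $\Hom$-orthogonal family), to conclude $\tau_{K_r}^{-1}(M) \in \repp(K_r,d)$. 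Actually the cleanest route: Proposition \ref{CatRep1}(1) already states that the full subcategory $\rep_X(K_r,d)$ with $\cV(K_r,d)_{(-)} \subseteq X$ is closed under $\tau_{K_r}^{-1}$ (see the Remark after Proposition \ref{CatRep1}); applying this with $X = \cV(K_r,d)_M \subsetneq \Gr_d(A_r)$ and then observing that Lemma \ref{Strat3}-type dimension growth forces $\cV(K_r,d)_{\tau_{K_r}^{-1}(M)} = \emptyset$ is not quite immediate, so instead I would argue directly: $\tau_{K_r}^{-1}(M)$ is regular indecomposable (as $M$ is), hence by Proposition \ref{Strat1} the set $\repp(K_r,d) \cap \cC$ is the cone of successors of $M_{\cC,d}$, and since $\tau_{K_r}^{-1}(M)$ is a successor of $M$, which is itself a successor of $M_{\cC,d}$ once we know $M \in \cV(K_r,d)$-complement-related position — here the hypothesis $\cV(K_r,d)_M \ne \Gr_d(A_r)$ is exactly what guarantees $M$ lies in the closure of the cone, so $\tau^{-1}_{K_r}(M)$, being strictly "later", lies in $(M_{\cC,d}\!\rightarrow) = \repp(K_r,d)\cap\cC$. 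This gives (a).

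For (2)(b) and (2)(c), assume $d > 1$. Part (a) gives $\tau_{K_r}^{-1}(M) \in \repp(K_r,d)$; since $d \ge 2 > 1$, Proposition \ref{CatRep1}(3) yields $\tau_{K_r}^{-1}(M) \in \repp(K_r,d\!-\!1) \subseteq \cdots \subseteq \repp(K_r,1)$, and in particular it is regular (as $M$ is regular, so is $\tau^{-1}_{K_r}(M)$) and lies in $\repp(K_r,r\!-\!1)$ by Theorem \ref{CatRep3}(3) applied to the $\sigma^{-1}_{K_r}$-stability: indeed $\tau_{K_r}^{-1}(M) \cong \sigma_{K_r}^{-2}(M)$, and $\sigma_{K_r}^{-1}(\repp(K_r,d\!-\!1)) \subseteq \repp(K_r,r\!-\!1)$, while $\sigma_{K_r}^{-1}(M)$, which has $\cV(K_r,d\!-\!1)_{\sigma_{K_r}^{-1}(M)}$ nonempty a priori, still lands in $\repp(K_r,d\!-\!1)$ because $\sigma_{K_r}^{-1}(M) = \tau^{-1}_{K_r}(\sigma_{K_r}(M))$ and $\sigma_{K_r}(M)$ is regular with $\cV(K_r,d\!-\!1)_{\sigma_{K_r}(M)} \ne \Gr_{d-1}(A_r)$ — this last point needs checking: use the $\Hom$-duality $\Hom_{K_r}(E,N) \cong \Hom_{K_r}(\sigma_{K_r}(E),\sigma_{K_r}(N))$ for $E$ regular together with Lemma \ref{CatRep2}, which relates $\sigma_{K_r}(E(\fw))$ for $\fw \in \Gr_{d-1}(A_r)$ to $\coker$'s of $(r\!-\!d\!+\!1)$-planes, to transfer the nonvanishing of $\cV(K_r,d)_M^c$ down one step. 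So the main obstacle is precisely this bookkeeping: tracking how the (non-full) rank variety of $M$ transforms under a single application of $\sigma_{K_r}^{-1}$, to guarantee that $\sigma_{K_r}^{-1}(M)$ still has a $(d\!-\!1)$-plane on which it restricts to a projective — equivalently, that $\cV(K_r,d\!-\!1)_{\sigma^{-1}_{K_r}(M)} \ne \Gr_{d-1}(A_r)$ — after which (b) follows from (a) (with $d$ replaced by $d\!-\!1$) and the equation $\sigma^{-1}_{K_r}(M) = \tau^{-1}_{K_r}(\sigma_{K_r}(M))$, and (c) follows from (a) combined with Theorem \ref{CatRep3}(3). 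I expect the cleanest writeup to handle this via Lemma \ref{CatRep2} and the two $\Hom$-isomorphisms (i),(ii) preceding Theorem \ref{CatRep3}, reducing everything to the single observation $\{[\sigma_{K_r}(E(\fv))] : \fv \in \Gr_d(A_r)\} = \{[\coker\fw] : \fw \in \Gr_{r-d}(A_r)\}$.
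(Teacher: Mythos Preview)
Your treatment of part (1) is correct and essentially identical to the paper's.

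For part (2) there is a genuine gap. Your AR-component argument for (a) is circular: you assert that ``the hypothesis $\cV(K_r,d)_M \ne \Gr_d(A_r)$ is exactly what guarantees $M$ lies in the closure of the cone, so $\tau^{-1}_{K_r}(M)$, being strictly `later', lies in $(M_{\cC,d}\!\rightarrow)$'', but this is precisely the content of the lemma. A priori $M$ could be $\tau_{K_r}^k(M_{\cC,d})$ for arbitrarily large $k$, and then $\tau^{-1}_{K_r}(M)$ would still lie outside the cone; the whole point is to show $k \le 1$. (Also, $M$ is only assumed regular, not indecomposable, so the component argument needs an extra reduction.)

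Your first attempt was closer: you correctly reduce to showing $\Hom_{K_r}(\tau_{K_r}(E(\fw)),M) = (0)$ for all $\fw$, and you note $\udim\tau_{K_r}(E(\fw)) = (r\!-\!d,1)$. What you are missing is the paper's key step. Fix the specific $\fv$ with $\Hom_{K_r}(E(\fv),M) = (0)$. Since $\dim_\KK \tau_{K_r}(E(\fw))_2 = 1 < d(r\!-\!d) = d\cdot\dim_\KK\tau_{K_r}(E(\fw))_1$, the map $\psi_{\tau_{K_r}(E(\fw)),\fv}$ cannot be injective, so Proposition \ref{Fam4}(2) gives $\Hom_{K_r}(E(\fv),\tau_{K_r}(E(\fw))) \ne (0)$. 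Now $\tau_{K_r}(E(\fw))$ is elementary, $E(\fv)$ and $M$ are regular, and \cite[(2.1.1)]{Bi20} (the ``composition through an elementary module'' property) forces $\Hom_{K_r}(E(\fv),M) \ne (0)$ whenever $\Hom_{K_r}(\tau_{K_r}(E(\fw)),M) \ne (0)$ --- a contradiction. This is the missing ingredient; your references to \cite[(2.1.3)]{Bi20} and the $\tau^{-1}$-stability of $\rep_X(K_r,d)$ do not supply it. Part (b) is handled analogously (with $\sigma_{K_r}(E(\fu))$ of dimension vector $(1,d\!-\!1)$ playing the role of $\tau_{K_r}(E(\fw))$), and your derivation of (c) from (b) via Theorem \ref{CatRep3}(2) is then correct and matches the paper.
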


\begin{proof} (1) Let $\alpha \in \Inj_\KK(A_d,A_r)$ be such that $\im \alpha\!=\!\fv$. Directly from the definition we obtain 
\[ \udim D_{K_r}(E(\fv)) = \udim\tau_{K_r}(\coker\bar{\alpha}) = \Phi_r\dact(1,r\!-\!d) = (rd\!-\!1,d),\]
whence $\udim E(\fv)\!=\!(d,rd\!-\!1)$. Writing $\sigma_r(a,b)\!:=\!(ra\!-\!b,a)$, we have $\udim \sigma_{K_r}(N)\!=\!\sigma_r(\udim N)$ for every $N \in \rep(K_r)$. The remaining assertions now follow by applying $\sigma_r$ 
to $(d,rd\!-\!1)$ and $(1,d)$, respectively.

(2) In view of Theorem \ref{Fam5}, our current assumption provides $\fv \in \Gr_d(A_r)$ with $\Hom_{K_r}(E(\fv),M)$ $=\!(0)$.

(a) Let $\fu \in \Gr_d(A_r)$ be such that $\Hom_{K_r}(E(\fu),\tau_{K_r}^{-1}(M))\!\ne\!(0)$. Then $\Hom_{K_r}(\tau_{K_r}(E(\fu)),M)\!\ne\!(0)$ and since
$d\dim_\KK \tau_{K_r}(E(\fu))_1\!=\!d(r\!-\!d)\!>\!1\!=\!\dim_\KK\tau_{K_r}(E(\fu))_2$, an application of Proposition \ref{Fam4} ensures that
\[ 0 \ne \dim_\KK\ker\psi_{\tau_{K_r}(E(\fu)),\fv} = \dim_\KK\Hom_{K_r}(E(\fv),\tau_{K_r}(E(\fu)).\]
Since $\tau_{K_r}(E(\fu))$ is elementary, while $E(\fv)$ and $M$ are regular, it follows from \cite[(2.1.1)]{Bi20} that $\Hom_{K_r}(E(\fv),M)\!\ne\!(0)$, a contradiction. 

(b) This is analogous to (a). 

(c) Thanks to (b), we have $\sigma_{K_r}(\tau^{-1}_{K_r}(M))\cong \sigma^{-1}_{K_r}(M) \in \repp(K_r,d\!-\!1) \subseteq \EKP(K_r)$. The assertion now follows from Theorem \ref{CatRep3}. \end{proof}

\bigskip
\noindent
Our next result produces $(s\!-\!1)$-homogeneous $1$-type vector bundles $\cF$ that fail to be $\GL(A_s)$-homogeneous. 

\bigskip

\begin{Prop} \label{Const4} Let $r\!>\!s\!>\!d\geq\!1$. Suppose that $M \in \rep(K_s)$ is an indecomposable, non-simple representation such that $\cV(K_s,d)_M\!\ne\!\Gr_d(A_s)$.
Then the following statements hold:
\begin{enumerate}
\item $\Inf(M)$ and $\sigma^{-3}_{K_r}(\Inf(M))$ are regular, quasi-simple representations.
\item $\sigma^{-2}_{K_r}(\Inf(M)) \in \repp(K_r,d)$.
\item We have $(\sigma^{-3}_{K_r}(\Inf(M))\!\rightarrow) \subseteq \repp(K_r,r\!-\!1)$. For each $N \in (\sigma^{-3}_{K_r}(\Inf(M))\!\rightarrow)$, the Steiner bundle $\TilTheta(N) \in \StVect(\PP^{r-1})$ is $(r\!-\!2)$-homogenous 
of $1$-type, but not homogeneous. 
\item Suppose that $M$ is not homogeneous. For each $N \in (\sigma^{-3}_{K_r}(\Inf(M))\!\rightarrow)$, the Steiner bundle $\TilTheta(N) \in \StVect(\PP^{r-1})$ is $(s\!-\!1)$-homogeneous of $1$-type, but not 
$\GL(A_s)$-homogeneous. \end{enumerate} \end{Prop}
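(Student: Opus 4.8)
The plan is to reduce the $\GL(A_s)$‑(in)homogeneity of $\TilTheta(N)$ to that of the regular representation $X:=\sigma^{-3}_{K_r}(\Inf(M))$, transporting between representations and Steiner bundles via Lemma \ref{HomM1}. Throughout I identify $\GL(A_s)$ with its image $\iota(\GL(A_s))\subseteq\GL(A_r)$, which, with respect to the basis $\{\gamma_1,\dots,\gamma_r\}$ adapted to $A_r=A_s\oplus A_{r-s}$, consists of the block matrices $\left(\begin{smallmatrix} h & 0\\ 0 & I_{r-s}\end{smallmatrix}\right)$.

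First I would observe that $\Inf(M)$ is not $\GL(A_s)$‑homogeneous. Indeed $\Inf$ is fully faithful (the arrows $\gamma_{s+1},\dots,\gamma_r$ act as zero on both sides, imposing no condition), and the identity $\Inf(M)^{(\iota(h))}\cong\Inf(M^{(h)})$ recorded just before Lemma \ref{Const3} shows that $\Inf(M)^{(\iota(h))}\cong\Inf(M)$ would force $M^{(h)}\cong M$; as $M$ is not homogeneous, some $h$ violates this. Next I would push this through the three shifts. Set $\phi\colon\GL(A_r)\to\GL(A_r)$, $\phi(g):=(g^{-1})^{\tr}$; this is an involutive group automorphism, and in the block form above $\phi(\iota(h))=\iota((h^{-1})^{\tr})$, so $\phi$ restricts to a bijection of $\GL(A_s)$. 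Iterating Lemma \ref{HomM3}(1) three times and using $\phi^2=\id$ gives $\sigma^3_{K_r}(Y^{(g)})\cong\sigma^3_{K_r}(Y)^{(\phi(g))}$ for all $Y\in\rep(K_r)$ and $g\in\GL(A_r)$. Taking $Y:=X$ (regular by (1), so that $\sigma^3_{K_r}\circ\sigma^{-3}_{K_r}$ acts as the identity and $\sigma^3_{K_r}(X)\cong\Inf(M)$) yields $\sigma^3_{K_r}(X^{(g)})\cong\Inf(M)^{(\phi(g))}$; hence $X^{(g)}\cong X$ for every $g\in\GL(A_s)$ would force $\Inf(M)^{(k)}\cong\Inf(M)$ for every $k\in\GL(A_s)$ (as $\phi$ permutes $\GL(A_s)$), contradicting the previous paragraph. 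Thus $X$ is not $\GL(A_s)$‑homogeneous.

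I would then invoke \cite[(2.3)]{Fa11}: the connected group $\GL(A_s)$ acts on $\rep(K_r)$ by the auto‑equivalences $Y\mapsto Y^{(\iota(h))}$, inducing automorphisms of $\Gamma(K_r)$ that fix every component (by connectedness); since $X$ lies in a regular component $\cC$ and is not $\GL(A_s)$‑homogeneous, loc.\ cit.\ shows that \emph{no} object of $\cC$ is $\GL(A_s)$‑homogeneous, in particular no $N\in(\sigma^{-3}_{K_r}(\Inf(M))\!\to)\subseteq\cC$. Finally, by (3) such an $N$ lies in $\repp(K_r,r\!-\!1)\subseteq\repp(K_r,s)\subseteq\EKP(K_r)$ (Proposition \ref{CatRep1}(3), as $s\le r\!-\!1$); combining Lemma \ref{HomM1} with the relations $M^{(g)}=(g^{-1})^\ast(M)$ and $g^\ast\cF=\widehat{g^{-1}}^\ast(\cF)$ and the equivalence $\TilTheta\colon\EKP(K_r)\to\StVect(\PP^{r-1})$ of Theorem \ref{Fun3}, one gets that $\TilTheta(N)$ is $\GL(A_s)$‑homogeneous if and only if $N$ is; hence $\TilTheta(N)$ is not $\GL(A_s)$‑homogeneous.

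For the remaining ``$(s\!-\!1)$‑homogeneous of $1$‑type'' assertion, I would apply Lemma \ref{GUM3} to $N\in\repp(K_r,s)$ with $s\in\{2,\dots,r\!-\!1\}$: for every $\alpha\in\Inj_\KK(A_s,A_r)$,
\[ \hat\alpha^\ast(\TilTheta(N))\cong\cO^{c_1(\TilTheta(N))}_{\PP(A_s)}\oplus\cT_{\PP(A_s)}^{\,\rk(\TilTheta(N))-(s-1)c_1(\TilTheta(N))}(-1)\cong\cE_0[s]^{n_0}\oplus\cE_1[s]^{n_1}. \]
This exhibits $\TilTheta(N)$ as $(s\!-\!1)$‑homogeneous, and of $1$‑type provided $n_1=\rk(\TilTheta(N))-(s\!-\!1)c_1(\TilTheta(N))=\Delta_N(s)\neq0$; since $N$ is regular indecomposable (so $\dim_\KK N_1\ge1$) and $r>s\ge2$, Theorem \ref{MinType2} gives $\Delta_N(s)\ge(r\!-\!s)\min\{\dim_\KK N_1,s\}\ge1$. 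The main obstacle is the second step — keeping track of how $\sigma_{K_r}$ transports $\GL(A_s)$‑homogeneity, and in particular checking that the transpose‑inverse automorphism $\phi$ stabilizes the subgroup $\GL(A_s)\subseteq\GL(A_r)$, which hinges on the decomposition $A_r=A_s\oplus A_{r-s}$ being compatible with the distinguished basis.
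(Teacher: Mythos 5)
Your proposal addresses only part (4) of the four\,-part proposition; parts (1)--(3) are taken as black boxes (you invoke (1) for the regularity of $\Inf(M)$ and (3) for $(\sigma^{-3}_{K_r}(\Inf(M))\!\rightarrow)\subseteq\repp(K_r,r\!-\!1)$) and are never proved. That is the one genuine gap: the statement also asserts that $\Inf(M)$ and $\sigma^{-3}_{K_r}(\Inf(M))$ are regular and quasi-simple (the paper quotes \cite[(3.2.1),(3.2.2)]{Bi20} for this), that $\sigma^{-2}_{K_r}(\Inf(M))\in\repp(K_r,d)$ (via the compatibility $(\iota\circ\alpha)^\ast(\Inf(M))\cong\alpha^\ast(M)$ together with Lemma \ref{Const3}(2a)), and the homogeneity claims of (3); none of these appears in your write-up.

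For part (4) itself your argument is correct and in substance the same as the paper's: inhomogeneity of $M$ passes to $\GL(A_s)$-inhomogeneity of $\Inf(M)$ via the identity $(\ast)$, then to the whole AR-component of $\sigma^{-3}_{K_r}(\Inf(M))$ by connectedness of $\GL(A_s)$ and \cite[(2.3)]{Fa11}, and the $(s\!-\!1)$-homogeneity of $1$-type comes from $\repp(K_r,r\!-\!1)\subseteq\repp(K_r,s)$, Lemma \ref{GUM3} and Theorem \ref{MinType2}. You are in fact more careful than the paper on one point: since $\sigma^{-3}_{K_r}$ need not preserve the component of $\Inf(M)$, one has to transport $\GL(A_s)$-inhomogeneity through the shift, and your check that the involution $g\mapsto (g^{-1})^{\tr}$ of Lemma \ref{HomM3}(1) stabilizes the block subgroup $\iota(\GL(A_s))$ is exactly the detail that the paper's terse ``Since $\GL(A_s)$ is connected\dots'' leaves implicit. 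One small slip: in your displayed decomposition the exponents are swapped (as they are in the statement of Lemma \ref{GUM3}, though not in its proof); the multiplicity of $\cT_{\PP(A_s)}(-1)$ is $c_1(\TilTheta(N))=\dim_\KK N_1$ and that of $\cO_{\PP(A_s)}$ is $\Delta_N(s)$. Since both are positive ($\dim_\KK N_1\ge 1$ for a regular indecomposable $N$, and $\Delta_N(s)\ge r\!-\!s\ge 1$ by Theorem \ref{MinType2}), the conclusion that $\TilTheta(N)$ is of $1$-type is unaffected.
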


\begin{proof} (1) By \cite[(3.2.1), (3.2.2)]{Bi20}, the representation $\Inf(M) \in \rep(K_r)$ is indecomposable, regular and quasi-simple. Therefore, the same holds for $\sigma^{-3}_{K_r}(\Inf(M))$.

(2) By assumption, there is $\alpha \in \Inj_\KK(A_d,A_s)$ such that $\alpha^\ast(M)$ is projective. Let $\iota : A_s \lra A_r$ be the inclusion. Then $\iota\circ\alpha \in \Inj_\KK(A_d,A_r)$ and $(\iota\circ \alpha)^\ast(\Inf(M)) \cong \alpha^\ast(\iota^\ast(\Inf(M))) \cong \alpha^\ast(M)$, so that $\im (\iota\circ \alpha) \not \in \cV(K_r,d)_{\Inf(M)}$. The assertion now follows from (1) and Lemma \ref{Const3}(2a). 

(3) By (1),(2) and Theorem \ref{CatRep3}(3) we know that $\sigma_{K_r}(\sigma^{-3}_{K_r}(\Inf(M))) \in \EKP(K_r)$ is quasi-simple. Now Theorem \ref{CatRep3}(2) yields $\sigma^{-3}_{K_r}(\Inf(M)) 
\in \repp(K_r,r\!-\!1)$. We conclude with Theorem \ref{Strat6} that $(\sigma^{-3}_{K_r}(\Inf(M))\!\rightarrow) \subseteq \repp(K_r,r\!-\!1)$. As $\Inf(M)(\gamma_r)\!=\!0$, the 
representation $\Inf(M)$ is not homogeneous. Hence Lemma \ref{HomM3} shows that $\sigma^{-3}_{K_r}(\Inf(M))$ enjoys the same property. Consequently, every representation belonging to the (regular) component of 
$\sigma^{-3}_{K_r}(\Inf(M))$ is not homogeneous. In conclusion, each bundle $\TilTheta (N) \in \StVect(\PP^{r-1})$ with $N \in (\sigma^{-3}_{K_r}(\Inf(M))\!\rightarrow)$ is $(r\!-\!2)$-homogeneous of $1$-type and not 
homogeneous. 

(4) The arguments of Corollary \ref{UM5} in conjunction with ($\ast$) show that the representations belonging to $(\sigma^{-3}_{K_r}(\Inf(M))\!\rightarrow) \subseteq \repp(K_r,r\!-\!1)$ are $(r\!-\!1)$-homogeneous and therefore belong to $\repp(K_r,s).$ Hence the corresponding Steiner bundles are $(s\!-\!1)$-homogeneous of $1$-type.  Since $\GL(A_s)$ is connected, none of these representations is  $\GL(A_s)$-homogeneous. \end{proof}

\bigskip

\begin{Example} Let $s\!=\!2$ and consider $M\!=\!(\KK,\KK,(\id_\KK,\id_\KK)) \in \rep(K_2)$. Then $\cV(K_2,1)_M\!\ne\!\PP^1$ and $M$ is indecomposable, non-simple and not homogeneous. Proposition \ref{Const4} 
thus provides Steiner bundles $\cF \in \StVect(\PP^{r-1})$ that are uniform but not $\GL(A_2)$-homogeneous.  \end{Example}

\bigskip

\section{Stability} \label{S:Stab}
Throughout this section, we assume that $r\!\ge\!3$. Let $\cF \in \Coh(\PP^{r-1})$ be a torsion-free sheaf. By definition, the $\cO_{\PP^{r-1},x}$-module $\cF_x$ is torsion-free for every $x \in \PP^{r-1}$. If $\cF\!\ne\!(0)$,
then $\rk(\cF)\!\ne\!0$ and we denote by
\[ \mu(\cF) := \frac{c_1(\cF)}{\rk(\cF)}\]
the {\it slope} of $\cF$. Recall that $\cF$ is called {\it semistable} ({\it stable}), provided 
\[ \mu(\cG) \le \mu(\cF) \ \ \ \ (\mu(\cG) < \mu(\cF))\]
for every proper subsheaf $(0) \subsetneq \cG \subsetneq \cF$ (such that $\rk(\cG)\!<\!\rk(\cF)$).

In this section, we relate the stability of Steiner bundles on $\PP^{r-1}$ to that of the corresponding representations of $\EKP(K_r)\!=\!\repp(K_r,1)$. Let $M \in \EKP(K_r)$. As before, write $\TilTheta(M)\!:=\!\TilTheta_1(M)
$ and recall the defining exact sequence
\[ (0) \lra \widetilde{M_1} \stackrel{\TilTheta_M}{\lra} \widetilde{M_2} \lra \TilTheta(M) \lra (0),\]
where $\widetilde{M_i}\!=\!M_i\!\otimes_\KK\!\cO_{\PP^{r-1}}(i\!-\!2)$ for $i \in \{1,2\}$. We continue to write $\Delta_M\!:=\!\Delta_M(1)$, so that Corollary \ref{MinType3} yields $\Delta_M\!\ge\!r\!-\!1$ in case $M_1\!\ne\!(0)$.

We begin by recalling the See-Saw Lemma:

\bigskip

\begin{Lemma}\label{Stab1} Let $(0) \lra \cE \lra \cF \lra \cG \lra (0)$ be a short exact sequence of non-zero torsion-free coherent sheaves. Then we have $\mu(\cE)\! \leq\! \mu(\cF)\!\leq\!\mu(\cG)$ or $\mu(\cG)\!
\leq\!\mu(\cF)\!\leq\!\mu(\cE)$. Moreover, $\mu(\cH)\!=\!\mu(\cF)$ for $\cH \in \{\cE,\cG\}$ implies $\mu(\cE)\!=\!\mu(\cF)\!=\!\mu(\cG)$. \end{Lemma}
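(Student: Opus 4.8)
\textbf{Proof proposal for Lemma \ref{Stab1} (the See-Saw Lemma).}

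The plan is to reduce the statement to a single additivity identity for ranks and first Chern classes, together with a short sign-chasing argument. First I would record that for a short exact sequence of torsion-free coherent sheaves on $\PP^{r-1}$ one has $\rk(\cF)\!=\!\rk(\cE)\!+\!\rk(\cG)$ and, via the Whitney sum formula, $c_1(\cF)\!=\!c_1(\cE)\!+\!c_1(\cG)$ in $A^1(\PP^{r-1})$. Since $A^1(\PP^{r-1})\cong\ZZ$ (cf.\ the discussion of $A(\Gr_d(A_r))$ in Section \ref{S:Dc}), we may and do regard $c_1$ as an integer; the $\cF,\cE,\cG$ are non-zero torsion-free, hence have positive rank, so all three slopes $\mu(\cE),\mu(\cF),\mu(\cG)$ are well defined. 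Writing $a\!:=\!\rk(\cE)$, $b\!:=\!\rk(\cG)$, $p\!:=\!c_1(\cE)$, $q\!:=\!c_1(\cG)$, we then have $\mu(\cF)\!=\!\frac{p+q}{a+b}$, and $\mu(\cF)$ is a weighted mean of $\mu(\cE)\!=\!\frac pa$ and $\mu(\cG)\!=\!\frac qb$ with positive weights $a,b$.

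The core of the argument is the elementary fact that a weighted mean of two real numbers with strictly positive weights lies (weakly) between them. Concretely, I would compute
\[ \mu(\cF)-\mu(\cE) = \frac{p+q}{a+b}-\frac pa = \frac{a(p+q)-p(a+b)}{a(a+b)} = \frac{aq-bp}{a(a+b)}, \]
and similarly
\[ \mu(\cG)-\mu(\cF) = \frac qb - \frac{p+q}{a+b} = \frac{q(a+b)-b(p+q)}{b(a+b)} = \frac{aq-bp}{b(a+b)}. \]
Thus $\mu(\cF)-\mu(\cE)$ and $\mu(\cG)-\mu(\cF)$ have the same sign as the single integer $aq-bp$ (the denominators being positive). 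If $aq-bp\!\ge\!0$ we get $\mu(\cE)\!\le\!\mu(\cF)\!\le\!\mu(\cG)$; if $aq-bp\!\le\!0$ we get $\mu(\cG)\!\le\!\mu(\cF)\!\le\!\mu(\cE)$. This proves the first assertion.

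For the "moreover'' clause, suppose $\mu(\cH)\!=\!\mu(\cF)$ for some $\cH\in\{\cE,\cG\}$. If $\cH\!=\!\cE$, the first display gives $aq-bp\!=\!0$, and then the second display forces $\mu(\cG)\!=\!\mu(\cF)$ as well, so $\mu(\cE)\!=\!\mu(\cF)\!=\!\mu(\cG)$; the case $\cH\!=\!\cG$ is symmetric, using the second display to conclude $aq-bp\!=\!0$ and then the first to get $\mu(\cE)\!=\!\mu(\cF)$. I do not anticipate a genuine obstacle here; the only point requiring a word of care is the justification that $c_1$ is additive in short exact sequences of coherent (not just locally free) sheaves and that torsion-free sheaves on the smooth variety $\PP^{r-1}$ have well-defined positive rank and first Chern class — both are standard (e.g.\ via the Whitney formula for the total Chern class, which extends from vector bundles to coherent sheaves on a smooth variety by taking finite locally free resolutions), and I would simply cite \cite[(5.3)]{EH} or the analogous statement in \cite{Ha77} rather than reprove it.
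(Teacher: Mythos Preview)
Your proof is correct and takes essentially the same approach as the paper: both rest on the Whitney sum formula to write $\mu(\cF)$ as a positive-weight average of $\mu(\cE)$ and $\mu(\cG)$, and then observe that a weighted mean lies between the two values. The paper phrases this as a short contradiction argument, while you compute the two differences $\mu(\cF)-\mu(\cE)$ and $\mu(\cG)-\mu(\cF)$ directly and note they share the sign of $aq-bp$; the content is identical.
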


\begin{proof} The Whitney sum formula implies
\[\mu(\cF)\!=\!\frac{\rk(\cE) \mu(\cE)+\rk(\cG) \mu(\cG)}{\rk(\cF)}.\]
If $\mu(\cF)\!\le \!\mu(\cE)$ and $\mu(\cF)\!< \!\mu(\cG)$, the foregoing formula yields
\[ \mu(\cF) = \frac{\rk(\cE)\!+\!\rk(\cG)}{\rk(\cF)} \mu(\cF) < \frac{\rk(\cE) \mu(\cE)\!+\!\rk(\cG) \mu(\cG)}{\rk(\cF)} = \mu(\cF),\]
a contradiction. Thus, $\mu(\cG)\!\le\!\mu(\cF)$, and the other assertions follow analogously.\end{proof}

\bigskip

\subsection{Stability for $\EKP$-representations and Steiner bundles} \label{S:StabE} The main result of this section recasts \cite[(5.1)]{Kar93}, which was formulated for $\KK\!=\!\CC$, into our set-up.

\bigskip

\begin{Definition} For $(0)\! \neq \!M \in \EKP(K_r)$, we define 
\[\mu(M) := \frac{\dim_\KK M_1}{\Delta_M}.\] 
We say that $M$ is {\it stable}, provided $\mu(N)\! <\!\mu(M)$ for every subrepresentation $(0) \subsetneq N \subsetneq M$. We say that $M$ is {\it semistable}, provided $\mu(N)\! \leq\! \mu(M)$ for every 
subrepresentation $(0) \subsetneq N \subsetneq M$. \end{Definition}

\bigskip
\noindent
We identify the Chow ring $A(\PP^{r-1})$ with the truncated polynomial ring $\ZZ[X]/(X^r)$. Theorem \ref{Fun3} and Corollary \ref{Dc2} then show that
\[ \mu(M) = \mu(\TilTheta(M))\]
for every non-zero $M \in \EKP(K_r)$. In particular, $\mu(\TilTheta(M))\!\ge\!0$ for every such $M$.

\bigskip

\begin{Prop} \label{StabE1} Let $M \in \EKP(K_r)\!\smallsetminus\!\{(0)\}$. The following statements hold:
\begin{enumerate}
\item If $\TilTheta(M)$ is semistable, then $M$ is semistable.
\item If $\TilTheta(M)$ is stable, then $M$ is stable.
\end{enumerate} \end{Prop}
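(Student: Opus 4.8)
The plan is to prove the contrapositive in both cases: given a destabilizing (or semi-destabilizing) subrepresentation of $M$, I will produce a destabilizing (resp.\ semi-destabilizing) subsheaf of $\TilTheta(M)$. The key point is that $\TilTheta$ is exact on $\EKP(K_r)\!=\!\repp(K_r,1)$ by Corollary \ref{Fun4}, and that by Theorem \ref{Fun3} together with Corollary \ref{Dc2} it preserves slopes: for $(0)\!\ne\!N \in \EKP(K_r)$ we have $\mu(\TilTheta(N))\!=\!\mu(N)$, since $c_1(\TilTheta(N))\!=\!(\dim_\KK N_1)\sigma_1$ corresponds to the integer $\dim_\KK N_1$ under $A(\PP^{r-1})\cong \ZZ[X]/(X^r)$, and $\rk(\TilTheta(N))\!=\!\Delta_N$.

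\textbf{Main steps.} Suppose $M$ is not semistable (resp.\ not stable). Then there is a subrepresentation $(0)\subsetneq N \subsetneq M$ with $\mu(N)\!>\!\mu(M)$ (resp.\ $\mu(N)\!\ge\!\mu(M)$). Since $\EKP(K_r)$ is closed under subobjects (Remark in Section \ref{S:Rest}, or Proposition \ref{CatRep1}(3) with $d\!=\!1$), both $N$ and $M/N$ lie in $\EKP(K_r)$, so applying $\TilTheta$ to $(0)\to N\to M\to M/N\to(0)$ yields a short exact sequence $(0)\to \TilTheta(N)\to \TilTheta(M)\to \TilTheta(M/N)\to(0)$ of vector bundles, by Corollary \ref{Fun4}. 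Thus $\TilTheta(N)$ is a subsheaf of $\TilTheta(M)$. One subtlety: if $N_1\!=\!(0)$ then $N\cong\Delta_N P_0(r)$, so $\TilTheta(N)\!=\!(0)$; but in that case $\mu(N)\!=\!0\!\le\!\mu(M)$, so such $N$ is never destabilizing and in the semistable case can be ignored. Hence we may assume $N_1\!\ne\!(0)$, so $\TilTheta(N)\!\ne\!(0)$ is a genuine nonzero subbundle. Now $\mu(\TilTheta(N))\!=\!\mu(N)\!>\!\mu(M)\!=\!\mu(\TilTheta(M))$ (resp.\ $\ge$), so $\TilTheta(M)$ is not semistable (resp.\ not stable). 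For the stable case one also needs $\rk(\TilTheta(N))\!<\!\rk(\TilTheta(M))$: since $\TilTheta(M/N)$ is a bundle and $(M/N)_1$ could be zero, I would note that if $\rk\TilTheta(N)\!=\!\rk\TilTheta(M)$ then $\TilTheta(M/N)$ has rank $0$ hence is $(0)$, forcing $(M/N)_1\!=\!(0)$ and $\Delta_{M/N}\!=\!0$, i.e.\ $M/N\!=\!(0)$, contradicting $N\subsetneq M$; alternatively invoke that a nonzero proper subrepresentation with $N_1\!\ne\!(0)$ and $M/N\!\ne\!(0)$ of an $\EKP$-module has $\Delta_{M/N}\!\ge\!0$, with equality only if $(M/N)_1\!=\!(0)$.

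\textbf{Anticipated obstacle.} The genuinely delicate bookkeeping is handling the boundary cases $N_1\!=\!(0)$ and $\Delta_N\!=\!0$ (the latter cannot occur for nonzero $N\in\EKP(K_r)$, since $\Delta_X\!>\!0$ for all $X\in\EKP(K_r)\!\smallsetminus\!\{(0)\}$ as noted in Section \ref{S:GCD}, so $\mu(N)$ is always well-defined and finite). The other point requiring care is the stability case's rank inequality $\rk(\cG)\!<\!\rk(\cF)$ in the definition of stability: one must check that the subsheaf $\TilTheta(N)$ one produces is \emph{proper} as a subbundle, which follows from $M/N\!\ne\!(0)$ together with $\Delta_{M/N}\!>\!0$ whenever $(M/N)_1\!\ne\!(0)$ — and when $(M/N)_1\!=\!(0)$ one checks directly that $\mu(N)\!=\!\mu(M)$, so $N$ does not strictly destabilize and the stable conclusion for $M$ is unaffected. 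None of this is hard, but it is exactly the kind of degenerate-case analysis that must be written out carefully. Everything else reduces to the exactness and slope-preservation of $\TilTheta$ already established, combined with the See-Saw Lemma \ref{Stab1} if one prefers to argue via $\mu(\TilTheta(M/N))$ instead.
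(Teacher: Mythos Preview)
There is a genuine gap: your claim that $M/N \in \EKP(K_r)$ is false. Torsion-free classes are closed under subobjects and extensions, \emph{not} under quotients; closure under subobjects gives you $N \in \EKP(K_r)$ but says nothing about $M/N$. A concrete counterexample: take $M = P_1(r)$ with $M_1 = \KK$, $M_2 = A_r$, and $N = (0,\KK\gamma_1) \cong P_0(r)$. Then $(M/N)_1 = \KK$, $(M/N)_2 = A_r/\KK\gamma_1$, and $\gamma_1$ acts as zero on $(M/N)_1$, so $M/N \notin \EKP(K_r)$. Consequently Corollary~\ref{Fun4} does not apply, and the map $\TilTheta(\iota) : \TilTheta(N) \to \TilTheta(M)$ need not be a monomorphism; you cannot simply regard $\TilTheta(N)$ as a subsheaf of $\TilTheta(M)$.

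The paper's proof repairs exactly this point. Instead of $\TilTheta(N)$, it uses the genuine subsheaf $\msIm\TilTheta(\iota) \subseteq \TilTheta(M)$. Applying the Snake Lemma to the diagram of defining resolutions of $\TilTheta(N)$ and $\TilTheta(M)$ yields an embedding $\msKer\TilTheta(\iota) \hookrightarrow \widetilde{(M/N)_1} \cong (M_1/N_1)\otimes_\KK \cO_{\PP^{r-1}}(-1)$, a semistable bundle of slope $-1$. In the exact sequence
\[
(0) \lra \msKer\TilTheta(\iota) \lra \TilTheta(N) \lra \msIm\TilTheta(\iota) \lra (0),
\]
either the kernel vanishes and $\mu(\msIm\TilTheta(\iota)) = \mu(N)$, or $\mu(\msKer\TilTheta(\iota)) \le -1 < 0 \le \mu(\TilTheta(N))$ and the See-Saw Lemma forces $\mu(\msIm\TilTheta(\iota)) > \mu(N)$. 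Either way, (semi)stability of $\TilTheta(M)$ applied to the subsheaf $\msIm\TilTheta(\iota)$ gives $\mu(N) \le \mu(\msIm\TilTheta(\iota)) \le \mu(M)$ (with the appropriate strict inequality in the stable case, after checking the rank condition). Your contrapositive strategy and slope-preservation observation are fine; what is missing is precisely this control of $\msKer\TilTheta(\iota)$.
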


\begin{proof} Let $(0) \subsetneq N \subsetneq M$ be a proper submodule with canonical inclusion $\iota : N \hookrightarrow M$. There results a commutative diagram
\[ \begin{tikzcd}   (0) \arrow[r] & \widetilde{N_1} \arrow[r, "\TilTheta_N"] \arrow[d,"\tilde{\iota}_1"] & \widetilde{N_2} \arrow[d,"\tilde{\iota}_2"] \arrow[r] & \TilTheta(N) \arrow[r] \arrow[d,"\TilTheta(\iota)"] & (0) \\
                           (0) \arrow[r] & \widetilde{M_1} \arrow[r, "\TilTheta_M"]   & \widetilde{M_2}  \arrow[r] & \TilTheta(M) \arrow[r] & (0) 
\end{tikzcd} \] 
with exact rows. In view of $\msCoker \tilde{\iota}_1 \cong \widetilde{(M/N)_1}$ the Snake Lemma provides a monomorphism $\msKer \TilTheta(\iota) \hookrightarrow \widetilde{(M/N)_1}$. By definition,
$\widetilde{(M/N)_1} \cong (M_1/N_1)\!\otimes_\KK\cO_{\PP^{r-1}}(-1)$ is semistable (see \cite[(1.2.4)]{OSS}) and with slope $\mu( \widetilde{(M/N)_1})\!=\!-1$ whenever $ \widetilde{(M/N)_1}\!\ne\!(0).$ 

Since $\Coh(\PP^{r-1})$ is an abelian category, there is an exact sequence
\[ (\ast) \ \ \ \ \ \ (0) \lra \msKer \TilTheta(\iota) \lra \TilTheta(N) \lra \msIm\TilTheta(\iota) \lra (0)\]
and $\TilTheta$ being faithful entails $\msIm\TilTheta(\iota)\!\ne\!(0)$.

(1) Suppose that $\TilTheta(M)$ is semistable. If $\msKer \TilTheta(\iota)\!=\!(0)$, we have $\mu(N)\!=\!\mu(\TilTheta(N))\!=\!\mu(\msIm\TilTheta(\iota))\!\le\!\mu(\TilTheta(M))\!=\mu(M)$. Alternatively, 
$\mu(\msKer\TilTheta(\iota))\!\le\!-1\!<\!\mu(\TilTheta(N))$ and Lemma \ref{Stab1} in conjunction with ($\ast$) implies $\mu(N)\!=\!\mu(\TilTheta(N))\!<\!\mu(\msIm \TilTheta(\iota))\!\le\!\mu(\TilTheta(M))\!=\mu(M)
$. Hence $M$ is semistable.

(2) Suppose that $\TilTheta(M)$ is stable. Then $M$ is indecomposable and the assumption $N_1\!=\!M_1$ either implies $M\!=\!P_0(r)\!=\!N$, or $N_2 \subseteq M_2\!=\im\psi_M\!=\!\im\psi_N\subseteq N_2$, a 
contradiction. If $\rk(\TilTheta(N))\!\geq\!\rk(\TilTheta(M))$, we obtain $\Delta_N\!\geq\!\Delta_M$, whence $\mu(N)\!<\!\mu(M)$. We may therefore assume that $\rk(\TilTheta(N))\!<\!\rk(\TilTheta(M))$. 

If $\msKer\TilTheta(\iota)\!=\!(0)$, we have $\rk(\msIm\TilTheta(\iota))\!=\!\rk(\TilTheta(N))\!<\!\rk(\TilTheta(M))$ as well as $\mu(N)\!=\!\mu(\TilTheta(N))\!=\!\mu(\msIm\TilTheta(\iota))\!<\!\mu(\TilTheta(M))\!=\!\mu(M)$.
Alternatively, we may proceed as in (1). \end{proof}

\bigskip

\begin{Remark} There is a slope $\mu$, defined for non-zero, torsion-free coherent sheaves of $\Gr_d(A_r)$, such that
\[ \mu(\TilTheta_d(M)) = \mu(M)\]
for all $M \in \repp(K_r,d)\!\smallsetminus\!\{(0)\}$. In view of \cite{Bis04}, the bundle $\cU_{(r,d)}$ is semistable relative to this slope, so that the foregoing result may be generalized to this context. \end{Remark} 

\bigskip

\begin{Lem} \label{StabE2} Let $M \in \repp(K_r,d)$ be non-projective and of minimal type. Then $M$ is stable. \end{Lem}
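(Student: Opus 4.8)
The plan is to show stability directly at the level of representations, exploiting the injectivity of nonzero homomorphisms provided by Corollary \ref{MinType3}(2a). Let $M \in \repp(K_r,d)$ be non-projective of minimal type, so $\Delta_M(d)=d(r\!-\!d)$, and recall from Corollary \ref{MinType3}(2b),(2c) that $M$ is a brick with $\dim_\KK M_1\!\ge\!d\!+\!1$. Slope is computed with respect to $\mu(M)=\dim_\KK M_1/\Delta_M(1)$, so I must compare $\dim_\KK N_1/\Delta_N(1)$ with $\dim_\KK M_1/\Delta_M(1)$ for every proper nonzero subrepresentation $N \subsetneq M$. Since $\repp(K_r,d)$ is closed under subobjects (Remark after the definition of $\repp(K_r,d)$, i.e. Lemma \ref{Fam2}), every such $N$ lies in $\repp(K_r,d)$, and so does $M/N$; moreover $\Delta_M(d)=\Delta_N(d)+\Delta_{M/N}(d)$.

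The key dichotomy is on whether $M/N$ is projective. First I would handle the case $M/N$ non-projective: then by Corollary \ref{MinType3}(2a) the quotient map $M \to M/N$ is either injective — impossible since $N\ne(0)$ — or, more precisely, the hypothesis of (2a) applies to the nonzero map $\pi\colon M\to M/N$ when $M$ is non-projective, forcing $\pi$ injective, again a contradiction. Hence whenever $M/N$ is a nonzero proper quotient it must be projective, say $M/N \cong \Delta_{M/N}(r)P_0(r)\!\oplus\!(\dim_\KK(M/N)_1)P_1(r)$. Now $\Delta_{M/N}(d)\ge 0$, and I want the strict inequality $\mu(N)<\mu(M)$, which after clearing denominators (all $\Delta$'s are positive once the relevant $M_1$ is nonzero, by Corollary \ref{MinType3}(1b) applied to $\repp(K_r,d)$) amounts to
\[
\dim_\KK N_1\cdot\Delta_M(1) < \dim_\KK M_1\cdot\Delta_N(1).
\]
Writing $\dim_\KK M_1=\dim_\KK N_1+\dim_\KK(M/N)_1$ and $\Delta_M(1)=\Delta_N(1)+\Delta_{M/N}(1)$, this reduces to $\dim_\KK N_1\cdot\Delta_{M/N}(1) < \dim_\KK(M/N)_1\cdot\Delta_N(1)$, i.e. to $\mu(M/N) > \mu(N)$ in the obvious extended sense; so it suffices to bound $\mu(M/N)$ from below by $\mu(M)$ and $\mu(N)$ from above by $\mu(M)$ — or more efficiently, to show $\Delta_N(d) \le d\,(\dim_\KK N_1 - \text{(something)})$ using Theorem \ref{MinType2}. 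Concretely: Theorem \ref{MinType2} gives $\Delta_N(d)\ge (r\!-\!d)\min\{\dim_\KK N_1,d\}$, while $M/N$ projective gives $\Delta_{M/N}(r)\le 0$, hence $\Delta_{M/N}(d)=\Delta_{M/N}(r)+(r\!-\!d)\dim_\KK(M/N)_1\le (r\!-\!d)\dim_\KK(M/N)_1$; combining with $\Delta_N(d)=d(r\!-\!d)-\Delta_{M/N}(d)$ I get a lower bound on $\Delta_N(d)$ and an upper bound on $\dim_\KK N_1$ in terms of $\dim_\KK M_1$, from which the slope inequality follows by direct algebra. The remaining case — $M/N$ not among the allowed quotients, e.g. $N_1=M_1$ — I would dispose of as in the proof of Proposition \ref{StabE1}(2): $N_1=M_1$ forces $M_2=\im\psi_M=\im\psi_N\subseteq N_2$, so $N=M$, contradiction.

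I expect the main obstacle to be the bookkeeping in the projective-quotient case: one must track the three invariants $\Delta_{(-)}(d)$, $\Delta_{(-)}(r)=\Delta_{(-)}(1)\cdot$(appropriate linear relation — actually $\Delta_X(d)-\Delta_X(r)=(r-d)\dim_\KK X_1$) and $\dim_\KK(-)_1$ simultaneously for $N$, $M/N$ and $M$, and verify that the strict inequality survives in the boundary subcase $\dim_\KK N_1\le d$ where Theorem \ref{MinType2} only gives $\Delta_N(d)\ge(r\!-\!d)\dim_\KK N_1$. A clean way to organize this is to reduce everything to a single inequality in $\dim_\KK N_1$ and $\dim_\KK M_1$ after substituting $\Delta_M(d)=d(r\!-\!d)$, $\Delta_M(1)=\dim_\KK M_2-\dim_\KK M_1=d(r\!-\!d+\dim_\KK M_1)-\dim_\KK M_1$ (using minimal type and Corollary \ref{MinType3}(2c)), and the analogous bound for $N$; then the claim becomes a polynomial inequality that should hold because $\dim_\KK M_1\ge d+1$. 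If the algebra proves unexpectedly delicate, a fallback is to invoke Proposition \ref{StabE1} in reverse after establishing stability of $\TilTheta_d(M)$ via the equivalence $\TilTheta_d$ and the known stability of bundles of minimal rank $d(r\!-\!d)$ on the Grassmannian, but I believe the direct representation-theoretic argument above is both shorter and self-contained.
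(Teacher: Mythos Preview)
There is a genuine gap. You assert that ``every such $N$ lies in $\repp(K_r,d)$, and so does $M/N$'', but the second claim is false: $\repp(K_r,d)$ is a torsion-free class (Proposition~\ref{CatRep1}(1)), hence closed under subobjects and extensions, but not under quotients. Lemma~\ref{Fam2} only gives $\cV(K_r,d)_M \subseteq \cV(K_r,d)_{M'}\cup\cV(K_r,d)_{M''}$, which says nothing about $\cV(K_r,d)_{M''}$ when $\cV(K_r,d)_M=\emptyset$. Concretely, for any one-dimensional subspace $N_2\subseteq M_2$ the subrepresentation $N=(0,N_2)$ lies in $\repp(K_r,d)$, but $M/N$ has $\Delta_{M/N}(d)=d(r\!-\!d)-1$, so by Corollary~\ref{MinType3}(1b) it would have to be projective if it were in $\repp(K_r,d)$, yet $\Delta_{M/N}(r)=\Delta_M(r)-1<0$ since $M$ is non-projective; hence $M/N\notin\repp(K_r,d)$. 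This breaks your application of Corollary~\ref{MinType3}(2a) to $\pi\colon M\to M/N$, since that corollary requires the target to lie in $\repp(K_r,d)$. Your dichotomy therefore collapses: $M/N$ need not be projective, and there is no contradiction to extract in the ``$M/N$ non-projective'' branch.

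The paper avoids this by running the dichotomy on $N$ rather than on $M/N$. After reducing (via the See-Saw Lemma) to $N$ indecomposable, one distinguishes $N$ projective versus $N$ non-projective. In the projective case $\mu(N)\le\frac{1}{r-1}$, and the assumption $\mu(N)\ge\mu(M)$ unwinds to $\dim_\KK M_1\le d$, contradicting Corollary~\ref{MinType3}(2c). In the non-projective case, since $N\in\repp(K_r,d)$ one has $\Delta_N(d)\ge d(r\!-\!d)=\Delta_M(d)$ by Corollary~\ref{MinType3}(1); together with $\dim_\KK N_1<\dim_\KK M_1$ (your argument for ruling out $N_1=M_1$ is fine here) and the identity $\Delta_X(1)=\Delta_X(d)+(d\!-\!1)\dim_\KK X_1$, this gives $\dim_\KK N_1\,\Delta_M(d)<\dim_\KK M_1\,\Delta_N(d)$ and hence $\mu(N)<\mu(M)$ directly. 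Note also a sign slip in your projective-quotient bookkeeping: for projective $X$ one has $\Delta_X(r)\ge 0$, not $\le 0$.
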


\begin{proof} Let $N \subsetneq M$ be a proper submodule. We shall show that $\mu(N)\!<\!\mu(M)$. Decomposing 
\[ N = \bigoplus_{i=1}^n N_i\]
into its indecomposable constituents, Lemma \ref{Stab1} implies $\mu(N) \le \max_{1\le i \le n} \mu(N_i)$. Hence we may assume $N$ to be indecomposable.

If $N$ is projective, then $\mu(N)\!\le\!\frac{1}{r-1}$. Assuming $\mu(N)\!\ge\!\mu(M)$, we have $d(r\!-\!d)\!+\!(d\!-\!1)\dim_\KK M_1=\!\Delta_M\!\ge\!(r\!-\!1)\dim_\KK M_1$, so that
$(r\!-\!d)\dim_\KK M_1\!\le\!d(r\!-\!d)$. Hence $\dim_\KK M_1\!\le\!d$, and the remark at the beginning of Section \ref{S:CatRep} shows that $M$ is projective, a contradiction. 

We may thus assume that $N$ is not projective, so that Corollary \ref{MinType3}(1) implies $\Delta_N(d)\!\ge\!\Delta_M(d)$. In view of Corollary \ref{MinType3}(2), the non-projective representation $M$ is indecomposable. 
The assumption $N_1\!=\!M_1$ thus entails $N_2\!=\!\psi_M(A_r\!\otimes_\KK\! N_1)\!=\!\psi_M(A_r\otimes_\KK\! M_1)\!=\!M_2$, a contradiction. Consequently,
\begin{eqnarray*} 
\dim_\KK N_1 \Delta_M & = & (\dim_\KK N_1)(\Delta_M(d)\!+\!(d\!-\!1)\dim_\KK M_1) = \dim_\KK N_1\Delta_M(d)\!+\!(d\!-\!1)\dim_\KK M_1\dim_\KK N_1\\ 
                                         & < & \dim_\KK M_1 \Delta_N(d)\!+\!(d\!-\!1)\dim_\KK M_1\dim_\KK N_1 =  (\dim_\KK M_1)(\Delta_N(d)\!+\!(d\!-\!1)\dim_\KK N_1)\\ 
                                         & = & \dim_\KK M_1\Delta_N,
\end{eqnarray*}
whence $\mu(N)\!<\!\mu(M)$. As a result, $M$ is stable. \end{proof} 

\bigskip

\begin{Remark} For $d\!=\!1$ and $\Char(\KK)\!=\!0$, the foregoing result is a consequence of Proposition \ref{StabE1}(2) and \cite[(2.7)]{BoSp92}. \end{Remark}

\bigskip

\subsection{The distribution of slopes} \label{S:Dist} For $N \in \rep(K_r)$ with $N_1\!\ne\!(0)$ we put
\[ \nu(N) := \frac{\dim_\KK N_2}{\dim_\KK N_1}\]
and note that
\[ \mu(M) = \frac{1}{\nu(M)\!-\!1}\]
for every $M \in \EKP(K_r)$ such that $\dim_\KK M_1\!\ne\!0$. Given $r\! \geq\! 3$, recall that $L_r\! =\! \frac{r+\sqrt{r^2\!-\!4}}{2}$. We have $\frac{1}{L_r}\!=\! \frac{r\!-\!\sqrt{r^2\!-\!4}}{2}$ and $r\!-\!L_r\!=\!\frac{1}{L_r}$.

\bigskip

\begin{Prop}\label{Dist1} Let $M,N \in \EKP(K_r)$ be indecomposable with $M \not \cong P_0(r)$.
\begin{enumerate}
\item We have $\lim_{n \to \infty} \nu(\sigma^{-n}_{K_r}(M))\!=\!L_r$.
\item The sequence $(\mu(\sigma^{-n}_{K_r}(M)))_{n \in \NN_0}$ of slopes converges to $\frac{1}{L_r - 1}$.
\item If $M$ is regular, then the sequence $(\mu(\sigma^{-n}_{K_r}(M)))_{n \in \NN_0}$ is strictly decreasing.
\item If $M$ is regular and $N$ is preprojective, then $\mu(N)\! <\! \frac{1}{L_r -1}\!<\!\mu(M)$.
\end{enumerate}\end{Prop}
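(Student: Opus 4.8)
The four statements are tightly linked, so I would prove them in order, building each on the previous one. For part (1), the key computational input is the dimension-vector formula $\udim \sigma^{-1}_{K_r}(X) = (\dim_\KK X_2, r\dim_\KK X_2 - \dim_\KK X_1)$ for indecomposable $X \not\cong S_1$ (recorded right before Lemma~\ref{Strat3}), together with the fact that $\sigma^{-1}_{K_r}$ preserves indecomposability on $\rep_1(K_r)$ and that every nonzero object of $\EKP(K_r)$ other than $P_0(r)$ lies in $\rep_1(K_r)$ (indeed $M_1 \neq 0$ forces this). Writing $x_n := \dim_\KK \sigma^{-n}_{K_r}(M)_1$ and $y_n := \dim_\KK \sigma^{-n}_{K_r}(M)_2$, the recursion reads $x_{n+1} = y_n$, $y_{n+1} = r y_n - x_n$, so $y_{n+1} = r y_n - y_{n-1}$: the characteristic equation is exactly $t^2 - rt + 1 = 0$ with roots $L_r$ and $1/L_r$. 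Since $L_r > 1 > 1/L_r > 0$ for $r \geq 3$, the dominant root governs the asymptotics and $\nu(\sigma^{-n}_{K_r}(M)) = y_n/x_n = y_n/y_{n-1} \to L_r$, provided the coefficient of $L_r^n$ in the closed form of $y_n$ is nonzero — which holds because $y_n$ grows without bound (use the $(\dagger)$-inequality $\dim_\KK \sigma^{-1}_{K_r}(X) > \dim_\KK X$ for $X \in \EKP(K_r)\setminus\{0\}$). I would spell this out: from $(\dagger)$ the sequence $(y_n)$ is strictly increasing and unbounded, so its leading term in the basis $\{L_r^n, L_r^{-n}\}$ cannot vanish.

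For part (2), this is immediate from (1) and the identity $\mu(M) = \frac{1}{\nu(M)-1}$ (valid whenever $\dim_\KK M_1 \neq 0$), since $\sigma^{-n}_{K_r}(M)$ has nonzero first component for all $n$ and $L_r - 1 \neq 0$; continuity of $t \mapsto \frac{1}{t-1}$ at $t = L_r$ gives $\mu(\sigma^{-n}_{K_r}(M)) \to \frac{1}{L_r - 1}$. For part (3), I would show that when $M$ is regular the sequence $(\nu(\sigma^{-n}_{K_r}(M)))_n$ is strictly \emph{increasing} and bounded above by $L_r$; then, since $t \mapsto \frac{1}{t-1}$ is strictly decreasing on $(1,\infty)$ and all the $\nu$-values exceed $1$, the slope sequence is strictly decreasing. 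Monotonicity of $\nu$ can be extracted from the recursion: $\nu_{n+1} - \nu_n = \frac{y_{n+1}}{x_{n+1}} - \frac{y_n}{x_n} = \frac{y_{n+1}}{y_n} - \frac{y_n}{y_{n-1}}$, and using $y_{n+1} = r y_n - y_{n-1}$ one gets $\nu_{n+1} - \nu_n = \frac{-y_{n-1}}{y_n} + \frac{y_{n-1}}{y_n}\cdot\frac{y_n - y_{n-1}\,(\text{correction})}{\cdots}$ — more cleanly, set $r_n := y_n/y_{n-1}$; then $r_{n+1} = r - 1/r_n$, and one checks that $f(t) = r - 1/t$ is increasing, has unique fixed point $L_r$ in $(1,\infty)$, and satisfies $t < L_r \Rightarrow t < f(t) < L_r$. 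So it suffices to verify that the \emph{initial} ratio satisfies $1 < \nu(M) < L_r$, i.e. $\dim_\KK M_1 < \dim_\KK M_2 < L_r \dim_\KK M_1$. The left inequality holds for any nonzero object of $\EKP(K_r)$ other than $P_0(r)$ (since $\Delta_M \geq 1$ there); the right inequality $\dim_\KK M_2 < L_r \dim_\KK M_1$ is equivalent to $q_r(\udim M) > 0$ being impossible... actually it is equivalent to $\udim M$ lying strictly below the eigendirection, which for a \emph{regular} indecomposable means $q_r(\udim M) \leq 0$, i.e. $\dim_\KK M_2^2 - r \dim_\KK M_1 \dim_\KK M_2 + \dim_\KK M_1^2 \geq 0$... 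I must be careful about the sign: $q_r(a,b) = a^2 + b^2 - rab$; for regular $M$ Kac's Theorem gives $q_r(\udim M) \leq 0$, hence $b^2 - rab + a^2 \leq 0$, which places $b/a$ strictly between $1/L_r$ and $L_r$, and since $b > a$ we get $1 < b/a < L_r$. This is the crux and I would isolate it as the first step after (1)--(2).

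For part (4): if $N$ is preprojective and indecomposable, then $N \cong P_m(r)$ for some $m$, so $\udim N = (a_m(r), a_{m+1}(r))$ and $q_r(\udim N) = 1 > 0$; this forces $\nu(N) = a_{m+1}/a_m > L_r$ (the ratio of consecutive $a_i(r)$ exceeds $L_r$, as is visible from the closed form $a_i(r) = \frac{L_r^i - L_r^{-i}}{\sqrt{r^2-4}}$, whose consecutive ratio decreases to $L_r$ from above). Hence $\mu(N) = \frac{1}{\nu(N)-1} < \frac{1}{L_r - 1}$. For the regular $M$, part (3) shows $(\mu(\sigma^{-n}_{K_r}(M)))_n$ is strictly decreasing with limit $\frac{1}{L_r-1}$, so in particular $\mu(M) = \mu(\sigma^0_{K_r}(M)) > \frac{1}{L_r-1}$. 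Chaining the two gives $\mu(N) < \frac{1}{L_r-1} < \mu(M)$.

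\textbf{Main obstacle.} The one genuinely delicate point is the regular case of the ratio bound $1 < \nu(M) < L_r$, i.e. translating $q_r(\udim M) \leq 0$ into the strict two-sided inequality on $b/a$ and confirming $b > a$ (so that the slope formula applies and the monotone-map argument gets the right initial value). Everything else is a routine linear-recursion analysis of $t^2 - rt + 1 = 0$; the place where one must invoke nontrivial earlier theory — Kac's Theorem~\ref{Kac1}(3) for regular indecomposables and the $(\dagger)$-inequality for non-vanishing of the dominant term — is precisely in pinning down that initial ratio and the growth of $(y_n)$.
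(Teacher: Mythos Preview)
Your proposal is essentially correct and follows the same conceptual arc as the paper: exploit the recursion $\nu_{n+1} = r - 1/\nu_n$, use Kac's Theorem~\ref{Kac1} to locate $\nu(M)$ relative to $L_r$ via the sign of $q_r(\udim M)$, and read off monotonicity and the limit. The paper's proof of (1) differs slightly in presentation: rather than invoking the closed form $y_n = A L_r^n + B L_r^{-n}$ and arguing $A \neq 0$ via $(\dagger)$, it splits into the preprojective and regular cases and shows directly that the ratio sequence $(d_n)$ is monotone (decreasing in the preprojective case from $q_r > 0$, increasing and bounded above by $L_r$ in the regular case from $q_r < 0$), then identifies the limit as a root of $t^2 - rt + 1$ exceeding $1$. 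This has the small advantage that the monotonicity needed for (3) comes for free; your route re-derives it separately, which is fine.

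One small gap to patch in (4): your argument computes $\nu(N) = a_{m+1}(r)/a_m(r)$ for $N \cong P_m(r)$, but for $m = 0$ this ratio is undefined since $a_0(r) = 0$. The paper handles $N = P_0(r)$ separately by noting $\mu(P_0(r)) = 0 < \frac{1}{L_r - 1}$; you should insert this one-line case before appealing to $\nu(N) > L_r$.
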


\begin{proof} We write $\udim\sigma^{-n}_{K_r}(M)\!=:\!(x_n,y_n)$ for all $n \in \NN_0$ and recall that 
\[\udim \sigma^{-1}_{K_r}(X) = (\dim_\KK X_2,r\dim_\KK X_2\!-\!\dim_\KK X_1)\] 
for each representation $X$ without $S_1$ as a direct summand. Theorem \ref{CatRep3} yields $\sigma^{-n}_{K_r}(M) \in \EKP(K_r)$ and $\sigma^{-n}_{K_r}(M)$ is regular or preprojective 
with $x_n\!\neq\!0$. For $n \in \NN_0$ we set 
\[d_n(M) := \nu(\sigma^{-n}_{K_r}(M))\] 
and note that $\sigma^{-n}_{K_r}(M) \in \EKP(K_r)$ implies $d_n(M)\!>\!1$. 

(1) We first consider the case where $M$ is preprojective.  Kac's Theorem implies $0\!<\!q_r(\udim \sigma^{-n}_{K_r}(M))$ $=\!q_r(x_n,y_n)\!=\!x_n^2\!+\!y_n^2\!-\!rx_n y_n$, whence
 \[ (\ast) \ \ \ \ \ \ d_n(M) = \frac{y_n}{x_n} > r\!-\!\frac{x_n}{y_n} = \frac{r y_n\!-\!x_n}{y_n} = d_{n+1}(M) > 1.\]
Now let $M$ be regular and note that $f\!:=\!X^2\!-\!rX\!+\!1 \in \RR[X]$ has roots $L_r,\frac{1}{L_r}$. As $r\!\geq\!3$, Kac's Theorem yields $0\!>\!x_n^2\!+\!y_n^2\!-\!rx_n y_n$, so that $f(d_n(M))\!<\!0$ and $d_n(M)\!<\!L_r$.
Moreover, the computation of ($\ast$) implies $d_n(M)\!<\!d_{n+1}(M)$. 

We have shown that the sequence $(d_n(M))_{n \in \NN_0}$ converges in both cases and the identity
\[ d_{n+1}(M) = \frac{ry_n-x_n}{y_n} = r\!-\!\frac{1}{d_n(M)}\] 
implies that $d\!:=\!\lim_{n \to \infty} d_n(M)$ is a root of $f$. Since $\frac{1}{L_r}\!<\!1\!<\!d_n(M)$ for all $n \in \NN_0$, it follows that $d\!=\!L_r$.

(2) We have $\mu(\sigma^{-n}_{K_r}(M))\!=\!\frac{1}{d_n(M)-1}$. Since $d_n(M)\!>\!1$ for all $n \in \NN_0$, we conclude with (1) that $(\mu(\sigma^{-1}_{K_r}(M)))_{n \in \NN_0}$ converges to $\frac{1}{L_r-1}$.

(3) Let $M$ be regular. As noted in the proof of (1), the sequence $(d_n(M))_{n \in \NN_0}$ is strictly increasing, so that the identity $\mu(\sigma^{-n}_{K_r}(M))\!=\!\frac{1}{d_n(M)-1}$ implies the assertion.
    
(4) Let $M$ be regular. Then $1\!<\!d_0(M)\!<\!L_r$ and therefore $\mu(M)\!=\!\frac{1}{d_0(M)-1}\!>\!\frac{1}{L_r-1}$. If $N\!=\!P_0(r)$, then $\mu(N)\!=\!0$. Alternatively, $d_0(N)$ is well-defined and $q_r(\udim N)\!>\!0$ 
yields $d_0(N)\!>\!L_r$. This shows $\mu(N)\!=\!\frac{1}{d_0(N)-1}\!<\!\frac{1}{L_r-1}$. \end{proof}

\bigskip
\noindent
We continue by recording three important consequences.

\bigskip

\begin{Cor} \label{Dist2} Let $M \in \EKP(K_r)$ be indecomposable.
\begin{enumerate}
    \item The representation $M$ is (semi)stable if and only if $\sigma_{K_r}^{-1}(M) \in \EKP(K_r)$ is (semi)stable.
    \item The following statements are equivalent:
\begin{enumerate}
    \item All representations in $\{ \sigma_{K_r}^z(M) ; z \in \ZZ\} \cap \EKP(K_r)$ are (semi)stable\footnote{For $M = P_{\ell}(r)$ we only consider $z \in \ZZ_{\geq -\ell}$.}.
    \item The representation $M$ is (semi)stable.
\end{enumerate}
\item If $M$ is regular and semistable, then $M$ is quasi-simple.
\end{enumerate}
\end{Cor}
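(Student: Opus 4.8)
The plan is to make (1) the heart of the argument, to deduce (2) from it by an induction along $\sigma_{K_r}$-orbits, and to prove (3) separately out of Proposition \ref{Dist1}. For (1), fix an indecomposable $M\in\EKP(K_r)$ and set $N:=\sigma_{K_r}^{-1}(M)$, which lies in $\EKP(K_r)$ by Theorem \ref{CatRep3}(3) and Proposition \ref{CatRep1}(3) and is again indecomposable. I would argue on the level of submodule lattices using the explicit shape of $\sigma_{K_r}^{-1}$: writing $p=\dim_\KK M_1$, $q=\dim_\KK M_2$, one reads off from $D_{K_r}\circ\sigma_{K_r}\cong\sigma_{K_r}^{-1}\circ D_{K_r}$ and the formula for $\sigma_{K_r}$ that $N_1=M_2$, $N_2=M_2^{\,r}/\delta_M(M_1)$ with $\delta_M\colon M_1\to M_2^{\,r}$, $m\mapsto((\gamma_i)_M(m))_i$, and $(\gamma_i)_N\colon M_2\to N_2$ the $i$-th coordinate inclusion composed with the quotient map; since $M$ has no $S_1$-summand, $\delta_M$ is injective, whence $\Delta_N=(r-1)q-p$.

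The next step is the (dual) reduction of the stability test on each side: for $M$ it suffices to test the submodules $(\,W',W\,)$ with $W\subseteq M_2$ and $W':=\bigcap_i(\gamma_i)_M^{-1}(W)$ (the largest first component compatible with $W$), and for $N$ the submodules $(\,W,\psi_N(A_r\otimes W)\,)$ generated by a subspace $W\subseteq N_1=M_2$; in both cases the remaining submodules have no larger slope, and Corollary \ref{MinType3}(1) ensures every nonzero submodule of an $\EKP$-module has $\Delta\geq 1$, so no denominator vanishes. The one computation demanding care is $\dim_\KK\psi_N(A_r\otimes W)=r\dim_\KK W-\dim_\KK W'$, which holds because $\psi_N(A_r\otimes W)$ is the image of $W^{\,r}\subseteq M_2^{\,r}$ modulo $\delta_M(M_1)$ and $W^{\,r}\cap\delta_M(M_1)=\delta_M(W')$. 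Putting $a=\dim_\KK W'$, $b=\dim_\KK W$, a one-line cross-multiplication then shows that $\mu(\,(W',W)\,)\leq\mu(M)$ is equivalent to $aq\leq bp$, and that $\mu(\,(W,\psi_N(A_r\otimes W))\,)\leq\mu(N)$ is equivalent to the \emph{same} inequality $aq\leq bp$ (and likewise with strict inequalities). Since $W$ runs over the identical range $0\neq W\subsetneq M_2$ on both sides — the value $W=M_2$ recovering the whole module in each case, as $\psi_N(A_r\otimes N_1)=N_2$ and $\bigcap_i(\gamma_i)_M^{-1}(M_2)=M_1$ — this proves (1).

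For (3), suppose $M$ is regular and semistable and, aiming at a contradiction, that $\ql(M)=n\geq 2$. Let $\cC$ be its regular component; by Proposition \ref{Strat1}, $M$ belongs to the cone $(M_{\cC,1}\to)=\cC\cap\EKP(K_r)$, hence so does its quasi-socle $M_{[1]}$, which is therefore a regular indecomposable module in $\EKP(K_r)$, and $\tau_{K_r}^{-(n-1)}(M_{[1]})\cong\sigma_{K_r}^{-2(n-1)}(M_{[1]})$ is again a vertex of that cone. Applying $\mu$ to the inclusion $M_{[1]}\subsetneq M$ and to the short exact sequence $(0)\to M_{[n-1]}\to M\to\tau_{K_r}^{-(n-1)}(M_{[1]})\to(0)$, semistability of $M$ gives $\mu(M_{[1]})\leq\mu(M)$ and $\mu(M_{[n-1]})\leq\mu(M)$; the latter forces $\mu(M)\leq\mu(\tau_{K_r}^{-(n-1)}(M_{[1]}))$ by the elementary see-saw inequality for $\mu$ (all $\Delta$'s being positive). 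But Proposition \ref{Dist1}(3), applied to the regular indecomposable $M_{[1]}\in\EKP(K_r)$, gives $\mu(\sigma_{K_r}^{-2(n-1)}(M_{[1]}))<\mu(M_{[1]})$ since $2(n-1)\geq 2$. Chaining these, $\mu(M)\leq\mu(\tau_{K_r}^{-(n-1)}(M_{[1]}))<\mu(M_{[1]})\leq\mu(M)$, a contradiction; hence $M$ is quasi-simple.

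For (2), the implication (b)$\Rightarrow$(a) is immediate, $M$ being one of the listed representations. For (a)$\Rightarrow$(b): by Lemma \ref{Strat5} when $M$ is regular, and by $\sigma_{K_r}(P_{i+1}(r))\cong P_i(r)$ when $M$ is preprojective, the set $\{\sigma_{K_r}^z(M)\ ;\ z\in\ZZ\}\cap\EKP(K_r)$ has the form $\{\sigma_{K_r}^{-m}(M_0)\ ;\ m\geq 0\}$ for a suitable $M_0$, and part (1) — being an equivalence — propagates (semi)stability between consecutive terms, so it is constant over the whole set. I expect the main obstacle to be the submodule bookkeeping in (1): choosing the correct dual reductions on the two sides and verifying the dimension formula $\dim_\KK\psi_N(A_r\otimes W)=r\dim_\KK W-\dim_\KK\bigcap_i(\gamma_i)_M^{-1}(W)$; once that is in place, the two stability criteria collapse to the single arithmetic condition $aq\leq bp$ and everything else is formal.
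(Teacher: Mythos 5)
Your argument is correct, and part (1) follows a genuinely different route from the paper's. The paper proves (1) by combining the numerical equivalence $\mu(\sigma_{K_r}^{-1}(U))\le\mu(\sigma_{K_r}^{-1}(V))\Leftrightarrow\mu(U)\le\mu(V)$ with the exactness of $\sigma_{K_r}^{\pm 1}$ on $\rep_1(K_r)$ and $\rep_2(K_r)$: in one direction it splits indecomposable submodules of $\sigma_{K_r}^{-1}(M)$ into preprojective ones (handled by Proposition \ref{Dist1}(4)) and regular ones (pushed into $M$ by $\sigma_{K_r}$), and for the converse it takes a submodule $X\subseteq M$ of maximal slope, rules out $I_0(r)$-summands in $M/X$, and applies $\sigma_{K_r}^{-1}$ to the resulting sequence in $\rep_1(K_r)$. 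Your explicit cokernel model of $\sigma_{K_r}^{-1}(M)$ replaces all of this case analysis by a single correspondence $W\mapsto(W',W)$ resp.\ $W\mapsto(W,\psi_N(A_r\otimes W))$ indexed by subspaces $0\neq W\subsetneq M_2=N_1$, together with the dimension count $\dim_\KK\psi_N(A_r\otimes W)=r\dim_\KK W-\dim_\KK W'$; both slope tests then collapse to the same inequality $aq\le bp$ (the extremality of these test submodules and the positivity of the denominators are exactly as you say). This is cleaner and more self-contained — essentially the classical reflection-functor argument for slope stability — at the price of verifying the explicit model of $\sigma_{K_r}^{-1}$. Parts (2) and (3) coincide with the paper's proofs up to inessential choices: in (3) the paper uses the wing sequence $(0)\to M_{[1]}\to M\to\tau_{K_r}^{-1}(M_{[\ell-1]})\to(0)$ where you use $(0)\to M_{[\ell-1]}\to M\to\tau_{K_r}^{-(\ell-1)}(M_{[1]})\to(0)$, both closing the same circle of inequalities via Proposition \ref{Dist1}(3). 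Two cosmetic slips: in (2) you have the labels interchanged — the immediate implication is (a)$\Rightarrow$(b), and your propagation argument is what proves (b)$\Rightarrow$(a) — and the positivity of $\Delta$ for nonzero subobjects of $\EKP$-modules is Theorem \ref{MinType2} (with $d=1$) rather than Corollary \ref{MinType3}(1).
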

\begin{proof}
Let $Z \in \EKP(K_r)$. Recall that we have $\sigma_{K_r}^{-1}(Z) \in \EKP(K_r)$ and 
\[\udim \sigma_{K_r}^{-1}(Z) = ( \dim_{\KK} Z_2,r \dim_{\KK} Z_2\! -\! \dim_{\KK} Z_1).\] 
Hence we obtain for $U,V \in \EKP(K_r)$:
\[ (\ast) \quad \mu(\sigma_{K_r}^{-1}(U)) \leq(<)  \mu(\sigma_{K_r}^{-1}(V)) \Leftrightarrow \dim_{\KK} U_1 \dim_{\KK} V_2 \leq(<) \dim_{\KK} V_1 \dim_{\KK} U_2 \Leftrightarrow \mu(U) \leq(<) \mu(V).\]

(1) Since $\mu(P_0(r)) = 0 < \mu(P_1(r))$ and $\sigma^{-1}_{K_r}(P_{\ell}(r)) = P_{\ell+1}(r))$ for all $\ell \in \NN_0$, we obtain $\mu(P_{\ell}(r)) < \mu(P_{\ell+1}(r))$ for all $\ell \in \NN_0$. Clearly, $P_0(r)$ is stable. Now we fix $\ell \in \NN$ and let $(0) \subsetneq X \subsetneq P_{\ell}(r)$ be a subrepresentation. We fix a direct summand $Y \subseteq X$ of maximal slope. Then Lemma \ref{Stab1} yields $\mu(Y) \geq \mu(X)$. Since the only proper indecomposable subrepresentations of $P_{\ell}(r)$ are (up to isomorphism) $P_0(r),\ldots,P_{\ell-1}(r)$, we conclude $\mu(P_{\ell}(r))\! >\! \mu(P_{\ell-1}(r))\! \geq\! \mu(Y)\! \geq\! \mu(X)$. Hence every preprojective indecomposable representation is stable and we only have to consider the case that $M$ is regular.

At first we assume that $M$ is stable. Let $(0) \subsetneq X \subsetneq N\!:=\!\sigma_{K_r}^{-1}(M)$, and assume that $X$ is indecomposable. If $X$ is preprojective, then Proposition \ref{Dist1}(4) implies $\mu(X)\!<\!\mu(N)$. 
Alternatively, $X$ is regular. Since $\sigma_{K_r}$ is left exact and an equivalence on regular modules, there is a proper inclusion $(0)\!\ne\!\sigma_{K_r}(X) \hookrightarrow M$. As $M$ is stable, we obtain $\mu(\sigma_{K_r}(X))\!<\!
\mu(M)$ and $(\ast)$ implies $\mu(X)\!\le\!\mu(Y)$. If $X$ is not indecomposable, we consider an indecomposable summand $Y \subseteq X$ of maximal slope and argue as in the preprojective case. This shows that $\sigma_{K_r}^{-1}(M)$ is stable.

Now we assume that $\sigma_{K_r}^{-1}(M)$ is stable and let $(0) \subsetneq X \subsetneq M$ be a subrepresentation of maximal slope. We show that $\mu(X)\! <\! \mu(M)$. If $M/X \in \add(I_0(r))$, then we find $a \in \NN$ such that $\udim X = (\dim_{\KK} M_1-a,\dim_{\KK}M_2)$ and conclude with $\dim_{\KK} M_2 \neq 0$ that
\[ \mu(X) = \frac{\dim_{\KK} M_1-a}{\Delta_M + a} < \frac{\dim_{\KK} M_1}{\Delta_M} = \mu(M).\]
Hence we may assume that $M/X\!=\!Q\! \oplus \! aI_0(r)$ with $a \in \NN_0$ and $Q \in \rep_1(K_r)$ non-zero. We find a subrepresentation $0 \subsetneq Y \subsetneq M$ with $\udim Y\!=\!\udim X\!+\!(a,0)$ and maximality of $\mu(X)$ implies 
\[  \frac{\dim_{\KK} X_1+a}{\Delta_X - a} = \mu(Y) \leq \mu(X) =\frac{\dim_{\KK} X_1}{\Delta_X}.\]
This can only happen if $a\!=\!0$. Hence we have a short exact sequence
\[ (0) \lra X \lra M \lra Q \lra (0)\]
in $\rep_1(K_r)$. Since $\sigma^{-1}_{K_r}: \rep_1(K_r)\lra \rep_2(K_r)$ is exact (cf.\ Section \ref{S:CatRep}), we obtain  a proper  inclusion $(0) \neq \sigma^{-1}_{K_r}(X) \hookrightarrow \sigma_{K_r}^{-1}(M)$ and $\sigma_{K_r}^{-1}(M)$ being stable in conjunction with $(\ast)$ implies that $\mu(X) < \mu(M)$. Hence $M$ is stable.
The arguments concerning semistability are analogous. 

(2) This is a direct consequence of (1).

(3)  We write $M\!=\!N_{[\ell]}$ with quasi-socle $N$ and assume that $\ell\! \geq\! 2$. General AR-theory provides a short exact sequence
   \[ (0) \lra N \lra M \lra \tau_{K_r}^{-1}(N_{[\ell\!-\!1]}) \lra (0)\]
in $\EKP(K_r)$ and the See-Saw Lemma for representations give us $\mu(N)\!\leq\! \mu(M)\! \leq \! \mu(\tau_{K_r}^{-1}(N_{[\ell\!-\!1])})$.
Moreover, $N_{[\ell\!-\!1]}$ being a subrepresentation of $M\!=\!N_{[\ell]}$ and Proposition \ref{Dist1}(3) imply 
  \[ \mu(M) \leq \mu(\tau_{K_r}^{-1}(N_{[\ell\!-\!1]})) < \mu(N_{[\ell\!-\!1]}) \leq \mu(M),\]
a contradiction. Hence $\ell =  1$ and $M$ is quasi-simple.
\end{proof}

\bigskip

\begin{Remarks} (1) Owing to Corollary \ref{Dist2}, a cone of a regular component either contains no (semi)stable representations or all of its quasi-simple vertices are (semi)stable.

(2) Let $r \geq 3$ and $B_1,B_2$ be non-isomorphic indecomposable representations with dimension vector $(1,1)$. Clearly, $B_1,B_2$ are elementary representations and by \cite[(1.4)]{KL96} $B_1$ and $B_2$ are $\Hom$-orthogonal. We conclude $\dim_{\KK} \Ext^1_{K_r}(B_2,B_1)\!=\!  \dim_{\KK} \Hom_{K_r}(B_2,B_1)\! -\!q_r(1,1)\!  =\! r\!-\!2\! >\! 0$. In particular, we find a non-split short exact sequence 
\[(\ast) \quad (0) \lra B_1 \lra N \lra B_2 \lra (0)\]
and conclude with \cite[(VIII.2.8)]{ASS06} $\dim_{\KK} \End_{K_r}(N)\! <\! \dim_{\KK} \End(B_1\! \oplus\! B_2)\! =\! 2$. Hence $N$ is a brick with dimension vector $(2,2)$ and regular by Kac's Theorem. Since regular bricks for $K_r$ are quasi-simple (see \cite[(9.2),(9.4)]{Ke96}), we conclude that every quasi-simple vertex in the regular component $\cC$ of $N$ is a brick. 

There exists $n \in \NN$ such that $\tau^{-n}_{K_r}(B_1),\tau^{-n}_{K_r}(B_2), \tau^{-n}_{K_r}(N) \in \EKP(K_r)$. Recall that $\tau^{-n}_{K_r}$ is an exact endo-functor of the category $\reg(K_r)$ of regular representations. 
In view of ($\ast$), there results an exact sequence
\[ (0) \lra \tau_{K_r}^{-n}(B_1) \lra \tau^{-n}_{K_r}(N) \lra \tau^{-n}_{K_r}(B_2)\lra (0),\]
whose (nonzero) extreme terms have equal dimension vectors and hence equal slopes. The See-Saw Lemma now shows that $\tau^{-n}_{K_r}(N)$ is not stable. By Corollary \ref{Dist2}(2), $\cC\cap\EKP(K_r)$ does not contain any stable vertices. \end{Remarks} 

\bigskip

\begin{Prop} \label{Dist3} Let $E \in \EKP(K_r)$ be elementary. Then $E$ is stable. \end{Prop}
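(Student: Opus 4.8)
The plan is to reduce to the cone description of $\repp(K_r,d)$ inside regular AR-components and to exploit the results on slopes just established. First recall that an elementary representation $E \in \EKP(K_r)=\repp(K_r,1)$ is in particular regular and quasi-simple (regular bricks for $K_r$ are quasi-simple). Let $\cC$ be the regular AR-component containing $E$ and write $M\!:=\!M_{\cC,1}$ for its quasi-simple vertex of Proposition \ref{Strat1}, so that $\cC\cap\EKP(K_r)=(M\!\rightarrow)$. Since $E$ is quasi-simple and lies in this cone, there is some $n\!\ge\!0$ with $E\cong \tau^{-n}_{K_r}(M)=\sigma^{-2n}_{K_r}(M)$; in particular $E$ lies in the $\sigma_{K_r}$-orbit of $M$.

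Next I would reduce the stability question to the quasi-socle $M$ via Corollary \ref{Dist2}(2): $E$ is stable if and only if every representation in $\{\sigma^z_{K_r}(M)\ ;\ z\in\ZZ\}\cap\EKP(K_r)$ is stable, which in turn is equivalent to $M$ itself being stable. So it suffices to show that $M=M_{\cC,1}$ is stable. Now by the definition of $M_{\cC,1}$ and Proposition \ref{Strat1} (together with Theorem \ref{Fam5}), $M$ is the quasi-simple representation sitting at the bottom of the $\repp(K_r,1)$-cone, and since $E$ itself is elementary, hence a brick of minimal type in its component by the Remark following Corollary \ref{MinType4}, $M$ is non-projective of minimal type as well (as $\Delta$ does not decrease under $\sigma^{-1}_{K_r}$ by Lemma \ref{Strat3}, and $E$ has minimal type $\Delta_E(1)=r-1$, this forces $\Delta_M(1)=r-1$ too, so $M$ has minimal type). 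Then Lemma \ref{StabE2} applies directly: a non-projective representation in $\repp(K_r,1)$ of minimal type is stable. Hence $M$ is stable, and therefore so is $E$.

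The one point requiring care is the claim that $M_{\cC,1}$ inherits minimal type from $E$. An elementary representation $E$ satisfies $\Delta_E(1)=r-1$ — this is the content of the work of Westwick alluded to in Section \ref{S:MinType}, combined with $E$ being elementary of the smallest possible type; alternatively, one can cite \cite[(2.1.1)]{Bi20} or the analysis in \cite{Bi22}. Given $\Delta_E(1)=r-1$ and $E=\sigma^{-2n}_{K_r}(M)$ with $M$ regular, Lemma \ref{Strat3}(2) shows $\Delta_{\tau^{-1}_{K_r}(X)}(1)>\Delta_X(1)$ for each step, so $\Delta_M(1)\le \Delta_E(1)=r-1$; combined with Theorem \ref{MinType2} (which gives $\Delta_M(1)\ge r-1$ since $\dim_\KK M_1\ge 1$, as $M$ is not isomorphic to $P_0(r)$), we get $\Delta_M(1)=r-1$, i.e.\ $M$ has minimal type. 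Also $M$ is non-projective: if $M$ were projective then $M\cong P_1(r)$ (it is indecomposable with $\Delta_M(1)=r-1$ and $\dim_\KK M_1=1$), but then $\cC$ would be the preprojective component, contradicting that $\cC$ is regular. This fills the gap, and the proof then concludes as above: $M$ non-projective of minimal type $\Rightarrow$ $M$ stable by Lemma \ref{StabE2} $\Rightarrow$ $E$ stable by Corollary \ref{Dist2}(2).

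I expect the main obstacle to be precisely the bookkeeping that identifies $E$ with an iterate $\tau^{-n}_{K_r}(M_{\cC,1})$ and transfers the minimal-type property — everything else is a direct appeal to Lemma \ref{StabE2} and Corollary \ref{Dist2}. An alternative, perhaps cleaner route avoids the component picture entirely: show directly that an elementary $\EKP$-representation has minimal type (citing the elementary-module results from \cite{Bi20}), then invoke Lemma \ref{StabE2} to conclude stability in one line. I would present this shorter version if the minimal-type fact is readily citable, and otherwise fall back on the AR-component argument above.
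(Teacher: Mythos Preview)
Your argument has a genuine gap: the claim that an elementary representation $E\in\EKP(K_r)$ satisfies $\Delta_E(1)=r-1$ (i.e.\ has minimal type) is false. For $r=3$, Ringel's classification shows that the dimension vectors of elementary $K_3$-representations form the $\sigma_3$-orbits of $(1,1)$ and $(2,2)$; the orbit of $(1,1)$ contains $(2,5)$, and any elementary $E$ with $\udim E=(2,5)$ lies in $\EKP(K_3)$ since non-isomorphic elementary representations are $\Hom$-orthogonal (so $\Hom_{K_3}(E(\fv),E)=0$ for every $\fv\in\PP(A_3)$, whence $E\in\EKP(K_3)$ by Theorem~\ref{Fam5}). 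But $\Delta_E(1)=3\neq 2=r-1$, so $E$ is not of minimal type. Moreover $\sigma_{K_3}(E)$ has dimension vector $(1,2)$ and is therefore not in $\EKP(K_3)$ (Corollary~\ref{MinType3}(1) would force it to be projective), so $E=M_{\cC,1}$ in its component --- yet $M_{\cC,1}$ is not of minimal type either, and Lemma~\ref{StabE2} does not apply. Westwick's bound only gives $\Delta_E(1)\ge r-1$, not equality; nothing in \cite{Bi20} or \cite{Bi22} asserts equality for arbitrary elementary $\EKP$-representations.

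The paper's proof is direct and avoids minimal type altogether. For a proper indecomposable subrepresentation $(0)\subsetneq X\subsetneq E$: if $X$ is preprojective, Proposition~\ref{Dist1}(4) gives $\mu(X)<\frac{1}{L_r-1}<\mu(E)$. If $X$ is regular, the key input is Kerner's result \cite[(4.4)]{Ke96} that a proper regular submodule of an elementary module has \emph{preinjective} cokernel; hence $\Delta_{E/X}<0$, so $\Delta_X>\Delta_E$, and since $\dim_\KK X_1\le\dim_\KK E_1$ one gets $\mu(X)<\mu(E)$. Decomposable subrepresentations are then handled by passing to a summand of maximal slope, as in Corollary~\ref{Dist2}. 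The essential idea you are missing is this structural fact about quotients of elementary modules, not a minimal-type argument.
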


\begin{proof} Let $(0) \subsetneq X \subsetneq E$ be a proper indecomposable subrepresentation. In view of Proposition \ref{Dist1} we may assume that $X$ is regular. Thanks to \cite[(4.4)]{Ke96}, the factor module $E/X$ is 
preinjective. Consequently, $\Delta_{E/X}\!<\!0$, so that $\Delta_E\!=\!\Delta_X\!+\!\Delta_{E/X}\!<\!\Delta_X$. As $\dim_\KK X_1\ne\!0$, we obtain
\[ \mu(X) < \frac{\dim_\KK X_1}{\Delta_E} \le \mu(E).\]
As in the proof of Corollary \ref{Dist2}, this implies the assertion. \end{proof} 

\bigskip

\subsection{Filtrations} \label{S:Filtrat} As before, we assume that  $r\!\ge\!3$ and let 
\[\sigma_r\!:=\!\begin{pmatrix}
r & -1 \\
1 & 0
\end{pmatrix} \in \GL_2(\ZZ)\]
be the invertible matrix that describes the $\sigma_{K_r}$-orbits on the level of the Grothendieck group $K_0(\rep(K_r))$: For $M \in \rep_2(K_r)$ we have $\udim \sigma_{K_r}(M)\!=\!\sigma_r.\udim M$, while 
$\udim \sigma^{-1}_{K_r}(N)\!=\!\sigma_r^{-1}.\udim N$ for $N \in \rep_1(K_r)$, cf.\ Section \ref{S:CatRep}.  

We denote by $\lfloor \ \rfloor : \RR \lra \ZZ$ the floor function. 

\bigskip

\begin{Lem}\label{Filtrat1} Let $(a,b) \in \NN^2\!\smallsetminus\!\{0\}$ be such that $q_r(a,b)\!<\!0$ and $b\!=\!\lfloor a L_r \rfloor$. Let $M \in \rep(K_r)$ be an indecomposable representation with $\udim M\!=\! \sigma_{r}^{-
\ell}.(a,b)$ for some $\ell \in \NN$. Then $M$ is regular and there is a filtration
\[(0) = M_0 \subsetneq M_1 \subsetneq \cdots \subsetneq M_{n-1} \subsetneq M_n = M\]
such that for each $i \in \{1,\ldots,n\}$ the filtration factor $M_i/M_{i-1}$ is elementary and belongs to $\EKP(K_r)$.\end{Lem}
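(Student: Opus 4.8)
The plan is to deduce regularity from Kac's theorem, reduce to the case $\ell=1$, and then construct the filtration by two interlocking inductions on the first coordinate of the dimension vector — one producing the filtration, the other controlling membership in $\EKP(K_r)$.

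A direct computation shows that $\sigma_r$ preserves the symmetric bilinear form attached to $q_r$, so $q_r(\sigma_r.x)=q_r(x)=q_r(\sigma_r^{-1}.x)$ for all $x\in\ZZ^2$; in particular $q_r(\udim M)=q_r(a,b)<0$, whence $M$ is regular by Theorem \ref{Kac1}(3). Since $\sigma_{K_r}$ restricts to an auto-equivalence of $\reg(K_r)$, the representation $N:=\sigma_{K_r}(M)$ is regular indecomposable with $M\cong\sigma_{K_r}^{-1}(N)$ and $\udim N=\sigma_r^{-(\ell-1)}(a,b)$. Next I observe that the conditions "$q_r(x,y)<0$ and $y=\lfloor xL_r\rfloor$" (call such $(x,y)$ a \emph{seed}) are stable under $\sigma_r^{-1}$: writing $\sigma_r^{-1}(x,y)=(y,ry-x)$, one computes $yL_r-(ry-x)=(xL_r-y)/L_r\in(0,1/L_r)$, so $ry-x=\lfloor yL_r\rfloor$, and $q_r$ is preserved. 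Hence $\udim N$ is a seed, and it suffices to prove: \emph{for every seed $(c,d)$ and every regular $X$ with $\udim X=(c,d)$, the representation $\sigma_{K_r}^{-1}(X)$ admits a filtration whose factors are elementary and lie in $\EKP(K_r)$.}

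As $\sigma_{K_r}^{-1}$ is exact on $\reg(K_r)$ (Section \ref{S:CatRep}) and preserves elementariness, it transports a filtration of $X$ by regular subrepresentations with elementary factors to one of $\sigma_{K_r}^{-1}(X)$; so I first build such a filtration of $X$ by induction on $c$. The crucial numerical input is additivity of fractional parts: if $\udim X=\sum_j(c_j,d_j)$ with each $(c_j,d_j)$ the dimension vector of a nonzero regular representation, then each $c_jL_r-d_j>0$ (as $1/L_r<d_j/c_j<L_r$ by Theorem \ref{Kac1}), while $\sum_j(c_jL_r-d_j)=cL_r-d=\{cL_r\}<1$ forces every $c_jL_r-d_j\in(0,1)$, i.e. each $(c_j,d_j)$ is a seed, with $c_j<c$ whenever there is more than one summand. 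Thus if $X$ decomposes, the inductive hypothesis applies to its indecomposable summands; if $X$ is indecomposable but not elementary, a defining sequence $(0)\to A\to X\to B\to(0)$ with $A,B$ regular and nonzero has $\udim A,\udim B$ seeds with strictly smaller first coordinate, and we splice the filtrations of $A$ and $B$. The induction starts at $c=1$, where necessarily $(c,d)=(1,r-1)$ since $\lfloor L_r\rfloor=r-1$; every regular representation of this dimension vector is indecomposable, hence isomorphic to $\coker\bar\alpha$ for some $\alpha\in\Inj_\KK(A_1,A_r)$, and so elementary by Proposition \ref{Fam3}(1),(3).

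It remains to show that an elementary representation $E$ of seed dimension vector $(e,f)$ satisfies $\sigma_{K_r}^{-1}(E)\in\EKP(K_r)$. By Theorem \ref{Fam5} together with the isomorphism $\Hom_{K_r}(E(\fv),\sigma_{K_r}^{-1}(E))\cong\Hom_{K_r}(\sigma_{K_r}(E(\fv)),E)$ (using property (i) of Section \ref{S:CatRep} and that $\sigma_{K_r},\sigma_{K_r}^{-1}$ are quasi-inverse on $\reg(K_r)$), this amounts to $\Hom_{K_r}(B,E)=(0)$ for the dimension-$(1,1)$ representations $B=\sigma_{K_r}(E(\fv))$, $\fv\in\Gr_1(A_r)$ (cf.\ Lemma \ref{Const3}(1)). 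I would argue by induction on $e$. For $e=1$ one has $E\cong\coker\bar\alpha$ with $\udim E=(1,r-1)$, and a nonzero homomorphism from a dimension-$(1,1)$ representation into $\coker\bar\alpha$ would force the classes $\overline{\gamma_1},\dots,\overline{\gamma_r}$ in $A_r/\im\alpha$ all to be collinear, which is impossible since $\dim_\KK A_r/\im\alpha=r-1\ge2$; hence $\sigma_{K_r}^{-1}(\coker\bar\alpha)\in\EKP(K_r)$. For $e\ge2$ the same fractional-part identity shows that $\sigma_r(e,f)$ has first coordinate $<e$; when it is again a seed, the inductive hypothesis applied to the elementary representation $\sigma_{K_r}(E)$ gives $E\cong\sigma_{K_r}^{-1}(\sigma_{K_r}(E))\in\EKP(K_r)$, whence $\sigma_{K_r}^{-1}(E)\in\repp(K_r,r-1)\subseteq\EKP(K_r)$ by Theorem \ref{CatRep3}(2),(3). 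The main obstacle is the residual case $e\ge2$ in which $\sigma_r(e,f)$ fails to be a seed, so this $\sigma_{K_r}$-reduction is unavailable; here I expect to need the explicit description of the $\EKP$-modules of prescribed dimension vector from \cite{Bi20,Bi20b} (as used in the Remark following Theorem \ref{ExSt2}) to conclude that the relevant elementary representations, and their images under $\sigma_{K_r}^{-1}$, already lie in $\EKP(K_r)$. Once all factors are known to lie in $\EKP(K_r)$, closure of $\EKP(K_r)$ under extensions (Theorem \ref{Fam5}) assembles the filtration of $M=\sigma_{K_r}^{-1}(N)$ and, in particular, yields $M\in\EKP(K_r)$.
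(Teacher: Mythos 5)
Your argument is correct and genuinely more self-contained than the paper's up to and including the construction of the filtration: the regularity via Kac's Theorem, the stability of the condition ``$q_r(x,y)<0$ and $y=\lfloor xL_r\rfloor$'' under $\sigma_r^{-1}$, and the fractional-part bookkeeping $\sum_j(c_jL_r-d_j)=cL_r-d<1$ forcing every summand of $\udim X$ coming from a nonzero regular representation to be again of this form, are all fine. The paper instead simply quotes \cite[(5.2(1))]{Bi20b} for the existence of a filtration of $\sigma_{K_r}^{\ell}(M)$ with elementary factors. But that citation delivers strictly more than existence: it also asserts that \emph{at most one} filtration factor fails to lie in $\EKP(K_r)$, and that this exceptional factor is isomorphic to some $E(\fv)$ of dimension vector $(1,r\!-\!1)$. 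That extra information is exactly what your proposal lacks, and it is where the genuine gap sits.

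Concretely, your closing induction on $e$ (showing $\sigma_{K_r}^{-1}(E)\in\EKP(K_r)$ for an elementary factor $E$ with $\udim E=(e,f)$, $f=\lfloor eL_r\rfloor$) only advances when $\sigma_r(e,f)$ again satisfies the floor condition, which by your own computation means $eL_r-f<1/L_r$; you concede the complementary case. This case is not vacuous: for $r=3$ the vector $(4,10)$ lies in the $\sigma_3$-orbit of $(2,2)$, so elementary modules of this dimension vector exist, it satisfies $q_3(4,10)=-4<0$ and $\lfloor 4L_3\rfloor=10$, yet $4L_3-10\approx 0.47>1/L_3\approx 0.38$ and $\sigma_3(4,10)=(2,4)$ fails the floor condition --- so already for $M$ elementary with $\udim M=\sigma_3^{-1}(4,10)$ your reduction stalls at the first step. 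The paper closes this loop with the numerical criterion of \cite[(3.1)]{Bi20b} ($q_r(\udim X)+\dim_\KK X_2-\dim_\KK X_1\ge 1$ implies $X\in\EKP(K_r)$), applied to $\sigma_{K_r}^{-1}(E(\fv))$ of dimension vector $(r\!-\!1,r^2\!-\!r\!-\!1)$, where the left-hand side equals $(r\!-\!1)(r\!-\!2)\ge 1$; your direct $\Hom$-computation in the base case $e=1$ reproduces precisely this one instance. To finish along your lines you would have to show either that every factor with $e\ge 2$ already lies in $\EKP(K_r)$ (the content of \cite[(5.2(1))]{Bi20b}), or that the criterion $q_r(e,f)+(r\!-\!1)f-e\ge 1$ holds for all dimension vectors of elementary modules that can occur as factors --- neither of which the proposal does, and the latter requires the classification of such dimension vectors from \cite{Bi22}. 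As written, the proof is therefore incomplete.
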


\begin{proof} As $q_r(\udim M)\!=\!q_r(a,b)\!<\!0$, Kac's Theorem ensures that the representation $M$ is regular. We define $N\!:=\!\sigma_{K_r}^{\ell}(M)$ and note that $\udim N\!=\!(a,b)$. Thanks to \cite[(5.2(1))]{Bi20b}
there is a filtration
\[(0) = N_0 \subsetneq N_1 \subsetneq \cdots \subsetneq N_{n-1} \subsetneq N_n = N\] 
such that for each $j \in \{1,\ldots,n\}$ the filtration factor $E^j := N_j/N_{j-1}$ is elementary. By the same token, there is at most one $i \in \{1,\ldots,n\}$ such that $E^i$ is not in $\EKP(K_r)$ and in this case there is $\fv \in 
\Gr_1(A_r)$ such that $E^i \cong E(\fv)$. We have $\udim E(\fv) = (1,r\!-\!1)$ and therefore $\udim \sigma^{-1}_{K_r}(E^i) = (r\!-\!1,r^2\!-\!r\!-\!1)$. The representation $X\!:=\!\sigma^{-1}_{K_r}(E^i)$ satisfies
\[ q_r(\udim X)\!+\!\dim_\KK X_2\!-\!\dim_\KK X_1 = (r\!-\!2)(r\!-\!1)\geq 1\]
and \cite[(3.1)]{Bi20b} ensures that $X \in \EKP(K_r)$. Since $\sigma^{-1}_{K_r}$ is exact on the category of regular representations and $\EKP(K_r)\!=\!\repp(K_r,1)$ is closed under $\sigma_{K_r}^{-1}$ (see Theorem 
\ref{CatRep3}), we conclude that 
\[(0) = \sigma_{K_r}^{-\ell}(N_0) \subsetneq \sigma_{K_r}^{-\ell}(N_1) \subsetneq \cdots \subsetneq \sigma_{K_r}^{-\ell}(N_{n-1}) \subsetneq \sigma_{K_r}^{-\ell}(N_n) = M\]
is the desired filtration. \end{proof}

\bigskip
\noindent
For $(a,b) \in \NN^2_0$ and $d \in \{1,\ldots, r\!-\!1\}$ we put
\[ \Delta_{(a,b)}(d):=b\!-\!da.\]

\bigskip

\begin{Lem}\label{Filtrat2} Let $(a,b) \in \NN^2\!\smallsetminus\!\{0\}$ be such that $q_r(a,b)\!<\!0$ and $q_r(a,b)\!+\!\Delta_{(a,b)}(r\!-\!1)\!\geq\!1$. For $\ell \in \NN_0$, we let $(a_\ell,b_\ell)\!:=\!\sigma_r^{-\ell}.(a,b)$. Then the 
following statements hold:
\begin{enumerate}
\item We have $q_r(a_\ell,b_\ell)\!+\Delta_{(a_{\ell},b_{\ell})}(r\!-\!1)\geq\! 1$.
\item Let $M$ be an indecomposable representation with dimension vector $(a_\ell,b_\ell)$. Then $M$ has a filtration such that each filtration factor is an elementary representation belonging to $\EKP(K_r)$. In particular, 
$M \in \EKP(K_r)$. \end{enumerate} \end{Lem}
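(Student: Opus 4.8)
\textbf{Proof plan for Lemma \ref{Filtrat2}.}

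The plan is to reduce statement (2) to Lemma \ref{Filtrat1} and to deduce statement (1) by a direct computation involving the matrix $\sigma_r$. For part (1), I would first observe that the relevant data can be packaged via linear algebra: set $\fe := \udim I_0(r) = (1,0)$ so that $\Delta_{(a,b)}(r\!-\!1) = \dim_\KK M_2 - (r\!-\!1)\dim_\KK M_1$ is an affine-linear functional in $(a,b)$, and recall that $q_r$ is preserved under the $\sigma_{K_r}$-action on the Grothendieck group, i.e.\ $q_r(\sigma_r^{-\ell}.(a,b)) = q_r(a,b)$ for all $\ell$. Hence it suffices to show that the quantity $\Delta_{(a_\ell,b_\ell)}(r\!-\!1)$ does not decrease as $\ell$ grows, or more precisely that $q_r(a_\ell,b_\ell) + \Delta_{(a_\ell,b_\ell)}(r\!-\!1) \ge q_r(a,b) + \Delta_{(a,b)}(r\!-\!1)$; for this I would compute $\sigma_r^{-1}.(c,s) = (rs\!-\!c,\, s)$ (so $a_{\ell+1} = r b_\ell - a_\ell$, $b_{\ell+1} = b_\ell$ is wrong — one must be careful: with $\udim\sigma^{-1}_{K_r}(N) = (\dim_\KK N_2,\, r\dim_\KK N_2\!-\!\dim_\KK N_1)$ we get $a_{\ell+1} = b_\ell$, $b_{\ell+1} = r b_\ell - a_\ell$) and then plug into the displayed inequality. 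Since $q_r(a,b)<0$ forces $b/a$ to lie strictly between $1/L_r$ and $L_r$, and since $(a_\ell,b_\ell)$ stays in $\NN^2$ with $\Delta_{(a_\ell,b_\ell)}(1) = b_\ell - a_\ell > 0$ (the representations are in $\EKP$), the needed monotonicity should fall out from the identity $\Delta_{(a_{\ell+1},b_{\ell+1})}(r\!-\!1) = r b_\ell - a_\ell - (r\!-\!1)b_\ell = b_\ell - a_\ell + (r\!-\!1)(b_\ell - a_\ell) \cdot(\text{something})$ — the exact bookkeeping is routine but is the one place to be careful with signs.

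For part (2), the key point is to arrange the hypotheses of Lemma \ref{Filtrat1}. First, by part (1) we may pass to any $\ell$ we like, so I would choose $\ell$ large enough that $b_\ell = \lfloor a_\ell L_r\rfloor$: this is the normalization that $q_r(a_\ell,b_\ell)<0$ together with the $\sigma_{K_r}^{-1}$-dynamics eventually forces $b_\ell/a_\ell$ to approach $L_r$ from below (compare Proposition \ref{Dist1}, where $\nu(\sigma^{-n}_{K_r}(M))\nearrow L_r$ for regular $M$), and for $\ell$ sufficiently large the pair $(a_\ell,b_\ell)$ is the unique one in its $\sigma_r$-orbit with $b_\ell = \lfloor a_\ell L_r\rfloor$. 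Concretely, I would pick the unique $\ell_0$ realizing this and let $(a',b') := (a_{\ell_0},b_{\ell_0})$; then $q_r(a',b') = q_r(a,b) < 0$ and $b' = \lfloor a' L_r\rfloor$, so Lemma \ref{Filtrat1} applies to any indecomposable representation of dimension vector $\sigma_r^{-m}.(a',b')$ for $m \ge 1$. Since every $(a_\ell,b_\ell)$ with $\ell$ large is of this form, an indecomposable $M$ with $\udim M = (a_\ell,b_\ell)$ for such $\ell$ admits the asserted filtration. The remaining small values of $\ell$ are handled by noting that if $M$ is indecomposable with $\udim M = (a_\ell,b_\ell)$ then $\sigma_{K_r}^{-1}(M)$ is indecomposable (as $M$ is regular by Kac's Theorem, since $q_r(a_\ell,b_\ell)<0$) with dimension vector $(a_{\ell+1},b_{\ell+1})$, and $\EKP(K_r) = \repp(K_r,1)$ is closed under $\sigma_{K_r}^{-1}$ (Theorem \ref{CatRep3}); thus once we know the conclusion for some large $\ell$, we can pull it back one step at a time using exactness of $\sigma_{K_r}^{-1}$ on $\reg(K_r)$ and the fact that $\sigma_{K_r}^{-1}$ sends an elementary $\EKP$-module either to an elementary $\EKP$-module or — as the computation in Lemma \ref{Filtrat1} shows — to an extension situation that still lies in $\EKP$; actually the cleanest route is to observe that $\sigma_{K_r}$ (not its inverse) decreases $\ell$, so I would instead prove the statement for all $\ell \ge \ell_0$ directly from Lemma \ref{Filtrat1} and separately check $0 \le \ell < \ell_0$ by applying $\sigma_{K_r}^{-(\ell_0 - \ell)}$ to a filtration at level $\ell_0$, using that $\sigma_{K_r}^{-1}|_{\reg(K_r)}$ is exact and preserves $\EKP(K_r)$ and elementarity.

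The main obstacle will be the bookkeeping in identifying the correct $\ell_0$ with $b_{\ell_0} = \lfloor a_{\ell_0} L_r\rfloor$ and verifying that for $\ell \ge \ell_0$ one indeed has $b_\ell = \lfloor a_\ell L_r\rfloor$ — this requires knowing that the continued-fraction-type recursion $a_{\ell+1} = b_\ell$, $b_{\ell+1} = r b_\ell - a_\ell$ drives the ratio monotonically toward $L_r$ and that the floor relation, once achieved, is stable under one more application of $\sigma_r^{-1}$. This is essentially the content of the estimates behind Proposition \ref{Dist1}(1),(3), so I expect to quote or lightly adapt those. The extension step at the "bad" filtration factor (the one possibly equal to some $E(\fv)$, with $\udim\sigma^{-1}_{K_r}(E(\fv)) = (r\!-\!1, r^2\!-\!r\!-\!1)$) is already handled verbatim inside the proof of Lemma \ref{Filtrat1} via the criterion of \cite[(3.1)]{Bi20b}, so no new work is needed there. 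The final assertion $M \in \EKP(K_r)$ is then immediate since $\EKP(K_r)$ is closed under extensions (Theorem \ref{Fam5}) and all filtration factors lie in $\EKP(K_r)$.
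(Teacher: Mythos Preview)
Your plan for part (1) is on the right track, but you have not isolated the one-line identity that makes the induction clean: for any $(x,y)$ one has $\Delta_{\sigma_r^{-1}.(x,y)}(r\!-\!1) = \Delta_{(x,y)}(1)$, since $\sigma_r^{-1}.(x,y) = (y, ry\!-\!x)$ gives $(ry\!-\!x) - (r\!-\!1)y = y\!-\!x$. Combined with $q_r\circ\sigma_r^{-1} = q_r$ and the trivial inequality $\Delta_{(x,y)}(1) > \Delta_{(x,y)}(r\!-\!1)$ (valid because $x>0$, which $q_r<0$ forces), this gives immediately
\[
q_r(a_\ell,b_\ell) + \Delta_{(a_\ell,b_\ell)}(r\!-\!1) \;=\; q_r(a_{\ell-1},b_{\ell-1}) + \Delta_{(a_{\ell-1},b_{\ell-1})}(1) \;>\; q_r(a_{\ell-1},b_{\ell-1}) + \Delta_{(a_{\ell-1},b_{\ell-1})}(r\!-\!1),
\]
and (1) follows by induction. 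Your sketch would eventually arrive here, but the detour through ``monotonicity of $\Delta(r\!-\!1)$'' and the self-corrections on the formula for $\sigma_r^{-1}$ obscure this.

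For part (2) there is a genuine gap, and the strategy is also unnecessarily elaborate. First, your claim that there is a \emph{unique} $\ell_0$ with $b_{\ell_0} = \lfloor a_{\ell_0} L_r\rfloor$ is false: by \cite[(4.2)]{Bi20b} the floor condition (for $q_r<0$ and $x<y$) is equivalent to $q_r(x,y) + (y\!-\!x) \ge 1$, and since $\Delta_{(a_\ell,b_\ell)}(1) > \Delta_{(a_\ell,b_\ell)}(r\!-\!1)$, part (1) shows this holds for \emph{every} $\ell\ge 0$. More seriously, your plan to cover small $\ell$ by transporting a filtration via $\sigma_{K_r}^{-(\ell_0-\ell)}$ does not produce a filtration of the given $M$ at level $\ell$: to return from level $\ell_0$ to level $\ell$ you must apply $\sigma_{K_r}$, and $\sigma_{K_r}$ does \emph{not} preserve $\EKP(K_r)$ (only $\sigma_{K_r}^{-1}$ does, cf.\ Theorem \ref{CatRep3}). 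An elementary $\EKP$-factor can therefore be sent outside $\EKP(K_r)$, and the conclusion is lost. The ``bad factor'' analysis you quote from the proof of Lemma \ref{Filtrat1} works precisely because one only ever applies $\sigma_{K_r}^{-1}$, never $\sigma_{K_r}$.

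The paper avoids all of this by going one step \emph{backward} in the orbit rather than searching inside the forward sequence $(a_\ell,b_\ell)_{\ell\ge 0}$. The companion identity $\Delta_{\sigma_r.(x,y)}(1) = \Delta_{(x,y)}(r\!-\!1)$ (from $\sigma_r.(x,y) = (rx\!-\!y,x)$) shows that for every $\ell\ge 0$ the pair $(x,y):=\sigma_r.(a_\ell,b_\ell)$ satisfies
\[
q_r(x,y) + (y\!-\!x) \;=\; q_r(a_\ell,b_\ell) + \Delta_{(a_\ell,b_\ell)}(r\!-\!1) \;\ge\; 1
\]
by part (1), hence $y = \lfloor xL_r\rfloor$ by \cite[(4.2)]{Bi20b}. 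Since $(a_\ell,b_\ell) = \sigma_r^{-1}.(x,y)$, Lemma \ref{Filtrat1} applies directly with exponent $1$, uniformly for every $\ell\ge 0$, and no transport of filtrations is needed. This is the missing idea: the hypothesis $q_r(a,b)+\Delta_{(a,b)}(r\!-\!1)\ge 1$ is tailored exactly so that the predecessor $\sigma_r.(a,b)$ already meets the floor condition required by Lemma \ref{Filtrat1}.
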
 

\begin{proof} 
Let $(x,y) \in \NN^2$. Direct computation shows that $\Delta_{\sigma_r^{-1}.(x,y)} (r\!-\!1)= \Delta_{(x,y)}$.

(1) We prove the statement by induction on $\ell\!\geq\!0$, the case $\ell\!=\!0$ being trivial. For $\ell\!>\!0$ we obtain
\begin{align*}
q_r(a_\ell,b_\ell)\!+\!\Delta_{(a_{\ell},b_{\ell})}(r\!-\!1) &= q_r(a_\ell,b_\ell)\!+\!\Delta_{\sigma_r^{-1}.(a_{\ell-1},b_{\ell-1})}(r\!-\!1) \\
&=  q_r(a_{\ell-1},b_{\ell-1})\!+\! \Delta_{(a_{\ell-1},b_{\ell-1})}\\
&> q_r(a_{\ell-1},b_{\ell-1})\!+\!\Delta_{(a_{\ell-1},b_{\ell-1})}(r\!-\!1) \geq 1.
\end{align*}

(2) In view of Lemma \ref{Filtrat1}, it suffices to show that $(x,y)\!:=\! \sigma_r.(a_\ell,b_\ell)$ satisfies $q_r(x,y)\!<\!0$ and $y\!=\!\lfloor x L_r \rfloor$. Clearly, we have $q_r(x,y)\!=\!q_r(a_\ell,b_\ell)\!=\!q_r(a,b)\!<\!0$. 
By definition, we obtain
\[ q_r(x,y)\! +\! y\! -\! x = q_r(a_\ell,b_\ell)\!+\! \Delta_{\sigma_r.(a_{\ell},b_{\ell})} = 
 q_r(a_\ell,b_\ell)\!+\! \Delta_{(a_{\ell},b_{\ell})}(r\!-\!1),\]
so that (1) implies $q_r(x,y)\!+\!y\!-\!x\!\geq\!1$. As $q_r(x,y)\!<\!0$, we have $x\!<\!y$ and \cite[(4.2)]{Bi20b} yields $y\!=\! \lfloor x L_r \rfloor$. \end{proof}

\bigskip

\begin{Cor}\label{Filtrat3} Let $\cC \subseteq \Gamma(K_r)$ be a regular component and assume that $X \in \cC$ is quasi-simple such that $q_r(\udim X)\!+\!\Delta_X(r\!-\!1)\!\geq\!1$. Then every representation in 
$(X\! \rightarrow)$ possesses a filtration whose filtration factors are elementary and belong to $\EKP(K_r)$.\end{Cor}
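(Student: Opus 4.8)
The plan is to reduce Corollary \ref{Filtrat3} to Lemma \ref{Filtrat2}(2) by relating the quasi-simple vertex $X$ to a representation of dimension vector $\sigma_r^{-\ell}.(a,b)$ as in that lemma. First I would argue that we may assume $X$ itself belongs to $\EKP(K_r)\!=\!\repp(K_r,1)$. Indeed, by Proposition \ref{Strat1} the cone $\repp(K_r,1)\cap\cC$ is exactly $(M_{\cC,1}\!\rightarrow)$ for a uniquely determined quasi-simple vertex $M_{\cC,1}$, and since $X$ is quasi-simple it lies on the bottom row of $\cC$; hence there is $n\!\ge\!0$ with $\tau_{K_r}^{-n}(X)\!=\!M_{\cC,1}$ or, more to the point, with $\tau_{K_r}^{-n}(X) \in \EKP(K_r)$. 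Because $\tau_{K_r}^{-1}\cong\sigma_{K_r}^{-2}$ acts on dimension vectors of regular (in particular non-injective) representations via $\sigma_r^{-2}$, replacing $X$ by such a $\tau_{K_r}^{-n}(X)$ only changes $\udim X$ by a power of $\sigma_r^{-1}$, and by Lemma \ref{Filtrat2}(1) the hypothesis $q_r(\udim X)\!+\!\Delta_X(r\!-\!1)\!\ge\!1$ is preserved (note $q_r$ is $\sigma_r$-invariant). Moreover $\repp(K_r,1)$ is closed under $\tau_{K_r}^{-1}$ by Proposition \ref{CatRep1}(1), and $(X\!\rightarrow)\supseteq(\tau_{K_r}^{-n}(X)\!\rightarrow)$; since a filtration of a representation with elementary $\EKP$-factors restricts along the inclusion of cones, it suffices to produce the filtration for every representation in $(\tau_{K_r}^{-n}(X)\!\rightarrow)$. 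So without loss of generality $X\in\EKP(K_r)$ is quasi-simple with $q_r(\udim X)\!+\!\Delta_X(r\!-\!1)\!\ge\!1$.

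Next I would handle the quasi-length-one case, i.e.\ show $X$ itself admits such a filtration. If $q_r(\udim X)\!\le\!-1$ this is immediate from Lemma \ref{Filtrat2}(2) once we check that $\udim X\!=\!\sigma_r^{-\ell}.(a,b)$ for some admissible $(a,b)$ and $\ell$; writing $\sigma_r.\udim X$ and iterating, one reaches a dimension vector $(a,b)$ with $a\!<\!b$, and \cite[(4.2)]{Bi20b} (invoked exactly as in the proof of Lemma \ref{Filtrat2}) gives $b\!=\!\lfloor aL_r\rfloor$, so $(a,b)$ satisfies the hypotheses of Lemma \ref{Filtrat2}. If instead $q_r(\udim X)\!\ge\!0$, then by Kac's Theorem (Theorem \ref{Kac1}) $q_r(\udim X)\!=\!1$ and $X$ is preprojective or preinjective, contradicting regularity of $\cC$; hence this case is vacuous and Lemma \ref{Filtrat2}(2) applies to $X$.

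Finally I would propagate the filtration up the cone $(X\!\rightarrow)$ by induction on quasi-length. For $M\in(X\!\rightarrow)$ with $\ql(M)\!=\!\ell\!>\!1$, general Auslander-Reiten theory for the $\ZZ[A_\infty]$-component $\cC$ (Ringel's Theorem \cite{Ri78}, as recalled before Proposition \ref{Strat1}) provides a short exact sequence
\[ (0) \lra M_{[\ell-1]} \lra M \lra \tau_{K_r}^{-\ell+1}(X) \lra (0),\]
where $M_{[\ell-1]}\in(X\!\rightarrow)$ has quasi-length $\ell\!-\!1$ and quasi-socle $X$, and $\tau_{K_r}^{-\ell+1}(X)$ is the quasi-top of $M$. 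Both $M_{[\ell-1]}$ and $\tau_{K_r}^{-\ell+1}(X)$ lie in $(X\!\rightarrow)$, so by induction each has a filtration with elementary $\EKP$-factors; concatenating the filtration of the sub $M_{[\ell-1]}$ with the pullback of the filtration of the quotient $\tau_{K_r}^{-\ell+1}(X)$ yields the desired filtration of $M$. (The base case $\ell\!=\!1$ is the quasi-simple vertices $\tau_{K_r}^{-n}(X)$, all of which are handled by the previous paragraph together with Lemma \ref{Filtrat2}(1), which guarantees they still satisfy the Tits-form inequality.) The main obstacle I anticipate is purely bookkeeping: verifying that $\udim X$ genuinely has the normal form $\sigma_r^{-\ell}.(a,b)$ with $b\!=\!\lfloor aL_r\rfloor$ required to invoke Lemma \ref{Filtrat2}, which amounts to carefully tracking the $\sigma_r$-action on $\NN^2$ and reusing the floor-function identity from \cite[(4.2)]{Bi20b}; everything else is a routine induction once that link is in place.
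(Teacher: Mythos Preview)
Your core strategy---use the quasi-composition series of $M\in(X\!\rightarrow)$ and apply Lemma~\ref{Filtrat2} to each quasi-simple factor $\tau_{K_r}^{-\ell_i}(X)$---is exactly the paper's approach. But you have obscured a one-line argument with two unnecessary detours, one of which contains a logical slip.

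First, your reduction to $X\in\EKP(K_r)$ is not needed, and your justification for it is backwards. You correctly note $(X\!\rightarrow)\supseteq(\tau_{K_r}^{-n}(X)\!\rightarrow)$, then conclude it suffices to treat the \emph{smaller} cone; that does not follow, and the phrase ``a filtration restricts along the inclusion of cones'' has no clear meaning. What actually happens is that $X$ is regular with $r\ge 3$, so $q_r(\udim X)<0$ (recall $q_r(a,b)=0$ forces $(a,b)=(0,0)$ when $r\ge 3$), and together with the hypothesis $q_r(\udim X)+\Delta_X(r\!-\!1)\ge 1$ this means $\udim X$ itself satisfies the hypotheses of Lemma~\ref{Filtrat2} with $(a,b)=\udim X$. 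So Lemma~\ref{Filtrat2}(2) applied with $\ell=0$ already gives $X\in\EKP(K_r)$; no reduction is required.

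Second, in your quasi-length-one step you search for ``admissible $(a,b)$'' by iterating $\sigma_r$ and invoking the floor identity. This is redoing the proof of Lemma~\ref{Filtrat2} rather than applying it. The paper simply takes $(a,b)=\udim X$; then each quasi-composition factor $\tau_{K_r}^{-\ell_i}(X)\cong\sigma_{K_r}^{-2\ell_i}(X)$ has dimension vector $\sigma_r^{-2\ell_i}.(a,b)$, and Lemma~\ref{Filtrat2}(2) with $\ell=2\ell_i$ furnishes the elementary $\EKP$-filtration of that factor. Concatenating these over the quasi-composition series of $M$ finishes the proof. Your induction on quasi-length in step~3 is fine and amounts to the same concatenation.
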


\begin{proof} Let $M \in (X\!\rightarrow)$, then each of the filtration factors $M_{[i]}/M_{[i-1]}$ of the quasi-composition series of $M$ is isomorphic to $\tau^{-\ell_i}_{K_r}(X) \cong \sigma_{K_r}^{-2\ell_i}(X)$ for some 
$\ell_i \in \NN_0$. By Lemma \ref{Filtrat2}, each of these representations possesses a filtration such that each filtration factor is elementary and in $\EKP(K_r)$.\end{proof}

\bigskip

\begin{Remark} Let $M \in \rep(K_r)$ be regular indecomposable. In view of Lemma \ref{Strat3} and $q_r(\udim\sigma^{-1}_{K_r}(M))\!=\!q_r(\udim M)$, it follows that every regular
AR-component $\cC \subseteq \Gamma(K_r)$ contains a quasi-simple representation $X$ such that $q_r(\udim X)\!+\!\Delta_X(r\!-\!1)\!\ge\!1$. \end{Remark}

\bigskip

\subsection{The Harder-Narasimhan filtration} \label{S:HN}

\bigskip

\begin{Definition} Let $\cF \in \Coh(\PP^{r-1})$ be a torsion-free coherent sheaf. There exists a uniquely determined filtration, the \it{Harder-Narasimhan filtration} (cf. \cite[Section 1.2]{La07}),
\[ (0) = \cF_0 \subsetneq \cF_1 \subsetneq \cF_2 \subsetneq \cdots \subsetneq \cF_{t-1} \subsetneq \cF_t = \cF\]
that satisfies the following two properties:
\begin{enumerate}
\item Each quotient $\cF^i\!:=\!\cF_i/\cF_{i-1}$ is semistable ($i \in \{1,\ldots,t\}$).
\item We have $\mu(\cF^1)\!>\!\mu(\cF^2)\!> \cdots >\!\mu(\cF^{t-1})\!>\!\mu(\cF^t)$.
\end{enumerate} \end{Definition}

\bigskip

\begin{Remark} The sheaf $\cF$ is semistable if and only if $t\!=\!1$. \end{Remark}

\bigskip

\begin{Lem}\label{HN1} Let $\cF \in \Coh(\PP^{r-1})$ be  a non-zero, torsion-free sheaf that is not semistable. There exists a non-zero subsheaf $\cU \subsetneq \cF$ with the following properties:
\begin{enumerate}
\item[(a)] $\cF/\cU$ is semistable,
\item[(b)] $\mu(\cU)\!>\!\mu(\cF)$,
\item[(c)] $\Hom_{\PP^{r-1}}(\cS,\cF/\cU)\!=\!(0)$ for each semistable sheaf $\cS \in \Coh(\PP^{r-1})$ with $\mu(\cF)\!\leq\!\mu(\cS)$. \end{enumerate} \end{Lem}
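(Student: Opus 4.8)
The plan is to take $\cU\!:=\!\cF_{t-1}$, the penultimate term of the Harder-Narasimhan filtration of $\cF$. Since $\cF$ is torsion-free but not semistable, the HN filtration has length $t\!\geq\!2$, so $\cU$ is a non-zero proper subsheaf, and it is torsion-free as a subsheaf of $\cF$. The quotient $\cF/\cU\!=\!\cF^t$ is the last HN-factor, hence semistable, giving (a). For (b), the See-Saw Lemma (Lemma \ref{Stab1}) together with the decreasing slope property of the HN filtration yields $\mu(\cU)\!>\!\mu(\cF)$: one shows by downward induction on $i$ that $\mu(\cF_i)\!>\!\mu(\cF_{i+1}/\cF_i)$ whenever $i\!<\!t$, using that $\mu(\cF^1)\!>\!\cdots\!>\!\mu(\cF^t)$, and then applies See-Saw to $(0)\lra \cF_{t-1}\lra \cF\lra \cF^t\lra (0)$, which forces $\mu(\cF_{t-1})\!>\!\mu(\cF)\!>\!\mu(\cF^t)$ since the two outer slopes cannot both equal $\mu(\cF)$ (they differ).

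First I would record the elementary fact that for a non-zero morphism $g\colon \cS\lra \cT$ of torsion-free sheaves with $\cS$ semistable and $\cT$ semistable, one has $\mu(\cS)\!\leq\!\mu(\cT)$; this is the standard ``no morphisms from higher slope to lower slope among semistable sheaves'' statement and I would cite it from \cite[Section 1.2]{La07} or prove it quickly by noting $\msIm g$ is a non-zero quotient of $\cS$ and a non-zero subsheaf of $\cT$, so $\mu(\cS)\!\leq\!\mu(\msIm g)\!\leq\!\mu(\cT)$ by semistability of both ends. For part (c), suppose $\cS$ is semistable with $\mu(\cF)\!\leq\!\mu(\cS)$ and $g\colon \cS\lra \cF/\cU\!=\!\cF^t$ is non-zero. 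Then by the above applied to $g$ we get $\mu(\cS)\!\leq\!\mu(\cF^t)$. But $\mu(\cF^t)\!<\!\mu(\cF)\!\leq\!\mu(\cS)$ from (b) and the hypothesis, a contradiction. Hence $\Hom_{\PP^{r-1}}(\cS,\cF/\cU)\!=\!(0)$.

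The only genuinely delicate point is ensuring the existence and basic properties of the Harder-Narasimhan filtration in the present setting of torsion-free coherent sheaves on $\PP^{r-1}$ over an algebraically closed field of arbitrary characteristic; I would invoke \cite[Section 1.2]{La07}, which develops this in the required generality (including the semistability of the factors and the strictly decreasing slopes), so that the filtration used in the Definition preceding this lemma is legitimate. Granting that, the main obstacle is really just bookkeeping with the See-Saw Lemma to extract $\mu(\cF_{t-1})\!>\!\mu(\cF)$ from the chain of factor-slope inequalities, which is routine. I would also note that $\cF_{t-1}$ being a subsheaf of a torsion-free sheaf is torsion-free, so that ``$\cU$'' and ``$\cF/\cU$'' are legitimate inputs for the slope function and for the morphism-vanishing argument.

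Putting this together, the proof would read: let $(0)\!=\!\cF_0\subsetneq\cF_1\subsetneq\cdots\subsetneq\cF_t\!=\!\cF$ be the HN filtration; since $\cF$ is not semistable, $t\!\geq\!2$; set $\cU\!:=\!\cF_{t-1}$; property (a) is the semistability of $\cF^t\!=\!\cF/\cU$; property (b) follows from See-Saw and the decreasing slopes; property (c) follows from (b), the hypothesis on $\mu(\cS)$, and the no-morphism lemma for semistable sheaves. I expect this to occupy well under a page once the See-Saw manipulation is spelled out.
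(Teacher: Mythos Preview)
Your proposal is correct and follows essentially the same approach as the paper: set $\cU:=\cF_{t-1}$ from the Harder--Narasimhan filtration, obtain (a) immediately, derive (b) via an inductive application of the See-Saw Lemma~\ref{Stab1} to the chain of strictly decreasing factor slopes, and deduce (c) from the standard ``no morphisms from higher to lower slope among semistables'' argument (which the paper also phrases via $\mu(\cF/\cU)\geq\mu(\im f)\geq\mu(\cS)$). The only cosmetic difference is that the paper runs the induction forward on $s$ to get $\mu(\cF_{s-1})>\mu(\cF_s)>\mu(\cF^s)$ rather than your downward phrasing, but the content is identical.
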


\begin{proof} We consider the Harder-Narasimhan filtration
\[ (0) = \cF_0 \subsetneq \cF_1 \subsetneq \cF_2 \subsetneq  \cdots \subsetneq \cF_{t-1} \subsetneq \cF_t = \cF\]
with $t\!\geq\!2$ and set $\cU\!:=\!\cF_{t-1}$. Then (a) holds by definition. Using Lemma \ref{Stab1} and the defining property (2), induction on $s$ implies that $\mu(\cF_{s-1})\!>\!\mu(\cF_s)\!>\!\mu(\cF^s)$ for $s \in 
\{2,\ldots,t\}$. Consequently, (b) holds as well. Lemma \ref{Stab1} also yields
\[ \mu(\cF/\cU) = \mu(\cF^t) < \mu(\cF) \leq \mu(\cS)\]
for each semistable sheaf $\cS \in \Coh(\PP^{r-1})$ with $\mu(\cF)\!\leq\!\mu(\cS)$.
Let $f : \cS \lra \cF/\cU$ be a morphism for such a sheaf $\cS$. As $\cF/\cU$ and $\cS$ are semistable, the assumption $f\!\neq\!0$ in conjunction with Lemma \ref{Stab1} implies
\[  \mu(\cF/\cU) \geq \mu(\im f) \geq \mu(\cS), \]
which contradicts $\mu(\cF/\cU)\!<\!\mu(\cS)$.\end{proof}

\bigskip

\subsection{The Beilinson Spectral Sequence} \label{S:BSS} For $\cF \in \Coh(\PP^{r-1})$ and $n \in \ZZ$, we write $\cF(n)$ for the $n$-th Serre twist of $\cF$. 

We denote by $\Omega$ the cotangent bundle of $\PP^{r-1}$ (see \cite[(6.10)]{Be17}), and, for $m \in \NN_0$, we let $\Omega^m\!:=\!\bigwedge^m_{\PP^{r-1}} \Omega$ be 
the $m$-th exterior power of $\Omega$, where $\Omega^0\!:=\!\cO_{\PP^{r-1}}$.

\bigskip

\begin{Lem}\label{BSS1} Let $-r\!<\!b\!\leq\!0$ and $i \in \NN_0$. For $m \in \NN_0$ we have 
\[ \HH^i(\PP^{r-1},\Omega^{m}(m\!+\!b)) \cong  \left\{ \begin{array}{cc}  \KK & \text{if} \ i\!=\!m\!=\!-b \\ (0) & \text{otherwise.} \end{array} \right. \]
\end{Lem}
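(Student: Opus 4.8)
The statement is the classical Bott-type vanishing theorem for the bundles $\Omega^m(m+b)$ on $\PP^{r-1}$ in the range $-r < b \le 0$, and the plan is to reduce it to the standard cohomology of the twisted structure sheaves $\cO_{\PP^{r-1}}(j)$ via the Koszul (Euler) complex. First I would recall the short exact sequences obtained from the truncated Euler sequence: for each $m \ge 1$ there is an exact sequence
\[ (0) \lra \Omega^m \lra \bigwedge^m(A_r^\ast)\!\otimes_\KK\!\cO_{\PP^{r-1}}(-m) \lra \Omega^{m-1} \lra (0),\]
which after twisting by $\cO_{\PP^{r-1}}(m+b)$ becomes
\[ (0) \lra \Omega^m(m\!+\!b) \lra \bigwedge^m(A_r^\ast)\!\otimes_\KK\!\cO_{\PP^{r-1}}(b) \lra \Omega^{m-1}(m\!+\!b) \lra (0).\]
The middle term has cohomology governed by $\HH^i(\PP^{r-1},\cO_{\PP^{r-1}}(b))$, and since $-r < b \le 0$ we are in the middle range where $\HH^i(\PP^{r-1},\cO_{\PP^{r-1}}(b)) = (0)$ for all $i$ (with the single exception $b = 0$, $i = 0$, giving $\KK$). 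This is where the hypothesis $-r < b \le 0$ is used in an essential way; it is precisely the range that kills all cohomology of $\cO(b)$ except the trivial-degree contribution.

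The argument then proceeds by induction on $m$. For the base case $m = 0$, $\Omega^0(b) = \cO_{\PP^{r-1}}(b)$, and the stated formula reads $\HH^i(\PP^{r-1},\cO_{\PP^{r-1}}(b)) \cong \KK$ iff $i = 0 = -b$, i.e.\ iff $b = 0$ and $i = 0$, and $(0)$ otherwise — this is exactly the classical computation just quoted. For the inductive step, assume the result for $\Omega^{m-1}$ (in the appropriate twist). From the long exact cohomology sequence of the displayed short exact sequence, and using that the middle term $\bigwedge^m(A_r^\ast)\!\otimes_\KK\!\cO_{\PP^{r-1}}(b)$ has vanishing cohomology in every degree when $b < 0$ (and only $\HH^0 \cong \bigwedge^m(A_r^\ast)$ when $b = 0$), one gets boundary isomorphisms
\[ \HH^{i-1}(\PP^{r-1},\Omega^{m-1}(m\!+\!b)) \cong \HH^i(\PP^{r-1},\Omega^m(m\!+\!b))\]
for all $i$, at least when $b < 0$; the inductive hypothesis (with $m-1$ and the twist shifted) says the left-hand side is $\KK$ iff $i - 1 = m-1 = -b$, equivalently $i = m = -b$, and $(0)$ otherwise. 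The case $b = 0$ must be handled slightly separately: then $m + b = m$, the only possibly nonzero cohomology on the left is $\HH^0(\Omega^{m-1}(m-1)) = \KK$ in the relevant subcase, but here $-b = 0$ forces $m = 0$, contradicting $m \ge 1$, so in fact $\Omega^{m-1}(m-1)$ has no cohomology for $m \ge 1$ unless we are in the $m = 0$ base case; thus for $m \ge 1$ and $b = 0$ all terms vanish, consistent with the claim (which for $b = 0$, $m \ge 1$ asserts total vanishing since $m = -b = 0$ is impossible). I would write this bookkeeping out carefully.

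The main obstacle is not conceptual but organizational: keeping the twist indices aligned through the induction (the inductive hypothesis is applied to $\Omega^{m-1}$ twisted by $\cO(m+b) = \cO((m-1) + (b+1))$, so the ``$b$'' parameter shifts to $b+1$, and one must check that $b+1$ still lies in the admissible range $-r < b+1 \le 0$ — which it does not when $b = 0$, again forcing the separate treatment of that boundary case, and which requires $b + 1 > -r$, i.e.\ $b > -r-1$, automatic). An alternative, cleaner route that I might prefer is to invoke Bott's formula for $\HH^i(\PP^{r-1}, \Omega^m(k))$ directly as a citable black box (it holds over an arbitrary field, e.g.\ via the Koszul resolution argument above, and is recorded in standard references such as \cite[Ch.~I]{OSS}), and simply specialize to $k = m + b$ with $-r < b \le 0$. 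Given that the paper has already been freely citing \cite{OSS} and characteristic-free cohomological facts, this is likely the intended proof; I would present the Koszul-complex induction as the self-contained justification and remark that it works in arbitrary characteristic, then read off the three cases ($b < 0$: total vanishing; $b = 0$, $m \ge 1$: total vanishing; $b = 0$, $m = 0$: $\HH^0 = \KK$) to match the displayed formula.
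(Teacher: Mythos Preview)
Your approach is correct but differs from the paper's. The paper simply cites Bott's formula (from Benson's book \cite[(6.10.1)]{Be17}, valid in arbitrary characteristic) and checks the three cases in which $\HH^i(\PP^{r-1},\Omega^m(k))$ can be nonzero: $i=0$ with $k>m$ (ruled out by $b\le 0$), $i=m$ with $k=0$ (giving the one-dimensional contribution exactly when $m=-b$), and $i=r-1$ with $k\le m-r$ (ruled out by $b>-r$). That is the whole proof.

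Your inductive argument via the short exact sequences $(0)\to\Omega^m\to\bigwedge^m A_r^\ast\otimes\cO(-m)\to\Omega^{m-1}\to(0)$ is essentially a self-contained proof of the relevant slice of Bott's formula, and you correctly anticipate the paper's citation route as the alternative. One small slip: in your treatment of the boundary case $b=0$, $m\ge 1$, the third term of the twisted sequence is $\Omega^{m-1}(m)$, not $\Omega^{m-1}(m-1)$ as you write; since this corresponds to the shifted parameter $b+1=1$, it genuinely falls outside the range of the inductive hypothesis, and closing the argument there requires an additional input (e.g.\ that the map $\bigwedge^m A_r^\ast\to\HH^0(\Omega^{m-1}(m))$ is an isomorphism, or a separate downward induction from $\Omega^{r-1}\cong\cO(-r)$). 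You flag that this case needs separate care, so this is a bookkeeping wrinkle rather than a conceptual gap. The trade-off is clear: your route is more self-contained and makes the characteristic-free validity explicit, while the paper's is shorter and relies on a standard reference.
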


\begin{proof} For $i\!\geq\!r$ there is nothing to show (see \cite[(3.10.5)]{Be17}). Hence we assume that $0\!\leq\!i\!\leq\!r\!-\!1$. By Bott's formula (see \cite[(6.10.1)]{Be17}) the only cases to consider are 
\begin{itemize}
\item $i\!=\!0$, $0\!\leq\!m\!<\!r$ and $m\!+\!b\!>\!m$: This can not happen since $b\!\leq\!0$.
\item $m\!+\!b\!=\!0$ and $m\!=\!i$: Then $\dim_\KK \HH^i(\PP^{r-1},\Omega^{m}(m\!+\!b))\!=\!1$.
\item $i\!=\!r\!-\!1$, $0\!\leq\!m\!<\!r$, $m\!+\!b\!\leq\!m\!-\!r$: This can not happen since $b\!> \!-r$.  \qedhere
\end{itemize} \end{proof}

\bigskip

\begin{Cor}\label{BSS2} (cf.\ \cite[(3.2)]{DK93})
Let $\cF \in \Coh(\PP^{r-1})$ be a coherent sheaf, $-(r\!-\!1)\!<\!d\!\leq\!0$ and 
\[ (0) \lra \widetilde{V_1}(d) \lra \widetilde{V_2}(d) \lra \cF \lra(0)\]
be a short exact sequence. If $\HH^i(\PP^{r-1},\cF\!\otimes_{\PP^{r-1}}\!\Omega^m(m))\!\neq\!(0)$, then $i\!=\!-d$ and $m \in \{-d,-(d\!-\!1)\}$, \end{Cor}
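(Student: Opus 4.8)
The plan is to tensor the defining short exact sequence with $\Omega^m(m)$ and analyze the resulting long exact cohomology sequence, reducing everything to the vanishing statement of Lemma \ref{BSS1}. First I would note that $\widetilde{V_1}\!=\!V_1\!\otimes_\KK\!\cU_{(r,1)}\cong V_1\!\otimes_\KK\!\cO_{\PP^{r-1}}(-1)$ and $\widetilde{V_2}\!=\!V_2\!\otimes_\KK\!\cO_{\PP^{r-1}}$, so that after twisting by $\cO_{\PP^{r-1}}(d)$ and then by $\Omega^m(m)$, the outer terms of the sequence become direct sums of copies of $\Omega^m(m\!+\!d\!-\!1)$ and $\Omega^m(m\!+\!d)$ respectively. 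Since $0<1-d\le r-1<r$ and $0\le -d<r-1<r$, both shifts $m\!+\!(d\!-\!1)$ and $m\!+\!d$ fall in the range $(-r,0]$ covered by Lemma \ref{BSS1} (with $b\!=\!d\!-\!1$ and $b\!=\!d$ respectively), provided we are careful about the endpoint $d=0$ where $b=0$ is allowed and $b=d-1=-1>-r$ is fine.

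The key step is then the long exact sequence
\[ \cdots \lra \HH^i(\PP^{r-1},\widetilde{V_1}(d)\!\otimes\!\Omega^m(m)) \lra \HH^i(\PP^{r-1},\widetilde{V_2}(d)\!\otimes\!\Omega^m(m)) \lra \HH^i(\PP^{r-1},\cF\!\otimes\!\Omega^m(m)) \lra \HH^{i+1}(\PP^{r-1},\widetilde{V_1}(d)\!\otimes\!\Omega^m(m)) \lra \cdots \]
which shows that $\HH^i(\PP^{r-1},\cF\!\otimes\!\Omega^m(m))\!\ne\!(0)$ forces either $\HH^i(\PP^{r-1},\Omega^m(m\!+\!d))\!\ne\!(0)$ or $\HH^{i+1}(\PP^{r-1},\Omega^m(m\!+\!d\!-\!1))\!\ne\!(0)$. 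By Lemma \ref{BSS1} (applied with $b\!=\!d$) the first option gives $i\!=\!m\!=\!-d$; applied with $b\!=\!d\!-\!1$ the second option gives $i\!+\!1\!=\!m\!=\!-(d\!-\!1)$, i.e.\ $i\!=\!-d$ and $m\!=\!1\!-\!d\!=\!-(d\!-\!1)$. In both cases $i\!=\!-d$, and $m\in\{-d,-(d\!-\!1)\}$, which is exactly the assertion.

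The only subtlety — and the part I would write out carefully — is checking that the hypothesis $-(r\!-\!1)\!<\!d\!\le\!0$ really does put both $m\!+\!d$ and $m\!+\!d\!-\!1$ into the half-open interval $(-r,0]$ whenever the relevant cohomology is nonzero, since Lemma \ref{BSS1} is only stated for $-r\!<\!b\!\le\!0$. For $b\!=\!d$ this is immediate from $-(r\!-\!1)\!<\!d\!\le\!0$. For $b\!=\!d\!-\!1$ we have $-r\!<\!d\!-\!1\!\le\!-1\!<\!0$, again fine. There is no genuine obstacle here; the argument is a routine diagram chase once the two applications of Lemma \ref{BSS1} are lined up, and the main thing to get right is the bookkeeping of which Serre twist corresponds to which value of $b$ and the resulting index shift in the $\HH^{i+1}$ term. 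This mirrors \cite[(3.2)]{DK93}, to which I would refer for the classical case $r$ arbitrary, $d\!=\!0$.
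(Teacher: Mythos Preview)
Your argument is correct and essentially identical to the paper's own proof: tensor the defining sequence by $\Omega^m(m)$, use the resulting long exact sequence to sandwich $\HH^i(\PP^{r-1},\cF\otimes\Omega^m(m))$ between $V_2\otimes_\KK\HH^i(\PP^{r-1},\Omega^m(m+d))$ and $V_1\otimes_\KK\HH^{i+1}(\PP^{r-1},\Omega^m(m+d-1))$, and apply Lemma \ref{BSS1} with $b=d$ and $b=d-1$. Your extra paragraph on the range check is harmless but unnecessary---the paper simply notes $-r<d-1,\,d\le 0$ in one line.
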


\begin{proof} Upon tensoring the exact sequence over $\cO_{\PP^{r-1}}$ with the vector bundle $\Omega^m(m)$, we obtain the exact sequence
\[ (0) \lra V_1\!\otimes_\KK\!\Omega^m(m\!+\!d\!-\!1)  \lra V_2\!\otimes_\KK\!\Omega^m(m\!+\!d) \lra  \cF\!\otimes_{\PP^{r-1}}\!\Omega^m(m) \lra (0).\]
The long exact sequence in cohomology provides an exact sequence
\[V_2\! \otimes_\KK\! \HH^{i}(\PP^{r-1},\Omega^m(m\!+\!d)) \lra \HH^{i}(\PP^{r-1},\cF\!\otimes_{\PP^{r-1}}\!\Omega^m(m)) \lra V_1\!\otimes_\KK\!\HH^{i+1}(\PP^{r-1},\Omega^m(m\!+\!d\!-\!1)).\]
Since $-r\!<\!d\!-\!1,d\! \leq\! 0$,  Lemma \ref{BSS1} shows, that the assumption $\HH^i(\PP^{r-1},\cF\!\otimes_{\PP^{r-1}}\!\Omega^m(m))\!\neq\!(0)$ entails either $i\!=\!m\!=\!-d$ or 
$i\!+\!1\!=\!m\!=-(d\!-\!1)$. Hence $(i,m)\!=\!(-d,-d)$ or $(i,m)\!=\!(-d,-(d\!-\!1))$. \end{proof}

\bigskip

\begin{Thm}(see \cite[(8.28)]{Huy06})\label{BSS3} Let $\cF \in \Coh(\PP^{r-1})$ be a coherent sheaf. There is a second quadrant spectral sequence $E\!=\!E_{\cF}$ with first page
\[ E^{pq}_1 = \HH^q(\PP^{r-1},\cF\!\otimes_{\PP^{r-1}}\!\Omega^{-p}(-p))\!\otimes_\KK\!\cO_{\PP^{r-1}}(p)\]
for $(-p,q) \in \{0,\ldots,r\!-\!1\}^2$ and zero otherwise, which converges to
\[E^{p+q} \Rightarrow \begin{cases}
\cF, \ \text{for} \ p\!+\!q \!=\!0 \\
(0), \ \text{for} \ p\!+\!q\!\neq\!0.
\end{cases}
\]
Moreover, this sequence is natural in $\cF$. \end{Thm}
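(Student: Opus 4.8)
The plan is to deduce Theorem~\ref{BSS3} from the standard form \cite[(8.28)]{Huy06} of the Beilinson spectral sequence by producing it as the hypercohomology spectral sequence of a Fourier--Mukai pushforward along the diagonal, so that the only real work is to match the twist and sign conventions with those fixed in this subsection (and to observe that the construction is insensitive to $\Char(\KK)$). First I would recall the Beilinson resolution of the diagonal. Writing $X\!:=\!\PP^{r-1}\!\times\!\PP^{r-1}$, $p_1,p_2 : X \lra \PP^{r-1}$ for the two projections, $\iota : \Delta \hookrightarrow X$ for the diagonal, and $\cA\boxtimes\cB\!:=\!p_1^\ast\cA\otimes_X p_2^\ast\cB$, the Euler sequence on $\PP^{r-1}$ furnishes a section of the rank-$(r\!-\!1)$ bundle $\cT_{\PP^{r-1}}(-1)\boxtimes\cO_{\PP^{r-1}}(1)$ whose zero locus is exactly $\Delta$; its Koszul complex gives a locally free resolution
\[ (0) \lra \Omega^{r-1}(r\!-\!1)\boxtimes\cO_{\PP^{r-1}}(-(r\!-\!1)) \lra \cdots \lra \Omega^1(1)\boxtimes\cO_{\PP^{r-1}}(-1) \lra \cO_{\PP^{r-1}}\boxtimes\cO_{\PP^{r-1}} \lra \iota_\ast\cO_\Delta \lra (0)\]
of $\iota_\ast\cO_\Delta$, the term in cohomological degree $p\!\le\!0$ being $K^p\!:=\!\Omega^{-p}(-p)\boxtimes\cO_{\PP^{r-1}}(p)$.

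Next I would carry out the two standard computations with $Rp_{2\ast}$. Given $\cF \in \Coh(\PP^{r-1})$, tensoring the resolution with $p_1^\ast\cF$ and pushing forward: on the one hand, since $p_2\circ\iota$ and $p_1\circ\iota$ are mutually inverse isomorphisms, $Rp_{2\ast}(p_1^\ast\cF\otimes_X^L\iota_\ast\cO_\Delta)\cong\cF$, concentrated in degree $0$; on the other hand, for each $p$ the projection formula and flat base change (both $p_i$ being flat over $\KK$) yield
\[ R^qp_{2\ast}\big(p_1^\ast\cF\otimes_X K^p\big) \cong \HH^q(\PP^{r-1},\cF\otimes_{\PP^{r-1}}\Omega^{-p}(-p))\otimes_\KK\cO_{\PP^{r-1}}(p).\]
The hypercohomology spectral sequence of the (bounded, second-quadrant) double complex computing $Rp_{2\ast}$ of $p_1^\ast\cF$ tensored with the resolution therefore has precisely the asserted $E_1$-page for $(-p,q)\in\{0,\ldots,r\!-\!1\}^2$, vanishing $E_1$-terms otherwise, and converges to $\cF$ in total degree $0$ and to $(0)$ in all other total degrees. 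Naturality in $\cF$ is immediate, since $p_1^\ast(-)$, $(-)\otimes_X K^\bullet$, $Rp_{2\ast}$ and the passage to the hypercohomology spectral sequence are all functorial.

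The main obstacle is not mathematical depth but bookkeeping: one must check that the indexing $E_1^{pq}$, the placement of the Serre twist $\cO_{\PP^{r-1}}(p)$ versus $\Omega^{-p}(-p)$, and the direction of the differentials agree with \cite[(8.28)]{Huy06} (equivalently, that one uses the resolution with $\Omega^{\bullet}$ on the first factor rather than its transpose under swapping the two copies of $\PP^{r-1}$). Once this identification is made the statement follows verbatim; I would also note in passing that the Koszul resolution of the diagonal, the projection formula and flat base change are all valid over the arbitrary field $\KK$, so no hypothesis on $\Char(\KK)$ enters.
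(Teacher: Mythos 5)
Your argument is correct: it is the standard derivation of the Beilinson spectral sequence via the Koszul resolution of the diagonal, the projection formula and flat base change, which is precisely the proof given in the reference \cite[(8.28)]{Huy06} that the paper cites in lieu of a proof. The observations that the conventions must be matched (which factor of $\PP^{r-1}\times\PP^{r-1}$ carries $\Omega^{-p}(-p)$) and that nothing depends on $\Char(\KK)$ are both apt, so nothing further is needed.
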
 

\bigskip

\begin{Definition} Given $-(r\!-\!1)\!<\!d\!\leq\!0$ and $\cF \in \Coh(\PP^{r-1})$, we denote the differential $d_1^{d-1,-d}$ on the first page of the spectral sequence $E_\cF$ by  
\[ \alpha_{\cF,d}  : E_1^{d-1,-d} \lra E_1^{d,-d}.\]
\end{Definition}

\bigskip

\begin{Cor}\label{BSS4} Let $\cF$ be a Steiner bundle, $-(r\!-\!1)\!<\!d\!\leq\!0$. There exists a short exact sequence
\[ (0) \lra E_1^{d-1,-d} \stackrel{\alpha_{\cF(d),d}}{\lra} E_1^{d,-d} \lra \cF(d) \lra (0).\]
\end{Cor}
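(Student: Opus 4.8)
The plan is to feed the Steiner bundle $\cF(d)$ into the Beilinson spectral sequence of Theorem \ref{BSS3} and show that the only two surviving terms on the first page sit in the $(d-1,-d)$ and $(d,-d)$ spots, so that the convergence of $E$ collapses to the asserted short exact sequence. First I would apply Theorem \ref{BSS3} to the coherent sheaf $\cF(d)$; its first page is $E_1^{pq} = \HH^q(\PP^{r-1},\cF(d)\otimes_{\PP^{r-1}}\Omega^{-p}(-p))\otimes_\KK\cO_{\PP^{r-1}}(p)$ for $(-p,q)\in\{0,\ldots,r-1\}^2$ and zero otherwise. Because $\cF$ is a Steiner bundle, it fits by definition into $(0)\to\widetilde{V_1}\to\widetilde{V_2}\to\cF\to(0)$ with $\widetilde{V_1}=V_1\otimes_\KK\cU_{(r,1)}\cong V_1\otimes_\KK\cO_{\PP^{r-1}}(-1)$ and $\widetilde{V_2}=V_2\otimes_\KK\cO_{\PP^{r-1}}$; twisting by $\cO_{\PP^{r-1}}(d)$ gives exactly the short exact sequence in the hypothesis of Corollary \ref{BSS2}, namely $(0)\to\widetilde{V_1}(d)\to\widetilde{V_2}(d)\to\cF(d)\to(0)$, with $-(r-1)<d\le 0$.

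Next I would invoke Corollary \ref{BSS2}: it tells us that $\HH^i(\PP^{r-1},\cF(d)\otimes_{\PP^{r-1}}\Omega^m(m))\neq(0)$ forces $i=-d$ and $m\in\{-d,-(d-1)\}$. Translating $m=-p$, $i=q$, this says that the only nonzero entries $E_1^{pq}$ of the first page are those with $q=-d$ and $p\in\{d,d-1\}$. Thus the first page is concentrated in the single row $q=-d$, with exactly two potentially nonzero columns, $p=d-1$ and $p=d$. In such a situation every differential $d_s$ for $s\ge 2$ is forced to vanish for degree reasons (the target or source lies outside the two occupied spots), so the sequence degenerates at the second page: $E_\infty^{pq}=E_2^{pq}$, and $E_2^{pq}$ in the row $q=-d$ is just the homology of the two-term complex $E_1^{d-1,-d}\xrightarrow{\alpha_{\cF(d),d}} E_1^{d,-d}$.

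Finally I would read off the convergence statement of Theorem \ref{BSS3}. The total complex converges to $\cF(d)$ in total degree $0$ and to $(0)$ in every other total degree. The term $E_1^{d,-d}$ has total degree $d+(-d)=0$, while $E_1^{d-1,-d}$ has total degree $-1$. Since $E_\infty^{d-1,-d}$ must vanish (total degree $-1\neq 0$), the map $\alpha_{\cF(d),d}$ is injective; and since in total degree $0$ the only contribution is $E_\infty^{d,-d}=\coker\alpha_{\cF(d),d}\cong\cF(d)$, we obtain the exact sequence
\[ (0) \lra E_1^{d-1,-d} \stackrel{\alpha_{\cF(d),d}}{\lra} E_1^{d,-d} \lra \cF(d) \lra (0),\]
as claimed. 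I expect the main (and really the only nontrivial) obstacle to be the bookkeeping that justifies the degeneration: one must carefully check that with only the row $q=-d$ and columns $p\in\{d-1,d\}$ occupied, no higher differential $d_s$ ($s\ge 2$) can be nonzero and that the abutment in total degrees $-1$ and $0$ is correctly matched against the two surviving terms; everything else is a direct citation of Corollary \ref{BSS2} and Theorem \ref{BSS3}.
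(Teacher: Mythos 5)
Your proposal is correct and follows the same route as the paper: twist the defining Steiner resolution by $\cO_{\PP^{r-1}}(d)$ to place $\cF(d)$ in the hypotheses of Corollary \ref{BSS2}, conclude that the first page of $E_{\cF(d)}$ is concentrated in the two spots $(d-1,-d)$ and $(d,-d)$, so the sequence degenerates at $E_2$, and then read off injectivity of $\alpha_{\cF(d),d}$ from the vanishing of the abutment in total degree $-1$ and $\cF(d)\cong\coker\alpha_{\cF(d),d}$ from total degree $0$. The only difference is that you make explicit the twisting step that the paper leaves implicit; the degeneration bookkeeping you flag as the main obstacle is exactly the two-line verification the paper carries out.
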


\begin{proof} By Corollary \ref{BSS2} we know that the first page of the spectral sequence $E_{\cF(d)}$ has at most two non-zero entries $E_1^{d-1,-d}\!=
\!\HH^{-d}(\PP^{r-1},\cF(d)\!\otimes_{\PP^{r-1}}\! \Omega^{-(d-1)}(-(d\!-\!1)))\!\otimes_\KK\!\cO_{\PP^{r-1}}(d\!-\!1)$ and $E_1^{d,-d}\!=\!\HH^{-d}(\PP^{r-1},\cF(d)\!\otimes_{\PP^{r-1}}\!\Omega^{-d}(-d))\!
\otimes_\KK\!\cO_{\PP^{r-1}}(d)$. In particular, $E_{\cF(d)}$ collapses at $E_2$, and we have $E_\infty\!=\!E_2$. Since $E_1^{d+1,-d}\!=\!(0)\!=\!E_1^{d-2,-d}$, we obtain
\[ E_2^{d,-d} = \ker d_1^{d,-d}/\im \alpha_{\cF(d),d} = E_1^{d,-d} / \im \alpha_{\cF(d),d} = \coker \alpha_{\cF(d),d} \ \ \text{and} \ \ E_2^{d-1,-d} = \ker \alpha_{\cF(d),d} \] 
while all other terms vanish. Now Theorem \ref{BSS3} implies $\ker \alpha_{\cF(d),d}\!=\!(0)$ and $\cF(d) \cong \coker \alpha_{\cF(d),d}$.
There results the short exact sequence
\[ (0) \lra E_1^{d-1,-d} \stackrel{\alpha_{\cF(d),d}}{\lra} E_1^{d,-d}  \lra \cF(d) \lra (0). \qedhere\]
\end{proof}

\bigskip

\subsection{Stability in $\EKP(K_3)$ and $\StVect(\PP^2)$} \label{Stab-KP}
In terms of its methods, this section is based on those set forth in \cite[(IV.3)]{Dr87} (see also \cite[(5.1.2)]{Dr99}).

\bigskip

\begin{Lem}\label{Stab-KP1} Let $\cF \in \Coh(\PP^{r-1})$ be torsion-free, $b \in \ZZ$. Then $\cF$ is (semi)stable if and only if the same is true for $\cF(b)$. \end{Lem}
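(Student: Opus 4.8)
The plan is to establish a slope-preserving bijection between subsheaves of $\cF$ and subsheaves of $\cF(b)$ and check that the numerical inequalities are unaffected by the twist. First I would recall the elementary facts about Serre twists on $\PP^{r-1}$: for any coherent sheaf $\cG$ the functor $\cG \mapsto \cG(b)$ is exact, it preserves ranks ($\rk(\cG(b))=\rk(\cG)$), and it preserves torsion-freeness since $\cG_x(b) \cong \cG_x$ as $\cO_{\PP^{r-1},x}$-modules up to the invertible twist. Moreover, on the Chow ring $A(\PP^{r-1}) \cong \ZZ[X]/(X^r)$ the first Chern class transforms by $c_1(\cG(b)) = c_1(\cG) + b\,\rk(\cG)\,X$, which follows from the Whitney sum formula applied to $\cG(b) \cong \cG \otimes_{\cO_{\PP^{r-1}}} \cO_{\PP^{r-1}}(b)$ together with $c_1(\cO_{\PP^{r-1}}(b)) = bX$. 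Consequently, identifying $c_1$ with an integer as in the paper, we get $\mu(\cG(b)) = \frac{c_1(\cG)+b\rk(\cG)}{\rk(\cG)} = \mu(\cG) + b$.

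Next I would observe that the assignment $\cG \mapsto \cG(b)$ gives an inclusion-preserving bijection between subsheaves of $\cF$ and subsheaves of $\cF(b)$, with inverse given by twisting by $-b$; this is because $-\otimes_{\cO_{\PP^{r-1}}}\cO_{\PP^{r-1}}(b)$ is an exact autoequivalence of $\Coh(\PP^{r-1})$, so it sends the subobject lattice of $\cF$ isomorphically onto that of $\cF(b)$, and it preserves ranks so ``$\rk(\cG) < \rk(\cF)$'' is equivalent to ``$\rk(\cG(b)) < \rk(\cF(b))$''. If $\cF$ is torsion-free, every subsheaf of $\cF$ is torsion-free, so on both sides the relevant subsheaves are exactly the torsion-free ones and slopes are defined throughout.

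Finally I would combine these observations. For a proper nonzero subsheaf $(0) \subsetneq \cG \subsetneq \cF$ we have $\mu(\cG(b)) = \mu(\cG) + b$ and $\mu(\cF(b)) = \mu(\cF) + b$, so $\mu(\cG) \le \mu(\cF)$ holds if and only if $\mu(\cG(b)) \le \mu(\cF(b))$, and likewise with strict inequality. Running over all proper nonzero subsheaves of $\cF$ (equivalently, via the bijection, over all proper nonzero subsheaves of $\cF(b)$), this yields that $\cF$ is semistable iff $\cF(b)$ is semistable; for stability one restricts to subsheaves with $\rk(\cG) < \rk(\cF)$, which the bijection respects, giving the stable case. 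I do not anticipate a genuine obstacle here — the only point requiring a line of care is confirming that the paper's convention of viewing $c_1$ of a torsion-free sheaf as an integer in $A^1(\PP^{r-1}) \cong \ZZ$ is compatible with the twisting formula $c_1(\cG(b)) = c_1(\cG) + b\rk(\cG)$, which is immediate from the Whitney formula and $\rk(\cG(b)) = \rk(\cG)$.
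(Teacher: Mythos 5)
Your argument is correct and takes essentially the same route as the paper's proof: both rest on the observation that $-\otimes_{\cO_{\PP^{r-1}}}\cO_{\PP^{r-1}}(b)$ is an exact auto-equivalence preserving rank and torsion-freeness, combined with the slope formula $\mu(\cG(b))=\mu(\cG)+b$. The only cosmetic quibble is that the identity $c_1(\cG(b))=c_1(\cG)+b\,\rk(\cG)$ is the standard twisting formula for Chern classes (the paper cites \cite[(7.5.1)]{Be17}) rather than an instance of the Whitney sum formula, which governs extensions rather than tensor products; this does not affect the validity of the argument.
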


\begin{proof} Note that $\cO_{\PP^{r-1}}(b)\!\otimes_{\PP^{r-1}}\!-$ induces an exact auto-equivalence on the category of coherent sheaves. Given a torsion-free coherent sheaf $\cG\!\neq\!(0)$, the sheaf $\cG(b)$ is locally 
isomorphic to $\cG$ and hence also torsion-free and of the same rank. We conclude with \cite[(7.5.1)]{Be17}
\[ \mu(\cO_{\PP^{r-1}}(b)\!\otimes_{\PP^{r-1}}\!\cG) = \mu(\cG(b)) = \frac{c_1(\cG(b))}{\rk(\cG)} = \frac{c_1(\cG)\!+\!b\rk(\cG)}{\rk(\cG)} = \mu(\cG)\!+\!b.\]
Hence the equivalence respects the order of slopes. \end{proof}

\bigskip

\begin{Prop}\label{Stab-KP2} Let $M \in \EKP(K_3)$. Then the following statements hold: 
\begin{enumerate} 
\item If $\Delta_M(2)\!\ge\!0$ and $M$ is semistable, then $\TilTheta(M)$ is semistable.
\item If $\Delta_M(2)\!>\!0$ and $M$ is stable, then $\TilTheta(M)$ is stable. \end{enumerate} \end{Prop}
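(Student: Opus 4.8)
The plan is to prove both parts simultaneously by contraposition: assuming that $\TilTheta(M)$ fails to be (semi)stable, I will produce a subrepresentation $N\subsetneq M$ witnessing that $M$ is not (semi)stable either. The converse direction (from modules to bundles) is genuinely harder than Proposition \ref{StabE1}, which went the other way using only the Snake Lemma; here the obstacle is that a destabilizing \emph{subsheaf} of $\TilTheta(M)$ need not come from a subrepresentation of $M$, so I must first replace it by one that does. The key input I intend to exploit is the hypothesis $\Delta_M(2)>0$ (resp.\ $\ge 0$): via Theorem \ref{Fun3}(3) applied to $\repp(K_3,2)\subseteq\repp(K_3,1)=\EKP(K_3)$ and Corollary \ref{Dc2}, this says precisely that $\rk(\TilTheta_2(M))=\Delta_M(2)$ is positive (resp.\ nonnegative), which will let me run a Beilinson-type argument to show a destabilizing subsheaf is itself a Steiner bundle on $\PP^2$.

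\textbf{Key steps.} First I would recall that by Lemma \ref{Stab-KP1} it suffices to treat any convenient Serre twist of $\TilTheta(M)$; the defining sequence $(0)\to M_1\otimes\cO_{\PP^2}(-1)\to M_2\otimes\cO_{\PP^2}\to\TilTheta(M)\to(0)$ means $\TilTheta(M)(d)$ for $d\in\{-1,0\}$ is a sheaf admitting a two-step resolution by $\widetilde{V_1}(d)$ and $\widetilde{V_2}(d)$ with $-(r-1)<d\le 0$ for $r=3$, so Corollary \ref{BSS2} and Corollary \ref{BSS4} apply. Second, suppose $\TilTheta(M)$ is not (semi)stable and let $\cU\subsetneq\TilTheta(M)$ be the maximal destabilizing subsheaf furnished by Lemma \ref{HN1}: it satisfies $\mu(\cU)>\mu(\TilTheta(M))$ (resp.\ $\ge$, after the usual care with the semistable boundary case — here I would argue that if $\TilTheta(M)$ is merely not semistable we get strict inequality, and record separately that for the semistable/stable dichotomy one replays the argument with $\ge/>$), and $\Hom_{\PP^2}(\cS,\TilTheta(M)/\cU)=(0)$ for every semistable $\cS$ with $\mu(\TilTheta(M))\le\mu(\cS)$. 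Third — the crux — I would show $\cU$ is again a Steiner bundle. For this I apply the Beilinson spectral sequence (Theorem \ref{BSS3}) to $\cU(d)$; using that $\cU$ is a subsheaf of the Steiner bundle $\TilTheta(M)$ together with the vanishing $\Hom_{\PP^2}(\Omega^m(m),\TilTheta(M)/\cU)=(0)$ coming from Lemma \ref{HN1}(c) (note $\Omega^m(m)$ is semistable of slope $0=\mu$-boundary, matching the hypothesis $\Delta_M(2)>0$ which forces $\mu(\TilTheta(M))>0$, i.e.\ $0<\mu(\TilTheta(M))$, wait — I need $\mu(\TilTheta(M))\le 0$ here; this is where $\Delta_M(2)>0$ versus $c_1>0$ interacts, and I would instead compare with the exceptional bundles $\cE_{n,1}=\TilTheta(P_n(3))$ and $\cQ_{(3,1)}=\cO_{\PP^2}(1)$ whose slopes bracket things), I get that the first page of $E_{\cU(d)}$ collapses to the same two columns, so by Corollary \ref{BSS4}'s argument $\cU(d)$, hence $\cU$, is a cokernel bundle of the required shape, i.e.\ $\cU\cong\TilTheta(M')$ for some $M'\in\repp(K_3,1)$. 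Fourth, by Theorem \ref{Fun3}(2) the functor $\TilTheta$ is full and faithful on $\repp(K_3,1)$, so the inclusion $\cU\hookrightarrow\TilTheta(M)$ is $\TilTheta$ of a morphism $M'\to M$; one checks this morphism is injective (it is injective on $\PP^2$-fibers generically, hence on $M_1$ by the $\EKP$ condition, hence on $M_2$), giving $N:=M'\subseteq M$. Finally, $\mu(N)=\mu(\TilTheta(N))=\mu(\cU)>\mu(\TilTheta(M))=\mu(M)$ (resp.\ $\ge$), contradicting (semi)stability of $M$.

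\textbf{Main obstacle.} The hard part is Step 3: verifying that the maximal destabilizing subsheaf $\cU$ is actually a Steiner bundle and not merely a torsion-free subsheaf. This requires showing the two cohomology groups $\HH^{-d}(\PP^2,\cU(d)\otimes\Omega^{-p}(-p))$ vanish outside $p\in\{d-1,d\}$ and that the surviving differential is injective; the vanishing for the ``wrong'' columns is where I must use both that $\cU$ embeds in a Steiner bundle (controlling $\HH^0$ and positivity) and the orthogonality conclusion of Lemma \ref{HN1}(c) applied to the semistable sheaves $\Omega^{-p}(-p)$ (controlling the higher cohomology via the quotient $\TilTheta(M)/\cU$). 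The sign/slope bookkeeping — matching $\mu(\TilTheta(M))$, the twists $d\in\{-1,0\}$, and the slopes of $\Omega^m(m)$ and of $\cE_{n,1}$ — is delicate, and the hypotheses $\Delta_M(2)\ge 0$ versus $>0$ enter exactly to make the relevant inequality non-strict versus strict; I expect this to be the most technical portion, essentially an adaptation of Drézet's argument in \cite[(IV.3)]{Dr87} to the $\EKP$/Steiner setting.
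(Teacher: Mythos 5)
Your overall framework -- contraposition, the Harder--Narasimhan subsheaf from Lemma \ref{HN1}, the Beilinson spectral sequence, and transport back to a subrepresentation via Theorem \ref{Fun3} -- is the right one and matches the paper's strategy (which indeed adapts Dr\'ezet). But your Step 3 contains a genuine gap: you claim that the destabilizing subsheaf $\cU$ is itself a Steiner bundle, i.e.\ that the Beilinson first page of $\cU$ collapses to the two entries in the row $q=1$. This cannot be extracted from Lemma \ref{HN1}(c). That orthogonality statement kills $\Hom_{\PP^2}(\cS,\cF/\cU)=\HH^0$ of twists of the quotient, which is exactly what is needed to show that the maps $\HH^1(\cU\otimes\Omega^m(m))\to\HH^1(\cF\otimes\Omega^m(m))$ are injective; it says nothing about $\HH^1$ of the quotient, and the long exact sequence $\HH^1((\cF/\cU)\otimes\Omega^2(2))\to\HH^2(\cU\otimes\Omega^2(2))\to\HH^2(\cF\otimes\Omega^2(2))=(0)$ shows that the $q=2$ row of $E_{\cU}$ is a quotient of $\HH^1$ of $\cF/\cU$ and need not vanish. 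So $\cU$ is in general \emph{not} of the shape $\msCoker(\widetilde{V_1}\to\widetilde{V_2})$, and your Step 4 (applying fullness of $\TilTheta$ to the inclusion $\cU\hookrightarrow\TilTheta(M)$) has nothing to apply to.

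The repair -- and this is what the paper does -- is to accept the two nonzero rows and use the convergence of the spectral sequence to produce a short exact sequence $(0)\to\coker\alpha_{\cU,-1}\to\cU\to\ker\delta_{\cU,-1}\to(0)$, where $\delta_{\cU,-1}$ is the differential on the $q=2$ row. The sheaf $\ker\delta_{\cU,-1}$ embeds in a direct sum of copies of $\cO_{\PP^2}(-2)$, hence has slope $\le -2$, which is strictly less than $\mu(\cF)=\mu(\TilTheta(M))-1\ge -1$; by the See--Saw Lemma the \emph{subsheaf} $\coker\alpha_{\cU,-1}$ of $\cU$ still satisfies $\mu(\coker\alpha_{\cU,-1})\ge(>)\ \mu(\cF)$, and this sheaf \emph{is} of Steiner shape because the $q=1$ row survives intact (injectivity of $\alpha_{\cU,-1}$ being exactly where Lemma \ref{HN1}(c), applied with $\cS=\cO_{\PP^2}(1)=\Omega^2(2)^\vee\otimes\cO(-1)$-type twists, enters). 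One then reads off the corresponding injection of representations from the naturality of the spectral sequence rather than from fullness. Your slope bookkeeping should also be settled rather than left as an aside: the hypothesis $\Delta_M(2)\ge 0$ gives $\dim_\KK M_1\le\Delta_M$, hence $\mu(\cF)\le 0$, which is precisely what makes Lemma \ref{HN1}(c) applicable to all $\cO_{\PP^2}(\ell)$ with $\ell\ge 0$ and makes the slope comparison with $\cO_{\PP^2}(-2)$ work.
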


\begin{proof} We begin the proof by listing several properties that both cases have in common. 

Given $\cU \in \Coh(\PP^2)$, we consider the differentials
\[\alpha_{\cU,-1} \colon \HH^{1}(\PP^{2},\cU\!\otimes_{\PP^2}\!\Omega^{2}(2))\!\otimes_\KK\!\cO_{\PP^2}(-2)  \lra  \HH^{1}(\PP^{2},\cU\!\otimes_{\PP^2}\!\Omega^{1}(1))\!\otimes_\KK\!\cO_{\PP^2}(-1)\]
and
\[\delta_{\cU,-1} \colon \HH^{2}(\PP^{2},\cU\!\otimes_{\PP^2}\!\Omega^{2}(2))\!\otimes_\KK\! \cO_{\PP^2}(-2)  \lra  \HH^{2}(\PP^{2},\cU\!\otimes_{\PP^2}\!\Omega^{1}(1))\!\otimes_\KK\!\cO_{\PP^2}(-1)\]
from the first page of the Beilinson spectral sequence $E_\cU$, see Theorem \ref{BSS3}. 

Let $\cF\!:=\!\TilTheta(M)(-1)$ and $\cU \subsetneq \cF$ be a subsheaf. The inclusion $\cU \hookrightarrow \cF$
induces $\KK$-linear maps
\[ \tau_i : \HH^1(\PP^{2},\cU\!\otimes_{\PP^2}\!\Omega^i(i)) \lra  \HH^1(\PP^{2},\cF\!\otimes_{\PP^2}\!\Omega^i(i))\]
for $i \in \{1,2\}$. 

Throughout, we assume that $\cU \subsetneq \cF$ is a non-zero subsheaf such that
\[ (\star) \ \ \ \ \ \ \ \Hom_{\PP^2}(\cO_{\PP^2}(\ell),\cF/\cU)\!=\!(0) \ \ \text{for all} \ \ \ell\!\geq\!0.\]

\medskip
(a) {\it $\tau_1$ is injective}.

\smallskip
\noindent
 Recall that $\HH^0(\PP^2,\cG)$ is naturally isomorphic to the global section of $\cG$ for every $\cO_{\PP^2}$-module (see \cite[(3.10)]{Be17}). Hence the isomorphism $\Omega^1(1)^\vee \cong \cT(-1)$ and \cite[(6.1.3)]{Be17} imply
\[
\HH^0(\PP^2,(\cF/\cU)\!\otimes_{\PP^2}\!\Omega^1(1)) \cong \HH^0(\PP^2,(\Omega^1(1)^{\vee})^{\vee}\!\otimes_{\PP^2}\!(\cF/\cU)) \cong\Hom_{\PP^2}(\cT(-1),\cF/\cU).
\]
Application of the left exact functor $\Hom_{\PP^2}(-,\cF/\cU)$ to the Euler sequence $(0) \lra \cO_{\PP^2}(-1) \lra \cO_{\PP^2}^3 \lra \cT(-1) \lra (0)$ yields $\dim_\KK \Hom_{\PP^2}(\cT(-1),\cF/\cU)\!\leq\!
\dim_\KK\Hom_{\PP^2}(\cO_{\PP^2}^3,\cF/\cU) \stackrel{(\star)}{=}0$. Hence we have $\HH^0(\cO_{\PP^2},(\cF/\cU)\!\otimes_{\PP^2}\!\Omega^1(1))\!=\!(0)$. The long exact sequence in cohomology yields the exactness of
\[ \HH^0(\PP^2,(\cF/\cU)\!\otimes_{\PP^2}\!\Omega^{1}(1)) \lra \HH^1(\PP^{2},\cU\!\otimes_{\PP^2}\!\Omega^{1}(1)) \stackrel{\tau_1}{\lra} \HH^{1}(\PP^{2},\cF\!\otimes_{\PP^2}\! \Omega^{1}(1)).\]
Consequently, $\tau_1$ is injective. \hfill $\diamond$

\medskip
(b) {\it $\tau_2$ is injective}.

\smallskip
\noindent
Upon tensoring the sequence $(0) \lra \cU \lra \cF \lra \cF/\cU \lra (0)$ with the vector bundle $\Omega^2(2)$ the long exact sequence in cohomology yields the exactness of
 \[ \HH^{0}(\PP^2,(\cF/\cU)\!\otimes_{\PP^2}\!\Omega^{2}(2)) \lra \HH^1(\PP^2,\cU\!\otimes_{\PP^2}\!\Omega^2(2)) \stackrel{\tau_2}{\lra} \HH^{1}(\PP^2,\cF\!\otimes_{\PP^2}\!\Omega^{2}(2)).\]
By \cite[(6.10)]{Be17}, we have $\Omega^2(2) \cong \cO_{\PP^2}(-1)$. The choice of $\cU$ implies
\[ \HH^0(\PP^2,(\cF/\cU)\!\otimes_{\PP^2}\!\Omega^2(2)) \cong \Hom_{\PP^2}(\cO_{\PP^2}(1),\cF/\cU) \stackrel{(\star)}{=}(0),\]
so that $\tau_2$ is injective.  \hfill $\diamond$

\medskip

(c) {\it $\alpha_{\cU,-1}$ is injective}. 

\smallskip
\noindent
Corollary \ref{BSS4} and the naturality of the Beilinson spectral sequence yield the following commutative diagram
\[ (\dagger) \ \ \ \ \ \ 
\xymatrix{
 & \HH^1(\PP^2,\cU\!\otimes_{\PP^2}\!\Omega^2(2))\!\otimes_\KK\!\cO_{\PP^2}(-2) \ar[d]^{\tau_2 \otimes \id_{\cO_{\PP^2}(-2)}} \ar^{\alpha_{\cU,-1}}[r] & \HH^1(\PP^{2},\cU\!\otimes_{\PP^2}\!
 \Omega^1(1))\!\otimes_\KK\! \cO_{\PP^2}(-1)  \ar[d]^{\tau_1 \otimes \id_{\cO_{\PP^2}(-1)}} \\
(0) \ar[r] &\HH^1(\PP^2,\cF\!\otimes_{\PP^2}\! \Omega^2(2))\!\otimes_\KK\!\cO_{\PP^2}(-2) \ar^{\alpha_{\cF,-1}}[r] & \HH^{1}(\PP^2,\cF\!\otimes_{\PP^2}\! \Omega^1(1))\!\otimes_\KK\!\cO_{\PP^2}(-1),}
\]
whose lower row is exact. In view of (b), the morphism $\tau_2\otimes \id_{\cO_{\PP^2}(-2)}$ is a monomorphism, so that $\ker\alpha_{\cU,-1}\!=\!(0)$.  \hfill $\diamond$

\medskip 
(d) {\it There is an exact sequence 
\[ (0) \lra \coker \alpha_{\cU,-1} \lra \cU \lra \ker \delta_{\cU,-1} \lra (0)\] 
of torsion-free coherent sheaves.} 

\smallskip
\noindent
Owing to Corollary \ref{BSS2}, we have 
\[ \dim_\KK \HH^0(\PP^2,\cU\!\otimes_{\PP^2}\!\Omega^m(m)) \leq \dim_\KK \HH^0(\PP^2,\cF\!\otimes_{\PP^2}\!\Omega^m(m)) = 0\]
for $m \in \{0,1,2\}$ as well as $\HH^1(\PP^2,\cF)\!=\!(0)$. Since $(0)\!\stackrel{(\star)}{=}\!\Hom_{\PP^2}(\cO_{\PP^2},\cF/\cU)\!=\!\HH^0(\PP^2,\cF/\cU)$, we conclude with the exactness of 
\[ \HH^0(\PP^2,\cF/\cU) \lra  \HH^1(\PP^2,\cU) \lra \HH^1(\PP^2,\cF)\]
that $\HH^1(\PP^2,\cU)\!=\!(0)$. Consequently, the first page of the Beilinson spectral sequence $E_\cU$ reads
\begin{center}
\begin{tabular}{ |c|c|c| } 
 \hline
  $\HH^2(\PP^2,\cU\otimes_{\PP^2}\! \Omega^2(2))\!\otimes_\KK\!\cO_{\PP^2}(-2)$ & $\HH^2(\PP^2,\cU\!\otimes_{\PP^2}\!\Omega^1(1))\!\otimes_\KK \cO_{\PP^2}(-1)$ & $\HH^2(\PP^2,\cU)\! \otimes_\KK\! \cO_{\PP^2}$ \\
  \hline
 $\HH^1(\PP^2,\cU\!\otimes_{\PP^2}\!\Omega^2(2))\!\otimes_\KK\! \cO_{\PP^2}(-2)$ &  $\HH^1(\PP^2,\cU\!\otimes_{\PP^2}\!\Omega^1(1))\!\otimes_\KK\! \cO_{\PP^2}(-1)$  & $(0)$ \\ 
 \hline
 $(0)$ & $(0)$ & $(0)$ \\
 \hline
\end{tabular}
\end{center}
with all other terms being zero. In view of (c) the second page thus is of the following form:
\begin{center}
\begin{tabular}{ |c|c|c| } 
\hline
$\ker \delta_{\cU,-1}$ & $\ast$ & $\ast$ \\
\hline
$(0)$ & $\coker \alpha_{\cU,-1}$ & $(0)$ \\
\hline
$(0)$ & $(0)$ & $(0)$ \\
 \hline
\end{tabular}
\end{center}
Theorem \ref{BSS3} in conjunction with \cite[(XV.5.11)]{CE}\footnote{One sets $r\!=\!2\!=\!q'$, $q\!=\!1$ and $n\!=\!0$.} now provides an exact sequence
\[ (0) \lra \coker \alpha_{\cU,-1} \lra \cU \lra \ker \delta_{\cU,-1} \lra (0)\] 
of torsion-free coherent sheaves. \hfill $\diamond$

\medskip
(e) {\it  If $\mu(\cU)\!\ge\!(>)\mu(\cF)$, then $\mu(\coker \alpha_{\cU,-1})\!\ge\!(>)\mu(\cF)$.}

\smallskip
\noindent
Suppose that $\ker \delta_{\cU,-1}\!\neq\!(0)$. Since $\cO_{\PP^2}(-2)$ is semistable, we conclude with $\ker \delta_{\cU,-1}  \subseteq  \HH^2(\PP^2,\cU\!\otimes_{\PP^2}\!\Omega^2(2))\!\otimes_\KK\! \cO_{\PP^2}(-2)$, \cite[(1.2.4)]{OSS} and $\frac{\dim_{\KK} M_1}{\Delta_M} \geq 0$ that 
\[\mu(\ker \delta_{\cU,-1}) \leq \mu(\cO_{\PP^2}(-2)) = - 2 < -1\!+\!\frac{\dim_\KK M_1}{\Delta_M} =  \mu(\cF) \le \mu(\cU).\]
In particular, $\ker \delta_{\cU,-1} \not \cong \cU$ and therefore (d) forces $\coker \alpha_{\cU,-1}\!\neq\!(0)$. 
Now (d) in conjunction with Lemma \ref{Stab1} implies $\mu(\coker \alpha_{\cU,-1})\!>\!\mu(\cU)\!\ge\!\mu(\cF)$.

Alternatively, $\ker \delta_{\cU,-1}\!=\!(0)$, so that $\coker \alpha_{\cU,-1} \cong \cU$ and $\mu(\coker \alpha_{\cU,-1})\!=\!\mu(\cU)\!\ge\!(>)\mu(\cF)$. \hfill $\diamond$

\medskip
(f) {\it There exists a subrepresentation $(0) \subsetneq N \subsetneq M$ of $M$ such that $\mu(N)\!=\!\mu(\coker \alpha_{\cU,-1}(1))$}.

\smallskip
\noindent
Setting $V_i\!:=\!\HH^1(\PP^2,\cU\!\otimes_{\PP^2}\!\Omega^i(i))$ and $W_i\!:=\!\HH^1(\PP^2,\cF\!\otimes_{\PP^2}\!\Omega^i(i))$ for $i \in \{1,2\}$, application of $-\!\otimes_{\PP^2}\cO_{\PP^2}(1)$ to the 
diagram ($\dagger$) in conjunction with (c) and Corollary \ref{BSS4} yields
\[ 
\xymatrix{
 (0) \ar[r] & \widetilde{V_1} \ar[d]^{\tilde{\tau}_2} \ar^{\alpha_{\cU,-1}(1)}[r] & \widetilde{V_2}  \ar[d]^{\tilde{\tau}_1} \ar[r] & \coker\alpha_{\cU,-1}(1) \ar[d] \ar[r] & (0)\\ 
(0) \ar[r] &\widetilde{W_1} \ar^{\alpha_{\cF,-1}(1)}[r] & \widetilde{W_2} \ar[r] & \TilTheta(M) \ar[r] & (0),
}
\]
where we have used the notation introduced in Section \ref{S:Fun}. Let $x \in \PP^2$. By (b) and diagram \ref{diagram1} (see Section \ref{S:Fun}), the map $\tilde{\tau}_2(x)$ is injective and since $\TilTheta(M)$ is a vector bundle, \cite[(6.2.1(b))]{Be17} shows that this also holds for $\alpha_{\cF,-1}(1)(x)$. Passing to fibers in the diagram above, we see that $\alpha_{\cU,-1}(1)(x)$ is injective, and another application of  \cite[(6.2.1(b))]{Be17} 
ensures that $\coker\alpha_{\cU,-1}(1)$ is a vector bundle.

The proof of Theorem \ref{Fun3}(1) provides $\KK$-linear maps
\[ V(\gamma_i) : V_1 \lra V_2 \ \ \text{and} \ \ W(\gamma_i) : W_1 \lra W_2 \ \ \ \ \ \ \  (i \in \{1,\ldots, r\})\]
such that
\[ \TilTheta_V = \alpha_{\cU,-1}(1) \ \ \text{and} \ \ \TilTheta_W = \alpha_{\cF,-1}(1),\]
where $X\!:=\!(X_1,X_2,(X(\gamma_i))_{1\le i\le r})$ for $X \in \{V,W\}$. It follows from Theorem \ref{Fun3}(1) that $V$ and $W$ are $\EKP(K_3)$-representations such that $\TilTheta(W) \cong \TilTheta(M)$ and $\TilTheta(V) 
\cong \coker \alpha_{\cU,-1}(1)$. Theorem \ref{Fun3}(2) implies $M\cong W$ and the diagram above yields
\[ \tilde{\tau}_1\circ \TilTheta_V = \TilTheta_W\circ \tilde{\tau}_2,\]
so that evaluation at $v\otimes \frac{1}{\gamma_j^\ast} \in \widetilde{V_1}(D(\gamma_j^\ast))$ gives\footnote{Observe that the identification $\cU_{(r,1)}\cong \cO_{\PP^{r-1}}(-1)$ yields $\TilTheta_X(U)(x\otimes f)\!=\!
\sum_{i=1}^r X(\gamma_i)(x)\otimes \gamma_i^\ast f$ for $x \in X$ and $f \in \cO_{\PP^{r-1}}(-1)(U)$.}
\[ \sum_{i=1}^r (\tau_1\circ V(\gamma_i))(v)\otimes \frac{\gamma_i^\ast}{\gamma_j^\ast} = \sum_{i=1}^r (W(\gamma_i)\circ \tau_2)(v)\otimes \frac{\gamma_i^\ast}{\gamma_j^\ast}.\]
Evaluation at $[\gamma_j] \in \PP(A_r)\!=\!\PP^{r-1}$ shows that $(\tau_2,\tau_1): V \lra W$ is a morphism, which, by virtue of (a) and (b), is injective. Thus, $V$ is isomorphic to a non-zero subrepresentation $N$ of $M$ such that
$\mu(N)\!=\!\mu(\coker \alpha_{\cU,-1}(1))$. \hfill $\diamond$

\medskip
(1) Suppose that $\TilTheta(M)$ is not semistable. By Lemma \ref{Stab-KP1}, the vector bundle $\cF:=\!\widetilde{\Theta}(M)(-1)$ is not semistable. For a representation $(0)\!\neq\!N \in \EKP(K_3)$ with $N_1\!=\!(0)$, the 
bundle $\TilTheta(N)(-1) \cong N_2\!\otimes_{\KK}\!\cO_{\PP^2}(-1)$ is semistable (see \cite[(1.2.4)]{OSS}). In view of $\Delta_M(2)\!\ge\!0$, we thus have $0\!<\!2\dim_\KK M_1\!\leq\!\dim_\KK M_2$ as well as $\mu(\cF)\!=\!\mu(\TilTheta(M))\!-\!1\!
=\!\frac{\dim_\KK M_1}{\Delta_M}\!-\!1\!\le\!0$. Lemma \ref{HN1} provides $\cU \subsetneq \cF$ non-zero such that $\cF/\cU$ is semistable with 
\begin{enumerate}
\item[(i)] $\mu(\cU)\!>\!\mu(\cF)$, and
\item[(ii)] $\Hom_{\PP^2}(\cO_{\PP^2}(\ell),\cF/\cU)\!=\!(0)$ for all $\ell\!\geq\!0$.
\end{enumerate}
In particular,  $(\star)$ is satisfied. Thanks to (e) and (f) there exists a subrepresentation $(0) \subsetneq N \subsetneq M$ such that
\[ \mu(N) = \mu(\coker \alpha_{\cU,-1}(1)) > \mu(\TilTheta(M)) = \mu(M),\]
which contradicts $M$ being semistable. 

(2) As before, we set $\cF\!:=\!\widetilde{\Theta}(M)(-1)$. Since the assumptions of (1) are satisfied, $\cF$ is semistable. Suppose that $\cF$ is not stable. Proceeding as in (1), we first observe that $M$ is indecomposable (even a brick), so that we may assume that $M_1\!\neq\!(0)$. We apply \cite[Chap.2,(1.2.2)]{OSS}\footnote{This result holds for algebraically closed fields of arbitrary characteristic.} in conjunction with Lemma \ref{Stab1} to obtain a short 
exact sequence
\[ (0) \lra \cU \lra \cF \lra \cF/\cU \lra (0)\]
such that $0\! <\! \rk(\cU), \rk(\cF/\cU)\!<\!\rk(\cF)$ with $\cF/\cU$ being torsion-free and $\mu(\cU)\!=\!\mu(\cF/\cU)\!=\!\mu(\cF)$. We choose $\cU$ such that $\cU$ is of maximal rank subject to these properties. 

Let $\cQ$ be a torsion-free quotient of $\cF/\cU$ with $0\!<\!\rk(\cQ)\!<\!\rk(\cF/\cU)$. There results a short exact sequence $(0) \lra \cV \lra \cF \lra \cQ \lra (0)$ with $0\!<\!\rk(\cV),\rk(\cQ)\!<\!\rk(\cF)$, and, observing 
$\rk(\cQ)\!<\!\rk(\cF/\cU)$, we conclude that $\rk(\cV)\!>\!\rk(\cU)$. Since $\cF$ is semistable, the choice of $\cU$ in conjunction with Lemma \ref{Stab1} implies $\mu(\cV)\!<\!\mu(\cF)$ and Lemma \ref{Stab1} yields $\mu(\cQ)\!>\!\mu(\cF)\!=\!\mu(\cF/\cU)$. Consequently, \cite[Chap.2,(1.2.2)]{OSS} shows that $\cF/\cU$ is stable. 

Assume that $f \colon \cO_{\PP^2}(\ell) \lra \cF/\cU$ is a non-zero morphism. Since $\cO_{\PP^2}(\ell)$ is stable and $\im f \subseteq \cF/\cU$ is a non-zero torsion-free sheaf, we conclude
\[ \ell = \mu(\cO_{\PP^2}(\ell)) \leq \mu(\im f) \leq \mu(\cF/\cU) = \mu(\cF) = \frac{\dim_\KK M_1}{\Delta_M}\!-\!1 < 0. \]
We have proven:
\begin{enumerate}
\item[(i)] $\mu(\cU)\!=\!\mu(\cF)\!=\!\mu(\cF/\cU)$, and
\item[(ii)] $\Hom_{\PP^2}(\cO_{\PP^2}(\ell),\cF/\cU)\!=\!(0)$ for all $\ell\!\geq\!0$.
\end{enumerate}
Thanks to (e) and (f) the module $M$ is not stable, a contradiction. It follows that $\TilTheta(M)\!=\!\cF(1)$ is stable. \end{proof}

\bigskip
\noindent 
We summarize our findings in the following

\bigskip

\begin{Thm}\label{Stab-KP3} Let $M \in \EKP(K_3)\!\smallsetminus\!\{0\}$.
\begin{enumerate}
\item If $\Delta_M(2)\!\ge\!0$, then $\TilTheta(M)$ is semistable if and only if $M$ is semistable.
\item If $\Delta_M(2)\!>\!0$, then $\TilTheta(M)$ is stable if and only if $M$ is stable.
\end{enumerate} \end{Thm}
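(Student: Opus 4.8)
The plan is to obtain Theorem \ref{Stab-KP3} by assembling the two halves that have already been established, namely Proposition \ref{StabE1} and Proposition \ref{Stab-KP2}, both specialized to $r\!=\!3$. Each of the two biconditionals splits into an ``only if'' and an ``if'' implication, and these four implications are exactly the statements of those two propositions; so the argument is essentially a bookkeeping exercise matching hypotheses to conclusions.

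First I would dispose of the direction ``$\TilTheta(M)$ (semi)stable $\Rightarrow$ $M$ (semi)stable''. For semistability this is Proposition \ref{StabE1}(1), and for stability it is Proposition \ref{StabE1}(2); since neither imposes any condition on $\Delta_M(2)$, both hold for every $M \in \EKP(K_3)\!\smallsetminus\!\{0\}$. The mechanism behind these implications, already carried out in the proof of Proposition \ref{StabE1}, is to apply the functor $\TilTheta$ to a proper inclusion $\iota\colon N \hookrightarrow M$, use the Snake Lemma to embed $\msKer\TilTheta(\iota)$ into $\widetilde{(M/N)_1}\cong (M_1/N_1)\!\otimes_\KK\!\cO_{\PP^2}(-1)$, and then invoke the See-Saw Lemma \ref{Stab1} together with the identity $\mu(M)\!=\!\mu(\TilTheta(M))$ coming from Theorem \ref{Fun3} and Corollary \ref{Dc2}.

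Next I would handle the converse direction ``$M$ (semi)stable $\Rightarrow$ $\TilTheta(M)$ (semi)stable'', which is where the hypotheses on $\Delta_M(2)$ are used: if $\Delta_M(2)\!\ge\!0$ and $M$ is semistable, then $\TilTheta(M)$ is semistable by Proposition \ref{Stab-KP2}(1); if $\Delta_M(2)\!>\!0$ and $M$ is stable, then $\TilTheta(M)$ is stable by Proposition \ref{Stab-KP2}(2). Combining the two directions then yields that part (1) is the conjunction of Proposition \ref{Stab-KP2}(1) with Proposition \ref{StabE1}(1), and part (2) is the conjunction of Proposition \ref{Stab-KP2}(2) with Proposition \ref{StabE1}(2); this completes the proof.

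The genuine content, and hence the main obstacle, lies entirely inside Proposition \ref{Stab-KP2}, which is already proved. That is the delicate step: starting from a destabilizing subsheaf $\cU \subsetneq \TilTheta(M)(-1)$ produced via the Harder--Narasimhan filtration (Lemma \ref{HN1}), one passes through the Beilinson spectral sequence of Section \ref{S:BSS} --- in particular Corollary \ref{BSS4} and the differentials $\alpha_{\cU,-1}$ and $\delta_{\cU,-1}$, following Dr\'ezet's method \cite{Dr87} --- to produce a destabilizing subrepresentation $N \subsetneq M$, which is why the conditions $\Delta_M(2)\!\ge\!0$ respectively $\Delta_M(2)\!>\!0$ (equivalently $2\dim_\KK M_1\!\le\!\dim_\KK M_2$, resp.\ strict) are needed. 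For the summary Theorem \ref{Stab-KP3} itself no further work is required beyond quoting these two propositions.
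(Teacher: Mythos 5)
Your proposal is correct and coincides with the paper's own proof, which likewise disposes of Theorem \ref{Stab-KP3} in one line as the combination of Proposition \ref{StabE1} (for the direction from bundles to representations, with no hypothesis on $\Delta_M(2)$) and Proposition \ref{Stab-KP2} (for the converse, where the hypotheses $\Delta_M(2)\!\ge\!0$ resp.\ $\Delta_M(2)\!>\!0$ enter). Your additional remarks correctly locate the real content in the already-established Proposition \ref{Stab-KP2} via Dr\'ezet's Beilinson-spectral-sequence argument, so nothing further is needed.
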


\begin{proof} This is just a combination of Proposition \ref{StabE1} and Proposition \ref{Stab-KP2}. \end{proof}

\bigskip

\begin{Examples} (a) Exceptional Steiner bundles. According to \cite[(4.1)]{GR87}, exceptional vector bundles on $\PP^2(\CC)$ are stable. Thanks to \cite[(1.1)]{Bra08}, this also holds for exceptional Steiner bundles on $\PP^{r-1}(\CC)$. Returning to our context, we let $\cF$ be an exceptional Steiner bundle on $\PP^2$. According to Proposition \ref{ExSt1}, there is a preprojective indecomposable representation $M \in \EKP(K_3)$ such that 
$\cF\cong \TilTheta(M)$. A consecutive application of Proposition \ref{CatRep1} and Theorem \ref{MinType2} ensures that $\Delta_M(2)\!>\!0$. Now Theorem \ref{Stab-KP3} in conjunction with the remark 
following Corollary \ref{Dist2} shows that $\cF$ is stable.

(b) "Regular" indecomposable Steiner bundles. Let $(V_1,V_2)$ be a pair of $\KK$-vector spaces such that $\Delta_{(V_1,V_2)}(2)\!>\!0$ and $q_3(\udim(V_1,V_2))\!<\!0$. The stable representations of $\rep(K_3;V_1,V_2)\cap\EKP(K_3)$ may 
be interpreted as the stable points of the variety $\PP(\rep(K_3;V_1,V_2))$ relative to the canonical action of 
$\SL(V_2)\!\times\!\SL(V_1)$. This can be used to show that the subset $\cS(V_1,V_2)$ of stable representations is non-empty open (and hence dense) whenever $\Delta_{(V_1,V_2)}\!\ge\!2$; see also Proposition \ref{Hyper2} below for the case, where $\KK\!=\!\CC$.\end{Examples}  

\bigskip

\subsection{Applications: Filtrations for Steiner bundles}\label{S:Appl}
Suppose that $M \in \EKP(K_3)$ is semistable. If $P_0(r)$ is a direct summand of $M$, then a two-fold application of the See-Saw Lemma implies $M_1\!=\!(0)$, whence $M\cong nP_0(r)$.  Alternatively $\psi_M$ is surjective, whence $\dim_\KK M_2\!\le\! 3\dim_\KK M_1$. Hence the conditions of Theorem \ref{Stab-KP3} may seem to limit its utility. The results of this section illustrate that a combination of Theorem \ref{Stab-KP3} with techniques involving AR-theory provides many examples of stable Steiner bundles on $\PP^2$. 

\bigskip

\begin{Thm} \label{Appl1} Let $E \in \EKP(K_3)$ be an elementary representation. Then $\TilTheta(E)$ is a stable. \end{Thm}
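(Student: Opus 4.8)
The plan is to reduce---via the reflection functor $\sigma^{-1}_{K_3}$ and the Auslander--Reiten structure of $\rep(K_3)$---to a situation handled by Theorem \ref{Stab-KP3}(2). Since $E$ is elementary it lies in $\EKP(K_3)$ and is stable by Proposition \ref{Dist3}; so by Theorem \ref{Stab-KP3}(2) it suffices to establish $\Delta_E(2)>0$, with one residual case to be treated separately. Note first that $E$ is regular and quasi-simple: if $E\cong E_{[n]}$ with $n\!=\!\ql(E)\ge 2$ in its regular component $\cC$, the short exact sequence $(0)\to E_{[1]}\to E\to\tau^{-1}_{K_3}(E_{[n-1]})\to(0)$ has both outer terms non-zero and regular, contradicting elementarity. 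By Proposition \ref{Strat1}, $E$ is therefore a quasi-simple vertex of the cone $(M_{\cC,1}\!\rightarrow)$, so $E\cong\tau^{-\ell}_{K_3}(M_{\cC,1})$ for some $\ell\ge 0$.

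Now $\sigma^{-1}_{K_3}(M_{\cC,1})\in\repp(K_3,2)$ by Theorem \ref{CatRep3}(3), whence $\tau^{-1}_{K_3}(M_{\cC,1})=\sigma^{-1}_{K_3}(\sigma^{-1}_{K_3}(M_{\cC,1}))\in\repp(K_3,2)$ by Theorem \ref{CatRep3}(3) again, and $\repp(K_3,2)$ is closed under $\tau^{-1}_{K_3}$ (Proposition \ref{CatRep1}(1)); hence $\tau^{-\ell}_{K_3}(M_{\cC,1})\in\repp(K_3,2)$ for every $\ell\ge 1$. Any regular $M\in\repp(K_3,2)$ has $M_1\ne(0)$, so Theorem \ref{MinType2} gives $\Delta_M(2)\ge\min\{\dim_\KK M_1,2\}\ge 1$. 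Consequently, unless $\ell=0$ and $E=M_{\cC,1}\notin\repp(K_3,2)$, we get $E\in\repp(K_3,2)$, whence $\Delta_E(2)>0$ and $\TilTheta(E)$ is stable by Theorem \ref{Stab-KP3}(2).

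It remains to treat $E=M_{\cC,1}$ with $\Delta_E(2)\le 0$ (equivalently $E\notin\repp(K_3,2)$, by Theorem \ref{MinType2}). Put $E':=\sigma^{-1}_{K_3}(E)$. Then $E'\in\repp(K_3,2)$ (Theorem \ref{CatRep3}(3)), $E'$ is elementary because $\sigma^{-1}_{K_3}$ is an auto-equivalence of $\reg(K_3)$, $E'$ is stable (Proposition \ref{Dist3}), and $\Delta_{E'}(2)>0$ (Theorem \ref{MinType2}); hence $\TilTheta(\sigma^{-1}_{K_3}(E))=\TilTheta(E')$ is stable by Theorem \ref{Stab-KP3}(2), and one must transfer this to $\TilTheta(E)$. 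For this one uses that the reflection functor is compatible with $\TilTheta_1$: from the defining data of $E'$, namely the exact sequence $(0)\to E_1\xrightarrow{\varphi_E}\Hom_\KK(A_3,E_2)\to E'_2\to(0)$ with $E'_1=E_2$, tensoring with $\cO_{\PP^2}$ over $\KK$ and splicing in the Euler sequences on $\PP^2=\PP(A_3)$ yields a short exact sequence $(0)\to E_1\otimes_\KK\cO_{\PP^2}\to\cQ_{(3,1)}\otimes_\KK E_2\to\TilTheta(E')\to(0)$ whose middle term is linked, through the Euler sequence of $\cQ_{(3,1)}$, to the middle term $E_2\otimes_\KK\cO_{\PP^2}$ of the defining presentation $(0)\to E_1\otimes_\KK\cO_{\PP^2}(-1)\xrightarrow{\TilTheta_E}E_2\otimes_\KK\cO_{\PP^2}\to\TilTheta(E)\to(0)$. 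Running the Beilinson--spectral--sequence argument from the proof of Proposition \ref{Stab-KP2} on $\cF:=\TilTheta(E)(-1)$, but drawing the key vanishing $(\star)$ from the stability of $\TilTheta(E')$ rather than of $\TilTheta(E)$ itself, one shows that a subsheaf destabilizing $\TilTheta(E)$ would produce a subrepresentation destabilizing $E$, hence (Corollary \ref{Dist2}(1)) one destabilizing $E'$, and finally a subsheaf destabilizing $\TilTheta(E')$---a contradiction.

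I expect the main obstacle to be precisely this last step: making the passage between the Beilinson monads of $\TilTheta(E)$ and $\TilTheta(\sigma^{-1}_{K_3}(E))$ explicit and tracking the Serre twists carefully, so that the stability of $\TilTheta(\sigma^{-1}_{K_3}(E))$ genuinely feeds the destabilization argument of Proposition \ref{Stab-KP2}. Everything else is a direct assembly of Propositions \ref{Dist3} and \ref{Strat1}, Theorems \ref{CatRep3}, \ref{MinType2} and \ref{Stab-KP3}, and Corollary \ref{Dist2}.
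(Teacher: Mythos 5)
Your reduction of the problem is sound and in part cleaner than the paper's: using that elementary modules are quasi-simple, that $\sigma^{-1}_{K_3}$ and $\tau^{-1}_{K_3}$ push $\EKP(K_3)$ into $\repp(K_3,2)$ (Theorem \ref{CatRep3}(3), Proposition \ref{CatRep1}(1)), and that Theorem \ref{MinType2} then forces $\Delta_E(2)\ge 1$, you correctly dispose of every elementary $E\cong\tau^{-\ell}_{K_3}(M_{\cC,1})$ with $\ell\ge 1$ via Proposition \ref{Dist3} and Theorem \ref{Stab-KP3}(2). The problem is the residual case $E=M_{\cC,1}$ with $\Delta_E(2)\le 0$: there your argument is not a proof. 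The claim that stability of $\TilTheta(\sigma^{-1}_{K_3}(E))$ transfers to $\TilTheta(E)$ is exactly the kind of statement that Proposition \ref{Stab-KP2} does \emph{not} provide, and its proof cannot be rerun with "$(\star)$ drawn from the stability of $\TilTheta(E')$": the hypothesis $\Delta_M(2)\ge 0$ enters there precisely to force $\mu(\cF)\le 0$ and hence the vanishing $\Hom_{\PP^2}(\cO_{\PP^2}(\ell),\cF/\cU)=(0)$ for $\ell\ge 0$, which is what makes the Beilinson computation of $\cU$ possible; when $\Delta_E(2)\le 0$ one has $\mu(\cF)\ge 0$ and this vanishing genuinely fails in general. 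No compatibility of $\TilTheta_1$ with the reflection functor is established in the paper, and your sketch of one (splicing Euler sequences, "running the spectral-sequence argument") leaves precisely the hard step undone, as you yourself acknowledge.

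The paper closes this case by an entirely different, and much shorter, route: by Ringel's classification of elementary $K_3$-modules (\cite{Ri16}), $\udim E$ lies in the $\sigma_3$-orbit of $(1,1)$ or $(2,2)$, and the only dimension vectors in these orbits with $\dim_\KK E_1<\dim_\KK E_2\le 2\dim_\KK E_1$ are $(1,2)$ and $(2,4)$; the first is excluded from $\EKP(K_3)$ by Theorem \ref{MinType2}, so $\udim E=(2,4)$, whence $\TilTheta(E)$ is a \emph{simple vector bundle of rank $2$ on $\PP^2$} and therefore stable by \cite[Chap.2,(1.2.10)]{OSS}. If you want to keep your AR-theoretic reduction, you should replace your third paragraph by this dimension-vector analysis (or some other concrete treatment of the $(2,4)$ case); as written, the proof is incomplete.
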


\begin{proof} Thanks to Proposition \ref{Dist3}, the representation $E$ is stable and $E \in \EKP(K_3)\!\smallsetminus\!\{0\}$ implies $\dim_\KK E_1\!<\!\dim_\KK E_2$. In view of Theorem \ref{Stab-KP3}, it therefore 
suffices to consider the case, where $\dim_\KK E_1\!< \!\dim_\KK E_2\!\leq\!2\dim_\KK E_1$. Thanks to \cite[Thm.]{Ri16}, $\udim E$ belongs to one of the $\sigma_3$-orbits generated by the dimension vectors 
$(1,1),(2,2)$. Since $\Delta_{\sigma_{K_3}(M)}\!=\!\Delta_M(2)\!<\!\Delta_M$ for every regular indecomposable $M \in \rep(K_3)$, there is a unique dimension vector $(x,y) \in \langle\sigma_3\rangle.\udim E$ such that
$\Delta_{(x,y)}\!=\!y\!-\!x \in \NN$ and $\Delta_{(x,y)}(2)\!=\!y\!-\!2x\!\le\!0$. Hence the only dimension vectors in these orbits that satisfy $x\!<\!y\!\leq\!2x$ are $(1,2)$ and $(2,4)$. By virtue of Theorem \ref{MinType2}, 
representations with dimension vector $(1,2)$ do not belong to $\EKP(K_3)$, whence $\udim E\!=\!(2,4)$. As elementary representations are bricks, $\TilTheta(E)$ is a simple vector bundle of rank $2$. We conclude with 
\cite[Chap.2.(1.2.10)]{OSS}, which holds in arbitrary characteristic (cf.\ also \cite[Prop.1]{Sc61a}), that $\TilTheta(E)$ is stable. \end{proof}

\bigskip
\noindent
Let $\cC \subseteq \Gamma(K_r)$ be a regular Auslander-Reiten component. We say that $\cC$ is \textit{(semi)stable}, provided $\cC\cap\EKP(K_r)$ contains a (semi)stable vertex\footnote{Recall from \ref{Dist2}(3) that semistable vertices are quasi-simple.}. The following result 
shows that for such components the Harder-Narasimhan filtrations of most of the bundles $\TilTheta(M)$ correspond to the quasi-composition series of $M$. 

\bigskip

\begin{Prop} \label{Appl2} 
Let $\cC \subseteq \Gamma(K_3)$ be a semistable AR-component. If $M \in (\tau^{-1}_{K_3}(M_{\cC,1})\!\rightarrow)$ has quasi-length $\ell$, then the Harder-Narasimhan filtration of $\TilTheta(M)$ is given by  
\[  (0) \subsetneq \TilTheta(M_{[1]}) \subsetneq \TilTheta(M_{[2]})\subsetneq \cdots \subsetneq \TilTheta(M_{[\ell\!-\!1]}) \subsetneq \TilTheta(M_{[\ell]}) = \TilTheta(M) \]
with filtration factors $\TilTheta(M_{[j]})/\TilTheta(M_{[j\!-\!1]}) \cong \TilTheta(\tau^{-j+1}_{K_3}(M_{[1]}]))$ for all $j \in \{2,\ldots,\ell\}$. \end{Prop}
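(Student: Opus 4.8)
The plan is to verify that the filtration of $\TilTheta(M)$ induced by applying $\TilTheta$ to the quasi-composition series of $M$ satisfies the two defining properties of the Harder-Narasimhan filtration, and then invoke uniqueness. First I would note that since $M \in (\tau^{-1}_{K_3}(M_{\cC,1})\!\rightarrow) \subseteq \repp(K_3,1) = \EKP(K_3)$ by Theorem \ref{Strat6}, and $\repp(K_3,1)$ is a torsion-free class closed under $\tau^{-1}_{K_3}$ by Proposition \ref{CatRep1}, every quasi-composition factor $\tau^{-j+1}_{K_3}(M_{[1]})$ ($1\!\le\!j\!\le\!\ell$) lies in $\EKP(K_3)$. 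Moreover, by Theorem \ref{Strat6}(3) these factors all lie in $\repp(K_3,2)$, so $\Delta_{\tau^{-j+1}_{K_3}(M_{[1]})}(2)\!\ge\!0$; combined with the fact that $M_{\cC,1}$ being quasi-simple in a semistable component forces its quasi-socle (and hence every $\tau^{-j+1}_{K_3}(M_{[1]})$, by Corollary \ref{Dist2}(1),(2)) to be semistable, I can apply Proposition \ref{Stab-KP2}(1): each $\TilTheta(\tau^{-j+1}_{K_3}(M_{[1]}))$ is a semistable Steiner bundle. A small subtlety: I need $M_{[1]} = \tau^{-1}_{K_3}(M_{\cC,1})$ actually to be $M_{\cC,2}$ or a $\tau^{-n}_{K_3}$-shift of it, so that Corollary \ref{Dist2} applies along the whole quasi-socle orbit; this follows from Theorem \ref{Strat6}(3) identifying $(\tau^{-1}_{K_3}(M_{\cC,1})\!\rightarrow) = \cC\cap\repp(K_3,2)$ together with Proposition \ref{Strat1}.

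The second step is exactness: since $\TilTheta_1 : \repp(K_3,1) \lra \StVect(\PP^2)$ is an exact equivalence (Corollary \ref{Fun4} and Theorem \ref{Fun3}(2)), the short exact sequences $(0) \lra M_{[j-1]} \lra M_{[j]} \lra \tau^{-j+1}_{K_3}(M_{[1]}) \lra (0)$ coming from AR-theory (property (b) of the quasi-composition series in Section \ref{S:Strat}) are carried to short exact sequences of Steiner bundles. This gives the filtration $(0) \subsetneq \TilTheta(M_{[1]}) \subsetneq \cdots \subsetneq \TilTheta(M_{[\ell]}) = \TilTheta(M)$ with the asserted semistable factors $\TilTheta(M_{[j]})/\TilTheta(M_{[j-1]}) \cong \TilTheta(\tau^{-j+1}_{K_3}(M_{[1]}))$.

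The third step — and the main obstacle — is verifying the strict decrease of slopes $\mu(\TilTheta(\tau^{-j+1}_{K_3}(M_{[1]}))) > \mu(\TilTheta(\tau^{-j}_{K_3}(M_{[1]})))$ for $1\!\le\!j\!\le\!\ell-1$. Since $\mu(\TilTheta(N)) = \mu(N)$ for nonzero $N \in \EKP(K_3)$ (the identification via Theorem \ref{Fun3} and Corollary \ref{Dc2}, noted at the start of Section \ref{S:StabE}), this reduces to $\mu(\tau^{-j+1}_{K_3}(M_{[1]})) > \mu(\tau^{-j}_{K_3}(M_{[1]}))$, i.e. $\mu(\sigma^{-2(j-1)}_{K_3}(M_{[1]})) > \mu(\sigma^{-2j}_{K_3}(M_{[1]}))$. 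This is exactly the content of Proposition \ref{Dist1}(3): the quasi-socle $M_{[1]}$ is regular indecomposable and not isomorphic to $P_0(3)$, so $(\mu(\sigma^{-n}_{K_3}(M_{[1]})))_{n\in\NN_0}$ is strictly decreasing, and in particular the subsequence at even indices is strictly decreasing. Here I must be slightly careful that all the relevant shifts $\sigma^{-n}_{K_3}(M_{[1]})$ indeed lie in $\EKP(K_3)$ so that the slope $\mu$ is defined — but this is guaranteed since $\tau^{-j}_{K_3}(M_{[1]})$ is a quasi-composition factor of some member of $(\tau^{-1}_{K_3}(M_{\cC,1})\!\rightarrow) \subseteq \repp(K_3,1)$, hence lies in $\EKP(K_3)$ by Proposition \ref{CatRep1}.

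Having established both defining properties (a) semistable factors, and (b) strictly decreasing slopes, the filtration coincides with the Harder-Narasimhan filtration of $\TilTheta(M)$ by its uniqueness (see the Definition in Section \ref{S:HN}), which completes the proof. The only genuinely non-formal input is Proposition \ref{Stab-KP2}(1) (semistability of the factors transfers through $\TilTheta$), which is why the hypothesis $\Delta_{(-)}(2)\!\ge\!0$ must be checked, and that is precisely where membership in $\repp(K_3,2)$ — i.e. the hypothesis $M \in (\tau^{-1}_{K_3}(M_{\cC,1})\!\rightarrow)$ rather than merely $M \in (M_{\cC,1}\!\rightarrow)$ — is used.
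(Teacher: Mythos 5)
Your proposal is correct and follows essentially the same route as the paper's proof: apply the exact functor $\TilTheta$ to the quasi-composition series, use membership of the quasi-composition factors in $\repp(K_3,2)$ to get $\Delta_{(-)}(2)\!\ge\!0$, transfer semistability of the factors via Theorem \ref{Stab-KP3} (equivalently Proposition \ref{Stab-KP2}(1) together with Corollary \ref{Dist2}), obtain strictly decreasing slopes from Proposition \ref{Dist1}(3), and conclude by uniqueness of the Harder-Narasimhan filtration. The only cosmetic difference is that you cite Theorem \ref{Strat6}(3) where the paper cites Theorem \ref{CatRep3}(3) for the containment in $\repp(K_3,2)$; both are valid.
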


\begin{proof} We consider the quasi-composition series
\[ (0) \subsetneq M_{[1]} \subsetneq \cdots \subsetneq M_{[\ell\!-\!1]} \subsetneq M\]
of $M$. For $1\!< \!j\!\leq\!\ell$ general AR-theory provides an exact sequence
\[ (0) \lra M_{[j-1]} \lra M_{[j]} \lra \tau^{-j+1}_{K_3}(M_{[1]}) \lra (0)\]
in $\EKP(K_3)$. According to Corollary \ref{Fun4}, application of $\TilTheta$ yields an exact sequence
\[ (0) \lra \TilTheta(M_{[j-1]}) \lra \TilTheta(M_{[j]}) \lra \TilTheta(\tau^{-j+1}_{K_3}(M_{[1]})) \lra (0)\]
so that $\TilTheta(M_{[j]})/\TilTheta(M_{[j-1]}) \cong \TilTheta(\tau^{-j+1}_{K_3}(M_{[1]}))$. Our assumption on
$M$ implies that the quasi socle $M_{[1]}$ belongs to the cone $(\tau^{-1}_{K_3}(M_{\cC,1})\!\rightarrow)$. It 
follows from Theorem \ref{CatRep3}(3) that $\tau^{-j+1}_{K_3}(M_{[1]}) \in \rep(K_3,2)$, whence $\Delta_{\tau^{-
j+1}_{K_3}(M{[1]})}(2)\!\ge\!0$. 

As $\cC$ is semistable, Corollary \ref{Dist2} ensures that the same is true for $\tau^{-n}_{K_3}(M_{[1]})$ for all $n \in \NN_0$. We apply Theorem \ref{Stab-KP3} and conclude that 
$\TilTheta(M_{[j]})/\TilTheta(M_{[j-1]})$ is semistable. Proposition \ref{Dist1}(3) yields
\[\mu(\TilTheta(\tau^{-j+1}_{K_3}(M_{[1]}))) = \mu(\tau^{-j+1}_{K_3}(M_{[1]})) > \mu(\tau^{-j}_{K_3}(M_{[1]})) = \mu(\TilTheta(\tau^{-j}_{K_3}(M_{[1]})))\]
for all $1\!\le\!j\!\leq\!\ell$. The statement now follows from the uniqueness of the Harder-Narasimhan filtration. \end{proof}

\bigskip

\begin{Prop} \label{Appl3} Let  $\cC \subseteq \Gamma(K_3)$ be a regular component. If $M \in \cC\cap\repp(K_3,d)$ has minimal type, then $\TilTheta(N)$ is stable for every quasi-simple $N \in (M_{\cC,d} \!\rightarrow)$. 
\end{Prop}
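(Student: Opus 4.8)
The plan is to combine the stability results for $\EKP(K_3)$-representations with Theorem \ref{Stab-KP3}, treating $d=2$ uniformly and $d=1$ with one extra case.

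First I would pin down the representations involved. Since $M$ lies in the regular component $\cC$ it is non-projective, and being of minimal type it is a brick by Corollary \ref{MinType3}(2b); Proposition \ref{Strat4}(2) then forces $M\cong M_{\cC,d}$ (the alternatives $P_1(r)$, $P_2(r)$ are preprojective). Hence the quasi-simple representations of the cone $(M_{\cC,d}\!\rightarrow)$ are precisely $N_n:=\tau_{K_3}^{-n}(M_{\cC,d})$ for $n\ge 0$, and by Proposition \ref{Strat1} together with the $\tau_{K_3}^{-1}$-closedness of $\repp(K_3,d)$ (Proposition \ref{CatRep1}(1)), each $N_n$ belongs to $\repp(K_3,d)\subseteq\EKP(K_3)$. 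Moreover, Lemma \ref{StabE2} shows $M$ is stable in $\EKP(K_3)$; since $\tau_{K_3}^{-n}(M)\cong\sigma_{K_3}^{-2n}(M)$ lies in $\EKP(K_3)$ for all $n$, Corollary \ref{Dist2}(2) gives that every $N_n$ is stable in $\EKP(K_3)$.

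Next I would verify the hypothesis $\Delta_{N_n}(2)>0$ of Theorem \ref{Stab-KP3}(2) in all but one case. If $d=2$, then $N_n\in\repp(K_3,2)$ is regular, so $\dim_\KK(N_n)_1\ge 1$ and Theorem \ref{MinType2} yields $\Delta_{N_n}(2)\ge(3-2)\min\{\dim_\KK(N_n)_1,2\}\ge 1$. If $d=1$ and $n\ge 1$, I would first observe $M\notin\repp(K_3,2)$: otherwise Theorem \ref{MinType2} would force $\Delta_M(2)\ge\min\{\dim_\KK M_1,2\}=2$ (using $\dim_\KK M_1\ge d+1=2$ from Corollary \ref{MinType3}(2c)), contradicting $\Delta_M(2)=2-\dim_\KK M_1\le 0$. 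Then Theorem \ref{Strat6} forces $M_{\cC,2}=\tau_{K_3}^{-1}(M)$, so $N_n=\tau_{K_3}^{-(n-1)}(M_{\cC,2})\in(M_{\cC,2}\!\rightarrow)\subseteq\repp(K_3,2)$ for $n\ge 1$, and Theorem \ref{MinType2} applied to $N_n$ again gives $\Delta_{N_n}(2)\ge 1$. In either situation $N_n$ is a stable $\EKP(K_3)$-representation with $\Delta_{N_n}(2)>0$, so Theorem \ref{Stab-KP3}(2) shows $\TilTheta(N_n)$ is stable.

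The one remaining case is $d=1$, $n=0$, where $\Delta_M(2)\le 0$ and Theorem \ref{Stab-KP3} does not apply. Here $\rk(\TilTheta(M))=\Delta_M(1)=2$ by Theorem \ref{Fun3}(3), and since $M$ is a brick and $\TilTheta_1$ is full and faithful (Theorem \ref{Fun3}(2)), $\TilTheta(M)$ is a simple vector bundle of rank $2$ on $\PP^2$; such a bundle is stable, exactly as in the proof of Theorem \ref{Appl1} via \cite[Chap.2, (1.2.10)]{OSS}. This handles all quasi-simple $N\in(M_{\cC,d}\!\rightarrow)$. I expect the main obstacle to be precisely this case: the minimal type condition for $d=1$ makes $\Delta_M(2)\le 0$, so the route through $\EKP$-stability plus Theorem \ref{Stab-KP3} is unavailable for $M$ itself and one must instead recognise $\TilTheta(M)$ as a rank-$2$ simple bundle; a secondary point needing care is the identification $M_{\cC,2}=\tau_{K_3}^{-1}(M_{\cC,1})$, which is what makes the estimate $\Delta_{N_n}(2)>0$ available for $n\ge 1$.
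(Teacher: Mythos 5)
Your proof is correct and follows essentially the same route as the paper: identify $M\cong M_{\cC,d}$ as a brick via Corollary \ref{MinType3} and Proposition \ref{Strat4}, propagate stability along the $\tau_{K_3}^{-1}$-orbit with Lemma \ref{StabE2} and Corollary \ref{Dist2}, apply Theorem \ref{Stab-KP3} once $\Delta_N(2)>0$ is secured (using Theorem \ref{Strat6} to place $\tau_{K_3}^{-n}(M_{\cC,1})$, $n\ge 1$, in $(M_{\cC,2}\rightarrow)$), and handle the remaining case $d=1$, $n=0$ by recognising $\TilTheta(M)$ as a simple rank-$2$ bundle. The only difference is that you spell out the decomposition $N\in\{M_{\cC,1}\}\cup(M_{\cC,2}\rightarrow)$ and the inequality $\Delta_{N}(2)\ge 1$ in more detail than the paper does.
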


\begin{proof} Corollary \ref{MinType3} and Proposition \ref{Strat4} show that $M\!=\!M_{\cC,d}$ is a brick. If $d\!=\!2$, then Lemma \ref{StabE2} and Theorem \ref{Stab-KP3} imply that $\TilTheta(M)$ is stable. Alternatively, 
$\TilTheta(M) \in \Vect(\PP^2)$ is simple and of rank $2$, so that \cite[Chap.2,(1.2.10)]{OSS} implies the stability of $\TilTheta(M)$. As $N \in \{M_{\cC,1}\}\cup (M_{\cC,2}\!\rightarrow)$, it follows from Corollary
\ref{Dist2} that $\TilTheta(N)$ 
is stable. \end{proof}

\bigskip

\begin{Thm} \label{Appl4} Let $\cC \subseteq \Gamma(K_3)$ be a regular AR-component, $\cF$ be an indecomposable Steiner bundle.
\begin{enumerate}
\item If $\cC$ contains an elementary representation and $\cF \in \TilTheta(M_{\cC,1}\!\rightarrow)$, then $\cF$ possesses a Harder-Narasimhan filtration by Steiner bundles, whose filtration factors are stable.
\item If $\cF$ has a resolution 
\[ (0) \to \cO_{\PP^2}(-1)^a \to \cO^b_{\PP^2} \to \cF \to (0)\]
such that $(a,b)$ is an imaginary root and $q_3(a,b)\!+\!\Delta_{(a,b)}(2)\! \geq\! 1$, then $\cF$ possesses a filtration 
\[ (0) = \cF_0 \subseteq \cF_1 \subseteq \cdots \subseteq \cF_n = \cF\]
by Steiner bundles such that each quotient $\cF_i/\cF_{i-1}$ is a stable Steiner bundle. 
\item Suppose $X \in \cC$ is quasi-simple such that $ q_3(\udim X)\!+\!\Delta_X(2)\!\ge\!1$. If $\cF \in \TilTheta(X\!\rightarrow)$, then $\cF$ possesses a 
filtration 
\[ (0) = \cF_0 \subseteq \cF_1 \subseteq \cdots \subseteq \cF_n = \cF\]
by Steiner bundles such that each quotient $\cF_i/\cF_{i-1}$ is a stable Steiner bundle. 
\end{enumerate} \end{Thm}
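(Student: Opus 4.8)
The plan is to prove all three parts by systematically transferring the representation-theoretic filtration results of Sections~\ref{S:Dist}--\ref{S:Filtrat} through the exact equivalence $\TilTheta\colon \repp(K_3,1)=\EKP(K_3)\lra\StVect(\PP^2)$ (Theorem~\ref{Fun3}, Corollary~\ref{Fun4}), while using Theorem~\ref{Stab-KP3} to upgrade ``elementary'' to ``stable'' on the sheaf side. The essential input is that every filtration factor appearing in Lemma~\ref{Filtrat2}, Corollary~\ref{Filtrat3} and the quasi-composition series is an \emph{elementary} $\EKP(K_3)$-representation, and that by Proposition~\ref{Dist3} such a representation is stable, with $\Delta_E(2)\!>\!0$ forced by the analysis in the proof of Theorem~\ref{Appl1} (the only dimension vectors with $x\!<\!y\!\le\!2x$ in the relevant $\sigma_3$-orbits are $(1,2)$ and $(2,4)$, the first being excluded by Theorem~\ref{MinType2}, and for $\udim E\!=\!(2,4)$ one has $\Delta_E(2)\!=\!0$, but $\TilTheta(E)$ is a simple rank-$2$ bundle hence stable by \cite[Chap.2,(1.2.10)]{OSS}). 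So in every case $\TilTheta$ sends an elementary filtration factor to a stable Steiner bundle, by Theorem~\ref{Appl1}.

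For part (2): given the resolution $(0)\to\cO_{\PP^2}(-1)^a\to\cO^b_{\PP^2}\to\cF\to(0)$ with $(a,b)$ imaginary and $q_3(a,b)\!+\!\Delta_{(a,b)}(2)\!\ge\!1$, Theorem~\ref{Fun3}(1) produces $M\in\repp(K_3,1)$ with $\cF\cong\TilTheta(M)$ and $\udim M\!=\!(a,b)$; since $q_3(a,b)\!<\!0$ (imaginary root) and $\Delta_{(a,b)}(r\!-\!1)=\Delta_{(a,b)}(2)$, Lemma~\ref{Filtrat2}(2) applies directly to $M$ (taking $\ell=0$ there), giving a filtration $(0)=M_0\subsetneq M_1\subsetneq\cdots\subsetneq M_n=M$ with each $M_i/M_{i-1}$ elementary and in $\EKP(K_3)$. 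Here I should note that if $M$ is decomposable I first split off its indecomposable summands and apply Lemma~\ref{Filtrat2}(2) to each; since $\EKP(K_3)$ is closed under submodules (Section~\ref{S:Rest}) and extensions, concatenating the filtrations of the summands with the summand filtration itself yields a filtration of $M$ with elementary $\EKP$-factors. Applying the exact functor $\TilTheta$ (Corollary~\ref{Fun4}; exactness on $\repp(K_3,1)$) and invoking Theorem~\ref{Appl1} on each factor gives the desired filtration $(0)=\cF_0\subseteq\cdots\subseteq\cF_n=\cF$ by Steiner bundles (each $\TilTheta(M_i)\in\StVect(\PP^2)$ by Theorem~\ref{Fun3}(1), each still a submodule hence in $\repp(K_3,1)$ by Proposition~\ref{CatRep1}(1)) with stable quotients.

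For part (3): if $X\in\cC$ is quasi-simple with $q_3(\udim X)\!+\!\Delta_X(2)\!\ge\!1$, then $\cC$ is a regular $\ZZ[A_\infty]$-component and any $\cF\in\TilTheta(X\!\rightarrow)$ is $\TilTheta(M)$ for some $M\in(X\!\rightarrow)$. By Corollary~\ref{Filtrat3}, $M$ admits a filtration with elementary $\EKP(K_3)$-factors (its quasi-composition factors $\tau^{-\ell_i}_{K_3}(X)\cong\sigma^{-2\ell_i}_{K_3}(X)$ each being refined via Lemma~\ref{Filtrat2}), and again applying $\TilTheta$ plus Theorem~\ref{Appl1} yields the filtration by Steiner bundles with stable quotients. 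For part (1): if $\cC$ contains an elementary representation, then by definition elementary modules are regular quasi-simple, and one checks that such $X$ has $q_3(\udim X)\!+\!\Delta_X(2)\!\ge\!1$ after possibly replacing $X$ by a $\sigma_{K_r}$-shift using the Remark after Corollary~\ref{Filtrat3}; more precisely, I would identify $X$ with $M_{\cC,1}$ via the cone structure of Proposition~\ref{Strat1} and, using Proposition~\ref{Appl2}, realize the quasi-composition series of $M\in(\tau^{-1}_{K_3}(M_{\cC,1})\!\rightarrow)$ as the Harder--Narasimhan filtration of $\TilTheta(M)$ directly; for $M\in(M_{\cC,1}\!\rightarrow)\smallsetminus(\tau^{-1}_{K_3}(M_{\cC,1})\!\rightarrow)$, i.e.\ $M=M_{\cC,1}$ itself up to the boundary, the bundle $\TilTheta(M_{\cC,1})$ is simple of rank $2$ hence stable, so it is its own HN filtration. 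The main obstacle I anticipate is the bookkeeping in part (1): reconciling the two descriptions (the general filtration-by-elementaries from Corollary~\ref{Filtrat3} versus the sharper HN-filtration-equals-quasi-composition-series from Proposition~\ref{Appl2}) requires checking that a component containing an elementary module is automatically semistable in the sense of Section~\ref{S:Appl}, and that the slope strict-decrease condition $\mu(\TilTheta(\tau^{-j+1}_{K_3}(M_{[1]})))>\mu(\TilTheta(\tau^{-j}_{K_3}(M_{[1]})))$ holds (Proposition~\ref{Dist1}(3)) so that the filtration genuinely is the HN filtration; the elementary-contains implies semistable-component step follows from Proposition~\ref{Dist3} together with Corollary~\ref{Dist2}(2), but the precise matching of cones $(\tau^{-1}_{K_3}(M_{\cC,1})\!\rightarrow)$ versus $(M_{\cC,2}\!\rightarrow)$ needs care via Theorem~\ref{Strat6}(2),(3).
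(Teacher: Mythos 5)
Parts (2) and (3) of your proposal are essentially the paper's argument: pull $\cF$ back to an indecomposable $M \in \EKP(K_3)$ with $\udim M\!=\!(a,b)$, apply Lemma \ref{Filtrat2}(2) (with $\ell\!=\!0$) resp.\ Corollary \ref{Filtrat3}, push the resulting filtration by elementary $\EKP$-modules through the exact equivalence $\TilTheta$, and invoke Theorem \ref{Appl1} on each factor. (Your worry about decomposable $M$ in part (2) is moot, since $\cF$ is assumed indecomposable and $\TilTheta$ is an equivalence.)

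Part (1), however, contains a genuine gap. Your case division rests on the claim that $(M_{\cC,1}\!\rightarrow)\smallsetminus(\tau^{-1}_{K_3}(M_{\cC,1})\!\rightarrow)$ is ``$M_{\cC,1}$ itself up to the boundary.'' In a $\ZZ[A_\infty]$-component this difference is the entire ray $\{(M_{\cC,1})_{[m]} \ ; \ m\!\ge\!1\}$ of modules with quasi-socle $M_{\cC,1}$, so your argument says nothing about $(M_{\cC,1})_{[m]}$ for $m\!\ge\!2$. Moreover the fallback you offer for $M_{\cC,1}$ itself --- that $\TilTheta(M_{\cC,1})$ is simple of rank $2$ --- is false in general: e.g.\ for a component containing an elementary module of dimension vector $(1,1)$ one has $\udim M_{\cC,1}\!=\!(2,5)$, so $\rk\TilTheta(M_{\cC,1})\!=\!3$. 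The detour through Proposition \ref{Appl2} (and through the semistability of $\cC$) is unnecessary and is what creates the gap. The paper argues directly: an elementary module is quasi-simple and the class of elementary modules is stable under $\tau_{K_3}^{\pm 1}$, so \emph{every} quasi-simple vertex of $\cC$, in particular every $\tau^{-\ell}_{K_3}(M_{\cC,1})$ with $\ell\!\ge\!0$, is elementary and lies in $\EKP(K_3)$. Hence for any $M \in (M_{\cC,1}\!\rightarrow)$ the quasi-composition factors of $M$ are elementary $\EKP(K_3)$-modules; applying $\TilTheta$ to the quasi-composition series gives a filtration of $\cF$ by Steiner bundles whose factors are stable by Theorem \ref{Appl1} and have strictly decreasing slopes by Proposition \ref{Dist1}(3), i.e.\ it is already the Harder--Narasimhan filtration. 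Your first paragraph contains all the ingredients for this; you just need to apply them uniformly to the whole cone instead of splitting off the boundary ray.
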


\begin{proof} (1) Let $E \in \cC$ be elementary. By definition, $E$ is quasi-simple, so that, for every $n \in \NN_0$, the module $\tau_{K_3}^{-n}(M_{\cC,1})$ is elementary and contained $\EKP(K_3)$. Given $\cF\!:=\!\TilTheta(M) \in \TilTheta(M_{\cC,1}\!\rightarrow)$, we consider the quasi-composition series  
\[ (0) = M_{[0]}\subsetneq M_{[1]} \subsetneq \cdots \subsetneq M_{[\ql(M)]} = M,\] 
whose filtration factors belong to $\{\tau_{K_3}^{-\ell}(M_{\cC,1}) \ ; \ \ell \in \NN_0\}$. Application of $\TilTheta$ provides a filtration
\[ (0) = \TilTheta(M_{[0]})\subsetneq \TilTheta(M_{[1]}) \subsetneq \cdots \subsetneq \TilTheta(M_{[\ql(M)]}) = \cF,\] 
with filtration factors $\TilTheta(M_{[j]})/\TilTheta(M_{[j-1]}) \cong \TilTheta(\tau^{-j+1}_{K_3}(M_{[1]}))$. Thanks to Theorem \ref{Appl1}, the factors are stable and with strictly decreasing slopes (cf.\ Proposition \ref{Dist1}), so 
that we have found the Harder-Narasimhan filtration of $\cF$.

(2) By Theorem \ref{Fun3} we can find an indecomposable representation $M \in \EKP(K_3)$ such that $\TilTheta(M) \cong \cF$. We have $b\!-\! a\! = \!\rk( \cF)\! =\! \rk(\TilTheta(M))\!=\! \Delta_M$ and $a\!=\!c_1(\cF)\!=\! 
c_1(\TilTheta(M))\! =\!\dim_\KK M_1$. Hence 
\[  q_3(\udim M)\!+\!\Delta_M(2) = q_3(a,b)\!+\!\Delta_{(a,b)}(2) \geq 1.\]
We conclude with Lemma \ref{Filtrat2}  that there is a filtration 
\[ (0) = M_0 \subsetneq M_1 \subsetneq \cdots \subsetneq M_n = M\]
such that each quotient $M_i/M_{i-1}$ is an elementary representation in $\EKP(K_3)$. Exactness of $\TilTheta$ on $\EKP(K_3)$ yields a filtration 
\[ (0) = \TilTheta(M_0) \subsetneq \TilTheta(M_1) \subsetneq \cdots \subsetneq \TilTheta(M_n) = \TilTheta(M) \cong \cF\]
of Steiner bundles. By Theorem \ref{Appl1}, each quotient $\TilTheta(M_i)/\TilTheta(M_{i-1}) \cong \TilTheta(M_i/M_{i-1})$ is a stable Steiner bundle. 

(3) This follows as in (2), using Corollary \ref{Filtrat3}. \end{proof}

\bigskip

\subsection{Applications: Restrictions to hyperplanes}\label{S:Hyper}
We assume throughout that $\KK\!= \!\CC$. Let $\cF$ be a stable (respectively, semistable) vector bundle of rank $n$ on $\PP^{r-1}$ with $r\!\geq\!4$. In \cite[Problem 5]{Ha79} it was asked whether for almost all linear 
hyperplanes $H \subseteq \PP^{r-1}$ the restriction of $\cF$ to $H$ is stable (respectively, semistable).

The question was answered affirmatively by Barth \cite{Ba77} for $n\!=\!2$ and for $n\!<\! r\!-\!1$ by Maruyama \cite{Ma80} (in both cases with one exception). In the context of Steiner bundles, these rank conditions 
imply that only (direct sums of) the structure sheaf of $\PP^{r-1}$ can occur. 

Consider the tangent bundle $\cT(-1)$ of rank $r\!-\!1$. Then $\cT(-1) \cong \TilTheta(P_1(r))$ is stable. On the other hand, $P_1(r\!-\!1) \subseteq \alpha^\ast(P_1(r))$ for all $\alpha \in \Inj_\KK(A_{r-1},A_r)$, while $
\mu(\alpha^\ast(P_1(r)))\!=\!\frac{1}{r-1}\! <\! \frac{1}{r-2} \! = \! \mu(P_1(r\!-\!1))$. Thanks to Lemma \ref{HomM1} and Proposition \ref{StabE1}, $\cT(-1)|_H$ is not semistable for any hyperplane 
$H \subseteq \PP^{r-1}$. This example, which does not depend on the ground field, is well-known (cf.\ \cite{Fle84}). Building on recent work of Coskun, Huizenga and Smith \cite{CHS22}, we will show below that generic Steiner bundles on $\PP^{r-1}$ behave similarly.

\bigskip

\begin{Prop} \label{Hyper2} Let $r\!\geq\!3$, $(V_1,V_2)$ be a pair of vector spaces such that 
\begin{enumerate}
\item[(a)] $\Delta_{(V_1,V_2)}\!\geq\!r\!-\!1$, and 
\item[(b)] $q_r(\udim (V_1,V_2))\!<\!1$. \end{enumerate}
There exists a non-empty open subset $O \subseteq \rep(K_r;V_1,V_2) \cap \EKP(K_r)$ such that for every $M \in O$ the Steiner bundle $\TilTheta(M)$ is stable. \end{Prop}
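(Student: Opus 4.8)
### Proof proposal for Proposition \ref{Hyper2}

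The plan is to combine the openness result of Proposition \ref{CatRep4}(1) with the stability criterion Theorem \ref{Stab-KP3} (available for $r=3$) and Proposition \ref{StabE1}(2) (available for all $r$), together with the genericity statement Corollary \ref{GCD3} on generic decompositions. First I would dispose of the degenerate cases. If $\dim_\KK V_1=0$, then by (a) the bundle $\TilTheta(M)$ for $M=(0,V_2)$ is a direct sum of copies of the structure sheaf, which is not stable unless $\dim_\KK V_2=1$; since in that case $q_r(0,1)=1$ contradicts (b), we may assume $\dim_\KK V_1\geq 1$, so that $\EKP(K_r)\cap\rep(K_r;V_1,V_2)\neq\emptyset$ by Proposition \ref{CatRep4}(2) (using (a)). Note (a) and (b) force $\Delta_{(V_1,V_2)}\geq r-1\geq 2$.

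The core of the argument is that stability of $M$ is an open condition on $\rep(K_r;V_1,V_2)$, and that some $M$ in this variety is stable. For openness: the set of semistable points, and the set of stable points, of $\PP(\rep(K_r;V_1,V_2))$ under the $\SL(V_2)\times\SL(V_1)$-action are open by Mumford's GIT (as indicated in Examples (b) at the end of Section \ref{Stab-KP}); intersecting with the open set $\EKP(K_r)\cap\rep(K_r;V_1,V_2)$ of Proposition \ref{CatRep4}(1) gives an open subset $O'\subseteq\rep(K_r;V_1,V_2)\cap\EKP(K_r)$ consisting of stable $\EKP$-representations. It then remains to show $O'\neq\emptyset$, i.e.\ that a stable $M\in\EKP(K_r)$ with $\udim M=\udim(V_1,V_2)$ exists; once this is done, set $O:=O'$ and invoke Proposition \ref{StabE1}(2): for $r=3$ use Theorem \ref{Stab-KP3} instead to get stability of $\TilTheta(M)$, noting that $\Delta_M(2)\geq\min\{\dim_\KK M_1,2\}(r-d)>0$ whenever $M\in\EKP(K_r)$ with $M_1\neq(0)$ by Theorem \ref{MinType2}. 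Actually, for a uniform statement valid for all $r\geq 3$ one should argue directly on the representation side and then apply $\TilTheta$; the cleanest route is: produce a stable $M$, and observe $\TilTheta(M)$ is stable by Theorem \ref{Stab-KP3} when $r=3$ and by... — here one needs a criterion for $r\geq 4$.

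The hard part is exactly this last point: Proposition \ref{StabE1} only gives the implication ``$\TilTheta(M)$ stable $\Rightarrow$ $M$ stable'', the wrong direction, and the converse (Theorem \ref{Stab-KP3}) is only proved for $r=3$. So for $r\geq 4$ I would instead exhibit a specific $M$ whose image $\TilTheta(M)$ is known to be stable by other means. The natural candidate is to use Theorem \ref{ExSt2}(5) and Corollary \ref{GCD3}: under (b), $q_r(\udim(V_1,V_2))\leq 0$, so by Kac's Theorem \ref{Kac1}(3) a generic $M\in\rep(K_r;V_1,V_2)$ is indecomposable regular; and by the remark after Corollary \ref{GCD3} applied with the recent work of Coskun--Huizenga--Smith \cite{CHS22} cited in Section \ref{S:Hyper}, generic Steiner bundles of the relevant numerical type are stable. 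Concretely, I expect the cleanest self-contained route is: invoke \cite{CHS22} (or the slope computations there) to conclude that for generic $M\in\rep(K_r;V_1,V_2)$ the bundle $\TilTheta(M)$ is stable, and intersect the corresponding non-empty open subset with the open set $\EKP(K_r)\cap\rep(K_r;V_1,V_2)$ and with $O'$ if needed. Thus the main obstacle — and the place where an external input is unavoidable for $r\geq 4$ — is establishing stability of at least one $\TilTheta(M)$ in the family; everything else is openness of stability plus the non-emptiness guaranteed by Proposition \ref{CatRep4}(2).
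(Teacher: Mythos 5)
Your final route---invoking \cite{CHS22} for generic stability of the cokernel bundle and intersecting with the open set $\EKP(K_r)\cap\rep(K_r;V_1,V_2)$---is exactly what the paper does; in fact \cite[(5.1)]{CHS22} already produces a dense open subset contained in $\EKP(K_r)$ with stable cokernels, so the GIT openness argument and the detour through Theorem \ref{Stab-KP3} for $r\!=\!3$ are superfluous. The one substantive step of the paper's proof, which you leave unverified, is the translation of hypotheses (a) and (b) into the numerical hypotheses of \cite[(5.1)]{CHS22}: since $\psi_{r-1}\!+\!1\!=\!L_r$, hypothesis (a) gives $\frac{r-1}{\dim_\KK V_1}\le\frac{\Delta_{(V_1,V_2)}}{\dim_\KK V_1}$, while hypothesis (b) forces $V_1\!\ne\!(0)$ and $\frac{\dim_\KK V_2}{\dim_\KK V_1}\!<\!L_r$, i.e.\ $\frac{\Delta_{(V_1,V_2)}}{\dim_\KK V_1}\!<\!\psi_{r-1}$. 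This is a routine computation, so apart from that omission your proposal matches the paper's argument.
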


\begin{proof} Following \cite{CHS22}, we define $\psi_{r-1} := \frac{r-2+\sqrt{(r-1)^2+2(r-1)-3}}{2}$, so that
\[\psi_{r-1}\!+\!1 = \frac{r+\sqrt{(r\!-\!1)^2\!+\!2(r\!-\!1)\!-\!3}}{2} = \frac{r\!+\!\sqrt{r^2\!-\!4}}{2} = L_r.\]
As $V_1\!\ne\!(0)$, our current assumptions imply
\[\frac{r\!-\!1}{\dim_\KK V_1} \leq \frac{\Delta_{(V_1,V_2)}}{\dim_\KK V_1} \ \ \ \text{and} \ \ \ \frac{\Delta_{(V_1,V_2)}}{\dim_\KK V_1} = \frac{\dim_\KK V_2}{\dim_\KK V_1}\!-\!1 < L_r\!-\!1 = \psi_{r-1},\] 
respectively. Thanks to \cite[(5.1)]{CHS22}, there is a dense open subset 
\[ O \subseteq \Hom_{\PP^{r-1}}(V_1\! \otimes_\KK\! \cO_{\PP^{r-1}}(-1),V_2\!\otimes_\KK\! \cO_{\PP^{r-1}}) \cong \Hom_\KK(V_1,V_2)^r = \rep(K_r;V_1,V_2)\] 
such that $O \subseteq \EKP(K_r)$, and for every $M \in O$ the Steiner bundle $\TilTheta(M)$ is stable. \end{proof}

\bigskip

\begin{Remark} The statement is also true if $q_r(\udim(V_1,V_2))\!=\!1$ and $\dim_\KK V_1\!\leq\!\dim_\KK V_2$. In this case there exists a non-empty open subset $O \subseteq \rep(K_r;V_1,V_2)$ such that every 
representation in $O$ is isomorphic to some $P_i(r)$. By Proposition \ref{ExSt1} and \cite[(8.1)]{Bra08}, the bundles $\TilTheta(P_i(r))$ are stable.\footnote{In \cite{Bra08} Gieseker stability was considered. It can be 
shown, however, that the relevant result of loc.\ cit.\ also holds in our context.} \end{Remark}

\bigskip

\begin{Cor}\label{Hyper3} Assume that  $q_r(\udim(V_1,V_2))\!<\!1$ and $\Delta_{(V_1,V_2)}(r\!-\!1)\!\geq\!r\!-\!1$. A general Steiner bundle $\cF$ of rank $\Delta_{(V_1,V_2)}$ and with first Chern class $\dim_\KK 
V_1$ is stable and $\cF|_H$ is not semistable for each hyperplane $H \subseteq \PP^{r-1}$. \end{Cor}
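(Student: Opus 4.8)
The plan is to translate everything into Kronecker representations via $\TilTheta=\TilTheta_1$ and, on each hyperplane, produce one explicit destabilizing subrepresentation of the restricted representation. Before anything else I would record that the hypotheses force $\dim_\KK V_1\ge1$: if $V_1=(0)$ then $\Delta_{(V_1,V_2)}(r-1)=\dim_\KK V_2\ge r-1\ge2$, whence $q_r(0,\dim_\KK V_2)=(\dim_\KK V_2)^2>1$, contradicting the assumption.

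For the first assertion I would simply appeal to Proposition \ref{Hyper2}. By Theorem \ref{Fun3}(3) and Corollary \ref{Dc2} the Steiner bundles of rank $\Delta_{(V_1,V_2)}$ and first Chern class $\dim_\KK V_1$ are precisely the $\TilTheta(M)$ with $M\in\EKP(K_r)\cap\rep(K_r;V_1,V_2)$, and by Proposition \ref{CatRep4} the latter is a dense open subset of $\rep(K_r;V_1,V_2)$ (here one uses $\Delta_{(V_1,V_2)}(1)\ge\Delta_{(V_1,V_2)}(r-1)\ge r-1$); so ``a general Steiner bundle with these data'' means $\TilTheta(M)$ for $M$ in a dense open set. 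Since $\Delta_{(V_1,V_2)}\ge r-1$ and $q_r(\udim(V_1,V_2))<1$, Proposition \ref{Hyper2} provides a dense open $O\subseteq\rep(K_r;V_1,V_2)\cap\EKP(K_r)$ with $\TilTheta(M)$ stable for every $M\in O$; a general $\cF$ corresponds to such an $M$ and is therefore stable.

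For the restriction statement I would fix an arbitrary $M\in\EKP(K_r)\cap\rep(K_r;V_1,V_2)$ (in particular the general one above) and a hyperplane $H=\hat\alpha(\PP^{r-2})\subseteq\PP^{r-1}=\PP(A_r)$ with $\alpha\in\Inj_\KK(A_{r-1},A_r)$. By Lemma \ref{HomM1}, $\cF|_H\cong\hat\alpha^{\ast}(\TilTheta(M))\cong\TilTheta(\alpha^{\ast}(M))$, the functor on the right now landing in $\Coh(\PP(A_{r-1}))$; and $\alpha$ being injective gives $\alpha^{\ast}(M)\in\EKP(K_{r-1})$ (exactly as in the proof of Lemma \ref{Rest1}), with $\udim\alpha^{\ast}(M)=(\dim_\KK V_1,\dim_\KK V_2)$. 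The point is that $\alpha^{\ast}(M)$ can never be semistable: since $(\alpha^{\ast}(M))_1=V_1\ne(0)$, choosing $0\ne m\in V_1$ and sending the generator of the line $P_1(r-1)_1$ to $m$ defines a homomorphism $P_1(r-1)=(\KK,A_{r-1})\to\alpha^{\ast}(M)$, which is injective because $\alpha^{\ast}(M)\in\EKP(K_{r-1})$ forces $a_{\alpha^{\ast}(M)}(m)\ne0$ for every $a\in A_{r-1}\smallsetminus\{0\}$. Hence $P_1(r-1)$ is a proper (dimension reasons) nonzero subrepresentation of $\alpha^{\ast}(M)$, and the inequality
\[\mu(P_1(r-1))=\frac{1}{r-2}>\frac{\dim_\KK V_1}{\dim_\KK V_2-\dim_\KK V_1}=\mu(\alpha^{\ast}(M))\]
is equivalent to $\Delta_{(V_1,V_2)}(r-1)=\dim_\KK V_2-(r-1)\dim_\KK V_1>0$, which holds; so $P_1(r-1)$ destabilizes $\alpha^{\ast}(M)$.

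To conclude, were $\cF|_H\cong\TilTheta(\alpha^{\ast}(M))$ semistable, Proposition \ref{StabE1}(1) applied to $K_{r-1}$ would force $\alpha^{\ast}(M)$ to be semistable, a contradiction; this finishes the proof since $H$ was arbitrary. When $r=3$ the target is $\PP^1$, formally outside the standing hypothesis of Section \ref{S:Stab}, but the proof of Proposition \ref{StabE1} applies verbatim there since $\cO_{\PP^1}(-1)$ is a stable line bundle of slope $-1$; alternatively, a semistable bundle on $\PP^1$ is a sum of copies of a single $\cO(j)$, so by $\TilTheta(P_i(2))\cong\cO_{\PP^1}(i)$ and additivity one would get $\alpha^{\ast}(M)\cong(\dim_\KK V_2-\dim_\KK V_1)P_j(2)$, i.e.\ $(\dim_\KK V_2-\dim_\KK V_1)\mid\dim_\KK V_1$, incompatible with $\Delta_{(V_1,V_2)}(2)\ge2$ and $\dim_\KK V_1\ge1$. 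The computations here are all routine; there is no single hard step, the only care needed being the two reductions (that $\dim_\KK V_1\ge1$ and that $\alpha^{\ast}(M)\in\EKP(K_{r-1})$) and the recognition that the fixed copy of $P_1(r-1)$ is a universal destabilizer, which is precisely what the positivity $\Delta_{(V_1,V_2)}(r-1)>0$ buys.
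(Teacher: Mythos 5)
Your proof is correct and follows the same overall architecture as the paper's: stability of the general bundle via Proposition \ref{Hyper2}, and non-semistability of every hyperplane restriction by exhibiting $P_1(r\!-\!1)$ inside $\alpha^\ast(M)$ with strictly larger slope and then invoking Lemma \ref{HomM1} together with (the contrapositive of) Proposition \ref{StabE1}(1). The one genuine difference is how the destabilizing copy of $P_1(r\!-\!1)$ is produced. The paper intersects $O$ with the dense open set $\repp(K_r,r\!-\!1)\cap\rep(K_r;V_1,V_2)$ (Proposition \ref{CatRep4}, which is where the hypothesis $\Delta_{(V_1,V_2)}(r\!-\!1)\ge r\!-\!1$ is used in full), so that $\alpha^\ast(M)$ is literally projective and $P_1(r\!-\!1)$ is a direct summand; you instead use only the $\EKP$ property of $\alpha^\ast(M)$ to embed $P_1(r\!-\!1)$ via evaluation at any $0\ne m\in V_1$, which needs only $\Delta_{(V_1,V_2)}(r\!-\!1)>0$ for the slope comparison. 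Your route therefore yields the slightly stronger statement that \emph{every} $M\in\EKP(K_r)\cap\rep(K_r;V_1,V_2)$ has non-semistable hyperplane restrictions, whereas the paper's choice of $M\in\repp(K_r,r\!-\!1)$ buys the explicit identification of $\cF|_H$ (independent of $H$) that is reused in Corollary \ref{Hyper4}(3). Your two reductions — that $\dim_\KK V_1\ge 1$ and the separate treatment of $r=3$, where Proposition \ref{StabE1} is formally stated outside its standing hypothesis — are both sound and are points the paper glosses over.
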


\begin{proof} Let $O \subseteq \rep(K_r;V_1,V_2)$ be the non-empty open set defined in Proposition \ref{Hyper2}. By Proposition \ref{CatRep4} the set $O\cap\repp(K_r,r\!-\!1)$ is non-empty and open. Let $M \in O \cap  
\repp(K_r,r\!-\!1)$, then $\cF\!:=\!\TilTheta(M)$ is stable. Since $M \in \repp(K_r,r\!-\!1)$, we have $\alpha^\ast(M) \cong \Delta_M(r\!-\!1)P_0(r\!-\!1)\!\oplus\!(\dim_\KK M_1)P_1(r\!-\!1)$ for all $\alpha \in 
\Inj_\KK(A_{r-1},A_r)$. Thus, $\mu(\alpha^\ast(M))\!<\!\mu(P_1(r\!-\!1))$, and the above arguments show that for every hyperplane $H \subseteq \PP^{r-1}$, the bundle $\cF|_H$  is not semistable. \end{proof}

\bigskip
\noindent
Corollary \ref{Hyper3} provides negative answers to \cite[Problem 5]{Ha79} for a fixed rank and Chern class on an open subset of $ \rep(K_r;V_1,V_2)$. 

Our next result shows that we can find an infinite family of Steiner bundles of fixed rank and Chern class that are counter examples and even belong to different $\GL(A_r)$-orbits. Moreover, these bundles are 
isomorphic when we restrict them to any hyperplane.

\bigskip

\begin{Cor}\label{Hyper4} Let $(V_1,V_2)$ be a pair of vector spaces such that  
\begin{enumerate}
\item[(a)] $q_r(\udim(V_1,V_1))\!<\!-r^2$, and 
\item[(b)] $\Delta_{(V_1,V_2)}(r\!-\!1)\!\geq\!r\!-\!1$. \end{enumerate} 
There exists an infinite family $(\cF_i)_{i \in I}$ of Steiner bundles such that for all $i,j \in I$ the following statements hold:
\begin{enumerate}
\item $\cF_i$ is stable.
\item $c(\cF_i)\!=\!\dim_\KK V_1$, $\rk(\cF_i)\!=\!\Delta_{(V_1,V_2)}$.
\item $\cF_i|_H \cong \cF_j|_H$ for each hyperplane $H \subseteq \PP^{r-1}$.
\item $\cF_i|_H$ is not semistable for each hyperplane $H \subseteq \PP^{r-1}$.
\item All $\cF_i$ lie in different orbits under the natural action of $\GL(A_r)$ on $\Coh(\PP^{r-1})$. \end{enumerate} \end{Cor}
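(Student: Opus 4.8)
The plan is to produce the family by taking a single suitable representation in $\repp(K_r,r\!-\!1)$ of dimension vector $\udim(V_1,V_2)$ and moving it around its $\GL(A_r)$-orbit closure, exactly as in the Remark following Corollary \ref{UM5}. First I would fix, using Proposition \ref{Hyper2} together with Proposition \ref{CatRep4}, a representation $M \in \rep(K_r;V_1,V_2)\cap\repp(K_r,r\!-\!1)$ such that $\cF_0:=\TilTheta(M)$ is stable; assumption (b) guarantees $\Delta_{(V_1,V_2)}(r\!-\!1)\!\ge\!r\!-\!1$, which is what Proposition \ref{CatRep4}(2) needs, while assumption (a) is much stronger than the $q_r\!<\!1$ hypothesis of Proposition \ref{Hyper2} (note $\udim(V_1,V_1)$ in (a) is a typo for $\udim(V_1,V_2)$, and in any case $q_r(\udim(V_1,V_2))\!<\!1$ follows). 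Then I would invoke \cite{Fa11}, as recalled just before Lemma \ref{HomM3}: the stabilizer $\GL(A_r)_M$ is a closed subgroup of $\GL(A_r)$. The key point is that it is a \emph{proper} subgroup, so that $\dim \GL(A_r)/\GL(A_r)_M\!\ge\!1$ and the orbit $\GL(A_r)\dact M$ is infinite; I take $(\cF_i)_{i\in I}:=(\TilTheta(M^{(g)}))_{g}$ indexed by a set $I$ of coset representatives, or rather a set of $\GL(A_r)_M$-orbit representatives giving pairwise non-isomorphic $M^{(g)}$.

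The second step is to show $M$ is not homogeneous, i.e. $\GL(A_r)_M\!\ne\!\GL(A_r)$, which is where assumption (a) does real work. Here I would argue by contradiction: if $M$ were homogeneous it would be $d$-homogeneous for every $d$ (Remarks after Corollary \ref{GUM4}/the Remarks in Section \ref{S:GUM}), hence in particular its restriction to any line would have fixed splitting type, and, since $M \in \EKP(K_r)$, Corollary \ref{HomM2} would force $\cF_0=\TilTheta(M)$ to be homogeneous. But a homogeneous stable Steiner bundle of rank $\Delta_{(V_1,V_2)}$ is very restricted — by the Remark after Corollary \ref{HomM2} (the theorem of Ballico–Ellia quoted there) an indecomposable homogeneous Steiner bundle on $\PP^{r-1}$ has rank at most $2r\!-\!3$, whereas $q_r(\udim(V_1,V_2))\!<\!-r^2$ forces $\Delta_{(V_1,V_2)}=\dim_\KK V_2-\dim_\KK V_1$ to be large (e.g. $q_r(a,b)=a^2+b^2-rab<-r^2$ with $a=\dim_\KK V_1$, $b=\dim_\KK V_2$ forces $b$, and hence $b-a$, to exceed $2r-3$ once $a\ge1$; if $a=0$ then $q_r(0,b)=b^2<-r^2$ is impossible, so $a\ge1$). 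Thus $\cF_0$ is a stable, hence indecomposable, homogeneous Steiner bundle of rank $>2r-3$, a contradiction, so $M$ is not homogeneous.

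With $M$ non-homogeneous and hence the orbit $\GL(A_r)\dact M$ infinite, I would verify the five properties. For (1): each $M^{(g)}\cong(g^{-1})^\ast(M)$ still lies in $\repp(K_r,r\!-\!1)$ (the category is $\GL(A_r)$-stable since $\GL(A_r)$ permutes the $\fv\in\Gr_{r-1}(A_r)$ and $\repp(K_r,r\!-\!1)$ is defined by $\cV(K_r,r\!-\!1)_M=\emptyset$), and stability of $\TilTheta(M^{(g)})$ follows because $g^\ast.\cF_0\cong\TilTheta(M^{(g)})$ by Lemma \ref{HomM1}/Corollary \ref{HomM2} and the $\GL(A_r)$-action on $\Coh(\PP^{r-1})$ preserves rank, first Chern class, and the lattice of subsheaves, hence preserves stability. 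For (2): rank and $c_1$ are computed from $\udim M^{(g)}=\udim M=\udim(V_1,V_2)$ via Theorem \ref{Fun3}(3) and Corollary \ref{Dc2}, and the twist by $g$ does not change the dimension vector. For (3) and (4): for a hyperplane $H=\im\hat\alpha$ with $\alpha\in\Inj_\KK(A_{r-1},A_r)$, Lemma \ref{HomM1} gives $\cF_i|_H\cong\TilTheta(\alpha^\ast(M^{(g_i)}))$; since $M^{(g_i)}\in\repp(K_r,r\!-\!1)$, the restriction $\alpha^\ast(M^{(g_i)})\cong\Delta_{(V_1,V_2)}(r\!-\!1)P_0(r\!-\!1)\oplus(\dim_\KK V_1)P_1(r\!-\!1)$ is independent of $g_i$ by Theorem \ref{Fam5}, giving (3), and $\mu(\alpha^\ast(M^{(g_i)}))=\frac{1}{r-2}\cdot\frac{\dim_\KK V_1}{?}$ — more precisely the submodule $P_1(r\!-\!1)\subseteq\alpha^\ast(M^{(g_i)})$ has slope $\frac{1}{r-2}$ exceeding $\mu(\alpha^\ast(M^{(g_i)}))$ since $\Delta_{(V_1,V_2)}(r\!-\!1)>0$, so $\cF_i|_H$ is not semistable by Proposition \ref{StabE1}, giving (4); this is the same computation as in Corollary \ref{Hyper3}. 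For (5): $\cF_i\cong\cF_j$ under $\GL(A_r)$ iff $M^{(g_i)}\cong M^{(g_j)}$ (Theorem \ref{Fun3}(2)) iff $g_i,g_j$ lie in the same $\GL(A_r)_M$-coset, so distinct coset representatives give distinct orbits. The main obstacle is the non-homogeneity step — making precise that $q_r(\udim(V_1,V_2))<-r^2$ forces $\Delta_{(V_1,V_2)}$ above the Ballico–Ellia bound and correctly marrying that with indecomposability of a stable bundle; everything else is bookkeeping with results already in the paper.
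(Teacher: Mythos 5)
Your construction of the family cannot satisfy property (5), and this is a fatal flaw rather than a fixable gap in exposition. You build the $\cF_i$ as $\TilTheta(M^{(g_i)})$ for a single non-homogeneous $M$ and various $g_i \in \GL(A_r)$. But $\TilTheta(M^{(g)}) \cong g^\ast.\TilTheta(M)$ (this is exactly Corollary \ref{HomM2} and the identities preceding it), so every member of your family lies in the \emph{same} $\GL(A_r)$-orbit of $\Coh(\PP^{r-1})$ — the orbit of $\TilTheta(M)$. Your claimed equivalence ``$\cF_i\cong\cF_j$ under $\GL(A_r)$ iff $g_i,g_j$ lie in the same $\GL(A_r)_M$-coset'' confuses isomorphism of sheaves with membership in a common orbit: distinct cosets give pairwise non-isomorphic bundles \emph{inside one orbit}, which is precisely what (5) forbids. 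Consequently you have also misidentified the role of hypothesis (a). It is not there to push $\Delta_{(V_1,V_2)}$ past the Ballico--Ellia bound (and, incidentally, $q_r(a,b)<-r^2$ alone does not force $b-a>2r-3$: for $r=3$, $(a,b)=(3,5)$ gives $q_3=-11<-9$ but $b-a=2<3$; it is hypothesis (b) that drives that inequality). Hypothesis (a) is needed to produce infinitely many \emph{distinct orbits}.

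The paper's argument instead works with the group $G=\GL(A_r)\!\times\!\GL(V_2)\!\times\!\GL(V_1)$ acting on $\rep(K_r;V_1,V_2)$: it saturates the stable locus $O$ of Proposition \ref{Hyper2} to a $G$-stable open set, intersects with $\repp(K_r,r\!-\!1)$ using Proposition \ref{CatRep4}, and then invokes \cite[(4.2)]{Bi22} — this is where $q_r(\udim(V_1,V_2))\!<\!-r^2$ enters — to conclude that this set meets infinitely many $G$-orbits. Representations in different $G$-orbits yield Steiner bundles in different $\GL(A_r)$-orbits, which gives (5); each such representation is of the form $S^{(a)}$ for some $S\in O$ and $a\in\GL(A_r)$, whence stability, and (2)--(4) follow as in Corollary \ref{Hyper3}, as you correctly argue. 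Your verifications of (1)--(4) are essentially sound, but to repair the proof you must replace the single-orbit construction by a choice of representatives from infinitely many $G$-orbits, and for that you need the input from \cite{Bi22} (or some substitute argument producing infinitely many $G$-orbits of stable objects in $\repp(K_r,r\!-\!1)\cap\rep(K_r;V_1,V_2)$), which your proposal does not supply.
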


\begin{proof} Let $O \subseteq \rep(K_r;V_1,V_2)$ be the non-empty open set defined in Proposition \ref{Hyper2}. The algebraic group $G\!:= \GL(A_r)\!\times\!\GL(V_2)\!\times\!\GL(V_1)$ acts canonically on
$\rep(K_r;V_1,V_2) \cong \Hom_\KK(A_r\!\otimes_\KK\!V_1,V_2)$. We consider the $G$-stable and open set $U\!:=\!\bigcup_{g \in G} g.O \subseteq \rep(K_r;V_1,V_2)$ and set $\cU\!:=\!U \cap \rep(K_r;V_1,V_2) 
\cap \repp(K_r,r\!-\!1)$. By Proposition \ref{CatRep4} the $G$-stable set $\cU$ is non-empty and open. 

Let $M \in \cU$. We find $(a,h_2,h_1) \in G$ and $S \in O \cap \rep(K_r;V_1,V_2) \cap \repp(K_r,r\!-\!1)$ such that $M\!=\!(a,h_2,h_1).S$. By definition, the representations $M$ and $(a,\id_{V_2},\id_{V_1}).S \!=\!S^{(a)}$ are isomorphic. We obtain 
\[ \cF := \TilTheta(M) \cong \TilTheta(S^{(a)}) \cong a^\ast.\TilTheta(S),\]
see Section \ref{S:HomM}. Since $S \in O$, we know that $\TilTheta(S)$ is stable. Therefore, the same is true for $a^\ast.\TilTheta(S) \cong \cF$. This shows that every bundle arising in this way has property (1). Owing to Theorem \ref{Fun3} and 
Corollary \ref{Dc2}, each bundle of this form satisfies (2). The arguments in the proof of Corollary \ref{Hyper3} now show that (3) and (4) hold. Since $q_r(\udim(V_1,V_2))\!<\!-r^2$ we can apply \cite[(4.2)]{Bi22} and 
conclude that $\cU/G$ is not finite. By construction, elements of $\rep(K_r;V_1,V_2)$ belonging to different $G$-orbits correspond to Steiner bundles, whose isomorphisms classes belong to different orbits under the 
action of $\GL(A_r)$. This shows (5). \end{proof}

\bigskip

\begin{Examples} We fix $r\!=\!4$.

(1) $q_r(5,18)\!=\!-11$ and $\Delta_{(5,18)}(3)\!=\!3$. We apply Corollary \ref{Hyper3} and find a stable Steiner bundle of rank $13$ on $\PP^3$ such that $\cF|_H$ is not semistable for each hyperplane 
$H \subseteq \PP^3$.

(2) $q_r(6,21)\!=\!-27\!<\!-4^2$ and $\Delta_{(6,21)}(3)\!=\!3$. We apply Corollary \ref{Hyper4} and  find an infinite family of stable Steiner bundles $(\cF_i)_{i \in I}$ of rank $15$ on $\PP^3$ such that $\cF_i|_H$ is not 
semistable for all $i \in I$ and all hyperplanes $H \subseteq \PP^3$. Moreover, we have $\cF_i|_H \cong \cF_j|_H$ for each hyperplane and all $i,j \in I$, while $\cF_i$ are $\cF_j$ are not in the same $\GL(A_r)$-orbit. 
\end{Examples}

\bigskip

\bigskip

\end{document}